\newcommand{\slfrac}[2]{\left.#1\middle/#2\right.}
\newcommand{\proj}{\mathbb{P}}
\newcommand{\seq}{\subseteq}
\newcommand{\C}{\mathbb{C}}
\newcommand{\R}{\mathbb{R}}
\newcommand{\rank}{\text{rank }}
\newcommand{\nocontentsline}[3]{}
\newcommand{\tocless}[2]{\bgroup\let\addcontentsline=\nocontentsline#1{#2}\egroup}
\newtheorem{thm}{Theorem}[section]
\newtheorem*{thm-nl}{Theorem}
\newtheorem*{prop-nl}{Proposition}
\newtheorem{lem}[thm]{Lemma}
\newtheorem{cor}[thm]{Corollary}
\newtheorem*{cor-nl}{Corollary}
\newtheorem{conjecture}[thm]{Conjecture}
\newtheorem*{conjecture-nl}{Conjecture}
\newtheorem*{quest-nl}{Question}
\newtheorem*{quests-nl}{Questions}
\newtheorem{question}[thm]{Question}
\newtheorem{prop}[thm]{Proposition}
\theoremstyle{remark}
\newtheorem*{rem}{Remark}
\newtheorem{remark}[thm]{Remark}
\theoremstyle{definition}
\newtheorem{mydef}[thm]{Definition}
\newtheorem{eg}{Example}
\title{{Stable Maps and Singular Curves on K3 Surfaces}}
\author{Michael Kemeny}
\begin{document}
\newpage
\begin{titlepage}
\begin{center}
{\Huge 
\textsc{Stable maps and \\ singular curves \\ on K3 surfaces}
\vfill

\textsc{Dissertation}\\ }
\vspace{0.5cm}
\large 
zur\\ 
Erlangung des Doktorgrades (Dr.\ rer.\ nat.)\\
der\\
Mathematisch-Naturwissenschaftlichen Fakult\"at\\
der\\
Rheinischen Friedrich-Wilhelms-Universit\"at Bonn\\ 
\vfill
vorgelegt von\\
Michael Kemeny\\
aus\\
Sydney, Australien
\vfill

Bonn 2015
\vfill
\vfill
\end{center}
\newpage
\thispagestyle{empty}

\noindent
Angefertigt mit Genehmigung der Mathematisch-Naturwissenschaftlichen \\
Fakult\"at der Rheinischen Friedrich-Wilhelms-Universit\"at Bonn
\vfill

\noindent
1.\ Gutachter: Prof.\ Dr.\ Daniel Huybrechts \\
2.\ Gutachter: Prof.\ Dr.\ Gavril Farkas\\\
Tag der Promotion: 11 June 2015  \\\
Erscheinungsjahr:  2015

\end{titlepage}
\chapter*{Summary}
In this thesis we study singular curves on K3 surfaces. Let $\mathcal{B}_g$ denote the stack of polarised K3 surfaces of genus $g$ and set $p(g,k)=k^2(g-1)+1$. There is a stack
$ \mathcal{T}^n_{g,k} \to \mathcal{B}_g$ with fibre over the polarised surface $(X,L)$ parametrising all unramified morphisms $f: C \to X$, birational onto their image, with $C$ an integral smooth curve of genus $ p(g,k)-n$ and $f_*C \sim kL$. In particular, by associating the singular curve $f(C)$ to a point $[f: C \to X]$ of $ \mathcal{T}^n_{g,k}$, one can think of $ \mathcal{T}^n_{g,k}$ as parametrising all singular curves on K3 surfaces such that the normalisation map is unramified (or equivalently such that the curve has ``immersed" singularities). 

The stack  $ \mathcal{T}^n_{g,k}$ comes with a natural moduli map $$\eta \;  : \;\mathcal{T}^n_{g,k} \to \mathcal{M}_{p(g,k)-n}$$ to the Deligne--Mumford stack of curves, defined by forgetting the map to the K3 surface, i.e.\ by sending $[f: C \to X]$ to $[C]$. It is natural to ask what one can say about the image of $\eta$. 

We first show that $\eta$ is generically finite (to its image) on at least one component of $\mathcal{T}^n_{g,k} $, in all but finitely many cases. We also consider related questions about the Brill--Noether theory of singular curves on K3 surfaces as well as the surjectivity of twisted Gaussian maps on normalisations of singular curves. Lastly, we apply the deformation theory of $\mathcal{T}^n_{g,k}$ to a seemingly unrelated problem, namely the Bloch--Beilinson conjectures on the Chow group of points of K3 surfaces with a symplectic involution. \\

The results of this thesis have appeared in \cite{huy-kem} and the preprint \cite{kemeny-singular}.
\newpage
\thispagestyle{plain}
\mbox{}

\tableofcontents
\clearpage
\thispagestyle{plain}
\par\vspace*{.35\textheight}{\centering \Large{Dedicated to my parents, Kathy and Gabor\par}}
\newpage
\thispagestyle{plain}
\mbox{}
\chapter{Introduction}
The systematic use of K3 surfaces has led to something of a revolution in the study of the geometry of the general curve. One prominent example of this is that the Brill--Noether theory of a general curve is the same as that of a general smooth curve lying on a K3 surface of Picard number one, \cite{lazarsfeld-bnp}. A second, equally important example, is that the syzygies of general projective curves are in many cases the same as the syzygies of certain hyperplane sections of K3 surfaces, \cite{voisin-even}, \cite{voisin-odd}, \cite{aprodu-farkas-green}, \cite{kemeny-farkas}. In other words, in order to understand some property of a general smooth curve, it can be instructive to first study the case where the curve lies on a K3 surface. Under this assumption the problem can become much simpler, often allowing one to reduce the original question to an exercise in lattice theory.

In recent years, there has been interest in the study of \emph{singular} curves on K3 surfaces. This dates back to physicists such as Yau, Zaslow, Witten and others who initiated the enumerative study of rational curves on K3 surfaces due to the role they play in high energy physics. A stunning early result was the proof of the Yau--Zaslow formula which counts rational curves in a linear equivalence class on a general K3 surface, \cite{bryan-leung}. A key technique in this study was the use of genus zero stable maps, as defined by Gromov and Kontsevich.

Singular curves of high genus on K3 surfaces also have interesting applications to the geometry of K3 surfaces. For example, in \cite{dedieu} a relation between the Severi variety of singular curves and self-rational maps of K3 surfaces was found by Dedieu, see also \cite{chen-self}. More recently, Ciliberto and Knutsen have used such curves in order to construct rational curves on hyperk\"{a}hler manifolds, \cite{ciliberto-knutsen-gonal}. 

These works use a Cartesian approach to studying nodal curves on surfaces; in other words, one considers the curves as defined by equations on the surface and allows these equations to deform in such a way that the singularities are preserved, see \cite{flam}. This approach works well for integral, nodal curves on K3 surfaces but has the draw-back that it is can be somewhat difficult to control for non-nodal curves, which turn up naturally as deformations of curves on special K3 surfaces. Indeed, the main methods at our disposal to directly construct singular curves on K3 surfaces are either via the Torelli theorem or by constructing curves on a Kummer surface. Such curves are often non-reduced or may have several components which intersect non-transversally. 

In order to make up for this, one commonly allows the K3 surfaces to degenerate to unions of rational scrolls, using powerful techniques from\cite{ciliberto-lopez-miranda} and \cite{chen-rational}.  Chains of rational curves on unions of rational scrolls are constructed with the desired properties and then deformed sideways to a nodal curve on a K3 surface. The configurations of rational curves which appear on the central fibre can be intimidating yet beautiful, with huge amounts of structure and symmetry. For an example of the kind of chain configurations which occur, we refer the reader to \cite[Pg.\ 14]{dedieu-ciliberto-pluri}, which illustrates a degenerate nodal curve on the union of four planes. 

One of the main goals of this thesis is to try and develop a different approach to studying some problems related to high genus singular curves on K3 surfaces. 
The realisation that such an approach should be possible was gained by a close reading of \cite{chen}. In this paper, Chen reproves the main result of \cite{chen-singularities} without degenerating to unions of rational surfaces (see also \cite{chen-rational}). One instead allows for non-reduced curves by explicitly performing semi-stable reductions. We express this idea via the general theory of stable maps. A typical example of the resulting degenerations can be seen in Figure 6, \cite[Pg.\ 92]{chen}.

We start by designing a general-purpose ``machine" for producing families of singular curves of arbitrary geometric genus on K3 surfaces. These families can be given many special properties, allowing us to use them to attack several interesting problems. Our machine has two parts. The first part is the Torelli theorem, which combined with results of Nikulin gives the following statement: let $\Lambda$ be any even lattice of signature $(1, \rho)$ for $\rho \leq 9$. Then there exists a $19-\rho$ dimensional algebraic family of K3 surfaces $X_t$ with $\text{Pic}(X_t) \simeq \Lambda$ for all $t$ (note that these surfaces have Picard rank $\rho+1$, in contrast to the usual convention). Now assume $\Lambda$ has a basis 
$$\{e_1, \ldots, e_{\rho+1} \} .$$ By taking linear combinations $\alpha_1 e_1 + \ldots+\alpha_{\rho+1} e_{\rho+1}$ for $\alpha_i \in \mathbb{Z}$, we may produce divisors on $X_t$ which, can be given all kinds of arithmetic properties by choosing $\Lambda$ wisely. 

Such divisors produced by Torelli will not, in the interesting cases, be irreducible. They are usually not reduced. In order to make use of them, we must perturb them slightly so that they become integral. This is done by the second part of our machine, which is the deformation theory of stable maps. We view the curve $\alpha_1 e_1 + \ldots+\alpha_{\rho+1} e_{\rho+1}$ as the image of a stable map $f: D \to X_t$, and then deform the map sideways to a map $f': D' \to Y$, where $Y$ is a different K3 surface. In many cases, we may set-up the lattice theory of $Y$ so that it forces the curve $f'_*(D')$ to become integral. For instance, this must happen if we deform in a primitive class to a K3 surface $Y$ of Picard rank one, and even in higher rank cases the integrality may still be achievable. These integral curves are deformed in such a way as to retain the chosen arithmetic properties of the central divisor $f_* D$. 

\section{Deformations of stable maps to K3 surfaces}
A polarised K3 surface of genus $g$ is a pair $(X,L)$ of a K3 surface $X$ over $\C$ and an ample line bundle $L$ on $X$ with $(L)^2=2g-2$. For $g$ at least three there is a Deligne--Mumford stack $\mathcal{B}_g$ parametrising all K3 surfaces. Let $D$ be a connected nodal curve and consider a morphism $f: D \to X$. We call
$f$ a \emph{stable map} if it has finite automorphism group.

This thesis starts with an exposition of the deformation theory of stable maps to (varying) polarised K3 surfaces. Much of this is by now rather standard, but our presentation differs from the usual one in at least two respects. Namely, we freely use two pieces of heavy machinery: the analytic deformation theory of morphisms as developed by Flenner, \cite{flenner-ueber}, and the theory of relative cycles as developed by Suslin--Voevodsky, \cite{suslin-relative} and Koll\'{a}r, \cite{kollar}.

Set $p(g,k)=k^2(g-1)+1$.  Following \cite{behrend-manin}, one may construct a Deligne--Mumford stack
$$ \mathcal{W}^n_{g,k} \to \mathcal{B}_g,$$
proper over $\mathcal{B}_g$, with fibre over $[(X,L)]$ parametrising all stable maps $f: D \to X$ with $f_*D \sim kL$ and such that
$D$ has arithmetic genus $p(g,k)-n$. 

The stack $\mathcal{W}^n_{g,k}$  comes equipped with a natural morphism of stacks
\begin{align*}
\eta \; : \; \mathcal{W}^n_{g,k} &\to \overline{\mathcal{M}}_{p(g,k)-n} \\
[f: D \to X] & \mapsto [\hat{D}]
\end{align*}
where $\overline{\mathcal{M}}_{p(g,k)-n}$ is the moduli space of stable curves, and the stable curve $[\hat{D}]$ is
obtained by contracting all unstable components of $D$.

The stack $\mathcal{W}^n_{g,k}$ may well have several irreducible components of different dimensions. All that we have in general is the following bound, see Theorem \ref{good-bound}:  for any irreducible component  $I$ of $ \mathcal{W}^n_{g,k}$, $$\dim I \geq 19+p(g,k)-n.$$ A few words on this bound, which is essentially due to Bryan--Leung, \cite{bryan-leung}, are in order. In order to obtain the estimate, transcendental analytic methods play a key role. Indeed one has to allow the K3 surface to deform to a non-projective complex surface and apply the deformation theory of analytic morphisms, \cite{flenner-ueber}. A different (and more powerful) approach to this bound which is appropriate for constructing virtual fundamental cycles can be given using Bloch's theory of the semi-regularity map,  see\cite{bloch-semi}, \cite{maulik-pand-nl}, \cite{kool-thomas}.

The situation above improves if we only consider morphisms which are \emph{unramified}. If $D$ is smooth and $f$ is unramified, then each component $I$ through $[f]$ has dimension \emph{exactly} $19+p(g,k)-n$. Furthermore $I$ dominates $\mathcal{B}_g$; thus the unramified morphism $f: D \to X$ can be deformed horizontally to the general $(X', L') \in \mathcal{B}_g$. An important observation is that this analysis can be generalised to cover many cases where $D$ is reducible, see Proposition \ref{ishi-little}. 

Let $ \mathcal{T}^n_{g,k}$ denote the open substack of $ \mathcal{W}^n_{g,k}$ parametrising unramified morphisms $f: D \to X$ with $D$ smooth which are birational onto their image. By \cite{chen-rational}, $ \mathcal{T}^n_{g,k}$ is nonempty. Thus $ \mathcal{T}^n_{g,k}$ is of pure dimension $19+p(g,k)-n$ and dominates $\mathcal{B}_g$. If $C \seq X$ is an integral nodal curve on a K3 surface of genus $g$ with $C \in |kL|$ and exactly $n$ nodes, then the normalisation of $C$ gives an unramified stable map $f: D \to X$. This gives a one-to-one correspondence between such nodal curves and the open substack $$ \mathcal{V}^n_{g,k} \seq  \mathcal{T}^n_{g,k}$$ parametrising stable maps $f: D \to X$ with $f(D)$ nodal. By a result of Dedieu--Sernesi using an argument of Arbarello--Cornalba and Harris, any component $I \seq \mathcal{T}^n_{g,k}$ for $g \geq 1$ containing a map $[f: D \to X] \in I$ with $D$ non-trigonal must meet $ \mathcal{V}^n_{g,k}$ in a dense open subset, \cite[Thm.\ 2.8]{ded-sern}, \cite[P.\ 107ff]{harris-morrison},  \cite{harrissev}. 

The following deep conjecture has been around for some years but currently seems out of reach or at least very difficult, see \cite[Conj.\ 1.2]{chen-rational}, \cite{dedieu}. See also \cite{cilided}, \cite{dedieu-ciliberto-pluri}, \cite{kemeny-thesis} for some work on special cases.
\begin{conjecture}
The moduli space $\mathcal{V}^n_{g,k}$ is irreducible.
\end{conjecture}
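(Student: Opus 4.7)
The plan is to combine the deformation theory of $\mathcal{T}^n_{g,k}$ summarised above with a degeneration-and-monodromy argument in the spirit of Harris' proof for $\mathbb{P}^2$, using the Torelli-plus-stable-maps machine as the source of controllable central fibres. By the Dedieu--Sernesi theorem cited just above, away from the trigonal locus any component $I \seq \mathcal{T}^n_{g,k}$ meets $\mathcal{V}^n_{g,k}$ in a dense open, so it suffices to prove that $\mathcal{T}^n_{g,k}$ is itself irreducible. Since every component is of pure dimension $19+p(g,k)-n$ and dominates $\mathcal{B}_g$, this in turn is equivalent to connectedness of the fibre of $\mathcal{T}^n_{g,k} \to \mathcal{B}_g$ over a very general polarised K3 surface $(X,L)$.

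The approach I would pursue is an induction on $n$, following Harris. For $n=0$ the claim is the irreducibility of $|kL|$ on the general $(X,L)$. For the inductive step, I would fix two would-be components $I_1, I_2$ and try to find a single reducible stable map $[f_0 \colon D_0 \to X_0]$ lying in the closures of both. Here $X_0$ would be constructed via the Torelli machine: one chooses a rank-$(\rho+1)$ lattice $\Lambda$ for which $kL$ is forced to decompose as a configuration of smooth rational components meeting in nodes, and realises the resulting divisor as the image of an unramified stable map via the semistable reduction of \cite{chen}. Smoothing any admissible subset of $n$ among the nodes of $D_0$ then produces a point of $\mathcal{V}^n_{g,k}$ lying on some component; the step would conclude if one could show, first, that both $I_1$ and $I_2$ degenerate to $[f_0]$, and second, that the monodromy of the family over a neighbourhood of $[X_0] \in \mathcal{B}_g$ acts transitively on such subsets.

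The main obstruction, which is precisely why the conjecture is open, is this monodromy statement. On $\mathbb{P}^2$ one uses the $\mathrm{PGL}_3$-action to permute nodes freely, but K3 surfaces carry no such symmetries, and monodromy must be extracted purely from the variation of K3 surfaces inside $\mathcal{B}_g$; this is delicate because the Picard lattice jumps precisely along the special loci one is trying to traverse. A secondary difficulty is to check that the chosen central fibre $(X_0, D_0)$ genuinely lies in the closure of \emph{every} component of $\mathcal{V}^n_{g,k}$, not merely one: the scroll-degeneration techniques of \cite{ciliberto-lopez-miranda}, \cite{chen-rational} were designed for exactly this problem but apply most cleanly to nodal configurations on unions of rational surfaces, while a K3-based central fibre produced by Torelli is typically non-reduced, forcing one to work throughout at the level of unramified stable maps rather than of embedded curves.
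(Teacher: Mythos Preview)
The statement you are attempting to prove is explicitly presented in the paper as an open \emph{conjecture}, not as a theorem; the paper does not offer a proof. Immediately after stating it the paper says that it ``has been around for some years but currently seems out of reach or at least very difficult,'' and only refers to partial results in the literature for special cases. So there is no ``paper's own proof'' to compare against.

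Your write-up is honest about this: you yourself identify the monodromy statement as ``precisely why the conjecture is open,'' and you flag a second difficulty (showing every component degenerates to your chosen central fibre) without resolving it. What you have written is therefore not a proof but a sketch of a strategy together with an explanation of where it breaks down. That is a reasonable thing to record, but it should be labelled as such rather than as a proof proposal. In particular, the inductive step as you describe it would require both the transitive-monodromy claim and the every-component-degenerates claim, and you have not supplied an argument for either; these are exactly the gaps that have prevented the conjecture from being settled.
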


Resticting $\eta$ to $ \mathcal{V}^n_{g,k}$ produces a morphism of stacks
$$ \eta \; : \;  \mathcal{V}^n_{g,k} \to  \mathcal{M}_{p(g,k)-n}$$
which was studied by Flamini, Knutsen, Pacienza, Sernesi in the case $k=1$, \cite{flam}. Specifically, they prove that $\eta$ is dominant
on each component of $\mathcal{V}^n_{g,1}$ in the range $2 \leq g \leq 11 $, $0 \leq n \leq g-2$  and $k=1$, and ask the following question (at least for $k=1$), :
\begin{question} \label{fkps-quest}
Is $\eta$ dominant on each component in the range $2 \leq p(g,k)-n \leq 11$? Is $\eta$ generically finite (to its image) for $p(g,k)-n \geq 12$? 
\end{question}

To ease the notation write $\mathcal{T}^n_{g}:=\mathcal{T}^n_{g,1} $ in the primitive case $k=1$.
The case $n=0$ of Question \ref{fkps-quest} is classical. Indeed, Mori--Mukai showed that if $n=0$, $k=1$ then the morphism 
$$\eta \; : \; \mathcal{T}^0_{g}  \to \mathcal{M}_{g}$$ is generically finite for $g \geq 13$ or $g=11$, but \emph{not} for $g=12$, \cite{mori-mukai}. In the non-primitive case $k \geq 2$, a very different approach using the deformation theory of cones shows that $\eta$ is generically finite for $g \geq 7$ and $n=0$, \cite{cili-classification}. Our first result is an extension of the results on generic finiteness to the singular case $n >0$. In the case $k=1$, we show:
\begin{thm} \label{main-prim-asd}
Assume $g \geq 11$, $n \geq 0$, and let $0 \leq r(g) \leq 5$ be the unique integer such that 
$$g-11 = \left \lfloor \frac{g-11}{6} \right \rfloor 6 +r(g).$$
Then there is a component $I \seq \mathcal{T}^n_{g}$ such that $${\eta}_{|_I}: I \to \mathcal{M}_{g-n}$$ is generically finite for
$g-n \geq 15$. Furthermore, if $r(g) \neq 5$, the lower bound can be improved to $g-n \geq 13$, and if $r(g)=0$ it can be improved to $g-n \geq 12$.
\end{thm}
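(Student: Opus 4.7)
The plan is to leverage Mori--Mukai's theorem in the smooth case as the base and to propagate generic finiteness along the chain of constructions coming from the ``machine'' described in the introduction. Recall Mori--Mukai: for $g_0 = 11$ or $g_0 \geq 13$ the moduli map $\mathcal{T}^0_{g_0} \to \mathcal{M}_{g_0}$ is generically finite. I would therefore aim, for each admissible $(g,n)$, to exhibit a component $I \subseteq \mathcal{T}^n_g$ whose general member $[f: D \to X]$ has $D$ of genus $g_0 := g-n$ deformation-equivalent to a Mukai-general smooth section of some K3 of genus $g_0$.

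First I would use Torelli plus Nikulin (as in the machine) to realise a K3 surface $X_0$ with $\mathrm{Pic}(X_0) \simeq \Lambda$, where $\Lambda$ is a rank-2 lattice of signature $(1,1)$ with basis $H_0, E$ satisfying $H_0^2 = 2g_0-2$, $E^2 = 0$, and $H_0 \cdot E = d$ for a suitably chosen integer $d$. Inside $X_0$ pick a Mukai-general $C_0 \in |H_0|$ of genus $g_0$ together with $m$ fibres $E_1, \ldots, E_m$ of the elliptic pencil $|E|$. The reducible curve $C_0 \cup E_1 \cup \cdots \cup E_m$ has class $L := H_0 + m E$ of arithmetic genus $g_0 + md = g$, while its partial normalisation $D_0$, separating each of the $md = n$ transverse intersections, has geometric genus $g_0$. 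After contracting unstable bridges this yields an unramified stable map $f_0 \colon D_0 \to X_0$ representing a point of the reducible-domain boundary of $\mathcal{W}^n_g$. Proposition \ref{ishi-little} then gives that the component $J$ of $\mathcal{W}^n_g$ containing $[f_0]$ has dimension exactly $19 + g - n$ and dominates $\mathcal{B}_g$. Deforming horizontally to a general $(X,L) \in \mathcal{B}_g$ with $\mathrm{Pic}(X) = \mathbb{Z}L$ forces $f_* D$ to become integral because $L$ is primitive, and standard openness arguments for unramifiedness and smoothness of $D$ produce a component $I \subseteq \mathcal{T}^n_g$ whose generic member is a smooth-domain deformation of $(D_0, f_0)$.

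For generic finiteness of $\eta|_I$, the key point is that the curve $D$ attached to a general $[f \colon D \to X] \in I$ specialises to $C_0$, hence is Mukai-general in $\mathcal{M}_{g_0}$. Given a general $[D] \in \eta(I)$, I would analyse the fibre $\eta|_I^{-1}([D])$ by reconstructing a member $[f: D \to Y]$ from its image curve $f(D) \in |L|$: each such datum determines an unordered $n$-tuple of pairs of points on $D$ to glue, together with the K3 surface $Y$ containing the resulting nodal curve. Smoothing the $n$ nodes (via standard deformations of the pair $(Y, f(D))$, possible because the stable map is unramified and the nodes are immersed singularities) produces a smooth curve $C_t$ of genus $g$ on a nearby K3 surface $Y_t$, while the normalisation $D$ is deformed trivially. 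A careful accounting then identifies $\eta|_I^{-1}([D])$ with a subset of the Mori--Mukai fibre over $[C_t] \in \mathcal{M}_g$ modulo the finite combinatorial data of how the $n$ nodes lie on $D$. Applying Mori--Mukai to $(Y_t, C_t)$ (valid because the domain curves in our family satisfy $g_0 \geq 11$, $g_0 \neq 12$) then gives generic finiteness of $\eta|_I$.

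The bound on $g - n$ in the statement reflects not the generic-finiteness step but the lattice step: one must realise $n$ as a product $md$ with $H_0 \cdot E = d$ small enough that the sought polarised K3 surface $(X_0, L)$ with $\mathrm{Pic}(X_0) = \Lambda$ actually exists (the $10 - \rho \geq 0$ constraint for Nikulin's theorem, together with positivity/effectivity of $L$). The main obstacle, which I expect to be the technical heart of the argument, is this lattice-theoretic bookkeeping: for each residue class $r(g) \in \{0,1,\ldots,5\}$ one must produce a decomposition $n = md$ with $d \in \{1,2,\ldots,6\}$ (or a small generalisation using several elliptic or $(-2)$-classes in a rank $\geq 3$ lattice) for which Nikulin's existence criterion is satisfied and the class $L_0 + mE$ remains primitive and big. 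The improvement for $r(g) = 0$, and the intermediate improvement for $r(g) \neq 5$, then correspond precisely to when the smallest admissible $d$ can be reduced, allowing a smaller base genus $g_0$. The secondary subtlety is ensuring $C_0$ can be chosen non-trigonal, so that the Dedieu--Sernesi argument guarantees $I$ meets the locus $\mathcal{V}^n_g$ densely and the constructed component is genuine.
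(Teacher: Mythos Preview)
Your proposal has two genuine gaps that would make the argument fail as written.

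\textbf{The construction disconnects the domain.} You take $C_0 \cup E_1 \cup \cdots \cup E_m$ with $md$ transverse intersections and then ``separate each of the $md = n$ intersections''. But normalising at \emph{all} intersection points produces the disjoint union $C_0 \sqcup E_1 \sqcup \cdots \sqcup E_m$, which is not connected and hence not the domain of a stable map. There are no ``unstable bridges'' to contract: the elliptic components are simply detached. If instead you keep at least one node per $E_i$ to preserve connectivity, the arithmetic genus of the domain becomes at least $g_0 + m$ (each elliptic tail contributes $1$), not $g_0$, so you are no longer in $\mathcal{W}^n_g$. The paper avoids this by attaching \emph{rational} curves rather than elliptic ones: a rational tail meeting the rest in one point contributes $0$ to the arithmetic genus, so one can normalise all but a few nodes and still land in the correct $\mathcal{W}^n_g$.

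\textbf{The finiteness argument does not reduce to Mori--Mukai.} Your plan is to smooth the $n$ nodes of $f(D)$ to a smooth curve $C_t$ of genus $g$ on a nearby K3 and then invoke Mori--Mukai. But $C_t$ has genus $g$, not $g_0 = g-n$, and its normalisation is $C_t$ itself; the original $D$ does not survive as the normalisation of $C_t$. So there is no identification between $\eta|_I^{-1}([D])$ and any Mori--Mukai fibre. More structurally, even if you had a correct reducible model, elliptic components move in a pencil on a K3, so they are not rigid; a one-parameter family of stable maps with fixed stabilisation could slide the $E_i$ along $|E|$, giving positive-dimensional fibres. The paper's mechanism (Proposition~\ref{finiteness-criterion}) works precisely because the extra components are rational and hence rigid: once the core component $C$ is pinned down, rigidity of rational curves on a K3 freezes the whole map.

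For comparison, the paper does not cite Mori--Mukai as a black box at all. It proves directly (Corollary~\ref{bij-cor}) that for a smooth hyperplane section $C$ of a carefully chosen rank-$10$ elliptic K3 $Y_{\Omega_{11}}$, the fibre of $\eta$ over $[C]$ is zero-dimensional, using Mukai's reconstruction of the K3 as the quadric hull of a specific adjoint embedding of $C$. It then glues on integral $(-2)$-curves $R_1, R_2$ on a rank-$3$ degeneration $S_{g,11}$, partially normalises, and applies the rigidity criterion above. The bounds $l_g \in \{12,13,15\}$ come from the intersection numbers $(M \cdot R_i)$ available in that lattice, not from divisibility constraints on $n = md$.
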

In the case $k \geq 2$ we show:
\begin{thm} \label{main-nonprim-asd}
Assume $k \geq 2$, $g \geq 8$. 
Then there is a component $I \seq \mathcal{T}^n_{g,k}$ such that $${\eta}_{|_I}: I \to \mathcal{M}_{p(g,k)-n}$$ is generically finite for
$p(g,k)-n \geq 18$. Furthermore, in most cases the lower bound can be improved, see Chapter \ref{finny}.
\end{thm}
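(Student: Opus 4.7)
The plan is to produce, for each $g \geq 8$ and $k \geq 2$, an explicit irreducible component $I \seq \mathcal{T}^n_{g,k}$ and to verify that the differential $d\eta$ has zero-dimensional kernel at a general point of $I$.

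\textbf{Step 1 (Construction of a reducible stable map via lattice theory).} First, I would apply the ``machine" sketched in the introduction: the Torelli theorem together with Nikulin's realisation result for even lattices of signature $(1,\rho)$ produces a K3 surface $X_0$ whose Picard lattice contains an ample class $L_0$ with $L_0^2 = 2g-2$ together with auxiliary $(-2)$-classes $E_1,\ldots,E_s$ whose general representatives are smooth rational curves meeting a residual smooth integral curve $C_0 \in |kL_0 - \sum E_i|$ transversally in exactly $n$ points. The normalisation of the reducible divisor $C_0 \cup \bigcup E_i$ with appropriate gluing yields an unramified stable map $f_0: D_0 \to X_0$ with $D_0$ nodal of arithmetic genus $p(g,k)-n$ and $f_{0\,*}D_0 \sim kL_0$. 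The lattice $\Lambda$ has to be chosen so that such a smooth $C_0$ exists and so that $\sum E_i \cdot C_0 = n$; this is the step where the hypothesis $k \geq 2$ gives enough room for manoeuvre.

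\textbf{Step 2 (Horizontal deformation to Picard rank one).} Invoking Proposition \ref{ishi-little}, the unramifiedness of $f_0$ ensures that the component of $\mathcal{W}^n_{g,k}$ through $[f_0]$ has dimension exactly $19+p(g,k)-n$ and dominates $\mathcal{B}_g$. Specialising to a very general $(X,L) \in \mathcal{B}_g$ with $\text{Pic}(X) \ISOM \mathbb{Z}\langle L\rangle$, the class $kL$ becomes indecomposable in $\text{Pic}(X)$, so the deformed stable map $f: D \to X$ must have integral image and smooth irreducible source. Hence $[f] \in \mathcal{T}^n_{g,k}$ and we obtain the desired component $I$.

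\textbf{Step 3 (Generic finiteness via Gaussian maps).} At a general $[f: D \to X] \in I$ with $X$ of Picard rank one, I would analyse
$$d\eta_{[f]}: T_{[f]}\mathcal{T}^n_{g,k} \to T_{[D]}\mathcal{M}_{p(g,k)-n}.$$
Its kernel classifies first-order deformations of the pair $(X,f)$ keeping $D$ constant, and the standard exact sequence for the deformation space of $f$ identifies this kernel with the cokernel of $H^0(D,N_f) \to H^1(D, f^*T_X)$, where $N_f$ is locally free by unramifiedness. Serre-dualising and using the isomorphism $N_f \ISOM K_D \otimes (f^*L)^{-1}$ modulo a correction supported on the preimage of the singular locus, the vanishing of this cokernel reduces to surjectivity of a twisted Gaussian map of the form
$$\Phi_{K_D,f^*L}: H^0(D,K_D) \otimes H^0(D,f^*L) \to H^0(D, K_D \otimes f^*L),$$
in the spirit of Wahl, Beauville--M\'erindol and Ciliberto--Lopez--Miranda. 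The bound $p(g,k)-n \geq 18$ is chosen so that known surjectivity criteria apply to the general $D$ in $I$, with sharper criteria in restricted genus ranges giving the improvements deferred to Chapter \ref{finny}.

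\textbf{Main obstacle.} The hardest part is Step 3: one must transfer Gaussian surjectivity from an abstract general curve of genus $p(g,k)-n$ (where it is known) to the specific curves $D$ parametrised by $I$, which arise as normalisations of singular members of $|kL|$ on K3 surfaces. This transfer relies crucially on Step 2, and on choosing the lattice $\Lambda$ in Step 1 with enough flexibility so that the component $I$ dominates a moduli-generic locus in $\mathcal{M}_{p(g,k)-n}$, not just a Noether--Lefschetz sublocus where Gaussian surjectivity could fail.
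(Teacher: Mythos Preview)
Your proposal diverges from the paper's proof in both the finiteness mechanism and the integrality argument, and each divergence contains a real gap.

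\textbf{Step 2 fails for $k\geq 2$.} You claim that on a K3 surface of Picard rank one the class $kL$ becomes indecomposable, forcing the deformed stable map to have irreducible source. But for $k\geq 2$ one always has the effective decomposition $kL = L + \cdots + L$, so a limit of your reducible map could very well split as a union of $k$ curves each in $|L|$. The paper handles this carefully in Proposition~\ref{prim-cor} and in the proof of Theorem~\ref{finiteness-nonprim}: the reducible stable map $f:B\to X$ is built so that \emph{no} connected subcurve $B_0\subsetneq B$ has $f_*B_0\in|mL|$ for any $m$, and this is verified combinatorially (``if $B_0\seq B$ is a connected union of components \ldots\ then $n=k$ and $B_0=B$''). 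Your Step~2 as written does not establish this and therefore does not produce a point of $\mathcal{T}^n_{g,k}$.

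\textbf{Step 3 is the wrong mechanism.} The paper does not prove generic finiteness of $\eta$ via Gaussian surjectivity at all. Its argument (Proposition~\ref{finiteness-criterion}) is a rigidity argument: one builds $f:B\to X$ with a distinguished component $C\seq B$ for which the fibre of $\eta$ over $[C]$ is already known to be zero-dimensional (the base case being smooth genus~$11$ curves on $Y_{\Omega_{11}}$, where Mukai's quadric-hull reconstruction, Lemma~\ref{muk-lem} and Corollary~\ref{bij-cor}, pins down the K3 surface from the curve), and all other components of $B$ are rational. Since rational curves on a K3 are rigid, any one-parameter family in $\eta^{-1}([\hat B])$ must be constant. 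Your Gaussian approach faces the opposite obstruction: curves on K3 surfaces have \emph{non}-surjective Wahl maps (this is precisely what the paper exploits in Chapter~\ref{sec:markedwahl}), so a surjectivity criterion for the relevant Gaussian cannot be expected to hold on your component $I$. Moreover the map you display, $H^0(K_D)\otimes H^0(f^*L)\to H^0(K_D\otimes f^*L)$, is a multiplication map, not a Gaussian; the genuine Gaussian lands in $H^0(\Omega_D\otimes K_D\otimes f^*L)$.
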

Setting $n=0$, we recover the (optimal) statement in the smooth, primitive, case and all cases other than $g=7$ in the nonprimitive case. In particular, this gives a new proof of the generic finiteness theorem for $n=0$, $k \geq 2$, $g \geq 8$ which more resembles the original approach of \cite{mori-mukai}. By construction the component $I$ contains maps $[f: D \to X]$ with $D$ non-trigonal. Thus we have:
\begin{cor} \label{univ-sev-corollary-sdfw}
The restriction $${\eta}_{|_{\mathcal{V}^{n}_{g,k}}}: \mathcal{V}^{n}_{g,k} \to \mathcal{M}_{p(g,k)-n}$$ is generically finite on one component, for the same bounds on $p(g,k)-n$ as in Theorem \ref{main-prim-asd} and \ref{main-nonprim-asd}.
\end{cor}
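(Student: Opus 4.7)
The plan is to deduce the corollary immediately from the two main theorems together with the Dedieu--Sernesi density result cited just before Question \ref{fkps-quest}. Concretely, fix bounds on $g,n,k$ as in Theorem \ref{main-prim-asd} or Theorem \ref{main-nonprim-asd}, and let $I \seq \mathcal{T}^{n}_{g,k}$ be the irreducible component produced by the relevant theorem, on which $\eta_{|_I}$ is generically finite. The theorem construction explicitly guarantees the existence of some $[f: D \to X] \in I$ with $D$ non-trigonal, which is the only hypothesis needed to trigger the Arbarello--Cornalba--Harris / Dedieu--Sernesi density statement recalled in the excerpt.

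Next I would invoke \cite[Thm.\ 2.8]{ded-sern} to conclude that $I \cap \mathcal{V}^{n}_{g,k}$ is a dense open subset of $I$; in particular it is nonempty. Since $\mathcal{V}^{n}_{g,k}$ is an open substack of $\mathcal{T}^{n}_{g,k}$, any irreducible component of $\mathcal{V}^{n}_{g,k}$ is of the form $I' \cap \mathcal{V}^{n}_{g,k}$ for some component $I'$ of $\mathcal{T}^{n}_{g,k}$ with nonempty intersection. Thus $J := I \cap \mathcal{V}^{n}_{g,k}$ is itself an irreducible component of $\mathcal{V}^{n}_{g,k}$, of the same dimension $19+p(g,k)-n$ as $I$.

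Finally, since $\eta_{|_I}: I \to \mathcal{M}_{p(g,k)-n}$ is generically finite and $J$ is a dense open subset of $I$, the further restriction
\[
\eta_{|_J} \; : \; J \to \mathcal{M}_{p(g,k)-n}
\]
is automatically generically finite as well. This gives the desired component of $\mathcal{V}^{n}_{g,k}$ on which $\eta$ is generically finite to its image, under the same numerical bounds as in Theorems \ref{main-prim-asd} and \ref{main-nonprim-asd}.

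There is essentially no obstacle here: the substantive work has already been done in proving the main theorems (producing a component $I$ with a non-trigonal member and with $\eta_{|_I}$ generically finite). The corollary is a formal consequence, and the only point requiring care is verifying that restricting to the open substack $\mathcal{V}^{n}_{g,k}$ really does cut out a full-dimensional component, which is immediate from the openness of $\mathcal{V}^{n}_{g,k} \seq \mathcal{T}^{n}_{g,k}$ together with the density statement.
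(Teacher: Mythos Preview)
Your proposal is correct and follows essentially the same approach as the paper: the paper's justification is simply the sentence ``By construction the component $I$ contains maps $[f: D \to X]$ with $D$ non-trigonal,'' after which the Dedieu--Sernesi density result already recalled in the introduction yields that $I \cap \mathcal{V}^n_{g,k}$ is a dense open subset of $I$, and generic finiteness passes to this open. You have spelled out exactly this argument, including the minor point that the intersection is a genuine irreducible component of $\mathcal{V}^n_{g,k}$.
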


\section{Chow groups and Nikulin involutions}
In Chapter \ref{nik-inv}, we apply the stable map machinery to a conjecture on the
action of Nikulin involutions on the Chow group of points on a K3 surface. Let $X$ be a complex projective K3 surface with a symplectic involution $i :X\simeq X$, and let $CH^2(X)$ denote the Chow group of points.  The Bloch--Beilinson conjectures predict the following conjecture, see \cite{huybrechts-chow}.
\begin{conjecture}\label{conj:BB}
 The involution $i^*$ acts as the identity on $CH^2(X)$.
\end{conjecture}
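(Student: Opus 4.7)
The plan is to prove the stronger geometric statement that $[p] = [i(p)]$ in $CH^2(X)$ for every $p \in X$; since the conjecture is equivalent to the triviality of the correspondence $[\Gamma_i] - [\Delta_X]$ on zero-cycles, this suffices. The strategy is to exhibit, through a dense subset of $X$, an $i$-invariant integral singular curve $C \seq X$ whose normalisation $\tilde{C}$ is hyperelliptic with the lifted involution $\tilde{i}$ playing the role of the hyperelliptic involution. On such a $C$, every pair $\{q, i(q)\}$ lies in a fibre of the $g^1_2$ on $\tilde{C}$, so $[q] + [i(q)]$ is constant in $CH^2(\tilde{C})$ and hence in $CH^2(X)$. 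Specialising $q$ to one of the eight fixed points $q_0$ of $i$ gives $[p] + [i(p)] = 2[q_0]$, and repeating the argument with $p$ replaced by $q_0$ on a second such curve then forces $[p] = [i(p)]$.

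To construct these curves I would invoke the two-part machine sketched in the introduction, adapted to the moduli of K3 surfaces with a symplectic involution. First, applying Torelli and Nikulin to a suitable even lattice $\Lambda$ of signature $(1,\rho)$ containing the Nikulin lattice $E_8(-2)$ together with auxiliary classes $e_i$, one produces a special K3 surface $X_0$ with symplectic involution $i_0$, carrying an $i_0$-invariant but reducible or non-reduced divisor $D_0 \in |kL|$; the components of $D_0$ are to be engineered as pull-backs via $X_0 \to Y_0$ of a rational curve $B_0$ on the Nikulin quotient $Y_0$, together with appropriate exceptional contributions near the eight nodes of $Y_0$. Second, one views $D_0$ as the image of a stable map $f_0 : D_0' \to X_0$ and uses the deformation theory of $\mathcal{W}^n_{g,k}$, restricted to the equisymplectic locus in $\mathcal{B}_g$, to deform $f_0$ to a stable map $f': D' \to X'$ where $X'$ is a generic K3 with symplectic involution $i'$ and Picard lattice so constrained that $f'_*(D')$ is integral and nodal. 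Because $f'_*(D')$ still descends to a curve on $Y'$, its normalisation is forced to be a double cover of a smooth rational curve, which supplies the required hyperelliptic structure with $\tilde{i}'$ as the hyperelliptic involution.

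The hard part will be the simultaneous control of several competing requirements: the deformation of $f_0$ must remain $i$-equivariant while the ambient K3 loses enough of its special Picard classes to force the image to become integral; the resulting one-parameter family of $i$-invariant curves must sweep out a dense open subset of $X'$ rather than cluster on the fixed locus or the ramification divisor; and the involution induced on the normalisation must coincide with, rather than merely commute with, the hyperelliptic involution. The first and third points reduce to transversality and flatness statements on $\mathcal{W}^n_{g,k}$ restricted to the equisymplectic sublocus of $\mathcal{B}_g$, where I would combine the dimension bound $\dim I \geq 19 + p(g,k) - n$ of Theorem \ref{good-bound} with the unramified smoothness criterion of Proposition \ref{ishi-little}; the covering property is a straightforward dimension count once the lattice $\Lambda$ has been chosen so that the universal curve over this component dominates the universal K3.
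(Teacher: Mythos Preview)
First, a point of context: the statement you are attempting is a \emph{conjecture}, and the paper does not prove it in full. The paper establishes it only for $\Upsilon_{2e+1}$-polarised K3 surfaces (Theorem~\ref{chow-thm} and Corollary~\ref{bloch-thm-onethird}), via Proposition~\ref{chow-starting-pt}. So your proposal, if it worked, would be substantially stronger than what the paper achieves.

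However, there is a genuine gap in the hyperelliptic mechanism at the heart of your plan. Suppose $C \subseteq X$ is $i$-invariant and its normalisation $\tilde{C}$ is hyperelliptic with $\tilde{i}$ equal to the hyperelliptic involution. Then for $q \in \tilde{C}$ the divisor $q + \tilde{i}(q)$ lies in the fixed $g^1_2$, so pushing forward to $X$ gives that $[q] + [i(q)]$ is a \emph{constant} class $\alpha \in CH^2(X)$. If $C$ passes through a fixed point $q_0$ of $i$, this yields $[p] + [i(p)] = 2[q_0]$. But from this relation alone one cannot deduce $[p] = [i(p)]$: setting $\gamma = [p] - [q_0]$ one has $i^*\gamma = -\gamma$ and $[p] - [i(p)] = 2\gamma$, and nothing forces $\gamma$ to be torsion. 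Your proposed ``repeat with $q_0$'' step is vacuous since $q_0$ is already fixed by $i$. The paper's approach avoids this by using curves of geometric genus \emph{one}: a fixed-point-free involution on an elliptic curve $E$ has quotient of genus one by Hurwitz, so $E \to E/\langle \tilde{i}\rangle$ is an isogeny and $x - \tilde{i}(x)$ lies in its finite kernel, i.e.\ is \emph{torsion} in $\mathrm{Pic}^0(E)$; Roitman's theorem then kills it in $CH^2(X)$.

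There is a second, related problem. For the hyperelliptic involution on a curve of genus $g \geq 1$ to equal $\tilde{i}$, the map $\tilde{i}$ must have $2g+2$ fixed points on $\tilde{C}$. These map to fixed points of $i$ on $X$, but $i$ has only eight isolated fixed points. So either your invariant curves pass through many fixed points with high multiplicity (which conflicts with covering a dense open set and with your stated requirement that they ``avoid the fixed locus''), or $\tilde{i}$ is fixed-point-free and hence cannot be the hyperelliptic involution at all. The paper's genus-one strategy is designed precisely so that the curves can avoid the fixed points while the lifted involution is fixed-point-free.
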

Let $f: E \to X$ be a morphism from an smooth, proper curve of genus one to $X$, with one-dimensional image. The pushforward of $f$ induces a group homomorphism
$$ CH^1(E) \simeq \text{Pic}(E) \to CH^2(X).$$
This allows one to relate Conjecture \ref{conj:BB} to the study of singular elliptic curves on $X$ which are invariant under $i$. In particular, by using that $CH^2(X)$ is torsion-free due to a theorem of Roitman, one can show the following, see Section \ref{BloBei}.
\begin{prop}
Let $i: X \to X$ be a symplectic automorphism of finite order on a projective K3 surface over $\C$. Assume there exists a dominating family of integral
curves $C_t \seq X$ of geometric genus one, such that for generic $t$ the following two conditions are satisfied:
\begin{enumerate}
\item $C_t$ avoids the fixed points of $f$.
\item $C_t$ is invariant under $f$.
\end{enumerate}
Then $i^*=\text{id}$ on $CH^2(X)$.
\end{prop}
This reduces the problem to the question of constructing dominating families of singular elliptic curves which are kept invariant under the involution.

The moduli space of K3 surfaces together with a Nikulin involution has been constructed by van Geemen and Sarti, \cite{sar-gee}, building upon the foundational work of Nikulin,  \cite{nikulin-finite}. The moduli spaces are constructed as spaces of $\Lambda$-polarised K3 surfaces, as in \cite{dolgachev}. To be more precise, consider the lattice $$\Phi_g := \mathbb{Z}L \oplus E_8(-2),$$
where $L^2=2g-2$, $g \geq 3$. If $g$ is odd, then there is a unique lattice $ \Upsilon_g$ which is an over lattice of $\Phi_g$ with $ \Upsilon_g / \Phi_g \simeq \mathbb{Z} / 2\mathbb{Z}$ and such that $E_8(-2)$ is a primitive sublattice of $\Upsilon_g$. 
There is an involution $j : \Phi_g \to \Phi_g$ which acts at $1$ on $\mathbb{Z}L$ and $-1$ on $E_8(-2)$. This extends to an involution $j: \Upsilon_g \to \Upsilon_g$ for odd $g$. For the proof of the following, see Thm.\ \ref{sarti-geemans-thm}
\begin{thm} [Sarti--Geemen]
Let $\Lambda$ be either of the lattices $\Phi_g$ or $\Upsilon_g$. Let $X$ be a K3 surface with a primitive embedding $\Lambda \hookrightarrow \text{Pic}(X)$ such that $L$ is big and nef. Then $X$ admits a Nikulin involution $f: X \to X$ such that $f^*_{|_\Lambda}=j$ and $f^*_{|_{\Lambda^{\perp}}}=\text{id}$, for $\Lambda^{\perp} \seq H^2(X,\mathbb{Z})$. Conversely, if $X$ admits a Nikulin involution $f$ then there is a primitive embedding $\Lambda \hookrightarrow \text{Pic}(X)$ such that $L$ is big and nef, where $\Lambda$ is either $\Phi_g$ or $\Upsilon_g$, for some $g$. Further, $f^*_{|_\Lambda}=j$ and $f^*_{|_{\Lambda^{\perp}}}=\text{id}$.
\end{thm}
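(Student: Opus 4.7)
The plan is to establish both implications via the strong Torelli theorem for K3 surfaces together with Nikulin's analysis of the invariant/coinvariant decomposition of $H^2$ under a symplectic involution. The forward direction builds an abstract lattice involution and promotes it to a geometric one; the converse dismantles the action of a given Nikulin involution on $H^2(X,\mathbb{Z})$ and reassembles the factors to recognise $\Phi_g$ or $\Upsilon_g$ inside $\text{Pic}(X)$.

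\emph{Forward direction.} Given a primitive embedding $\Lambda \hookrightarrow \text{Pic}(X)$, extend $j : \Lambda \to \Lambda$ by the identity on $\Lambda^{\perp} \seq H^2(X,\mathbb{Z})$ to obtain an involution $\varphi$ of $H^2(X,\mathbb{Z})$; in the $\Upsilon_g$ case one must first verify that $j$ extends across the index-two gluing, which comes down to the observation that $j$ sends $\tfrac{1}{2}(L+e)$ to $\tfrac{1}{2}(L+e) - e$, still inside $\Upsilon_g$. Since $\Lambda \seq \text{Pic}(X)$, the transcendental lattice $T(X)$ is contained in $\Lambda^{\perp}$, so $\varphi$ acts trivially on $T(X)$ and hence on $H^{2,0}(X) \seq T(X) \otimes \C$; this makes $\varphi$ a Hodge isometry. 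Because $\varphi(L)=L$ and $L$ is big and nef, $\varphi$ preserves the positive cone. After possibly composing with a product of reflections in $(-2)$-classes of $\Lambda^{\perp}$ fixed by $\varphi$, one lands in the Kähler chamber, and strong Torelli then produces an automorphism $f : X \to X$ with $f^* = \varphi$. Since $\varphi$ is an involution acting trivially on $H^{2,0}$, $f$ is a Nikulin involution with the desired $\pm$-decomposition.

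\emph{Converse direction.} Suppose $f: X \to X$ is a Nikulin involution. Nikulin's classification of finite-order symplectic actions gives that the coinvariant sublattice $S_f := (H^2(X,\mathbb{Z})^{f^*})^{\perp}$ is isometric to $E_8(-2)$ and is primitive in $H^2(X,\mathbb{Z})$. Since $X$ is projective, averaging any ample class yields an invariant big and nef class, and one may take $L \in \text{Pic}(X)^{f^*}$ primitive with $L^2 = 2g-2$; this gives an orthogonal embedding $\Phi_g = \mathbb{Z}L \oplus E_8(-2) \hookrightarrow \text{Pic}(X)$ with the prescribed action of $f^*$. If this embedding is saturated in $\text{Pic}(X)$ we are done; if not, the saturation is an index-two overlattice in which $E_8(-2)$ must still sit primitively (else $E_8(-2)$ would not be primitive in $H^2$, contradicting Nikulin). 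A short arithmetic check shows that the only way to enlarge $\Phi_g$ while keeping $E_8(-2)$ primitive is via a glue vector $\tfrac{1}{2}(L+e)$, which requires $L^2 + e^2 \equiv 0 \pmod 4$; since $E_8(-2)$ is $4$-divisible, this forces $g$ odd, and produces exactly $\Upsilon_g$.

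\emph{Main obstacle.} The chief technical point is the Kähler-cone preservation in the forward direction: the naive extension $\varphi$ preserves the positive cone but may shuffle Kähler chambers, and one must argue that the correcting element of the $(-2)$-Weyl group can be chosen to act trivially on $\Lambda$, so that the resulting $f^*$ still restricts to $j$ on $\Lambda$ (rather than to $j$ twisted by an unwanted reflection). The key input here is that $E_8(-2)$ contains no $(-2)$-classes (all its self-intersections are divisible by $4$), so none of the required reflections can originate from $\Lambda$ itself, and one can arrange for them to come from $\Lambda^{\perp}$ where $\varphi$ already acts as the identity. Secondarily, in the $\Upsilon_g$ case one must keep careful track of divisibility of $L$ in $\text{Pic}(X)$ versus in $\Lambda$ to ensure the primitivity hypothesis is consistent with the existence of the gluing class.
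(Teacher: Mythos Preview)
The paper does not prove this theorem; it is quoted from Nikulin and van Geemen--Sarti, so there is no in-paper argument to compare against. Your outline follows the standard approach of those references, and the overall architecture (build a Hodge isometry from $(j,\text{id})$, invoke strong Torelli; conversely, use Nikulin's identification $S_f\simeq E_8(-2)$ and analyse the saturation of $\mathbb{Z}L\oplus E_8(-2)$) is correct. Two points deserve tightening.

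\textbf{The K\"ahler-cone step is misargued.} You propose to correct $\varphi$ by reflections in $(-2)$-classes lying in $\Lambda^\perp$. But any nontrivial such reflection would alter $\varphi|_{\Lambda^\perp}$, and the theorem requires $f^*|_{\Lambda^\perp}=\text{id}$; so your fix would destroy exactly the conclusion you want. The correct use of the fact that $E_8(-2)$ contains no $(-2)$-vectors is that \emph{no} correction is needed: it forces $\varphi$ to preserve the set of effective $(-2)$-classes and hence the ample cone directly. Concretely, if $\delta$ is an effective $(-2)$-class with $\varphi(\delta)$ anti-effective, then $\delta-\varphi(\delta)$ is a nonzero effective class in the coinvariant lattice $E_8(-2)$; a short argument with irreducible components then produces a $(-2)$-vector in $E_8(-2)$, a contradiction. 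This is Nikulin's lemma (see e.g.\ Huybrechts, \emph{Lectures on K3 surfaces}, Ch.~15), and it is what you should invoke rather than a Weyl-group adjustment.

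\textbf{The extension to $H^2(X,\mathbb{Z})$ needs a word.} You verify that $j$ extends from $\Phi_g$ to $\Upsilon_g$, but the more substantive point is that $(j,\text{id})$ on $\Lambda\oplus\Lambda^\perp$ extends across the finite-index inclusion $\Lambda\oplus\Lambda^\perp\subset H^2(X,\mathbb{Z})$. This holds because $j$ acts trivially on the discriminant group $A_\Lambda$: the $E_8(-2)$-part of $A_\Lambda$ is $2$-torsion, on which $-1=+1$. That is the step that makes $\varphi$ a well-defined isometry of the full lattice. The converse direction is essentially fine; just note that averaging an ample class gives an \emph{ample} (not merely big and nef) invariant class, and take $L$ to be the primitive generator of that ray.
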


Using this, we are able to prove Conjecture \ref{conj:BB} in one-third of all cases.
\begin{cor}
Conjecture  \ref{conj:BB} holds for an arbitrary $\Upsilon_{2e+1}$-polarised K3 surface $(X,f)$.
\end{cor}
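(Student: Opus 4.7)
The plan is to verify the hypotheses of the Proposition. Specifically, I must construct on an arbitrary $\Upsilon_{2e+1}$-polarised K3 surface $(X, L, i)$ a dominating family of integral geometric-genus-one curves $C_t \subset X$ invariant under the Nikulin involution $i$ and disjoint from its eight fixed points. My approach is to produce such curves as pullbacks from the Nikulin quotient.

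Blow up the eight fixed points of $i$ to obtain $\tilde{X} \to X$ with exceptional $(-1)$-curves $F_1, \ldots, F_8$, so that the lifted involution $\tilde{i}$ fixes these curves pointwise and the quotient $\tilde{Y} := \tilde{X}/\tilde{i}$ is a K3 surface. The induced double cover $\pi : \tilde{X} \to \tilde{Y}$ is branched along eight disjoint $(-2)$-curves $E_1, \ldots, E_8 \subset \tilde{Y}$, with covering class $L_\pi \in \text{Pic}(\tilde{Y})$ satisfying $2 L_\pi \sim \sum E_i$. Next, apply the stable-map machinery of the thesis to $\tilde{Y}$: choose a class $[M] \in \text{Pic}(\tilde{Y})$ orthogonal to each $E_i$ such that some $|kM|$ carries a one-parameter family of integral geometric-genus-one curves whose universal family dominates $\tilde{Y}$. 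The existence of such a family follows from the dimension bound of Theorem \ref{good-bound} and the smoothness/unramifiedness criterion of Proposition \ref{ishi-little}, both applied on $\tilde{Y}$.

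Pull back the family via $\pi$. For a generic member $C' \subset \tilde{Y}$, disjointness from the $E_i$'s makes $\pi^{-1}(C') \to C'$ an étale double cover, which is connected precisely when the $2$-torsion line bundle $L_\pi|_{C'} \in \text{Pic}^0(C')[2]$ is non-zero. Granted this, Riemann--Hurwitz gives $g(\pi^{-1}(C')) = 2 g(C') - 1 = 1$, and since $\pi^{-1}(C')$ avoids the $F_i$'s, its image $C \subset X$ under the contraction $\tilde{X} \to X$ is an integral $i$-invariant elliptic curve disjoint from the fixed points. Letting $C'$ vary over the one-parameter family on $\tilde{Y}$ produces the dominating family on $X$ required by the Proposition, which then yields $i^* = \text{id}$ on $CH^2(X)$.

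The main obstacle is the choice of class $[M]$ on $\tilde{Y}$: one needs simultaneously (i) a positive-dimensional family of integral geometric-genus-one curves in some $|kM|$; (ii) generic members disjoint from $\sum E_i$; and (iii) $L_\pi$ restricting non-trivially to a generic such curve. Conditions (i) and (ii) can be arranged by choosing $[M]$ in the orthogonal complement of $\{E_1, \ldots, E_8\}$ inside $\text{Pic}(\tilde{Y})$, but (iii) is a genuine lattice-theoretic constraint specific to the Nikulin lattice structure on $\tilde{Y}$ inherited from the $\Upsilon_{2e+1}$-polarisation of $X$. Verifying this constraint for arbitrary (not merely generic) $\Upsilon_{2e+1}$-polarised K3 surfaces, and extending the construction uniformly over the moduli space, is where the real work lies.
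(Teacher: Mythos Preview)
Your approach diverges from the paper's and contains a genuine gap that you yourself identify. The paper does \emph{not} attempt to construct the dominating family of invariant elliptic curves on an \emph{arbitrary} $\Upsilon_{2e+1}$-polarised K3 surface. Instead, the hard geometric work (the pullback-from-the-quotient construction you sketch, done via explicit deformation from the special surface $X_{(2),8A_1}$) is carried out only for the \emph{generic} $\Upsilon_{2e+1}$-polarised K3 surface; this is Theorem~\ref{chow-thm}. The passage from generic to arbitrary is then a one-line specialisation argument: any point $x$ on an arbitrary $(X,f)$ is a specialisation of points $x_t$ on generic $(\mathcal{X}_t,f_t)$, and the relation $[x_t]=[f_t(x_t)]$ in $CH^2(\mathcal{X}_t)$ specialises to $[x]=[f(x)]$ in $CH^2(X)$ by Mumford's lemma on the closedness of rational equivalence. (The case $e=1$ is handled separately by citing prior work.)

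Your direct construction runs into exactly the obstacle you name: verifying condition (iii), that $L_\pi|_{C'}$ is nontrivial for generic $C'$, requires control over which genus-one curves actually appear on the quotient, and this is not available for an arbitrary member of the moduli space. Even condition (i) is problematic on an arbitrary $\tilde{Y}$: Proposition~\ref{ishi-little} lets you deform an unramified stable map once you have one, but producing the initial map in a chosen class on a non-generic K3 is not automatic. The paper sidesteps both issues by working only at the generic point of moduli, where one can degenerate to the explicit elliptic surface $X_{(2),8A_1}$ and write down the curves by hand; the specialisation of cycles then does the rest. This division of labour---geometry at the generic fibre, formal specialisation to the special fibre---is the idea you are missing.
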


These results have been subsequently extended to cover \emph{all} K3 surfaces admitting a Nikulin involution, see \cite{voisin-bloch}. More generally, Conjecture \ref{conj:BB} is now known to hold for all K3 surfaces admitting symplectic automorphisms of finite order. See \cite[\S 5]{huy-kem} for this result on one component of the moduli space and order prime and \cite{huy-derived} for the full statement.

\section{An obstruction for a marked curve to admit a nodal model on a K3 surface}
It is a natural question to study the image of $$ \eta \; : \;  \mathcal{V}^n_{g,k} \to  \mathcal{M}_{p(g,k)-n}.$$  In the case of smooth curves $n=0$, there is a well-known conjectural characterization of the image $\eta$, due to Wahl \cite{wahl-square}. He makes the following remarkable conjecture, which would give a complete characterization of those smooth curves that lie on a K3 surface:
\begin{conjecture}[Wahl] \label{wahlconj}
Assume $C$ is a smooth curve of genus $g \geq 8$ which is Brill--Noether general. Then there exists a K3 surface $X \seq \proj^g$ such that $C$ is a hyperplane section of $X$ if and only if the Wahl map $W_C$ is nonsurjective. 
\end{conjecture}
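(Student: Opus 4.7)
The plan treats the two implications by quite different methods.

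For the ``only if'' direction, assume $C$ is a smooth hyperplane section of a K3 surface $X \seq \proj^g$. Adjunction gives $N_{C/X} \simeq K_C$ since $K_X = 0$, and the restricted cotangent sequence
\[
0 \to K_C^{\vee} \to \Omega_X^1|_C \to \Omega_C^1 \to 0
\]
forces, after appropriate twisting and taking cohomology, the image of $W_C$ into a proper subspace of $H^0(3K_C)$. The relevant obstruction is detected by the Kodaira--Spencer class of $C \hookrightarrow X$ in $H^1(X, \Omega_X^1)$, which is nonzero since $h^{1,1}(X) = 20$. This is the classical argument of Wahl and Beauville--M\'erindol and is the only direction that is routine.

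The ``if'' direction is the substantive content of the conjecture. The starting point is Wahl's theorem identifying $\mathrm{coker}(W_C)^{\vee}$ with the space of first-order embedded deformations of the projective cone $X_0 := \mathrm{Cone}(C) \seq \proj^g$ over the canonical model of $C$ which smooth the vertex. Brill--Noether generality of $C$ guarantees that $C$ is nonhyperelliptic with a projectively normal canonical embedding (and so $X_0$ is a reasonable degenerate object), so Wahl's identification applies. A chosen nonzero $\xi \in \mathrm{coker}(W_C)^{\vee}$ thus produces a ``K3 ribbon,'' an infinitesimal K3 structure on $X_0$, and the plan is to globalise this datum to an honest K3 surface.

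Concretely, one would (i) extend $\xi$ order by order to a formal flat family $\mathcal{X} \to \mathrm{Spf}\,\C[[t]]$ of subschemes of $\proj^g$ with central fibre $X_0$, (ii) algebraise the family via Grothendieck's existence theorem together with the induced polarisation, and (iii) check via Bogomolov-type constraints that the generic fibre is a smooth surface with trivial canonical bundle, hence a K3 surface, recovering $C$ as the flat limit of its hyperplane sections. The main obstacle is step (i): the obstructions to lifting $\xi$ live in $\mathrm{Ext}^2$ groups that do not vanish for formal reasons, and no cohomological input is known which forces them to vanish for an arbitrary Brill--Noether general $C$. This is precisely the reason the conjecture remains open.

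Two concrete avenues look most promising. Following Arbarello--Bruno--Sernesi, one would exploit Brill--Noether generality together with the non-surjectivity of $W_C$ to embed $C$ as a hyperplane section of a Fano or K3-fibred threefold supplied by Mukai's classification, reducing the smoothing question to a finite-dimensional extendability problem inside that threefold. A complementary route, closer in spirit to the techniques of this thesis, is to use the Torelli/Nikulin machine of the introduction to build a K3 surface $Y$ of a prescribed Picard lattice carrying a singular curve whose normalisation has the same geometric genus and moduli as $C$, and then invoke the generic finiteness of $\eta$ from Theorem \ref{main-prim-asd} together with a stable-map deformation to certify that $[C]$ lies in the image of $\eta$ over $Y$. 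Both strategies require a genuinely global input beyond the infinitesimal data encoded in $W_C$; bridging that gap is where the decisive work lies.
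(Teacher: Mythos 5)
This statement is labelled as a \emph{conjecture} in the thesis, and the thesis offers no proof of it: the surrounding text explicitly describes it as Wahl's open conjecture, notes that only the ``only if'' direction is known (citing Wahl's original paper for the nonsurjectivity of $W_C$ when $C$ lies on a K3 surface), and uses the conjecture purely as motivation for studying a marked analogue. So there is no proof in the paper to compare your proposal against, and your proposal is not a proof either --- as you yourself concede, the ``if'' direction is precisely where ``the decisive work lies'' and you do not supply it. A research programme with an acknowledged unfilled gap cannot be accepted as a proof of the statement; the honest verdict is that the statement remains a conjecture and your write-up should present it as such rather than as a proof attempt.

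Two smaller points. First, your sketch of the ``only if'' direction is not an accurate rendition of the known argument: the nonsurjectivity of $W_C$ for a smooth curve on a K3 surface is \emph{not} obtained by showing the image lands in a proper subspace cut out by a Kodaira--Spencer class in $H^1(X,\Omega_X^1)$; Wahl's proof goes through the deformation theory of the cone over $C$ (the corank of $W_C$ controls deformations of the cone, and the existence of the K3 surface produces a deformation not coming from $\proj^g$), while Beauville--M\'erindol give a direct argument using the ribbon structure on $2C \seq X$. The conclusion is correct but the mechanism you describe would need to be replaced. Second, your remark that the converse might follow from the generic finiteness of $\eta$ in Theorem \ref{main-prim-asd} overstates what that theorem gives: generic finiteness on \emph{one component} of $\mathcal{T}^n_{g}$ says nothing about whether a \emph{given} Brill--Noether general curve with nonsurjective Wahl map lies in the image of $\eta$, which is the surjectivity-type statement the converse requires.
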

Here the Wahl map refers to the map $\bigwedge^2 H^0(C,K_C) \to H^0(C,K_C^3)$ given by 
$s \wedge t \mapsto tds-sdt$. One side of this conjecture is well-known; indeed if $C \seq X$ is a smooth curve in a K3 surface then $W_C$ is nonsurjective, \cite{wahl-jac}. Furthermore, if $C$ is general and $\text{Pic}(X) \simeq \mathbb{Z}C$, then $C$ is Brill--Noether general, \cite{lazarsfeld-bnp}. In \cite[Question 5.5]{flam}, it was asked if there exists such a Wahl-type obstruction for a smooth curve to have a nodal model lying on a K3 surface.

Let $\widetilde{\mathcal{M}}_{p(g,k)-n,2n} := [\mathcal{M}_{p(g,k)-n,2n}/ S_{2n}]$ denote the stack of curves with an unordered marking (or divisor). One may slightly alter the above question and ask if there exists an obstruction for a \emph{marked} curve to have a nodal model lying on a K3 surface in such a way that the marking is the divisor over the nodes (when we forget about the ordering).
For  any positive integers $h,l$ and $[(C,T)] \in \widetilde{\mathcal{M}}_{h,2l}$, one may consider the Gaussian
$$W_{C,T}: \bigwedge^2 H^0(C,K_{C}(-T)) \to H^0(C,K_{C}^3(-2T)) $$
which we will call the \emph{marked Wahl map}, since it depends on both the curve and the marking. In Section \ref{sec:markedwahl} we show:
\begin{thm}
Fix any integer $l \in \mathbb{Z}$. Then there exist infinitely many integers $h(l)$, such that the general marked curve $[(C,T)] \in \widetilde{\mathcal{M}}_{h(l),2l}$ has surjective marked Wahl map.
\end{thm}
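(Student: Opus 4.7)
The plan is to apply semi-continuity of corank. The marked Wahl map $W_{C,T}$ is precisely the Gaussian map $W_L$ associated to the line bundle $L = K_C(-T)$, and the corank of such a map of vector bundles over a family is upper semi-continuous on the base. Consequently, the locus in $\widetilde{\mathcal{M}}_{h,2l}$ where $W_L$ fails to be surjective is closed, and for each of the infinitely many integers $h(l)$ we wish to produce it suffices to exhibit a \emph{single} marked curve $[(C_0, T_0)] \in \widetilde{\mathcal{M}}_{h(l),2l}$ for which $W_{C_0, T_0}$ is surjective. A preliminary dimension count via Riemann--Roch (source of dimension $\binom{h-2l}{2}$ against target of dimension roughly $5h - 5 - 4l$ once $h$ is large relative to $l$) shows that no numerical obstruction to surjectivity is present in the range of interest.

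For the construction of $(C_0, T_0)$, I would degenerate to the boundary of $\widetilde{\mathcal{M}}_{h(l),2l}$ and argue by induction on genus. A natural candidate is a nodal reducible curve $C_0 = C' \cup R$ where $C'$ is a general smooth curve of somewhat smaller genus and $R$ is a short auxiliary chain of rational or low-genus components meeting $C'$ transversally. The marking $T_0$ is distributed between $C'$ and $R$ so that $L_0 = \omega_{C_0}(-T_0)$ restricts trivially (or with vanishing higher cohomology) on the components of $R$, while its restriction to $C'$ agrees, up to a controlled twist, with $\omega_{C'}$ or with a Wahl-favourable line bundle on $C'$. A Mayer--Vietoris / dévissage computing the cokernel of $W_{L_0}$ on $C_0$ from the contributions of $C'$, of $R$, and of the gluing nodes then reduces the surjectivity on $C_0$ to a corresponding twisted surjectivity on $C'$.

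The induction is anchored by the classical surjectivity of the ordinary Wahl map for the general smooth curve of genus $\geq 10$ (excluding $g = 10$), due to Ciliberto--Harris--Miranda and Voisin; alternatively, for small $l$ the base case can be handled by an explicit computation on a hyperelliptic or trigonal curve. The main obstacle will be the careful bookkeeping of the Mayer--Vietoris exact sequence, ensuring that the contributions of the auxiliary component $R$ and of the attaching nodes do not introduce spurious obstructions in $H^1$. The combinatorial constraints on where the marked points of $T_0$ are placed and on how much the genus may increase per inductive step is precisely what restricts the argument to an infinite arithmetic progression of genera $h(l)$, rather than all large integers.
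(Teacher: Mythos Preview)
Your approach is genuinely different from the paper's, and the core step you defer is precisely where the difficulty lies.

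The paper does not degenerate to the boundary of $\widetilde{\mathcal{M}}_{h,2l}$ at all. Instead, it produces an explicit \emph{smooth} marked curve $(D,T)$: take a plane curve $C \subset \mathbb{P}^2$ of degree $d$ with $n=l$ nodes and $m=10$ ordinary triple points at generic points, let $S \to \mathbb{P}^2$ be the blow-up, and let $D$ be the strict transform with $T$ the divisor over the nodes. One then has $K_D(-T) \simeq M|_D$ for $M = \mathcal{O}_S((d-3)H - 2E_X - 2E_Y)$, and the marked Wahl map factors through the surface Gaussian $\Phi_M$ on $S$. Surjectivity of $\Phi_M$ is obtained by writing $M - K_S$ as a sum of three very ample line bundles and invoking Kawamata--Viehweg vanishing; surjectivity of the restriction map to $D$ comes from Hirschowitz's vanishing criterion for linear systems with assigned multiplicities. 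The infinitely many values $h(l)$ arise simply from the infinitely many admissible degrees $d$. Semicontinuity is applied at this smooth point after checking $h^0(D,\mathcal{O}_D(T)) = 1$, so that $h^0(K_D(-T))$ is locally constant.

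Your proposal, by contrast, hinges on a Mayer--Vietoris decomposition of the Gaussian on a reducible curve $C' \cup R$, and you correctly flag this as ``the main obstacle'' --- but you do not carry it out. Gaussian maps do not split cleanly over components: the domain $\bigwedge^2 H^0(C_0, L_0)$ contains mixed wedges of sections supported on different components, and controlling their image requires real work (this is why the Ciliberto--Harris--Miranda argument for the ordinary Wahl map uses very specific degenerations, not arbitrary attachments). Moreover, your semicontinuity step is stated too loosely: the surjectivity locus is only open where $h^0(K_C(-T))$ is constant, i.e.\ where $h^0(\mathcal{O}_C(T)) = 1$, and at a reducible boundary point this can easily jump. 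So as written, the proposal identifies the right shape of argument but leaves both the d\'evissage and the constancy of ranks at the degenerate point as genuine gaps; the paper's plane-curve construction sidesteps both issues entirely.
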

On the other hand we show:
\begin{thm}
Assume $g-n \geq 13$ for $k=1$ or $g \geq 8$ for $k >1$, and let $ n \leq \frac{p(g,k)-2}{5}$. Then there is an irreducible component $I^0 \seq \mathcal{V}^n_{g,k}$ such that for a general $[(f: C \to X,L)] \in I^0$ the marked Wahl map $W_{C,T}$ is nonsurjective, where $T \seq C$ is the divisor over the nodes of $f(C)$.
\end{thm}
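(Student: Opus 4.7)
The plan is to extend Wahl's classical nonsurjectivity argument for smooth curves on K3 surfaces to the marked, singular setting of $\mathcal{V}^n_{g,k}$. The component $I^0$ whose existence is required is supplied by invoking Theorem \ref{main-prim-asd} (for $k=1$) or Theorem \ref{main-nonprim-asd} (for $k\geq 2$): the numerical hypotheses produce an irreducible component of $\mathcal{T}^n_{g,k}$ which, by the Dedieu--Sernesi result recalled in the introduction, meets $\mathcal{V}^n_{g,k}$ in a dense open substack $I^0$. It remains to prove nonsurjectivity of $W_{C,T}$ at a generic $[f\colon C \to X]\in I^0$.

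For such $[f]$, the map $f$ is unramified and $K_X \ISOM \sheafO_X$, so the differential $df$ gives
$$0 \to K_C^{-1} \to f^*\Omega^1_X \to K_C \to 0$$
on $C$, with kernel $K_C^{-1}$ by comparison of determinants. The main step is to factor $W_{C,T}$ through cohomology of a twist of $f^*\Omega^1_X$. Concretely, given $s,t \in H^0(C, K_C(-T))$, one lifts them to sections $\widetilde{s}, \widetilde{t} \in H^0(X, \sheafO_X(kL) \otimes \mathcal{I}_N^2)$, where $\mathcal{I}_N$ is the ideal sheaf of the nodes $N \seq X$ of $f(C)$. The Gaussian on $X$ then produces
$$\widetilde{s}\,d\widetilde{t} - \widetilde{t}\,d\widetilde{s} \;\in\; H^0\bigl(X,\, \Omega^1_X \otimes \sheafO_X(2kL) \otimes \mathcal{I}_N^3\bigr),$$
a form vanishing to order $3$ at each node. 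Pulling back along $f$ and using the adjunction $f^*\sheafO_X(kL) \ISOM K_C(T)$, together with the cotangent sequence above, deposits this in $H^0$ of a subsheaf $\mathcal{G} \seq f^*\Omega^1_X \otimes K_C^2(-T)$ fitting into
$$0 \to K_C(-T) \to \mathcal{G} \to K_C^3(-2T) \to 0,$$
with image in $H^0(K_C^3(-2T))$ equal to $W_{C,T}(s\wedge t)$.

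The cokernel of $H^0(\mathcal{G}) \to H^0(K_C^3(-2T))$ is then the image of the connecting map $\partial \colon H^0(K_C^3(-2T)) \to H^1(K_C(-T))$. By Serre duality $H^1(K_C(-T)) \ISOM H^0(\sheafO_C(T))^\vee$, and the hypothesis $n \leq (p(g,k)-2)/5$ is calibrated to force $h^0(\sheafO_C(T)) = 1$ for a generic member of $I^0$, bounding the cokernel by $1$. Nonsurjectivity of $W_{C,T}$ then reduces to $\partial \neq 0$, equivalently to the statement that the extension class of $\mathcal{G}$ in $H^1(C, K_C^{-2}(T))$ pairs nontrivially under cup product with $H^0(K_C^3(-2T))$. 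This class is inherited from the extension class of the cotangent sequence above, whose nonvanishing reflects the nondegenerate holomorphic symplectic form on $X$ --- the exact input driving Wahl's smooth case.

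The principal obstacles I anticipate are: (a) ensuring surjectivity (or sufficiently controlled non-surjectivity) of the lifting map $H^0(X, \sheafO_X(kL) \otimes \mathcal{I}_N^2) \to H^0(C, K_C(-T))$, which requires an $H^1$-vanishing estimate on $X$ in which the bound on $n$ plays a role; and (b) rigorously identifying the refined subsheaf $\mathcal{G}$ and tracking the twist corrections at nodes, since the strict inclusion $K_C(-T) \subsetneq K_C(T) = f^*\sheafO_X(kL)$ forces the Gaussian to preserve higher-order vanishing at $T$ than a naive twist analysis predicts. Obstacle (b) is where I expect the most genuine technical care, as it drives the geometric fact that the image of the marked Wahl actually lies in the smaller space $H^0(K_C^3(-2T))$ rather than only in $H^0(K_C^3(-T))$.
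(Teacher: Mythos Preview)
Your overall plan---factor $W_{C,T}$ through a surface Gaussian and show the projection to $H^0(K_C^3(-2T))$ is not surjective---is correct and matches the paper. However, the execution diverges from the paper at the crucial step, and your version has a genuine gap.

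First, both of your stated obstacles dissolve if you pass to the blow-up $\pi\colon Y\to X$ at the nodes. Then $C$ sits as a smooth divisor in $Y$, the line bundle $M=\mathcal{O}_Y(C)$ satisfies $M|_C\simeq K_C(-T)$, and the relevant middle term is simply $\Omega_Y|_C(M^2)$, which fits into
\[
0 \to K_C(-T) \to \Omega_Y|_C(M^2) \to K_C^3(-2T) \to 0.
\]
Your obstacle (a) is then the surjectivity of $H^0(Y,M)\to H^0(C,K_C(-T))$, which is immediate from $H^1(Y,\mathcal{O}_Y)=0$; obstacle (b) disappears because there is no need to track ideal-sheaf powers on $X$ by hand.

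The real gap is your endgame. You claim that $\partial\neq 0$ follows from the nondegenerate symplectic form on $X$, ``the exact input driving Wahl's smooth case.'' But this is not what the paper uses, and it is not clear your argument goes through. The extension class of $\Omega_Y|_C$ lives in $H^1(K_C^{-2}(T))$, not $H^1(K_C^{-2})$, and is not simply the image of the class of $f^*\Omega_X^1$ under an obvious map; even granting nonvanishing of the class, you would still need to show the cup-product pairing with $H^0(K_C^3(-2T))$ is nonzero, which does not follow formally. The paper instead proves the stronger statement $H^1(C,\Omega_Y|_C(M^2))=0$, from which $\partial$ is \emph{surjective} (hence nonzero, since $h^1(K_C(-T))=h^0(\mathcal{O}_C(T))\geq 1$). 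By Serre duality this $H^1$-vanishing is $H^0(C,T_Y|_C(2T-K_C))=0$, and one has the chain of inclusions
\[
H^0(T_Y|_C(2T-K_C)) \subseteq H^0(f^*T_X(2T-K_C)) \subseteq H^0(f^*T_X)=0,
\]
where the last inclusion uses that $K_C-2T$ is effective and the final vanishing is Lemma~\ref{coh-van}, a consequence of the generic finiteness of $\eta$ on the chosen component. You have also misread the role of the bound $n\leq (p(g,k)-2)/5$: it is not there to force $h^0(\mathcal{O}_C(T))=1$, but precisely to guarantee $\deg(K_C-2T)\geq g(C)$, hence effectivity of $K_C-2T$. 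So the heart of the argument is the vanishing $H^0(C,f^*T_X)=0$ coming from generic finiteness, not a direct symplectic-form computation.
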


\section{Brill--Noether theory for nodal curves on K3 surfaces}
In the last section we study the Brill--Noether theory of nodal curves on K3 surfaces. There are two related questions: for $[(f: C \to X,L)] \in \mathcal{V}_g^n$ general, one may firstly ask if the smooth curve $C$ is Brill--Noether general and secondly if the nodal curve $f(C)$ is Brill--Noether general. For the first question we show in Section \ref{BNP-nodal}:
\begin{prop} 
Assume $g-n \geq 8$. Then there exists a component $\mathcal{J} \seq \mathcal{V}^n_g$ such that for $[(f:C \to X,L)] \in \mathcal{J}$ general, $C$ is Brill--Noether--Petri general.
\end{prop}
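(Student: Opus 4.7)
The plan is to adapt Lazarsfeld's proof of Brill--Noether--Petri generality for smooth curves on K3 surfaces of Picard rank one (\cite{lazarsfeld-bnp}) to the setting at hand, in which $f : C \to X$ is not a closed immersion but a birational morphism onto a nodal curve $C' = f(C)$. The component $\mathcal{J}$ will be chosen so as to dominate $\mathcal{B}_g$, and this dominance reduces the question to a very general K3 surface of Picard rank one.

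\emph{Choice of the component.} From the facts collected earlier in the excerpt---that $\mathcal{T}^n_g$ is of pure dimension $19 + g - n$ and dominates $\mathcal{B}_g$, and that by Dedieu--Sernesi each component of $\mathcal{T}^n_g$ containing a non-trigonal map meets $\mathcal{V}^n_g$ densely---I would pick an irreducible component $\mathcal{J} \seq \mathcal{V}^n_g$ that dominates $\mathcal{B}_g$, the non-trigonality being generic in the range $g - n \geq 8$. For a very general $(X, L)$ in the image of $\mathcal{J} \to \mathcal{B}_g$, we have $\text{Pic}(X) \simeq \mathbb{Z}L$ with $L$ very ample, and a general $[f : C \to X] \in \mathcal{J}$ above $(X, L)$ is an unramified morphism, birational onto a nodal $C' \in |L|$, with $C$ smooth of genus $h := g - n$.

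\emph{The modified Lazarsfeld--Mukai bundle.} Suppose for contradiction that $C$ fails Brill--Noether--Petri, and choose a minimal-degree line bundle $A \in W^r_d(C)$ for which $\mu_A$ is not injective; standard reductions on the base locus of $A$ let us assume $A$ is globally generated and that $H^0(C, A)$ separates the two points $q_1, q_2 \in C$ above each node of $C'$. Form on $X$ the evaluation sequence
$$0 \to F \to H^0(C, A) \otimes \mathcal{O}_X \to f_*A \to 0.$$
A local calculation at each node $p \in C'$, where $(f_*A)_p \simeq \mathcal{O}_{X,p}/(\alpha) \oplus \mathcal{O}_{X,p}/(\beta)$ for local equations $\alpha, \beta$ of the two branches, shows that $F_p$ is isomorphic to $(\alpha) \oplus (\beta) \oplus \mathcal{O}_{X,p}^{r-1} \simeq \mathcal{O}_{X,p}^{r+1}$. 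Hence $F$ is locally free of rank $r+1$ globally on $X$. Riemann--Roch applied to $\chi(X, f_*A) = \chi(C, A) = d - h + 1$ then yields
$$c_1(F) = -L, \quad c_2(F) = d + n,$$
so $E := F^\vee$ is a rank $r+1$ vector bundle on $X$ with $c_1(E) = L$ and $c_2(E) = d + n$.

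\emph{Deriving the contradiction.} Following Lazarsfeld, a nonzero element of $\ker \mu_A$ yields, via Serre duality on $C$ and the long exact cohomology sequence of the defining sequence of $F$, a non-scalar endomorphism of $E$. Hence $E$ is not simple, and since $\text{Pic}(X) \simeq \mathbb{Z}L$ we obtain a destabilising sub-line-bundle $M \subset E$ with $c_1(M) = aL$ for some integer $a$. The existence of $M$ yields in turn a line bundle on $X$ with $h^0(M) \geq 2$ and $h^0(L \otimes M^{-1}) \geq 2$; but $\text{Pic}(X) \simeq \mathbb{Z}L$ forces $a \geq 1$ from the first condition and $a \leq 0$ from the second, a contradiction as long as the accompanying numerical inequality involving $c_2(E) = d + n$ and the Brill--Noether bounds on $d, r, h$ can be closed off, which happens precisely in the range $h = g - n \geq 8$.

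\emph{Main obstacle.} The key divergence from Lazarsfeld's smooth-case argument is the local analysis at the nodes of $C'$, which must establish that $F$ is actually locally free (rather than merely torsion-free) there---this is where the global generation of $A$ and its branch-separating property at each node are essential. The resulting extra summand $+n$ in $c_2(E)$ then feeds into the final Hodge-index-style numerical comparison, which is where the explicit threshold $g - n \geq 8$ is forced. A secondary subtlety is identifying the element of $\ker \mu_A$ with a non-scalar endomorphism of $E$, which requires care since $f_*A$ is a rank-one sheaf on the non-normal curve $C'$ rather than a line bundle on a smooth subscheme of $X$.
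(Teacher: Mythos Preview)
Your approach is genuinely different from the paper's, and there is a real gap.

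\textbf{What the paper does.} The paper does not attempt a Lazarsfeld-type argument on a Picard-rank-one K3. Instead it uses a degeneration. On the special K3 surface $S_{p,h}$ (with $p=g$, $h=g-n$) of Picard rank three, the primitive class $M$ admits no splitting $M=A_1+A_2$ with $h^0(A_i)\geq 2$, so by the classical Lazarsfeld argument a general smooth $D\in |M|$ (which has genus $h=g-n$) is Brill--Noether--Petri general. One then forms the reducible curve $D\cup R_1\cup\epsilon R_2\in |M+R_1+\epsilon R_2|$, where $(M+R_1+\epsilon R_2)^2=2g-2$, and partially normalises all nodes but one on each $D\cap R_i$. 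The resulting stable map lies in the closure of $\mathcal{V}^n_g$, and the stabilisation of its domain is $D$ itself. Semicontinuity then gives BNP-generality for the general point of the component $\mathcal{J}$. The bound $g-n\geq 8$ enters only through the lattice construction of $S_{p,h}$ (it is needed for very-ampleness of $M$, via Lemma \ref{little-lem}).

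\textbf{The gap in your argument.} The crucial step---``a nonzero element of $\ker\mu_A$ yields a non-scalar endomorphism of $E$''---does not follow from Lazarsfeld's method, and I do not see how to justify it. In the classical setting, simplicity of $E$ is used via a deformation/parameter-space argument to show Petri injectivity for \emph{generic} $(C',A')$ with $C'\in|L|$ smooth and $A'$ a $g^r_{c_2(E)}$ on $C'$; it is not a pointwise implication ``Petri fails $\Rightarrow$ $E$ not simple''. More seriously, your bundle has $c_2(E)=d+n$, so the natural degeneracy-locus family parametrised by Grassmannians in $H^0(E)$ consists of pairs $(C',A')$ with $C'\in |L|$ a smooth genus-$g$ curve and $A'$ a line bundle of degree $d+n$ on $C'$---not pairs (nodal curve, degree-$d$ line bundle on its genus-$(g-n)$ normalisation). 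The restriction $f^*E$ to $C$ involves $f^*f_*(K_C\otimes A^{-1})$, which differs from $K_C\otimes A^{-1}$ by contributions at the preimages of the nodes, so the cohomological identities linking $\ker\mu_A$ to $H^0(\mathcal{E}nd\,E)$ break down. A secondary issue: the assumption that $H^0(C,A)$ separates the two points over each node is not a ``standard reduction'' on the base locus of $A$; it is a genuine closed condition that you cannot simply impose without loss of generality, and without it the evaluation map $H^0(A)\otimes\mathcal{O}_X\to f_*A$ fails to be surjective. Finally, nothing in your sketch produces the threshold $g-n\geq 8$; the ``Hodge-index-style numerical comparison'' you allude to is not carried out.
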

The above result should not be expected to hold for all $[(f:C \to X,L)] \in \mathcal{J}$ (or even for all $[(f:C \to X,L)] \in \mathcal{J}$ with the general polarised K3 surface $(X,L)$ kept fixed), see \cite[Thm.\ 0.1]{ciliberto-knutsen-gonal}.

For the second question we again have a positive answer. For an integral nodal curve $D$, we denote by $\bar{J}^d(D)$ the compactified Jacobian of degree $d$, rank one, torsion-free sheaves on $D$. In Section \ref{rat} we show:
\begin{thm}
Let $X$ be a K3 surface with $\text{Pic}(X) \simeq \mathbb{Z} L$ and $(L \cdot L)=2g-2$. Suppose $D \in |L|$ is a rational, nodal curve. Then
$$\overline{W}^r_d(D) := \{ \text{$A \in \bar{J}^d(D)$ with $h^0(A) \geq r+1$} \}$$ is either empty or is equidimensional of the expected dimension $\rho(g,r,d)$.
\end{thm}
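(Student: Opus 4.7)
The approach is to adapt the Lazarsfeld--Mukai method, originally used to establish Brill--Noether generality for smooth curves on Picard rank one K3 surfaces, to the compactified Brill--Noether locus on the rational nodal curve $D$.

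First I would stratify the compactified Jacobian $\overline{J}^d(D)$ by the set $S \seq \{1,\ldots,g\}$ of nodes at which a rank one torsion-free sheaf fails to be locally free. Via pushforward from the partial normalization $\widetilde{D}_S \to D$ at the nodes in $S$, each stratum is canonically the generalized Jacobian $J^{d-|S|}(\widetilde{D}_S)$. Since the full normalization of $D$ is $\proj^1$, every $\widetilde{D}_S$ is itself rational nodal of arithmetic genus $g-|S|$, and the intersection of $\overline{W}^r_d(D)$ with the $S$-stratum is identified with the classical Brill--Noether locus $W^r_{d-|S|}(\widetilde{D}_S)$, whose expected dimension is $\rho(g-|S|,r,d-|S|) = \rho(g,r,d) - |S|$. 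Thus it suffices to establish the stratum-wise upper bound $\dim W^r_{d-|S|}(\widetilde{D}_S) \leq \rho(g,r,d) - |S|$: only the open stratum $S=\emptyset$ can then support components of the full dimension $\rho(g,r,d)$.

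For this upper bound I would adapt the Lazarsfeld--Mukai construction to the torsion-free setting. Given a general $A \in \overline{W}^r_d(D)$ and a basepoint-free $V \seq H^0(D,A)$ of dimension $r+1$, consider the evaluation $V \otimes \sheafO_X \to \iota_* A$ on $X$ and define $\mathcal{E}_{A,V}$ as the dual of its kernel. Using the Mukai pairing together with $\text{Pic}(X) \simeq \mathbb{Z}L$, one argues that $\mathcal{E}_{A,V}$ is simple, and a Riemann--Roch computation on $X$ then bounds the moduli of such triples $(A,V)$ by the expected number $\rho(g,r,d)$. The main obstacle is that, when $A$ is not locally free at one or more nodes, the sheaf $\iota_* A$ has a subtler local structure than the pushforward of a line bundle, so the kernel of the evaluation need not be locally free. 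One must either replace the kernel by its double dual and carefully track the Chern class correction, or perform the Lazarsfeld--Mukai construction instead on $\widetilde{D}_S$ via the finite morphism $\widetilde{D}_S \to X$ coming from the partial normalization.

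For the matching lower bound I would argue by specialization. Embed $D$ in a one-parameter family $\{C_t\} \seq |L|$ with $C_t$ smooth for $t \neq 0$. Each $C_t$ is Brill--Noether general by Lazarsfeld's theorem, so $W^r_d(C_t)$ is pure of dimension $\rho(g,r,d)$ when nonempty. A relative compactified Brill--Noether locus over the base is proper, and standard upper semicontinuity arguments, together with the fact that each boundary stratum of $\overline{J}^d(D)$ is a specialization of the generic fibre, force every component of $\overline{W}^r_d(D)$ to have dimension at least $\rho(g,r,d)$. Combined with the upper bound above this yields equidimensionality of the expected dimension, and emptiness whenever $\rho(g,r,d) < 0$.
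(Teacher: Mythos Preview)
Your stratification by the non-locally-free locus is correct, and the lower bound is fine (the paper uses the determinantal bound of Bhosle rather than specialisation, but either works). The real problem is the upper bound.

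You claim that simplicity of the Lazarsfeld--Mukai bundle together with Riemann--Roch bounds the moduli of pairs $(A,V)$ by $\rho(g,r,d)$. This is the step that fails. Riemann--Roch on $X$ computes $\dim M_v = 2\rho(g,r,d)$, not $\rho(g,r,d)$; simplicity (or stability) by itself only tells you that the bundle sits in this $2\rho$-dimensional moduli space. In the smooth case Lazarsfeld closes the gap via the Petri map, but for torsion-free sheaves on a nodal curve there is no clean analogue linking simplicity of $\mathcal{E}_{A,V}$ to injectivity of a Petri-type map. More tellingly, your argument never uses the hypothesis that $D$ is \emph{rational}, and the statement is certainly false for arbitrary nodal curves in $|L|$.

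The paper's upper bound is genuinely different and hinges on rationality. One shows that the image $M_C$ of the Lazarsfeld--Mukai map $\psi_C$ is \emph{isotropic} for Mukai's symplectic form on $M_v$: since $D$ is rational, every point of $D$ is rationally equivalent to the Beauville--Voisin class $c_X$, so $c_2(\mathcal{E}_{A,V}) \sim d\, c_X$ is constant in $CH_0(X)$; by O'Grady's description of the symplectic form via second Chern classes, this forces the pullback of the form to vanish on $M_C$. Hence $\dim M_C \le \frac{1}{2}\dim M_v = \rho(g,r,d)$, and then a fibre-dimension count gives $\dim V_{d,r}\le \rho(g,r,d)$.

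There is a second gap. Even granting $\dim V_{d,r}\le\rho$, this does not give your stratum-wise bound $\dim W^r_{d-|S|}(\widetilde D_S)\le \rho - |S|$, nor does it rule out components of $\overline{W}^r_d(D)$ supported entirely outside $V_{d,r}$ (sheaves with base points or with $h^0>r+1$). The paper handles this not by stratifying over $S$ but by induction on $\rho$: one sends $A$ to its globally generated part $A'\in V_{d',r'}$, bounds the fibres of $A\mapsto A'$ by analysing extensions by skyscraper sheaves at the nodes, and invokes the inductive hypothesis since $\rho(g,r',d')<\rho(g,r,d)$. Your proposal contains no mechanism for this part of the argument.
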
 
As one may smoothen the nodes of a rational nodal curve $D$ on a K3 surface to produce a curve with an arbitrary number of nodes, the above result immediately gives the following corollary:
\begin{cor}
For any $n \geq 0$, there is a component $\mathcal{J} \seq \mathcal{V}^n_g$ such that if $[(f:C \to X,L)] \in \mathcal{J}$ is general and $D=f(C)$ then
$$\overline{W}^r_d(D) := \{ \text{$A \in \bar{J}^d(D)$ with $h^0(A) \geq r+1$} \}$$ is either empty or is equidimensional of the expected dimension $\rho(g,r,d)$. 
\end{cor}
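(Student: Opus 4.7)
The plan is to bootstrap directly from the preceding theorem on rational nodal curves. The key observation is that $\rho(g,r,d)$ depends only on the arithmetic genus $g$, which is preserved under partial smoothings of nodes: writing a curve with $n$ nodes as a partial smoothing of a rational nodal curve with $g$ nodes keeps the invariant $g$ fixed, while the \emph{number} of remaining nodes $n$ is freely adjustable.

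First I would fix a Picard-rank-one K3 surface $(X_0,L_0)$ with $(L_0)^2 = 2g-2$ and a rational nodal curve $D_0 \in |L_0|$; such $D_0$ has exactly $g$ nodes, and its normalisation yields an unramified, birational stable map $f_0: \widetilde{D}_0 \to X_0$ lying in $\mathcal{V}^g_g$. By the preceding theorem, $\overline{W}^r_d(D_0)$ is either empty or equidimensional of dimension $\rho(g,r,d)$. Next, using the deformation theory of unramified stable maps developed earlier in the thesis (the pure-dimensionality and dimension count for $\mathcal{T}^n_{g,k}$, together with the extension to reducible/nodal sources in Proposition \ref{ishi-little}), each subset of the $g$ nodes of $D_0$ can be independently smoothed, simultaneously allowing the K3 target to deform. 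Smoothing exactly $g-n$ of the nodes produces a $(19+g-n)$-dimensional family of unramified stable maps $f_t: C_t \to X_t$ whose generic member has $f_t(C_t)$ integral nodal with exactly $n$ nodes. This family fills out an open subset of a component $\mathcal{J} \seq \mathcal{V}^n_g$ whose closure contains $[f_0]$.

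Finally I would invoke semi-continuity applied to the relative compactified Jacobian $\bar{J}^d$ over this family together with its relative Brill--Noether subscheme, whose formation is well-behaved because every fibre $f_t(C_t)$ has at worst nodal (hence planar) singularities. Upper semi-continuity of $h^0$ along the specialisation $C_t \rightsquigarrow D_0$ forces $\dim \overline{W}^r_d(f_t(C_t)) \leq \dim \overline{W}^r_d(D_0) = \rho(g,r,d)$ for general $t$, whenever the left-hand side is non-empty. Combined with the classical lower bound $\dim \overline{W}^r_d \geq \rho(g,r,d)$ on any non-empty component (valid uniformly on the compactified Jacobian of a curve of arithmetic genus $g$ with locally planar singularities), this forces equality on $\mathcal{J}$, which is exactly the claim.

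The main obstacle is purely technical and has already been addressed elsewhere in the thesis: one must ensure that the partial smoothings trace out an honest component of $\mathcal{V}^n_g$ (rather than escaping via a more exotic degeneration), and that the Brill--Noether stratification on $\bar{J}^d$ specialises correctly in nodal families. Both are handled by the framework set up in earlier chapters, so the substance of the corollary is entirely carried by the preceding theorem; the rest is a formal deformation-theoretic consequence.
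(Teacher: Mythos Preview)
Your proposal is correct and follows exactly the approach the paper intends: the paper states (in the introduction) that the corollary is immediate from the theorem on rational nodal curves because ``one may smoothen the nodes of a rational nodal curve $D$ on a K3 surface to produce a curve with an arbitrary number of nodes,'' and you have accurately spelled out the details of this smoothing-plus-semicontinuity argument. The only minor remark is that what you need is upper semicontinuity of the \emph{fibre dimension} of the relative Brill--Noether locus $\overline{\mathcal{W}}^r_d \to T$ (rather than of $h^0$ directly), combined with the componentwise lower bound of Theorem~\ref{bhosle-bn}; but this is clearly what you mean, and the argument goes through.
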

In particular, if $\rho(g,r,d) <0$, $\overline{W}^r_d(D) = \emptyset$ for $D$ as in the above corollary; indeed this is well-known and follows from the arguments of \cite[\S 3.2]{gomez}, \cite[Cor.\ 1.4]{lazarsfeld-bnp}. On the other hand, if $\rho(g,r,d)  \geq 0$, then $\overline{W}^r_d(D) \neq \emptyset$ by deforming $D$ to a smooth curve on $X$ and semicontinuity.

\subsection*{Acknowledgements}
First of all, I am very much in debt to my thesis advisor Professor Daniel Huybrechts. He taught me the fundamental theory of K3 surfaces, explained the importance of rational and elliptic curves, and introduced me to stable maps as a way of parametrising singular curves. Just as importantly, he provided me with infinite encouragement at every step of the way. I would further like to offer special thanks to A.\ Knutsen and E.\ Sernesi for very important conversations and suggestions, and G.\ Farkas for agreeing to be the second reader of this thesis. Furthermore, discussions with my colleague S.\ Schreieder about the literature on singular curves on K3 surfaces proved to be most valuable. Thanks also to C.\ Ciliberto, X.\ Chen, F.\ Flamini, G.\ Galati, F. Gounelas and U.\ Greiner for discussions and explanations relating to this work. I am most grateful to an anonymous referee for compiling a very long list of misprints and errors which led to innumerable improvements.

Without the constant support of Ishita Agarwal it is doubtful that I would have completed this thesis.

This work was funded by a PhD scholarship from the Bonn International Graduate School in Mathematics and by SFB/TR 45.

\chapter{Stable maps}
In this chapter we construct the moduli space $ \mathcal{W}^n_{g,k}$ of stable maps to projective K3 surfaces and study its deformation theory. We proceed in three
steps. In Section \ref{ConstructMaps}, we restrict ourselves to considering families of stable maps over an algebraic base. 
In this setting one has an algebraic stack parametrising all such families. The standard deformation theory of this object is however insufficient for
most purposes, essentially because deformations of algebraic K3 surfaces need not remain algebraic.

 In order to have a workable deformation theory, in Section  \ref{complexDef} we no longer assume \emph{a priori} that all K3 surfaces remain algebraic in their deformations. We also allow the base of the families of stable maps to be arbitrary complex
spaces. There is then a \emph{local} construction of a moduli space parametrising such objects, which exists as a complex space, together with a well-defined analytic deformation
theory producing an optimal dimension count. A simple argument shows that, \emph{a posteriori}, all K3 surfaces do indeed remain algebraic in the deformations, so the resulting dimension count applies to the original algebraic moduli space $ \mathcal{W}^n_{g,k}$ as well. 

In Section \ref{unramSect}, we apply the above analysis to the case of an \emph{unramified} stable map $f: C \to X$. The computations done in this section form the backbone of the rest of this thesis.

\section{Construction of the moduli space of stable maps} \label{ConstructMaps}
We start by fixing some notation.
\begin{mydef}
By \emph{connected curve} we mean a projective algebraic variety of dimension one over $\C$ which is connected. A connected curve is
not assumed to be either reduced or irreducible. By  \emph{nodal curve} we mean a connected curve which is reduced and with all singularities (if any) nodal.
\end{mydef}
Let $X$ be a projective variety over $\C$. An integral connected curve $C \seq X$ can be viewed in one of two ways. The \emph{Cartesian perspective} views $C$ as being defined via polynomial equations on $X$; in other words we identify $C$ in a one-to-one fashion with the ideal sheaf $I_C$ defining $C$ via the exact sequence
$$0 \to I_C \to \mathcal{O}_X \to \mathcal{O}_C \to 0 .$$ The study of deformations of $I_C$ leads to the construction of the \emph{Hilbert scheme} of $X$, see for instance \cite[Ch.\ 1]{kollar}. Suppose now $C$ is an integral nodal curve. Deformation of $I_C$ in the Hilbert scheme of $X$ will often smooth out the nodes of $C$. If one wishes to preserve the singularities of $C$ one is led to theory of the \emph{Severi variety}, see \cite{tannen2} and \cite{flam}. This works best when $X$ is a smooth surface. 

We will take a different approach. Rather than viewing the integral nodal curve $C \seq X$ as defined by equations on $X$ and studying those deformations of the equations which preserve the nodes, we instead consider $C$ as the image $f_*(\tilde{C})$ of a morphism $$f \; : \; \tilde{C} \to X$$ where $\tilde{C}$ is a smooth curve. Indeed, we take $f$ to be the composition of the normalisation morphism $\tilde{C} \to C$ with the closed immersion $C \hookrightarrow X$. This viewpoint is known as the \emph{parametric perspective} and is due to Horikawa, \cite{horikawa}. The resulting deformation theory of the morphism $f$ turns out to be somewhat more workable than the theory of the Severi variety even when $X$ is a surface. Indeed, with this perspective one often has a much better description of limiting degenerations of $f_*(\tilde{C})$ (which will often be neither reduced nor irreducible, let alone integral nodal).

In order to describe all possible deformations of the morphism $f$ we make the following definition, \cite{fulpar}.
\begin{mydef}[Gromov, Kontsevich]
Let $f : D \to X$ be a morphism from a connected nodal curve $D$ to a projective variety $X$, and assume $f(D)$ is not a point. The morphism $f$ is a \emph{stable map} if the following condition is satisfied:
suppose $E \seq D$ is an irreducible component with $E \simeq \proj^1$ and $f(E)$ a point. Then $E$ meets the rest of the curve in at least three points, i.e.\ $$\# \{E \cap \overline{D \setminus E} \} \geq 3.$$
\end{mydef}
The above condition may be characterised in several alternate ways, \cite[Prop.\ 3.9]{behrend-manin}, \cite{fulpar}, \cite{li-tian}. 
\begin{prop}
Let $f : D \to X$ be a morphism from a connected nodal curve $D$ to a smooth projective variety $X$, and assume $f(D)$ is not a point. Then the following conditions are equivalent:
\begin{enumerate}
\item The morphism $f$ is stable.
\item The automorphism group of $f$ is finite.
\item The line bundle $\omega_D \otimes f^* \mathcal{O}_X(3)$ is ample.
\item The map $\text{Hom}_D (\Omega_D, \mathcal{O}_D) \to H^0(D, f^*T_X)$ induced by the natural map $f^* \Omega_X \to \Omega_D$
is injective.
\item The infinitesimal automorphism group of $f$ is trivial.
\end{enumerate}
\end{prop}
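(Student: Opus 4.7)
The plan is to establish the equivalences by treating them in three groups: (1)$\Leftrightarrow$(3) is a direct degree calculation, (2)$\Leftrightarrow$(5) is formal from algebraic group theory, and (4)$\Leftrightarrow$(5) and (1)$\Leftrightarrow$(2) require identifying the infinitesimal (resp.\ global) automorphisms of $f$ explicitly.

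First I would prove (1)$\Leftrightarrow$(3). Ampleness of a line bundle on a projective nodal curve $D$ is equivalent to positivity of the degree on every irreducible component $E \subseteq D$. Using the adjunction formula for a nodal curve, $\deg_E \omega_D = 2g(E) - 2 + k_E$, where $k_E = \#\{E \cap \overline{D \setminus E}\}$ is the number of nodes where $E$ meets the rest of $D$. Thus
\[
\deg_E\bigl(\omega_D \otimes f^*\sheafO_X(3)\bigr) = 2g(E) - 2 + k_E + 3 \deg\bigl(f|_E^*\sheafO_X(1)\bigr).
\]
When $f|_E$ is non-constant this is at least $2g(E)+1+k_E \ge 1$, so positivity automatically holds. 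When $f|_E$ is constant the degree is $2g(E) - 2 + k_E$, which is positive precisely unless $g(E)=0$ and $k_E \le 2$, or $g(E)=1$ and $k_E=0$, or $g(E)=0$ and $k_E=0$. The last two cases cannot occur because $f(D)$ is not a point and $D$ is connected, so (3) fails exactly when some contracted $\proj^1$ component has fewer than three attaching nodes, which is the negation of (1).

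Next, (2)$\Leftrightarrow$(5) follows because the automorphism group of $f$ is the $\C$-points of an algebraic group scheme (the stabilizer of $f$ in the algebraic group of automorphisms of $D$ over $X$), and such a group is finite if and only if its Lie algebra vanishes. For (1)$\Leftrightarrow$(2), an automorphism of $f$ restricts to an automorphism of each irreducible component preserving the attaching nodes and commuting with $f$. On a non-contracted component $f|_E$ has finite automorphism group; on a contracted $\proj^1$-component, $\mathrm{Aut}(\proj^1)$ fixing $k_E$ points has positive dimension iff $k_E \le 2$. Hence (2) fails exactly at an unstable component, giving the equivalence.

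Finally, for (4)$\Leftrightarrow$(5), I would identify the infinitesimal automorphisms of $f$ with the kernel of the map in (4). An infinitesimal automorphism of $f$ is a global vector field $v \in \mathrm{Hom}_D(\Omega_D, \sheafO_D)$ on the nodal curve $D$ whose push-forward under $df$ vanishes, i.e.\ such that the composition
\[
\mathrm{Hom}_D(\Omega_D, \sheafO_D) \longrightarrow \mathrm{Hom}_D(f^*\Omega_X, \sheafO_D) = H^0(D, f^*T_X)
\]
sends $v$ to zero. Thus (4) is exactly the vanishing of the infinitesimal automorphism group, which is (5). The main subtlety to handle carefully is the nodal curve setting: $\Omega_D$ has torsion at the nodes, so one must verify that $\mathrm{Hom}_D(\Omega_D, \sheafO_D)$ really parametrizes vector fields on $D$ (equivalently, derivations $\sheafO_D \to \sheafO_D$), and that an element in its kernel indeed integrates to a one-parameter subgroup of automorphisms of $f$. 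This is the step I would expect to require the most care, though it is standard and appears in \cite{behrend-manin} and \cite{li-tian}; everything else reduces to the degree computation and the structure theory of automorphisms of $\proj^1$.
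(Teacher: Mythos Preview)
The paper does not actually give a proof of this proposition: it simply states the result and refers the reader to \cite[Prop.\ 3.9]{behrend-manin}, \cite{fulpar}, and \cite{li-tian}. So there is no ``paper's own proof'' to compare against. Your argument is the standard one found in those references, and it is essentially correct.

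A couple of minor points worth tightening. In your (1)$\Leftrightarrow$(3) step, you should be explicit that $g(E)$ denotes the \emph{arithmetic} genus of the component (an irreducible component of a nodal curve can itself have nodes), so that the paper's condition ``$E\simeq\proj^1$'' really means $p_a(E)=0$; with that reading your degree formula $\deg_E\omega_D = 2p_a(E)-2+k_E$ is correct. In (1)$\Leftrightarrow$(2), the assertion that a non-contracted component contributes only finitely many automorphisms deserves one more line: an automorphism of $E$ commuting with a non-constant morphism $f|_E$ must preserve each (finite) fibre, hence fixes infinitely many points, hence lies in a finite group. Finally, for (2)$\Leftrightarrow$(5) you are implicitly using that $\mathrm{Aut}(f)$ is the group of $\C$-points of a group scheme \emph{of finite type}, which is what guarantees that vanishing Lie algebra implies finiteness; this is clear once one realises $\mathrm{Aut}(f)$ sits as a closed subgroup scheme of $\mathrm{Aut}(D)$.
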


The definition of stable maps generalises easily to the relative setting:
\begin{mydef}
Let $S$ be a Noetherian scheme over $k= \C$ and $\mathcal{X}$ an $S$-scheme. A family of stable maps over an $S$-scheme $U$ is a pair $(\mathcal{C},f)$ where
\begin{itemize}
\item $\mathcal{C} \to U$ is a flat, proper family of nodal curves
\item $f: \mathcal{C} \to \mathcal{X} \times_S U$ is a $U$-morphism
\item for each $k$-point of $U$, $f_k: \mathcal{C}_k \to \mathcal{X}_k$ is a stable map.
\end{itemize}
\end{mydef}
Let $\mathcal{X} \to S$ be a projective morphism of $\C$-schemes over a Noetherian scheme $S$ with a fixed $S$-ample line bundle $\mathcal{O}_{\mathcal{X}}(1)$. There
is a functor from the category $\textbf{Sch}_{S}$ of $S$-schemes to the category $\textbf{Set}$ of sets
\begin{align*}
\mathcal{W} (\mathcal{X},d,p) \; : \; \textbf{Sch}_S \to \textbf{Set}
\end{align*}
such that $\mathcal{W} (\mathcal{X},d,p)(U)$ is the set of isomorphism classes of stable curves $(\mathcal{C},f)$ over $U$ such that for all $k$-points of $U$, $\mathcal{C}_k$ has constant arithmetic genus $p$ and $\deg f_k^*(\mathcal{O}_{\mathcal{X}}(1))=d$.

\begin{thm} \label{const-W}
Let $S$ be a Noetherian scheme over $\C$ and let $\mathcal{X} \to S$ be a projective morphism of schemes with a fixed $S$-ample divisor $\mathcal{O}_{\mathcal{X}}(1)$.
The functor $\mathcal{W} (\mathcal{X},d,p)$ is a Deligne--Mumford stack, proper over $S$, which admits a projective coarse moduli space $\mathbb{W} (\mathcal{X},d,p)$.
\end{thm}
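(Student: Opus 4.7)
The plan is to follow the now-standard construction of Kontsevich, made precise in the algebraic category by Fulton--Pandharipande, Behrend--Manin and Alexeev. One realizes $\mathcal{W}(\mathcal{X},d,p)$ as a quotient stack of a locally closed subscheme of a suitable Hilbert scheme, and then verifies the stack-theoretic properties.

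First I would establish a boundedness statement. For a stable map $f:D\to \mathcal{X}_s$ with $D$ of arithmetic genus $p$ and $\deg f^*\sheafO_{\mathcal{X}}(1)=d$, the characterization in the Proposition above tells us that $\omega_D\otimes f^*\sheafO_{\mathcal{X}}(3)$ is ample. A fixed power $N$ (depending only on $d$ and $p$) of this line bundle is very ample with vanishing higher cohomology; this lets one embed the graph $\Gamma_f\subset D\times \mathcal{X}_s\hookrightarrow \proj^M\times \mathcal{X}_s$ as a closed subscheme with fixed Hilbert polynomial $P=P(d,p,N)$ relative to $\sheafO_{\proj^M}(1)\boxtimes\sheafO_{\mathcal{X}}(1)$. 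Boundedness then implies that all such graphs are parametrized by a single relative Hilbert scheme $H := \mathrm{Hilb}^P(\proj^M\times_S \mathcal{X}/S)$, which is projective over $S$ by Grothendieck.

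Next I would cut out the locus $H'\seq H$ over which the fibers are honestly graphs of stable maps. Concretely, $H'$ is the locally closed subscheme where: (i) the projection to the $\proj^M$-factor has image a nodal curve of arithmetic genus $p$; (ii) this projection is a closed immersion (so the subscheme really is the graph of a morphism $f$); (iii) the composition with the second projection is a stable morphism, which by the Proposition is equivalent to $\omega_D\otimes f^*\sheafO_{\mathcal{X}}(3)$ being relatively ample, an open condition. All these conditions are locally closed (openness/closedness follow from semicontinuity and the existence of a relative dualizing sheaf for flat families of nodal curves), and the fibers of $\mathcal{C}\to H'$ inherit flatness from the universal family over $H$. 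The group $\mathrm{PGL}_{M+1}$ acts on $H'$ by change of basis on $\proj^M$, with finite stabilizers precisely because the stability condition forces $\mathrm{Aut}(f)$ to be finite. Setting $\mathcal{W}(\mathcal{X},d,p):=[H'/\mathrm{PGL}_{M+1}]$ then yields the desired Deligne--Mumford stack, independently of $N$ (any two choices differ by a common refinement).

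Properness is the main technical point and is reduced to the valuative criterion. Separatedness follows from uniqueness of limits for stable maps, which in turn is a direct consequence of the ampleness of $\omega_D\otimes f^*\sheafO_{\mathcal{X}}(3)$ applied to two proposed limits over the special fiber of a DVR. Existence of limits is Kontsevich's stable reduction theorem: given a family of stable maps over the generic point of a DVR $\mathrm{Spec}\,R$, one first takes a flat limit of the graph in $H$, performs semistable reduction on the central curve, and contracts the unstable components using that $\omega\otimes f^*\sheafO_{\mathcal{X}}(3)$ is relatively ample on the stable locus. This is the step I expect to be most delicate, and I would simply invoke the detailed treatments in Fulton--Pandharipande or Behrend--Manin rather than reprove it. Once properness and finiteness of inertia are in hand, Keel--Mori produces a proper algebraic space coarse moduli $\mathbb{W}(\mathcal{X},d,p)$; projectivity follows either by descending a $\mathrm{PGL}_{M+1}$-linearized ample line bundle from $H'$ via GIT, or equivalently by the Kollár/Cornalba theorem on ample line bundles on proper DM stacks using the functorial line bundle $\det(\pi_*(\omega_{\mathcal{C}/\mathcal{W}}^a\otimes f^*\sheafO_{\mathcal{X}}(b)))$ for suitable $a,b\gg 0$.
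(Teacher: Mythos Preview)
Your proposal is correct and follows essentially the same approach as the paper: the paper's proof simply cites Behrend--Manin, Fulton--Pandharipande, and Araujo--Koll\'ar, and the outline given immediately afterward is exactly the Hilbert-scheme-quotient construction you describe (embed via a power of $\omega_D\otimes f^*\mathcal{O}_{\mathcal{X}}(3)$, take the graph in $\mathcal{X}\times_S\proj^N_S$, and form the $\mathrm{PGL}$-quotient of the relevant locally closed locus). Your write-up is in fact more detailed than the paper's, which defers properness and projectivity entirely to the cited references.
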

\begin{proof}
The construction is given in \cite{behrend-manin} in the case $S=\text{Spec}(k)$, see also \cite{fulpar}.  For the relative case, see \cite[Thm.\ 50]{arakol}, \cite[\S 1.2]{vist-ab}, \cite{oort-ab}.
\end{proof}
\begin{remark}
Over finite characteristic, $\mathcal{W} (\mathcal{X},d,p)$ is still a proper, algebraic Artin stack, but it may fail to be Deligne--Mumford, see the hypotheses of \cite[Prop.\ 4.1]{behrend-manin}.
\end{remark}
We will outline the construction of the algebraic stack $\mathcal{W} (\mathcal{X},d,p)$, taken from \cite{arakol}. Let $(\mathcal{C},f)$ be a family of stable maps over
$U$, and let $\pi: \mathcal{C} \to U$ be the projection. By the third characterisation of stable maps, $M:=\omega_{\mathcal{C} / U} \otimes f^* \mathcal{O}_{\mathcal{X}}(3)$ is $U$-relatively ample. In fact, $M^2$ is $U$-very ample and $R^1\pi_*M^2=0$. Thus there is some constant $N$ (independent of $U$) such that $M^2$ induces a closed immersion $j: \mathcal{C} \hookrightarrow \proj^N_{U}$. The diagonal $(f, j)$ then gives a closed immersion
$$\mathcal{C} \hookrightarrow \mathcal{X} \times_S \proj^N_S \times_S U .$$ The proper algebraic stack $\mathcal{W} (\mathcal{X},d,p)$ is then a quotient of an open subset of $\text{Hilb}_S (\mathcal{X} \times_S \proj^N_S)$ by $\text{PGL}(N+1)$. The projectivity of $\mathbb{W} (\mathcal{X},d,p)$ is explained in \cite[\S 4.3]{fulpar} (for a reference which explicitly works with the relative case see \cite[\S 2.4, 2.5]{oort-ab}).

Let $\overline{\mathcal{M}}_p$ denote the Deligne--Mumford stack of stable curves of arithmetic genus $p$. For any nodal curve $D$, contracting all unstable components yields a unique stable curve $D'$. That this works well in families is the content of the next proposition, which is essentially \cite[\S 1.3]{fulpar} (see also \cite[\S 2.6]{oort-ab}).
\begin{prop} \label{const-morph}
There is a morphism of algebraic stacks 
\begin{align*}
\eta \; : \;  \mathcal{W} (\mathcal{X},d,p) \to  \overline{\mathcal{M}}_p 
\end{align*}
which on $k$-points acts as follows: if $f: D \to \mathcal{X}_k \in  \mathcal{W} (\mathcal{X},d,p)(\text{Spec}(k))$, then $$\eta([f])=[D'] \in \overline{\mathcal{M}}_p(\text{Spec}(k))$$ where
$D'$ denotes the contraction of all unstable components of $D'$.
\end{prop}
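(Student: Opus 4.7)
The plan is to construct $\eta$ on families and then check the stack-theoretic compatibilities. Given a family of stable maps $(\pi : \mathcal{C} \to U, f : \mathcal{C} \to \mathcal{X}\times_S U)$ of arithmetic genus $p$, the pointwise recipe for $\eta$ sends a fiber $D = \mathcal{C}_u$ to the scheme $D'$ obtained by contracting each rational component $E \subseteq D$ with $\#\{E\cap\overline{D\setminus E}\} \leq 2$. For $p \geq 2$, the classical description is $D' = \mathrm{Proj}\bigoplus_{n\geq 0} H^0(D,\omega_D^{\otimes n})$, so the construction should be globalised using the relative dualising sheaf $\omega_{\mathcal{C}/U}$.

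First, I would record the standard cohomological properties of $\omega_{\mathcal{C}/U}$ on a family of nodal curves: for every $n \geq 2$, $R^1\pi_*\omega_{\mathcal{C}/U}^{\otimes n} = 0$, the sheaf $\pi_*\omega_{\mathcal{C}/U}^{\otimes n}$ is locally free of rank $(2n-1)(p-1)$, and both statements are stable under arbitrary base change (via cohomology and base change applied to $\pi$, using that $\deg \omega_D|_E \geq 0$ on every component $E$). The evaluation map then gives a $U$-morphism
$$\varphi_n \; : \; \mathcal{C} \longrightarrow \mathbb{P}_U\bigl(\pi_*\omega_{\mathcal{C}/U}^{\otimes n}\bigr)$$
for all sufficiently large $n$. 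Fiberwise $\varphi_n$ is exactly the classical contraction morphism: it is an isomorphism on stable components (where $\omega_D$ is ample restricted there) and it collapses every unstable $\mathbb{P}^1$ in $D$ to a point, because $\omega_D$ has degree $0$ on such components. I would then define $\mathcal{C}' \to U$ as the scheme-theoretic image of $\varphi_n$ (equivalently, the Stein factorisation).

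Next, I would verify the three main properties needed to conclude that $[\mathcal{C}' \to U] \in \overline{\mathcal{M}}_p(U)$: (i) flatness of $\mathcal{C}' \to U$, which follows from constancy of the Hilbert polynomial combined with the cohomology and base change statements in the previous step; (ii) that each geometric fiber $\mathcal{C}'_u$ is a stable curve of arithmetic genus $p$, which follows from the fiberwise description of $\varphi_n$ together with the classical fact that the image is the stabilisation $D'$; and (iii) independence of the choice of $n$ and compatibility with arbitrary pullback $U'\to U$, which is a formal consequence of the base change property of $\omega_{\mathcal{C}/U}^{\otimes n}$ and $\pi_*$. Functoriality in $U$ then upgrades the assignment $(\mathcal{C},f)\mapsto \mathcal{C}'$ to a morphism of algebraic stacks $\eta : \mathcal{W}(\mathcal{X},d,p) \to \overline{\mathcal{M}}_p$ whose action on $\C$-points is as described.

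The main obstacle is the bookkeeping in step (ii): one must check that the scheme-theoretic image of $\varphi_n$ actually coincides fiberwise with the classical stabilisation $D'$ and not with some thickening, i.e.\ that $\varphi_n$ is surjective on structure sheaves after restriction to each fiber. This is where $R^1\pi_*\omega^{\otimes n}=0$ and the surjectivity of the evaluation map on each fiber enter essentially, together with Grauert's theorem to propagate the fiberwise picture to the total space. For low arithmetic genus ($p \leq 1$) the argument requires twisting $\omega_{\mathcal{C}/U}$ by markings at the nodes of the unstable components, but in our setting $p = p(g,k)-n$ can be assumed $\geq 2$ in the cases of geometric interest, so this is a minor point.
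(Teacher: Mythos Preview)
Your proposal is correct and follows essentially the same route as the paper: the paper simply cites Knudsen's stabilisation construction (\cite[Prop.\ 2.1]{knudsen}) to contract unstable components in families, while you have unpacked the content of that construction via $\omega_{\mathcal{C}/U}^{\otimes n}$. The substance is identical; you have just written out what the paper delegates to the reference.
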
 
\begin{proof}
Consider a family of stable curves $(\mathcal{C},f) \in \mathcal{W} (\mathcal{X},d,p)(U)$. Contracting unstable components of the fibres of $\mathcal{C} \to U$ yields a uniquely-defined family of stable maps $\mathcal{C}' \in \overline{\mathcal{M}}_p(U)$, by the proof of \cite[Prop.\ 2.1]{knudsen}. This therefore yields a morphism of stacks 
$\eta \; : \;  \mathcal{W} (\mathcal{X},d,p) \to  \overline{\mathcal{M}}_p$.
\end{proof}

We now recall some facts from the theory of families of algebraic cycles over a base $S$ from \cite{suslin-relative}, \cite[I.3]{kollar}. See also \cite[Ch.\ 1]{fulton}, \cite[Appendix I.A]{levine-mixed}, \cite[\S 8]{cisinski-deglise}. \footnote{The main definition of \cite{suslin-relative} apparently requires a straightforward modification in the case where the base $S$ is non-reduced at generic points, see \cite[Ch.\ 2]{shane-kelly}. Much of the theory anyhow requires $S$ to be \emph{normal} and we will always make this assumption, so this issue is not relevant to us (families of cycles over non-reduced bases are left undefined in \cite[Def.\ I.3.10]{kollar}).} We will use this theory to pass from a family of stable maps $f_k : C_k \to X_k$ to a family of (non-reduced) curves $f_{k,*} (C_k) \seq X_k$, at least in the simple case where $X_k$ is a smooth surface.

First recall the definition of algebraic cycles and pushforward from \cite[\S 1.4]{fulton}, \cite[I.3.1]{kollar}.
\begin{mydef}
Let $X$ be a scheme of finite type over $\C$. A $d$-dimensional algebraic cycle on $X$ is a formal linear combination $\sum_{i=1}^k a_i [V_i]$, $a_i \in \mathbb{Z}$, where $V_i$ is an integral closed subscheme of dimension $d$ in $X$. The abelian group of all $d$-dimensional cycles is denoted $Z_d(X)$. If $W \seq X$ is a closed subscheme of pure dimension $d$ and $W_1, \ldots, W_k$ are the irreducible components of $W_{red}$, we define the fundamental cycle $[W] \in Z_d(X)$ by
$$[W]= \sum_{i=1}^k l(\mathcal{O}_{W, \eta_i}) [W_i] $$
where $l(\mathcal{O}_{W, \eta_i})$ denotes the length of $\mathcal{O}_{W, \eta_i}$, where $\eta_i$ is the generic point of $W_i$ (the fundamental cycle is written as $\text{cycl}_X(W)$ in \cite{suslin-relative}).

Let $f:X \to Y$ be a morphism of schemes of finite type over $\C$, and let $V_i$ be an integral closed subscheme of dimension $d$ in $X$.  Let $W_i$ be the closure of $f(V_i)$ with reduced structure. We set
$$f_*(V_i) = \begin{cases} \deg{(V_i / W_i)} [V_i] & \mbox{if} \dim(W_i)=d \\
0 & \mbox{otherwise}.
\end{cases} $$ This extends by linearity to a homomorphism of abelian groups
$$ f_* \; : \; Z_d(X) \to Z_d(Y)$$
called the \emph{pushforward}.
\end{mydef}

The following lemma is \cite[Prop.\ 3.4.8]{suslin-relative} (compare with \cite[I.3.23.2]{kollar}). 
\begin{lem} \label{kollar-flat}
Let $S$ be an integral normal scheme of finite type over $\C$ and let $p:Y \to S$ a smooth, finite type morphism of relative dimension $d$. Let $X/S \hookrightarrow Y/S$ be a closed immersion of $S$-schemes with $X$ integral and such that all fibres of the projection $q:X \to S$ have pure dimension $d-1$. Then $q:X \to S$ is flat, and $X$ is a relatively effective Cartier divisor in $Y$.
\end{lem}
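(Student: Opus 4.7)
The plan is to work Zariski-locally near an arbitrary $x \in X$ with image $s = q(x) \in S$. I aim to exhibit, in a neighborhood of $x$, a single element $f \in \mathcal{O}_{Y,x}$ that (i) generates the ideal of $X$ in $Y$, (ii) is a non-zero-divisor, and (iii) whose quotient $\mathcal{O}_{Y,x}/(f)$ is flat over $\mathcal{O}_{S,s}$. Together these give that $X \hookrightarrow Y$ is an effective Cartier divisor and that $q$ is flat, which is precisely the assertion that $X$ is a relatively effective Cartier divisor.

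First, a dimension count. Since every fibre of $q$ has pure dimension $d-1$, in particular the generic fibre is nonempty, so $q$ is dominant (hence surjective onto the integral $S$). As $X$ is integral, the generic fibre $X_\eta$ is integral of dimension $d-1$, giving $\dim X = \dim S + d - 1 = \dim Y - 1$; so $X$ has codimension $1$ in $Y$, and fibrewise $X_s \seq Y_s$ is of pure codimension $1$ in the smooth (hence regular, hence factorial) scheme $Y_s$.

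Next, the local equation. The ring $\mathcal{O}_{Y_s,x}$ is a regular local ring of dimension $d$, hence a UFD. By the pure codimension one hypothesis on the fibre, every minimal prime of the fibre ideal $\bar I := I_{X/Y}(s)_x$ has height one and is therefore principal. A primary decomposition argument (using the integrality of $X$ to control embedded primes) shows $\bar I$ itself is principal, say $\bar I = (\bar f)$. Lift $\bar f$ to an element $f \in I_{X/Y,x}$. Since $\mathfrak{m}_{S,s}\cdot \mathcal{O}_{Y,x} \seq \mathfrak{m}_{Y,x}$, Nakayama's lemma applied to the finitely generated $\mathcal{O}_{Y,x}$-module $I_{X/Y,x}$ and to the identity $I_{X/Y,x} = (f) + \mathfrak{m}_{S,s}\cdot I_{X/Y,x}$ yields $I_{X/Y,x} = (f)$.

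Finally, flatness and Cartier-ness come together via the standard local criterion: if $A \to B$ is a flat local ring map and $f \in B$ lifts a non-zero-divisor of $B/\mathfrak m_A B$, then $f$ is itself a non-zero-divisor in $B$ and $B/(f)$ is flat over $A$. Taking $A = \mathcal{O}_{S,s}$, $B = \mathcal{O}_{Y,x}$ (flat by smoothness of $p$), and using that $\bar f \neq 0$ in the domain $\mathcal{O}_{Y_s,x}$ (hence a non-zero-divisor), we conclude that $f$ is a non-zero-divisor in $\mathcal{O}_{Y,x}$ and that $\mathcal{O}_{X,x} = \mathcal{O}_{Y,x}/(f)$ is $\mathcal{O}_{S,s}$-flat. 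Varying $x \in X$ then gives flatness of $q$ globally and realises $X$ locally as the zero scheme of a regular element of $\mathcal{O}_Y$. I expect the principal obstacle to be verifying the principality of the fibre ideal $\bar I$ in the middle step: pure dimensionality only directly controls the minimal primes, and the elimination of embedded higher-codimension primes -- critical for running Nakayama and for applying the local non-zero-divisor criterion -- is the one place where one must carefully exploit both the integrality of $X$ and the factoriality of $\mathcal{O}_{Y_s,x}$.
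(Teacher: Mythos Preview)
There is a genuine gap in your Nakayama step. From $\bar I=(\bar f)$ you can only conclude
$I_{X/Y,x}=(f)+\bigl(I_{X/Y,x}\cap\mathfrak m_{S,s}\mathcal O_{Y,x}\bigr)$,
not the identity $I_{X/Y,x}=(f)+\mathfrak m_{S,s}\,I_{X/Y,x}$ that Nakayama requires. The discrepancy between $I\cap\mathfrak m_s\mathcal O_{Y,x}$ and $\mathfrak m_s I$ is precisely $\mathrm{Tor}_1^{\mathcal O_{S,s}}(\mathcal O_{X,x},k(s))$, whose vanishing is the flatness you are trying to establish, so the argument is circular here. More tellingly, your proof never invokes the normality of $S$, and that hypothesis is essential. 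Take $S$ to be an affine nodal curve with normalisation $\nu:\tilde S\simeq\mathbb A^1\to S$, set $Y=\mathbb A^1_S$, and let $X\subset Y$ be the image of the graph of an affine coordinate on $\tilde S$. Then $X\simeq\mathbb A^1$ is integral and every fibre of $X\to S$ has dimension $0=d-1$, yet $X\to S$ is the normalisation map, hence not flat. In this example the fibre ideal at a point over the node is visibly principal with non-zero-divisor generator, so your argument runs unchanged through the choice of $f$, but $I_{X/Y,x}$ is \emph{not} principal. Your flagged worry about embedded primes in $\bar I$ is a symptom of the same underlying issue: fibrewise data alone cannot force $I$ to be principal.

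The paper's proof proceeds in the opposite order. It first shows that $X$ is locally principal in the \emph{total space} $Y$, citing EGA IV, 21.14.3; this is where the normality of $S$ and the integrality of $X$ are used globally. Once $I$ is known to be principal on $Y$, restriction makes each $X_s\subset Y_s$ locally principal of pure codimension one in a regular scheme, hence an effective Cartier divisor, and flatness then follows from the relative Cartier criterion (Stacks Project, Tag 062Y) --- which is exactly the local non-zero-divisor argument of your final step. The nontrivial input is thus principality on $Y$ rather than on $Y_s$, and this cannot be obtained by your lift-and-Nakayama manoeuvre.
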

\begin{proof}
From \cite[21.14.3]{egaiv-iv}, $X$ is a locally principal closed subscheme of $Y$. Thus each fibre $X_s \seq Y_s$ over $s \in S$ is locally principally closed, and since $X \to S$ has all fibres of pure dimension $d-1$ and $Y \to S$ is smooth, each fibre $X_s \seq Y_s$  must be an effective Cartier divisor. The claim now follows from \cite[\href{http://stacks.math.columbia.edu/tag/062Y}{Tag 062Y}]{stacks-project}.
\end{proof}
Note that when $S$ is itself smooth the above result is also a special case of the well-known result \cite[Prop.\ 6.1.5]{egaiv}.
As a consequence of the above lemma we have the following:
\begin{prop} \label{push-forwards}
Let $\mathcal{X} \to S$ be a smooth, projective family of surfaces, where $S$ is an integral normal scheme of finite type over $\C$. Suppose
$ \mathcal{C} \to S$ is a flat, proper family of nodal curves and $f: \mathcal{C} \to \mathcal{X}$ is a family of stable maps over $S$. Assume that for each closed point $s: \text{Spec}(k) \to S$, $f_s: \mathcal{C}_s \to \mathcal{X}_s$ has one-dimensional image. Then there is a relatively effective Cartier divisor $\mathcal{D} \to  \mathcal{X} $ over $S$ such that $\mathcal{D}_s \simeq f_*{\mathcal{C}_s}$ (as subschemes) for all closed points $s \in S$. 
\end{prop}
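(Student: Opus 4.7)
The plan is to use the Suslin--Voevodsky and Koll\'ar theory of relative algebraic cycles referenced earlier in this section in order to push forward the fundamental cycle of $\mathcal{C}/S$ to a family of effective codimension-one cycles on $\mathcal{X}/S$, and then realise this family as an actual relatively effective Cartier divisor by applying Lemma \ref{kollar-flat} component by component.

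First, let $\mathcal{Z} := f(\mathcal{C}) \seq \mathcal{X}$ be the scheme-theoretic image, which is a closed subscheme because $f$ is proper ($\mathcal{C} \to S$ is proper and $\mathcal{X} \to S$ is separated). Let $Z_1, \ldots, Z_k$ be the irreducible components of $\mathcal{Z}_{\text{red}}$; each is an integral closed subscheme of $\mathcal{X}$. Because $\mathcal{C} \to S$ is flat and $S$ is integral, every associated point of $\mathcal{C}$ lies over the generic point $\eta$ of $S$, so every irreducible component $\mathcal{C}^j$ of $\mathcal{C}$ dominates $S$. Its image $f(\mathcal{C}^j)$ is continuous image of an irreducible scheme, hence irreducible, and so equals $Z_{i(j)}$ for a unique index $i(j)$; every $Z_i$ arises in this way, and in particular each $Z_i$ dominates $S$. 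The hypothesis that every closed fibre of $f$ has one-dimensional image, combined with upper semi-continuity of fibre dimension, forces $\dim f_\eta(\mathcal{C}_\eta) = 1$, so the generic fibre of $Z_i \to S$ is one-dimensional. Using that $(Z_i)_s \seq f_s(\mathcal{C}_s)$ is one-dimensional set-theoretically, together with Chevalley's lower bound for fibre dimensions, every fibre $(Z_i)_s$ is of pure dimension $1$. Lemma \ref{kollar-flat} then gives that $Z_i \to S$ is flat and $Z_i$ is a relatively effective Cartier divisor in $\mathcal{X}$.

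I would then fix multiplicities by setting $d_j := \deg(\mathcal{C}^j / Z_{i(j)})$ (the generic degree of $f|_{\mathcal{C}^j}$) and $a_i := \sum_{j : i(j) = i} d_j > 0$, and define the relatively effective Cartier divisor
$$\mathcal{D} := \sum_{i=1}^{k} a_i\, Z_i \seq \mathcal{X}.$$
By construction $\mathcal{D}_\eta = f_*[\mathcal{C}_\eta]$ as cycles on the generic fibre.

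The main and most delicate point is to upgrade this generic equality to $\mathcal{D}_s \simeq f_*\mathcal{C}_s$ on every closed fibre $s$. Both sides define families of relative one-dimensional cycles on $\mathcal{X}/S$: the left by flatness of each $Z_i \to S$, the right because the pushforward of a flat proper family of curves under a proper morphism defines a family of relative cycles in the sense of \cite{suslin-relative}, \cite[I.3]{kollar}. Over an integral normal base a proper family of relative cycles is determined by its restriction to the generic fibre, so the two families agree for every $s \in S$. Since each $\mathcal{X}_s$ is a smooth surface, effective Cartier divisors on it are determined by their associated cycles, which yields the asserted isomorphism $\mathcal{D}_s \simeq f_*\mathcal{C}_s$ as subschemes. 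I expect this rigidity/specialisation statement for relative cycles over a normal base to be the substantive content of the argument; once it is granted the rest of the proof is formal.
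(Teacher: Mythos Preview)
Your approach is essentially the same as the paper's: decompose into irreducible pieces, apply Lemma \ref{kollar-flat} to each to get relatively effective Cartier divisors, assemble with the appropriate multiplicities, and then invoke the Suslin--Voevodsky/Koll\'ar specialisation theorem for relative cycles over a normal base to identify the fibres. The paper indexes by irreducible components $\mathcal{C}_i$ of $\mathcal{C}$ rather than by components of the image, but the resulting divisor is the same and the key citation (\cite[Thm.\ 3.6.1]{suslin-relative}) is exactly what you anticipated.

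One small oversight: your claim that ``the generic fibre of $Z_i \to S$ is one-dimensional'' does not follow from $\dim f_\eta(\mathcal{C}_\eta)=1$ for an \emph{individual} component $Z_i$. A stable map may have contracted rational components, and the image of such a component is a section of $\mathcal{X}\to S$ with $0$-dimensional fibres; Lemma \ref{kollar-flat} does not apply to it. The paper handles this by setting the multiplicity $d_i=0$ whenever $\dim\mathcal{C}_i>\dim V_i$. In your indexing you should simply discard those $Z_i$ arising only as images of contracted components (equivalently, restrict to the $Z_i$ with $\dim Z_i=\dim S+1$); they contribute nothing to $f_*\mathcal{C}_s$ anyway, so the rest of your argument goes through unchanged.
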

\begin{proof}
Let $\mathcal{C}_1, \ldots, \mathcal{C}_j$ be the irreducible (and reduced) components of $\mathcal{C}$, set $V_i:=f(\mathcal{C}_i)$ with reduced induced scheme structure and let $f_i: \mathcal{C}_i \to V_i$ be the restriction of $f$ to the components. Let $d_i=\deg f_i$ in case $\dim \mathcal{C}_i=\dim V_i$ and $d_i=0$ otherwise, for $1 \leq i \leq j$. By Lemma \ref{kollar-flat}, each $V_i$ satisfying $\dim \mathcal{C}_i=\dim V_i$ is a relatively effective Cartier divisor over $S$, so we have a relatively effective Cartier divisor $\mathcal{D}:= \sum_{i=1}^j d_i V_i$.

We claim that $\mathcal{D}_s \simeq f_*{\mathcal{C}_s}$ for any point $s: \text{Spec}(k) \to S$. Then $\mathcal{D}$ is our desired family of relatively effective Cartier divisors. To show \emph{rational equivalence} $\mathcal{D}_s \sim f_*{\mathcal{C}_s}$, one could take a desingularisation of $S$ and then apply standard intersection theory as in \cite[Ch.\ 10]{fulton}. That the two divisors are \emph{equal} in the strong sense requires the methods of \cite{suslin-relative} or \cite{kollar}. For instance, we may directly apply \cite[Thm.\ 3.6.1 ]{suslin-relative} (also see \cite[Prop.\ I.3.2]{kollar}).
\end{proof}
In the situation of the above lemma, we will write $f_{*} \mathcal{C}$ for the relatively-effective divisor $\mathcal{D}$. 
\begin{prop} \label{unramifiedisanopenstack}
Let $S$ be a scheme of finite type over $\C$, and let $\mathcal{X} \to S$ be a smooth, projective morphism of K3 surfaces. Then there is an open substack $\mathcal{T}(\mathcal{X},d,p) \seq \mathcal{W}(\mathcal{X},d,p)$ parametrising stable maps $[f: C \to X]$ with $C$ smooth, $f$ unramified and birational onto its image. There is further an open substack $\mathcal{V}(\mathcal{X},d,p) \seq \mathcal{T}(\mathcal{X},d,p) $ parametrising stable maps $[f: C \to X]$ which satisfy the additional assumption that $f(C)$ is nodal.
\end{prop}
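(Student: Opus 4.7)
The plan is to verify, for every $U$-family $(\mathcal{C},f)\in\mathcal{W}(\mathcal{X},d,p)(U)$, that each condition defining $\mathcal{T}$ and $\mathcal{V}$ cuts out an open subscheme of $U$; naturality in $U$ then yields the open substacks. Write $\pi:\mathcal{C}\to U$ for the structure morphism and set $\mathcal{Y}:=\mathcal{X}\times_S U$. By standard reductions (working Zariski-locally on $U$ and passing to normalisations of irreducible components), we may apply Proposition \ref{push-forwards} to $(\mathcal{C},f)$.

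To obtain $\mathcal{T}(\mathcal{X},d,p)$ we impose three fiberwise conditions. First, smoothness of $\mathcal{C}_u$: the non-smooth locus of the proper flat morphism $\pi$ is closed in $\mathcal{C}$, and its image under $\pi$ is closed in $U$. Second, unramifiedness of $f_u$: since $\mathcal{Y}/U$ is smooth, the coherent sheaf $\Omega_{\mathcal{C}/\mathcal{Y}}$ base-changes fiberwise to $\Omega_{f_u}$, and the image in $U$ of its closed support in $\mathcal{C}$ is again closed. Restricting to the open set where the first two conditions hold, Proposition \ref{push-forwards} furnishes a relative effective Cartier divisor $\mathcal{D}:=f_*\mathcal{C}$ on $\mathcal{Y}$, flat over $U$, with $\mathcal{D}_u=f_{u,*}\mathcal{C}_u$. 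As $\mathcal{C}_u$ is smooth and $f_u$ is unramified, $f_u$ factors as an \'etale cover of the normalisation of $f_u(\mathcal{C}_u)$, so $f_u$ is birational onto its image if and only if $\mathcal{D}_u$ is reduced. On a smooth surface a Cartier divisor is reduced iff generically reduced (Cohen--Macaulay), and the locus where a flat family of effective Cartier divisors has generically reduced fibers is open, by upper semi-continuity of the multiplicity at generic points of irreducible components.

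For $\mathcal{V}(\mathcal{X},d,p)$ we further impose that $\mathcal{D}_u$ is nodal. The non-smooth locus of the proper morphism $\mathcal{D}\to U$ is closed in $\mathcal{D}$, and within this closed set the points with singularity worse than a node form a further closed subscheme (detected, e.g., by vanishing of coherent $\mathrm{Ext}$-sheaves, or by the delta-invariant stratification). Projecting through the proper morphism $\mathcal{D}\to U$ yields a closed subset of $U$, whose complement is the sought open locus. The main subtlety is the birationality step: a family of smooth plane conics degenerating to a double line illustrates that birational-onto-image is \emph{not} an open condition in $\mathcal{W}(\mathcal{X},d,p)$ without the unramifiedness restriction, and it is precisely the cycle-theoretic pushforward of Proposition \ref{push-forwards} that translates this condition into the manifestly open reducedness of a relative effective Cartier divisor.
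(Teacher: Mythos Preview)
Your proof is correct and follows essentially the same route as the paper: both detect smoothness and unramifiedness via the support of $\Omega_{\mathcal{C}/\mathcal{Y}}$, then invoke Proposition~\ref{push-forwards} (after passing to normalisations of components) to produce the relative Cartier divisor $\mathcal{D}$, and finally read off birationality and nodality as the open conditions that $\mathcal{D}_u$ be reduced, respectively nodal. The paper is slightly more explicit about the properness of the normalisation map needed to push openness back down, which you fold into ``standard reductions''; conversely, your closing remark on why the unramified hypothesis is essential for openness of birationality is a helpful addition not in the original.
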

\begin{proof}
Let $ \hat{\mathcal{W}}(\mathcal{X},d,p) \to  \mathcal{W}(\mathcal{X},d,p)$ be an \'etale atlas, with universal family of stable maps
$$\xymatrix{
\mathcal{C} \ar[r]^{f} \ar[d]
&\mathcal{X} \ar[ld] \\
 \hat{\mathcal{W}}(\mathcal{X},d,p).
}$$ We need to construct open subsets $U_1 \seq U_2 \seq \hat{\mathcal{W}}(\mathcal{X},d,p)$ such that $p \in U_2$ if and only if $[f_p: \mathcal{C}_p \to X_p]$ has $\mathcal{C}_p$ smooth, $f_p$ unramified and birational onto its image, and $p \in U_1$ if and only if we have in addition that $f_p (\mathcal{C}_p)$ is nodal (note that these properties are obviously preserved by automorphisms of $f_p$). Let $I_1, \ldots, I_k$ be the irreducible components of $\hat{\mathcal{W}}(\mathcal{X},d,p)$, with reduced structure, and let $\pi_i : N_i \to I_i$ be the normalisation morphism for each $i$. Let $\tilde{f} : \mathcal{C}_{N_i} \to \mathcal{X}_{N_i}$ be the pull-back of $f$. It suffices to construct open subsets $\tilde{U}_{1,i} \seq \tilde{U}_{2,i} \seq N_i$ parametrising points $s$ such that $\tilde{f}_s$ is a stable map with the properties corresponding to $U_1$ and $U_2$; indeed $\pi_i$ is proper so we can set $$U_j=\bigcup_{i=1}^k\hat{\mathcal{W}}(\mathcal{X},d,p) \setminus \pi(N_i \setminus \tilde{U}_{j,i})$$ for $j=1,2$. It is well-known that there is an open set $V_{1,i} \seq N_i$ parametrising points $p$ where $\mathcal{C}_{N_i,p}$ is smooth. If we let $V_{2,i}$ be the complement of the support of $\Omega_{\mathcal{C}_{N_i} / \mathcal{X}_{N_i}}$, then $V_{2,i}$ parametrises points $p$ where $ \tilde{f}_p$ is unramified. 

Let $W_i=V_{1,i} \cap V_{2,i}$. Applying Lemma  \ref{push-forwards}, we let $\mathcal{D} \to \mathcal{X}_{W_i}$ be the flat family of Cartier divisors corresponding to the push-forward of $\tilde{f}_{|_{W_i}}.$ We let $\tilde{U}_{2,i}$ be the open subset parametrising points $p \in W_i$ with the irreducible curve $\mathcal{D}_p$ reduced and then $\tilde{U}_{1,i} \seq \tilde{U}_{2,i}$ the open subset with $\mathcal{D}_p$ in addition nodal, \cite[Lemma 3.34]{harris-morrison}.
\end{proof}
The following proposition is standard, e.g.\ compare with \cite[\S 8.2]{vist-ab}.
\begin{prop}
Assume that $\mathcal{X} \to S$ is a smooth, projective family of K3 surfaces, where $S$ is separated of finite type over $\C$. Let $\mathcal{L} \in \text{Pic}(\mathcal{X})$ be an $S$-flat line bundle with $(\mathcal{L}_s \cdot \mathcal{O}_{\mathcal{X}_s}(1))=d$ for any $s \in S$. There is an open and closed substack $$\mathcal{W} (\mathcal{X}, \mathcal{L},p) \seq \mathcal{W} (\mathcal{X},d,p)$$ such that a closed point $f_k: \mathcal{C}_k \to \mathcal{X}_k$ in $\mathcal{W} (\mathcal{X},d,p)(\C)$ is contained in $\mathcal{W} (\mathcal{X}, \mathcal{L},p)(\C)$ if and only if we have the rational equivalence $$f_{k,*} \mathcal{C}_k \sim \mathcal{L}_k.$$
\end{prop}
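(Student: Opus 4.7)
The strategy is to reduce, via an étale atlas and normalisation, to showing that two sections of the relative Picard scheme $\text{Pic}_{\mathcal{X}/S}$ agree on an open and closed subset of the base; this is immediate because for a smooth projective family of K3 surfaces one has $R^1\pi_*\mathcal{O}_{\mathcal{X}} = 0$, so $\text{Pic}^0_{\mathcal{X}/S}$ is trivial and hence $\text{Pic}_{\mathcal{X}/S} \to S$ is étale (as a scheme locally of finite type).

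First I would take an étale atlas $\hat{\mathcal{W}} \to \mathcal{W}(\mathcal{X},d,p)$ by a scheme, together with the universal family of stable maps $\bar{f}: \bar{\mathcal{C}} \to \mathcal{X}_{\hat{\mathcal{W}}}$. Since the condition $f_{k,*}\mathcal{C}_k \sim \mathcal{L}_k$ is invariant under automorphisms of stable maps, it suffices to produce an open and closed subscheme $U \seq \hat{\mathcal{W}}$ cut out by this condition; $U$ then descends to the desired substack. To put the fibrewise push-forward into a relative Cartier divisor, I would decompose $\hat{\mathcal{W}}$ into its finitely many irreducible components $I_j$, pass to the normalisation $\nu_j : N_j \to I_j$, and pull $\bar{f}$ back to each $N_j$. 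Proposition \ref{push-forwards} then yields a relatively effective Cartier divisor $\mathcal{D}_j \seq \mathcal{X}_{N_j}$ whose fibre at every closed point $s$ is $\bar{f}_{N_j,s,*}\bar{\mathcal{C}}_{N_j,s}$; its hypotheses are satisfied because every stable map in $\mathcal{W}(\mathcal{X},d,p)$ has one-dimensional image (as the composite degree against $\mathcal{O}_{\mathcal{X}_s}(1)$ is the positive integer $d$).

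Next, the invertible sheaves $\mathcal{O}_{\mathcal{X}_{N_j}}(\mathcal{D}_j)$ and the restriction of $\mathcal{L}$ to $\mathcal{X}_{N_j}$ induce two morphisms $\sigma_j, \tau_j : N_j \to \text{Pic}_{\mathcal{X}/S}$ over $N_j \to S$. Because $\text{Pic}_{\mathcal{X}/S} \to S$ is étale, any section over a connected scheme is locally constant, so $\sigma_j$ and $\tau_j$ agree on an open and closed subset $V_j \seq N_j$; by construction, this is exactly the locus of $s \in N_j$ where $\mathcal{D}_{j,s} \sim \mathcal{L}_s$. Since $\nu_j$ is proper and surjective, the image $\nu_j(N_j \setminus V_j) \seq I_j$ is closed, and the union of these closed subsets over all $j$ has open complement $U \seq \hat{\mathcal{W}}$ parametrising exactly the points $s$ where $\bar{f}_{s,*}\bar{\mathcal{C}}_s \sim \mathcal{L}_s$. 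The subset $U$ is also closed since its complement is a union of closed subsets, and it is invariant under the étale equivalence relation defining $\mathcal{W}(\mathcal{X},d,p)$, hence descends to the open and closed substack $\mathcal{W}(\mathcal{X},\mathcal{L},p)$.

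The main obstacle is the bookkeeping in the first half, namely assembling the fibrewise push-forwards into a relative Cartier divisor when the atlas $\hat{\mathcal{W}}$ may be non-reduced or reducible; Proposition \ref{push-forwards} requires an integral normal base, which forces the detour through irreducible components and their normalisations. Once this has been done, the remainder is a formal consequence of the étaleness of $\text{Pic}_{\mathcal{X}/S}$, which is the only point at which the K3 hypothesis is used.
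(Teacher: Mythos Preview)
Your proof is correct and follows the same overall architecture as the paper's: pass to an \'etale atlas, break into irreducible components, replace each by a normal base so that Proposition~\ref{push-forwards} applies, and then argue that the condition $f_{s,*}\mathcal{C}_s \sim \mathcal{L}_s$ is constant on each component. The difference lies in this last step. The paper works transcendentally: it takes a resolution of singularities $R_i \to I_i$ (so that $R_i$ is smooth), applies Ehresmann's lemma to locally trivialise $\mathcal{X}_{R_i}$ topologically, and then compares first Chern classes in $H^2(\mathcal{X}_s,\mathbb{Z})$. You instead invoke the \'etaleness of $\text{Pic}_{\mathcal{X}/S} \to S$, which for K3 surfaces follows from $R^1\pi_*\mathcal{O}_{\mathcal{X}}=0$; this is the algebraic shadow of the same phenomenon and lets you get away with normalisation rather than resolution. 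Your route is cleaner and stays within the algebraic category, while the paper's argument is more hands-on and makes the underlying topology visible.

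One small wrinkle: your sentence ``$U$ is also closed since its complement is a union of closed subsets'' is not a valid reason (that would show $U$ is open, which you already know). The correct observation is that each $N_j$, being the normalisation of an irreducible component, is integral and hence connected, so $V_j$ is either all of $N_j$ or empty; thus after relabelling, $U = \bigcup_{j \le m} I_j$ is visibly closed as a finite union of closed subsets, and the components with $j \le m$ cannot meet those with $j > m$ since the condition is intrinsic to the point. This is exactly how the paper concludes.
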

\begin{proof}
Let $ \hat{\mathcal{W}}(\mathcal{X},d,p) \to  \mathcal{W}(\mathcal{X},d,p)$ be an \'etale atlas and $I_1, \ldots, I_k$ the (reduced) irreducible components of $ \hat{\mathcal{W}}(\mathcal{X},d,p)$. Let $\pi_i : R_i \to I_i$ be a resolution of singularities for each $i$ and let $\tilde{f} : \mathcal{C}_{R_i} \to \mathcal{X}_{R_i}$ be the pull-back of the universal stable map as in the previous proposition, with $j:\mathcal{X}_{R_i} \to R_i$ the projection. It is enough to show that for each $R_i$ we have either $f_{s,*} \mathcal{C}_s \sim \mathcal{L}_s$ holds for all closed points $s \in R_i$ or for no points at all. Indeed, suppose this holds and label the irreducible components such that $f_{s,*} \mathcal{C}_s \sim \mathcal{L}_s$ holds for all closed points (or equivalently one point) of precisely the components $I_1, \ldots, I_m$. Then 
$$\bigcup_{i=1}^m I_i=\bigcup_{j=m+1}^k (\hat{\mathcal{W}}(\mathcal{X},d,p) \setminus I_j)$$ is an open and closed subset of $ \hat{\mathcal{W}}(\mathcal{X},d,p)$ with the desired property.

Applying Lemma \ref{push-forwards}, let $\mathcal{D} \to \mathcal{X}_{R_i}$ be the flat family of Cartier divisors corresponding to the push-forward of $\tilde{f}_{|_{R_i}}$ and let $\mathcal{M} = \mathcal{O}_{\mathcal{X}_{R_i}}(\mathcal{D})$ be the corresponding line bundle. We work in the category of complex manifolds and follow \cite[\S I.3.1.2]{voisin-vol2}. Let $s \in R_i$ be a closed point, and let $U \seq R_i$ be an contractible open set about $s$, taken in the classical topology. Denote by $\mathcal{X}_U$ resp. $\mathcal{X}_s$ the complex manifolds $j^{-1}(U)$ resp.\ $j^{-1}(s)$. Replacing $U$ with a smaller open set if necessary, Ehresmann's lemma states that there exists a homeomorphism $\mathcal{X}_U \simeq \mathcal{X}_s \times U$ of fibrations over $U$. Since $U$ was chosen to be contractible, pullback via the inclusion $\mathcal{X}_s \to \mathcal{X}_U$ induces an isomorphism
$$H^2(\mathcal{X}_U, \mathbb{Z}) \simeq H^2(\mathcal{X}_s, \mathbb{Z})$$
for any $s \in U$.
This gives (by the functorialities of the first Chern class), that for any $s,t \in U$,
$\mathcal{L}_t \simeq \mathcal{M}_t$ if and only if $c_1(\mathcal{L}_t \otimes \mathcal{M}^*_t)=0$, which happens if and only if $\mathcal{L}_s \simeq \mathcal{M}_s$. As $R_i$ is connected, we see that either $f_{s,*} \mathcal{C}_s \sim \mathcal{L}_s$ holds for all closed points $s \in R_i$ or for no points at all, as required.
\end{proof}
Now consider the Deligne--Mumford stack $\mathcal{B}_g$ of primitively polarised K3 surfaces of genus $g \geq 3$. Analogously to Theorem \ref{const-W} and Proposition \ref{const-morph} we have:
\begin{prop}
Set $p(g,k):=k^2(g-1)+1$ for $g \geq 3$.
There is a Deligne--Mumford stack $\mathcal{W}_{g,k}^n$, proper over $\mathcal{B}_g$, with fibre over a polarised K3 surface $[(X,L)]$ parametrising all stable maps $f: D \to X$ with $f_{*}D \in |kL|$ and such that $D$ is a nodal curve of arithmetic genus $p(g,k)-n$. The stack $\mathcal{W}_{g,k}^n$ comes equipped with a morphism of stacks $$\eta \; : \;  \mathcal{W}_{g,k}^n \to \overline{\mathcal{M}}_{p(g,k)-n} .$$
\end{prop}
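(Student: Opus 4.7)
The plan is to deduce this proposition by applying the relative construction of Theorem \ref{const-W} together with the two preceding propositions (on open substacks cutting out unramified maps and on fixing the linear equivalence class of the push-forward) to the universal family of polarised K3 surfaces over $\mathcal{B}_g$, and then to descend the resulting stack along an étale atlas of $\mathcal{B}_g$.

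First I would recall that $\mathcal{B}_g$ is a Deligne--Mumford stack for $g \geq 3$ and pick an étale atlas $U \to \mathcal{B}_g$. Pulling back gives a smooth, projective family of polarised K3 surfaces $(\mathcal{X}_U, \mathcal{L}_U) \to U$ with $U$ a scheme of finite type over $\C$ and $\mathcal{L}_U$ a relative polarisation. Setting $d := k(2g-2)$ and $p := p(g,k) - n$, Theorem \ref{const-W} applied to $\mathcal{X}_U \to U$ with its $U$-ample line bundle $\mathcal{L}_U$ produces a proper Deligne--Mumford stack $\mathcal{W}(\mathcal{X}_U, d, p) \to U$ parametrising stable maps of arithmetic genus $p$ and degree $d$ with respect to $\mathcal{L}_U$. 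I then pick out the open-and-closed substack $\mathcal{W}(\mathcal{X}_U, \mathcal{L}_U^{\otimes k}, p)$ of stable maps whose push-forward is rationally equivalent to $k\mathcal{L}_U$, via the previous proposition (noting that Lemma \ref{push-forwards} legitimately produces the push-forward as a relative Cartier divisor since $\mathcal{X}_U \to U$ is a family of smooth surfaces). This is the desired fibre-product-free incarnation of $\mathcal{W}^n_{g,k}$ over $U$.

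Next I would check that this construction is functorial in étale base change, so that if $U' \to U$ is étale the stack $\mathcal{W}(\mathcal{X}_{U'}, \mathcal{L}_{U'}^{\otimes k}, p)$ is the pullback of $\mathcal{W}(\mathcal{X}_U, \mathcal{L}_U^{\otimes k}, p)$ along $U' \to U$. This follows from the universal property of $\mathcal{W}(\cdot, d, p)$ as a moduli functor of stable maps, which is evidently preserved under base change; the push-forward condition is likewise preserved because Lemma \ref{push-forwards} and the flat-family-of-Cartier-divisors it produces commute with base change along normal schemes. Choosing a second atlas $U \times_{\mathcal{B}_g} U \to U$ of the diagonal and applying the same compatibility furnishes descent data, so the family $\{\mathcal{W}(\mathcal{X}_U, \mathcal{L}_U^{\otimes k}, p)\}_U$ glues to a Deligne--Mumford stack $\mathcal{W}^n_{g,k} \to \mathcal{B}_g$. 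Properness over $\mathcal{B}_g$ is étale-local on the target and so is inherited from Theorem \ref{const-W}. The fibre over a closed point $[(X,L)] \in \mathcal{B}_g$ is, by construction and by the previous proposition, exactly the set of stable maps $f: D \to X$ with $D$ nodal of arithmetic genus $p(g,k)-n$ and $f_* D \sim kL$.

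Finally, the morphism $\eta$ is produced by applying Proposition \ref{const-morph} to the étale atlas: it gives a morphism $\mathcal{W}(\mathcal{X}_U, \mathcal{L}_U^{\otimes k}, p) \to \overline{\mathcal{M}}_{p(g,k)-n}$ obtained by contracting unstable components fibrewise, and the same descent argument (using that Knudsen's contraction in \cite{knudsen} commutes with étale base change) shows these glue to a morphism of stacks $\eta : \mathcal{W}^n_{g,k} \to \overline{\mathcal{M}}_{p(g,k)-n}$ over $\C$. The only place where any real work is hidden is in the verification that the constructions of Theorem \ref{const-W}, the push-forward cutting proposition, and the stable contraction of Proposition \ref{const-morph} all behave well under étale base change on $U$; this is the main (but essentially formal) obstacle, and the rest is a direct gluing.
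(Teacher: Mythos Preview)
Your proposal is correct and takes essentially the same approach as the paper: apply Theorem \ref{const-W} relatively to the universal family of polarised K3 surfaces, then cut out the class $kL$ and invoke Proposition \ref{const-morph} for $\eta$. The only difference is one of presentation: the paper works directly with the universal stack $\mathcal{X} \subseteq \proj^{N}_{\mathcal{B}_g}$ (with $N = 9(g-1)$) and defers the stacky compatibility issues to \cite[\S 8.3]{vist-ab} and \cite[\S 2.7]{oort-ab}, whereas you unpack those references by passing to an \'etale atlas and gluing by hand.
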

\begin{proof}
Let $\mathcal{X} \seq \proj^{N}_{\mathcal{B}_g}$ be the universal stack of polarised K3 surfaces, where $N=9(g-1)$. The claim follows from Theorem \ref{const-W} and the considerations in \cite[\S 8.3]{vist-ab} or \cite[\S 2.7]{oort-ab}.
\end{proof}
In the same exact same manner as Proposition \ref{unramifiedisanopenstack}, we have an open substack $\mathcal{T}^n_{g,k} \seq \mathcal{W}^n_{g,k}$ parametrising stable maps $[f: C \to X]$ with $C$ smooth, $f$ unramified and birational onto its image, and $\mathcal{V}^n_{g,k} \seq \mathcal{T}^n_{g,k}$ parametrising maps satisfying the additional condition that $f(C)$ is nodal.
\begin{remark}
Throughout this section we have always assumed that our family $\mathcal{X} \to S$ is projective. In the case $\mathcal{X} \to S$  is only proper, one can show that the analogous functor $\mathcal{W} (\mathcal{X},d,p)$ can be constructed as an algebraic Artin stack, locally of finite type, \cite[\S 8.4]{vist-ab}. This can be useful in some applications, for instance when one wants to consider stable maps into families of pseudo-polarised K3 surfaces, i.e.\ pairs $(X,L)$ with $L$ only big and nef, rather than ample. We have stuck to the projective case, since the references are more complete, but many of the statements in this chapter generalise in some form to the proper setting.
\end{remark}

\section{Morphisms of complex spaces} \label{complexDef}
General deformations of an algebraic K3 surface are not algebraic. As a result, the constructions in the preceding section do not capture the full story of deformations of stable maps into K3 surfaces and transcendental techniques are needed before one can work effectively with the stack $\mathcal{W}^n_{g,k}$. In this section we recall the deformation theory of morphisms of complex spaces, following Flenner's Habilitationsschrift \cite{flenner-ueber}. See also the survey article \cite{palamodov}. As a reference for the general theory of complex spaces we recommend \cite{fischer},  \cite[Ch.\ I]{greuel-introduction}. The standard reference for formal deformation theory is \cite{rim-formal}. For us, complex spaces are assumed to be Hausdorff, but \emph{not} reduced.

Let $\mathcal{A}n_{*}$ be the category of complex space germs $(S,*)$ at $*$. The objects of this category are pointed spaces $(S,*)$ and the morphisms are equivalence classes of morphisms defined in an open neighbourhood of the distinguished point. For any $(S,*) \in \mathcal{A}n_{*}$, a \emph{deformation} of a complex space $X$ is a proper, flat morphism $f: \mathcal{X} \to S$ with $f^{-1}(*) \simeq X$. Two deformations are equivalent if they coincide on an open set about the distinguished point $*$. 
\begin{mydef}
A deformation $\phi: \mathcal{X} \to S$ of a compact complex space $X$ is called \emph{complete}, if, for any other deformation $\psi: \mathcal{Y} \to B$ of $X$, there is a morphism of germs $h: B \to S$ such that the pullback deformation $h^* \phi: \mathcal{X}_B \to B $ is equivalent to $\psi$. The deformation $\phi$ is called \emph{versal}, if it satisfies the following stronger condition: for any deformation $\psi$ as above and for all closed embeddings $$i: (A,*) \hookrightarrow (B,*)$$ of germs together with a morphism 
$$\gamma: (A,*) \to (S,*)  $$
such that the pullback $\gamma^* \phi: \mathcal{X}_A \to A$ is isomorphic to $i^* \psi: \mathcal{Y}_A \to A$, then there exists a morphism $h: B \to S$, such that $h^* \phi : \mathcal{X}_B \to B$ is isomorphic to $\psi: \mathcal{Y} \to B$, and in addition $\gamma=h \circ i$. 
\end{mydef}
Note that there are no uniqueness assumptions on the morphism $h$. A versal deformation $\phi: \mathcal{X} \to S$ is called \emph{semiuniversal} if the differential $$Dh: T(B,*) \to T(S,*)$$ between the tangent spaces at $*$ is independent of the choice of $h$. A complete deformation $\phi: \mathcal{X} \to S$ with the property that the morphism $h$ as above is unique is called \emph{universal}; it is automatically versal. Be aware that many authors use the term versal to describe deformations which we are merely calling complete. The above definition is that used in \cite[II.1.3]{greuel-introduction}, \cite{flenner-ueber} and others and is required to ensure that versality behaves well with the corresponding notion for formal spaces and to ensure that versality is an open condition.
 
One of the fundamental results in the deformation theory of complex spaces is the following, \cite[II.8.3]{flenner-ueber}:
\begin{thm}[Douady, Forster--Knorr, Grauert, Palamodov]
Any compact complex space $X$ admits a semiuniversal deformation.
\end{thm}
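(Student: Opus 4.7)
The plan is to reduce the existence of a semiuniversal deformation to (i) the construction of a formal versal object via Schlessinger's criteria, and (ii) a delicate analytic convergence argument that realises the formal object as the germ of an honest complex space.

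First I would compute the relevant tangent and obstruction data. Deformations of the compact complex space $X$ are controlled by the cotangent complex $\mathbb{L}_X$, with first-order deformations classified by $T^1_X := \mathrm{Ext}^1(\mathbb{L}_X, \mathcal{O}_X)$ and obstructions lying in $T^2_X := \mathrm{Ext}^2(\mathbb{L}_X, \mathcal{O}_X)$. The crucial input is that both vector spaces are finite-dimensional over $\C$; this is a coherence plus compactness statement, obtained from the local-to-global Ext spectral sequence together with finiteness of coherent cohomology on the compact space $X$. With $h^i := \dim T^i_X$ one has a candidate formal base: a quotient of $\C[[t_1, \ldots, t_{h^1}]]$ by at most $h^2$ power series relations.

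Next I would produce the formal semiuniversal deformation by verifying Rim's or Schlessinger's criteria in the analytic-Artinian setting, as developed in \cite{rim-formal} and used by Flenner in \cite{flenner-ueber}. Concretely, one works in the category of Artin local $\C$-algebras and checks the standard gluing axioms for the functor $\mathrm{Def}_X$; once they are confirmed, one inductively lifts deformations order by order, at each step killing the obstruction class in $T^2_X$ by imposing one new relation on the base. The outcome is a formal deformation $\mathcal{X}_{\mathrm{for}} \to \mathrm{Spf}(R)$ with $R$ complete local Noetherian and with tangent space canonically $T^1_X$, which gives formal semiuniversality essentially by construction (minimality of the tangent map).

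The main obstacle, and the deepest step, is convergence: one must realise this formal object as an actual germ of complex analytic space. This is the content of Grauert's theorem, with alternative proofs by Douady and Forster--Knorr. Grauert's route proceeds by exhibiting an explicit sequence of approximate analytic deformations and controlling the power-series coefficients by majorants, ultimately obtaining an analytic germ $(S, *)$ whose completion recovers $R$. The Douady and Forster--Knorr approaches instead endow the space of privileged neighbourhoods of $X$ with a Banach-analytic structure, reducing existence to an implicit function theorem in that setting. Either route yields a proper flat analytic family $\mathcal{X} \to S$ restricting to $X$ over $*$. Semiuniversality then follows: the completeness property is inherited from the formal construction by pulling back along any classifying morphism $h : B \to S$, and the uniqueness of $Dh$ at $*$ is forced by the identification $T_*(S) \simeq T^1_X$ together with the fact that the latter is the universal receiver of first-order Kodaira--Spencer classes.
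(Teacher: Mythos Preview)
The paper does not give a proof of this theorem. It is stated as a fundamental result from the literature with a citation to Flenner's Habilitationsschrift \cite[II.8.3]{flenner-ueber}, and the names in the attribution (Douady, Forster--Knorr, Grauert, Palamodov) indicate the original sources. There is therefore no ``paper's own proof'' to compare your proposal against.

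That said, your outline is an accurate high-level summary of how the classical proofs go: finite-dimensionality of $T^1_X$ and $T^2_X$ from compactness and coherence, a formal semiuniversal hull via Schlessinger/Rim, and then the hard analytic convergence step due to the named authors. This is precisely the content being cited, so your sketch is consistent with what the paper is invoking. One minor remark: Palamodov's approach via the cotangent complex and resolvent is somewhat different in flavour from the Grauert/Forster--Knorr division-theorem route and from Douady's Banach-analytic method, but you have correctly identified that the convergence step is the genuine depth of the theorem and that the formal part is comparatively routine.
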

Any compact complex space $X$ has a deformation-obstruction theory which is described by a complex $T^*(X)$, see \cite[Ch.\ 2.3]{palamodov}. The vector space $T^1(X)$ parametrises deformations of $X$, whereas $T^2(X)$ gives a group of obstructions. If $\mathcal{X} \to S$ is a semiuniversal deformation of $X$, then the germ $S$ can be described locally near the origin as the zeroes of a holomorphic map $\phi: V \to T^2(X)$, where $V \seq T^1(X)$ is an open neighbourhood about $0 \in T^1(X)$. In particular, $\dim (S,*) \geq \dim T^1(X)-\dim T^2(X)$. Moreover, the set of points in $s \in S$ such that  $\mathcal{X} \to S$ remains versal at $s$ is Zariski open, \cite[Thm.\ 7.1]{bingener}.

For any complex space $S$, let $\mathcal{A}n_S$ be the category of complex spaces over $S$. Let $ X \to S$ be a morphism of complex spaces. There is a functor
$$\text{Hilb}(X /S) \; : \;  \mathcal{A}n_S \to \{ \text{Sets} \}$$ defined by setting
$\text{Hilb}(X /S)(Z)$ equal to the set of subspaces $V \seq X \times_S Z$ which are proper and flat over $Z$. The following key result is due to Douady in the absolute setting and Pourcin in the relative case:
\begin{thm} [\cite{douady}, \cite{pourcin}]
Let $X \to S$ be a morphism of complex spaces. Then the functor $\text{Hilb}(X /S)$ is representable by a complex space over $S$.
\end{thm}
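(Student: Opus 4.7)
The plan is to reduce to Douady's representability theorem for the absolute Hilbert functor of a single complex space (the case $S=\mathrm{pt}$), and then bootstrap to the relative case following Pourcin. The question is local on $S$ and compatible with base change, so I would first shrink $S$ to a polydisk, and then further reduce to parametrizing compact analytic subspaces contained in a relatively compact open $U\seq X$; embedding $U\hookrightarrow S\times\C^N$ locally via a chart, the problem becomes one of parametrizing families of compact analytic subspaces of a fixed polycylinder $P\seq\C^N$, with parameter in $S$.

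For the absolute case, I would invoke Douady's framework of \emph{privileged polycylinders}. For a compact analytic subspace $Y\seq P$, a polycylinder $Q\Subset P$ is privileged if $\Gamma(\bar Q,\sheafO_Y)$ admits a finite presentation
\[ \sheafO_P^{\oplus q}(\bar Q)\to\sheafO_P^{\oplus p}(\bar Q)\to\sheafO_Y(\bar Q)\to 0 \]
whose matrix entries have norms controlled uniformly in the Banach topology as $Y$ varies. The space of such presentations naturally carries a \emph{Banach analytic space} structure, inside which the condition that the cokernel is supported on a genuine analytic subspace cuts out a closed Banach analytic subspace $H$. The hard step is to show that, locally near each point, $H$ admits a finite-dimensional complex-analytic retract carrying the universal family; this rests on Grauert's coherence theorem for proper direct images together with Weierstrass preparation and division, which trim the infinite-dimensional parameter space of presentations down to finite-dimensional analytic data. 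For the relative statement, one performs the same construction fiberwise over $S$: flatness of $V\to Z$ ensures that the privileged presentation of $\sheafO_V$ on $Z\times\bar Q$ varies $Z$-flatly, so the absolute Banach analytic construction globalizes to a Banach analytic space over $S$, and the finite-dimensional reduction produces a complex space $H(X/S)\to S$ representing $\mathrm{Hilb}(X/S)$ on the chosen open subset of $S$. These local representing objects are characterized up to unique isomorphism by their functors of points and therefore glue canonically, yielding the desired relative Douady space.

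The principal obstacle is the finite-dimensionality reduction in the absolute case: the naive parameter space of presentations is genuinely infinite-dimensional, and producing an honest complex-analytic structure on $H$ requires the full strength of Douady's privileged polycylinder machinery together with Grauert's coherence theorem. Once this is in hand, the passage to the relative case is essentially a bookkeeping exercise verifying that every step in the absolute construction commutes with base change in $S$, together with a standard descent/gluing argument on $S$.
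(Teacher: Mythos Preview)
The paper does not give a proof of this theorem at all: it is stated as a black-box citation to Douady and Pourcin, with only the remark that ``the construction of $\text{Hilb}(X/S)$ has a very different flavour to the algebraic construction of the Hilb functor.'' There is therefore no argument in the paper to compare your proposal against.

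That said, your sketch is a reasonable outline of the actual Douady--Pourcin strategy: the Banach analytic space of privileged presentations, the finite-dimensional reduction via Grauert coherence, and the passage to the relative case by verifying compatibility with base change and gluing. As a high-level roadmap this is correct. But you should be aware that what you have written is only a sketch of a sketch: the genuinely hard part (the finite-dimensional reduction) is acknowledged but not carried out, and the relative case in Pourcin requires more than ``bookkeeping'' --- one must in particular handle the non-compactness of the fibres $X_s$ and the lack of a global embedding, which is why the construction is done via an exhaustion by relatively compact opens and a careful gluing. If the intention is simply to cite the result (as the paper does), your proposal is overkill; if the intention is to actually prove it, substantial work remains.
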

The complex space $\text{Hilb}(X /S)$ is called the \emph{Douady space}. The construction of $\text{Hilb}(X /S)$ has a very different flavour to the algebraic construction of the Hilb functor. 

Let $f: X \to Y$ be a morphism between compact complex spaces. A deformation of $f$ over the germ representative $(S,*) \in\mathcal{A}n_{*}$ is defined to be a diagram 
$$\xymatrix{
\mathcal{X} \ar[r]^{F} \ar[d]^g
&\mathcal{Y} \ar[ld]^h \\
 S
}$$
where $F,g,h$ are morphism with $g,h$ flat and proper, and such that the diagram reduces to $f$ at $* \in S$. Two deformations of $f$ are equivalent if they coincide in an open set about $* \in S$. Morphisms between deformations of $f$ are defined in the natural way, \cite[I.3.D]{flenner-ueber}. Define the functor 
$$\text{Def}(X,f,Y) \; : \;  \mathcal{A}n_* \to \{ \text{Sets} \}$$
by sending $S$ to all equivalence classes of deformations of $f$ over $S$. This functor admits a deformation-obstruction theory in the sense of 
\cite[I.4.D]{flenner-ueber}, \cite[\S 6]{buchweitz-flenner}. For any deformation $a \in \text{Def}(X,f,Y)(S)$, this deformation obstruction theory is specified by functors
$$ T^{i}_{f}(a) \: : \: \text{Coh}_{\mathcal{O}_{S,*}} \to \text{Coh}_{\mathcal{O}_{S,*}} $$ between the categories of coherent $\mathcal{O}_{S,*}$ modules, see also \cite[\S 7.4]{buchweitz-flenner}. We define $$T^{i}_{f}:= T^{i}_{f}(a_0)(\C);$$
where $a_0$ is the trivial deformation of $f$; this is a $\C$ vector space. Note that there is a canonical isomorphism $T^{i}_{f}(a)(\C)\simeq T^{i}_{f}$ for any $i$ and \emph{any} deformation $a \in \text{Def}(X,f,Y)(S)$, \cite[Satz I.3.22]{flenner-ueber}. 

We call an element $a \in \text{Def}(X,f,Y)(A)$ \emph{versal} if for any $b \in \text{Def}(X,f,Y)(B)$ and a pair of a closed immersion $i:  (C,*) \hookrightarrow (B,*)$ of germs and a morphism $\gamma: (C,*) \to (A,*)$ of germs with $i^*b \simeq \psi^* a$, then there exists a morphism $$h: (B,*) \to (A,*)$$ of germs such that $h^{*}(a) \simeq b$ and $h \circ i = \gamma$. A versal deformation $a$ is called \emph{semiuniversal} if, in the above situation, the differential 
$$dh: T(B,*) \to T(A,*)$$
is independent of the choice of $h$. 
The following is \cite[II.8.3]{flenner-ueber}:
\begin{thm}[Flenner] \label{semiuniv-maps}
If $f: X \to Y$ is a morphism between compact complex spaces, then $f$ admits a semiuniversal deformation.
\end{thm}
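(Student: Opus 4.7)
The plan is to reduce the problem to the already-known existence of semiuniversal deformations of compact complex spaces, combined with the representability of the relative Douady functor, by using the graph construction. Concretely, to a morphism $f: X \to Y$ one associates its graph $\Gamma_f \subseteq X \times Y$, which is a closed subspace isomorphic to $X$ via the first projection. Deforming $f$ is then the same as deforming the triple $(X, Y, \Gamma_f)$ in such a way that the first projection $\Gamma_f \to X$ remains an isomorphism in the family, so the problem splits naturally into two pieces of well-understood deformation theory.

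First I would apply the Douady--Forster--Knorr--Grauert--Palamodov theorem twice to produce semiuniversal deformations $\mathfrak{X} \to (S_X,*)$ of $X$ and $\mathfrak{Y} \to (S_Y,*)$ of $Y$. Taking the fibre product over the point gives a flat and proper family $\mathfrak{X} \times \mathfrak{Y} \to (S_X \times S_Y,*)$ which is semiuniversal for deformations of the pair $(X,Y)$. Next, I would form the relative Douady space
\[
H := \mathrm{Hilb}\bigl(\mathfrak{X} \times \mathfrak{Y} \,/\, S_X \times S_Y\bigr),
\]
whose existence is guaranteed by the Douady--Pourcin theorem; the graph $\Gamma_f \subseteq X \times Y$ defines a distinguished point $[\Gamma_f] \in H$ lying over $*$.

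The third step is to cut out the locus $H^\circ \subseteq H$ parametrising subspaces $Z \subseteq \mathfrak{X}_s \times \mathfrak{Y}_t$ for which the first projection $Z \to \mathfrak{X}_s$ is an isomorphism. Since being an isomorphism is an open condition in proper flat families (e.g.\ by the usual argument with Fitting ideals or by \cite[II.1]{flenner-ueber}), $H^\circ$ is an open subspace of $H$ containing $[\Gamma_f]$. Over $H^\circ$ the universal subspace $\mathcal{Z} \subseteq (\mathfrak{X} \times \mathfrak{Y})_{H^\circ}$ is the graph of a morphism $F : \mathfrak{X}_{H^\circ} \to \mathfrak{Y}_{H^\circ}$, and the resulting diagram gives a deformation of $f$ over the germ $(H^\circ, [\Gamma_f])$.

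It remains to verify that this deformation is semiuniversal. Given any other deformation $F': \mathfrak{X}' \to \mathfrak{Y}'$ over a germ $(B,*)$, versality of $\mathfrak{X}$ and $\mathfrak{Y}$ produces classifying maps $B \to S_X \times S_Y$ together with isomorphisms pulling back $\mathfrak{X}, \mathfrak{Y}$ to $\mathfrak{X}', \mathfrak{Y}'$; the graph $\Gamma_{F'}$ then yields, via the universal property of the relative Douady space, a lift $B \to H^\circ$ inducing $F'$, which gives the required completeness and the rigidity condition with respect to closed embeddings. The hard part is the semiuniversality clause, i.e.\ checking that the differential $dh$ at the base point is intrinsically determined: this amounts to identifying the tangent and obstruction spaces of $\mathrm{Def}(X,f,Y)$ at the trivial deformation with those coming from the construction (essentially the exact sequence relating $T^i(X)$, $T^i(Y)$ and the normal complex $T^i_f$), and matching them with the corresponding tangent spaces of $S_X$, $S_Y$ and of the Douady space at $[\Gamma_f]$. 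This comparison, carried out via the cotangent complex of the graph embedding, is the technical core of Flenner's argument, and it is where the full machinery of \cite{flenner-ueber} enters; once it is in place, openness of versality then transports semiuniversality from the formal statement to the analytic germ.
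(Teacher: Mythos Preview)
Your construction is exactly the paper's: graph of $f$, semiuniversal deformations of $X$ and $Y$, the relative Douady space of $\mathfrak{X}\times\mathfrak{Y}$ over $S_X\times S_Y$, and the open locus where the first projection is an isomorphism. The one substantive difference is in the last step. The paper does \emph{not} attempt to verify semiuniversality directly via a tangent-space comparison; instead it invokes a general result of Flenner (\cite[Satz III.8.1]{flenner-ueber} or \cite[Satz 5.2]{flenner-kriterium}) to the effect that once a \emph{versal} deformation exists, a semiuniversal one exists as well. So the paper only checks versality of the Douady-space family, which is immediate from the universal property, and then stops. Your plan to match $T^i_f$ with the tangent spaces of $S_X$, $S_Y$ and of the Douady space at $[\Gamma_f]$ would also work, but it is the longer route and your description of it is rather sketchy; the paper's reduction is both cleaner and avoids the cotangent-complex bookkeeping you allude to.
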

\begin{proof}
From \cite[Satz III.8.1]{flenner-ueber} or \cite[Satz 5.2]{flenner-kriterium} it suffices to show that there exists a versal deformation of $f$.
The graph of $f$ gives a closed immersion $\Gamma \seq X \times Y$. \text{Pic}k semiuniversal deformations $\mathcal{X} \to A_1$, $\mathcal{Y} \to A_2$ of $X$, $Y$. There is an open subset $U \to A_1 \times A_2$ of the relative Douady space $Hilb_{A_1 \times A_2}(\mathcal{X} \times \mathcal{Y})$ parametrising morphisms $f_t: X_t \to Y_t$, by identifying the morphisms with their graph. The pull-back of the universal family over $Hilb_{A_1 \times A_2}(\mathcal{X} \times \mathcal{Y})$ via 
$U \seq Hilb_{A_1 \times A_2}(\mathcal{X} \times \mathcal{Y})$ gives a versal family.
\end{proof}
The above semiuniversal deformation remains versal in a Zariski-open subset about $* \in S$, \cite[Kor.\ I.4.11, Satz III.8.1]{flenner-ueber}, also see\cite{flenner-kriterium}. 

Fix a semiuniversal deformation $a \in \text{Def}(X,f,Y)(S)$ of $f$. As in the case of deformations of compact complex spaces, one can give a local description of the base $S$ of the semiuniversal deformation $a$. The following dimension bound is a direct application of \cite[Cor.\ 6.11]{buchweitz-flenner}.
\begin{prop}
Let $f: X \to Y$ be a morphism between compact complex spaces, and let $$a \in \text{Def}(X,f,Y)(S)$$ be a semiuniversal deformation. Then
$$ \dim (S,*) \geq \dim T^1_{f}-\dim T^2_{f}.$$
\end{prop}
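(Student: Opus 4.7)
The plan is to carry out the standard Kuranishi-type argument: realise the base germ $(S,*)$ as the zero locus of an analytic obstruction map from an open neighbourhood of $0 \in T^1_f$ into $T^2_f$, and then read off the dimension bound from an analytic version of Krull's height theorem.

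First I would record two standard consequences of semiuniversality. Because $a$ is semiuniversal, the Zariski tangent space $T(S,*)$ is canonically identified with $T^1_f$, the space of first-order deformations of $f$ (this is dual to the infinitesimal lifting property applied to the dual numbers and follows from \cite[Satz I.3.22]{flenner-ueber}). Moreover, the functor $T^i_f(\,\cdot\,)$ is a deformation-obstruction theory for the deformation functor $\text{Def}(X,f,Y)$ in the sense of Buchweitz--Flenner: obstructions to extending a deformation along a small surjection in $\mathcal{A}n_*$ live in $T^2_f$ tensored with the kernel, and they vanish if and only if the extension exists.

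With these two ingredients in hand, I would invoke the general framework of \cite[Cor.\ 6.11]{buchweitz-flenner} to produce a holomorphic obstruction (``Kuranishi'') map. Concretely, one chooses an open neighbourhood $V \subseteq T^1_f$ of the origin together with a holomorphic map
\[
\phi : V \longrightarrow T^2_f, \qquad \phi(0)=0, \qquad d\phi(0)=0,
\]
such that the germ $(S,*)$ is isomorphic, as a germ of complex spaces, to the zero germ $(\phi^{-1}(0),0)$. The existence of such a $\phi$ is precisely the content of the cited corollary: one uses versality to lift the universal first-order deformation step by step, and the failure to lift at each order is controlled by $T^2_f$, which assembles into the single holomorphic obstruction $\phi$ after applying the (analytic) Artin-type convergence results of Flenner.

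Once this local model is in place, the conclusion is immediate from the analytic version of Krull's height theorem (or equivalently the Rückert--Nullstellensatz style dimension estimate for analytic germs, see e.g.\ \cite[Ch.\ I]{greuel-introduction}): the zero germ of $\dim T^2_f$ holomorphic functions on a smooth germ of dimension $\dim T^1_f$ has every irreducible component of dimension at least $\dim T^1_f - \dim T^2_f$, which gives the claimed bound. The main obstacle in this plan is not the final dimension-theoretic step but the construction of $\phi$; the delicate point is that, unlike the purely formal case treated by Rim and Schlessinger, one needs the honest analytic Kuranishi map produced by \cite{buchweitz-flenner}, which in turn rests on the convergence statements underlying Theorem \ref{semiuniv-maps}.
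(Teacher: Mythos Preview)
Your proposal is correct and follows exactly the paper's approach: the paper simply states that the bound is ``a direct application of \cite[Cor.\ 6.11]{buchweitz-flenner}'', and your argument is precisely an unpacking of what that corollary asserts (a Kuranishi-type presentation of $(S,*)$ as the zero locus of an obstruction map $V \subseteq T^1_f \to T^2_f$) together with the standard dimension estimate for such a zero locus. There is nothing to add.
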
 
\begin{rem}
In fact, for the above dimension count, $a$ need only be a \emph{formal} semiuniversal deformation, as defined in \cite[\S 6]{buchweitz-flenner}.
\end{rem}

We now consider the case $f: D \to X$ is a stable map from a nodal curve to a projective K3 surface, with $f(D)$ one-dimensional. In this case, we have $$T^{i}(D)= \text{Ext}^{i}_{D}(\Omega_{D}, \mathcal{O}_{D}), \; \; \; \; T^{i}(X)=H^{i}(X,T_{X}),$$
where the first equality is just the statement that the cotangent complex of a nodal curve $D$ is the sheaf $\Omega_{D}$.
There is a long exact sequence
\begin{equation} \label{rans-sequence}
0 \to T^0(D / X) \to T^0_{f} \to T^0(X) \to T^1(D / X) \to T^1_{f} \to T^1(X) \to \ldots
\end{equation}
where the $\C$ vector spaces $T^i(D / X)$ fit into the long exact sequence
\begin{equation} \label{relative-morph-les}
\begin{split}
0 &\to T^0(D / X) \to T^0(D) \to H^0(D, f^*T_{X}) \to T^1(D / X) \to T^1(D) \\
&\to H^1(D, f^*T_{X}) \to \ldots
\end{split}
\end{equation}
see for example \cite[\S 4.3, \S 4.5]{palamodov}, \cite[Satz I.3.4]{flenner-ueber}, \cite[5.13]{bingener} (for the second sequence) and \cite[Appendix C]{greuel-introduction}. \footnote{A similar sequence appears in the well-known paper \cite{ran-maps} which takes a very different approach using non-commutative algebra. Its appears, however, that Ran's use of the cotangent \emph{sheaf} needs to be replaced with the cotangent \emph{complex} for his results to hold in the claimed generality (this makes no difference in our situation).}

The following calculation is taken from \cite[Prop.\ 4.1]{kemeny-thesis}.
\begin{prop} \label{main-good-dimension-count}
Let $f: D \to X$ be a stable map from a nodal curve to a projective K3 surface, with $f(D)$ one-dimensional.  Then we have the following bounds
\begin{align}
 \dim T^1_{f}-\dim T^2_{f} & = 19+p(D) \\
  \dim T^1_{D/X}-\dim T^2_{D/X} & \geq p(D) -1
 \end{align}
 where $p(D)$ denotes the arithmetic genus of $D$.
\end{prop}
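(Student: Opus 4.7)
Write $p = p(D)$ for brevity. The plan is to extract both statements by taking alternating sums (Euler characteristics) in the two long exact sequences \eqref{rans-sequence} and \eqref{relative-morph-les}, after first recording the relevant individual Euler characteristics and verifying that the sequences terminate.

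The first step is to assemble the inputs. On a K3 surface one has $T_X \simeq \Omega^1_X$ (via $K_X \simeq \mathcal{O}_X$), so $h^0(T_X) = h^2(T_X) = 0$ and $h^1(T_X) = 20$; in particular $\chi(T^\bullet(X)) = -20$ and $T^i(X) = 0$ for $i \geq 2$. For the nodal curve $D$, at each node $A = \mathbb{C}\{x,y\}/(xy)$ the sheaf $\Omega_D$ admits the length-two free resolution
\[
0 \to A \xrightarrow{(y,\,x)} A^{\oplus 2} \to \Omega_A \to 0,
\]
so $\mathcal{E}xt^i(\Omega_D, \mathcal{O}_D) = 0$ for $i \geq 2$; the local-to-global spectral sequence together with $\dim D = 1$ then forces $T^i(D) = 0$ for $i \geq 2$. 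A direct identification of derivations of $\mathcal{O}_D$ at a node gives $\mathcal{H}om(\Omega_D, \mathcal{O}_D) \simeq \nu_* T_{\tilde D}(-R)$, where $\nu : \tilde D \to D$ is the normalisation and $R \subset \tilde D$ is the preimage of the nodes; combined with $\dim_{\mathbb{C}} H^0(\mathcal{E}xt^1(\Omega_D, \mathcal{O}_D)) = \delta$ (the number of nodes), Riemann--Roch on $\tilde D$ yields $\chi(T^\bullet(D)) = (3 - 3g(\tilde D) - 2\delta) - \delta = 3 - 3p$. Finally, $c_1(T_X) = 0$ and Riemann--Roch for the rank-two locally free sheaf $f^*T_X$ on $D$ give $\chi(D, f^*T_X) = 2(1-p)$.

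Given the inputs, assembly is quick. The alternating sum in \eqref{relative-morph-les} gives $\chi(T^\bullet(D/X)) = \chi(T^\bullet(D)) - \chi(f^*T_X) = 1 - p$, and the sequence terminates by the vanishings just recorded, so $T^i(D/X) = 0$ for $i \geq 2$. The second assertion is then immediate:
\[
\dim T^1_{D/X} - \dim T^2_{D/X} = \dim T^0_{D/X} + (p-1) \geq p-1.
\]
The alternating sum in \eqref{rans-sequence} then gives $\chi(T^\bullet_f) = \chi(T^\bullet(D/X)) + \chi(T^\bullet(X)) = -19 - p$, with $T^i_f = 0$ for $i \geq 3$. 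Invoking the characterisation of stability (property (5) in the proposition just after the definition of stable map), $f$ stable means $T^0_f = 0$, and the first (exact) equality $\dim T^1_f - \dim T^2_f = 19 + p$ follows.

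The main obstacle I expect is the honest local computation at the nodes: verifying the two-term free resolution of $\Omega_A$ (hence vanishing of $\mathcal{E}xt^i$ in degrees $\geq 2$) and identifying $\mathcal{H}om(\Omega_D, \mathcal{O}_D) \simeq \nu_* T_{\tilde D}(-R)$ by hand, so that the correct Riemann--Roch computation on $\tilde D$ can be applied. With these local inputs secured, everything else is formal bookkeeping in the two long exact sequences.
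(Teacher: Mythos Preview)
Your proof is correct, modulo one slip: the claim ``$T^i(D/X)=0$ for $i\geq 2$'' is not justified (and need not hold); the long exact sequence only gives $T^i(D/X)=0$ for $i\geq 3$, since $T^2(D/X)$ sits as a quotient of $H^1(D,f^*T_X)$. Fortunately your very next line correctly keeps $\dim T^2_{D/X}$ in the computation, so the argument goes through once ``$i\geq 2$'' is replaced by ``$i\geq 3$''.

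The overall architecture---take alternating sums in the two long exact sequences \eqref{rans-sequence} and \eqref{relative-morph-les}, using $T^0_f=0$ and the vanishing of higher terms---is exactly the paper's. The genuine difference is in how you obtain $\chi(T^\bullet(D))=3-3p$. You compute it directly by the local analysis at the nodes (the two-term resolution of $\Omega_A$, the identification $\mathcal{H}om(\Omega_D,\mathcal{O}_D)\simeq\nu_*T_{\tilde D}(-R)$, and Riemann--Roch on $\tilde D$). The paper instead embeds $D$ diagonally into $\mathbb{P}^N\times X$ via $(f,j)$, with $j$ induced by a power of $\omega_D\otimes f^*\mathcal{O}(3)$, and reads the alternating sum off the conormal exact sequence $0\to N^*_{D,\mathbb{P}^N\times X}\to f^*\Omega_X\oplus(\Omega_{\mathbb{P}^N})_{|D}\to\Omega_D\to 0$ together with Riemann--Roch for the locally free normal bundles. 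Your route is more elementary and self-contained for nodal $D$; the paper's route avoids any local computation at singular points by exploiting only that $D$ is an embedded local complete intersection, and is the approach of \cite{arakol} it cites.
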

\begin{proof}
Since $f$ is stable, $T^0(D) \to H^0(D, f^* T_{X})$ is injective. Since we further have $T^0(X)=0$, this gives $T^0_{f}=0$.
To compute this, we follow \cite[p.\ 62ff.\ \!]{arakol}. A power of the ample bundle $\omega_{D} \otimes f^* \mathcal{O}_{D}(3)$ is very ample and hence it induces an embedding
$$ j \; : \; D \hookrightarrow \proj^N$$
for some integer $N$. The diagonal $\delta:=(f,j)$ then gives a second closed immersion
$$ \delta \; : \; D \hookrightarrow \proj^N \times X.$$
 As $D$ is nodal, $D$ is in particular a reduced, local complete intersection curve. Thus these two embeddings induce two normal bundles on $D$, namely $N_{D, \proj^N} $ and $N_{D, \proj^N \times X}$. We have a short exact sequence
 $$0 \to N^*_{D, \proj^N \times X} \to f^* \Omega_{X} \oplus (\Omega_{\proj^N})_{|_{D}} \to \Omega_{D} \to 0 .$$ Taking the long exact sequence of $\text{Ext}(\; , \mathcal{O}_{D})$ gives
 $$ \dim T^0_{D}-\dim T^1_{D} + \dim T^2_{D}= \chi(f^*T_{X})+\chi((T_{\proj^N})_{|_{D}})-\chi(N_{{D}, \proj^N \times {X}}).$$
 In fact, $T^2_{D}=H^1(\mathcal{E}xt^1(\Omega_{D}, \mathcal{O}_{D}))$ by the local-to-global spectral sequence for $Ext$, and since $\mathcal{E}xt^1(\Omega_{D}, \mathcal{O}_{D})$ is supported at the nodes of $D$ we have $T^2_{D}=0$, but we will not use this.
 
 From equation (\ref{relative-morph-les}) we get 
 \begin{align*}
 \dim T^0(D / X)-\dim T^1(D / X)+\dim T^2(D / X)
 =\chi((T_{\proj^N})_{|_{D}})-\chi (N_{D, \proj^N \times {X}})
 \end{align*}
 Combining this with the exact sequence (\ref{rans-sequence}), then gives
 $$\dim T^1_{f}-\dim T^2_{f} = \chi (N_{{D}, \proj^N \times {X}})-\chi((T_{\proj^N})_{|_{D}})+20.$$ It now suffices to determine $\deg N_{D, \proj^N \times {X}}$, as then $\chi (N_{D, \proj^N \times {X}})$ and $\chi((T_{\proj^N})_{|_{D}})$ can be readily computed using Riemann--Roch and the Euler sequence. There is a short exact sequence
 $$0 \to f^* T_{X} \to N_{D, \proj^N \times {X}} \to N_{D, \proj^N} \to 0 $$ and thus
 $\deg N_{D,\proj^N \times {X}}=\deg N_{D, \proj^N}$ since $X$ is a K3 surface. For an embedded local complete intersection curve
 $ D \seq \proj^N$, we have 
 $$\deg N_{D, \proj^N}=(N+1) \deg D+2p(D)-2 $$
 from \cite[p.\ 63]{arakol}, which allows one to finish the computations.
\end{proof}

Following \cite[\S 7.4]{buchweitz-flenner}, we now define a ``restricted" version of the $Def(X,f,Y)$ functor, which only allows chosen deformations of $Y$. Let $f: X \to Y$ be a morphism of compact complex spaces. Fix a representative of a complex space germ $(B,*) \in \mathcal{A}n_*$ and a flat deformation $h: \mathcal{Y} \to B$ of a compact complex space $Y$. Let  $\mathcal{A}n_{(B,*)}$ be the category of germs over $(B,*)$. Define
$$\text{Def}_{h}(X,f,Y) \; : \;  \mathcal{A}n_{(B,*)} \to \{ \text{Sets} \}$$ as follows. For any morphism of germs $j: (A,*) \to (B, *)$, let
$h_A: \mathcal{Y}_A \to A$ be the pull-back of $\mathcal{Y} \to B$ under $j$. Then $\text{Def}_{h}(X,f,Y)(A)$ is defined to be the subset of $Def(X,f,Y)(A)$ consisting of deformations of $f$ such that the induced deformation of $Y$ is $h_A: \mathcal{Y}_A \to A$. In other words, $\text{Def}_{h}(X,f,Y)(A)$ is the set of equivalence classes of diagrams of the form 
$$\xymatrix{
\mathcal{X} \ar[r]^{F} \ar[d]^g
&\mathcal{Y_A} \ar[ld]^{h_A} \\
 A
}$$
which reduce to $f$ at the distinguished point $* \in A$. Morphisms between objects are defined in the natural way, as are versal and semiuniversal deformations. The proof of Theorem \ref{semiuniv-maps} similarly shows that $\text{Def}_{h}(X,f,Y)$ has a semiuniversal deformation. 

The next lemma is rather obvious.
\begin{lem} \label{silly-lem-one}
Assume the compact complex space $Y$ admits a \emph{universal} deformation $h: \mathcal{Y} \to B$ over a germ $(B,*)$. Then any semiuniversal deformation $a \in \text{Def}(X,f,Y)(S)$ may be considered as a semiuniversal deformation for the deformation functor $\text{Def}_h(X,f,Y)$.
\end{lem}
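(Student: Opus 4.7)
The plan is to exploit universality of $h$ (not merely semiuniversality) to endow $(S,*)$ canonically with a $B$-structure, and then to transport versality and semiuniversality from the unrestricted functor $\text{Def}(X,f,Y)$ to the restricted functor $\text{Def}_h(X,f,Y)$ essentially formally.

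First, since $a \in \text{Def}(X,f,Y)(S)$ in particular induces a flat deformation of $Y$ over $S$, the universality of $h: \mathcal{Y} \to B$ yields a \emph{unique} morphism of germs $\phi: (S,*) \to (B,*)$ pulling $h$ back to this deformation of $Y$. Via $\phi$, view $(S,*) \in \mathcal{A}n_{(B,*)}$; then $a$ becomes an element of $\text{Def}_h(X,f,Y)(S)$.

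Next, check versality of $a$ in the restricted sense. Let $(A,*) \to (B,*)$ be a germ over $B$ with structure map $j$, take $b \in \text{Def}_h(X,f,Y)(A)$, a closed immersion $i: (C,*) \hookrightarrow (A,*)$ in $\mathcal{A}n_{(B,*)}$, and $\gamma: (C,*) \to (S,*)$ a morphism in $\mathcal{A}n_{(B,*)}$ satisfying $i^{*}b \simeq \gamma^{*}a$. Forgetting the $B$-structures, semiuniversality of $a$ in $\text{Def}(X,f,Y)$ produces a morphism of germs $\tilde{h}: (A,*) \to (S,*)$ with $\tilde{h}^{*}a \simeq b$ and $\tilde{h} \circ i = \gamma$. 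The composition $\phi \circ \tilde{h}: (A,*) \to (B,*)$ classifies the deformation of $Y$ underlying $\tilde{h}^{*}a \simeq b$, which is exactly $h_A = j^{*}h$. By the \emph{universality} of $h$ — and this is the key point where universality, rather than mere semiuniversality, is used — the classifying map for $h_A$ is unique, so necessarily $\phi \circ \tilde{h} = j$. Hence $\tilde{h}$ is automatically a morphism in $\mathcal{A}n_{(B,*)}$, as required for versality of $a$ in $\text{Def}_h(X,f,Y)$.

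Finally, for the differential condition: the collection of admissible $\tilde{h}$ in the restricted setting is contained in the collection of admissible morphisms in the unrestricted setting, so uniqueness of $d\tilde{h}: T(A,*) \to T(S,*)$ for $a$ as a semiuniversal deformation in $\text{Def}(X,f,Y)$ implies the corresponding uniqueness in $\text{Def}_h(X,f,Y)$. Thus $a$ is semiuniversal for $\text{Def}_h(X,f,Y)$. The only substantive ingredient is the uniqueness clause in the universality of $h$; the rest is bookkeeping.
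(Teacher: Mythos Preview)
Your proof is correct and is exactly the natural unpacking of the definitions; the paper itself gives no proof, simply remarking that the lemma is ``rather obvious.'' Your argument makes explicit the one genuine point --- that universality (rather than mere versality) of $h$ is needed to force $\phi \circ \tilde{h} = j$, so that the lift $\tilde{h}$ produced by the unrestricted theory is automatically a $B$-morphism --- which is presumably what the author had in mind.
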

The following lemma is straightforward.
\begin{lem} \label{silly-lem-two}
Let $j: (B,*) \to (A,*)$ be a morphism of complex germs, let $h: \mathcal{Y} \to A$ be a flat deformation of a compact complex space $Y$, and let 
$h_B: \mathcal{Y}_B \to B$ be the pullback of $h$ under $j$. Let $a \in \text{Def}_h(X,f,Y)(S)$ be a semiuniversal element, over the base $k: S \to A$. Let $pr_1: S \times_A B \to S$ denote the projection. Then $$b:=pr_1^*a \in \text{Def}_{h_B}(X,f,Y)(S\times_A B)$$ provides a semiuniversal deformation for $\text{Def}_{h_B}(X,f,Y)$.
\end{lem}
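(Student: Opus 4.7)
The plan is a straightforward diagram chase, using the universal property of the fibre product $S \times_A B$ at the level of morphisms, and then at the level of tangent spaces for the semiuniversality statement.

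First I would verify that $b = pr_1^* a$ genuinely lies in $\text{Def}_{h_B}(X,f,Y)(S \times_A B)$. Writing $pr_2: S \times_A B \to B$ for the second projection, the deformation of $Y$ induced by $b$ is $pr_1^*(k^* h)$. Since $k \circ pr_1 = j \circ pr_2$ by definition of the fibre product, this equals $pr_2^*(j^* h) = pr_2^* h_B$, as required.

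Next I would prove versality. Suppose $c \in \text{Def}_{h_B}(X,f,Y)(C)$ has structure map $l: C \to B$, and we are given a closed immersion $i: C' \hookrightarrow C$ together with a $B$-morphism $\gamma: C' \to S \times_A B$ satisfying $i^* c \simeq \gamma^* b$. Composing with $j: B \to A$, we may regard $c$ as an object of $\text{Def}_h(X,f,Y)(C)$ with structure map $j \circ l: C \to A$. The $A$-morphism $\gamma' := pr_1 \circ \gamma: C' \to S$ satisfies $\gamma'^* a = \gamma^*(pr_1^* a) = \gamma^* b \simeq i^* c$. Applying semiuniversality of $a$ inside $\text{Def}_h(X,f,Y)$, I obtain an $A$-morphism $h': C \to S$ with $h'^* a \simeq c$ and $h' \circ i = \gamma'$. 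Since $h'$ and $l$ are both $A$-morphisms, the universal property of $S \times_A B$ produces a unique $h: C \to S \times_A B$ with $pr_1 \circ h = h'$ and $pr_2 \circ h = l$; in particular $h$ is a $B$-morphism. Then $h^* b = h^* pr_1^* a = h'^* a \simeq c$, and the two identities $pr_1 \circ (h \circ i) = h' \circ i = \gamma' = pr_1 \circ \gamma$ and $pr_2 \circ (h \circ i) = l \circ i = pr_2 \circ \gamma$ force $h \circ i = \gamma$ by the uniqueness part of the fibre product property.

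Finally, for semiuniversality I would verify that the differential $dh: T(C,*) \to T(S \times_A B,*)$ is independent of the choice of $h$. Tangent spaces commute with fibre products of germs, so
\[
T(S \times_A B, *) \cong T(S,*) \times_{T(A,*)} T(B,*),
\]
and $dh$ is uniquely determined by its two components $pr_1 \circ dh = dh'$ and $pr_2 \circ dh = dl$. The second component is fixed since $l$ is the given structure map of $c$, while the first is independent of choices by the semiuniversality of $a$ inside $\text{Def}_h(X,f,Y)$. Hence $dh$ is canonical, completing the proof. The only mildly delicate point is to keep track of which morphisms live over $A$ and which over $B$; once this bookkeeping is done, everything reduces to the universal property of the fibre product.
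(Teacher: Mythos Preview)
Your proof is correct and follows essentially the same approach as the paper: versality via the universal property of the fibre product (which the paper actually omits, so your detailed argument is a welcome addition), and semiuniversality by decomposing the differential into its two fibre-product components. The paper phrases the semiuniversality step by working directly with tangent vectors as morphisms from the double point $T_\epsilon$ rather than invoking the identity $T(S \times_A B,*) \cong T(S,*) \times_{T(A,*)} T(B,*)$, but this is the same argument unwound one level.
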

\begin{proof}
Versality is an easy application of the universal property of the fibre-product, \cite[Def.\ I.1.46]{greuel-introduction}. We omit the proof. We denote by $T_{\epsilon} \in \mathcal{A}n_*$ the double point at $*$; i.e.\ the germ associated to the analytic $\C$-algebra $\C \{t \}/ (t^2)$; this has a one-dimensional tangent space spanned by $\epsilon$. For any complex space germ $(W,*)$ there is a canonical identification between the tangent space $T(W,*)$ and the vector space of morphisms $r: T_{\epsilon} \to W$ given by $r \mapsto dr(\epsilon)$. Let $c \in \text{Def}_{h_B}(X,f,Y)(C)$ be a deformation over a base $l: C \to B$, $v: T_\epsilon \to C$ any tangent vector and let $q_i: C \to S\times_A B$ for $i=1,2$ be $B$-morphisms such that $q_i^*b=c$. We need to show that $q_1 \circ v=q_2 \circ v$ (or equivalently $dq_1 (v)=dq_2(v)$, identifying $v$ with a tangent vector).

We have $q_i^*pr_1^*a=q_i^*b=c$ for $i=1,2$. Since $c$ is a semiuniversal element for $\text{Def}_h(X,f,Y)(S)$, we have $pr_1 \circ q_1 \circ v=pr_2 \circ q_2 \circ v$. Set $\alpha:=pr_1 \circ q_r \circ v \; : \;T_{\epsilon} \to S$ and set $\beta:=l \circ v \; : \; T_{\epsilon} \to B$. Then both $q_1 \circ v$ and $q_2 \circ v$ fit into the commutative diagram:
$$\xymatrix{
T_{\epsilon} \ar@/_/[ddr]_{\beta} \ar@/^/[drr]^{\alpha} \ar[dr]^{q_i \circ v} \\
& S \times_A B \ar[d]^{pr_2} \ar[r]_{pr_1}
& S \ar[d]^k  \\
& B \ar[r]_j &A }.$$ The universal property of the fibre product then implies $q_1 \circ v=q_2 \circ v$.
\end{proof}

We now restrict to our setting. Suppose $h: \mathcal{X} \to B$ is a flat family of deformations of the K3 surface $X$. Let $f: D \to X$ be a stable map with $f(D)$ one-dimensional.
There is a deformation theory of $\text{Def}_h(D,f, \mathcal{X})$, relative to $h$, described by the vector spaces $T^i(D/X)$ from Equation (\ref{relative-morph-les}). This gives the following, see \cite[Cor.\ 6.11, \S 7.4]{buchweitz-flenner} (also compare with the discussion in \cite[\S 2]{huy-kem}).
\begin{prop} \label{relative-analy-dimct}
Let $h: \mathcal{X} \to B$ is a flat family of deformations of the K3 surface $X$ and let $a \in \text{Def}_h(D,f, \mathcal{X})(S)$ be a semiuniversal element.
 Then
$$ \dim (S,*) \geq \dim B+\dim T^1(D/X)-\dim T^2(D/X).$$
\end{prop}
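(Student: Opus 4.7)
My plan is to reduce the claim to the general dimension estimate of Buchweitz--Flenner for semiuniversal deformations equipped with a relative deformation--obstruction theory, adapted here to the restricted functor $\text{Def}_h(D, f, \mathcal{X})$.

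First, I would identify the relevant tangent and obstruction spaces. Because we are forcing the deformation of the target to be the fixed family $h : \mathcal{X} \to B$, rather than allowing $X$ to deform freely, the infinitesimal theory controlling $\text{Def}_h$ \emph{relative to} $B$ is precisely the relative one of $f$: at the trivial deformation, the relative tangent space is $T^1(D/X)$ and obstructions land in $T^2(D/X)$. This is exactly the content of the long exact sequence (\ref{relative-morph-les}), and it is the kind of relative deformation--obstruction theory formalised in \cite[\S 7.4]{buchweitz-flenner}.

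Second, I would invoke the general bound of \cite[Cor.\ 6.11]{buchweitz-flenner}: given a semiuniversal deformation of a functor with a relative deformation--obstruction theory as above, the base $S$ is locally near $*$ the zero locus of at most $\dim T^2(D/X)$ holomorphic equations inside a smooth complex germ of relative dimension $\dim T^1(D/X)$ over $B$. Since $S$ lives naturally over $B$ by the very definition of $\text{Def}_h$, this immediately gives
$$\dim(S, *) \;\geq\; \dim B + \dim T^1(D/X) - \dim T^2(D/X),$$
which is the desired bound.

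The only delicate point is checking that the relative deformation--obstruction theory of $\text{Def}_h$ is well-defined over a possibly singular base $B$; this is exactly what \cite[\S 7.4]{buchweitz-flenner} takes care of, so no further calculation is needed. A second route, should one wish to avoid invoking the full relative machinery, would use the fibre-product construction from Lemmas \ref{silly-lem-one} and \ref{silly-lem-two}: pull back a semiuniversal absolute deformation of $f$ (with base $S_0$ of dimension $\geq \dim T^1_f - \dim T^2_f$ by Proposition \ref{main-good-dimension-count}) along the classifying morphism from $B$ to the smooth $20$-dimensional base $A$ of the semiuniversal deformation of $X$, and apply the naive bound $\dim(S_0 \times_{A} B, *) \geq \dim S_0 + \dim B - 20$ together with the identity $\dim T^1_f - \dim T^2_f - 20 = \dim T^1(D/X) - \dim T^2(D/X) - 1$ extracted from the exact sequence (\ref{rans-sequence}). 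Either route yields the stated inequality, but the direct appeal to \cite{buchweitz-flenner} is the most economical.
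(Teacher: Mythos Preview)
Your primary route is correct and is exactly what the paper does: it simply invokes \cite[Cor.~6.11, \S 7.4]{buchweitz-flenner} for the restricted functor $\text{Def}_h$, noting that the relative tangent--obstruction spaces are $T^1(D/X)$ and $T^2(D/X)$.

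Your alternative route via the fibre product contains an arithmetic slip. From the long exact sequence (\ref{rans-sequence}), using $T^0(X)=T^2(X)=0$, $\dim T^1(X)=20$, and $T^0_f=0$ (stability), the alternating sum gives
\[
\dim T^1_f - \dim T^2_f - 20 \;=\; \dim T^1(D/X) - \dim T^2(D/X),
\]
\emph{not} $\dim T^1(D/X) - \dim T^2(D/X) - 1$ as you wrote. With your stated identity the fibre-product bound would fall short of the proposition by one; with the correct identity it works. (The dimension estimate $\dim(S_0 \times_A B) \geq \dim S_0 + \dim B - 20$ is legitimate since $A = \text{Def}(X)$ is smooth of dimension $20$ for a K3 surface, and Lemma~\ref{silly-lem-one} applies because $H^0(X,T_X)=0$ makes the Kuranishi family universal.)
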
 
We have implicitly used there the fact that, in the notation of Buchweitz--Flenner: $$\dim T^i_{\widetilde{\mathcal{D}} / \widetilde{\mathcal{X}}}(\mathcal{O}_D)=\dim T^{i}_{D/X}(\mathcal{O}_D)$$ for any deformation $F: \widetilde{\mathcal{D}} \to \widetilde{\mathcal{X}}$ of $f$. This follows, for instance, from the long exact sequence \cite[Satz I.3.4]{flenner-ueber} and base change \cite[Satz I.3.22]{flenner-ueber}, \cite[(5.14)]{bingener}.

We now return to the Deligne--Mumford stack $\mathcal{W}_{g,k}^n$. By seeing this as an algebraic stack, it comes with the ``standard" deformation-obstruction theory \cite{behrend-fantechi}, \cite{behrend}. This leads to the bound 
\begin{equation} \label{bad-bound}
\dim \mathcal{W}_{g,k}^n \geq 18+p(g,k)-n
\end{equation}
 which is perhaps most easily seen from the description of $\mathcal{W}_{g,k}^n$ as a quotient of a relative Hilbert scheme, cf.\ \cite[\S 10]{arakol}. 

It is a well-known fact that Bound \ref{bad-bound} is not optimal. For instance, applying it to the case of rational curves in primitive classes, i.e.\ $k=1$, $p(g,k)=n$, it would suggest that there is an $18$-dimensional divisor in the space $\mathcal{B}_g$ of primitively polarised K3 surfaces admitting a rational curve in the linear system induced by the polarisation. This does not sit well with the result of Mori--Mukai stating that \emph{all} primitively polarised K3 surfaces admit (possibly non-reduced) rational curves in the class of the polarisation, \cite[Appendix]{mori-mukai}. The solution to this problem comes by considering \emph{all} deformation of the K3 surface $X$ and not merely the algebraic ones.

We firstly need a simple lemma about complex spaces.
\begin{lem} \label{local-parametrisation-theorem}
Let $S$ be an irreducible, reduced complex space, and $x \in S$ a point. There exists a holomorphic map 
$$z \; : \; \Delta \to S $$
where $\Delta \seq \C$ is a disc about the origin, such that $z(0)=x$ and such that there exists a nonempty, analytically open set $\Delta^0 \seq \Delta$ with $z(y)$ lying in the smooth locus of $S$ for all $y \in \Delta^0$.
\end{lem}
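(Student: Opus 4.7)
Let $n := \dim_x S$. If $n = 0$ the point $x$ is isolated in $S$ and hence automatically smooth, so the constant map $z : \Delta \to S$, $w \mapsto x$, does the job with $\Delta^0 = \Delta$. So from now on assume $n \geq 1$. Since $S$ is reduced, the singular locus $\Sigma := \mathrm{Sing}(S)$ is a proper closed analytic subset of $S$ and its local dimension at $x$ is strictly less than $n$ (see e.g.\ \cite[II.5]{greuel-introduction}).

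The first step is to produce a one-dimensional irreducible closed analytic subset $C \seq S$ containing $x$ which is \emph{not} contained in $\Sigma$. For this, I embed a neighbourhood of $x$ analytically into some $\C^N$, sending $x$ to the origin, and intersect with a linear subspace $L \seq \C^N$ of codimension $n-1$ through $0$. The local dimension theorem for analytic germs guarantees that for generic $L$ the intersection $S \cap L$ is purely one-dimensional at $x$; a dimension count shows that for a sufficiently generic $L$ the auxiliary intersection $\Sigma \cap L$ is at most zero-dimensional near $x$. Any irreducible component of $S \cap L$ passing through $x$ then gives a curve $C$ with $C \not\seq \Sigma$.

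The second step is to pass to a smooth parameter. Let $\nu : \tilde C \to C$ be the normalisation, so that $\tilde C$ is a smooth one-dimensional complex space. Pick a preimage $\tilde x \in \nu^{-1}(x)$, and choose a holomorphic chart $\varphi : \Delta \to \tilde C$ from a disc about the origin in $\C$ with $\varphi(0) = \tilde x$. Set $z := \iota \circ \nu \circ \varphi : \Delta \to S$, where $\iota : C \hookrightarrow S$ is the inclusion. Then $z(0) = x$. Because $C$ is irreducible and $C \not\seq \Sigma$, the set $C \setminus \Sigma$ is open and dense in $C$; its preimage under the finite map $\nu$ is the complement of a discrete set in $\tilde C$. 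Therefore $\Delta^0 := \varphi^{-1}\bigl(\nu^{-1}(C \setminus \Sigma)\bigr)$ is a nonempty analytically open subset of $\Delta$, and by construction $z(\Delta^0)$ lies in the smooth locus of $S$.

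\textbf{Main obstacle.} The only non-formal step is the existence of $C$ with $C \not\seq \Sigma$, i.e., the generic slicing argument. This ultimately rests on the local parametrisation theorem (Noether normalisation) for reduced analytic germs and the standard dimension estimates for intersections of analytic sets; once this is granted, the remainder of the proof is a routine passage to the normalisation of the curve $C$.
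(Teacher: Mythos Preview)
Your proof is correct and follows essentially the same strategy as the paper's: construct a one-dimensional irreducible analytic subset through $x$ that is not contained in the singular locus, then pass to its normalisation and take a local chart. The only cosmetic difference is in how the curve is produced: the paper uses the Local Parametrisation Theorem to get a finite projection $\pi: U \to V \seq \C^m$ and pulls back a generic line in $V$, whereas you embed a neighbourhood of $x$ into $\C^N$ and slice with a generic linear subspace of codimension $n-1$. These are dual formulations of the same dimension-counting argument, and both rest on the same underlying fact (Noether normalisation for analytic germs).
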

\begin{proof}
Let $m=\dim S$. If $m=0$ the result is trivial, whereas for $m = 1$ the result follows from the existence of normalisations of reduced complex spaces, \cite[Thm.\ I.1.95]{greuel-introduction}. Assume $m \geq 2$. By the Local Parametrisation Theorem (e.g.\ \cite[Cor.\ 4.6.7]{taylor-several} or \cite{gunning}), there exists an open neighbourhood $U \seq S$ about $x$, an open neighbourhood $V \seq \C^m$ about the origin and a finite morphism $\pi: U \to V$, such that $\pi(U)$ contains an open subset about the origin and $\pi(x)=0$. \text{Pic}k a point $v \in V$ near the origin and outside of the branch locus (which is a proper closed analytic set), and a line $L$ from the origin to $v$. By considering $L$ as the zeroes of a holomorphic function $\C^m \to \C^{m-1}$ and applying \cite[Prop.\ I.1.85]{greuel-introduction}, there exists an irreducible, one-dimensional analytic subset $\widetilde{\Delta} \seq U$ containing $x$ and such that $\pi(\widetilde{\Delta}) \seq L$. Since the restriction $\pi_{\widetilde{\Delta}}: \widetilde{\Delta} \to L$ has isolated fibres it is locally finite, \cite[Thm.\ I.1.66]{greuel-introduction}, and hence $\pi(\widetilde{\Delta})$ contains an open subset of $L$ about the origin. In particular, $\pi(\widetilde{\Delta})$ contains points outside the branch locus. Let $\widetilde{\Delta}_{red}$ denote the reduction, and then take $\Delta$ to be an open disc over $x$ in the normalisation of $\widetilde{\Delta}_{red}$. Then $\Delta$ has the desired property.
\end{proof}
We now need a proposition about the topology of nodal curves. The main idea we need is that of oriented real blow-ups. These occur naturally in the study of degenerations of surfaces, \cite[Pg.\ 33-35]{persson}. They have become a standard tool in logarithmic geometry and Teichm\"{u}ller theory, see e.g.\ \cite[Pg.\ 404]{kawamata-namikawa}, \cite{kato-log}, \cite[\S 2]{looijenga-cellular}, \cite[VI]{hubbard-compact}. We follow the presentation from \cite[\S 8.2]{abramovich-gillam}; also see \cite[X.9]{arbarello-II} and the unpublished manuscripts \cite[\S 2]{kawamata-fiber}, \cite{gillam}. 

Let $X$ be a topological space, $\pi: L \to X$ a complex line bundle and $s: X \to L$ a section. Choose local trivialisations $(\pi, \phi): L \to X \times \C$. We start with the set
$$ B_{L,s}(X):=\{ l \in L \; | \; |\phi(l)| \cdot (\phi s \pi)(l)= \phi l \cdot |(\phi s \pi)(l)| \}.$$ This space is invariant under the $\C^*$ action and glues to give a well-defined topological space over $X$. Further, it contains the zero-section and is invariant under the natural $\mathbb{R}_{>0}$ action. Let $B^*_{L,s}(X) \seq B_{L,s}(X)$ denote the complement of the zero section and define the \emph{simple real blow-up},\footnote{This is denoted $Blo_{L,s}(X)$ in \cite{abramovich-gillam}. We are choosing the notation to emphasise the difference with the full oriented real blow up.} denoted $SiBl_{L,s}(X)$, by
$$SiBl_{L,s}(X) := B^*_{L,s}(X) /  \mathbb{R}_{>0}.$$ This comes with a proper morphism of topological spaces $\pi: SiBl_{L,s}(X)  \to X$, which is a homeomorphism away from the zeroes of $s$ and an oriented $S^1$ bundle over $Z(s)$. We have the following functorialities. Let $p: Y \to X$ be a continuous morphism, then there is a homeomorphism
$$SiBl_{L,s}(X) \times_X Y \simeq SiBl_{p^*L,p^*s}(Y) .$$ Next let $L_1, \ldots, L_n$ be complex line bundles on $X$ with sections $s_i$, and $L=L_1 \otimes \ldots \otimes L_n$ with section $s=s_1 \otimes \ldots \otimes s_n$. Then the multiplication
$$L_1 \times_X L_2 \ldots \times_X L_n \to L $$
induces a map 
$$ SiBl_{L_1,s_1}(X) \times_X \ldots \times_X SiBl_{L_n,s_n}(X) \to SiBl_{L,s}(X) .$$
Now assume that $X$ is a complex manifold and that $D$ is a smooth divisor, defined by a section $s$ of a holomorphic line bundle $L$. In this case $SiBl_{L,s}(X)$
has the structure of a real analytic manifold with corners (for more on this notion see \cite[Pg.\ 363]{lee-intro}). If $U \seq \C^n$ is a local chart for $X$, and if $s$ is locally the function $s : U \to \C$ on $U$, then we can describe $SiBl_{L,s}(X)$ locally as the set
$$\{ (x, \tau) \in U \times S^1 \; | \; s(x)=|s(x)| \tau\},$$
where $S^1$ denotes the unit circle in the complex plane. Now assume $X$ is a complex manifold and $D=D_1 \cup \ldots \cup D_k$ is a union of smooth divisors with normal crossings. Let $L = \mathcal{O}(D)$, and $L_i= \mathcal{O}_{D_i}$, and let $s_i \in H^0(X,L_i)$ define $D_i$, with $s=s_1 \otimes \ldots \otimes s_k$. We define the \emph{oriented real blow up} of $X$ at $D$ to be 
$$ Bl_{L,s}(X):= SiBl_{L_1,s_1}(X) \times_X \ldots \times_X SiBl_{L_k,s_k}(X).$$
In fact, if $D$ is any normal crossing divisor, meaning it has the above form locally but not necessarily globally, then can one define the real oriented blow up 
by glueing together charts of the above form, but we will not need this. $ Bl_{L,D}(X)$ is an analytic manifold with corners, which comes with a proper map $p: Bl_{L,D}(X) \to X$ (use \cite[Prop.\ 10.1.5 d]{bourbaki-top} to see properness). It has the following local description: for a small open $U \seq X$, $p^{-1}(U)$ is given by
$$\{ (x, \tau_1, \ldots, \tau_k) \in U \times (S^1)^k \; | \; s_i(x)=|s_i(x)| \tau_i \; \text{for all $1 \leq i \leq k$}\}.$$ If $n= \dim (U)$, then these local patches are real-analytically isomorphic to
$$(S^1)^k \times [0, \epsilon)^k \times \Delta_{\epsilon}^{n-k},$$ where $\Delta_{\epsilon}^{n-k}$ is an $\epsilon$-disc in $\C^{n-k}$, see \cite[Pg.\ 150]{arbarello-II}.
Note that $ Bl_{L,s}(X)$ is related but different to $SiBl_{L,s}(X)$; indeed if all divisors pass through the origin then the first has central fibre $(S^1)^k$, whereas the second has $S^1$ as the central fibre.

The payoff for all these definitions is that we can now state the following remarkable result, due to E.\ Looijenga.
\begin{thm} [Looijenga] \label{looi-thm}
Let $\mathcal{C} \to T$ be a Kuranishi family of stable marked curves. Let $0 \in B$ be the origin, and let $p_1, \ldots, p_k$ be the nodes in the central fibre. Let $D_1, \ldots, D_k$ be the divisors in $T$ corresponding to deformations which preserve the node $p_k$. Let $D$ be the normal crossing divisor $D_1 \cup \ldots \cup D_k$ and $L=\mathcal{O}_T(D)$ with $s \in H^0(T,L)$ the section defining $D$. Then there is an analytic manifold with corners $Z$, an analytic fibration $Z \to Bl_{L,s}(T)$ with fibre a fixed orientable Riemann surface $\Sigma$, and a surjective, continuous map $\lambda: Z \to \mathcal{C}$ giving a commutative diagram
$$\xymatrix{
Z \ar[r]^{\lambda} \ar[d]
&\mathcal{C} \ar[d]^g \\
Bl_{L,s}(T) \ar[r]^h & T \;.
}$$ For each point $p \in Bl_{L,s}(T)$, the induced map on fibres
$$\lambda_p \; : \; \Sigma \simeq Z_p \to \mathcal{C}_p $$
is a diffeomorphism away from the nodes of $\mathcal{C}_p $.
\end{thm}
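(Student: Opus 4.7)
The plan is to build $Z$ by doing an explicit local analysis near each node, trivializing away from the nodes with Ehresmann, and gluing the two descriptions over $Bl_{L,s}(T)$. First I would fix, for each node $p_i$ of $\mathcal{C}_0$, an analytic neighbourhood $U_i \seq \mathcal{C}$ in which the family acquires the standard nodal form
$$U_i \ISOM \{(x,y,t) \in \Delta^2 \times T \; : \; xy = t_i \},$$
where $t_i$ is a local equation for the boundary divisor $D_i \seq T$. On $U_i$, the oriented real blow-up introduces $\tau_i \in S^1$ and $r_i = |t_i| \geq 0$ with $t_i = r_i \tau_i$. Writing $x = \rho_x \xi$, $y = \rho_y \eta$ with $\xi, \eta \in S^1$ and $\rho_x, \rho_y \geq 0$ subject to the two real-analytic relations $\xi \eta = \tau_i$ and $\rho_x \rho_y = r_i$, one obtains a real-analytic manifold with corners mapping properly to $Bl_{L,s}(T)$. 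Its fibre over $r_i > 0$ is the annulus $\{\rho_x \rho_y = r_i\}$, while its fibre over $r_i = 0$ consists of two half-open cylinders glued along the circle at infinity via $\xi \eta = \tau_i$, i.e.\ a standard cylinder. This defines $Z$ and the map $\lambda$ locally: collapse the circle $\{r_i = 0\}$ to the node $p_i$.

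Next, away from the nodes, $\mathcal{C} \setminus \{p_1, \ldots, p_k\} \to T$ is a proper submersion on each bordered piece, so by Ehresmann's lemma (after shrinking $T$ to a Stein contractible base, which is allowed since the statement is local on $T$), one obtains a $C^\infty$-trivialization of $\mathcal{C} \setminus \bigcup_i U_i'$ over $T$, for $U_i' \subsetneq U_i$ slightly smaller neighbourhoods of the nodes. Pulled back to $Bl_{L,s}(T)$, this yields a trivial bundle whose fibre is a fixed bordered surface with $2k$ boundary circles, one for each preimage of a node in the normalisation $\widetilde{\mathcal{C}_0}$. The reference surface $\Sigma$ is then obtained by gluing this bordered surface to $k$ standard cylinders along chosen diffeomorphisms of the boundary circles; its topological type is independent of the point of $Bl_{L,s}(T)$.

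The main step, and the principal obstacle, is to glue the two constructions on the overlap $U_i \setminus U_i'$, which in the plumbing picture is an annulus bundle parametrised by $\xi \in S^1$ and $\rho_x$ and in the Ehresmann picture is the analogous annulus bundle extracted from a tubular neighbourhood of $p_i$ in the smooth part of $\mathcal{C}$. Both restrict to the same trivial $S^1$-bundle of annuli over the open stratum $\{r_i > 0\}$, and the content of the gluing is that the $S^1$-framing given by $\tau_i$ on the blow-up matches, up to a continuous reparametrisation, the rotational framing of the normal bundle to the node that underlies the Ehresmann trivialization. Existence of such matching framings is obtained by a partition-of-unity argument compatible with holomorphic coordinates near $p_i$, and continuity across $r_i = 0$ follows by inspection of the local model.

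Once this compatibility is in place, the local pieces glue to a real-analytic manifold with corners $Z$ fibred over $Bl_{L,s}(T)$ with fibre $\Sigma$, and the map $\lambda: Z \to \mathcal{C}$ is defined to be the identity on the complement of the plumbing necks and the natural collapse $S^1 \to \{p_i\}$ on each neck; it is continuous, surjective, makes the square commute, and is a diffeomorphism on each fibre away from the nodes. The remainder of the argument is bookkeeping: verifying that $\lambda$ is well-defined on overlaps, that the fibration $Z \to Bl_{L,s}(T)$ is real-analytic with corners, and that the construction is independent of the various choices up to isotopy.
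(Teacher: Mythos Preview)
The paper does not actually prove this theorem: its entire proof is a citation to \cite[\S 2]{looijenga-cellular} and, for the analyticity statements, to \cite[Ch.\ X.9, XV.8]{arbarello-II}. Your sketch is essentially the standard plumbing-plus-Ehresmann construction that those references carry out, so in that sense your approach matches the cited proofs rather than diverging from them; there is nothing in the paper itself to compare against beyond the references.
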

\begin{proof}
See \cite[\S 2]{looijenga-cellular}. For the analyticity statements, see \cite[Ch.\ X.9, XV.8]{arbarello-II}.
\end{proof}
In other words, one can perform simultaneous \emph{real} resolutions of families of nodal curves, after modifying the base, in situations where one could not possibly perform simultaneous resolutions of singularities in the complex category. This is illustrated in Figure \ref{pinching}, which illustrates a family of smooth quadrics degenerating to a nodal quadric. By Teichm\"{u}ller theory, all degenerations to nodal curves occur by pinching finitely many loops in this fashion.

Whilst families of cycles work well in the algebraic setting (see\cite[Ch.\ 10]{fulton}), they seem to be much harder to work with in the case of analytic families of singular complex spaces. We will circumvent this problem using Theorem \ref{looi-thm}. Let $f: D \to X$ be a morphism from a nodal curve to a K3 surface, and let $\tilde{D}$ be the normalisation and $\tilde{f}: \tilde{D} \to X$ the induced map. We define $$f_* D:= \tilde{f}_* \tilde{D} \in H_2(X, \mathbb{Z}).$$ 

\newpage
\begin{figure}[H] \label{pinching}
\begin{center}
\includegraphics[scale=0.4]{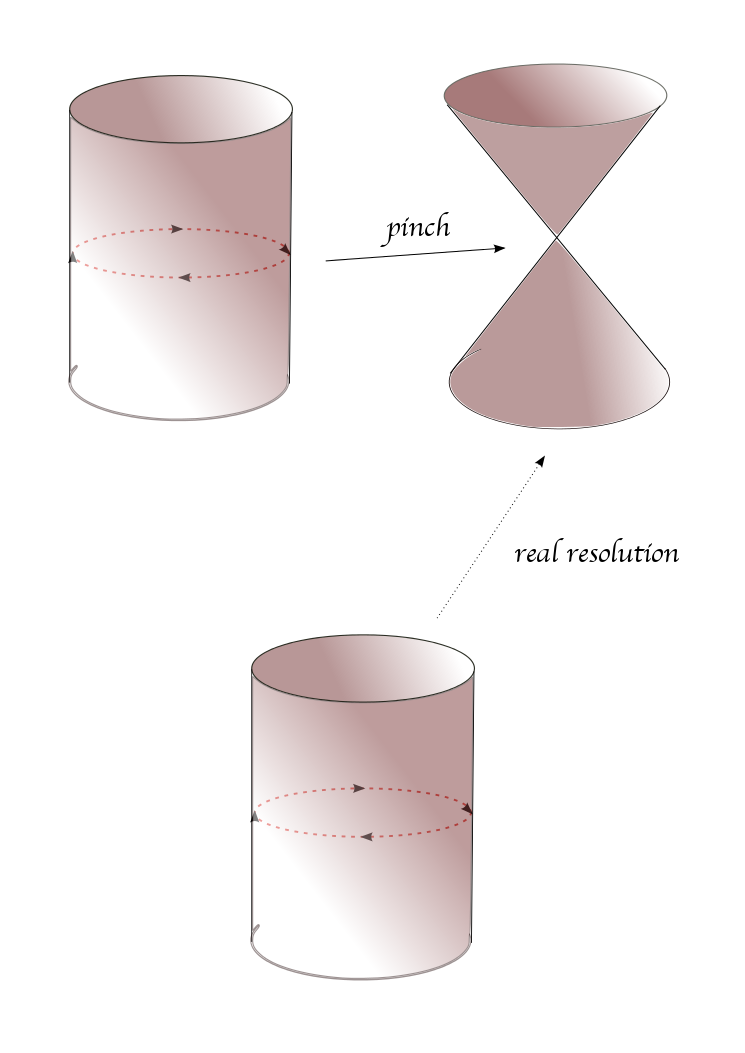}
\caption{A degeneration of a smooth quadric to a nodal curve via pinching a loop. The lower cylinder illustrates the real resolution locally. }
\end{center}
\end{figure}
\newpage

Before we proceed we will prove two easy lemmas from topology. The first one is standard. Let $X$ be an $n$-dimensional, compact topological manifold with boundary and connected components $X_i$, $1 \leq i \leq k$. Recall that the compact manifold with boundary $X$ is called orientable if the (possibly non-compact) open manifold $X-\partial X$ is orientable. If we assume that $X$ is orientable, then Lefschetz duality gives isomorphisms $H_n(X_i, \partial X_i; \mathbb{Z}) \simeq  H^0(X_i, \mathbb{Z}) \simeq \mathbb{Z}$, \cite[Thm.\ 3.43]{hatcher}. In particular, $$H_n(X, \partial X; \mathbb{Z}) \simeq \mathbb{Z}^k.$$ The point of the next lemma is to describe this isomorphism slightly differently.
\begin{lem}. 
Let $X$ be an $n$-dimensional, compact topological manifold with boundary and connected components $X_i$, $1 \leq i \leq k$. For each $i$, let $x_i \in X_i$ be a point in the interior
and let $U_i \seq X_i$ be a small, contractible, open neighbourhood of $x_i$. Assume $X$ is orientable. Then there are isomorphisms
$$H_n(X, \partial X; \mathbb{Z}) \simeq \bigoplus_{1 \leq i \leq k} H_n(X_i, X_i-x_i; \mathbb{Z}) \simeq \bigoplus_{1 \leq i \leq k} H_n(U_i, U_i-x_i; \mathbb{Z}), $$
induced by the obvious inclusions.
\end{lem}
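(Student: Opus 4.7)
The plan is to reduce to the connected case and then combine excision with the existence of a fundamental class for a compact, orientable manifold with boundary. First I would observe that, since $X$ decomposes topologically as the disjoint union $\bigsqcup_i X_i$, both relative pairs $(X, \partial X)$ and $(X, X - \{x_1, \ldots, x_k\})$ split as disjoint unions $\bigsqcup_i (X_i, \partial X_i)$ and $\bigsqcup_i (X_i, X_i - x_i)$ respectively. Consequently the relative homology groups appearing in the statement split naturally as direct sums over $i$, compatibly with the inclusion-induced maps, so it suffices to show, for each fixed $i$, both that the inclusion $(X_i, \partial X_i) \hookrightarrow (X_i, X_i - x_i)$ induces an isomorphism on $H_n(-;\mathbb{Z})$, and that the inclusion $(U_i, U_i - x_i) \hookrightarrow (X_i, X_i - x_i)$ is an excision isomorphism.

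For the first of these, I would invoke the fundamental class of a compact, connected, orientable $n$-manifold with boundary: the orientation of $X_i$, inherited from that of $X$, determines a distinguished generator $[X_i, \partial X_i] \in H_n(X_i, \partial X_i; \mathbb{Z}) \simeq \mathbb{Z}$. By the defining property of this class as the unique class restricting compatibly to local orientations (cf.\ \cite[\S 3.3]{hatcher}), its image under the map $H_n(X_i, \partial X_i; \mathbb{Z}) \to H_n(X_i, X_i - x_i; \mathbb{Z})$ is precisely the local orientation at the interior point $x_i$, which is a generator of $H_n(X_i, X_i - x_i; \mathbb{Z}) \simeq \mathbb{Z}$ (via the standard local homology computation for a point in the interior of an $n$-manifold). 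A homomorphism of infinite cyclic groups carrying a generator to a generator is an isomorphism, which handles this step.

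For the excision step, I would apply the excision theorem with the subset $A := X_i \setminus U_i$ excised from the pair $(X_i, X_i - x_i)$. The set $A$ is closed in $X_i$ (since $U_i$ is open) and is contained in the open set $X_i - x_i$ (since $x_i \in U_i$), so $\overline{A} \subseteq \mathrm{int}(X_i - x_i)$; excision then yields $H_n(X_i, X_i - x_i; \mathbb{Z}) \simeq H_n(U_i, U_i - x_i; \mathbb{Z})$, with the inclusion-induced map as stated. The only nontrivial ingredient in the whole argument is the identification of the image of the fundamental class $[X_i, \partial X_i]$ with the local orientation class at $x_i$, which is the main obstacle in the sense that it must be invoked carefully in the presence of boundary; once this is granted, everything else follows immediately from excision and the splitting over components. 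Note that the contractibility of $U_i$ is not needed for this argument, but is a convenient assumption should one subsequently want to identify $H_n(U_i, U_i - x_i; \mathbb{Z})$ with $\mathbb{Z}$ directly via the long exact sequence of the pair.
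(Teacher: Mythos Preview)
Your argument is correct, and the second isomorphism (excision) is handled identically to the paper. The difference lies in how you establish the first isomorphism $H_n(X_i,\partial X_i;\mathbb{Z})\to H_n(X_i,X_i-x_i;\mathbb{Z})$. You invoke directly the characterizing property of the fundamental class $[X_i,\partial X_i]$---that it restricts to the local orientation at each interior point---and then observe that a homomorphism $\mathbb{Z}\to\mathbb{Z}$ sending generator to generator is an isomorphism. The paper instead argues more from first principles: it takes a collar neighbourhood $V_i\simeq\partial X_i\times[0,1)$, uses a deformation retraction and excision to identify $H_n(X_i,\partial X_i;\mathbb{Z})\simeq H_n(X_i-\partial X_i,\partial X_i\times(0,1);\mathbb{Z})$, then applies \cite[Lemma 3.27(a)]{hatcher} to the compact set $X_i\setminus V_i$ in the open manifold $X_i-\partial X_i$ to get surjectivity onto $H_n(X_i-\partial X_i,(X_i-\partial X_i)-x_i;\mathbb{Z})\simeq\mathbb{Z}$, and finally uses Lefschetz duality for the source to conclude. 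Your route is cleaner and more conceptual, at the cost of citing a property of the relative fundamental class that, in the boundary case, is not spelled out in full detail in \cite[\S 3.3]{hatcher}; the paper's route is more self-contained but longer.
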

\begin{proof}
We follow \cite[p.\ 252ff ]{hatcher}.  By excision $H_n(X_i, X_i-x_i; \mathbb{Z}) \simeq H_n(U_i, U_i-x_i; \mathbb{Z}) \simeq \mathbb{Z}$. It remains to show that inclusion induces an isomorphism $$H_n(X_i, \partial X_i ; \mathbb{Z}) \simeq H_n(X_i, X_i-x_i; \mathbb{Z}).$$ By \cite[Prop.\ 3.42]{hatcher}, there is an open neighbourhood $V_i \seq X_i$ containing $\partial X_i$, together with a homeomorphism $\psi_i: V_i \to \partial X_i \times [0,1)$ such that $\psi_i(\partial X_i )=\partial X_i \times \{0 \}$. We may choose $V_i$ so that $x_i \notin V_i$. By excision
$$H_n(X_i, \partial X_i ; \mathbb{Z}) \simeq H_n(X_i, \partial X_i \times [0,1) ; \mathbb{Z}) \simeq H_n(X_i-\partial X_i, \partial X_i \times (0,1); \mathbb{Z}).$$ Since the manifold $X_i-\partial X_i$ is orientable, applying \cite[Lemma 3.27(a)]{hatcher} to the compact set $A:=X_i-V_i \seq X_i-\partial X_i$ shows that the natural map
$$  H_n(X_i-\partial X_i, \partial X_i \times (0,1); \mathbb{Z}) \to H_n(X_i-\partial X_i, (X_i-\partial X_i)-x_i; \mathbb{Z}) \simeq \mathbb{Z} $$
induced by inclusion is surjective. Since Lefschetz duality gives $H_n(X_i, \partial X_i; \mathbb{Z}) \simeq  H^0(X_i; \mathbb{Z}) \simeq \mathbb{Z}$, this completes the proof. 
\end{proof}

Let $X$ be $n$-dimensional, compact topological manifold with boundary and connected components $X_i$, $1 \leq i \leq k$. Assume further that $X$ is orientable. An \emph{orientation class} is then an element
$\Theta \in H_n(X, \partial X; \mathbb{Z}) $ which can be written as a sum of generators of $H_n(X_i, \partial X_i; \mathbb{Z}) \simeq \mathbb{Z}$ for each $i$. Note that we have $2^k$ choices for the orientation class, corresponding to a choice of sign for each component.
\begin{lem} \label{orientation-topological}
Let $A$ be an orientable $n$-dimensional, compact topological manifold with boundary and finitely many connected components and let $B$ be an orientable, $n$-dimensional, compact manifold without boundary (possibly disconnected). Assume we have a surjective, continuous map $f: A \to B$ such that $f(\partial A) \seq B$ is a closed set and the restriction
$$f: A-\partial A \to B-f(\partial A) $$
is a homeomorphism. Assume that each connected component of $f(\partial A)$ is homeomorphic to a topological manifold of dimension strictly less than $n$. Finally assume that if
$$\delta \; : \; H_n(A, \partial A; \mathbb{Z}) \to H_{n-1}(\partial A, \mathbb{Z})  $$
is the boundary map then 
$$f_* \circ \delta (\Xi)=0 \in H_{n-1}(f(\partial A), \mathbb{Z}) $$
for some orientation class $\Xi \in H_n(A, \partial A; \mathbb{Z})$.
Then we can choose an orientation $\Theta \in H_n(B, \mathbb{Z})$ such that
$$ f_* \Xi = j \Theta \in H_n(B, f(\partial A); \mathbb{Z}),$$
where $j: H_n(B, \mathbb{Z}) \to H_n(B, f(\partial A); \mathbb{Z})$ is the natural inclusion.
\end{lem}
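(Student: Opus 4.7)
My plan is to combine the long exact sequence of the pair $(B, f(\partial A))$ with the naturality of the connecting map $\delta$ applied to the morphism of pairs $f\colon (A,\partial A)\to (B, f(\partial A))$. Naturality yields the commutative square
\[
\begin{array}{ccc}
H_n(A,\partial A;\mathbb{Z}) & \xrightarrow{\delta} & H_{n-1}(\partial A;\mathbb{Z}) \\
\downarrow f_* & & \downarrow f_* \\
H_n(B, f(\partial A);\mathbb{Z}) & \xrightarrow{\delta'} & H_{n-1}(f(\partial A);\mathbb{Z}),
\end{array}
\]
so the hypothesis $f_*\delta(\Xi)=0$ gives $\delta'(f_*\Xi)=0$. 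Exactness of the long exact sequence of $(B, f(\partial A))$ then produces $\Theta\in H_n(B;\mathbb{Z})$ with $j(\Theta)=f_*\Xi$, and this $\Theta$ is unique because each connected component of $f(\partial A)$ is a manifold of dimension strictly less than $n$, forcing $H_n(f(\partial A);\mathbb{Z})=0$.

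The main remaining task is to verify that $\Theta$ is an orientation class, i.e.\ that its restriction to $H_n(B_j;\mathbb{Z})\simeq\mathbb{Z}$ is a generator for each connected component $B_j$ of $B$. Since $B_j$ is closed, connected and orientable, the map $H_n(B_j;\mathbb{Z})\to H_n(B_j, B_j - b_j;\mathbb{Z})\simeq\mathbb{Z}$ is an isomorphism for any $b_j\in B_j$, so it suffices to show $\Theta$ restricts to a generator of $H_n(B_j, B_j - b_j;\mathbb{Z})$ for some chosen $b_j$. As $f(\partial A)$ is a union of manifolds of dimension less than $n$, it has empty interior in $B$, so I may choose $b_j\in B_j\setminus f(\partial A)$; its unique preimage $a_{i(j)}=f^{-1}(b_j)$ lies in $A\setminus\partial A$ (since $f\colon A\setminus\partial A\to B\setminus f(\partial A)$ is a homeomorphism) and sits in some component $A_{i(j)}$ of $A$.

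The final step is to assemble the commutative diagram
\[
\begin{array}{ccc}
H_n(A,\partial A) & \xrightarrow{f_*} & H_n(B, f(\partial A)) \\
\downarrow & & \downarrow \\
H_n(A_{i(j)}, A_{i(j)} - a_{i(j)}) & \xrightarrow{f_*} & H_n(B_j, B_j - b_j),
\end{array}
\]
in which the vertical arrows are induced by the evident pair inclusions (well-defined precisely because $b_j\notin f(\partial A)$ and $a_{i(j)}\notin \partial A$). The previous lemma shows $\Xi$ maps to a generator on the bottom-left. To conclude it suffices to prove the bottom $f_*$ is an isomorphism: I would choose a small contractible neighbourhood $V_j\subset B_j\setminus f(\partial A)$ of $b_j$ and set $U_j=f^{-1}(V_j)$; then excision identifies the bottom row with $H_n(U_j, U_j - a_{i(j)})\to H_n(V_j, V_j - b_j)$, which is an isomorphism because $f|_{U_j}\colon U_j\to V_j$ is a homeomorphism. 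The only real subtlety is careful bookkeeping to ensure all pair-maps are well-defined; the topology itself is routine once $b_j$ is chosen outside $f(\partial A)$.
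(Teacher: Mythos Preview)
Your proposal is correct and follows essentially the same approach as the paper: both lift $f_*\Xi$ to $H_n(B;\mathbb{Z})$ via the long exact sequence of the pair $(B,f(\partial A))$ (the paper states this step more tersely), then verify that $\Theta$ is an orientation class by picking a point $b_j\in B_j\setminus f(\partial A)$, using excision and the homeomorphism $f|_{A\setminus\partial A}$ to identify the local homology groups, and invoking the previous lemma to see that $\Xi$ restricts to a local generator. Your explicit mention of uniqueness of $\Theta$ (from $H_n(f(\partial A);\mathbb{Z})=0$) is a small bonus not stated in the paper.
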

\begin{proof}
By the assumption  $f_* \circ \delta (\Xi)=0$, there is a
$\Theta \in H_n(B, \mathbb{Z})$ with $ f_* \Xi = j \Theta$. It remains to show that $\Theta$ is an orientation class, or equivalently that for each connected component
$B_i$ of $B$, there is an element $x_i \in B$ such that the image of $\Theta$
under the natural map $H_n(B, \mathbb{Z}) \to H_n(B, B-x_i; \mathbb{Z})$ is a generator. For any component $B_i$ of $B$, let $x_i \in B_i$, $x_i \notin f(\partial A)$
and pick $y_i \in f^{-1}(x_i)$. Now consider the natural commutative diagram
$$\xymatrix{
 \mathbb{Z} \{ \Xi \} \ar[r] \ar[d]_g
&H_n(A, A-y_i; \mathbb{Z}) \ar[d] \\
\mathbb{Z} \{ \Theta \} \ar[r] &H_n(B, B-x_i; \mathbb{Z}).
}$$ For small open subsets $U_i$ resp.\ $V_i$ about $y_i$ resp.\ $x_i$, the inclusions give isomorphisms
$H_n(U_i, U_i-y_i; \mathbb{Z}) \simeq H_n(A, A-y_i; \mathbb{Z}) $ and $H_n(V_i, V_i-x_i; \mathbb{Z}) \simeq H_n(B, B-x_i; \mathbb{Z}) $. 
Thus the assumption that $f$ is a homeomorphism away from $\partial A$ gives that the rightmost vertical map in the above commutative diagram is an isomorphism. Further, the topmost horizontal map takes $\Xi$ to a generator of $H_n(A, A-y_i; \mathbb{Z})$ by the previous lemma. The claim follows.
\end{proof}
We can now prove the previously mentioned result on the topology of stable maps.
\begin{prop} \label{analytic-top}
Let $\Delta \seq \C$ be a disc about the origin and let 
$$\xymatrix{
\mathcal{D} \ar[r]^{F} \ar[d]_g
&\mathcal{X} \ar[ld]^h \\
\Delta
}$$
be a flat family of morphisms, with $g: \mathcal{D}  \to \Delta$ a flat and proper family of nodal curves of genus $p$ and $h: \mathcal{X} \to \Delta$ a proper, smooth family of K3 surfaces. Assume $F_0: \mathcal{D}_0 \to \mathcal{X}_0$ is non-constant. Let $\phi_t: H^2(\mathcal{X}_t, \mathbb{Z}) \simeq H^2(\mathcal{X}_0, \mathbb{Z})$ be a holomorphic family of markings for $t \in \Delta$, with Poincare dual
$\hat{\phi}_t: H_2(\mathcal{X}_0, \mathbb{Z}) \to H_2(\mathcal{X}_t, \mathbb{Z})$. Then $\hat{\phi}_t(F_{*,0} \mathcal{D}_0)=F_{*,t} \mathcal{D}_t \in H_2(\mathcal{X}_t, \mathbb{Z})$ for all $t \in \Delta$.
\end{prop}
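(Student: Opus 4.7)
The strategy is to reduce the statement to a topological one by combining Ehresmann's theorem on the K3 side with Looijenga's real-oriented-blow-up model for families of nodal curves (Theorem \ref{looi-thm}). On the K3 side, $h\colon\mathcal{X}\to\Delta$ is a proper submersion, so Ehresmann's theorem (after possibly shrinking $\Delta$) yields a diffeomorphism $\mathcal{X}\cong\mathcal{X}_0\times\Delta$ of fibre bundles. The holomorphic marking $\phi_t$ then coincides with the topological parallel-transport marking, so the Poincar\'e dual $\hat\phi_t\colon H_2(\mathcal{X}_0,\mathbb{Z})\to H_2(\mathcal{X}_t,\mathbb{Z})$ agrees with the canonical isomorphism $H_2(\mathcal{X}_0)\simeq H_2(\mathcal{X})\simeq H_2(\mathcal{X}_t)$ induced by inclusions. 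On the curve side, I would realise $g\colon\mathcal{D}\to\Delta$ as the pullback of a semiuniversal deformation of $\mathcal{D}_0$ and invoke Theorem \ref{looi-thm}; functoriality of the oriented real blow-up under base change gives a proper continuous surjection $\pi\colon\widetilde\Delta\to\Delta$ from a connected real-analytic manifold with corners, a topological fibre bundle $Z\to\widetilde\Delta$ with fibre a fixed closed oriented Riemann surface $\Sigma$, and a continuous surjection $\lambda\colon Z\to\mathcal{D}\times_\Delta\widetilde\Delta$ that restricts on each fibre to a diffeomorphism $\lambda_q\colon\Sigma\to\mathcal{D}_{\pi(q)}$ away from the nodes.

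Set $G:=F\circ\lambda\colon Z\to\mathcal{X}$. Because $Z\to\widetilde\Delta$ is a topological fibre bundle over a path-connected base, any two fibres $Z_{q_0}, Z_{q_1}\simeq\Sigma$ are freely homotopic in $Z$, and continuity of $G$ together with the Ehresmann trivialisation imply
\[
\hat\phi_{\pi(q_1)}\bigl(G_{q_0,\ast}[\Sigma]\bigr)\;=\;G_{q_1,\ast}[\Sigma]\qquad\text{in }H_2(\mathcal{X}_{\pi(q_1)},\mathbb{Z}).
\]
Consequently it suffices to prove, for every $q\in\widetilde\Delta$ with $p:=\pi(q)$, the fibrewise identity $G_{q,\ast}[\Sigma]=F_{\ast,p}\mathcal{D}_p$ in $H_2(\mathcal{X}_p,\mathbb{Z})$. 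By definition $F_{\ast,p}\mathcal{D}_p=(F_p\circ n_p)_\ast[\tilde{\mathcal{D}}_p]$, where $n_p\colon\tilde{\mathcal{D}}_p\to\mathcal{D}_p$ is the normalisation, so what has to be shown is $(F_p\circ\lambda_q)_\ast[\Sigma]=(F_p\circ n_p)_\ast[\tilde{\mathcal{D}}_p]$.

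This fibrewise identity is the technical heart of the proof and also the main obstacle: the topological smoothing $\Sigma$ and the normalisation $\tilde{\mathcal{D}}_p$ have genuinely different local topology near a node (a cylindrical collar versus two disjoint disc caps), so the two pushforwards cannot be compared directly. The plan is to excise small open disc neighbourhoods of $\lambda_q^{-1}(N)\subset\Sigma$ and of $n_p^{-1}(N)\subset\tilde{\mathcal{D}}_p$, where $N\subset\mathcal{D}_p$ denotes the finite nodal set, and apply Lemma \ref{orientation-topological} to the resulting compact oriented surfaces with boundary mapping to $\mathcal{X}_p$. Both maps are homeomorphisms onto their image on the complement of the disc neighbourhoods, so Lemma \ref{orientation-topological} identifies the two pushforwards with a common ``absolute'' class on $\mathcal{X}_p\setminus F_p(N)$. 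Hence $(F_p\circ\lambda_q)_\ast[\Sigma]-(F_p\circ n_p)_\ast[\tilde{\mathcal{D}}_p]$ is supported in arbitrarily small neighbourhoods of the finite set $F_p(N)$; the long exact sequence of $(\mathcal{X}_p,\mathcal{X}_p\setminus F_p(N))$ together with $H_2(F_p(N),\mathbb{Z})=0$ forces this difference to vanish, completing the argument.
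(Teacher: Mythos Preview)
Your global strategy matches the paper's: Ehresmann on the K3 side, Looijenga's real-oriented-blow-up on the curve side, homotopy invariance for parallel transport, and then a fibrewise comparison between $(F_p\circ\lambda_q)_*[\Sigma]$ and $(F_p\circ n_p)_*[\tilde{\mathcal D}_p]$. The gap is in how you execute the fibrewise step.

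First, Lemma~\ref{orientation-topological} cannot be applied as you describe. That lemma concerns a map $f\colon A\to B$ between an $n$-manifold with boundary and an $n$-manifold, with $f$ a homeomorphism off $\partial A$. You propose to take $A$ a surface with boundary and $B=\mathcal{X}_p$, a $4$-manifold; the dimensions do not match. Moreover, the assertion that ``both maps are homeomorphisms onto their image on the complement of the disc neighbourhoods'' is false in general: $F_p\colon\mathcal D_p\to\mathcal X_p$ may have multiple components with the same image, or a component mapping with degree $>1$. What \emph{is} true is that $\lambda_q$ and $n_p$ are each homeomorphisms from the complements of the discs onto $\mathcal D_p\setminus N$, so the two compositions with $F_p$ \emph{agree} there. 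Finally, the sentence invoking ``the long exact sequence of $(\mathcal X_p,\mathcal X_p\setminus F_p(N))$ together with $H_2(F_p(N),\mathbb Z)=0$'' does not give what you want: by excision $H_2(\mathcal X_p,\mathcal X_p\setminus F_p(N))=0$ already, so the map $H_2(\mathcal X_p)\to H_2(\mathcal X_p,\mathcal X_p\setminus F_p(N))$ is zero on the nose and yields no information.

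The paper repairs this by introducing an intermediate object: the real oriented blow-up $\widetilde\Sigma:=Bl_{\mathcal O(R),s}(\tilde{\mathcal D}_t)$ of the normalisation at the points over the nodes. This $\widetilde\Sigma$ is a $2$-manifold with boundary mapping to \emph{both} $\Sigma$ (by gluing boundary circles, which carry opposite orientations, so $\mu_*\delta(\Xi)=0$) and to $\tilde{\mathcal D}_t$ (by collapsing each circle to a point). Lemma~\ref{orientation-topological} is then applied correctly, twice, with $B=\Sigma$ and $B=\tilde{\mathcal D}_t$ respectively. This identifies $(F_t\circ\Gamma_p)_*\Theta_1$ with $\pm F_{*,t}\mathcal D_t$. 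There is still a residual sign ambiguity, which the paper resolves by a K\"ahler-positivity argument: one fixes $\Theta_1$ so that the identity holds at $t=0$, and then pairing with a parallel-transported K\"ahler class rules out a sign flip at nearby $t$. Your write-up omits this orientation analysis entirely; without it the argument only yields equality up to sign.
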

\begin{proof}
 It is enough to prove the statement locally on $\Delta$.
Using that a flat morphism of complex spaces is open, \cite[\S 3.19]{fischer}, and the implicit function theorem \cite[Cor.\ 1.1.12]{huybrechts-complex}, we assume we have markings $s_i: \Delta \to \mathcal{D}$, so that $\mathcal{D}$ becomes a family of $m$-pointed stable curves, for some $m$. Let $\pi: \mathcal{C} \to T$ be a Kuranishi family of stable, $m$-pointed genus $p$ curves. Shrinking $\Delta$ if necessary, the universal property of Kuranishi families gives a morphism $j: \Delta \to T$. 

Let $D_i$ be the divisors from Theorem \ref{looi-thm}, set $L_i=\mathcal{O}_T(D_i)$, $s_i \in H^0(L_i)$ the sections defining $D_i$. Then assume $j^*s_i=0$ for $1 \leq i \leq m$ (allowing $m=0$), and $j^*s_i=t^{l_i}$ for $m+1 \leq i \leq k$ (allowing $m=k$). Let $M \in \text{Pic}(\Delta)$ be the line bundle associated to the degree one divisor $0$. By the functorialities of the simple real blow-up and composing with the obvious diagonal maps we have, if $m \neq 0$, a map
$$SiBl_{M,0}(\Delta) \to Bl_{L_1, s_1}(T) \times_T \ldots \times_T Bl_{L_m, s_m}(T)$$ and, if $m \neq k$, a map
$$ Bl_{M,t}(\Delta) \to Bl_{L_{m+1}, s_{m+1}}(T) \times_T \ldots \times_T Bl_{L_k, s_k}(T).$$ 

Let $\tilde{B}$ denote $SiBl_{M,0}(\Delta)$ in case $m=k$, $ Bl_{M,t}(\Delta)$ in case $m=0$ and $SiBl_{M,0}(\Delta) \times_{\Delta} Bl_{M,t}(\Delta)$ if $m \neq 0$, $m \neq k$. Then we have a continuous map
$$\tilde{B} \to Bl_{L,s}(T)$$ and a surjective projection $\pi: \tilde{B} \to \Delta$. Shrinking $\Delta$ if necessary we can explicitly write down the map $\pi$. If $m=k$ it is the projection
$$ S^1 \times \Delta \to \Delta.$$
If $m=0$ it is
\begin{align*}
[0, \epsilon) \times S^1 &\to \Delta_{\epsilon} \\
(x, \tau) &\mapsto x\tau.
\end{align*}
Lastly, if $m \neq 0$, $m \neq k$, it is
\begin{align*}
[0, \epsilon) \times S^1 \times S^1 &\to \Delta_{\epsilon} \\
(x, \tau, \alpha) &\mapsto x\tau.
\end{align*}
From Theorem \ref{looi-thm}, we can form a commutative diagram of topological spaces
$$\xymatrix{
Z_{\widetilde{B}} \ar[r]^{\Gamma} \ar[d]_{h}
&\mathcal{D} \ar[d]^g \\
\widetilde{B} \ar[r]^{\pi} &\Delta
}$$
with all maps surjective. For any point $q \in \pi^{-1}(0)$ there is an open subset $U \seq \widetilde{B}$ about $q$ such that $h^{-1}(U)$ is homeomorphic to $\Sigma \times U$ (as fibre bundles). Let $\Theta_1 \in H_2(\Sigma, \mathbb{Z}) $
 be an orientation class on $\Sigma$. Then, for any $p \in U$ over $t \in \Delta$ sufficiently close to $0$, we have
$$(F_{t} \circ \Gamma_{p})_* \Theta_1= \hat{\phi}_t((F_0 \circ \Gamma_{q})_* \Theta_1) \in H_2(\mathcal{X}_t, \mathbb{Z}),$$
by homotopy invariance. It suffices to show that $$(F_t \circ \Gamma_{p})_*\Theta_1 =\pm F_{*,t} \mathcal{D}_t$$ for any $t \in \Delta$ (including $t=0$) and $p \in \pi^{-1}(t)$. Indeed, assuming the above equation, we can pick the orientation on $\Sigma$ to ensure $(F_0 \circ \Gamma_{q})_* \Theta_1=F_{*,0} \mathcal{D}_0$. We claim that we now have $(F_t \circ \Gamma_{p})_* \Theta_1= F_{*,t} \mathcal{D}_t$ for all $t$ close to $0$. Suppose there is a $t$ with $(F_t \circ \Gamma_{p})_*\Theta_1=- F_{*,t} \mathcal{D}_t$. Let $\omega$ be a K\"ahler form on $\mathcal{X}_0$ and let $\omega_t:=\phi_t^{-1}(\omega)$ be the corresponding K\"ahler form on $\mathcal{X}_t$.  Then
$((F_t \circ \Gamma_{p})_* \Theta_1 \cdot \omega_t)=((F_0 \circ \Gamma_{q})_* \Theta_1 \cdot \omega)=(F_{*,0} \mathcal{D}_0 \cdot \omega)>0$, where the pairing is given by the cap product (since $F_0$ is by assumption non-constant). For any holomorphic map $f: C \to \mathcal{X}_t$ from a smooth, projective curve $C$, one has $(f_*C \cdot \omega_t)=\int_C f^*\omega_t \geq 0$. Thus
$(- F_{*,t} \mathcal{D}_t \cdot \omega_t) \leq 0$ by definition of $F_{*,t} \mathcal{D}_t$, giving a contradiction.

Let $\alpha: \widetilde{D}_t \to \mathcal{D}_t$ be the normalisation and
 let $V \seq \mathcal{D}_t$ be the smooth locus. By definition, $F_{*,t} \mathcal{D}_t= \widetilde{F}_{*,t}(\widetilde{D}_t)$. 
 Let $R \seq \widetilde{D}_t$ be the reduced divisor over the nodes, defined by a section $s \in H^0(\widetilde{D}_t,\mathcal{O}(R))$. Define $$\widetilde{\Sigma}:=Bl_{\mathcal{O}(R),s}(\widetilde{D}_t ).$$ We have a commutative diagram
 $$\xymatrix{
\widetilde{\Sigma} \ar[r]^{ \; \; \; \; \; \nu} \ar[d]_{\mu}
&\widetilde{D}_t \ar[d]^{\widetilde{F}_t } \\
\Sigma \ar[r]^{F_t \circ \Gamma_p} &\mathcal{X}_t
}$$
 from Figure 10, \cite[Pg.\ 152]{arbarello-II}. As is explained in \cite{arbarello-II}, $\widetilde{\Sigma}$ is an oriented manifold with boundary and $\mu$ is obtained by glueing circles in the boundary which have \emph{opposite} orientations. 
 This means that if $\Xi \in H_2(\widetilde{\Sigma},\partial \widetilde{\Sigma}; \mathbb{Z})$ is the orientation class  on the (possibly disconnected) manifold with boundary $\widetilde{\Sigma}$, then $\mu_* \circ \delta (\Xi)=0 \in H_1(\mu(\partial \widetilde{\Sigma}), \mathbb{Z})$, where 
 $$\delta \; : \; H_2(\widetilde{\Sigma} , \partial \widetilde{\Sigma} ; \mathbb{Z}) \to H_1(\partial \widetilde{\Sigma} ; \mathbb{Z}) $$
 is the boundary map. Thus there is an orientation class $\Theta_1 \in H_2(\Sigma, \mathbb{Z}) $ on $\Sigma$ such that we have $i(\Theta_1)=\mu_*(\Xi) \in H_2(\Sigma, \mu(\partial \widetilde{\Sigma}), \mathbb{Z})$, where
 $$i \; : \; H_2(\Sigma, \mathbb{Z})  \to  H_2(\Sigma, \mu (\partial \widetilde{\Sigma}); \mathbb{Z})  $$
 is the natural inclusion, by Lemma \ref{orientation-topological}. Let $\Theta_2 \in H_2(\widetilde{D}_t, \mathbb{Z})$ denote the orientation class on the (not-necessarily connected) complex manifold $\widetilde{D}_t$.  Since
 $H_1(R, \mathbb{Z})=0$, we have
 $i'(\Theta_2)=\pm \nu_*(\Xi)$, where
 $$i': H_2(\widetilde{D}_t, \mathbb{Z}) \to H_2(\widetilde{D}_t, R ; \mathbb{Z})$$ is the natural inclusion (here the sign depends on the choice of orientation on $\Sigma$). Thus we have
 $$(F_t \circ \Gamma_p)_*(i(\Theta_1))=\pm \widetilde{F}_{t,*}(i'(\Theta_2)) \in H_2(\mathcal{X}_t, \tilde{F}_t(R); \mathbb{Z}).$$
 The natural inclusion
 $$j: H_2(\mathcal{X}_t, \mathbb{Z}) \to H_2(\mathcal{X}_t, \tilde{F}_t(R); \mathbb{Z})$$
 is an isomorphism since $\dim(\tilde{F}_t(R))=0$, so this completes the proof.
\end{proof}

\begin{remark}
Assume that the central fibre $\mathcal{X}_0$ is projective and $(F_{0,*} \mathcal{D}_0)^2>0$. Then the above result can also be shown in a more algebro-geometric way. Consider the coherent sheaf of $\mathcal{O}_X$ modules $F_* \mathcal{O}_{\mathcal{D}}$. It is easy to see that this is a flat module over the smooth curve $\Delta$. Indeed this is equivalent to showing that $F_* \mathcal{O}_{\mathcal{D}}$ is torsion free $h^*\mathcal{O}_{\Delta}$ module, which follows from the fact that $\mathcal{O}_{\mathcal{D}}$ is flat (hence torsion free) over $\Delta$. Following \cite[\S 3a]{bismut-gillet-soule}, one can construct the determinant of a coherent sheaf on a complex manifold, in a similar manner to the algebraic construction of \cite{knudsen-mumford-I}.  Applying $F_*$ to the short exact sequence
$$0 \to \mathcal{O}_{\mathcal{D}} \xrightarrow{t}  \mathcal{O}_{\mathcal{D}} \to \mathcal{O}_{\mathcal{D}_t} \to 0$$
 gives an exact sequence of coherent sheaves on $\mathcal{O}_{\mathcal{D}}$
$$0 \to (F_* \mathcal{O}_{\mathcal{D}})_t  \to F_*(\mathcal{O}_{\mathcal{D}_t}) \xrightarrow{\psi} R^1 F_* \mathcal{O}_{\mathcal{D}} \to \ldots$$
Now $Im \psi$ is a coherent sheaf supported on a zero dimensional subset of $\mathcal{D}_t$, so it has trivial determinant.
Thus there is an isomorphism  
$\det ((F_* \mathcal{O}_{\mathcal{D}})_t) \simeq \det F_{*,t} \mathcal{O}_{\mathcal{D}_t}$ of sheaves on $\mathcal{D}_t$. The determinant is constructed by glueing together determinants of local free resolutions, and this gives readily that if $\mathcal{F}$ is a coherent sheaf on $\mathcal{X}$, flat over $\Delta$, then
$$ \det (\mathcal{F}_t) \simeq (\det (\mathcal{F}))_t.$$ This now gives 
$$c_1((\det(F_* \mathcal{O}_{\mathcal{D}}))_t)=c_1(\det F_{*,t} \mathcal{O}_{\mathcal{D}_t}) .$$ Using Ehresmann's theorem, we now see that $\mathcal{X}$ admits a holomorphic line bundle $\mathcal{L}$ with $(\mathcal{L}_t)^2$ constant and equal to $(\det F_{*,0} \mathcal{O}_{\mathcal{D}_0})^2$. If we assume $\mathcal{X}_0$ is projective, then, as we show below, Grothendieck--Riemann--Roch shows that $c_1(\det F_{*,0} \mathcal{O}_{\mathcal{D}_0})$ is identified with $F_{0,*} \mathcal{D}_0$ under Poincar\'e duality. Furthermore, if $(F_{0,*} \mathcal{D}_0)^2>0$, then \emph{all} fibres $\mathcal{X}_t$ must be projective (see e.g.\ \cite[Rem.\ 8.1.3]{huy-lec-k3}), and Grothendieck--Riemann-Roch gives that $c_1(\det F_{*,t} \mathcal{O}_{\mathcal{D}_t})$ is identified with $F_{t,*} \mathcal{D}_t$, allowing one to conclude.

We now give the details of the GRR calculation. Let $g: C \to Y$ be a morphism from a nodal curve to a projective K3 surface with one-dimensional image. We need to show that $(c_1(g_*\mathcal{O}_C) \cdot \alpha)=(g_*C \cdot \alpha)$ for any $\alpha \in \text{Pic}(Y)$. Let $\mu\; : \; \widetilde{C} \to C$ be the normalisation of $C$ and $\tilde{g}: \widetilde{C} \to Y$ the induced morphism. Let $C_1, \ldots ,C_m$ be the connected components of $\widetilde{C}$ and $g_i: C_i \to Y$ the induced maps. Assume $g_i$ has one-dimensional image for $i \leq m'$ and zero-dimensional image for $m'<i \leq m$. We have $(g_*C \cdot \alpha)=\sum_{i=1}^{m'} \deg g^*_i(\alpha)$. There is an inclusion
$$\mathcal{O}_C \to \mu_* \mathcal{O}_{\widetilde{C}} $$ which is an isomorphism away from the nodes; after pushing forward to the surface this gives $c_1(g_*\mathcal{O}_C) \simeq c_1(\tilde{g}_*\mathcal{O}_{\tilde{C}})\simeq \sum_{i=1}^{m'} c_1(g_{i,*}\mathcal{O}_{C_i})$. It thus suffices to consider the case that $C$ is smooth and connected with one-dimensional image. In this situation, GRR applies and gives
\begin{align*}
c_1(g_*\mathcal{O}_C) \cdot \alpha&=[ch(g_* \mathcal{O}_C)td(Y)]_1 \cdot \alpha \\
&=[g_*(ch\mathcal{O}_C td(C))]_1 \cdot \alpha \\
&=g_*(ch\mathcal{O}_C td(C)) \cdot \alpha \\
&=ch\mathcal{O}_C td(C) \cdot g^* \alpha \\
&=\deg g^* \alpha,
\end{align*}
where the second last line follows from the projection formula.
\end{remark}
We can now prove the main result of this section. 
\begin{thm} \label{good-bound}
Every component of the Deligne--Mumford stack $\mathcal{W}_{g,k}^n$ has dimension at least $19+p(g,k)-n$.
\end{thm}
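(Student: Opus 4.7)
The plan is to apply the analytic semiuniversal deformation theory of Flenner (Theorem \ref{semiuniv-maps}) together with the dimension count of Proposition \ref{main-good-dimension-count} and the topological invariance of the push-forward class (Proposition \ref{analytic-top}). The extra dimension beyond the naive algebraic bound (\ref{bad-bound}) will come from permitting the K3 surface to deform in all $20$ analytic directions rather than just the $19$ polarized ones; \emph{a posteriori} these analytic deformations all remain projective, so the improved bound transfers to the algebraic stack $\mathcal{W}_{g,k}^n$.

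Fix an irreducible component $I \seq \mathcal{W}_{g,k}^n$, pick a general point $[f : D \to X] \in I$, and choose an \'etale chart $U \to \mathcal{W}_{g,k}^n$ with distinguished point $u \mapsto [f]$, so that $\dim I = \dim_u U$. By Theorem \ref{semiuniv-maps} the morphism $f$ admits a semiuniversal analytic deformation $a \in \text{Def}(D,f,X)(S)$ over a complex space germ $(S,*)$, and by Proposition \ref{main-good-dimension-count}
$$\dim(S,*) \geq \dim T^1_f - \dim T^2_f = 19 + p(g,k) - n.$$
Write $F : \mathcal{D} \to \mathcal{X}$ and $h : \mathcal{X} \to S$ for the induced families. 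The strategy is to upgrade $a$ to an analytic family of stable maps of the type parametrized by $\mathcal{W}_{g,k}^n$ and then to transfer the dimension count to $\dim_u U$.

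The core step is this upgrade. For a local flat marking $\phi_t : H^2(\mathcal{X}_t, \mathbb{Z}) \simeq H^2(X, \mathbb{Z})$, Proposition \ref{analytic-top} gives $F_{*,t}\mathcal{D}_t = \hat\phi_t(kL)$. As this class is represented by a holomorphic cycle, it is of Hodge type $(1,1)$; consequently the parallel-transport $\hat\phi_t(L)$ of the primitive class $L$, which is integral and primitive automatically, is also $(1,1)$ because $k$ times it is. It therefore defines a primitive line bundle $\mathcal{L}_t$ on $\mathcal{X}_t$ with $(\mathcal{L}_t)^2 = 2g-2 > 0$. By the projectivity criterion for K3 surfaces, each fibre $\mathcal{X}_t$ is projective, and openness of ampleness guarantees that $\mathcal{L}_t$ is ample for $t$ near $*$. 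After shrinking $S$, the enriched data $(h, F, \mathcal{L})$ is an analytic family of stable maps of the type parametrized by $\mathcal{W}_{g,k}^n$.

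The upgraded family is classified by a morphism of complex space germs $(S,*) \to \mathcal{W}_{g,k}^{n,an}$, which the \'etale chart lifts to a morphism $(S,*) \to (U^{an}, u)$. Running the same parallel-transport argument on an \emph{arbitrary} analytic deformation of $f$ shows that every such deformation enriches to a family of the required type, and hence pulls back from $U^{an}$; in other words, $U^{an}$ is itself a versal analytic deformation of $f$. Since any versal deformation factors through a semiuniversal one via a smooth morphism of germs (\cite[Kor.\ I.4.11]{flenner-ueber}, \cite{flenner-kriterium}), we obtain a smooth morphism $(U^{an}, u) \to (S, *)$ and conclude
$$\dim I = \dim_u U \geq \dim(S,*) \geq 19 + p(g,k) - n.$$
The main obstacle is the upgrading step; the subtlety there is that one must pass not only $kL$ but also its primitive ``root'' $L$ through the parallel transport and verify that both remain of type $(1,1)$ on every analytic fibre, which is precisely what dividing by $k$ inside the integral cohomology lattice permits.
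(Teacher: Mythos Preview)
Your strategy is the same as the paper's: take Flenner's analytic semiuniversal deformation of $f$, use Proposition \ref{main-good-dimension-count} for the bound $\dim(S,*)\geq 19+p(g,k)-n$, and then invoke Proposition \ref{analytic-top} to see that the push-forward class, and hence the primitive class $L$, stays of type $(1,1)$ on every fibre, so that all analytic deformations are in fact polarized and the bound transfers to $\mathcal{W}_{g,k}^n$. The paper packages the transfer via the restricted functor $\text{Def}_{h_p}$ and the fiber product $S\times_{\text{Def}(X)}\mathcal{B}$ (Lemmas \ref{silly-lem-one}, \ref{silly-lem-two}), whereas you phrase it as ``$U^{an}$ is versal for the unpolarized functor, hence smooth over $S$''; since $H^1(X,\mathcal{O}_X)=0$ the forgetful map from polarized to unpolarized deformations is an equivalence once the class stays $(1,1)$, so these two packagings are equivalent.

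There is, however, one technical gap. Proposition \ref{analytic-top} is stated only for families over a \emph{disc} $\Delta$, and you invoke it directly over the germ $(S,*)$ (and, for the versality claim, over an arbitrary analytic base). The germ $(S,*)$ may well be singular and non-reduced, and a ``local flat marking $\phi_t$'' need not make sense there. The paper handles this carefully: it replaces $S$ by $S_{\text{red}}$, passes to irreducible components, and then uses Lemma \ref{local-parametrisation-theorem} to produce, through any point, a holomorphic disc meeting the smooth locus; Proposition \ref{analytic-top} applied along such discs shows that the image of $\pi:S\to\text{Def}(X)$ meets the closed polarized locus $\mathcal{B}$ in a dense set, hence lies entirely in $\mathcal{B}$. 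Your argument needs exactly this reduction to go through.
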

\begin{proof}
Let $\mathcal{A}$ be an \'etale atlas for $\mathcal{W}_{g,k}^n$. This is a scheme of finite type over $\C$, since $\mathcal{W}_{g,k}^n \to \mathcal{B}_g$ is proper, so it suffices to show that, for any closed point $* \in \mathcal{A}$ corresponding to a stable map $f: D \to X$ we have
$$\dim \hat{\mathcal{O}}_{\mathcal{A},*} \geq 19+p(D), $$
where $\hat{\mathcal{O}}_{\mathcal{A},*}$ denotes the completion. Let $\mathcal{B} \to \mathcal{B}_g$ be an \'etale atlas, this comes with a universal family
$ h_p \; : \; \mathcal{X} \to \mathcal{B}$ of polarised K3 surfaces together with a closed immersion 
$\mathcal{X} \hookrightarrow \proj^N_{ \mathcal{B}}$ for some integer $N$.
 Consider the universal family,
$$\xymatrix{
\mathcal{D} \ar[r]^{F} \ar[d]
&\mathcal{X}_{\mathcal{A}} \ar[ld] \\
 \mathcal{A}
}$$ corresponding to $\mathcal{W}_{g,k}^n(\mathcal{A})$. Restricting to the completion 
$\hat{\mathcal{A}}_*$ gives a \emph{formal} semiuniversal deformation $\hat{a}$ of $f$ for the functor $\text{Def}_{h_p}(D,f,X)$ restricted to formal complex spaces. Semiuniversal deformations (either formal or local) are unique up to non-canonical isomorphisms, cf.\ \cite[Lem\ II.1.12]{greuel-introduction}. Thus it suffices to show that there exists a semiuniversal element $b$ of the functor $\text{Def}_{h_p}(D,f,X)$ such that the base of $b$ has dimension at least $19+p(D)$. Let $h: \widetilde{\mathcal{X}} \to \text{Def}(X)$ be the universal deformation of the K3 surface $X$, see \cite[\S 6.2]{huy-lec-k3}. The complex space $\mathcal{B}$ is smooth of dimension $19$, \cite[Pg.\ 150-151]{sernesi-def}. Further, we have a closed immersion of germs about $*$
$$j \; : \; \mathcal{B} \hookrightarrow \text{Def}(X),$$
see \cite[Ch.\ 6.2]{huy-lec-k3}. Choose any semiuniversal deformation of $ c \in \text{Def}(D,f,X)(S)$. By Proposition \ref{main-good-dimension-count}, $$\dim (S,*) \geq 19+p(D).$$ If $c$ corresponds to the diagram,
$$c : \xymatrix{
\mathcal{D}_2 \ar[r]^{F_2} \ar[d]
&\mathcal{X}_{2} \ar[ld] \\
 \mathcal{S}
},$$
let $$\pi: \mathcal{S} \to \text{Def}(X)$$ be the morphism of germs given by the universal property of $\text{Def}(X)$. From Lemma \ref{silly-lem-one}, $\pi$ allows us to consider $c$ as a semiuniversal deformation of $\text{Def}_h(D,f,X)$. By Lemma \ref{silly-lem-two},  $b:=pr_1^*c \in \text{Def}_{h_p}(D,f,X)(S \times_{\text{Def}(X)} \mathcal{B})$ provides a semiuniversal deformation for the functor $\text{Def}_{h_p}(D,f,X)$. 

We need to show that $\dim S \times_{\text{Def}(X)} \mathcal{B}=\dim (S,*)$. It suffices to show that (locally about *), we have the inclusion of sets $$\pi(S) \subseteq \mathcal{B}.$$ By replacing the germ representative $S$ with its reduction $S_{red}$ and base-changing, it suffices to assume $S$ reduced. It further suffices to show
$$\pi(S^{sm}) \subseteq \mathcal{B}, $$ 
where $S^{sm}$ is the smooth locus, since $S^{sm}$ is open and dense and $\mathcal{B} \seq \text{Def}(X)$ is closed. Replacing $S$ with a small open subset about $*$, we may assume $S$ has only finitely many components, and then for each component $S_i$ it suffices to show
$$\pi(S_i^{sm}) \subseteq \mathcal{B}, $$
where $S_i^{sm}$ is the smooth locus. Thus we may assume $S$ is irreducible.

Let $z: \Delta \to S$ be as in Lemma \ref{local-parametrisation-theorem}, with $\Delta(0)=*$, and consider the pull-back family
$$ \xymatrix{
\mathcal{D}_{2,\Delta} \ar[r]^{F_{2,\Delta}} \ar[d]_q
&\mathcal{X}_{2,\Delta} \ar[ld] \\
 \Delta
}.$$ Proposition \ref{analytic-top} now shows that $\pi(z(\Delta)) \seq \mathcal{B}$, since the proposition shows that the 1-1 classes $F_{2, t*}(\mathcal{D}_{2,t})$ on $\mathcal{X}_{2,t}$ deform to $f_{*}(D)$ for $t \in \Delta$ near the origin. Since the set $z(\Delta)$ contains a smooth point of $S$, and the smooth locus of the irreducible complex space $S$ is connected, applying Proposition \ref{analytic-top} again shows $\pi(S^{sm}) \subseteq \mathcal{B}$ as required.
\end{proof}

Let $\mathcal{X} \to S$ be a smooth, projective family of K3 surfaces over a smooth and connected algebraic variety $S$ with a point $0 \in S$, and let $\mathcal{L}$ be an $S$-flat, ample line bundle 
on $\mathcal{X}$. Further, let $\mathcal{M}$ be any effective $S$-flat line bundle 
on $\mathcal{X}$, such that $(\mathcal{M}_t)^2 \geq 2$ for each $t \in S$.
The theorem above can be generalised to the Deligne--Mumford stacks $\mathcal{W} (\mathcal{X}, \mathcal{M},p)$.
Recall the construction of the \emph{twistor-family}, \cite[\S 3]{bryan-leung}. Let $\Lambda_{K3}$ denote the K3 lattice, and consider the period domain
$$ \mathcal{D} = \{ x \in \proj(\Lambda_{K3} \otimes \C) \; : \; (x \cdot \bar{x})>0, (x)^2 =0  \}.$$ Now, let $(X,L)$ be a marked, polarised K3 surface, and let $\Omega \in \mathcal{D}$ denote the corresponding period point, in other words the point corresponding to the one-dimensional vector space $h^{0,2}(X)$ under the marking $H^2(X, \mathbb{Z}) \hookrightarrow \Lambda_{K3}$. Consider the plane
$$ V_{\Omega, c_1(L)} \simeq \proj^2 \seq \proj(\Lambda_{K3} \otimes \C)$$
defined by the span of $\{ \Omega, \overline{\Omega}, c_1(L) \} \seq \Lambda_{K3} \otimes \C$. The intersection of $V_{\Omega, c_1(L)}$ with 
$\mathcal{D}$ is the smooth, projective, plane quadric
$$T_{\Omega, c_1(L)} = \{ (x)^2=0 \; | \; x \in   V_{\Omega, c_1(L)} \}$$
called the twistor family. The twistor family has the following key property:
\begin{prop} [\cite{bryan-leung}, Prop.\ 3.1] \label{twistor-prop}
Let $M $ be an effective divisor on $X$.
Let $Y$, $\phi : H^2(Y, \mathbb{Z}) \simeq \Lambda_{K3}$ be a marked K3 surface corresponding to a point $y \in T_{\Omega, c_1(L)}$.
Then $\phi^{-1}(c_1(M))$ lies in $H^{1,1}(Y, \mathbb{Z})$  if and only if $y \in \{ \Omega, \overline{\Omega}\}$, i.e.\ if and only if $Y$ is either the marked K3 surface $X$ or its complex conjugate. Further, if $(M)^2 \geq -2$, then $\phi^{-1}(c_1(M))$ is an effective $1-1$ class if and only if $Y$ is the marked K3 surface $X$.
\end{prop}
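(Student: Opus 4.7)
The plan is to reduce both assertions to an explicit linear-algebra computation inside $\Lambda_{K3} \otimes \C$, using the defining equation $(y)^2=0$ of the twistor quadric $T_{\Omega, c_1(L)}$ together with the fact that $M$ is already of type $(1,1)$ on $X$.

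First I would establish the $(1,1)$-condition. A real class $\alpha \in H^2(Y, \R)$ lies in $H^{1,1}(Y, \R)$ precisely when it is orthogonal to $H^{2,0}(Y)$ under the intersection form, so under the marking $\phi$ the condition $\phi^{-1}(c_1(M)) \in H^{1,1}(Y, \mathbb{Z})$ translates into the single lattice equation $(c_1(M) \cdot y)=0$ in $\Lambda_{K3} \otimes \C$. Expanding $y = a\Omega+b\overline{\Omega}+c \cdot c_1(L)$ and using $(c_1(M) \cdot \Omega)=(c_1(M) \cdot \overline{\Omega})=0$, which follows because $M$ is effective on $X$ and hence of type $(1,1)$, this reduces to $c \cdot (c_1(M) \cdot c_1(L))=0$. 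Since $L$ is ample and $M$ is a nontrivial effective divisor, $(c_1(M) \cdot c_1(L))>0$, forcing $c=0$. Substituting back into $(y)^2=0$ yields $2ab\,(\Omega \cdot \overline{\Omega})=0$, and because $(\Omega \cdot \overline{\Omega})>0$ one of $a,b$ must vanish; projectively, $y \in \{[\Omega], [\overline{\Omega}]\}$.

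For the effectivity assertion, the global Torelli theorem says that the marked K3 with period $\Omega$ is $(X, \phi_X)$ itself, whereas the one with period $\overline{\Omega}$ is the complex conjugate $\overline{X}$ (with the same underlying marking). The direction $Y=X \Rightarrow \phi^{-1}(c_1(M))$ effective is immediate. For the converse, I would rule out the case $Y=\overline{X}$ via the key observation that the K\"ahler cone of $\overline{X}$ is the negative of that of $X$, since conjugating the complex structure $J \mapsto -J$ takes a K\"ahler $(1,1)$-form to its negative. Hence for any K\"ahler class $\omega$ on $\overline{X}$, the class $-\omega$ is K\"ahler on $X$, giving
\begin{equation*}
(\phi^{-1}(c_1(M)) \cdot \omega)_{\overline{X}} \;=\; -(c_1(M) \cdot (-\omega))_{X} \;<\; 0,
\end{equation*}
using that $M$ is a nontrivial effective divisor on $X$. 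Since any effective class on $\overline{X}$ must pair non-negatively with a K\"ahler class, $\phi^{-1}(c_1(M))$ cannot be effective on $\overline{X}$. The hypothesis $(M)^2 \geq -2$ enters through Riemann--Roch on $\overline{X}$, which exhibits $-c_1(M)$ as the corresponding effective class there, consistent with the bijection between holomorphic curves on $X$ and $\overline{X}$ that reverses oriented homology classes.

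I expect the main obstacle to be the careful bookkeeping in the second paragraph: identifying the conjugate surface $\overline{X}$ with the period $\overline{\Omega}$ under the chosen marking convention, verifying that orientations on the underlying real fourfold are preserved (so that $H^2$ and its intersection form are canonically identified) while oriented classes of complex curves flip sign, and deducing the resulting reversal of the K\"ahler cone. Once these conventions are pinned down, both statements fall out from the elementary lattice calculation above.
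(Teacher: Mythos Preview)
The paper does not supply its own proof of this proposition; it is quoted directly from Bryan--Leung and used as a black box. So there is no argument in the paper to compare against, and your write-up stands on its own.

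Your approach is sound and is in fact essentially the standard one. The first paragraph is clean: the orthogonality condition $(c_1(M)\cdot y)=0$ together with $(c_1(M)\cdot c_1(L))>0$ kills the $c_1(L)$-coefficient, and then the quadric equation forces $y\in\{[\Omega],[\overline\Omega]\}$. For the second part your K\"ahler-cone argument is correct: under $J\mapsto -J$ the ambient orientation (hence the intersection form on $H^2$) is preserved, while complex curves keep their support but reverse orientation, so the effective cone and the K\"ahler cone both flip sign. That already shows $c_1(M)$ pairs negatively with any K\"ahler class on $\overline X$, hence cannot be effective there.

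One small point: your invocation of $(M)^2\geq -2$ via Riemann--Roch is not actually needed for the direction you prove, and the sentence about it is a bit muddled. The K\"ahler-cone argument rules out effectivity of $c_1(M)$ on $\overline X$ with no hypothesis on $(M)^2$. The condition $(M)^2\geq -2$ is relevant only if one wants to argue the other way around, namely that on $\overline X$ the class $-c_1(M)$ is forced to be effective by Riemann--Roch even without knowing a priori that $M$ was effective on $X$; but since you are already given $M$ effective on $X$, this is superfluous. I would simply drop that sentence.
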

Treat $S$ as a complex manifold. Let $\text{Def}(\mathcal{X}_0)$
be the Kodaira--Spencer space of deformations of $\mathcal{X}_0$. By the local Torelli theorem we have a local isomorphism
$$\text{Def}(\mathcal{X}_0) \to \mathcal{D}.$$ After replacing $S$ with an (analytic) open set about $0$, let
$$\rho :  S \to \mathcal{D} \seq  \proj(\Lambda_{K3} \otimes \C), $$
be the period map, see \cite[Ch.\ 6.2]{huy-lec-k3}. Shrinking $S$ about $0$ if necessary, the image of $\rho$ lies in the hypersurface
$$ \mathcal{D} \cap \proj(l^{\perp}) \seq \proj(\Lambda_{K3} \otimes \C) ,$$
where $l=c_1(\mathcal{L}_0) \in  H^2(\mathcal{X}_0, \C)\simeq \Lambda_{K3} \otimes \C$. If $(l)^2=2g-2$, then we denote by $\mathcal{H}_g$ a small open set  $ \mathcal{D} \cap \proj(l^{\perp}) $ about $\rho(0)$; $\mathcal{H}_g$ is smooth and connected of dimension $19$. Now let $\widetilde{\mathcal{H}} \seq \mathcal{D}$ be the holomorphic $\proj^1$ bundle over $\mathcal{H}$ given by the union of twistor spaces $$\bigcup_{\Omega \in \mathcal{H}_g} T_{\Omega, l}.$$ At least locally about $0$, we construct a $\proj^1$ bundle $\pi: \widetilde{S} \to S$ and a morphism of $\proj^1$ bundles
$$\tilde{\rho} \; : \; \widetilde{S} \to \widetilde{\mathcal{H}} \seq \mathcal{D} $$
together with a section $s: S \to \widetilde{S}$, with $\tilde{\rho} \circ s = \rho$. We denote by $\widetilde{X} \to \widetilde{S}$ the pull back of the universal family of K3 surfaces under $\tilde{\rho}$.
\begin{thm} \label{relative-gdct}
Let $h: \mathcal{X} \to S$ be a smooth, projective family of K3 surfaces over a smooth and connected algebraic variety $S$. Let $\mathcal{M}$ be any effective $S$-flat line bundle 
on $\mathcal{X}$, such that $(\mathcal{M}_t)^2 \geq -2$ for each closed point $t \in S$. Then each component of $\mathcal{W} (\mathcal{X}, \mathcal{M},p)$ has dimension at least
$\dim(S)+p.$
\end{thm}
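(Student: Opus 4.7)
The plan is to mimic the proof of Theorem \ref{good-bound}, with the single new ingredient being the twistor $\mathbb{P}^1$-bundle $\widetilde{S} \to S$ and marked family $\widetilde{\mathcal{X}} \to \widetilde{S}$ constructed just before the theorem statement. Fix an \'etale atlas $\mathcal{A}$ of $\mathcal{W}(\mathcal{X}, \mathcal{M}, p)$ and a closed point $* \in \mathcal{A}$ corresponding to a stable map $f : D \to X := \mathcal{X}_0$. Exactly as in Theorem \ref{good-bound}, the completion $\hat{\mathcal{O}}_{\mathcal{A},*}$ is a formal semiuniversal deformation for the analytic functor $\text{Def}_{h_p}(D, f, \mathcal{X})$, so by uniqueness of semiuniversal elements it suffices to produce any analytic semiuniversal element for this functor whose base has dimension at least $\dim S + p$.

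To do so, apply the existence statement behind Proposition \ref{relative-analy-dimct} to the family $\tilde{h}: \widetilde{\mathcal{X}} \to \widetilde{S}$, obtaining a semiuniversal element $\tilde{a} \in \text{Def}_{\tilde{h}}(D, f, \widetilde{\mathcal{X}})(\widetilde{\mathcal{S}})$. Combining Proposition \ref{relative-analy-dimct} with the lower bound $\dim T^1(D/X) - \dim T^2(D/X) \geq p-1$ of Proposition \ref{main-good-dimension-count} yields
$$\dim (\widetilde{\mathcal{S}}, *) \;\geq\; \dim \widetilde{S} + (p - 1) \;=\; (\dim S + 1) + (p - 1) \;=\; \dim S + p,$$
the crucial extra dimension over applying Proposition \ref{relative-analy-dimct} directly to $S$ coming precisely from the twistor direction. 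Letting $\kappa : \widetilde{\mathcal{S}} \to \widetilde{S}$ be the classifying morphism, it remains to show that $\kappa$ factors (locally near $*$) through the section $s : S \hookrightarrow \widetilde{S}$. Granting this, Lemma \ref{silly-lem-two} together with the resulting local identification $\widetilde{\mathcal{S}} \times_{\widetilde{S}} S \simeq \widetilde{\mathcal{S}}$ exhibits $\tilde{a}$ as a semiuniversal element for $\text{Def}_{h_p}(D, f, \mathcal{X})$ of the required dimension, finishing the proof.

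The main obstacle, and the place where the hypothesis $(\mathcal{M}_t)^2 \geq -2$ enters, is the factorization claim. Following the endgame of the proof of Theorem \ref{good-bound}, I would first replace $\widetilde{\mathcal{S}}$ by its reduction, decompose into irreducible germs, and then invoke Lemma \ref{local-parametrisation-theorem} to reduce to showing $(\kappa \circ z)(\Delta) \subseteq S$ for an arbitrary disc $z : \Delta \to \widetilde{\mathcal{S}}^{\mathrm{red}}$ meeting the smooth locus. The pullback of $\tilde{a}$ along $z$ is a flat one-parameter family of stable maps into fibers of $\widetilde{\mathcal{X}}$ whose image cycles parallel-transport to $[\mathcal{M}_0]$ by Proposition \ref{analytic-top}. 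Applied fiberwise along $\widetilde{S} \to S$, Proposition \ref{twistor-prop} then forces the image of $\kappa \circ z$ to lie on $S$, since for $\tilde{s} \in \widetilde{S} \setminus S$ the parallel-transported class is not of type $(1,1)$ on the surrounding (non-projective) twistor K3 fibers, let alone effective, which contradicts the existence of a non-constant stable map into that fiber. This establishes the factorization and hence the theorem.
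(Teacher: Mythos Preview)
Your proposal is correct and follows essentially the same route as the paper's proof. The paper's argument is extremely terse (``This is similar to Theorem~\ref{good-bound} \ldots\ this reduces to \ldots''), and what you have written is precisely the unpacking of that sketch: pass to the twistor $\mathbb{P}^1$-bundle $\widetilde S$, apply Proposition~\ref{relative-analy-dimct} over $\widetilde S$ together with the bound $\dim T^1(D/X)-\dim T^2(D/X)\ge p-1$ from Proposition~\ref{main-good-dimension-count}, and then use Proposition~\ref{twistor-prop} with the reduction argument of Theorem~\ref{good-bound} to force the classifying map to land in the section $S\subset\widetilde S$. One cosmetic point: your use of the symbol $h_p$ is a carry-over from the notation in Theorem~\ref{good-bound}; here the relevant family is just $h:\mathcal X\to S$.
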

\begin{proof}
This is similar to Theorem \ref{good-bound}. Let $0 \in S$ be a point and  $f: D \to X$ be a closed point of $\mathcal{W} (\mathcal{X}, \mathcal{M},p)$ over $0$. Let $\tilde{h}: \widetilde{X} \to \widetilde{S}$ be the universal family of K3 surfaces over the twistor family $\widetilde{S}$ as above.
Let $a$ be a semiuniversal deformation of $\text{Def}_h(D,f,X)$; we have to show that the base of $a$ has dimension at least $\dim(S)+p$. From Proposition \ref{twistor-prop} and the argument of Theorem \ref{good-bound}, this reduces to showing that if $b$ is a semiuniversal deformation of $\text{Def}_{\tilde{h}}(D,f,X)$, then the base of $b$ has dimension at least $\dim(S)+p=\dim\widetilde{S}+p-1$. This is Proposition \ref{main-good-dimension-count} together with Proposition \ref{relative-analy-dimct}.
\end{proof}
\begin{remark}
For an alternative proof of this theorem see \cite[Thm.\ 2.4, Rem.\ 3.1]{kool-thomas}. This approach makes use of Bloch's semiregularity map, \cite{bloch-semi}, \cite{maulik-pand-nl}.
\end{remark}

\section{Deformation theory for an unramified map} \label{unramSect}
We now restrict ourselves to the case of unramified stable maps. This makes the deformation theory much simpler to handle.

Let $f: D \to S$ be a stable map from a connected nodal curve to a smooth projective surface. If we assume that $f$ is \emph{unramified}, then
the natural morphism $$f^* \Omega_S \to \Omega_D$$ of sheaves of differentials is surjective. We may thus define a normal bundle $N_f \in \text{Pic}(D)$ as the dual of the line bundle $N_f^*$ defined via the exact sequence
$$0 \to N_f^* \to f^* \Omega_S \to \Omega_D \to 0 .$$ As in \cite[\S 3.2]{sernesi-def}, one may define a deformation functor describing those deformation of the stable map $f$ which \emph{leave the target $S$ fixed}. If we let $\text{Def}(f,S)$ be the vector space of deformations of $f$ which leave $S$ fixed, we have
$$ \text{Def}(f,S)=h^0(D,N_f)$$
and a space of obstructions is given by 
$$ \text{Obst}(f,S)=h^1(D,N_f).$$
The following fact from \cite[\S 2.1]{ghs-rat} will prove to be useful:
\begin{prop} \label{normal-bdl}
Let $f: D \to X$ be an unramified morphism from a connected nodal curve to a projective K3 surface and let $B \seq D$ be a connected union of components. If we let $f_B := f_{|_B}$, then
$$ N_{f_B}(Y) \simeq (N_{f})_{|_B}$$
where $Y=B \cap \overline{D \setminus B}$.
\end{prop}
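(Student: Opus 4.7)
The plan is to compare the two defining exact sequences for $N_f$ and $N_{f_B}$. Recall
\[
0 \to N_f^* \to f^*\Omega_X \to \Omega_D \to 0, \qquad 0 \to N_{f_B}^* \to f_B^*\Omega_X \to \Omega_B \to 0.
\]
Since $N_f^*$ and $f^*\Omega_X$ are locally free on $D$, restricting the first sequence to $B$ remains exact, giving
\[
0 \to (N_f^*)_{|_B} \to f_B^*\Omega_X \to (\Omega_D)_{|_B} \to 0.
\]
There is a natural surjection $(\Omega_D)_{|_B} \twoheadrightarrow \Omega_B$ induced by the closed immersion $B \hookrightarrow D$. Applying the snake lemma to the resulting diagram (with $f_B^*\Omega_X$ as the common middle term) yields a short exact sequence of coherent sheaves on $B$
\[
0 \to (N_f^*)_{|_B} \to N_{f_B}^* \to K \to 0,
\]
where $K := \ker\bigl((\Omega_D)_{|_B} \to \Omega_B\bigr)$.

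The heart of the matter is thus to identify $K$ with the structure sheaf $\mathcal{O}_Y$ of the nodes lying in $Y = B \cap \overline{D \setminus B}$. Away from $Y$ the immersion $B \hookrightarrow D$ is a local isomorphism onto a union of components of $D$, so $(\Omega_D)_{|_B} \simeq \Omega_B$ there and $K$ vanishes. It therefore suffices to compute $K$ locally at a point $p \in Y$. Near $p$, choose formal coordinates so that $\mathcal{O}_{D,p} \simeq k[[x,y]]/(xy)$ with the branch $B$ cut out by $y = 0$. Then
\[
\Omega_{D,p} = \bigl(\mathcal{O}_{D,p}\, dx \oplus \mathcal{O}_{D,p}\, dy\bigr)/\bigl(y\,dx + x\,dy\bigr),
\]
and tensoring with $\mathcal{O}_{B,p} = k[[x]]$ (setting $y=0$) yields $(\Omega_D)_{|_B,p} = \bigl(\mathcal{O}_{B,p}\, dx \oplus \mathcal{O}_{B,p}\, dy\bigr)/(x\, dy)$, while $\Omega_{B,p} = \mathcal{O}_{B,p}\, dx$. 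The map sends $dx \mapsto dx,\ dy \mapsto 0$, so the kernel is $\mathcal{O}_{B,p}\, dy / (x\, dy) \simeq k(p)$, which is exactly $\mathcal{O}_{Y,p}$.

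Since each $p \in Y$ is a smooth point of $B$ (it is a node of $D$, but $B$ carries only one of the two branches), $Y$ is a Cartier divisor in $B$. Hence the short exact sequence $0 \to (N_f^*)_{|_B} \to N_{f_B}^* \to \mathcal{O}_Y \to 0$ exhibits $(N_f^*)_{|_B}$ as the kernel of a surjection from a line bundle onto $\mathcal{O}_Y$, so $(N_f^*)_{|_B} \simeq N_{f_B}^*(-Y)$, and dualising gives $(N_f)_{|_B} \simeq N_{f_B}(Y)$, as claimed. The only nontrivial step is the local identification of $K$ with $\mathcal{O}_Y$; everything else is a formal diagram chase.
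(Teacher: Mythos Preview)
Your proof is correct and follows the same overall strategy as the paper: compare the two conormal sequences, identify the discrepancy sheaf $K=\ker\bigl((\Omega_D)_{|_B}\to\Omega_B\bigr)$ with $\mathcal{O}_Y$ via the same local computation at a node, and conclude. One small point: your justification for left-exactness after restriction (``since $N_f^*$ and $f^*\Omega_X$ are locally free'') is not quite the right reason; what you actually need is that the induced map $(N_f^*)_{|_B}\to (f^*\Omega_X)_{|_B}$ of vector bundles on the reduced curve $B$ is injective, which holds because the relevant $\mathrm{Tor}_1$ is torsion (supported on $Y$) and $(N_f^*)_{|_B}$ is a line bundle, hence torsion-free.

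Your conclusion is in fact a bit cleaner than the paper's. The paper, rather than using the snake lemma to obtain $0\to (N_f^*)_{|_B}\to N_{f_B}^*\to \mathcal{O}_Y\to 0$ directly, passes through determinants: it embeds $D\hookrightarrow\mathbb{P}^k$ to produce a locally free resolution of $\Omega_D$ so that $\det(\Omega_D)_{|_B}$ is well defined, then compares $\det(\Omega_D)_{|_B}$ with $\det\Omega_B$ via the torsion sheaf $T\simeq\mathcal{O}_Y$, and finally takes determinants of both conormal sequences. Your snake-lemma argument bypasses this determinant detour entirely and reaches the same conclusion with less machinery.
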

\begin{proof}
The second exact sequence for K\"ahler differentials gives a surjection $(\Omega_{D})_{|_B} \to \Omega_{B} \to 0$ which is an isomorphism outside $Y$. Thus we have a short exact sequence on $B$
\begin{align} \label{kahler-ses}
0 \to T \to (\Omega_{D})_{|_B} \to \Omega_{B} \to 0
\end{align}
where $T$ is a torsion sheaf, supported on $Y$. We claim that $T \simeq \mathcal{O}_Y$. Let $p \in Y$ be a closed point. It suffices to show
that the stalk $T_p$ is isomorphic to the sky-scraper sheaf $\mathcal{O}_p$. Since $\mathcal{O}_B \to \hat{\mathcal{O}}_B$ is flat, where $\hat{\mathcal{O}}_B$ denotes the completion, and $D$ is nodal, it suffices to consider the case where $D$ is the affine plane curve defined by $xy=0$,
$B \seq D$ is the curve $x=0$ and $p$ is the origin. But in this case Equation \ref{kahler-ses} boils down to the following exact sequence of $A=k[y]$ modules
$$0 \to \slfrac{A<dx>}{ydx} \to \slfrac{A<dx,dy>}{ydx} \to A<dy> \to 0 . $$
Since $\slfrac{A<dx>}{ydx}$ is isomorphic to the $A$-module $k$, this proves the claim.

Next, $f$ is an unramified, hence stable, map to a projective variety, and so $\omega_D \otimes f^*(\mathcal{O}_X(3))$ is ample. Thus we have a closed
immersion $D \hookrightarrow \proj^k$ for some $k$. Since $D$ is a reduced local complete intersection curve, we have a short exact sequence
$$ 0 \to N_{D / \proj^k}^* \to (\Omega_{\proj^k})_{|_D} \to \Omega_D \to 0,$$
where $N_{D / \proj^k}^*$ is locally free.
Thus $(\Omega_{D})_{|_B}$ admits a locally free resolution and so the line bundle $\det (\Omega_{D})_{|_B}$ is defined. Likewise, $\det \Omega_B$ is a well-defined line bundle. We have a short exact sequence
$$ 0 \to \mathcal{O}_B(-Y) \to \mathcal{O}_B \to T \to 0 ,$$
so $\det T \simeq \mathcal{O}_B(Y)$ and  Equation \ref{kahler-ses} gives
$$ \det (\Omega_{D})_{|_B} \simeq \det \Omega_B (Y).$$
The result now follows from taking determinants of the short exact sequences 
$$0 \to (N_f^*)_{|_B} \to (f^* \Omega_X)_{|_B} \to (\Omega_D)_{|_B} \to 0 $$
and 
$$0 \to N_{f_B}^* \to f_B^* \Omega_X \to \Omega_B \to 0 .$$
\end{proof}
\begin{prop} \label{ishi-little}
Let $[(f: D \to X, L)] \in \mathcal{W}^{n}_{g,k}$ represent an unramified stable map such that $h^0(N_f) \leq p(D)$, where $p(D)=p(g,k)-n$ denotes the arithmetic genus of $D$. Then for every irreducible component $J \seq\mathcal{W}^{n}_{g,k}$ containing $[(f: D \to X, L)]$ the projection $\pi: J \to \mathcal{B}_g$ is dominant.
\end{prop}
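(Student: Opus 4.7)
The plan is to combine the global lower bound on $\dim J$ provided by Theorem \ref{good-bound} with a local upper bound on the dimension of the fibre of $\pi$ at $[f]$ coming from the unramified deformation theory with fixed target. By Theorem \ref{good-bound}, any irreducible component $J \seq \mathcal{W}^{n}_{g,k}$ satisfies
$$\dim J \;\geq\; 19 + p(g,k) - n \;=\; 19 + p(D).$$
This is the only really nontrivial input; the naive algebraic estimate (\ref{bad-bound}) would give $\dim J \geq 18 + p(D)$, which is off by one and insufficient for the conclusion. All the hard work here is already contained in the transcendental analysis of Section \ref{complexDef}.

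For the upper bound on the fibre, I would use that $f$ is unramified to define the normal line bundle $N_f$ via the exact sequence recalled at the beginning of Section \ref{unramSect}. As discussed there, the deformations of $f$ which leave the polarised K3 surface $(X,L)$ fixed are parametrised by $H^0(D, N_f)$, with obstructions in $H^1(D, N_f)$. In particular, the Zariski tangent space at $[f]$ to the fibre $F_0 := \pi^{-1}([(X,L)])$ embeds into $H^0(D, N_f)$, so the local dimension at $[f]$ of every irreducible component of $F_0$ is bounded above by
$$h^0(D, N_f) \;\leq\; p(D),$$
using the hypothesis.

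To conclude, after passing to an \'etale atlas of the Deligne--Mumford stack $J$ so that the standard dimension formulas for finite-type schemes apply, I would invoke the usual fibre-dimension inequality: any irreducible component $F$ of $F_0$ through $[f]$ satisfies $\dim F \geq \dim J - \dim \pi(J)$. Combining the two previous estimates,
$$p(D) \;\geq\; \dim F \;\geq\; \dim J - \dim \pi(J) \;\geq\; 19 + p(D) - \dim \pi(J),$$
which forces $\dim \pi(J) \geq 19 = \dim \mathcal{B}_g$. Since $\mathcal{B}_g$ is irreducible of dimension $19$ and $\pi(J)$ is constructible in $\mathcal{B}_g$, this means $\pi(J)$ is dense in $\mathcal{B}_g$, i.e.\ $\pi$ is dominant. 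The only step involving real content is the first one; the remainder is a direct dimension count, and I do not foresee a genuine obstacle beyond correctly handling the stacky dimension statements on an atlas.
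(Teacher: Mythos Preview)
Your proposal is correct and follows essentially the same line as the paper's own proof: both combine the lower bound $\dim J \geq 19 + p(D)$ from Theorem \ref{good-bound} with the upper bound $h^0(N_f) \leq p(D)$ on the local dimension of the fibre (via the normal bundle description of deformations with fixed target), and conclude by a fibre-dimension count that $\dim \pi(J) = \dim \mathcal{B}_g$. The paper's argument is simply a more compressed version of yours.
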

\begin{proof}
We have seen that $\dim J \geq p(D)+19$ in Theorem \ref{good-bound}. The fibre $\pi^{-1}([(X,L)])$ may be identified with the space of stable maps into the \emph{fixed} surface $X$, and thus each component of $\pi^{-1}([(X,L)])$ containing $[(f: D \to X, L)]$ has dimension at most $h^0(N_f) \leq p(D)$, \cite[\S 2.1]{ghs-rat}. Thus $\dim \pi(J)=\dim \mathcal{B}_g$, so $\pi: J \to \mathcal{B}_g$ is dominant (note that this also forces the equality $h^0(N_f) = p(D)$).
\end{proof}
By using Theorem \ref{relative-gdct}, one has the following, slightly different version of the above lemma. The proof is identical.
\begin{prop} \label{relative-dominant}
Let $S$ be a smooth and connected algebraic variety over $\C$. Let $\mathcal{X} \to S$ be a projective, flat family of K3 surfaces. Let $\mathcal{M}$ be an effective $S$-flat line bundle, with $(\mathcal{M}_t)^2 \geq -2$ for any $t \in S$. Let $[f: D \to \mathcal{X}_s] \in \mathcal{W}(\mathcal{X}, \mathcal{M}, p)$ represent an unramified stable map such that $h^0(N_f) \leq p$. Then the projection $\pi: \mathcal{W}(\mathcal{X}, \mathcal{M}, p) \to S$ is dominant near $[f: D \to \mathcal{X}_s]$. Further, $\dim_f \mathcal{W}(\mathcal{X}, \mathcal{L}, p)=p+\dim S$.
\end{prop}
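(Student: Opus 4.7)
The plan is to mirror the proof of Proposition \ref{ishi-little} line for line, replacing the absolute bound $\dim J \geq 19 + p(D)$ used there by its relative analogue furnished by Theorem \ref{relative-gdct}. Two ingredients are required: a lower bound on the dimension of every irreducible component of $\mathcal{W}(\mathcal{X},\mathcal{M},p)$ through $[f]$, and a matching upper bound on the dimension at $[f]$ of the fibre of $\pi$ over the distinguished base point $s \in S$.

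For the lower bound I would invoke Theorem \ref{relative-gdct} directly. The hypotheses placed on $\mathcal{X} \to S$ and $\mathcal{M}$ in the present statement (smooth projective family over a smooth connected base, $S$-flat effective $\mathcal{M}$ with $(\mathcal{M}_t)^2 \geq -2$) are exactly those required by that theorem, so it yields $\dim J \geq \dim S + p$ for any component $J$ through $[f]$. This is the deep transcendental input, and essentially all the substantive work for the present proposition has already been absorbed into that theorem, which in turn relied on the twistor-family argument and the analytic deformation theory from Section \ref{complexDef}.

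For the upper bound I would use that $\pi^{-1}(s)$ parametrises stable maps into the single projective K3 surface $\mathcal{X}_s$. Since $f$ is unramified, the deformation theory of stable maps with fixed target recalled at the start of Section \ref{unramSect} (following \cite[\S 2.1]{ghs-rat}) identifies the Zariski tangent space to the fibre at $[f]$ with $H^0(D,N_f)$ and provides $H^1(D,N_f)$ as an obstruction space. Consequently each irreducible component of $\pi^{-1}(s)$ meeting $[f]$ has dimension at most $h^0(N_f) \leq p$.

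Combining these two estimates, for any component $J \subseteq \mathcal{W}(\mathcal{X},\mathcal{M},p)$ passing through $[f]$ one obtains
$$\dim \pi(J) \;\geq\; \dim J - \dim_{[f]} \pi^{-1}(s) \;\geq\; (\dim S + p) - p \;=\; \dim S,$$
which forces $\pi$ to be dominant near $[f]$. Since the outer terms of this chain coincide, all inequalities must be equalities, which simultaneously gives $\dim_{f} \mathcal{W}(\mathcal{X},\mathcal{M},p) = \dim S + p$ and, as a byproduct, $h^0(N_f) = p$. I do not anticipate a genuine obstacle here: the heavy machinery has already been packaged into Theorem \ref{relative-gdct}, and the only step meriting mild care is applying the fibre-dimension inequality to each component of $\pi^{-1}(s)$ through $[f]$ separately before taking the supremum to estimate $\dim \pi(J)$.
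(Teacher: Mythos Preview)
Your proposal is correct and matches the paper's approach exactly: the paper simply states that the proof is identical to that of Proposition~\ref{ishi-little}, replacing the appeal to Theorem~\ref{good-bound} by Theorem~\ref{relative-gdct}, which is precisely what you have written out.
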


The following is a generalisation of \cite[Prop.\ 2.3]{huy-kem}. \footnote{There is a minor mistake in the proof of \cite[Prop.\ 2.3]{huy-kem}; the claimed isomorphism $\Omega_{D_{|_{D_n}}} \simeq \mathcal{O}(-1)$ should be replaced with $det(\Omega_{D_{|_{D_n}}}) \simeq \mathcal{O}(-1)$, as the sheaf $\Omega_{D_{|_{D_n}}}$ is not torsion free.}
\begin{lem} \label{stablem}
Let $f: D \to X$ be an unramified morphism from a connected nodal curve to a K3 surface, and let $N_f$ denote the normal bundle of $f$. Assume that the irreducible components $Z_1, \ldots, Z_s$ of $D$ are smooth. Assume further that we may label the components such that $\bigcup_{i=1}^j Z_i$ is connected for all $j \leq s$. Then $h^0(N_f) \leq p(D)$, where $p(D)$ denotes the arithmetic genus of $D$.
\end{lem}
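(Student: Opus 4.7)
The plan is to proceed by induction on the number of components $s$, peeling off the component $Z_s$ using the ordering hypothesis. For the base case $s=1$, the curve $D = Z_1$ is smooth and irreducible. Taking determinants in the defining sequence $0 \to N_f^* \to f^*\Omega_X \to \Omega_D \to 0$ gives $N_f \simeq K_D$ since $X$ is a K3 surface, so $h^0(N_f) = h^0(K_D) = g(D) = p(D)$ by Serre duality. (The base case is actually equality.)

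For the inductive step, set $D' := Z_1 \cup \cdots \cup Z_{s-1}$ and $B := Z_s$, so that $D'$ is connected by hypothesis and $Y := D' \cap B$ is a nonempty finite set of nodes. The standard short exact sequence
\[
0 \to \mathcal{O}_{D'}(-Y) \to \mathcal{O}_D \to \mathcal{O}_B \to 0
\]
(where the left-hand term realizes the ideal sheaf $I_B \subseteq \mathcal{O}_D$) tensored with $N_f$ yields
\[
0 \to (N_f)_{|_{D'}}(-Y) \to N_f \to (N_f)_{|_{B}} \to 0.
\]
Applying Proposition \ref{normal-bdl} to the component $B$ with complementary curve $D'$ gives $(N_f)_{|_{B}} \simeq N_{f_B}(Y)$, and since $B$ is smooth with $f_B$ unramified into a K3 surface we have $N_{f_B} \simeq K_B$; thus $(N_f)_{|_B} \simeq K_B(Y)$. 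Applying the same proposition to $D'$ (with complement $B$) gives $(N_f)_{|_{D'}} \simeq N_{f_{D'}}(Y)$, so $(N_f)_{|_{D'}}(-Y) \simeq N_{f_{D'}}$. Taking cohomology in the displayed sequence yields
\[
h^0(N_f) \leq h^0(N_{f_{D'}}) + h^0(K_B(Y)).
\]

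It remains to estimate the two terms. By the inductive hypothesis, $h^0(N_{f_{D'}}) \leq p(D')$. For the second term, Riemann–Roch on the smooth curve $B$ gives $\chi(K_B(Y)) = g(B) - 1 + |Y|$, and Serre duality gives $h^1(K_B(Y)) = h^0(\mathcal{O}_B(-Y)) = 0$ since $|Y| \geq 1$, so $h^0(K_B(Y)) = g(B) - 1 + |Y|$. Combining these, together with the standard formula $p(D) = p(D') + g(B) - 1 + |Y|$ for the arithmetic genus of a nodal union along $|Y|$ points, delivers $h^0(N_f) \leq p(D)$. I do not anticipate a serious obstacle; the only delicate point is keeping track of which restriction needs a twist by $Y$, which is exactly what Proposition \ref{normal-bdl} is built to handle.
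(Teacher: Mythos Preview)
Your proof is correct and follows essentially the same approach as the paper: induction on $s$, peeling off $Z_s$ via the restriction sequence $0 \to (N_f)_{|_{D'}}(-Y) \to N_f \to (N_f)_{|_{Z_s}} \to 0$, applying Proposition~\ref{normal-bdl} to both pieces, and finishing with Riemann--Roch on $Z_s$ together with the genus formula $p(D) = p(D') + g(Z_s) - 1 + |Y|$. The only cosmetic difference is notation ($T$ versus $D'$, $\{p_1,\ldots,p_r\}$ versus $Y$).
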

\begin{proof}
We will prove this by induction on $s$. If $D$ is irreducible, then by assumption $D$ is smooth, so we have a short exact sequence of vector bundles
$$0 \to T_D \to f^* T_X \to N_f \to 0 $$
and taking determinants gives $N_f \simeq \omega_D$. Thus $h^0(N_f)=h^0(\omega_D)=p(D)$. Now let $T:=\overline{D \setminus Z_s}=\bigcup_{i=1}^{s-1} Z_i$; this is connected by assumption. Let $\{p_1, \ldots, p_r\} = Z_s \cap T$. We have a short exact sequence
$$0 \to {N_f}_{|_T}(-p_1 -\ldots -p_r) \to N_f \to {N_f}_{|_{Z_s}} \to 0 .$$ If $B \seq D$ is a connected union of components, and $f_B:=f_{|_B}$, $Y:=B \cap (\overline{D \setminus B})$, then $N_{f_B}(Y)={N_f}_{|_B}$ from Proposition \ref{normal-bdl}. Thus
$$ h^0(N_f) \leq h^0(N_{f_T})+h^0(\omega_{Z_s}(p_1+\ldots+p_r)).$$ 
By induction, $h^0(N_{f_T}) \leq p(T)$, and further $h^1(\omega_{Z_s}(p_1+\ldots+p_r))=h^0(\mathcal{O}_{Z_s}(-p_1 \ldots -p_r))=0$, so Riemann--Roch gives $h^0(\omega_{Z_s}(p_1+\ldots+p_r))=p(Z_s)+r-1$. Thus the claim follows from
 $p(B)=p(T)+p(Z_s)+r-1$.
\end{proof}
\begin{remark} \label{slightgen-nodal}
It follows from the proof that the above result may be generalised as follows. Suppose $f: D \to X$ be an unramified morphism from a connected nodal curve to a K3 surface, and $D= \bigcup_{i=1}^s Z_i$ where $Z_1$ is connected, but not necessarily irreducible or smooth, and with $Z_2, \ldots, Z_s$ smooth (and with $s>1$). Assume $\bigcup_{i=1}^j Z_i$ is connected for all $j \leq s$, and $h^0(N_{f_1}) \leq p(Z_1)$, where $f_1:=f_{|_{Z_1}}$. Then $h^0(N_f) \leq p(D)$.
\end{remark}

\begin{mydef}
We define $\mathcal{T}^n_{g,k} \seq \mathcal{W}^{n}_{g,k}$ to be the open substack parametrising stable maps $f: D \to X$ such that $D$ is integral and smooth and $f$ is unramified and birational onto its image.
\end{mydef}
We now need the following result on simultaneous normalisation of families of singular curves, which in this generality is usually attributed to Raynaud, generalising the results of Teissier, \cite{teissier}. For modern treatments and further generalisations, see \cite{chiang-lipman},  \cite[Thm.\ 12]{kollar-simultaneous}.
\begin{prop} [Teissier, Raynaud] \label{sim-res}
Let $B$ be a normal, integral scheme of finite-type over $\C$, and let 
$$ f: \mathcal{C}_1 \to B$$ be a projective, flat family of relative dimension one with reduced fibres. Then there exists a simultaneous resolution, i.e.\ a flat family $ g: \mathcal{C}_2 \to B$ of relative dimension one with normal fibres, together with a finite map $h: \mathcal{C}_2 \to \mathcal{C}_1$ such that $f \circ h=g$ and the restriction morphism $$h_b:  \mathcal{C}_{2,b} \to \mathcal{C}_{1,b} $$ is the normalisation map for each fibre over $b \in B$.
\end{prop}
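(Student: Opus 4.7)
The plan is to construct $\mathcal{C}_2$ as the normalization $\nu: \mathcal{C}_2 \to \mathcal{C}_1$ of the total space, to set $g := f \circ \nu$, and then to verify the required fibrewise properties by invoking the Teissier--Chiang-Lipman--Kollár criterion for the existence of a simultaneous normalization.

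First I would check that the total space $\mathcal{C}_1$ is itself reduced. Since $f$ is flat with reduced fibres over the reduced (indeed normal) base $B$, every associated point of $\mathcal{O}_{\mathcal{C}_1}$ must lie over an associated point of $B$ and restrict to an associated, hence (by fibre-reducedness) minimal, point of some fibre. Thus $\mathcal{C}_1$ has no embedded components and is generically reduced, hence reduced. Because finite-type $\C$-schemes are excellent, the normalization $\nu: \mathcal{C}_2 \to \mathcal{C}_1$ is then a finite birational morphism, and the composition $g := f \circ \nu$ is automatically projective of relative dimension one.

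The content of the proposition is that $g$ is flat, that every fibre $\mathcal{C}_{2,b}$ is normal, and that the restriction $\nu_b: \mathcal{C}_{2,b} \to \mathcal{C}_{1,b}$ is the normalization map of the fibre. By the theorem of Teissier \cite{teissier}, in the generality developed by Chiang-Lipman \cite{chiang-lipman} and recorded as \cite[Thm.\ 12]{kollar-simultaneous}, these three statements hold simultaneously if and only if the function $\delta: B \to \mathbb{Z}_{\geq 0}$ sending $b$ to the sum of $\delta$-invariants of the singularities of $\mathcal{C}_{1,b}$ is locally constant on $B$.

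The main obstacle is therefore to establish the local constancy of $\delta$, and it is here that the normality of $B$ is essential. I would combine Teissier's upper semicontinuity of $\delta$ with a codimension argument using Serre's condition $(S_2)$ on the normal base $B$ together with the purity of the conductor ideal in a flat family: the jump locus $\{b \in B : \delta(b) > c\}$ is closed by semicontinuity, and a codimension-one jump on a normal base would force the conductor to fail the relevant depth/Cohen--Macaulay property transverse to this locus, giving a contradiction, while higher codimension jumps are excluded by pulling back along general arcs through a regular point of $B$ and applying semicontinuity on the one-dimensional base. Once local constancy of $\delta$ is in hand, the Teissier--Kollár criterion produces $(\mathcal{C}_2, \nu, g)$ with all of the stated properties, completing the argument.
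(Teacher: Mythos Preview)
The paper does not actually prove this proposition; it simply records it as a known result and cites \cite{teissier}, \cite{chiang-lipman}, and \cite[Thm.\ 12]{kollar-simultaneous}. So there is no argument in the paper to compare yours against.

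Your plan is structurally the right one: take the normalization of the total space and appeal to the Teissier--Koll\'ar criterion, which says this yields a simultaneous normalization precisely when the fibrewise $\delta$-invariant is locally constant. The genuine gap is your argument for local constancy of $\delta$, and it cannot be repaired, because $\delta$ is \emph{not} locally constant under the stated hypotheses. Take a pencil of smooth plane cubics specialising to an integral nodal cubic over $B=\mathbb{A}^1$: this is projective, flat, of relative dimension one, with reduced fibres over a normal base, yet $\delta$ jumps from $0$ to $1$ at the origin. No simultaneous normalization in the sense of the proposition can exist here, since flatness of $g$ would force constant arithmetic genus, while the normalization of the generic fibre has genus $1$ and the normalization of the nodal cubic is $\mathbb{P}^1$. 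Your sentence ``higher codimension jumps are excluded by pulling back along general arcs \ldots\ and applying semicontinuity on the one-dimensional base'' is exactly where this breaks: upper semicontinuity constrains only the direction of a jump, not its occurrence.

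The conclusion is that the proposition as literally stated is too strong; the cited theorems are \emph{criteria} for simultaneous normalization (equivalently, characterisations of equigeneric families), not unconditional existence results. The missing hypothesis is that $\delta$ be locally constant. This is harmless for the paper: the only invocation of the proposition, in Lemma~\ref{defo-nodal-lemma}, is to a family whose fibres all have exactly $m$ nodes, so $\delta\equiv m$ and the criterion applies directly.
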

\begin{lem} \label{defo-nodal-lemma}
Let $f: D \to X$ be an unramified morphism from an integral, nodal curve $D$ to a K3 surface, with $[(f: D \to X,L)] \in \mathcal{W}^{n}_{g,k}$ and assume that $f$ is birational onto its image. Then $[(f: D \to X,L)]$ lies in the closure of $\mathcal{T}^n_{g,k}$.
\begin{proof}
Suppose there was a component of $J$ of $\mathcal{W}^{n}_{g,k}$ containing $[(f: D \to X,L)]$ such that if  $[(f': D' \to X',L')]$ is general, then $D'$ is nodal with at least $m >0$ nodes. Replace $J$ with the dense open subset parametrizing unramified maps, which are birational onto the image and such that the base $D'$ is integral with exactly $m$ nodes. Composing $f'$ with the normalization $\widetilde{D} \to D'$ gives an unramified stable map $h: \widetilde{D} \to X'$; thus we have a map $G: J(\C) \to \mathcal{T}^{n+m}_{g,k}(\C)$ between the \emph{sets} of closed points of the respective stacks. The fact that $f$ is birational onto its image implies that for a general $y \in Im(G)$, $G^{-1}(y)$ is a finite set. Indeed, if $y$ corresponds to the stable map $h: \widetilde{D} \to X'$, then any element of $G^{-1}(y)$ corresponds to an unramified stable map $h': B \to X'$, birational onto its image, where $B$ is integral and has exactly $m$ nodes and the normalization of $B$ is $ \widetilde{D}$. Further, 
if $\mu: \widetilde{D} \to B$ is the normalization morphism, then $h' \circ \mu=h$, by definition of $G$. As $h'$ is birational onto its image, the set 
$$ Z := \{ z \in \widetilde{D} \; | \; \text{There exists $y \neq z \in \widetilde{D}$ with $h(z)=h(y)$} \}$$
is finite. Since $h'$ is obtained from $h$ by glueing $m$ pairs of points in $Z$, there are only finitely many possibilities for $h'$.

We claim that, at least after a finite base change, $G$ is locally induced by a morphism of stacks. After replacing $J$ with an etale cover, we may assume it comes with a universal family. Let $J' \to J$ be the normalization of $J$. Pulling back the universal family on $J$ gives a family of stable maps over $J'$. In particular, we have a flat family $\mathcal{D} \to J'$ of nodal curves with exactly $m$ nodes specializing to $D'$.  Applying Proposition \ref{sim-res}, we may simultaneously resolve the $m$ nodes of the fibres of $\mathcal{D}$ to produce a family $\widetilde{\mathcal{D}} \to J'$ of \emph{smooth} curves, together with a morphism $\widetilde{\mathcal{D}} \to \mathcal{D}$ restricting to the normalization over each point in $J'$. By composing with the universal family of stable maps $\mathcal{D} \to \mathcal{X}$, where $\mathcal{X}$ is a family of K3 surfaces, we produce a family of stable maps $\widetilde{\mathcal{D}} \to \mathcal{X}$ over $J'$. By the universal property of $\mathcal{T}^{n+m}_{g,k}$, this produces a morphism of Deligne--Mumford stacks $J' \to \mathcal{T}^{n+m}_{g,k}$ which coincides with the composition $G'$ of $G$ with $J' \to J$ on the level of closed points. As this morphism is generically finite onto its image, the dimension of $\mathcal{T}^{n+m}_{g,k}$ is at least $\dim J'=\dim J \geq p(D)+19$. But $\mathcal{T}^{n+m}_{g,k}$ is smooth of dimension $p(D)-m+19$, so this is a contradiction.
\end{proof}
\end{lem}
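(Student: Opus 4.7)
My plan is to argue by contradiction, using simultaneous normalization to transfer information from $\mathcal{W}^{n}_{g,k}$ to $\mathcal{T}^{n+m}_{g,k}$ for some $m>0$, and then derive a contradiction from the dimension bound of Theorem \ref{good-bound} together with the fact that $\mathcal{T}^{n+m}_{g,k}$ is pure of the expected (smaller) dimension. The starting observation is that if $[(f:D \to X, L)]$ fails to lie in the closure of $\mathcal{T}^{n}_{g,k}$, then there is an irreducible component $J \seq \mathcal{W}^{n}_{g,k}$ containing this point such that the generic stable map $[(f': D' \to X', L')] \in J$ has $D'$ nodal but not smooth. After shrinking $J$ to a dense open, I may assume the generic $D'$ is integral with exactly $m > 0$ nodes and $f'$ is still unramified and birational onto its image.

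The key construction is to compose the generic stable map $f'$ with the normalisation $\widetilde{D'} \to D'$, obtaining an unramified stable map $h: \widetilde{D'} \to X'$ birational onto its image. This gives a set-theoretic map $G: J(\C) \to \mathcal{T}^{n+m}_{g,k}(\C)$. To realize $G$ as a morphism of stacks (at least after a finite base change), I would first pass to an \'etale atlas of $J$ so a universal family exists, then take the normalisation $J' \to J$ and apply Proposition \ref{sim-res} to the pull-back family of nodal curves $\mathcal{D} \to J'$; since each fibre has exactly $m$ nodes, simultaneous resolution yields a flat family $\widetilde{\mathcal{D}} \to J'$ of smooth curves together with a morphism $\widetilde{\mathcal{D}} \to \mathcal{D}$ restricting fibrewise to normalisation. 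Composing with the universal family of stable maps and invoking the universal property of $\mathcal{T}^{n+m}_{g,k}$ produces a morphism of Deligne--Mumford stacks $J' \to \mathcal{T}^{n+m}_{g,k}$ which induces $G$ on $\C$-points after the finite map $J' \to J$.

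The second essential step is to check that this morphism is generically finite onto its image, which I would deduce from the birationality of $f$. Concretely, given a generic $y \in \mathrm{Im}(G)$ corresponding to $[h: \widetilde{D} \to X']$, any preimage corresponds to a stable map $h': B \to X'$ with $B$ integral, nodal with $m$ nodes, whose normalisation is $\widetilde{D}$ and which satisfies $h' \circ \mu = h$ for the normalisation $\mu: \widetilde{D} \to B$. Thus $B$ is obtained from $\widetilde{D}$ by glueing $m$ pairs of distinct points lying in the finite set $Z := \{ z \in \widetilde{D} \mid \exists\, y \neq z \text{ with } h(z)=h(y)\}$, which is finite precisely because $f$ (and hence the generic $h$) is birational onto its image; this leaves only finitely many choices for $h'$.

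Finally, I combine the dimension estimates. By Theorem \ref{good-bound}, $\dim J' = \dim J \geq 19 + p(g,k)-n = 19 + p(D)$. On the other hand, $\mathcal{T}^{n+m}_{g,k}$ consists of unramified maps with smooth domain and is pure of dimension $19 + p(g,k)-n-m = 19 + p(D) - m$ (as recalled in the introduction and established via the deformation theory of Section \ref{unramSect}). The generically finite morphism $J' \to \mathcal{T}^{n+m}_{g,k}$ then forces $19 + p(D) \leq 19 + p(D) - m$, contradicting $m > 0$. The main obstacle here is the stackification step, i.e.\ promoting the set-theoretic map $G$ to an honest morphism of Deligne--Mumford stacks so that dimension counts are legitimate; the simultaneous normalisation result of Teissier--Raynaud, applied after normalising the atlas of $J$, is what makes this work.
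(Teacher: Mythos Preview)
Your proposal is correct and follows essentially the same route as the paper's proof: contradiction via a component $J$ whose generic domain has $m>0$ nodes, simultaneous normalisation (Proposition~\ref{sim-res}) after passing to a normal \'etale atlas to promote the set-theoretic normalisation map $G$ to a morphism $J' \to \mathcal{T}^{n+m}_{g,k}$, generic finiteness of this morphism from the birationality hypothesis, and finally the dimension contradiction $19+p(D) \le 19+p(D)-m$. The ordering of the finiteness and stackification steps is reversed relative to the paper, but the content is identical.
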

\begin{prop} \label{prim-cor}
Let $[(f: D \to X, L)] \in \mathcal{W}^{n}_{g,k}$ represent an unramified stable map such that  $h^0(N_f) \leq p(D)$, where $p(D)=p(g,k)-n$. Assume furthermore that there is no decomposition $$D= \bigcup_{i =1}^t D_i$$ for $t >1$ with each $D_i$ a connected union of irreducible components of $D$ such that $D_i$ and $D_j$ meet transversally (if at all) for all $i \neq j$ and such that for all $1 \leq i \leq t$, $f_*(D_i) \in |m_iL|$ for a positive integer $m_i >0$ (this is automatic if $k=1$). Lastly, assume that there is some component $D_j$ such that $f_{|_{D_j}}$ is birational onto its image, and if $D_i \neq D_j$ is any component, $f(D_i)$ and $f(D_j)$ intersect properly. Then $[(f: D \to X, L)]$ lies in the closure of $\mathcal{T}^{n}_{g,k}$.
\end{prop}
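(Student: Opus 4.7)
The plan is to combine Proposition \ref{ishi-little} with a deformation to a generic K3 surface of Picard rank one, and then to reduce to Lemma \ref{defo-nodal-lemma}. Fix any irreducible component $J \seq \mathcal{W}^n_{g,k}$ containing $[(f: D \to X, L)]$. Since $h^0(N_f) \leq p(D)$, Proposition \ref{ishi-little} gives that $J \to \mathcal{B}_g$ is dominant, so I may take a generic $[(f': D' \to X', L')] \in J$ over a very general $(X', L') \in \mathcal{B}_g$ with $\text{Pic}(X') \simeq \mathbb{Z}L'$. Being unramified is an open condition on $\mathcal{W}^n_{g,k}$ (cf.\ Proposition \ref{unramifiedisanopenstack}), so $f'$ is unramified. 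The main technical content is then to show (i) $D'$ is integral, and (ii) $f'$ is birational onto its image. Granted these, the conclusion follows from Lemma \ref{defo-nodal-lemma} applied to $[(f')]$, after observing that the closed substack $\overline{\mathcal{T}^n_{g,k}}$ contains the generic point of the irreducible component $J$, hence all of $J$.

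For (i), suppose $D'$ decomposed into irreducible components $D'_1,\ldots,D'_t$ with $t \geq 2$. Since $\text{Pic}(X') = \mathbb{Z}L'$ and no component is contracted (as $f'$ is unramified), each pushforward $f'_*(D'_i)$ lies in $|m_i L'|$ with $m_i > 0$. I would pick a smooth curve $\Delta \seq J$ joining $[(f)]$ to $[(f')]$, form the base-changed family $\mathcal{D}_\Delta \to \Delta$, and take the reduced closure $\overline{D'_i}$ of each $D'_i$ in $\mathcal{D}_\Delta$. Each $\overline{D'_i}$ is integral and dominates $\Delta$, hence flat over $\Delta$, so by Zariski's connectedness theorem its special fibre $D_i$ is a connected subcurve of $D$, necessarily a union of components. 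The resulting decomposition $D = \bigcup_{i=1}^t D_i$ consists of $t \geq 2$ connected unions of components meeting transversally (since $D$ is nodal), and flatness of the cycle $[f_*\mathcal{D}_\Delta]$, via Proposition \ref{push-forwards}, gives $f_*(D_i) \in |m_i L|$ with $m_i > 0$, directly contradicting the no-decomposition hypothesis.

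For (ii), set $C' := f'(D') \seq X'$ with reduced structure and write $[C'] = mL'$ and $f'_*D' = d' C'$, where $d' = \deg(f'\colon D' \to C')$ satisfies $d'm = k$. By Proposition \ref{push-forwards}, $f_*\mathcal{D}_\Delta$ is a relatively effective Cartier divisor on $\mathcal{X}_\Delta$, and as a cycle on the total space equals $d'[\overline{C'}]$, where $\overline{C'}$ is the reduced integral closure of $C'$. Specializing to the central fibre yields the cycle identity $[f_* D] = d' \cdot [\overline{C'}|_0]$ on $X$. Now the hypothesis that $f(D_i)$ and $f(D_j)$ intersect properly for $D_i \neq D_j$ means that no other component $D_i$ has image equal to $C_j := f(D_j)$, so the coefficient of $C_j$ in $f_*D$ is exactly $\deg(f|_{D_j}) = 1$ by birationality. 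Comparing coefficients of $C_j$ on both sides yields $1 = d' \cdot a_j$ for a positive integer $a_j$, forcing $d' = 1$.

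The hardest part I expect will be carrying through the two specialization arguments cleanly. In particular, I will need to verify flatness of the integral closures $\overline{D'_i}$ and $\overline{C'}$ via integrality and dominance over $\Delta$, and then track the behaviour of numerical classes and cycle-theoretic multiplicities through the specialization, for which Lemma \ref{kollar-flat} and Proposition \ref{push-forwards} will be the key tools.
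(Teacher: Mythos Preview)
Your proposal is correct and follows essentially the same strategy as the paper: dominate $\mathcal{B}_g$ via Proposition \ref{ishi-little}, specialize to a K3 with $\text{Pic}(X')\simeq\mathbb{Z}L'$, argue integrality of $D'$ by Zariski connectedness and the no-decomposition hypothesis, argue birationality by tracking the multiplicity of $f(D_j)$ in the pushed-forward relative divisor, and finish with Lemma \ref{defo-nodal-lemma}. The paper phrases step (ii) as ``$\bar{\mathcal{D}}$ is reduced on a dense open subset meeting $f(D_j)$'' rather than your explicit coefficient comparison $1=d'a_j$, but these are the same observation. One small point to tighten: in step (i) you should allow a finite base change of $\Delta$ before taking closures of the $D'_i$, as the paper does, to ensure the components of the generic fibre separate cleanly in the total space and specialize to honest unions of components of $D$.
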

\begin{proof}
 We need to show that we may deform $[(f: D \to X, L)]$ to an unramified stable map $[(f': D' \to X', L')]$ with $D'$ irreducible and smooth. We will firstly show that $[(f: D \to X, L)]$ deforms to a stable map with irreducible base. 
 
 From Proposition \ref{ishi-little}, $\pi: \mathcal{W}^{n}_{g,k} \to \mathcal{B}_g$ is dominant near $[(f: D \to X, L)]$. Thus we may deform $[(f: D \to X, L)]$ to an unramified stable map $[(f': D' \to X', L')]$, where $\text{Pic}(X') \simeq \mathbb{Z}L'$. For any irreducible component $Z \seq D'$, $f'_*(Z) \in |a_iL'|$ for some integer $a_i >0$. Given any one-parameter family $\gamma(t)=[(f_t: D_t \to X_t, L_t)]$ of stable maps with $\gamma(0)=[(f: D \to X, L)]$, then, by the Zariski connectedness theorem, after performing a finite base change about $0$ if necessary the irreducible components of $D_t$ for generic $t$ deform to a connected union of irreducible components of $D$ as $t \to 0$ (see \cite[\href{http://stacks.math.columbia.edu/tag/0551}{Tag 0551}]{stacks-project} and \cite[Cor.\ 8.3.6]{liu}). Thus the condition on $D$ ensures that $[(f: D \to X, L)] $ deforms to an unramified stable map of the form $[(f': D' \to X', L')]$ with $D'$ \emph{integral} and nodal. 
 
We will next show that $[(f: D \to X, L)] $ deforms to an unramified stable map of the form $[(f': D' \to X', L')]$ with $D'$ integral and nodal and such that $f'$ is birational onto its image. By Lemma \ref{defo-nodal-lemma}, this will complete the proof. Let $S$ be a smooth, irreducible, one-dimensional scheme with base point $0$, and suppose we have a diagram
 \[
\xymatrix{
\mathcal{D} \ar[r]^{\tilde{g}}  \ar[rd]_{\pi_1} & \mathcal{X} \ar[d]^{\pi_2} \\
&S
}
\] 
with $\tilde{g}$ proper, $\pi_1$, $\pi_2$ flat and with $\tilde{g}_s: \mathcal{D}_s \to \mathcal{X}_s$ an unramified stable map to a K3 surface for all $s$, with $\tilde{g}_0=f$ and such that $\mathcal{D}_s$ is integral for $s \neq 0$. By Proposition \ref{push-forwards} there is an $S$-flat line bundle $\mathcal{L}$ on $\mathcal{X}$, with $\mathcal{L}_0=kL$ and that the cycle $\tilde{g}_{*}(\mathcal{D}) \sim \mathcal{L}$ is a relatively effective (Cartier) divisor. So the cycle $\tilde{g}_{*}(\mathcal{D})$ may be considered as an $S$-relatively effective divisor $\bar{\mathcal{D}} \seq \mathcal{X}$. By the assumptions on $f=\tilde{g}_0$, the irreducible surface $\bar{\mathcal{D}}$ is reduced on a dense open subset meeting $f(D_j)$, which forces $\tilde{g}_s$ to be birational for $s$ near $0$ (as if $\deg(\tilde{g}_s)=d$, $\bar{\mathcal{D}}_s=d \tilde{g}_s(\mathcal{D}_s)$).
\end{proof}

\chapter{Constructing elliptic K3 surfaces} 
The goal of this chapter is to offer two different ways to construct elliptic K3 surfaces with many special properties.
These special surfaces can then be used in deformation arguments to gain insight on the properties of singular curves
on general K3 surfaces.
We start with the following definition from \cite{miranda}:
\begin{mydef}
A \emph{minimal elliptic surface over $\proj^1$} is an integral smooth surface $S$ together with a surjective morphism
$\pi: S \to \proj^1$ such that:
\begin{enumerate}
\item The general fibre of $\pi$ is an elliptic curve.
\item There are no $(-1)$ curves in the fibres of $\pi$.
If in addition $\pi$ has a section $s: \proj^1 \to S$, then we call $S$ a \emph{minimal elliptic surface over $\proj^1$ with section}. 
If $\pi$ defines a minimal elliptic surface over $\proj^1$ with section, and we further have that all fibres of $\pi$ are integral, then
$\pi$ is called a \emph{Weierstrass fibration over $\proj^1$}.
\end{enumerate}
We will be mostly interested in the case $S$ is a K3 surface. In this case the word ``minimal" is vacuous, so we will simply use the term \emph{elliptic K3 surface} to denote a minimal elliptic surface $S$ over $\proj^1$ such that $S$ is a K3 surface. 

In this chapter we discuss the two main techniques to construct elliptic K3 surfaces. In Section \ref{WeierEll} we describe how the Weierstrass equation can be used to very explicitly construct such surfaces. We will give several interesting examples of this construction. In Section \ref{TorelliEll} we will use the Torelli theorem for K3 surfaces to show that many elliptic K3 surfaces exist with prescribed
Picard groups. In contrast to the first approach, this only gives an implicit construction of elliptic K3 surfaces. In practice, however, one is nevertheless able to extract much geometric information out of the Torelli procedure. 
\end{mydef}
\section{Constructions via the Weierstrass equation} \label{WeierEll}
We start by reminding the reader of some elementary classical theory on elliptic curves.
Any integral curve over $\C$ of arithmetic genus one can be realised as a plane cubic. After a coodinate change, this plane cubic can be
put in the form 
$$ y^2z=x^3+axz^2+bz^3$$
for $a,b \in \C$. This is the \emph{Weierstrass equation}. The corresponding elliptic curve is singular if and only if
$ \Delta :=-16 (4a^3+27b^2) $ vanishes.

If we replace the constants $a,b \in \C$ with polynomials $a(t), b(t) \in \C [t]$, then we obtain a two-dimensional variety $S_0 \seq \proj^2 \times \mathbb{A}^1$, together with a projection $\pi: S_0 \to  \mathbb{A}^1$. The point $[x,y,z]=(0,1,0)$ defines a section to $\pi$. The fibre of $\pi$ over $t=t_0$ is a plane cubic curve, and it is singular if and only if $\Delta(t_0)=-16(4 a^3(t_0)+27b^3(t_0))=0$. 

Following \cite{miranda-moduli}, one can globalise the above equation to construct elliptic surfaces with a Weierstrass fibration. Let $L:= \mathcal{O}_{\proj^1}(N)$ for $N >0$, let $(A,B) \in H^0(L^4) \times H^0(L^6)$ be pairs of sections and assume that
$$ \Delta(A,B):= -16(4A^3+27B^2) \in H^0(L^{12})$$ is not the zero section. Consider the closed subscheme $S_{A,B}$ of $\proj:=\proj(L^2 \oplus L^3 \oplus \mathcal{O}_{\proj^1})$ defined by $y^2z=x^3+Axz^2+Bz^3$, where $(x,y,z)$ is a global coordinate system of $\proj$ relative to $(L^2,L^3,\mathcal{O}_{\proj^1})$. If we assume that $S$ is smooth, then the third projection $\pi: S \to \proj^1$ defines a Weierstrass fibration. Any Weierstrass fibration of a smooth elliptic surface comes from the above construction. Finally, $S$ is a K3 surface if and only if $N=2$.

Let $(A,B) \in H^0(\mathcal{O}_{\proj^1}(8)) \times H^0(\mathcal{O}_{\proj^1}(12))$ such that $\Delta(A,B)$ is not the zero section. We now describe explicitly the local equations for $S_{A,B}$. Let $U_0, U_1$ be two copies of $\mathbb{A}^1$ and set $W_0=U_0 \times \proj^2$, $W_1=U_1 \times \proj^2$. Then $\proj(\mathcal{O}_{\proj^1}(4) \oplus \mathcal{O}_{\proj^1}(6) \oplus \mathcal{O}_{\proj^1})$ is obtained from $W_0 \sqcup W_1$ by identifying $(u_0, [x_0, y_0, z_0])$ with $(u_1, [x_1, y_1, z_1])$ if and only if $u_0 \neq 0$ and 
$$ u_1= \frac{1}{u_0}, \; \; x_1=\frac{x_0}{u_0^4}, \; \; y_1=\frac{y_0}{u_0^6}, \; \; z_1=z_0.$$

The global sections $A$ resp.\ $B$ correspond to pairs of polynomial functions $(A_0, A_1)$, $(B_0, B_1)$ of degree $8$ resp.\ $12$
\begin{align*}
A_i \; : \; U_i \to \mathbb{A}^1, \; \; \; B_i \; : \; U_i \to \mathbb{A}^1, \; \; \text{for $i=0,1$}
\end{align*} 
such that $A_1(u)=u^8 A_0(\frac{1}{u})$ and $B_1(u)=u^{12} B_0(\frac{1}{u})$. The variety $S_{A,B}$ is then given by the equations
$$y_i^2z_i=x_i^3+A_i(u_i)x_iz_i^2+B_i(u_i)z_i^3 $$
on $W_i$ for $i=0,1$. 

\begin{eg} [\cite{kemeny-thesis}, Prop.\  7.1]
Let $a_1, \ldots, a_{12} \in \C$ be pairwise distinct. Set $$A=0 \in H^0(\mathcal{O}_{\proj^1}(8)), \; \; B= \prod_{i=1}^{12}(u_0-a_iu_1) \in H^0(\mathcal{O}_{\proj^1}(12)).$$ Denote the scheme $S_{A,B}$ by $X_{(a_i),0}$. Then the Jacobian criterion applied to the local description above gives that $X_{(a_i),0}$ is a smooth K3 surface. The fibration $X_{(a_i),0} \to \proj^1$ has $12$ singular fibres, which occur over the points $[a_i : 1] \in \proj^1$. Each singular fibre is a rational curve with one cusp and no other singularity.
\end{eg}

Even in the case where $(A,B) \in H^0(\mathcal{O}_{\proj^1}(8)) \times H^0(\mathcal{O}_{\proj^1}(12))$ do not define a smooth surface $S_{A,B}$, we may often produce a smooth elliptic K3 surface by desingularisation. For $p \in \proj^1$, let $\nu_p(A)$ resp.\ $\nu_p(B)$ be the order of vanishing of $A$ resp.\ $B$ at $p$. We have the following, again from \cite{miranda}:
\begin{prop}
Let $(A,B) \in H^0(\mathcal{O}_{\proj^1}(8)) \times H^0(\mathcal{O}_{\proj^1}(12))$ be such that $\Delta(A,B)$ is not the zero section. Assume
that for any $p \in \proj^1$ either $\nu_p(A) \leq 3$ or $\nu_p(B) \leq 5$ (or that both conditions hold). Then $S_{A,B}$ defines a surface with at worst rational double point singularities. The minimal desingularisation is an elliptic K3 surface $\widetilde{S}_{A,B} \to \proj^1$ with section.
\end{prop}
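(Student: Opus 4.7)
The plan is to prove three things: (1) $S := S_{A,B}$ has only rational double point singularities; (2) the minimal resolution $\widetilde{S}$ is a K3 surface; (3) the Weierstrass section lifts to $\widetilde{S}$.

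For (1), I would analyze the singularities of $S$ locally. In the affine chart $\{ z = 1 \}$ the equation reads $f = y^{2} - x^{3} - A(t) x - B(t) = 0$, and the Jacobian criterion forces a singular point to satisfy $y = 0$, $3 x^{2} + A = 0$, $A' x + B' = 0$ and $f = 0$; in particular it lies above a zero of $\Delta = -16(4 A^{3} + 27 B^{2})$. Centering coordinates at such a singularity, the pair $(a, b) := (\nu_{p}(A), \nu_{p}(B))$ together with $\nu_{p}(\Delta)$ determines the analytic singularity type via Kodaira's classification. The crucial observation is that the rescaling $(x, y) \mapsto (t^{2} x', t^{3} y')$ sends a Weierstrass equation with data $(A, B)$ to one with data $(A/t^{4}, B/t^{6})$; this is a regular change of coordinates precisely when $\nu_{p}(A) \geq 4$ and $\nu_{p}(B) \geq 6$, in which case the Weierstrass model is non-minimal at $p$ and the singularity is strictly worse than du Val. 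Hence the hypothesis $\nu_{p}(A) \leq 3$ or $\nu_{p}(B) \leq 5$ forces the Weierstrass model to be minimal at $p$ and the singularity to be of ADE type (rational double point); matching $(a, b, \nu_{p}(\Delta))$ to a specific ADE label is the content of Tate's algorithm.

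For (2), rational double points are canonical Gorenstein singularities, so the minimal resolution $\mu : \widetilde{S} \to S$ is crepant and $\omega_{\widetilde{S}} = \mu^{*} \omega_{S}$. Kodaira's canonical bundle formula for a Weierstrass fibration (which holds on the Gorenstein model) yields $\omega_{S} = \pi^{*} (\omega_{\proj^{1}} \otimes L)$, and for $L = \mathcal{O}_{\proj^{1}}(2)$ this is trivial, so $\omega_{\widetilde{S}} \cong \mathcal{O}_{\widetilde{S}}$. For the irregularity, rationality of the singularities gives $R^{i} \mu_{*} \mathcal{O}_{\widetilde{S}} = 0$ for $i \geq 1$, hence $h^{1}(\mathcal{O}_{\widetilde{S}}) = h^{1}(\mathcal{O}_{S})$. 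Since $\pi$ is a flat family of connected arithmetic-genus-one curves, one has $\pi_{*} \mathcal{O}_{S} = \mathcal{O}_{\proj^{1}}$ and, by cohomology and base change, $R^{1} \pi_{*} \mathcal{O}_{S} = L^{-1} = \mathcal{O}_{\proj^{1}}(-2)$; the Leray spectral sequence then gives $h^{1}(\mathcal{O}_{S}) = h^{1}(\mathcal{O}_{\proj^{1}}) + h^{0}(\mathcal{O}_{\proj^{1}}(-2)) = 0$.

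For (3), the exceptional divisor of $\mu$ consists of disjoint trees of $(-2)$-curves, each contained in a fiber of $\pi$; no fiber of $\tilde{\pi} := \pi \circ \mu$ therefore contains a $(-1)$-curve, and $\tilde{\pi} : \widetilde{S} \to \proj^{1}$ is a minimal elliptic fibration with elliptic generic fiber. Finally, the Weierstrass section $\sigma(t) = (0 : 1 : 0)$ lies entirely in the smooth locus of $S$: in the chart $\{ y = 1 \}$ the equation becomes $z - x^{3} - A x z^{2} - B z^{3} = 0$, whose partial derivative with respect to $z$ equals $1$ at $x = z = 0$, so $S$ is smooth along the image of $\sigma$; hence $\sigma$ lifts uniquely to a section of $\tilde{\pi}$. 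The main obstacle is step (1): matching the pair $(\nu_{p}(A), \nu_{p}(B))$ uniformly to a specific ADE singularity type (and showing the complementary pairs give strictly worse singularities) is essentially the content of Tate's algorithm and is the technical heart of the proposition; the remaining parts are standard consequences of Kodaira's canonical bundle formula, rationality of ADE singularities, and a direct local calculation.
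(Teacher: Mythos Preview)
The paper does not actually prove this proposition: it is stated as a result taken from Miranda's lecture notes on elliptic surfaces (the text introduces it with ``again from \cite{miranda}'') and no argument is given beyond the citation. Your sketch is correct and is essentially the standard argument one finds in that reference: minimality of the Weierstrass model at each $p$ (ensured by the hypothesis on $\nu_p(A),\nu_p(B)$) forces the singularities to be ADE via Tate's algorithm, the canonical bundle formula $\omega_S\simeq\pi^*(\omega_{\proj^1}\otimes L)$ with $L=\mathcal{O}_{\proj^1}(2)$ together with crepancy of ADE resolutions gives $\omega_{\widetilde S}\simeq\mathcal{O}_{\widetilde S}$, and the Leray/irregularity computation plus the smoothness of $S$ along the infinity section finish the job. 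There is nothing to compare here beyond noting that you have supplied what the paper outsources.
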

Note that we did \emph{not} claim that  $\widetilde{S}_{A,B} \to \proj^1$ is a Weierstrass fibration. This is in general not the case. There is however, a characterisation of the singular fibres $F$ of  $\widetilde{S}_{A,B}$ over $p \in \proj^1$ in terms of the singularities of $S_{A,B}$ over $p \in \proj^1$, due to Kodaira, \cite{kodaira-I}, \cite{kodaira-II}. The information is tabulated below, where $\chi(F)$ denotes the Euler number of $F$.

\begin{center}
\begin{tabular}{|l|p{5cm}|l|l|}
\hline
Kodaira type of $F$ & Description of $F$ & $\chi(F)$ & Singularity Type of $S_{A,B}$ \\ \hline
$I_0$ & smooth elliptic curve & $0$ & none \\ \hline
$I_1$ & irreducible, nodal rational curve & $1$ & none \\ \hline
$I_N$, $N \geq 2$ & cycle of $N$ smooth rational curves, meeting transversally & $N$ & $A_{N-1}$ \\ \hline
$I_N^*$, $N \geq 0$ & $N+5$ smooth rational curves meeting with dual graph $D_{N+4}$ & $N+6$ & $D_{N+4}$ \\ \hline
$II$ & a cuspidal rational curve & $2$ & none \\ \hline
$III$ & two smooth rational curves meeting at a point of order two & $3$ & $A_1$ \\ \hline
$IV$ & three smooth rational curves meeting at a point & $4$ & $A_2$ \\ \hline
$IV^*$ & seven smooth rational curves meeting with dual graph $\tilde{E}_6$ & $8$ & $E_6$ \\ \hline
$III^*$ & eight smooth rational curves meeting with dual graph $\tilde{E}_7$ & $9$ & $E_7$ \\ \hline
$II^*$ & nine smooth rational curves meeting with dual graph $\tilde{E}_8$ & $10$ & $E_8$ \\ \hline
 \end{tabular}
\end{center}
\clearpage

The following proposition is an easy application of Kodaira's classification of the singular fibres of $\widetilde{S}_{A,B}$.
\begin{prop}
Let $(A,B) \in H^0(\mathcal{O}_{\proj^1}(8)) \times H^0(\mathcal{O}_{\proj^1}(12))$ be such that $\Delta(A,B)$ is not the zero section. Assume
that for any $p \in \proj^1$ either $\nu_p(A) \leq 3$ or $\nu_p(B) \leq 5$ (or that both conditions hold). Assume further that $\Delta(A,B) \in H^0(\mathcal{O}_{\proj^1}(24))$ has $24$ distinct roots. Then $S_{A,B} \to \proj^1$ is smooth and all singular fibres are integral, rational curves with
one node and no other singularities. 
\end{prop}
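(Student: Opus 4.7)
The plan is as follows. The preceding proposition gives the minimal desingularisation $\widetilde{S}_{A,B}\to\proj^1$ as an elliptic K3 surface with section, so its topological Euler number equals $24$. I would combine this numerical constraint with Kodaira's table above via the (characteristic-zero) identity $\chi(F_p)=\nu_p(\Delta)$ for every $p\in\proj^1$, which can be verified row by row from the ``Euler number'' column of the table. Summing gives $\sum_p\chi(F_p)=\deg\Delta=24=\chi(\widetilde{S}_{A,B})$, consistently.

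Since $\Delta$ has $24$ distinct roots, $\nu_p(\Delta)\le 1$ for every $p$, and inspecting the third column the only fibre types with $\chi(F_p)\le 1$ are $I_0$ and $I_1$. Over each root of $\Delta$ the fibre must be singular, hence of Kodaira type $I_1$, which by definition is an integral rational curve with a single node and no further singularity. From the last column of the table, $I_1$ fibres introduce no singularity into the Weierstrass model, so the resolution map $\widetilde{S}_{A,B}\to S_{A,B}$ is an isomorphism and $S_{A,B}$ is itself smooth.

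The only step requiring justification outside the table is $\chi(F_p)=\nu_p(\Delta)$, but this is classical (Ogg's formula). An alternative, if one prefers to avoid citing it, is to argue directly by the Jacobian criterion applied to the local equation $f(x,y,u)=y^2-x^3-A(u)x-B(u)$. A singular point of a fibre forces $y=0$, $A(u_0)=-3x_0^2$ and $B(u_0)=2x_0^3$; note $x_0\ne 0$, else $A(u_0)=B(u_0)=0$ and $\nu_{u_0}(\Delta)\ge 2$. Differentiating $\Delta=-16(4A^3+27B^2)$ then yields $\Delta'(u_0)=-16\cdot 108\,x_0^3(x_0 A'(u_0)+B'(u_0))$, so simplicity of the root of $\Delta$ forces $\partial f/\partial u=-(A'(u_0)x_0+B'(u_0))\ne 0$, proving smoothness of $S_{A,B}$ at the singular point of the fibre. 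Finally, factoring $x^3+A(u_0)x+B(u_0)=(x-x_0)^2(x+2x_0)$ and substituting $u=x-x_0$ yields the local fibre equation $y^2=u^2(u+3x_0)$, which is an ordinary node since $3x_0\ne 0$, and the point at infinity $[0:1:0]$ is smooth; hence the fibre is an integral rational curve with a single node. The only mild obstacle is therefore the Ogg-style bookkeeping, but in our situation this reduces to the one-line fact that all Kodaira types other than $I_0$ and $I_1$ have $\nu_p(\Delta)\ge 2$.
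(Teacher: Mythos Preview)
Your proposal is correct and follows essentially the same route as the paper: both use the Euler-characteristic column of Kodaira's table together with the constraint $\chi(\widetilde{S}_{A,B})=24$ to pin down every singular fibre as $I_1$, and then read off smoothness of $S_{A,B}$ from the singularity column.

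The one small difference worth noting is that the paper avoids Ogg's formula entirely. It argues by pigeonhole: there are $24$ singular fibres (one over each root of $\Delta$), their Euler numbers sum to $\chi(\widetilde{S}_{A,B})=24$, and the table shows $\chi(F)\ge 1$ for every singular Kodaira type, forcing $\chi(F_i)=1$ for all $i$. You instead invoke $\chi(F_p)=\nu_p(\Delta)$ to get $\chi(F_p)\le 1$ directly. Both work; the paper's version is marginally more elementary since it needs only the lower bound $\chi\ge 1$ from the table rather than the exact Ogg relation. Your alternative hands-on Jacobian computation is a genuine addition and gives a self-contained proof that does not go through the resolution $\widetilde{S}_{A,B}$ at all.
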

\begin{proof}
As $\Delta(A,B)$ has $24$ distinct roots, $\widetilde{S}_{A,B}$ has $24$ singular fibres $F_1, \ldots, F_{24}$. Since
$24= \chi(\widetilde{S}_{A,B})=\sum_{i=1}^{24} \chi(F_i),$ then from the table above we see $\chi(F_i)=1$ for all $i$, $F_i$ is an
integral, rational curve with one node and no other singularities, and that $\widetilde{S}_{A,B}=S_{A,B}$.
\end{proof}
\begin{eg}
Let $(a_1, \ldots, a_{12}) \in \C^{12}$ be pairwise disjoint. Let $K \in \C$, and let $l:= -\sqrt[3]{\frac{27}{4}} \in \R$. Set
\begin{align*}
\alpha &= \prod_{i=1}^{12}(x_0-a_ix_1) \\
A&= K^2lx_1^8 \\
B&=\alpha+K^3x_1^{12}
\end{align*}
We have $\Delta(A,B)=-432 \alpha (\alpha+2K^3x^{12}_1)$. For a general choice of $K \neq 0$, $\alpha+2K^3x^{12}_1$ has $12$
distinct roots, all of which are distinct from the $a_i$.  Thus $S_{A,B}$ is a smooth K3 surface with $24$ singular fibres, all of which are
integral, nodal rational curves. We denote this K3 surface $X_{(a_i),K}$.
\end{eg}
\begin{remark}
Let $\mathcal{P}_g \to \mathcal{B}_g$ be the natural stack with fibre over a point representing a primitively polarised K3 surface $(X,L)$ equal to the linear system $|L|$. Let $\mathcal{V}_g^g \seq \mathcal{P}$ be the substack with fibre over $[(X,L)]$ parametrising integral $C \in |L|$ with precisely $g$ nodes and no other singularities; i.e.\ $C$ is a nodal rational curve. By studying rational stable maps to $X_{(a_i),K}$, any by considering the deformation $X_{(a_i),K} \to X_{(a_i),0}$, we were able to prove that the closure $\overline{\mathcal{V}}_g^g$ of $\mathcal{V}_g^g \seq \mathcal{P}$ is connected, \cite[Thm.\ 8.1]{kemeny-thesis}. Note that $\mathcal{V}_g^g$ is conjecturally irreducible, but this remains widely open and appears to be very difficult, \cite{dedieu}.
\end{remark}

We end this section with one final example which will play a role in the study of the Chow group of zero-cycles on a K3 surface.
\begin{eg} \label{X_{(2),8A_1}}
Consider the cubic equation
$$y^2=x^3+a(u_0)x^2+b(u_0)x $$
for general polynomials $a$ of degree four and $b$ of degree eight. After the coordinate change
$x_2=x+\frac{a(u_0)}{3}$, we can put this in Weierstrass form 
$$y^2=x_2^3+A(u_0)x+B(u_0)$$
where $A$ has degree eight and $B$ has degree twelve. Further, 
$\Delta(A,B)=b^2(a^2-4b)$. After homogenising $A,B$ we produce a elliptic K3 surface $S_{A,B}$ which has
rational double point singularities over the eight zeroes of $b$ and $I_1$ fibres (integral rational nodal curves) over the eight solutions to $a^2-4b=0$. 
The resolution $\widetilde{S}_{A,B}$ is a smooth K3 elliptic surface illustrated below. The eight singular fibres over $b=0$ are $I_2$ fibres, i.e.\ the union of two smooth rational curves meeting in two points, \cite[Prop.\ 4.2]{sar-gee}. In the diagram below (taken from \cite{huy-kem}) these are denoted $N_1, \ldots, N_8$. The fibration $\widetilde{S}_{A,B} \to \proj^1$ has two sections: the section at infinity $\sigma$ given by $x=z=0$ (where $z$ is the coordinate introduced in homogenisation) and the section $\tau$ given by $x=y=0$. The section $\tau$ has order two; thus translation by $\tau$ defines an involution $\widetilde{S}_{A,B} \to \widetilde{S}_{A,B}$. We will denote this K3 surface by $X_{(2),8A_1}$ as it corresponds to a general K3 surface with $2$-torsion in the Mordell-Weil group and eight fibres of type $A_1$ (i.e.\ corresponding to $A_1$ singularities of the unresolved surface $S_{A,B}$), see \cite{shimada-arxiv}.
\end{eg}
$$
\begin{picture}(150,100)
\put(-50,80){\qbezier(0,0)(-40,-40)(0,-80)}
\put(-50,80){\qbezier(-14,0)(26,-40)(-14,-80)}
\put(-30,0){$\ldots$}
\put(-50,80){\qbezier(60,0)(20,-40)(60,-80)}
\put(-50,80){\qbezier(46,0)(86,-40)(46,-80)}

\put(80,80){\qbezier(55,0)(62,-70)(63,-40)}
\put(80,80){\qbezier(55,-80)(62,-10)(63,-40)}
\put(150,00){$\ldots$}
\put(120,80){\qbezier(55,0)(62,-70)(63,-40)}
\put(120,80){\qbezier(55,-80)(62,-10)(63,-40)}
\put(-1.5,62){\line(1,0){190}}
\put(-60,62){\line(1,0){52}}
\put(-79,62){\line(1,0){10}}
\put(-50.1,62){\circle*{2}}
\put(9.8,62){\circle*{2}}
\put(177,62){\circle*{2}}
\put(137,62){\circle*{2}}
\put(80,65){\tiny\mbox{$\sigma$}}
\put(14.5,22){\line(1,0){175}}
\put(-45,22){\line(1,0){54}}
\put(-79,22){\line(1,0){28}}
\put(-66.1,22.1){\circle*{2}}
\put(-6.1,22.1){\circle*{2}}
\put(177.1,22.1){\circle*{2}}
\put(137,22.1){\circle*{2}}
\put(139.6,40.1){\circle*{2}}
\put(179.6,40.1){\circle*{2}}
\put(80,25){\tiny\mbox{$\tau$}}
\put(-81.5,40){\tiny\mbox{$N_1$}}
\put(-22.5,40){\tiny\mbox{$N_8$}}
\end{picture}
$$

\section{Constructions via Torelli} \label{TorelliEll}
The aim of this section is to show how the Torelli theorem can be used to construct K3 surfaces.
We start with the following lemma.
\begin{lem}
Let $X$ be a K3 surface. Suppose $X$ contains a smooth, integral elliptic curve $E$. Then $X$ is an elliptic surface.
\end{lem}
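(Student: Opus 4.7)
The plan is to produce an explicit elliptic fibration $\pi \colon X \to \mathbb{P}^1$ by studying the linear system $|E|$.

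First I would compute the self-intersection. Since $E$ is a smooth, integral curve of genus one, adjunction on $X$ gives
\[
2g(E) - 2 = E^2 + K_X \cdot E = E^2,
\]
using $K_X \simeq \mathcal{O}_X$. Hence $E^2 = 0$. Next I would compute $h^0(\mathcal{O}_X(E))$ via the restriction sequence
\[
0 \to \mathcal{O}_X \to \mathcal{O}_X(E) \to \mathcal{O}_E(E) \to 0.
\]
Adjunction again gives $\mathcal{O}_E(E) \simeq \omega_E \simeq \mathcal{O}_E$, so $h^0(\mathcal{O}_E(E)) = 1$. Combined with $H^1(\mathcal{O}_X) = 0$ (K3), the long exact sequence yields $h^0(\mathcal{O}_X(E)) = 2$, i.e.\ $|E|$ is a pencil. (Riemann--Roch and Serre duality give the same conclusion: $\chi(\mathcal{O}_X(E)) = E^2/2 + 2 = 2$ and $h^2(\mathcal{O}_X(E)) = h^0(\mathcal{O}_X(-E)) = 0$ since $E$ is effective and nonzero.)

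The key step will be showing that the pencil $|E|$ is base-point free, which then produces the morphism $\pi \colon X \to \mathbb{P}^1$. Write $|E| = |M| + F$ where $F$ is the fixed part and $|M|$ the moving part, so $E \sim M + F$ with $F \geq 0$. Since $E$ is an irreducible curve with $E^2 = 0$, $E$ is nef; hence $E \cdot M \geq 0$ and $E \cdot F \geq 0$, and both sum to $E^2 = 0$, forcing $E \cdot M = E \cdot F = 0$. Since $M$ is a moving class, $M^2 \geq 0$, and expanding $(M+F)^2 = 0$ together with $M \cdot F \geq 0$ (as $M$ is nef in the Zariski sense against effective $F$) forces $M^2 = F^2 = M \cdot F = 0$. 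Because $E$ is an \emph{irreducible} smooth member of $|E|$, $E$ cannot split off a fixed part of positive self-intersection behaviour, and a short case analysis (or direct appeal to Saint-Donat's theorem on K3 surfaces) shows $F = 0$ and then that $|E| = |M|$ has empty base locus.

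Having obtained the morphism $\pi = \varphi_{|E|} \colon X \to \mathbb{P}^1$, I would finish by observing that $E$ itself is a (smooth) fibre, so the general fibre is smooth of the same arithmetic genus $1 + E^2/2 = 1$, i.e.\ a smooth elliptic curve. Surjectivity and connected fibres follow from $\pi_* \mathcal{O}_X = \mathcal{O}_{\mathbb{P}^1}$, which holds because $h^0(\mathcal{O}_X) = 1$ and Stein factorisation through a curve of genus zero. Finally, the absence of $(-1)$-curves in the fibres is automatic: $K_X$ is trivial, so there are no $(-1)$-curves anywhere on $X$. Thus $\pi$ realises $X$ as a minimal elliptic surface over $\mathbb{P}^1$. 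The main obstacle is the base-point-free step; everything else is a short Riemann--Roch/adjunction computation.
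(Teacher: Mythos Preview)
Your proposal is correct and follows the same outline as the paper: compute $E^2=0$ by adjunction, use the restriction sequence (or Riemann--Roch) to get $h^0(\mathcal{O}_X(E))=2$, show $|E|$ is base-point free, and read off the elliptic fibration.

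The only substantive difference is in the base-point-free step, where you work harder than necessary. Your Zariski-decomposition argument is fine as far as it goes, but the closing ``short case analysis (or direct appeal to Saint-Donat)'' is vague, and the intermediate claim that $M^2=F^2=M\cdot F=0$ does not follow purely from $(M+F)^2=0$ together with $M^2\ge 0$ and $M\cdot F\ge 0$ without an extra input (e.g.\ $h^0(M)=2$ and Riemann--Roch to force $M^2\le 0$). The paper's route is more direct: since $E$ itself is an \emph{irreducible} member of $|E|$, the fixed part $F$ satisfies $0\le F\le E$, hence $F=0$ or $F=E$; the latter would give $\dim|E|=\dim|0|=0$, contradicting $h^0=2$. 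With no fixed component and $E^2=0$, any two distinct members are disjoint, so there are no isolated base points either. This replaces your decomposition argument and the appeal to Saint-Donat by two lines.
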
 
\begin{proof}
Let $L:= \mathcal{O}_X(E)$. From the arithmetic genus formula $(E)^2=2g(E)-2$ we see $(E)^2=0$. Thus $\deg(L_{|_{E}})=0$
so $h^0(E,L) \leq 1$. From
$$ 0 \to \mathcal{O}_X \to L \to L_{|_{E}} \to 0,$$
we have $h^0(X,L) \leq 2$. From Riemann--Roch $h^0(X,L) \geq \chi(L)=2$, so $h^0(X,L)=0$. Since $E$ is smooth and integral,
and $g(E) \neq 0$, $L$ is base point free and thus $L$ induces a morphism $X \to \proj^1$ such that the generic fibre is smooth, elliptic. 
\end{proof}
In order to show that a K3 surface contains a smooth elliptic curve, it suffices to show that it contains a non-trivial, nef divisor
$D$ with $(D)^2=0$. 
\begin{lem}
Let $X$ be a K3 surface. Suppose $X$ contains a non-trivial, nef divisor $D$ with $(D)^2=0$. Then $D$ is base point free. Furthermore, there
exists a smooth, integral elliptic curve $E$ with $D \sim mE$ for an integer $m \geq 1$.
\end{lem}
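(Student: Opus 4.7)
The plan is to follow a Saint--Donat-style analysis of the linear system $|D|$ on the K3 surface $X$. First, by Riemann--Roch we have $\chi(\mathcal{O}_X(D)) = 2 + \tfrac{1}{2} D^2 = 2$ since $K_X \simeq \mathcal{O}_X$, and $-D$ cannot be effective (as $(-D) \cdot H < 0$ for any ample $H$ by nefness and non-triviality of $D$), so Serre duality gives $h^2(X,D) = h^0(X,-D) = 0$ and hence $h^0(X,D) \geq 2$. Next I would decompose $|D| = |M| + F$ into moving part $|M|$ and fixed part $F$ and show $M^2 = 0$. Nefness of $D$ against the effective divisors $F$ and a general $M' \in |M|$ gives $D \cdot F \geq 0$ and $D \cdot M \geq 0$; they sum to $D^2 = 0$ and hence both vanish, yielding $M^2 = F^2 = -M \cdot F$. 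Since $|M|$ has no fixed component we have $M^2 \geq 0$; if $M^2 > 0$, Hodge index applied to $\langle M, F\rangle$ would force $F$ numerically proportional to $M$ with constant $-1$, giving $D \equiv 0$, hence $D = 0$ (using $\mathrm{NS}(X) = \mathrm{Pic}(X)$ on a K3), a contradiction. Thus $M^2 = F^2 = M \cdot F = 0$; a similar argument, avoiding any prescribed irreducible curve by a general $M' \in |M|$, shows $M$ itself is nef.

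Now $|M|$ has no fixed components and $M^2 = 0$, so two general members share no common component and meet in a zero-dimensional scheme of length $M^2 = 0$, i.e.\ are disjoint; hence $|M|$ is base-point free. The induced morphism $\phi_{|M|} : X \to \mathbb{P}^{h^0(M)-1}$ has image of dimension at most one (else $D^2 > 0$), and the Stein factorization $X \xrightarrow{f} B \to \phi_{|M|}(X)$ yields a smooth curve $B$ with $H^1(B, \mathcal{O}_B) = 0$ (by the Leray spectral sequence and $H^1(X, \mathcal{O}_X) = 0$), forcing $B \simeq \mathbb{P}^1$. A general fibre $E$ of $f$ is smooth by generic smoothness in characteristic zero, connected by construction, and has $p_a(E) = 1$ by adjunction on $X$ (using $K_X = 0$ and $E^2 = 0$). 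Hence $E$ is a smooth integral elliptic curve and $M = mE$ for some $m \geq 1$.

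Finally, I would rule out a nontrivial $F$, which is the main obstacle. Every irreducible component of $F$ has intersection zero with the nef class $E$ (since $F \cdot E = 0$ with non-negative contributions from each component), so each component is contracted by $f$ and lies in some fibre. As the intersection pairing on the components of a fibre is negative semi-definite with kernel spanned by the fibre class, $F^2 = 0$ forces $F$ to be a non-negative integer combination of full fibres of $f$. But if $F$ contained a fibre $f^{-1}(p_0)$, every element of $|D| = |(m+k)E|$ (with $k$ the total number of fibres in $F$ counted with multiplicity) would contain $f^{-1}(p_0)$, contradicting the evident presence in $|D|$ of divisors given by $m+k$ general distinct fibres of $f$ avoiding $p_0$. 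Therefore $F = 0$, and we conclude $D \sim mE$ with $|D| = |M|$ base-point free, as desired. The delicate step is the second one: controlling the fixed part requires the careful Hodge-index argument above, essentially the content of Saint--Donat's classical treatment of linear systems on K3 surfaces.
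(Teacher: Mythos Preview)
Your argument is essentially correct and reproduces the standard Saint--Donat argument; the paper itself gives no independent proof but simply cites \cite[Prop.\ 3.10]{huy-lec-k3}, whose content is exactly what you have written out.

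One small gap in the final step: you assert that Zariski's lemma forces $F$ to be a non-negative \emph{integer} combination of full fibres of $f$, and hence $|D|=|(m+k)E|$. Zariski's lemma only yields $F_p=\lambda_p\,f^{-1}(p)$ with $\lambda_p\in\mathbb{Q}_{\ge 0}$; to get $\lambda_p\in\mathbb{Z}$ (so that $F_p\ge f^{-1}(p)$ whenever $F_p\ne 0$) you need every fibre to have a reduced component, i.e.\ that $f$ has no multiple fibres. This is true for elliptic K3 surfaces---it follows from the canonical bundle formula and $K_X\simeq\mathcal{O}_X$---but should be stated. Alternatively you can sidestep the issue: by Hodge index, $D$, $E$ and $F$ all lie on a single isotropic line in $\mathrm{NS}(X)_{\mathbb{Q}}$; if $E_0$ is its primitive generator, then running your argument for $E_0$ in place of $D$ gives $E_0\sim m_0E+F_0$ with $E\equiv eE_0$ for some integer $e\ge 1$, so $F_0\equiv(1-m_0e)E_0$, and effectivity of $F_0$ forces $m_0=e=1$ and $F_0=0$. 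Thus $|E_0|$ is already a base-point-free elliptic pencil, and $D\sim NE_0$ is base-point free as required.
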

\begin{proof}
See \cite[Prop.\ 3.10]{huy-lec-k3}.
\end{proof}
By combining the above lemma with the Torelli theorem, one can construct elliptic K3 surfaces. To be more precise, let $\Lambda$ be an even lattice of rank $\rho+1 \leq 10$ and signature $(1, \rho)$, and fix an element $L \in \Lambda$ with $(L)^2>0$. A \emph{$\Lambda$-polarised K3 surface} is a K3 surface $X$, together with a primitive embedding $i: \Lambda \hookrightarrow  \text{Pic}(X)$ with $i(L)$ big and nef. The moduli space of $\Lambda$-polarised K3 surfaces exists as a quasi-projective algebraic variety, is nonempty, and has at most two components, both of dimension $19-\rho$, which locally on the period domain are interchanged by complex conjugation, \cite{dolgachev}. Complex conjugation here means that a complex surface $X$ with complex structure $J$ is sent to $(X,-J)$.

The generic K3 surface $X$ in the moduli space of $\Lambda$ polarised K3 surface has $\text{Pic}(X) \simeq \Lambda$. Thus if $\Lambda$ contains an element $D$ with $(D)^2=0$, and if further $D \in \text{Pic}(X)$ is nef, then $X$ is an elliptic K3 surface, and $D$ is a multiple of the class of an elliptic curve.

\begin{remark}
In the case where $\Lambda$ has rank greater than ten, there is not automatically a unique lattice embedding of $\Lambda$ into the K3 lattice. Using computer aided techniques, however, it has in recent years become possible to show the existence of such embeddings. One example of this is the work of Shimada, \cite{shimada-arxiv}, which gives a complete list of all possible configurations of the singular fibres of elliptic K3 surfaces with section up to ADE singularity type, together with the torsion part of the Mordell--Weil group.
\end{remark}

Suppose now $X$ is a K3 surface, and assume $E \in \text{Pic}(X)$ is smooth and elliptic. The elliptic fibration $X \to \proj^1$ induced by $E$
need not have a section. We define an integer $l$, called the \emph{multisection index} as follows. The set 
$$ \{ (D \cdot E) \; : \; D \in \text{Pic}(X)\} $$ is an ideal of $\mathbb{Z}$; $l$ is defined to be its positive generator. Then $l=1$ if and only if $X$ has a section. 

There is a construction which associated to $X \to \proj^1$ an elliptic K3 surface $J(X) \to \proj^1$ with a section $\proj^1 \to J(X)$, see \cite[\S 11.4]{huy-lec-k3}, \cite[Ch.\ 5]{cossec-dolgachev}, or\cite[Ch.\ 1.5]{fried}. One calls $J(X) \to \proj^1$ the \emph{Jacobian fibration}. The singular fibres of $X \to \proj^1$ and $J(X) \to \proj^1$ are isomorphic. We have an isomorphism $J(X)$ to $M_{H}(v)$, the moduli space of $H$-semistable sheaves on $X$ with Mukai vector $v$, where $H$ is a general polarisation on $X$ and $v=(0,E,l)$. Note that for $H$ generic, $M_H(v)$ is irreducible, \cite[Cor.\ 10.3.5]{huy-lec-k3}.

The lattice $\text{Pic}(J(X))$ is fully determined by $\text{Pic}(X)$. In fact, one has the following result, \cite[Lem.\ 2.1]{keum}. 
\begin{prop} [Keum]
Let $X \to \proj^1$ be an elliptic K3 surface with multisection index $l$ and generic fibre $E$. Then $\text{Pic}(X)$ embeds into $\text{Pic}(J(X))$ with index $l$. The element $E \in \text{Pic}(J(X))$ is divisible by $l$, and $\text{Pic}(J(X))$ is generated as a lattice by $\text{Pic}(X)$ and $F/l$. Further, 
$$\det  \text{Pic}(X) = l^2 \det \text{Pic}(J(X)).$$
\end{prop}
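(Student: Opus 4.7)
The plan is to realize the isomorphism $\text{Pic}(J(X)) \simeq (\text{Pic}(X) \oplus \mathbb{Z})/\mathbb{Z}(E,l)$ through the Mukai morphism, and then extract all four statements by elementary lattice arithmetic. Since $J(X)$ was identified above with $M_H(v)$ for $v = (0, E, l)$, and $\langle v, v \rangle = (E)^2 = 0$, the Mukai vector $v$ is isotropic. Using the preceding lemma, we may take $E$ to be the primitive class with $[\text{generic fibre}] \sim mE$; then $v$ is primitive in the Mukai lattice because any common divisor $n>1$ would have to divide $(D \cdot E)$ for every $D \in \text{Pic}(X)$ as well as $l$, contradicting the definition of $l$ as the positive generator of $\{(D \cdot E) : D \in \text{Pic}(X)\}$. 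For primitive isotropic $v$ with $H$ sufficiently generic, Mukai's theorem (as refined by O'Grady and Yoshioka) then yields a Hodge isometry
$$\theta_v \colon v^\perp / \mathbb{Z} v \xrightarrow{\sim} H^2(M_H(v), \mathbb{Z})$$
which restricts to an isomorphism of Picard lattices on the algebraic part.

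Next, I would compute $v^\perp$ explicitly in the algebraic Mukai lattice $\mathbb{Z} \oplus \text{Pic}(X) \oplus \mathbb{Z}$ with pairing $\langle(r_1,c_1,s_1),(r_2,c_2,s_2)\rangle = (c_1\cdot c_2) - r_1s_2 - r_2s_1$. The orthogonality condition $\langle v, (r,c,s)\rangle = 0$ reduces to $(E\cdot c) = rl$, so by the defining property of $l$, the assignment $c \mapsto (E\cdot c)/l$ is an honest homomorphism $\text{Pic}(X) \to \mathbb{Z}$, and the map
$$(c, s) \longmapsto \bigl((E\cdot c)/l,\, c,\, s\bigr)$$
identifies $v^\perp_{\mathrm{alg}}$ with $\text{Pic}(X) \oplus \mathbb{Z}$. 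Under this identification $v$ itself corresponds to $(E, l)$ (using $(E)^2=0$), so the Mukai isometry gives
$$\text{Pic}(J(X)) \;\simeq\; \bigl(\text{Pic}(X) \oplus \mathbb{Z}\bigr)/\mathbb{Z}(E,l).$$

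From this presentation all four statements are immediate. The natural map $\text{Pic}(X) \to \text{Pic}(J(X))$ sending $c \mapsto [(c,0)]$ is injective, since $(c, 0) = n(E,l)$ forces $n = 0$. The cokernel equals $(\text{Pic}(X) \oplus \mathbb{Z})/(\text{Pic}(X) + \mathbb{Z}(E,l)) \simeq \mathbb{Z}/l\mathbb{Z}$, so the embedding has index $l$. Moreover, in $\text{Pic}(J(X))$ the fibre class $E = [(E, 0)]$ equals $[(0,-l)] = l\cdot[(0,-1)]$, so $E$ is divisible by $l$, and $E/l = -[(0,-1)]$ together with the image of $\text{Pic}(X)$ clearly generates the quotient. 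Finally, the determinant identity is the standard fact that for a finite-index sublattice $L \subset M$ one has $\det L = [M:L]^2 \det M$, applied with $[M:L] = l$.

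The main technical point to justify carefully is the invocation of the Mukai isomorphism on the algebraic lattice: one needs $H$ to be $v$-generic so that $M_H(v)$ is a fine (or at least smooth projective) K3 surface with the Mukai morphism an isomorphism on the whole of $v^\perp/\mathbb{Z}v$ (as opposed to merely a sublattice), which is guaranteed by \cite[Cor.\ 10.3.5]{huy-lec-k3} and the primitivity argument above. The verification of primitivity of $v$ is the only place where the definition of the multisection index enters nontrivially; everything else is bookkeeping in the rank-two extension $(\text{Pic}(X) \oplus \mathbb{Z})/\mathbb{Z}(E,l)$.
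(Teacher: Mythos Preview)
The paper does not supply its own proof of this proposition; it is quoted with attribution and a bare citation to \cite[Lem.\ 2.1]{keum}. So there is nothing to compare against directly.

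Your argument via the Mukai isometry $v^\perp/\mathbb{Z}v \xrightarrow{\sim} H^2(M_H(v),\mathbb{Z})$ for the primitive isotropic vector $v=(0,E,l)$ is correct and is in fact the framework the paper itself adopts a few lines later when it invokes $J(Y)\simeq M_H(v)$. Two small comments. First, your primitivity argument for $v$ is slightly roundabout: the clean statement is that the generic fibre class $E$ is already primitive in $\text{Pic}(X)$ (it defines the pencil), so $E/n\in\text{Pic}(X)$ forces $n=1$ regardless of $l$. Second, there is a harmless sign slip: from $(E,0)=(E,l)+(0,-l)$ one gets $[(E,0)]=l\cdot[(0,-1)]$, so $E/l=[(0,-1)]$ rather than $-[(0,-1)]$. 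Neither affects the conclusion, and the remaining lattice bookkeeping is sound.
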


Let $X_1 \to \proj^1$, $X_2 \to \proj^1$ be two elliptic K3 surfaces. We say two elliptic K3 surfaces $X_1 \to \proj^1$ and  $X_2 \to \proj^1$ are isomorphic if there is an isomorphism $X_1 \simeq X_2$ commuting with the elliptic fibrations (note that a single K3 surface may have multiple, non-isomorphic elliptic fibrations). The following lemma will be needed later.
\begin{lem} \label{tate-shaf}
Let $X \to \proj^1$ be an elliptic K3 surface and consider $J(X) \to \proj^1$. Then there are only countably many non-isomorphic elliptic K3 surfaces $Y \to \proj^1$ with $J(Y) \to \proj^1$ isomorphic to $J(X) \to \proj^1$.
\end{lem}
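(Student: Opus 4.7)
The plan is to classify the elliptic K3 surfaces $Y \to \proj^1$ with a fixed Jacobian fibration $J(X) \to \proj^1$ via the Tate--Shafarevich group of the Jacobian. Specifically, by the classical theory of elliptic fibrations over a smooth curve (see, e.g., \cite[Ch.\ 5]{cossec-dolgachev} or \cite[Ch.\ 1.5]{fried}), the set of such $Y$, considered up to isomorphism of elliptic fibrations, is naturally a quotient (by the action of $\text{Aut}(J(X) \to \proj^1)$) of the Tate--Shafarevich group $\mathrm{Sh}(J(X)/\proj^1)$, defined as the first \'etale cohomology $H^1(\proj^1, \mathcal{J}^{\#})$ of the sheaf $\mathcal{J}^{\#}$ of local sections of the smooth locus of $J(X) \to \proj^1$. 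Since the automorphism group in question does not affect countability, it suffices to prove that $\mathrm{Sh}(J(X)/\proj^1)$ is countable.

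To do so, I would invoke the Leray spectral sequence for the projection $\pi : J(X) \to \proj^1$ with coefficients in the \'etale sheaf $\mathbb{G}_m$. Using that $\pi_* \mathbb{G}_m = \mathbb{G}_m$, the vanishing $H^2(\proj^1, \mathbb{G}_m) = 0$ (Tsen's theorem), and Grothendieck's identification of $R^1 \pi_* \mathbb{G}_m$ with $\mathcal{J}^{\#}$ away from the finitely many singular fibres (the correction there being finite and thus harmless for countability), the five-term exact sequence produces an injection
$$\mathrm{Sh}(J(X)/\proj^1) \hookrightarrow \text{Br}(J(X)) := H^2(J(X), \mathbb{G}_m),$$
where $\text{Br}(J(X))$ is the cohomological Brauer group of the K3 surface $J(X)$.

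Finally, since $J(X)$ is a projective complex K3 surface, the Brauer group has the well-known shape $\text{Br}(J(X)) \simeq (\mathbb{Q}/\mathbb{Z})^{22 - \rho}$ with $\rho = \text{rank } \text{Pic}(J(X))$. This description comes from combining the exponential sequence with $H^3(J(X), \mathbb{Z}) = 0$ and $H^2(J(X), \mathcal{O}_{J(X)}) \simeq \mathbb{C}$; the result is a divisible group of countable cardinality. Combining these steps establishes the lemma.

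The main obstacle is making the Tate--Shafarevich classification and its relation to the Brauer group precise enough to be invoked here, in particular treating the singular fibres of $J(X) \to \proj^1$ carefully. Both ingredients are standard but nontrivial, and in a fully written-out proof would require either an appeal to the detailed theory in \cite{cossec-dolgachev}, \cite{fried}, or a direct argument along the lines of the original work of Ogg--Shafarevich.
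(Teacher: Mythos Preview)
Your approach is correct and essentially the same as the paper's: both invoke the Tate--\v{S}afarevi\v{c} group of $J(X)$ and its identification with (a subgroup of) the Brauer group, which for a K3 surface is $(\mathbb{Q}/\mathbb{Z})^{22-\rho(J(X))}$ and hence countable. The paper simply cites the relevant references (Huybrechts' lecture notes, Friedman, Grothendieck), whereas you sketch the underlying Leray spectral sequence argument; the content is the same.
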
 
\begin{proof}
The set of isomorphism classes of elliptic K3 surfaces $Y \to \proj^1$ with $J(Y) \to \proj^1$ isomorphic to $J(X) \to \proj^1$ are
described by the Tate--\v{S}hafarevi\v{c} group of $J(X)$, which in our case is isomorphic to $(\mathbb{Q} / \mathbb{Z})^{22-\rho(J(X))}$, where $\rho(J(X))$ is the Picard number of $J(X)$, \cite[Rem.\ 11.5.12]{huy-lec-k3}, also cf.\ \cite[\S 1.5.3]{fried}, \cite[Cor.\ 2.2]{grothendieck-brauer} and \cite[Thm.\ 0.1]{huybrechts-brauer}.
\end{proof}

\tocless\subsection{The elliptic K3 surface $Y_{\Omega_g}$}  \label{mukai} 
We now use the Torelli theorem to construct special K3 surfaces and curves on them with some very special Brill--Noether properties. The examples constructed will play a crucial role in the coming chapters. 

Let $g\geq 5$ be an integer, let $1 \leq d_1,d_2, \ldots, d_8 < \left \lfloor \frac{g+1}{2} \right \rfloor$ be integers, and consider first the rank ten lattice $\Omega_g$ with ordered basis $\{L, E, \Gamma_1, \ldots, \Gamma_8 \}$ and with intersection form given by:
\begin{itemize}
\item $(L \cdot L)=2g-2$
\item $(L \cdot E)=  \left \lfloor \frac{g+1}{2} \right \rfloor$
\item $(E \cdot E)=0$
\item $(\Gamma_i)^2=-2$ for $1 \leq i \leq 8$
\item $(E \cdot \Gamma_i)=0$ for $1 \leq i \leq 8$
\item $(L \cdot \Gamma_i)=d_i$ for $1 \leq i \leq 8$
\item $(\Gamma_i \cdot \Gamma_j)=0$ for $i \neq j$, $1 \leq i,j \leq 8$
\end{itemize}
It is easily seen that the above lattice has signature $(1,9)$ and is even.
\begin{lem} \label{lem-aaa}
 Let $g \geq 6$ be an integer and choose $1 \leq d_1, \ldots, d_8 < \left \lfloor \frac{g+1}{2} \right \rfloor$. There exists a K3 surface $Y_{\Omega_g}$ with $\text{Pic}(Y_{\Omega_g}) \simeq \Omega_g$. Furthermore, for any such K3 we may choose the ordered basis $\{L,E, \Gamma_1, \ldots, \Gamma_8 \}$ of $\Omega_g$ 
 in such a way that $L-E$ is big and nef and with $\Gamma_i$ and $E$ representable by smooth, integral curves for $1 \leq i \leq 8$. Further there is a smooth rational curve $\widetilde{\Gamma}_i \in |E-\Gamma_i|$.
\end{lem}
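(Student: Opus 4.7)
The plan is to first produce a K3 surface $Y$ whose Picard lattice is abstractly isomorphic to $\Omega_g$ via Torelli--Nikulin theory, and then adjust the basis of $\Omega_g$ by a lattice isometry so that its image in $\mathrm{Pic}(Y)$ realises the prescribed geometry.

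First I would invoke the discussion at the start of Section \ref{TorelliEll}: since $\Omega_g$ is an even lattice of signature $(1,9)$ and rank $10$, it embeds primitively into the K3 lattice by Nikulin's theorem, and the moduli space of $\Omega_g$-polarised K3 surfaces is nonempty. A generic member $Y = Y_{\Omega_g}$ satisfies $\mathrm{Pic}(Y) \simeq \Omega_g$ as abstract lattices; fix any such isometry $\iota \colon \Omega_g \to \mathrm{Pic}(Y)$. Composing with $-\mathrm{id}$ if necessary, I may assume $\iota(L)$ lies in the positive component of the positive cone. The Weyl group $W$ of $\mathrm{Pic}(Y)$, generated by reflections in effective $(-2)$-classes, acts on this component with the ample cone as a fundamental chamber; after composing $\iota$ with an element of $W$, I may assume $\iota(L)$ is ample. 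I then relabel so that $L$, $E$, $\Gamma_i$ denote the images under this adjusted $\iota$.

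Verifying the remaining properties would then be a matter of Riemann--Roch and the lemma preceding this one on nef isotropic classes. Since $E$ is a primitive basis vector with $E^2 = 0$, $(L \cdot E) > 0$, and $(E \cdot \Gamma_i) = 0 = (E \cdot (E-\Gamma_i))$, one checks $E$ is nef, so the preceding lemma produces a smooth integral elliptic curve in $|E|$ (with $m = 1$ by primitivity). Riemann--Roch together with the strict inequalities $(L \cdot \Gamma_i) = d_i > 0$ and $(L \cdot (E-\Gamma_i)) = \lfloor (g+1)/2 \rfloor - d_i > 0$ (using $d_i < \lfloor (g+1)/2 \rfloor$) yields effectivity of the $(-2)$-classes $\Gamma_i$ and $E-\Gamma_i$; a genericity argument on $Y_{\Omega_g}$ then ensures that these cannot decompose into sums of smaller effective $(-2)$-classes and are therefore represented by irreducible smooth rational curves. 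Finally $(L-E)^2 = 2g-2 - 2\lfloor (g+1)/2 \rfloor > 0$ for $g \geq 6$, and $(L-E)$ pairs positively with $L$, $E$, and each $\Gamma_i$, from which bigness and nefness follow.

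The main obstacle is the step asserting that, after a \emph{single} Weyl group adjustment made only to place $\iota(L)$ in the ample cone, \emph{all} the other classes $E$, $L-E$, the $\Gamma_i$ and the $E-\Gamma_i$ simultaneously land in the correct geometric position. Concretely, one must verify that the chamber of the positive cone of $\Omega_g$ cut out by the walls $\Gamma_i^\perp$ and $(E-\Gamma_i)^\perp$ contains the class $L$, and that no ``hidden'' $(-2)$-class in $\Omega_g$ subdivides this chamber in a way that ruins the nefness of $E$ or $L-E$ or the irreducibility of $\Gamma_i$ or $E-\Gamma_i$ on the generic $Y_{\Omega_g}$. This amounts to a finite combinatorial computation inside the rank-$10$ lattice $\Omega_g$ combined with a genericity assertion ruling out the appearance of unwanted smooth rational curves.
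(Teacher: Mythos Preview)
Your overall strategy matches the paper's: invoke Nikulin/Torelli to produce a K3 with Picard lattice $\Omega_g$, then compose with a Weyl-group element to position the basis correctly. But two specific points in your execution are genuine errors, and the paper handles them differently.

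First, the paper applies Picard--Lefschetz reflections to make $L-E$ big and nef, not $L$ ample. This is what the lemma actually asks for, and it is the class against which all subsequent positivity checks are made. Your attempt to deduce nefness of $L-E$ afterwards from the fact that ``$(L-E)$ pairs positively with $L$, $E$, and each $\Gamma_i$'' is wrong: nefness means non-negative intersection with every \emph{effective curve}, and an arbitrary effective $(-2)$-class in a rank-$10$ lattice need not be one of the basis vectors. Checking against a basis is never enough.

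Second, the phrase ``a genericity argument on $Y_{\Omega_g}$ then ensures that these cannot decompose'' is both vague and inapplicable: the lemma asserts the conclusion \emph{for any} K3 with $\mathrm{Pic}\simeq\Omega_g$, so genericity buys nothing. The paper does not classify all $(-2)$-classes or appeal to a generic surface. Instead, for each required property it runs a direct elimination. For example, to show $E$ is nef: suppose there were an effective $R$ with $R^2=-2$ and $(E\cdot R)<0$; write $R=xL+yE+\sum z_i\Gamma_i$, so $(E\cdot R)=x\lfloor\tfrac{g+1}{2}\rfloor<0$ forces $x<0$, and then
\[
(R-xL)^2=-2+x^2(2g-2)-2x(L\cdot R)=-2+x^2\bigl(2g-2-2\lfloor\tfrac{g+1}{2}\rfloor\bigr)-2x(L-E\cdot R)>0
\]
for $x<0$, $g\ge 6$, contradicting $(R-xL)^2=-2\sum z_i^2\le 0$. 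The integrality of $\Gamma_i$ and of $\widetilde{\Gamma}_i\in|E-\Gamma_i|$ is proved by the same style of computation: take a hypothetical bad component $R$, expand in the basis, and use $(R\cdot E)\ge 0$, $(R\cdot L-E)\ge 0$ together with the constraint $1\le d_i<\lfloor\tfrac{g+1}{2}\rfloor$ to force $R$ to equal $\Gamma_i$ (resp.\ $\widetilde{\Gamma}_i$). These are exactly the ``finite combinatorial computations'' you allude to at the end, and they are the entire content of the proof; there is no shortcut.
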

\begin{proof}
By the global Torelli theorem and from a result of Nikulin, the fact that this lattice has signature $(1,9)$ and is even implies that there exists a K3 surface $Y_{\Omega_g}$ with $\text{Pic}(Y_{\Omega_g}) \simeq \Omega_g$, \cite[Cor.\ 1.9, Cor.\ 2.9]{morrison-large} or \cite[Cor.\ 14.3.1]{huy-lec-k3}. By performing Picard--Lefschetz reflections and a sign change, we may assume that $L-E$ is big and nef, since $(L-E \cdot L-E)>0$, \cite[Prop.\ VIII.3.9]{barth} and \cite[Cor.\ 8.2.11]{huy-lec-k3}. Next, $(E \cdot E)^2=0$ and $(L-E \cdot E) =\left \lfloor \frac{g+1}{2} \right \rfloor>0$ which implies that $E$ is effective, \cite[Prop.\ VIII.3.6(i)]{barth}. We now want to show that the general element of $|E|$ is smooth and irreducible. By \cite[Prop.\ 2.6]{donat} and the fact that $E$ belongs to a basis of $\text{Pic}(Y_{\Omega_g})$ it is enough to show that $|E|$ is base-point free. It is enough to show that $E$ is nef, \cite[Lemma 2.3]{knut} or \cite[Prop.\ 2.3.10]{huy-lec-k3}. 

So it suffices to show there is no effective divisor $R$ with $(R)^2=-2$ and $(E \cdot R)<0$, \cite[Prop.\ VIII.3.6]{barth}. Suppose for a contradiction that such an $R$ exists. Write $R=xL+y E+ \sum_{i=1}^{8} z_{i} \Gamma_i$ for integers $x,y,z_{i}$.  As $(E \cdot R) = x \left \lfloor \frac{g+1}{2} \right \rfloor <0$ we have $x <0$. Then $(R-x L)^2=-2 \sum_{i=1}^8 z^2_{i}  \leq 0$. However, 
\begin{align*}
(R-x L)^2 &= -2+x^2(2g-2)-2x(L \cdot R) \\
&=-2+x^2(2g-2-2\left \lfloor \frac{g+1}{2} \right \rfloor)-2x(L-E \cdot R) \\
& >0
\end{align*}
for $x <0$ and $g \geq 6$.
Thus $|E|$ is an elliptic pencil. 

Next $\Gamma_1$ is effective, since $(\Gamma_1 \cdot L-E) >0$. We claim $\Gamma_1$ is integral. Otherwise, there would be an integral component $R$ of $\Gamma_1$ with $(R \cdot \Gamma_1) <0$, since $(\Gamma_1)^2=-2$. Further, $(R)^2=-2$, since $R$ is not nef. Write $R=xL+y E+\sum_{i=1}^{8} z_{i} \Gamma_i$. We have $(R \cdot E)=x \left \lfloor \frac{g+1}{2} \right \rfloor \geq 0$ so $x \geq 0$. Assume $x \neq 0$. Then we have $(R \cdot R+E) >0$ and $(R+E)^2>0$ so $R+E$ is big and nef, which contradicts that $(R+E \cdot \Gamma_1)=(R \cdot \Gamma_1)<0$. So $x=0$. But then $(R)^2=-2$ gives $\sum_{i=1}^8 z^2_{i}=1$, and $(R \cdot \Gamma_1)=-2 z_{1} <0$ so $z_{1}=1$ and $z_{i}=0$, $i>1$. Lastly, $(R \cdot L)=d_1+y \left \lfloor \frac{g+1}{2} \right \rfloor \geq 0$ so we must have $y \geq 0$ (as  $d_1< \left \lfloor \frac{g+1}{2} \right \rfloor$). Since $R$ is a smooth and irreducible rational curve, we must then have $y=0$ and $R=\Gamma_1$ (as the only effective divisor in $|R|$ is integral). Thus $\Gamma_1$ is integral. Likewise, $\Gamma_2, \ldots, \Gamma_8$ are integral.

Next, $\widetilde{\Gamma}_1$ is effective, since $(\widetilde{\Gamma}_1)^2=-2$ and $(\widetilde{\Gamma}_1 \cdot L-E) >0$. 
Let $R$ be an integral component of $\widetilde{\Gamma}_1$ such that $(R \cdot \widetilde{\Gamma}_1) <0$, $(R)^2=-2$. 
Writing $R=xL+y E+\sum_{i=1}^{8} z_{i} \Gamma_i$, we see as above $x=0$ and we must have $z_1=-1$ and $z_{i}=0$, $i>1$. Since $( R \cdot L)=-d_1+y \left \lfloor \frac{g+1}{2} \right \rfloor \geq 0$ we must have $y \geq 1$ and then $R \sim \widetilde{\Gamma}_1+(y-1)E$. Since $R$ is integral, this forces $R= \widetilde{\Gamma}_1$.
\end{proof}

\begin{lem} \label{I2}
Let $Y_{\Omega_g}$ and $\{L,E, \Gamma_1, \ldots, \Gamma_8 \}$ be as in the previous lemma, and let $Y_{\Omega_g} \to \proj^1$ be the fibration induced by $E$. If $Y_{\Omega_g}$ is a general $\Omega_g$-polarised K3 surface, then at least six of the reducible singular fibres $\Gamma_i+\widetilde{\Gamma_i}$ for $1 \leq i \leq 8$ are of the type $I_2$ (as opposed to the type $III$). We choose indices such that $\Gamma_i+\widetilde{\Gamma_i}$
is an $I_2$ fibre for $i \geq 3$.
\end{lem}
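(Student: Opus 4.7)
The starting observation is the intersection computation
\[
(\Gamma_i \cdot \widetilde{\Gamma}_i) = (\Gamma_i \cdot E) - (\Gamma_i)^2 = 0 - (-2) = 2,
\]
so each reducible fibre $\Gamma_i + \widetilde{\Gamma}_i$ is a union of two smooth rational curves whose intersection cycle has length two. Consulting the Kodaira table of singular fibres, the only possibilities compatible with this data are type $I_2$ (two transverse intersection points) or type $III$ (a single tangential intersection of multiplicity two). Thus my task reduces to controlling, for how many $i$, the two curves $\Gamma_i$, $\widetilde{\Gamma}_i$ meet transversally rather than tangentially.

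For each $i$, let $\mathcal{D}_i$ be the locus in the $10$-dimensional moduli stack $\mathcal{M}_{\Omega_g}$ of $\Omega_g$-polarised K3 surfaces on which the $i$-th reducible fibre is of type $III$. Tangency is a closed condition in any local analytic family of $\Omega_g$-polarised K3 surfaces: in a local trivialisation the two intersection points trace holomorphic sections whose coincidence is cut out by the vanishing of a single holomorphic discriminant. Hence each $\mathcal{D}_i$ is either the whole of $\mathcal{M}_{\Omega_g}$ or is a proper closed substack, necessarily of codimension one in the latter case. If at least six of the $\mathcal{D}_i$ are proper, then the complement
\[
\mathcal{M}_{\Omega_g} \setminus \bigcup_{i \in S} \mathcal{D}_i
\]
is a nonempty Zariski open subset for any choice of six indices $S \subseteq \{1,\ldots,8\}$ avoiding the exceptional $\mathcal{D}_i$, and the desired conclusion follows by relabelling.

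The crucial step is therefore to rule out the possibility that more than two of the $\mathcal{D}_i$ coincide with the full moduli space. My plan here is to exhibit a single $\Omega_g$-polarised K3 surface on which at least six of the fibres $\Gamma_i + \widetilde{\Gamma}_i$ are of type $I_2$. For this I will use the Weierstrass-equation machinery of Section \ref{WeierEll}: one starts with a pair $(A,B)\in H^0(\mathcal{O}_{\proj^1}(8))\times H^0(\mathcal{O}_{\proj^1}(12))$ whose discriminant $\Delta(A,B)$ has eight simple zeros (forcing $I_1$ fibres there) and, at eight further points, the local equations forcing genuine $A_1$ singularities on the Weierstrass model, whose minimal resolution produces $I_2$ (not $III$) fibres; the distinction between $I_2$ and $III$ at such a point can be read off from the order of vanishing of $A$ and $B$, cf.\ the calculations leading up to Example \ref{X_{(2),8A_1}}. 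Varying the coefficients of $A,B$ in the appropriate versal family, one can arrange the lattice of transcendental cycles to be of rank $12$ so that the Picard lattice of the resolution contains $\Omega_g$ as a primitive sublattice, giving a point of $\mathcal{M}_{\Omega_g}$ at which the desired fibres are visibly of type $I_2$.

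The main obstacle will be the lattice-matching step: one must check that the elliptic fibration constructed from the Weierstrass model really sees the classes $L,E,\Gamma_1,\ldots,\Gamma_8$ with the prescribed intersection form of $\Omega_g$, and that $\Omega_g$ embeds primitively into the Picard lattice. This is a purely arithmetic verification, which I would either carry out by direct computation in $\mathrm{Pic}$ of the elliptic surface (using the Shioda--Tate formula to identify $L$ as the class of a suitable multisection together with components of the reducible fibres) or reduce to a known case from Shimada's classification tables cited in the remark following Lemma \ref{tate-shaf}. Once such a point of $\mathcal{M}_{\Omega_g}$ is exhibited with at least six $I_2$ fibres, openness of the $I_2$ condition finishes the proof.
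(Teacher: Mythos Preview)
Your overall strategy—show the $I_2$ condition is open in the moduli of $\Omega_g$-polarised K3 surfaces and then exhibit a single such surface on which at least six of the fibres $\Gamma_i+\widetilde\Gamma_i$ are of type $I_2$—matches the paper's. The gap is in the construction of the example, which you correctly flag as the main obstacle but underestimate: it is not a routine arithmetic verification, and your proposed route will not work as stated.

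The issue is that any Weierstrass K3 carries a section for its elliptic fibration, whereas for a K3 with $\text{Pic}\simeq\Omega_g$ the fibration $|E|$ has multisection index $(L\cdot E)=\lfloor\frac{g+1}{2}\rfloor>1$, since $(D\cdot E)=x(L\cdot E)$ for every $D=xL+yE+\sum z_i\Gamma_i$. A generic Weierstrass K3 with eight $A_1$-fibres has Picard lattice $T=\mathfrak{h}\oplus(-2)^8$, and Keum's formula $\det\Omega_g=(L\cdot E)^2\det T$ shows that the inclusion $\Omega_g\hookrightarrow T$ has index $(L\cdot E)>1$; in particular it is not primitive, so such a surface is \emph{not} $\Omega_g$-polarised. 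To salvage the direct approach you would need to pass to Picard rank $\geq 11$ and locate there a class $L$ with the prescribed (possibly odd) intersections $(L\cdot\Gamma_i)=d_i$ against the exceptional fibre components—this is exactly what fails to be automatic, and you have not indicated how to do it. You also do not address the fact that $M_{\Omega_g}$ may have two components, so a single example need not suffice.

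The paper sidesteps all of this by passing to the Jacobian fibration $J(Y_{\Omega_g})\to\proj^1$, which has the same singular fibres but \emph{does} admit a section; it is $\widetilde{\Omega_g}$-polarised where $\widetilde{\Omega_g}$ is generated by $\Omega_g$ and $F:=E/(L\cdot E)$, and a basis change $\{L-gF,\,F,\,\Gamma_i-d_iF\}$ shows $\widetilde{\Omega_g}\simeq T$. The Jacobian construction works in families (via moduli of sheaves) and has zero-dimensional fibres by countability of the Tate--\v{S}afarevi\v{c} group, so by Sard's theorem openness of the $I_2$ condition on the $\widetilde{\Omega_g}$-side pulls back to the $\Omega_g$-side. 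The required $T$-polarised example is then read off from Shimada's tables: an elliptic K3 with section, torsion-free Mordell--Weil group, and ten $A_1$-fibres, where the Euler-number bound $3k+2(10-k)\leq 24$ forces at most four of them to be of type $III$; one then primitively embeds $T$ using six of the $I_2$ fibres. Complex conjugation handles the two possible components of $M_{\widetilde{\Omega_g}}$.
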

\begin{proof}
As $( xL+y E+\sum_{i=1}^{8} z_{i} \Gamma_i \cdot E)=x(L \cdot E)$, the elliptic fibration $Y_{\Omega_g} \to \proj^1$ induced by $E$ has multisection index $(L \cdot E)$.
Consider the Jacobian fibration $J(Y_{\Omega_g}) \to \proj^1$, which has the same singular fibres as $Y_{\Omega_g}$ (up to isomorphism). Let $\widetilde{\Omega_g}$ be the lattice generated by $\Omega_g$ and $$F:=E /(L \cdot E).$$ Then $J(Y_{\Omega_g})$ is an element of the global moduli space $M_{\widetilde{\Omega_g}}$ of $\widetilde{\Omega_g}$-polarized K3 surfaces, \cite[Lemma 2.1]{keum} and \cite[Def.\ p.1602]{dolgachev}. A result of Mukai states that one may describe $J(Y_{\Omega_g})$ as a moduli space $M_H(v)$ of $H$-semistable sheaves on $Y_{\Omega_g}$ for a generic polarization $H$ and Mukai vector $(0,E, (L \cdot E))$; see \cite[\S 11.4.2]{huy-lec-k3}, \cite[p.\ 2081]{keum}, \cite{mukai-moduli}. This construction can be done in families by \cite[Thm.\ 4.3.7]{huybrechts-sheaves}, so there is a holomorphic map 
\begin{align*}
\phi \; : \; U &\to M_{\widetilde{\Omega_g}} \\
(Y'_{\Omega_g}, \Omega_g \hookrightarrow \text{Pic}(Y'_{\Omega_g})) & \mapsto (J(Y'_{\Omega_g}), \widetilde{\Omega_g} \hookrightarrow \text{Pic}(J(Y'_{\Omega_g})))
\end{align*} 
defined in a Euclidean open subset $U$ about $(Y_{\Omega_g}, \Omega_g \hookrightarrow \text{Pic}(Y_{\Omega_g}))$ in the local moduli space (or period domain) of marked $\Omega_g$-polarised K3 surfaces. The Tate--\v{S}afarevi\v{c} group of  $J(Y_{\Omega_g})$ is countable by Lemma \ref{tate-shaf}. In particular, this implies that the fibres of $\phi$ are zero-dimensional by \cite[Cor.\ 11.5.5]{huy-lec-k3}. By Sard's theorem, $\phi(U)$ contains a Euclidean open set. Thus it suffices to show that there is a dense open set $V \seq M_{\widetilde{\Omega_g}}$ with the property that for any $(X, \widetilde{\Omega_g} \hookrightarrow \text{Pic}(X)) \in V$, the fibration $X \to \proj^1$ induced by $F$ has at least six $I_2$ fibres. 

Let $T$ be the trivial lattice of $J(Y_{\Omega_g}) \to \proj^1$, i.e.\ the lattice generated by $F$, any section $\sigma$ of  $J(Y_{\Omega_g}) \to \proj^1$ and the components of the reducible fibres which do not meet $\sigma$. We have
$$T \simeq \mathfrak{h} \oplus (-2)^8,$$
where $\mathfrak{h}$ denotes the hyperbolic lattice.
By choosing the basis $\{L-gF, F, \Gamma_1-d_1 F, \ldots, \Gamma_8-d_8 F  \}$, we see $\widetilde{\Omega_g}$ is isometric to $T$, or equivalently, the Mordell--Weil group of $J(Y_{\Omega_g})$ is trivial, \cite[Thm.\ 6.3]{schuett}. Suppose $(X, \widetilde{\Omega_g} \hookrightarrow \text{Pic}(X)) \in  M_{\widetilde{\Omega_g}}$ has the property that $X \to \proj^1$ induced by $F$ has at least six $I_2$ fibres. Then the same holds in a dense open set in each component of $M_{\widetilde{\Omega_g}}$ containing $(X, \widetilde{\Omega_g} \hookrightarrow \text{Pic}(X))$ as the condition that a fibre be nodal is Zariski open (i.e.\ for a flat, proper algebraic family of integral curves, the locus of non-nodal curves is a Zariski closed subset of the base). Furthermore, the same clearly holds for the complex conjugate $X^c \to \proj^1$. There are at most two components of $M_{\widetilde{\Omega_g}}$, which locally on the period domain are interchanged by complex conjugation, so this would complete the proof.

 Thus it suffices to find such an elliptic K3 surface $X \to \proj^1$. From \cite[Thm.\ 2.12]{shimada-arxiv} (published in compressed form as \cite{shimada-mich}), there exists an elliptic K3 surface $X \to \proj^1$ with section and torsion-free Mordell--Weil group such that $X$ has 10 singular fibres of type $A_1$, each of which are either of type $I_2$ or $III$. Using that the Euler number of $III$ is $3$, we find that $X$ can have at most $4$ fibres of type $III$, and thus has at least $6$ fibres of type $I_2$, which have Euler number $2$ (as the sum of the Euler numbers of the singular fibres must be $24$, the Euler number of $X$). There is an obvious primitive embedding of $T$ into the trivial lattice of $X$ such that $6$ of the generators correspond to components of the $I_2$ fibres avoiding the section. As the Mordell--Weil group of $X \to \proj^1$ is torsion free, we have a primitive embedding $T \hookrightarrow \text{Pic}(X)$. This completes the proof. 
\end{proof}

\begin{lem} \label{little-lem}
Let $Y_{\Omega_g}$ and $\{L,E, \Gamma_1, \ldots, \Gamma_8 \}$ be as in the previous lemma. If we assume $g > 7$, then $L-E$ is very ample (and hence $L$ is also very ample).
\end{lem}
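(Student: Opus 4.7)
The plan is to apply Saint-Donat's very ampleness criterion for a big and nef divisor on a K3 surface: such a $D$ with $(D)^2 \geq 4$ is very ample if and only if
\begin{itemize}
\item[(a)] there is no irreducible curve $C$ with $(C)^2 = -2$ and $(D \cdot C) = 0$,
\item[(b)] there is no irreducible curve $F$ with $(F)^2 = 0$ and $(D \cdot F) \leq 2$, and
\item[(c)] $D \not\sim 2B$ for an effective divisor $B$ with $(B)^2 = 2$.
\end{itemize}
By Lemma~\ref{lem-aaa} we already know that $L-E$ is big and nef. A direct computation gives $(L-E)^2 = g-3$ if $g$ is odd and $(L-E)^2 = g-2$ if $g$ is even, so $(L-E)^2 \geq 6$ whenever $g \geq 8$ and the criterion applies. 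Condition (c) is immediate: since $\{L,E,\Gamma_1,\dots,\Gamma_8\}$ is a $\mathbb{Z}$-basis of $\text{Pic}(Y_{\Omega_g}) \simeq \Omega_g$, the class $L-E$ has primitive coordinates $(1,-1,0,\dots,0)$ and so is not divisible by $2$ in $\text{Pic}$.

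For (a) and (b) I would write any candidate $C = aL + bE + \sum_i c_i\Gamma_i$ in the basis and extract constraints from
\[
(C \cdot E) = a\left\lfloor\tfrac{g+1}{2}\right\rfloor,\quad (C \cdot \Gamma_i) = a d_i - 2c_i,\quad (C \cdot \widetilde{\Gamma}_i) = a\left(\left\lfloor\tfrac{g+1}{2}\right\rfloor - d_i\right) + 2c_i,
\]
together with $(C \cdot (L-E))$ and $(C)^2$. Since $E$, $\Gamma_i$ and $\widetilde{\Gamma}_i = E - \Gamma_i$ are all integral curves (Lemma~\ref{lem-aaa}), and since an easy direct check shows that none of them themselves satisfies the conditions in (a) or (b), effectiveness of $C$ forces $(C \cdot E) \geq 0$ and $(C \cdot \Gamma_i),(C \cdot \widetilde{\Gamma}_i) \geq 0$, giving $a \geq 0$ and $0 \leq 2c_i \leq a\lfloor(g+1)/2\rfloor$ with the companion bound from $\widetilde{\Gamma}_i$. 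Combined with $(C)^2 \in \{-2,0\}$ and $(C \cdot (L-E)) \in \{0,1,2\}$, this reduces to a finite case check in $(a,b,c_1,\dots,c_8)$; the inequalities $d_i < \lfloor(g+1)/2\rfloor$ and $g \geq 8$ provide enough slack to eliminate every candidate. In particular the case $a=0$ drops out because $(C)^2 = -2\sum c_i^2$ forces exactly one $c_i = \pm 1$, and then $(C \cdot (L-E)) = b\lfloor(g+1)/2\rfloor \pm d_i$ cannot lie in $\{0,1,2\}$ for any integer $b$.

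The main obstacle is condition (b) when $a \geq 1$: the extra slack $(L-E \cdot F) \leq 2$ (rather than $=0$) permits more candidates than in (a). The plan here is to apply the Hodge index theorem to the rank-two sublattice $\langle L-E, F \rangle$, which gives $((L-E)\cdot F)^2 \geq (L-E)^2 \cdot F^2 = 0$, and then to use the explicit upper bounds on $|c_i|$ coming from intersecting with the $\Gamma_i$ and $\widetilde{\Gamma}_i$ to close off the finitely many remaining numerical possibilities.
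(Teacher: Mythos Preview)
Your overall strategy—applying a very-ampleness criterion for big and nef divisors on a K3 and writing candidate curves in the basis $\{L,E,\Gamma_1,\dots,\Gamma_8\}$—is the same as the paper's, and your treatment of condition (c) and of the $a=0$ case is fine. The gap is in the $a\geq 1$ case of condition (b).

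The Hodge index inequality you invoke, $((L-E)\cdot F)^2 \geq (L-E)^2\cdot(F)^2$, is vacuous here because $(F)^2=0$ makes the right side zero; it gives you nothing. And the ``finite case check'' is not actually finite as you have set it up: your bounds $|c_i|=O(a)$ coming from $(C\cdot\Gamma_i)\geq 0$ and $(C\cdot\widetilde{\Gamma}_i)\geq 0$ do not bound $a$ itself, and neither $(C)^2=0$ nor $(C\cdot(L-E))\leq 2$ alone bounds $a$ or $b$. So you are left with infinitely many candidates and no mechanism to terminate.

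The missing trick, which the paper uses, is to compute $(C-aL)^2$. Since $C-aL=bE+\sum_i c_i\Gamma_i$ and $(E)^2=(E\cdot\Gamma_i)=0$, one has $(C-aL)^2=-2\sum_i c_i^2\leq 0$. On the other hand, expanding and using $(C\cdot L)=(C\cdot(L-E))+(C\cdot E)=(C\cdot(L-E))+a\lfloor\tfrac{g+1}{2}\rfloor$ gives
\[
(C-aL)^2 \;=\; (C)^2 \;+\; a\Bigl(a\bigl(2g-2-2\lfloor\tfrac{g+1}{2}\rfloor\bigr) - 2(C\cdot(L-E))\Bigr).
\]
For $g>7$ one has $2g-2-2\lfloor\tfrac{g+1}{2}\rfloor\geq 5$, so with $a\geq 1$ and $(C\cdot(L-E))\leq 2$ the right side is strictly positive (for both $(C)^2=-2$ and $(C)^2=0$), contradicting $(C-aL)^2\leq 0$. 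This single computation eliminates $b$ entirely and disposes of the $a\geq 1$ case for both (a) and (b) at once; in case (a) it forces $a=0$, whereupon your argument finishes it.
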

\begin{proof}
Suppose the big and nef line bundle $L-E$ is not very ample. Then there exists either a smooth rational curve $R \seq Y_{\Omega_g}$ with $(L-E \cdot R) = 0$ or a smooth elliptic curve $F \seq Y_{\Omega_g}$ with $0 < (L-E \cdot F) \leq 2$ (or both exist), by \cite[Thm.\ 1.1]{knut} (set $k=1$ in Knutsen's theorem and note that $L-E$ is primitive). 

Assume firstly that $R$ as above exists; we may write $R=x_1L+y_1 E+ \sum_{i=1}^{8} z_{1,i} \Gamma_i$ for integers $x_1,y_1,z_{1,i}$. We have
\begin{align*}
-2 \sum_{i=1}^8 z^2_{1,i} &=(R-x_1L)^2 \\
&=-2+x_1^2(2g-2)-2x_1(L \cdot R) \\
&=-2+x_1^2(2g-2-2\left \lfloor \frac{g+1}{2} \right \rfloor)-2x_1(L-E \cdot R).
\end{align*}
By the above equality, using that $g \geq 8$ and $(L-E \cdot R)=0$, we find that $x_1=0$ and there exists some $j$ such that $z_{1,j}=\pm 1$, $z_{1,i}=0$ for $i \neq j$. Then $0=( L-E \cdot R)=\pm d_j+y_1 \left \lfloor \frac{g+1}{2} \right \rfloor$ which is impossible
for $1 \leq d_j < \left \lfloor \frac{g+1}{2} \right \rfloor$. 

So now suppose there is some smooth elliptic curve $F \seq Y_{\Omega_g}$ with $0 < (L-E \cdot F) \leq 2$. We may write $F=x_2 L+y_2 E+ \sum_{i=1}^{8} z_{2,i} \Gamma_i$ for integers $x_2,y_2,z_{2,i}$. We have $(F \cdot E)=x_2 (L \cdot E)$ and hence $x_2 >0$ (as $F \notin |E|$).  We calculate
\begin{align*}
-2 \sum_{i=1}^8 z^2_{2,i}  &= (F-x_2L)^2 \\
&= x_2^2(2g-2)-2x_2(L \cdot F) \\
&= x_2(x_2((2g-2)-2 \left \lfloor \frac{g+1}{2} \right \rfloor)-2 (L-E \cdot F)) 
\end{align*}
which is impossible for $g>7$, $0 \leq (L-E \cdot F) \leq 2$. Thus $L-E$ is very ample. Using Knutsen's criterion again, and the fact that $E$ is nef, we see that $L$ is likewise very ample.
\end{proof}

For the rest of the section we will assume $g$ is odd. The following technical lemma will be needed later in this section.
\begin{lem} \label{little-lem2}
Assume $g \geq 11$  is odd and let $Y_{\Omega_g}$ and $\{L,E, \Gamma_1, \ldots, \Gamma_8 \}$  be as in Lemma \ref{little-lem}. Then $L-2E$ is not effective. Further $(L-E)^2 \geq 8$ and there exists no effective divisor $F$ with $(F)^2=0$ and $(F \cdot L-E) \leq 3$.
\end{lem}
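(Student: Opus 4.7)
The middle assertion is immediate from the intersection data of Lemma~\ref{lem-aaa}: one computes $(L-E)^2 = L^2 - 2(L\cdot E) + E^2 = (2g-2)-(g+1) = g-3$, which is $\geq 8$ for $g \geq 11$.

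For the third assertion, the plan is to write an arbitrary nonzero effective $F$ in the basis as $F = xL + yE + \sum_{j=1}^{8} z_j\Gamma_j$. Nefness of $E$ (Lemma~\ref{lem-aaa}) forces $x(g+1)/2 = (F\cdot E) \geq 0$, so $x \geq 0$. If $x = 0$, then $F^2 = -2\sum z_j^2 = 0$ kills every $z_j$, so $F = yE$ with $y \geq 1$ and hence $(F\cdot L-E) = y(g+1)/2 \geq 6$, contradicting $(F\cdot L-E) \leq 3$. If $x \geq 1$, I would combine the equation $F^2 = 0$ with the upper bound $y(g+1)/2 + \sum z_j d_j \leq 3 - x(3g-5)/2$ coming from $(F\cdot L-E) \leq 3$ to deduce $\sum z_j^2 \leq -x^2(g-3)/2 + 3x$, which is strictly negative for $g \geq 11$ and $x \geq 1$, contradicting $\sum z_j^2 \geq 0$.

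The first assertion is the substantive one. Suppose $L-2E$ were effective, and decompose $L-2E = \sum_i n_i C_i$ into distinct integral components with multiplicities $n_i \geq 1$. Writing $C_i = x_iL + y_iE + \sum_j z_{i,j}\Gamma_j$, nefness of $E$ again forces each $x_i \geq 0$, and then $\sum_i n_i x_i = 1$ singles out a unique ``horizontal'' component $C_0$ with $n_0 = x_0 = 1$; every other $C_i$ has $x_i = 0$ and hence $C_i^2 = -2\sum_j z_{i,j}^2 \in \{-2,0\}$. The first key step is to classify these vertical components: $C_i^2 = 0$ forces $C_i = E$, and $C_i^2 = -2$ together with integrality and the observation that $(C_i \cdot \Gamma_{j_0}) = -2$ or $(C_i \cdot \widetilde{\Gamma}_{j_0}) = -2$ otherwise yields a proper subcomponent, forces $C_i \in \{\Gamma_j, \widetilde{\Gamma}_j\}$ for some $j$.

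Letting $a$, $c_k$, $b_k \geq 0$ denote the multiplicities of $E$, $\Gamma_k$, $\widetilde{\Gamma}_k$ in the decomposition and using $\widetilde{\Gamma}_k = E - \Gamma_k$, one rearranges to $C_0 = L - (2+S)E + \sum_j u_j \Gamma_j$ with $S := a + \sum_k b_k \geq 0$ and $u_j := b_j - c_j \leq b_j$. The final step is to bound $(C_0)^2 = (2g-2) - (2+S)(g+1) + 2\sum_j u_j d_j - 2\sum_j u_j^2$ from above via the elementary estimate $2u_j d_j - 2u_j^2 \leq 2b_j d_j$ (a case split on the sign of $u_j$), combined with $d_j \leq (g-1)/2$ and $\sum_j b_j \leq S$. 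This yields $(C_0)^2 \leq -4 - 2S \leq -4$, contradicting $(C_0)^2 \geq -2$ for an integral curve. The principal obstacle is the classification of vertical integral components; once that is in place, the rest is bookkeeping with the intersection form, and the bound $2u_j d_j - 2u_j^2 \leq 2b_j d_j$ is precisely what absorbs the potentially large contributions of the $\Gamma$-coefficients of either sign, leaving only a linear-in-$S$ penalty that forces $(C_0)^2 < -2$.
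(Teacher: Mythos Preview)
Your proof is correct and follows essentially the same route as the paper's. The only notable difference is in the final bound for $(C_0)^2$: the paper first uses the relation $E=\Gamma_j+\widetilde{\Gamma}_j$ to arrange that for each $j$ at most one of $\Gamma_j,\widetilde{\Gamma}_j$ appears with nonzero coefficient, which makes $(C_0)^2\leq -4$ immediate; you keep both and absorb the mixed signs via your inequality $2u_jd_j-2u_j^2\leq 2b_jd_j$, landing at the same bound $-4-2S$.
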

\begin{proof}
Suppose $L-2E$ is an effective divisor and let $D_1, \ldots D_k$ be its irreducible components. Write $D_i=x_iL+y_iE+\sum_{j=1}^8 z_{i,j} \Gamma_j$ for integers $x_i,y_i,z_{i,j}$. Then $0 \leq (D_i \cdot E)=x_i (L \cdot E)$ so that $x_i \geq 0$ and $\sum_i x_i=1$. Thus we may assume $x_1=1$ and $x_i=0$ for all $i \geq 2$. Now let $\widetilde{D}$ be any irreducible curve of the form $a E+\sum_{j=1}^8 b_j \Gamma_j$ for integers $a,b_j$ and suppose $\widetilde{D} \neq \Gamma_j$, $\forall 1 \leq j \leq 8$. Then $0 \leq (\widetilde{D} \cdot \Gamma_j)=-2b_j$ so $b_j \leq 0$ for all $j$. Since $(\widetilde{D})^2=-2 \sum_{j=1}^8 b^2_j \geq -2$ by \cite[Prop.\ VIII 3.6]{barth}, there is at most one $b_j$ such that $b_j \neq 0$, and in this case $b_j=-1$. Suppose firstly that all $b_j=0$. Then $\widetilde{D} \sim E$, since $\widetilde{D}$ is integral, and all effective divisors in $|aE|$ are a sum of $a$ divisors in $|E|$, \cite[Prop.\ 2.6(ii)]{donat}. Next suppose $b_j=-1$. Then $\widetilde{D}=aE-\Gamma_j$ and $a \geq 1$ since $a$ is effective. Thus $\widetilde{D}=\widetilde{\Gamma_j}+(a-1)E$ and $(\widetilde{D})^2=-2$, which implies $a=1$ since $\widetilde{D}$ is a smooth and irreducible rational curve (as the unique effective divisor in the linear system $|\widetilde{D}|$ is integral). 

Thus if $i \geq 2$, $D_i$ is either $E$, $\Gamma_j$ or $\widetilde{\Gamma}_j$, for some $j$. Since $\sum_i D_i=L-2E$, we see  $D_1=L-(2+m')E-\sum_{j=1}^8 n'_{1,j} \Gamma_j-\sum_{j=1}^8 n'_{2,j} \widetilde{\Gamma}_j$ for nonnegative integers $m'$ and $n'_{1,j}$, $n'_{2,j}$, $1 \leq j \leq 8$. Since $E=\Gamma_j+\widetilde{\Gamma}_j$, we may rewrite $D_1$ in the form $D_1=L-(2+m)E-\sum_{j=1}^8 (n_{1,j} \Gamma_j+n_{2,j} \widetilde{\Gamma}_j)$, where $m$, $n_{1,j}$, $n_{2,j}$ are nonnegative integers and if $n_{1,j_1} \neq 0$ for some $j_1$ then $n_{2,j_1}=0$, and likewise if $n_{2,j_2} \neq 0$ then $n_{1,j_2} = 0$.
But then one computes
\begin{align*}
(D_1)^2&=(L-2E-(mE+\sum_{j=1}^8 n_{1,j} \Gamma_j+n_{2,j} \widetilde{\Gamma}_j))^2 \\
&=-4-2\sum_{j=1}^8 (n_{1,j}^2+ n_{2,j}^2) -2(L \cdot mE+\sum_{j=1}^8 n_{1,j} \Gamma_j+n_{2,j} \widetilde{\Gamma}_j) \\
& \leq -4
\end{align*}
which is a contradiction (since $(D)^2 \geq -2$ for any integral curve $D$). This proves that $L-2E$ is non-effective.

We have $(L-E)^2=g-3 \geq 8$ for $g \geq 11$. For $g \geq 11$ and $a'>0$ one has $(g-3)a'-6>0$. From the proof of Lemma \ref{little-lem}, this implies there is no effective $F$ with $(F)^2=0$ and $(F \cdot L-E) \leq 3$. 
\end{proof}

\begin{lem} \label{lem-xyzw}
Let $D \in \Omega_g$ be an effective divisor with $(D)^2 \geq 0$, and assume $L-D$ is effective and $(L-D)^2 >0$. Then $D=cE$ for some integer $c \geq 0$.
\end{lem}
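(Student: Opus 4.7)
The plan is to expand $D = xL + yE + \sum_{i=1}^8 z_i \Gamma_i$ in the given basis and show by a sequence of elementary intersection-theoretic observations that $x = 0$, all $z_i = 0$, and $y \ge 0$.

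First I would use the nefness of $E$ (established in Lemma \ref{lem-aaa}) to pin down the coefficient $x$. Since $D$ and $L-D$ are both effective and $E$ is nef, both intersections $D \cdot E = x(L \cdot E)$ and $(L-D) \cdot E = (1-x)(L \cdot E)$ are nonnegative; as $(L \cdot E) = \lfloor (g+1)/2\rfloor > 0$, this forces $x \in \{0,1\}$. To rule out $x = 1$, I would invoke the Hodge index theorem: the class $L-D$ has $(L-D)^2 > 0$ by hypothesis, and $E \cdot (L-D) = (1-x)(L \cdot E) = 0$, so $E$ would lie in the orthogonal complement of a positive class, forcing $E^2 < 0$ (since $E$ is not numerically trivial). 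This contradicts $E^2 = 0$, so $x = 0$.

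With $x = 0$, the self-intersection simplifies dramatically: using $(E)^2 = 0$, $(E \cdot \Gamma_i) = 0$, $(\Gamma_i \cdot \Gamma_j) = 0$ for $i \ne j$, and $(\Gamma_i)^2 = -2$, one immediately computes
\[
(D)^2 = -2 \sum_{i=1}^8 z_i^2.
\]
The hypothesis $(D)^2 \geq 0$ then forces $z_i = 0$ for all $i$, so $D = yE$. Finally, since $L-E$ is big and nef (Lemma \ref{lem-aaa}), $(L-E)$ pairs nonnegatively with the effective class $D$, giving $y \cdot ((L \cdot E) - 0) \geq 0$ and hence $y \geq 0$.

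The main obstacle will simply be to execute the Hodge-index step cleanly, since we need to know that $E$ is nonzero numerically (on a K3 surface, numerical and linear equivalence coincide, so this is immediate from $E$ being an element of a basis of $\Omega_g \simeq \mathrm{Pic}(Y_{\Omega_g})$). Every other step is an elementary intersection computation that uses only the prescribed intersection matrix of $\Omega_g$ and the basic positivity properties of $E$ and $L-E$ collected in the preceding lemmas.
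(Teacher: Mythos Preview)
Your proof is correct and follows essentially the same structure as the paper's: expand $D$ in the basis, use nefness of $E$ to force $x \in \{0,1\}$, eliminate $x=1$, then use $(D)^2 \geq 0$ to kill the $z_i$. The only variation is in ruling out $x=1$: the paper simply computes $(L-D)^2 = -2\sum_i z_i^2 \leq 0$, directly contradicting the hypothesis $(L-D)^2 > 0$, whereas you invoke the Hodge index theorem. Both work, and the paper's version is marginally more economical since it is the same computation you perform anyway in the $x=0$ step.
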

\begin{proof}
Write $D=xL+yE+\sum_{i=1}^8 z_i \Gamma_i$ for integers $x,y,z_i$.  One has $0 \geq (D-L \cdot E)=(x-1)(L \cdot E)$ so that $x \leq 1$. On the other hand $0 \leq (D \cdot E)=x(L \cdot E)$ so that $x \geq 0$. Thus $x=0$ or $x=1$. Suppose firstly that $x=1$. Then $0 < (D-L)^2=-2\sum_{i=1}^8 z_i^2$ which is a contradiction. Hence $x=0$. Then $0 \leq (D)^2=-2(\sum_{i=1}^8 z_i^2)$ and so $z_i=0$ for all $i$, as required.
\end{proof}

\section{Mukai's theory for curves on $Y_{\Omega_g}$} \label{muk-theory}
In this section we will apply a construction of Mukai to the special K3 surface $Y_{\Omega_g}$ in order to construct loci $Z \seq \mathcal{V}^0_g$ such that  for $x \in Z$ the fibre of  
$\eta : \mathcal{V}^0_g \to \mathcal{M}_{g}$ over $\eta(x)$ is zero-dimensional at $x$. This will be our basic tool for studying the generic finiteness of the morphism $\eta: \mathcal{T}^n_{g,k} \to \mathcal{M}_{p(g,k)-n}$. 

Our first task is to study Clifford index of curves on the elliptic K3 surfaces $Y_{\Omega_g}$ from Lemma \ref{little-lem2}.
For any smooth curve $C$ and $M \in \text{Pic}(C)$ with $\deg(M)=d$ and $h^0(M)=r+1$, let $\nu(M):=d-2r$. The \emph{Clifford index} of $C$ is defined by $$\nu(C):=\text{min}\{\nu(M) \; | \; M \in \text{Pic}(C) \text{ with $\deg(M) \leq g-1$, $h^0(M) \geq 2$}  \}.$$ Clifford's Theorem states that $\nu(C) \geq 0$ and $\nu(C)=0$ if and only if $C$ is hyperelliptic.
\begin{lem} \label{gon-omega}
Let $g \geq 11$ be odd, let $Y_{\Omega_g}$ and $\{L,E, \Gamma_1, \ldots, \Gamma_8 \}$ be as in Lemma \ref{little-lem2} and let $C \in |L|$ be a smooth curve. Then $\nu(C)=\frac{g+1}{2}-2$.
\end{lem}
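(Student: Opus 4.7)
The upper bound $\nu(C) \leq (g-3)/2$ comes directly from the elliptic pencil. From the restriction sequence
\[
0 \to \mathcal{O}_X(E - L) \to \mathcal{O}_X(E) \to \mathcal{O}_C(E|_C) \to 0
\]
on $X = Y_{\Omega_g}$, together with the fact that $L - E$ is big and nef and nonzero (Lemma \ref{lem-aaa}) so that $h^0(X, E - L) = 0$, we conclude $h^0(C, E|_C) \geq h^0(X, E) = 2$. Since $\deg(E|_C) = L \cdot E = (g+1)/2$, the restricted pencil witnesses $\nu(C) \leq (g+1)/2 - 2 = (g-3)/2$.

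For the reverse inequality, I would argue by contradiction and assume $\nu(C) < (g-3)/2$. Since $g$ is odd and $\geq 11$, we have $(g-3)/2 < (g-1)/2 = \lfloor (g-1)/2 \rfloor$, so $C$ has Clifford index strictly less than the generic value. The theorem of Green--Lazarsfeld on constancy of Clifford index in a linear system, together with its refinement by Donagi--Morrison (or Ciliberto--Pareschi / Knutsen), then supplies a line bundle $M \in \text{Pic}(X) = \Omega_g$ satisfying $h^0(M) \geq 2$, $h^0(L-M) \geq 2$, $M^2 \geq 0$, and $\nu(C) = L \cdot M - M^2 - 2$. The plan is to use Lemmas \ref{lem-xyzw} and \ref{little-lem2} to force $M$ into the short list $\{E,\, L-E\}$, for which a direct computation yields $\nu(M|_C) = (g-3)/2$, contradicting $\nu(C) < (g-3)/2$.

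The case analysis splits as follows. If $(L-M)^2 > 0$, Lemma \ref{lem-xyzw} applied to $M$ yields $M = cE$ for some integer $c \geq 1$; Lemma \ref{little-lem2} (non-effectivity of $L - 2E$) then forces $c = 1$, giving $\nu(C) = L \cdot E - 2 = (g-3)/2$. If instead $M^2 > 0$, then by Serre duality $\nu(M|_C) = \nu((L-M)|_C)$, and applying Lemma \ref{lem-xyzw} to $L - M$ gives $L - M = cE$; the condition $M^2 > 0$ (equivalently $(L - cE)^2 > 0$) forces $c = 1$, so $M = L - E$, again yielding $\nu(C) = (g-3)/2$. The degenerate case $M^2 = (L-M)^2 = 0$ forces $L \cdot M = g-1$ and hence $\nu(C) = g - 3 > (g-3)/2$, contradicting the assumption. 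The main obstacle is matching the output of the Green--Lazarsfeld--type theorem against the rigid constraints of $\Omega_g$: once Lemma \ref{lem-xyzw} is available the reduction to $M \in \{E, L - E\}$ is essentially forced, but one must carefully exploit the $M \leftrightarrow L - M$ symmetry from Serre duality, and the construction of $\Omega_g$ in Section \ref{mukai} is engineered precisely so that no exotic effective class of self-intersection zero intrudes.
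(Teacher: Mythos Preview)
Your approach is essentially the paper's: invoke a Green--Lazarsfeld/Knutsen-type result to produce a divisor $M$ on the K3 computing $\nu(C)$, then use Lemma~\ref{lem-xyzw} to pin $M$ down to $E$. However, your case analysis has a gap, and the paper's argument shows it is also unnecessary.

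The problem is Case~2. To apply Lemma~\ref{lem-xyzw} with $D=L-M$ you need $(L-M)^2\ge 0$, but from the hypotheses you listed for $M$ (namely $h^0(M)\ge 2$, $h^0(L-M)\ge 2$, $M^2\ge 0$, and $\nu(C)=L\cdot M-M^2-2$) there is no upper bound on $M^2$, and hence no lower bound on $(L-M)^2=2g-6-2\nu(C)-M^2$. As written, the case $(L-M)^2<0$ with $M^2>0$ is not handled. (Your Case~3 is in fact subsumed by Case~1, since $M^2=(L-M)^2=0$ implies $(L-M)^2=0\not>0$ fails but the computation you give there is fine on its own; the genuine missing region is $(L-M)^2<0$.)

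The fix---and what the paper does---is to record the additional inequality that Knutsen's version of the theorem actually gives: the divisor $D$ one obtains is smooth and irreducible with $0\le D^2<\nu(C)+2$ and $2D^2<D\cdot L$. Plugging $D\cdot L=D^2+\nu(C)+2$ and $D^2<\nu(C)+2$ into $(L-D)^2=2g-6-2\nu(C)-D^2$ yields $(L-D)^2> 2g-8-3\nu(C)\ge 0$ once $\nu(C)\le\frac{g-3}{2}$ and $g\ge 7$. Thus $(L-D)^2>0$ holds unconditionally, Lemma~\ref{lem-xyzw} gives $D=cE$, smoothness forces $c=1$, and the contradiction $\nu(C)=\frac{g-3}{2}$ follows. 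No case split is needed, and the appeal to Serre duality and to non-effectivity of $L-2E$ is avoided.
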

\begin{proof}
Consider the line bundle $A:=\mathcal{O}_C(E)$. We have $h^0(A)=2$, since $h^1(L-E)=0$ by Kodaira's vanishing theorem. As $\deg(A)=\frac{g+1}{2}$, we see 
$\nu(C) \leq \frac{g+1}{2}-2$. Suppose for a contradiction that $\nu(C) < \frac{g+1}{2}-2$. From \cite[Lem.\ 8.3]{knut}, there is a smooth and irreducible curve $D \seq Y_{\Omega_g}$ with $0 \leq (D)^2 < \nu(C)+2$, $2(D)^2 < (D \cdot L)$ and $\nu(C)= (D \cdot L)-(D)^2-2$ (the sharp inequalities here are due to the fact that $L$ is primitive). We have $$(D-L \cdot D)=(D)^2-(D \cdot L)=-2-\nu(C) \leq -2$$ and $$(D-L)^2=-6-2\nu(C)-(D)^2+2g > -8-3\nu(C)+2g \geq 0,$$ as $\nu(C) \leq \frac{g+1}{2}-2$ and $g \geq 7$. Hence $L-D$ is effective and $D=cE$ for some $c \geq 0$ by Lemma \ref{lem-xyzw}. As $D$ is represented by a smooth curve, we must have $D=E$. But then $$\nu(C)=(E \cdot L)-(E)^2-2=\frac{g+1}{2}-2,$$ giving a contradiction.
\end{proof}

\begin{lem} \label{gon-omega-2}
Assume $g \geq 11$ is odd and let $Y_{\Omega_g}$ be a K3 surface with $\text{Pic}(Y_{\Omega_g}) \simeq \Omega_g$ as in Lemma \ref{little-lem2}. Let $M \in \Omega_{g}$ be an effective line bundle on $Y_{\Omega_g}$ satisfying 
\begin{itemize}
\item $0 \leq (M)^2 < \frac{g+1}{2}$
\item $2 (M)^2 < (M \cdot L)$
\item $\frac{g+1}{2}= (M \cdot L)-(M)^2.$
\end{itemize}
Then $M=E$.
\end{lem}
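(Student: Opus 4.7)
Write $M = xL + yE + \sum_{i=1}^8 z_i \Gamma_i$ for integers $x,y,z_i$. The first step is to show $x \in \{0,1\}$ by pairing $M$ with the nef classes $E$ and $L-E$. Since $(M\cdot L)-(M)^2 = \frac{g+1}{2} > 0$, $M$ is nonzero, and effectivity together with nefness of $E$ gives $(M\cdot E) = x\cdot\frac{g+1}{2} \geq 0$, so $x \geq 0$. By Lemma \ref{little-lem}, for $g > 7$ the class $L-E$ is very ample, hence $(M \cdot L-E) > 0$. A direct computation gives
\begin{equation*}
(M \cdot L-E) \;=\; (M\cdot L) - (M \cdot E) \;=\; (M)^2 + (1-x)\tfrac{g+1}{2},
\end{equation*}
so combined with the hypothesis $(M)^2 < \frac{g+1}{2}$ we get $(x-1)\frac{g+1}{2} < \frac{g+1}{2}$, forcing $x \leq 1$.

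The case $x=0$ is easy: the intersection form of $\Omega_g$ immediately gives $(M)^2 = -2\sum z_i^2$, and the hypothesis $(M)^2 \geq 0$ forces $z_i = 0$ for all $i$. Thus $M = yE$, and the normalization $(M\cdot L) - (M)^2 = \frac{g+1}{2}$ becomes $y\cdot\frac{g+1}{2} = \frac{g+1}{2}$, so $y = 1$ and $M = E$.

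The main work is to rule out $x=1$. In that case $(M \cdot L-E) = (M)^2$, and the strict inequality above gives $(M)^2 > 0$. I would then show that $L-M$ is also effective so as to apply Lemma \ref{lem-xyzw}. Compute
\begin{equation*}
(L-M)^2 \;=\; (2g-2) - 2(M\cdot L) + (M)^2 \;=\; g-3-(M)^2 \;>\; \tfrac{g-7}{2} \;\geq\; 2
\end{equation*}
for $g \geq 11$, and $(L-M \cdot L) = \frac{3g-5}{2} - (M)^2 > g-3 > 0$. On the other hand $(M-L \cdot L) < 0$, so $h^0(M-L) = 0$; Riemann--Roch on the K3 surface then yields $h^0(L-M) \geq 2 + \tfrac{1}{2}(L-M)^2 > 0$, so $L-M$ is effective. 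Lemma \ref{lem-xyzw} now applies (since $(M)^2 > 0$, $L-M$ effective, $(L-M)^2 > 0$) and forces $M = cE$, contradicting $x=1$.

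The only potential obstacle is the very ampleness input for $L-E$ and the positivity of $(L-M)^2$, both of which require $g \geq 11$ (and in fact were already established in Lemmas \ref{little-lem} and \ref{little-lem2}); everything else is a short intersection-theoretic verification. Combining the two cases gives $M = E$, as desired.
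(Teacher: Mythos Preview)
Your proof is correct and follows essentially the same strategy as the paper: verify the hypotheses of Lemma \ref{lem-xyzw} (namely $(L-M)^2>0$ and $L-M$ effective) and conclude $M=cE$. The paper does this in one stroke without the case split on $x$: it computes $(L-M)^2>0$ and $(M-L\cdot M)<0$ directly from the three hypotheses, deduces $L-M$ is effective, applies Lemma \ref{lem-xyzw} to get $M=cE$, and then reads off $c=1$ from $\tfrac{g+1}{2}=(M\cdot L)-(M)^2$. Your preliminary bound $x\in\{0,1\}$ and the separate treatment of $x=0$ are not needed, since they simply reprove part of Lemma \ref{lem-xyzw}; on the other hand, your justification that $L-M$ is effective via $(M-L\cdot L)<0$ together with ampleness of $L$ is arguably more transparent than the paper's use of $(M-L\cdot M)<0$.
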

\begin{proof}
The above inequalities give $(M-L \cdot M) < 0$ and $$(M-L)^2=(M)^2+2g-2-2((M)^2+\frac{g+1}{2}) > 0$$ as $(M)^2 < \frac{g+1}{2} \leq g-3$ for $g \geq 7$. Thus $M=cE$ for $c >0$ by Lemma \ref{lem-xyzw}. The equation $\frac{g+1}{2}= (M \cdot L)-(M)^2$ gives $c=1$.
\end{proof}

\begin{lem} \label{unique-special}
Let $g \geq 11$ be odd and let $Y_{\Omega_g}$ and $\{L,E, \Gamma_1, \ldots, \Gamma_8 \}$ be as in Lemma \ref{little-lem2} and let $C \in |L|$ be a smooth curve. Suppose $A \in \text{Pic}(C)$ has $h^0(A)=2$ and $deg(A)= \frac{g+1}{2}$. Then $A \simeq E_{|_C}$.
\end{lem}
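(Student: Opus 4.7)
The plan is to use the Lazarsfeld--Mukai construction to trade the pencil $A$ for a rank-two vector bundle on $Y_{\Omega_g}$, pin down its destabilising line sub-bundle via Lemma \ref{gon-omega-2}, and then extract $A$ from the restriction to $C$.

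First, $A$ computes the Clifford index $\nu(C)=(g+1)/2-2$ of Lemma \ref{gon-omega}, since $\nu(A) = \deg A - 2(h^0(A) - 1) = (g+1)/2 - 2$. Moreover $A$ is basepoint-free: a basepoint $p$ would give $A(-p)$ with Clifford invariant $(g-1)/2 - 2 < \nu(C)$, contradicting the minimality. I would then form the Lazarsfeld--Mukai bundle $\mathcal{E} := \mathcal{F}^\vee$, where $\mathcal{F} := \ker(H^0(C,A) \otimes \mathcal{O}_{Y_{\Omega_g}} \twoheadrightarrow \iota_* A)$ and $\iota \colon C \hookrightarrow Y_{\Omega_g}$. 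Then $\mathcal{E}$ is locally free of rank $2$ with $c_1(\mathcal{E}) = L$ and $c_2(\mathcal{E}) = (g+1)/2$; dualising the defining sequence and restricting to $C$ exhibits a surjection $\mathcal{E}_{|_C} \twoheadrightarrow K_C \otimes A^\vee$ onto a line bundle of degree $(3g-5)/2$.

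Second, I would show $\mathcal{E}$ is non-simple and apply Donagi--Morrison. The Mukai vector is $v(\mathcal{E}) = (2, L, (g+1)/2)$, so $\langle v(\mathcal{E}), v(\mathcal{E}) \rangle = L^2 - 2 \cdot 2 \cdot (g+1)/2 = -4$, whence $\chi(\mathcal{E}nd\,\mathcal{E}) = 4$; simplicity would force $h^1(\mathcal{E}nd\,\mathcal{E}) = 1 + 1 - 4 = -2$ (by Serre duality on the K3), absurd. Non-simplicity, together with the fact that $A$ computes the Clifford index, then yields via the Donagi--Morrison structure theorem a destabilisation $0 \to M \to \mathcal{E} \to L - M \to 0$ with $M \in \text{Pic}(Y_{\Omega_g})$ satisfying $h^0(M) \geq 2$ (whence $M^2 \geq 0$ by Riemann--Roch), $2 M^2 < M \cdot L$, and the crucial equality $M \cdot L - M^2 = \nu(C) + 2 = (g+1)/2$. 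These are exactly the hypotheses of Lemma \ref{gon-omega-2}, which identifies $M \equiv E$ in $\Omega_g$.

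Third, I would split the destabilisation. The extension group $\text{Ext}^1(\mathcal{O}(L-E), \mathcal{O}(E)) = H^1(Y_{\Omega_g}, \mathcal{O}(2E - L))$ vanishes: $L - 2E$ is non-effective by Lemma \ref{little-lem2}, $2E - L$ is non-effective since $(2E - L) \cdot L = 3 - g < 0$, and $\chi(2E - L) = 2 + (2E - L)^2 / 2 = 0$, forcing $h^1(2E - L) = 0$. Hence $\mathcal{E} \simeq \mathcal{O}(E) \oplus \mathcal{O}(L - E)$, and restricting gives $\mathcal{E}_{|_C} \simeq E_{|_C} \oplus (L - E)_{|_C}$. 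Any line-bundle quotient $Q$ of this split bundle of degree $(3g-5)/2$ must receive an isomorphism from the summand $(L - E)_{|_C}$: the summand $E_{|_C}$ has strictly smaller degree $(g+1)/2 < (3g-5)/2$ and cannot surject onto a line bundle of greater degree, so the induced map $(L-E)_{|_C} \to Q$ must be nonzero and therefore an isomorphism. Applied to the Lazarsfeld--Mukai quotient $\mathcal{E}_{|_C} \twoheadrightarrow K_C \otimes A^\vee$, this forces $K_C \otimes A^\vee \simeq (L - E)_{|_C}$, equivalently $A \simeq E_{|_C}$.

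The genuine difficulty lies in the Donagi--Morrison step of the second paragraph: securing the destabilisation with $M^2 \geq 0$ and, crucially, the strict equality $M \cdot L - M^2 = (g+1)/2$ rather than only $\leq (g+1)/2$. Both refinements rely on $A$ \emph{computing} (not merely contributing to) $\nu(C)$ together with the condition $h^0(M) \geq 2$, which in particular excludes the $(-2)$-curve destabilisers $\Gamma_i$ and $\widetilde{\Gamma}_i$. Once these are in place, the splitting in the third paragraph and the quotient analysis on $C$ are direct consequences of the lattice calculations already made in Lemma \ref{little-lem2}.
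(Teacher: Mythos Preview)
Your argument and the paper's share the same backbone: both invoke Donagi--Morrison to produce a divisor class on $Y_{\Omega_g}$ satisfying the numerical hypotheses of Lemma~\ref{gon-omega-2}, and both then conclude that this class is $E$. The difference lies entirely in how $A \simeq E_{|_C}$ is extracted afterwards. The paper uses the part of \cite[Thm.~4.2]{donagi-morrison} asserting that some reduced $Z_0 \in |A|$ sits inside $D \cap C$; once $D \in |E|$, the length count $\ell(Z_0) = (g+1)/2 = (E \cdot C)$ forces $Z_0 = D \cap C$ scheme-theoretically, so $A \simeq \mathcal{O}_C(Z_0) \simeq E_{|_C}$ in one line. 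Your route instead computes $h^1(2E-L)=0$ via Lemma~\ref{little-lem2} to split $\mathcal{E} \simeq \mathcal{O}(E) \oplus \mathcal{O}(L-E)$, then analyses the degree-$(3g-5)/2$ quotient on $C$. This is correct and has the appeal of making the bundle structure completely explicit, but it is longer and uses Lemma~\ref{little-lem2} a second time where the paper does not need it.

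One small correction: the parenthetical ``$h^0(M) \geq 2$ (whence $M^2 \geq 0$ by Riemann--Roch)'' is not a valid implication on a K3 surface --- for instance $M = E + \Gamma_i$ has $h^0 = 2$ but $M^2 = -2$. What actually secures $M^2 \geq 0$ (along with the equality $M \cdot L - M^2 = (g+1)/2$ and hence the vanishing of the residual scheme $Z$ in the quotient) is the finer content of Donagi--Morrison's analysis for a pencil \emph{computing} the Clifford index, exactly as you acknowledge in your final paragraph; the paper draws on the same source (\cite[Lem.~4.6]{donagi-morrison} and the surrounding discussion) for the identical purpose. So the gap is in the justification, not the conclusion.
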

\begin{proof}
Let  $A \in \text{Pic}(C)$ with $h^0(A)=2$ and $deg(A)= \frac{g+1}{2}$. In particular, $\nu(A)=\nu(C)$. Then by \cite[Thm.\ 4.2]{donagi-morrison}, there is some effective divisor $D \seq Y_{\Omega_g} $ such that that $(D \cdot L) \leq g-1$, $L-D$ is effective, $\dim |D| \geq 1$, $D|_C$ achieves the Clifford index on $C$ and such that there exists a reduced divisor $Z_0 \in |A|$ of length $\frac{g+1}{2}$ with $Z_0 \seq D \cap C$. Further from the proof of \cite[Lemma 4.6]{donagi-morrison} and the paragraph proceeding it, we see $\frac{g+1}{2}= (D \cdot L)-(D)^2,$ using that $\nu(D|_C)= \frac{g+1}{2}-2$ by hypothesis. It then follows that all the conditions of Lemma \ref{gon-omega-2} are satisfied, so that $D \in |E|$. As $(E \cdot C)=\frac{g+1}{2}$ and $Z_0$ is reduced of length $\frac{g+1}{2}$,  we have $Z_0 = D \cap C$. Thus $Z_0 \in |E_{|_C}|$ which forces $A \simeq E_{|_C}$.
\end{proof}

The following lemma may be extracted from work of Mukai, cf.\ \cite[\S3]{mukai-duality}, \cite[Lem.\ 2]{mukai-genus11} (although it never appears in this precise form). Despite the simple proof, this lemma is actually rather fundamental, since it gives an example of  a K3 surface $S$ and a divisor $D \seq S$ such that the K3 surface $S$ can be reconstructed merely from the curve $D$ together with a special divisor $A \in \text{Pic}(D)$.
\begin{lem}[Mukai] \label{muk-lem}
Let $g \geq 11$ be odd and let $S$ be a K3 surface with $L, M \in \text{Pic}(S)$ such that $L^2=2g-2$, $M^2=0$ and $(L \cdot M)=\frac{g+1}{2}$. Let $D \in |L|$ be smooth and set $A:=M_{|_D}$. Further, let $A^{\dagger}=(L-M)_{|_D}$ be the adjoint of $A$ and set $k:=h^0(A^{\dagger})-1$. Assume $L-M$ is very ample and that there is no integral curve $F \seq S$ with $(F)^2=0$ and $(F \cdot L-M)=3$. Assume further that $M$ is represented by an integral curve and that $L-2M$ is not effective. Then $A^{\dagger}$ is very ample, and $S$ is the quadric hull of the embedding $\phi_{A^{\dagger}}: D \hookrightarrow \proj^k$ induced by $A^{\dagger}$. In particular, the linear system $|L|$ contains only finitely many curves $D'$ such that we have an isomorphism $\iota :  D' \simeq D$ with $\iota^*A \simeq M_{|_{D'}}$.
\end{lem}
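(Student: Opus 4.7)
The plan is to use the very ample embedding $S \hookrightarrow \proj^k$ given by $L-M$ and to show that the restriction of this embedding to $D$ already determines $S$ as its quadric hull, so that the abstract pair $(D, A)$ essentially recovers $(S, L)$ up to finitely many choices.

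First I would compute $h^0(S, L-M)$: by Riemann--Roch, $\chi(L-M) = (L-M)^2/2 + 2 = (g+1)/2$, and combined with Kodaira vanishing for $h^1$ (using ampleness of $L-M$) and $h^2(L-M) = h^0(M-L) = 0$ (using non-effectivity of $M-L$, since $L$ is nef and $((M-L)\cdot L) < 0$), this gives $h^0(L-M) = (g+1)/2$. The restriction sequence
\[ 0 \to \mathcal{O}_S(-M) \to \mathcal{O}_S(L-M) \to \mathcal{O}_D(A^{\dagger}) \to 0, \]
together with the vanishings $h^0(-M) = 0$ and $h^1(-M) = h^1(M) = 0$ (the latter because $M$ is an elliptic pencil class with $h^0(M)=2$ and $h^2(M)=0$), yields $h^0(A^{\dagger}) = (g+1)/2$, so $k = (g-1)/2$. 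Very ampleness of $A^{\dagger}$ is then immediate: the embedding $S \hookrightarrow \proj^k$ by $L-M$ restricts to an embedding of $D$ via the complete linear system $|A^{\dagger}|$.

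Next I would establish that the quadric hull $\text{QH}(D) \subseteq \proj^k$ of $\phi_{A^{\dagger}}(D)$ coincides with $S$. For the inclusion $S \subseteq \text{QH}(D)$, I argue that every quadric through $D$ must contain $S$: from the ideal sheaf sequence
\[ 0 \to I_S(2) \to I_D(2) \to I_{D/S}(2) \to 0 \]
on $\proj^k$, together with the isomorphism $I_{D/S}(2) \cong \mathcal{O}_S(L-2M)$, the non-effectivity of $L-2M$ yields $H^0(I_D(2)) = H^0(I_S(2))$. For the reverse inclusion $\text{QH}(D) \subseteq S$ I would invoke Saint-Donat's theorem: the homogeneous ideal of $S \subset \proj^k$ is generated by quadrics, because $(L-M)^2 = g-3 \geq 8$ for $g \geq 11$, and both the hyperelliptic and trigonal exceptions are ruled out by the hypotheses (very ampleness of $L-M$ for the former, and non-existence of an integral $F$ with $F^2=0$ and $(F \cdot L-M)=3$ for the latter).

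For the finiteness statement, given any $D' \in |L|$ with $\iota: D' \simeq D$ satisfying $\iota^* A \simeq M_{|_{D'}}$, the adjoint formula gives $\iota^* A^{\dagger} \simeq (L-M)_{|_{D'}}$, so the abstract embedding $\phi_{A^{\dagger}} \circ \iota: D' \hookrightarrow \proj^k$ differs from the natural inclusion $D' \hookrightarrow S \hookrightarrow \proj^k$ by a projective linear automorphism $g \in \text{PGL}(k+1)$ with $g(D') = D$. Since the quadric hull is projectively invariant, $g(S) = \text{QH}(g(D')) = \text{QH}(D) = S$, exhibiting $D'$ as the image of $D$ under an automorphism of $S$ preserving $L-M$ (and in turn $L$, since $D, D' \in |L|$). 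The group $\text{Aut}(S, L-M)$ is a closed algebraic subgroup of $\text{PGL}(k+1)$ whose Lie algebra $H^0(S, T_S)$ vanishes, hence is finite; the valid $D'$ therefore form a finite orbit of $D$ under this group. The main obstacle will be invoking Saint-Donat's theorem correctly, since one must verify that the specific hypotheses of the lemma cleanly rule out every exceptional case in which the ideal of $S$ might fail to be generated by quadrics.
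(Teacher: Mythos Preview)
Your proposal is correct and follows essentially the same approach as the paper: restrict the very ample system $|L-M|$ to $D$ via the vanishing $h^1(-M)=0$, use non-effectivity of $L-2M$ to show every quadric through $D$ contains $S$, and invoke Saint-Donat to conclude that $S$ is cut out by quadrics. Your ideal-sheaf formulation of the quadric step and your explicit justification that $\text{Aut}(S,L-M)$ is finite (via $H^0(T_S)=0$) are minor elaborations of the paper's argument, not genuine differences.
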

 \begin{proof}
 As there exists an integral curve in $|M|$, we know $h^1(M)=0$. From the exact sequence
 $$ 0 \to M^* \to L \otimes M^* \to  A^{\dagger} \to 0 $$
 we see that restriction induces an isomorphism $H^0(S,L\otimes M^*) \simeq H^0(D,A^{\dagger})$, which implies that $A^{\dagger}$ is very ample (as $L \otimes M^*$ is) and that $\phi_{A^{\dagger}}: D \hookrightarrow \proj^k$ is the composition of $D \hookrightarrow S$ and $\phi_{L-M}: S \hookrightarrow \proj^k$. Now let $Q \seq \proj^k$ be a quadric containing $D$. Then we claim that $Q$ contains $S$. Indeed, otherwise there is some effective divisor $T \seq S$ such that $D+T=Q \cap S$ so that $D+T \in |2L-2M|$. But then $T \in |L-2M|$, contradicting that $L-2M$ is not effective by assumption. Thus the quadric hull of $D$ coincides with the quadric hull of $S \hookrightarrow \proj^k$. But the quadric hull of $S$ is simply $S$ from \cite[Thm.\ 7.2]{donat}. The assumption $g \geq 11$ is needed to ensure $(L-M)^2 \geq 8$ which is required in Saint-Donat's theorem (note that since $L-M$ is very ample by assumption it is not hyperelliptic in the sense of \cite[\S 4.1, Thm.\ 5.2]{donat}). Finally, by the argument above, we have that, for any curve $D' \in |L|$ such that there exists an isomorphism $\iota :  D' \simeq D$ with $\iota^*A \simeq M_{|_{D'}}$, there is an automorphism $f: S \to S$ induced by an automorphism of $\proj^k$ such that $f(D')=D$. Since $S$ is a K3 surface, we deduce that there are only finitely many such $D'$.
 \end{proof}
 
 Putting all the pieces together, we have the following proposition:
\begin{prop} \label{good-dim-prop}
Let $g \geq 11$ be odd. Let $T$ be a smooth and irreducible scheme with base point $0 \in T$. Let $\mathcal{Z} \to T$ be a flat family of K3 surfaces together with an embedding $\phi: \mathcal{Z} \hookrightarrow T \times \proj^g$. Assume $\phi_0: \mathcal{Z}_0 \to \proj^{g}$ is the embedding $ Y_{\Omega_g} \to \proj^g$ induced by $|L|$, where $ Y_{\Omega_g}$ is as in Lemma \ref{little-lem2}. Let $H \seq \proj^g$ be a hyperplane and $C:= H \cap Y_{\Omega_g}$ a smooth curve. Assume that for all $t \in T$, we have a smoothly varying family of hyperplanes $H_t$ with $H_0=H$ and $\mathcal{Z}_t \cap H_t \simeq C$. Then for $t \in T$ general there is an isomorphism $ \psi_t: \mathcal{Z}_t \simeq Y_{\Omega_g}$ and an automorphism $f_t \in Aut(\proj^g)$ such that $f_t \circ \phi_t = \phi_0 \circ \psi_t$.  Furthermore, $f_t$ sends $H_t$ to $H$.
\end{prop}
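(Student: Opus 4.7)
The plan is to apply Mukai's Lemma \ref{muk-lem} fibrewise in order to reconstruct each $\mathcal{Z}_t$ from its hyperplane section, and thus identify it projectively with $Y_{\Omega_g}$. After replacing $T$ by an \'etale cover I may assume the relative hyperplane section $\mathcal{C} := \mathcal{Z} \cap H \to T$ is isomorphic to the constant family $C \times T$, giving for each $t$ a fixed isomorphism $\iota_t \colon C_t := \mathcal{Z}_t \cap H_t \xrightarrow{\sim} C$ with $\iota_0 = \mathrm{id}$. By adjunction on the K3 surface $L_t|_{C_t} = K_{C_t}$, so $C_t$ is canonically embedded in $H_t \simeq \proj^{g-1}$. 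By Lemmas \ref{gon-omega} and \ref{unique-special}, the bundle $A := E|_C$ is the unique $g^1_{(g+1)/2}$ on $C$; it corresponds under $\iota_t$ to a line bundle $A_t \in \text{Pic}(C_t)$, which is likewise the unique pencil of its degree on $C_t$.

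The first step is to produce an elliptic class on $\mathcal{Z}_t$ restricting to $A_t$. Applying \cite[Thm.\ 4.2]{donagi-morrison} to the polarised K3 $(\mathcal{Z}_t, L_t)$ with the Clifford-index-computing line bundle $A_t$ on $C_t$ yields an effective divisor class $M_t \in \text{Pic}(\mathcal{Z}_t)$ with $L_t - M_t$ effective, $h^0(M_t) \geq 2$, and, by the argument in the proof of \cite[Lem.\ 4.6]{donagi-morrison} combined with the Clifford bound, $(M_t)^2 = 0$ and $(M_t \cdot L_t) = (g+1)/2$. Uniqueness of $A_t$ as a $g^1_{(g+1)/2}$ on $C_t$ then forces $M_t|_{C_t} \simeq A_t$, so that $|M_t|$ is an elliptic pencil on $\mathcal{Z}_t$ whose restriction to $C_t$ matches that of $|E|$ on $Y_{\Omega_g}$ across $\iota_t$.

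Next I apply Mukai's Lemma \ref{muk-lem} to $(\mathcal{Z}_t, L_t, M_t)$. Its four hypotheses---very ampleness of $L_t - M_t$, integrality of $M_t$, non-effectivity of $L_t - 2M_t$, and absence of an integral curve $F$ on $\mathcal{Z}_t$ with $F^2 = 0$ and $(F \cdot L_t - M_t) = 3$---are open numerical conditions on $\text{Pic}(\mathcal{Z}_t)$ which hold at $t=0$ by Lemmas \ref{little-lem} and \ref{little-lem2}, and therefore continue to hold on a dense open $U \seq T$. For $t \in U$, Mukai's lemma identifies $\mathcal{Z}_t$ with the quadric hull in $\proj^{(g-1)/2}$ of $\phi_{(L_t-M_t)|_{C_t}}(C_t)$, and equally $Y_{\Omega_g}$ with the quadric hull of $\phi_{(L-E)|_C}(C)$. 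Under $\iota_t$ the two embeddings $C \hookrightarrow \proj^{(g-1)/2}$ are projectively equivalent, since they are induced by isomorphic line bundles on isomorphic curves, so an automorphism of $\proj^{(g-1)/2}$ carries one quadric hull to the other, yielding an isomorphism $\psi_t \colon \mathcal{Z}_t \xrightarrow{\sim} Y_{\Omega_g}$ restricting to $\iota_t$ on $C_t$ and satisfying $\psi_t^*(L-E) = L_t - M_t$.

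Finally I lift $\psi_t$ to an automorphism of $\proj^g$. Restriction to $C_t$ gives $\psi_t^*(E)|_{C_t} = \iota_t^*(E|_C) = A_t = M_t|_{C_t}$, and the restriction map $\text{Pic}(\mathcal{Z}_t) \to \text{Pic}(C_t)$ is injective on a K3 surface (a class $N$ restricting to $\mathcal{O}_{C_t}$ would have both $N$ and $N^\vee$ effective, hence trivial), so $\psi_t^*E = M_t$ and therefore $\psi_t^*L = L_t$. The resulting linear isomorphism $H^0(Y_{\Omega_g}, L) \to H^0(\mathcal{Z}_t, L_t)$, combined with the identifications determined by the embeddings $\phi_0$ and $\phi_t$, produces an automorphism $f_t \in \mathrm{Aut}(\proj^g)$ satisfying $f_t \circ \phi_t = \phi_0 \circ \psi_t$. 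The pullback $\psi_t^*$ sends the section of $L$ defining $H$ to a section of $L_t$ vanishing on $\psi_t^{-1}(C) = C_t$, which must be a scalar multiple of the section defining $H_t$; whence $f_t(H_t) = H$. The main obstacle is the Donagi--Morrison step coupled with the verification of Mukai's hypotheses in step three: for general $t$ the Picard lattice $\text{Pic}(\mathcal{Z}_t)$ is only known to contain the sublattice $\langle L_t, M_t\rangle$, and one must rule out additional $(-2)$ or $(0)$-classes that could violate the conditions, which is precisely where passing to the dense open $U \seq T$ is essential.
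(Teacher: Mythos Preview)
Your overall strategy matches the paper's, but there is a genuine gap in how you control $M_t$ and verify Mukai's hypotheses. The missing ingredient is the specialisation of Picard groups: for $t\in T$ general there is a primitive embedding $j\colon \text{Pic}(\mathcal{Z}_t)\hookrightarrow \text{Pic}(Y_{\Omega_g})\simeq\Omega_g$ sending $L_t$ to $L$. The paper uses this immediately, and it is essential at two points where your argument breaks down.

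First, in your step one you assert that Donagi--Morrison together with the Clifford bound gives $(M_t)^2=0$ and $(M_t\cdot L_t)=(g+1)/2$. What Donagi--Morrison (plus \cite[Lem.\ 4.6]{donagi-morrison}) actually yields is only $(M_t\cdot L_t)-(M_t)^2=(g+1)/2$ with $0\le (M_t)^2$ and $(M_t\cdot L_t)\le g-1$; these do not force $(M_t)^2=0$ on an arbitrary K3. In the paper this is exactly where Lemma~\ref{gon-omega-2} enters: one passes $M_t$ through $j$ into $\Omega_g$ and uses the explicit lattice computation there to conclude $j(M_t)=E$, hence $(M_t)^2=0$. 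Without the embedding $j$ you cannot make this deduction.

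Second, your step three claims that the four hypotheses of Lemma~\ref{muk-lem} are ``open numerical conditions'' that propagate from $t=0$. But you have constructed $M_t$ fibrewise via Donagi--Morrison, not as the restriction of a $T$-flat line bundle, so openness in $t$ is not available; and the condition ``no integral $F$ with $F^2=0$ and $(F\cdot L_t-M_t)=3$'' concerns the \emph{entire} lattice $\text{Pic}(\mathcal{Z}_t)$, which you explicitly say you do not control. The paper handles both points at once: knowing $j(M_t)=E$, the hypotheses for $(\mathcal{Z}_t,L_t,M_t)$ follow from the corresponding statements for $(Y_{\Omega_g},L,E)$ established in Lemmas~\ref{little-lem} and~\ref{little-lem2}, since any offending class on $\mathcal{Z}_t$ would map via $j$ to an offending class in $\Omega_g$. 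Once you insert the specialisation $j$ and invoke Lemma~\ref{gon-omega-2}, your argument and the paper's coincide.
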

\begin{proof}
For $t \in T$ general we have a primitive embedding $j: \text{Pic}(\mathcal{Z}_t) \hookrightarrow \text{Pic}(Y_{\Omega_g})$ with $j(\mathcal{O}_{\mathcal{Z}_t}(1))=L$. By hypothesis, each fibre $\mathcal{Z}_t$ contains a curve isomorphic to $C$ 
which has Clifford index $\frac{g+1}{2}-2$ by Lemma \ref{gon-omega}. By \cite[Lem.\ 8.3]{knut} there is some smooth and irreducible divisor $M' \in \text{Pic}(\mathcal{Z}_t)$ satisfying
$0 \leq (M')^2 < \frac{g+1}{2}$, $2 (M')^2 < (M' \cdot \mathcal{O}_{\mathcal{Z}_t}(1))$ and $\frac{g+1}{2}= (M' \cdot \mathcal{O}_{\mathcal{Z}_t}(1))-(M')^2$. As in the proof of Lemma \ref{gon-omega}, these conditions ensure that $\mathcal{O}_{\mathcal{Z}_t}(1)-M'$ is effective. Moreover, $M:=j(M')$ satisfies the conditions of Lemma \ref{gon-omega-2}, so that $M=E$. Thus $(M')^2=0$ and hence $M'$ is a smooth elliptic curve. By lemmas \ref{little-lem} and \ref{little-lem2}, $L-E$ is very ample,  $(L-E)^2 \geq 8$, $L-2E$ is not effective and there exists no effective $F$ with $(F)^2=0$ and $(F \cdot L-E)=3$; thus the same holds for $\mathcal{O}_{\mathcal{Z}_t}(1)-M' \in \text{Pic}(\mathcal{Z}_t)$ for $t$ close to $0$. Thus by Lemma \ref{muk-lem} and since $(\mathcal{O}_{\mathcal{Z}_t}(1)-M')_{|_C} \simeq K_C(E^*_{|_C})$  by Lemma \ref{unique-special} we have $ \mathcal{Z}_t \simeq Y_{\Omega_g}$. Furthermore, the embedding  $C \hookrightarrow Z_t$ is identified under this isomorphism with the natural embedding of $C$ into the quadric hull of the embedding $\phi_{K_C(E^*_{|_C})}: C \hookrightarrow \proj^k$. In particular, the embedding is independent of $t$ (up to the action of projective transformations on $\proj^g$). Thus for the general $t$ there is some $f_t \in Aut(\proj^g)=Aut(|L|)$ with $f_t(Z_t)=Y_{\Omega_g}\seq \proj^g$ such that $f_t$ sends $H_t$ to $H$.
\end{proof}
As a direct consequence we have the following corollary.
\begin{cor} \label{bij-cor}
Let $g \geq 11$ be odd and let $Y_{\Omega_g}$ and $\{L,E, \Gamma_1, \ldots, \Gamma_8 \}$ be as in Lemma \ref{little-lem2} and let $C \in |L|$ be a smooth curve. Then the fibre of the morphism $\eta: \mathcal{T}_g^0 \to \mathcal{M}_{g}$ over $[C]$ is zero-dimensional at $[(i:C \hookrightarrow Y_{\Omega_g}, L)]$.
\end{cor}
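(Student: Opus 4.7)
The plan is to argue by contradiction: suppose some component $I$ of the fibre $\eta^{-1}([C])$ containing the point $p = [(i: C \hookrightarrow Y_{\Omega_g}, L)]$ has positive dimension. Choose a smooth irreducible affine curve $T$ with distinguished point $0$ and a non-constant morphism $T \to I$ (of coarse moduli spaces) sending $0$ to $p$. Pulling back an étale atlas of $\mathcal{W}^0_g$ covering this map, and shrinking $T$ as needed, I can assume we have a projective family $\mathcal{X} \to T$ of K3 surfaces carrying a $T$-ample line bundle $\mathcal{L}$ with $(\mathcal{X}_0,\mathcal{L}_0) = (Y_{\Omega_g}, L)$, together with a family of stable maps $f: \mathcal{C} \to \mathcal{X}$ over $T$ with $\mathcal{C}$ smooth and every $f_t$ an unramified embedding satisfying $f_{t,*}\mathcal{C}_t \sim \mathcal{L}_t$.

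Since $T$ lies in the fibre over $[C] \in \mathcal{M}_g$, each $\mathcal{C}_t$ is isomorphic to $C$, so after a further étale base change I may trivialise $\mathcal{C} \simeq T \times C$ and view $f_t$ as an embedding $C \hookrightarrow \mathcal{X}_t$ into the linear system $|\mathcal{L}_t|$. By Lemma \ref{little-lem} and semicontinuity, $\mathcal{L}_t$ is very ample for all $t$ near $0$; hence $|\mathcal{L}|$ gives an embedding $\phi: \mathcal{X} \hookrightarrow T \times \proj^g$ matching the embedding $\phi_0: Y_{\Omega_g} \hookrightarrow \proj^g$ at $0$. Because each $f_t(C) \subset \mathcal{X}_t$ is cut out by a section of $\mathcal{L}_t$, and because $f_*\mathcal{O}_\mathcal{C}(\mathcal{L})$ is a locally free rank-$g$ quotient of $H^0(\mathcal{X}_t,\mathcal{L}_t)$ varying smoothly in $t$, the hyperplanes $H_t \subset \proj^g$ with $\mathcal{X}_t \cap H_t = f_t(C)$ form a smoothly varying family over $T$ with $H_0 = H$, where $H$ is the hyperplane cutting out $C \subset Y_{\Omega_g}$.

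I can now apply Proposition \ref{good-dim-prop}: for general $t \in T$ there is an isomorphism $\psi_t: \mathcal{X}_t \xrightarrow{\sim} Y_{\Omega_g}$ and $g_t \in \mathrm{Aut}(\proj^g)$ with $g_t \circ \phi_t = \phi_0 \circ \psi_t$ and $g_t(H_t) = H$. Intersecting with $\mathcal{X}_t$, this forces $\psi_t$ to carry $f_t(C) \subset \mathcal{X}_t$ onto $C \subset Y_{\Omega_g}$ as subschemes. Consequently, the composition $\psi_t \circ f_t: C \to Y_{\Omega_g}$ is an embedding with image $C$ and is therefore $i$ composed with an automorphism of $C$. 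Hence the stacky point $[(f_t: C \hookrightarrow \mathcal{X}_t, \mathcal{L}_t)] \in \mathcal{T}_g^0$ equals $[(i: C \hookrightarrow Y_{\Omega_g}, L)]$ for all $t$ in a dense open subset of $T$, so the morphism $T \to \mathcal{T}_g^0$ is constant on the coarse moduli space. This contradicts the choice of $T$ as non-constant, yielding the required zero-dimensionality.

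The only real obstacle is the bookkeeping: making sure that positive-dimensionality of the fibre at $p$ in the Deligne--Mumford stack sense actually produces a genuine non-constant one-parameter family (handled by passing to an étale atlas and using that $\mathcal{T}_g^0$ is separated with finite automorphism groups), and verifying that the hyperplanes $H_t$ vary holomorphically so that Proposition \ref{good-dim-prop} applies. Once the hypotheses of Proposition \ref{good-dim-prop} are set up, the rest is essentially immediate from the Mukai-style rigidity encoded in that proposition and in Lemmas \ref{unique-special} and \ref{muk-lem}.
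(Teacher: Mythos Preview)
Your proof is correct and takes essentially the same approach as the paper: the paper states the corollary as a direct consequence of Proposition \ref{good-dim-prop} without spelling out any details, and your argument is precisely the natural unpacking of that implication---set up a one-parameter family in the fibre, verify the hypotheses of Proposition \ref{good-dim-prop}, and deduce that the family is constant in moduli. The bookkeeping you flag (extracting a non-constant curve from a positive-dimensional stacky fibre, and checking the hyperplanes $H_t$ vary holomorphically) is handled adequately.
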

\begin{remark}
In fact, $\eta : \mathcal{T}_g^0 \to \mathcal{M}_{g}$ is birational onto its image for $g \geq 11$, $g \neq 12$, \cite{ciliberto-lopez-miranda}. Also see \cite[\S 10]{mukai-nonabelian}, \cite{sernesi-brill} for an approach in the odd genus case which more closely resembles the above.
\end{remark}

\chapter{The moduli map} \label{finny}
In this chapter we prove generic finiteness of the moduli map 
$$\eta: \mathcal{T}^m_{g,k} \to \mathcal{M}_{p(g,k)-m}$$ on one component for all but finitely many values of $p(g,k)-m $. The idea is to start with
the case of a smooth, primitive, genus $11$ curve $C$ lying on a K3 surface, i.e.\ the case $k=1, g=11, m=0$. The result has already been established for these values in Corollary \ref{bij-cor}. We then build up from this result by adding rational components to the curve $C$ and partially normalising at some of the nodes.
Using that rational curves on a K3 surface are rigid, this ultimately gives generic finiteness on one component, for all but finitely many values of $p(g,k)-m $.
\section{The moduli map}
We start by reducing to the case where the number of nodes is large.
\begin{lem} \label{red-largen}
Let $n \leq m$ with $p(g,k)-m>0$. Assume that there is a component $I_m \seq \mathcal{T}^m_{g,k}$ such that 
$${\eta}_{|_{I_m}}: I_m \to \mathcal{M}_{p(g,k)-m}$$ is generically finite and assume that for the general $[f: B \to X] \in I_m$, $B$ is non-trigonal. Then there exists a component $I_n \seq \mathcal{T}^n_{g,k}$
such that  $${\eta}_{|_{I_n}}: I_n \to \mathcal{M}_{p(g,k)-n}$$ is generically finite. For the general $[f': C \to Y] \in I_n$, $C$ is non-trigonal.
\end{lem}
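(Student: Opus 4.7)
We may assume $n < m$, since $n=m$ is trivial. The plan is to construct $I_n$ by smoothing $m-n$ of the nodes of the image $f(B) \seq X$ for a general $[f: B \to X] \in I_m$, and then to deduce generic finiteness of $\eta_{|_{I_n}}$ from that of $\eta_{|_{I_m}}$ via a local analysis near the boundary of $\overline{\mathcal{M}}_{p(g,k)-n}$.

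First I would construct $I_n$ as follows. Since $B$ is non-trigonal, the Dedieu--Sernesi result recalled in the introduction implies that the general $[f: B \to X] \in I_m$ lies in $\mathcal{V}^m_{g,k}$, so $f(B)$ is nodal with exactly $m$ nodes $p_1, \ldots, p_m$. Choose $m - n$ of these nodes and consider the versal analytic deformation space of the pair $(X,f(B))$: each node contributes an independent smoothing direction, so partially smoothing at the chosen nodes while preserving the remaining $n$ yields an $(m-n)$-parameter family of nodal curves with $n$ nodes, each of which is the image of an unramified stable map from its smooth normalisation. By the deformation theory developed in Sections \ref{complexDef}--\ref{unramSect}, this produces a family in $\mathcal{T}^n_{g,k}$ of dimension $\dim I_m + (m-n) = 19 + p(g,k) - n$, matching the dimension of any component of $\mathcal{T}^n_{g,k}$ by Proposition \ref{ishi-little}. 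Let $I_n$ be an irreducible component of $\mathcal{T}^n_{g,k}$ containing such a smoothing family.

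For the generic finiteness of $\eta_{|_{I_n}}$, take $[f': C \to Y] \in I_n$ general and observe that, as the smoothing parameters tend to zero, $C$ specialises in $\overline{\mathcal{M}}_{p(g,k)-n}$ to the nodal curve $B^\natural$ obtained from $B$ by glueing the $m-n$ pairs of points of the normalisation lying over the smoothed nodes. Locally at $[B^\natural]$, the moduli space $\overline{\mathcal{M}}_{p(g,k)-n}$ decomposes as a boundary stratum $\Delta$ (parametrising deformations of $B^\natural$ as a nodal curve) together with $m-n$ transverse smoothing directions, one per node. The generic finiteness of $\eta_{|_{I_m}}$ implies that the refined moduli map $[f: B \to X] \mapsto [(B, T_m)]$ to the stack of curves with $2m$ unordered marked points is also generically finite, so the image of $I_m$ inside $\Delta$ (obtained by glueing the chosen $m-n$ pairs) has dimension $\dim I_m = 19 + p(g,k) - m$. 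Combining with the $m-n$ independent transverse smoothing directions shows that $\eta(I_n) \seq \mathcal{M}_{p(g,k)-n}$ has dimension at least $19 + p(g,k)-n = \dim I_n$, so $\eta_{|_{I_n}}$ is generically finite.

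For non-trigonality, suppose for contradiction that the general $[f': C \to Y] \in I_n$ has $C$ trigonal. By properness of the relative compactified Picard scheme, the specialisation $C \rightsquigarrow B^\natural$ produces a torsion-free rank one sheaf $\mathcal{F}$ of degree $3$ on $B^\natural$ with $h^0(\mathcal{F}) \geq 2$; pulling back to the normalisation $B$ and dividing by torsion yields a line bundle of degree at most three on $B$ with at least two sections, forcing $B$ to have gonality at most three, against the hypothesis. The main obstacle is making the local decomposition of $\overline{\mathcal{M}}_{p(g,k)-n}$ near $[B^\natural]$ precise enough that the $m-n$ smoothing parameters produce $m-n$ genuinely independent directions in $\mathcal{M}_{p(g,k)-n}$; this is the standard plumbing description of the versal deformation of a nodal curve, but requires careful bookkeeping of the discrete choice of pairing and of automorphisms.
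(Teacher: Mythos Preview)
Your approach is plausible and would probably work, but it is considerably more involved than the paper's argument, and you yourself flag the genuine difficulty: verifying that the $m-n$ node-smoothing parameters really contribute independent directions in the image $\eta(I_n)\subseteq\mathcal{M}_{p(g,k)-n}$. The paper sidesteps this entirely by a simple reformulation.

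Instead of smoothing nodes and analysing the boundary geometry of $\overline{\mathcal{M}}_{p(g,k)-n}$, the paper works directly with a \emph{nodal} domain. Given a general $[f:B\to X]\in I_m$ with $f(B)$ nodal, let $B'$ be the partial normalisation of $f(B)$ at only $n$ of the $m$ nodes; then $B'$ is an integral nodal curve of arithmetic genus $p(g,k)-n$ with $m-n$ nodes, its normalisation is $\mu:B\to B'$, and $f$ factors as $f=\tilde g\circ\mu$ for an unramified stable map $\tilde g:B'\to X$ representing a point of $\mathcal{W}^n_{g,k}$. Now one checks directly that the fibre of $\eta:\mathcal{W}^n_{g,k}\to\overline{\mathcal{M}}_{p(g,k)-n}$ over $[B']$ is zero-dimensional at $[\tilde g]$: any one-parameter family through $[\tilde g]$ with constant stabilisation $[B']$ would, after composing with the fixed normalisation $\mu$, give a one-parameter family through $[f]$ with constant $[B]$, contradicting generic finiteness of $\eta_{|_{I_m}}$. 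Since $\tilde g$ is unramified and birational onto its image with integral nodal domain, Lemma~\ref{defo-nodal-lemma} places $[\tilde g]$ in the closure of $\mathcal{T}^n_{g,k}$, and zero-dimensionality of the fibre at this boundary point forces generic finiteness on any component $I_n$ through it.

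The upshot: your approach computes $\dim\eta(I_n)$ from below via boundary stratification, which requires the transversality bookkeeping you mention; the paper instead exhibits a single point where the fibre is visibly zero-dimensional, which is enough and needs no such analysis. For non-trigonality the paper's argument is also slightly simpler: since the normalisation of $B'$ is $B$, which is non-trigonal, $[B']$ lies outside the image of $\overline{\mathcal{H}}_{3,p(g,k)-n}\to\overline{\mathcal{M}}_{p(g,k)-n}$, hence the general smooth curve in $I_n$ is non-trigonal---no need to invoke limits of $g^1_3$'s as torsion-free sheaves.
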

\begin{proof}
Let $[(f: B \to X,L)] \in I_m$ be a general point. By \cite[Thm.\ B]{ded-sern}, $f(B)$ is a nodal curve (with precisely $m$ nodes). Thus there is an integral, nodal curve $B'$ with $m-n$ nodes with normalization $\mu: B \to B'$ such that $f$ factors through a morphism $\tilde{g}: B' \to X$. The fibre of $\eta$ over the stable curve $[B'] \in \overline {\mathcal{M}}_{p(g,k)-n}$ is zero-dimensional near $[(\tilde{g},L)]$ as otherwise we could compose with $\mu$ to produce a one dimensional family near $[(f: B \to X,L)]$ in the fibre of $\eta$ over $[B]$. Since $[(\tilde{g},L)]$ lies in the closure of $\mathcal{T}^n_{g,k}$ by Lemma \ref{defo-nodal-lemma}, we see that there exists a component $I_n \seq \mathcal{T}^n_{g,k}$
such that  $${\eta}_{|_{I_n}}: I_n \to \mathcal{M}_{p(g,k)-n}$$ is generically finite. As the normalization of $B'$ is non-trigonal by assumption, $B'$ must lie outside the image of 
$$\overline{\mathcal{H}}_{3, p(g,k)-n} \to \overline{\mathcal{M}}_{p(g,k)-n}, $$
in the notation of \cite[Thm.\ 3.150]{harris-morrison}. Thus, for the general $[f': C \to Y] \in I_n$, $C$ is non-trigonal.
\end{proof}

The following proposition gives a criterion for generic finiteness of the morphism
$$\eta: \mathcal{T}^n_{g,k} \to \mathcal{M}_{p(g,k)-n}$$ on one component $I$. The idea is to assume we have an unramified map $f_0: C_0 \to X$ representing a point in $\mathcal{W}^{n'}_{g'}$ such that finiteness of $\eta$ holds near the point representing $f_0$. If we then build a new morphism $f: C_0 \cup \proj^1 \to X$ by finding a rational curve $f(\proj^1)$ in $X$, and if we further assume $C_0 \cup \proj^1$ is a stable curve (i.e.\ $\proj^1$ intersects $C_0$ in at least three points), then by rigidity of rational curves in $X$, one sees easily that finiteness of $\eta$ holds near the point representing $f$, where $n,k$ are such that $f$ represents a point in $\mathcal{W}^n_{g,k}$.
\begin{prop} \label{finiteness-criterion}
Assume there exists a polarised K3 surface $(X,L)$ and an unramified stable map $f: B \to X$ with
$f_*(B) \in |kL|$. Assume:
\begin{enumerate}
\item $[(f: B \to X, L)]$ lies in the closure of $\mathcal{T}^{n}_{g,k}$.
\item There exists an integral, nodal component $C \seq B$ of arithmetic genus $p' \geq 2$ such that $f_{|_C}$ is an unramified morphism $j: C \to X$, birational onto its image. Let $k'$ be an integer such that there is a big and nef line bundle $L'$ on $X$ with $j_*(C) \in |k'L'|$, and let $g'=\frac{1}{2} (L')^2+1$.
\item The fibre of the morphism $\eta: \mathcal{W}^{n'}_{g',k'} \to \overline{\mathcal{M}}_{p'}$ over $[C]$ is zero-dimensional near $[(j: C \to X, L')]$, where $n'=p(j(C))-p'$.
\item If $\tilde{C}$ is the normalization of $C$, then $\tilde{C}$ is non-trigonal.
\item If $D \seq B$ is a component, $D \neq C$, then $D$ has geometric genus zero.
\item The stabilization morphism $B \to \hat{B}$ is an isomorphism in an open subset $U \seq B$ such that $C \seq U$.
\end{enumerate} 
Then there exists a component $I \seq \mathcal{T}^n_{g,k}$ such that ${\eta}_{|_I}$ is generically finite and $[(f: B \to X, L)]$ lies in the closure of $I \seq \mathcal{W}^{n}_{g,k}$. For the general $[f': B'\to X'] \in I$, $B'$ is non-trigonal.
\end{prop}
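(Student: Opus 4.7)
By hypothesis (1), $[(f,L)]$ lies in the closure $\overline{I}$ of some irreducible component $I\seq\mathcal{T}^n_{g,k}$. Since $\eta|_{\overline{I}}\colon \overline{I}\to\overline{\mathcal{M}}_{p(g,k)-n}$ is proper, upper semi-continuity of fibre dimension reduces generic finiteness of $\eta|_I$ to showing that the fibre of $\eta$ over $[\hat B]$ has dimension zero at $[(f,L)]$. Accordingly, my plan is to take an arbitrary smooth one-dimensional germ $(T,0)$ together with a family of stable maps $(f_t\colon B_t\to X_t)_{t\in T}$ through $(f,L)$ at $0$, satisfying $[\hat B_t]=[\hat B]$ for every $t$, and argue that this family is constant up to finite ambiguity.

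The driver of the argument will be the sub-curve $C\seq B$. By (6) the stabilization $B\to \hat B$ is an isomorphism in a neighbourhood of $C$, so $C$ sits inside $\hat B$ as a genuine sub-curve; since $\hat B_t \equiv \hat B$ is constant, $C$ extends to a \emph{constant} sub-family $C_t\seq B_t$. Restricting yields a family of stable maps $f_t|_{C_t}\colon C_t\to X_t$ in $\mathcal{W}^{n'}_{g',k'}$ whose image under the analogous moduli map is the constant point $[C]\in\overline{\mathcal{M}}_{p'}$. Hypothesis (3) forces this restricted family to take only finitely many values, so after a finite base change in $T$ the pair $(X_t,L'_t)$ and the map $j_t:=f_t|_{C_t}$ are independent of $t$; in particular the K3 target $X_t\equiv X$ is constant along $T$.

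It remains to control the other components of $B$. By (5), each non-$C$ component of $B$ maps under $f$ to a rational curve on the now-constant K3 surface $X$; and the moduli of rational stable maps to a fixed K3 of bounded degree has virtual dimension zero, and hence is a discrete (in particular countable) subset of $\mathcal{W}^n_{g,k}$. Condition (6) forces the attaching points of the rational bubbles to $C$ to correspond to fixed nodes of $\hat B$, while constancy of $\hat B$ pins down the combinatorial type of the bubbling away from $C$. Combining these three pieces, the image of $T$ in $\mathcal{W}^n_{g,k}$ is a constructible subset contained in a countable set, hence finite, hence a single point by connectedness of $T$. This yields the required zero-dimensionality of the fibre, and therefore generic finiteness of $\eta|_I$.

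For the final assertion on non-trigonality, I would check that $[\hat B]\notin \overline{\mathcal{H}}_{3,p(g,k)-n}$: any admissible degree-three cover of $\hat B$ must restrict to a non-constant morphism of degree at most three on $C$ (it cannot collapse $C$ to a point, as $p'\geq 2$), which pulls back to a $g^1_{\leq 3}$ on $\tilde C$, contradicting (4). Since the trigonal locus is closed in $\overline{\mathcal{M}}_{p(g,k)-n}$ and $[\hat B]\in\overline{\eta(I)}$, a dense open subset of $\eta(I)$ lies outside it, giving non-trigonality of the general $B'\in I$. The step I expect to be the main obstacle is the rational-bubble analysis of the third paragraph: combining the rigidity from (3) applied to $C$, the discreteness of rational stable maps to a fixed K3, and the combinatorial constraints coming from a constant stabilization requires a careful three-way bookkeeping, with condition (6) being the glue that makes these rigidities compatible.
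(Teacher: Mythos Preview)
Your approach is essentially the same as the paper's: reduce to showing the fibre of $\eta$ over $[\hat B]$ is zero-dimensional at $[(f,L)]$, take a one-parameter family with constant stabilisation, use (6) to lift $C$ to a constant component of $B_t$, apply (3) to freeze $(X,j)$, then invoke rigidity of the rational components to conclude, and finish non-trigonality via $\overline{\mathcal{H}}_{3,p(g,k)-n}$.

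One point to tighten: your justification ``the moduli of rational stable maps to a fixed K3 $\ldots$ has virtual dimension zero, and hence is a discrete $\ldots$ subset'' is not valid as stated---virtual dimension zero does not imply actual dimension zero (multiple covers already give positive-dimensional components). What actually gives rigidity here is that the family consists of \emph{unramified} maps: for an unramified $\phi\colon\mathbb{P}^1\to X$ to a K3 surface one has $N_\phi\simeq\omega_{\mathbb{P}^1}=\mathcal{O}(-2)$, so $h^0(N_\phi)=0$ and the map is infinitesimally rigid. The paper uses exactly this (phrased as ``rational curves on a K3 surface are rigid and $\tilde g_s$ is unramified''). A second small point: to lift $C\subset\hat B$ back into $B_t$ you need the stabilisation $B_t\to\hat B$ to be an isomorphism near $C$ for $t$ near $0$, not just at $t=0$; the paper gets this from an openness argument (the non-isomorphism locus in $\hat B$ is zero-dimensional in each fibre and avoids $C$ at $t=0$, hence nearby). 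With these two fixes your sketch goes through and matches the paper's argument.
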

\begin{proof}
We have a morphism $\eta: \mathcal{W}^{n}_{g,k} \to \overline{\mathcal{M}}_{p(g,k)-n}$. By assumption $1$, it suffices to show $\eta^{-1}([\hat{B}])$ is zero-dimensional near $[(f: B \to X, L)]$. Let $S$ be a smooth, irreducible, one-dimensional scheme with base point $0$, and suppose we have a commutative diagram
 \[
\xymatrix{
\mathcal{B} \ar[r]^{\tilde{g}}  \ar[rd]_{\pi_1} & \mathcal{X} \ar[d]^{\pi_2} \\
&S
}
\] 
with $\tilde{g}$ proper, $\pi_1$, $\pi_2$ flat and with $\tilde{g}_s: \mathcal{B}_s \to \mathcal{X}_s$ an unramified stable map to a K3 surface for all $s$, with $\tilde{g}_0=f$. Further assume $\hat{\mathcal{B}_s} \simeq \hat{B}$. For any $s \in S$, we have open subsets $U_s \seq \mathcal{B}_s$, $V_s \seq \hat{B}$ with the stabilization map inducing isomorphisms $U_s \simeq V_s$ and such that $\hat{B} \setminus V_s$ has zero-dimensional support. By assumption $6$, $C \seq V_0$, and thus for $s$ near $0$, $C \seq V_s \simeq U_s$. Thus, after performing a finite base change about $0 \in S$, there exists an irreducible component $\mathcal{C} \seq \mathcal{B}$, such that $\mathcal{C}_s \simeq C$, for all $s$ near $0$. We have a commutative diagram
 \[
\xymatrix{
\mathcal{C} \ar[r]^{h}  \ar[rd]_{{\pi_1}_{|_{\mathcal{C}}}} & \mathcal{X} \ar[d]^{{\pi_2}} \\
&S
}
\]
where ${\pi_1}_{|_{\mathcal{C}}}$ is flat and $h:=\tilde{g}_{|_{\mathcal{C}}}$. Since $h_0=j$, assumption $3$ gives $\mathcal{X}_s \simeq X$ and $h_s=j$ for all $s$. In particular, $\tilde{g}_s$ is a one-dimensional family of unramified morphisms into a \emph{fixed} K3 surface. From assumption $5$, the fact that rational curves on a K3 surface are rigid and since $\tilde{g}_s$ is unramified, we conclude that $\mathcal{B}_s \simeq B$ and $\tilde{g}_s: B \to X$ is independent of $s$. Thus  $\eta^{-1}(\hat{B})$ is zero-dimensional near $[(f: B \to X, L)]$. Hence there exists a component $I \seq \mathcal{T}^n_{g,k}$ such that ${\eta}_{|_I}$ is generically finite and $[(f: B \to X, L)]$ lies in the closure of $I \seq \mathcal{W}^{n}_{g,k}$. Since $\tilde{C}$ is non-trigonal, the stabilization $\hat{B}$ must lie outside the image of 
$$\overline{\mathcal{H}}_{3, p(g,k)-n} \to \overline{\mathcal{M}}_{p(g,k)-n}.$$ Thus, for the general $[f': B'\to X'] \in I$, $B'$ is non-trigonal.
\end{proof}

We will apply the above criterion to prove generic finiteness of $\eta$ on one component, for various bounds on $p(g,k)-n$. We first consider the case $k=1$. To begin, we will need an easy lemma. Let $p >h \geq 8$ be integers, and let $l,m$ be nonnegative integers with
$$p-h= \left \lfloor \frac{h+1}{2} \right \rfloor l+m $$
and $0 \leq m < \left \lfloor \frac{h+1}{2} \right \rfloor $. Define:
\begin{align*}
s_1:= \begin{cases}
   p-h-1, & \text{if $m=0$ or $m=\left \lfloor \frac{h+1}{2} \right \rfloor -1$}\\
    p-h+1, & \text{otherwise}.
  \end{cases}
\end{align*} 
Let $P_{p,h}$ be the rank three lattice generated by elements $\{M, R_1, R_2 \}$ and with intersection form given with respect to this ordered basis by:
\[ \left( \begin{array}{ccc}
2h-2 & s_1 & 3 \\
s_1 & -2 & 0 \\
3 & 0 & -2 \end{array} \right).\]
The lattice $P_{p,h}$ is even of signature $(1,2)$.
\begin{lem} \label{onenodal-lem-a}
Let $p>h \geq 8$. There exists a K3 surface $S_{p,h}$ with $\text{Pic}(S_{p,h}) \simeq P_{p,h}$ as above such that the classes $M, R_1, R_2 $ are each represented by integral curves and with $M$ very ample. If $h $ is odd and at least $11$, then for $D \in |M|$ general the fibre of $\eta: \mathcal{T}_h^0 \to \mathcal{M}_{h}$ is zero-dimensional at $[(i:D \hookrightarrow S_{p,h}, M)]$ and $D$ is non-trigonal. Furthermore, any divisor of the form $-xM+yR_1+zR_2 $ for integers $x,y,z$ with $x>0$ is not effective.
\end{lem}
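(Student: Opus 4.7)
The plan is to prove the three assertions in order.

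First, the lattice $P_{p,h}$ is even and a direct determinant computation shows signature $(1,2)$, so by Nikulin's theorem and surjectivity of the period map there exists a K3 surface $S_{p,h}$ with $\text{Pic}(S_{p,h})\simeq P_{p,h}$ (cf.\ \cite[Cor.\ 14.3.1]{huy-lec-k3}). After Picard--Lefschetz reflections I may assume $M$ is nef; since a case-by-case check shows $s_1\geq 2$ in every case of the definition, both $R_i$ have positive pairing with $M$ and are effective by Riemann--Roch. For ampleness, a putative $M$-orthogonal $(-2)$-class $R=aM+bR_1+cR_2$ has, after eliminating $bs_1+3c=-a(2h-2)$ in $R^2=-2$, the equation $a^2(h-1)+b^2+c^2=1$, whose only integer solutions for $h\geq 8$ are $(0,\pm 1,0)$ and $(0,0,\pm 1)$, which violate $(M\cdot R)=0$. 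The analogous substitution handles any elliptic $F$ with $0<(M\cdot F)\leq 2$, yielding $b^2+c^2=a(M\cdot F)-a^2(h-1)<0$ for all $a\neq 0$, so Knutsen's criterion \cite[Thm.\ 1.1]{knut} gives that $M$ is very ample. Integrality of $R_1,R_2$ for a suitable $S_{p,h}$ in the moduli is then established by an argument in the style of Lemma \ref{lem-aaa}: a proper integral component $A$ of $R_i$ would be a smooth rational curve with $(A\cdot R_i)<0$, and an enumeration of solutions $A=aM+bR_1+cR_2$ to $A^2=-2$ subject to $(A\cdot M)\geq 0$, $(A\cdot R_i)<0$, and $R_i-A$ effective quickly narrows down the possibilities.

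For the second assertion, assume $h\geq 11$ is odd. The key step is to present $Y_{\Omega_h}$ as a specialisation within the moduli of $P_{p,h}$-polarised K3 surfaces. Choose parameters $d_i$ of $\Omega_h$ so that some $d_j=3$ (requiring $\lfloor(h+1)/2\rfloor>3$, which holds for $h\geq 8$) and, according to the definition of $s_1$, so that some distinct $d_i$ and an integer $a\geq 0$ satisfy $s_1=d_i+a\lfloor(h+1)/2\rfloor$. Then the assignment
\begin{align*}
M\mapsto L,\qquad R_1\mapsto \Gamma_i+aE,\qquad R_2\mapsto \Gamma_j
\end{align*}
defines an intersection-preserving lattice embedding $P_{p,h}\hookrightarrow \Omega_h$, which is primitive because the $3\times 3$ minor of the embedding matrix on the columns indexed by $L,\Gamma_i,\Gamma_j$ equals $\pm 1$. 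Consequently $Y_{\Omega_h}$ is a specialisation of a generic $P_{p,h}$-polarised K3 surface, with curves in $|M|$ on $S_{p,h}$ specialising to curves in $|L|$ on $Y_{\Omega_h}$. By Corollary \ref{bij-cor} the fibre of $\eta$ is zero-dimensional at $[C\hookrightarrow Y_{\Omega_h},L]$ for general $C\in|L|$, and by Lemma \ref{gon-omega} any such $C$ has Clifford index $(h-3)/2\geq 4$, hence is non-trigonal. Upper semicontinuity of fibre dimension of $\eta$ and openness of non-trigonality in $\mathcal{M}_h$ then transport both properties to the generic pair $(D\hookrightarrow S_{p,h},M)$.

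For the third assertion, suppose $F=-xM+yR_1+zR_2$ with $x>0$ is effective. Since $R_1$ is integral and $(F\cdot R_1)=-xs_1-2y<0$ whenever $y>-xs_1/2$, in that range $R_1$ is a component of $F$ and $F-R_1$ remains effective; iterating with $R_1$ and, independently, with $R_2$ yields in finitely many steps an effective divisor $F'=-xM+y'R_1+z'R_2$ with $y'\leq -xs_1/2$ and $z'\leq -3x/2$. Since $-x\neq 0$ the class $F'$ is nonzero in $\text{Pic}(S_{p,h})$, yet
\begin{align*}
(F'\cdot M)=-x(2h-2)+y's_1+3z'\leq -x\bigl(2h-2+\tfrac{s_1^{2}}{2}+\tfrac{9}{2}\bigr)<0,
\end{align*}
contradicting that $M$ is ample. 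The main technical obstacle I anticipate is the integrality of $R_1$ and $R_2$ in the first part, since Lemma \ref{lem-aaa}'s approach used the auxiliary elliptic class $E$, which $P_{p,h}$ lacks; the workaround is a direct enumeration in the rank-three lattice. The specialisation argument in the second part is by contrast the conceptually cleanest step, reducing the generic-finiteness question immediately to Corollary \ref{bij-cor}.
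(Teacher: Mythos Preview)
Your proof is correct, and for the second assertion it matches the paper exactly: both construct the same primitive embedding $P_{p,h}\hookrightarrow\Omega_h$ (sending $M\mapsto L$, $R_1\mapsto aE+\Gamma_i$, $R_2\mapsto\Gamma_j$) and appeal to Corollary \ref{bij-cor} and Lemma \ref{gon-omega} via semicontinuity. The difference is that the paper reuses this specialisation for everything else as well: very ampleness of $M$ follows from that of $L$ on $Y_{\Omega_h}$ (Lemma \ref{little-lem}), integrality of $R_2$ from that of $\Gamma_j$, and non-effectivity of $-xM+yR_1+zR_2$ from the single observation that its image has negative intersection with the elliptic class $E$. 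Only $R_1$ (whose image $aE+\Gamma_i$ is reducible on $Y_{\Omega_h}$) requires a direct lattice argument, which the paper carries out \emph{after} establishing non-effectivity, using the latter to force the $M$-coefficient of any component to vanish. Your route---Knutsen's criterion directly for very ampleness, and the peeling-off argument for non-effectivity---is more self-contained, but mind the logical order: your third-assertion proof presupposes that $R_1,R_2$ are integral, so the ``direct enumeration'' you defer must be done first and cannot borrow non-effectivity. It does go through (for a component $D=aM+bR_1+cR_2$ of $R_i$ with $(D)^2=-2$ and $(D\cdot R_i)<0$, the constraints $0\le(M\cdot D)\le(M\cdot R_i)$ yield $b^2+c^2=1+a(M\cdot D)-a^2(h-1)$, and a two-line inequality using $h\ge 8$ forces $a=0$), so this is a matter of completeness rather than a genuine gap. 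Since you must build the specialisation anyway, the paper's approach is more economical; yours has the merit of showing that the remaining claims are intrinsic to the rank-three lattice.
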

\begin{proof}
Consider the K3 surface $Y_{\Omega_h}$ from Lemma \ref{lem-aaa}. We choose $d_1:=(L \cdot \Gamma_1)$ to be $m+1$ if $0 < m <  \left \lfloor \frac{h+1}{2} \right \rfloor  -1$ and $d_1=\left \lfloor \frac{h+1}{2} \right \rfloor  -1$ if $m=0$ and $d_1=\left \lfloor \frac{h+1}{2} \right \rfloor  -2$ if $m=\left \lfloor \frac{h+1}{2} \right \rfloor -1$. Further set $d_2=3$ and let all other $d_i$ be arbitrary integers in the range $1 \leq d_i < \left \lfloor \frac{h+1}{2} \right \rfloor$. We define a primitive embedding
$j: P_{p,h} \hookrightarrow \Omega_h$ as follows. If $m=0$ (so that $l \geq 1$ as $p>h$), we define the embedding via
$M \mapsto  L$, $R_1 \mapsto (l-1)E+\Gamma_1$, $R_2 \mapsto \Gamma_2$. If $m \neq 0$ we define the embedding via
$M \mapsto  L$, $R_1 \mapsto lE+\Gamma_1$, $R_2 \mapsto \Gamma_2$.
Let $M_{P_{p,h}}$ be the moduli space of ample, $P_{p,h}$-polarised K3 surfaces, \cite[Def.\ p.1602]{dolgachev}. The moduli space $M_{P_{p,h}}$ has dimension $17=19-2$, \cite[Prop.\ 2.1]{dolgachev}.  Let $M_1$ be a component containing the ample, $P_{p,h}$-polarised K3 surface $[Y_{\Omega_h}]$. Then the general point of $M_1$ represents a projective K3 surface $S_{p,h}$ with $\text{Pic}(S_{p,h}) \simeq P_{p,h}$, \cite[Cor.\ 1.9, Cor.\ 2.9]{morrison-large}. Further,  $M, R_2 $ are each represented by integral curves and $M$ is very ample by Lemmas \ref{lem-aaa} and \ref{little-lem}. If $h $ is odd and at least $11$, the statement about the fibres of $\eta$ follows from Corollary \ref{bij-cor} by semicontinuity, and the non-trigonality follows from Lemma \ref{gon-omega}. 

We now claim that any divisor of the form $-xM+yR_1+zR_2$ for integers $x,y,z$ with $x>0$ is not effective. By degenerating $S_{p,h}$ to $Y_{\Omega_h}$ as above, it suffices to show that $-xL +y j(R_1)+z\Gamma_2 \in \text{Pic}(Y_{\Omega_h})$ is not effective. But this is clear, since the rank ten lattice $\text{Pic}(Y_{\Omega_h})$ contains the class of a smooth, integral, elliptic curve $E$ with $( E \cdot -xL +y j(R_1)+z\Gamma_2)=-x (E \cdot L) <0$.

The $-2$ class $R_1$ is effective since $(R_1)^2=-2$, $(R_1 \cdot M) >0$. It remains to show that $R_1$ is integral. Let $D$ be any integral component of $R_1$ with $(D)^2=-2$, $(D \cdot R_1)<0$. Writing $D= xM+yR_1+zR_2$ we see that $x=0$ (as $R_1-D$ is effective). Thus $(D \cdot R_1) <0$ implies $y>0$ and $-2=(D)^2=-2(y^2+z^2)$ forces $z=0$; thus $D=R_1$ is integral.
\end{proof}
\begin{thm} \label{finiteness}
Assume $g \geq 11$, $n \geq 0$, and let $0 \leq r(g) \leq 5$ be the unique integer such that 
$$g-11 = \left \lfloor \frac{g-11}{6} \right \rfloor 6 +r(g).$$
Define 
\begin{itemize}  
\item $l_g:=12$, if $r(g)=0$.
\item $l_g:=13$, if $1 \leq r(g) <5$.
\item $l_g:=15$ if $r(g)=5$.
\end{itemize}
Then there is a component $I \seq \mathcal{T}^n_{g}$ such that $${\eta}_{|_I}: I \to \mathcal{M}_{g-n}$$ is generically finite for
$g-n \geq l_g$. For the general $[f': B' \to X'] \in I$, $B'$ is non-trigonal.
\end{thm}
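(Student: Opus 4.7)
The proof combines three ingredients: the base case of Corollary \ref{bij-cor} (giving generic finiteness of $\eta$ for a smooth genus-$h$ curve on $Y_{\Omega_h}$ whenever $h$ is odd with $h \geq 11$), the inductive criterion of Proposition \ref{finiteness-criterion} (to bootstrap from a smooth component to a larger nodal domain), and Lemma \ref{red-largen} (to decrease the number of nodes once generic finiteness is known at a larger value). The overall plan is: for each $g$ in the allowed range, directly establish generic finiteness on a component of $\mathcal{T}^m_g$ at the maximal node count $m = g - l_g$, and then propagate down to all smaller $n$ via Lemma \ref{red-largen}, using that the general member of the output component is automatically non-trigonal.

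To produce a stable map $[f \colon B \to X]$ realising a pair $(g,m)$ with $g - m = l_g$, I would take $X$ to be a K3 surface of the form $Y_{\Omega_h}$ or one of its specialisations $S_{p,h}$ (Lemma \ref{onenodal-lem-a}), for a carefully chosen odd integer $h \geq 11$, and build $B$ as a connected nodal curve having a smooth genus-$h$ component $C$ (to which Corollary \ref{bij-cor} applies) together with auxiliary rational components. Two basic types of rational component are available: the rigid $(-2)$-curves $R_1, R_2$ of Lemma \ref{onenodal-lem-a}, and the pairs $\Gamma_i + \widetilde{\Gamma}_i$ obtained by degenerating the elliptic fibres of $Y_{\Omega_h} \to \mathbb{P}^1$ (at least six of which are of type $I_2$ by Lemma \ref{I2}). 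The choice of $h$ and the configuration of auxiliary components depend on the residue $r(g) \bmod 6$: since $(L \cdot E) = (h+1)/2$ on $Y_{\Omega_h}$, the genus and node count contributed by each elliptic-fibre addition are constrained modulo $(h+1)/2$, and the three cases $r(g) = 0$, $1 \leq r(g) \leq 4$, $r(g) = 5$ force the respective optimal choices $h \in \{13, 15, 17\}$, which in turn produce the three values of $l_g$.

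The hypotheses of Proposition \ref{finiteness-criterion} must then be verified for each constructed $[f \colon B \to X]$: (i) membership in the closure of $\mathcal{T}^n_g$ via Proposition \ref{prim-cor}, which uses the Picard-lattice computations of Lemmas \ref{lem-aaa}, \ref{little-lem2} and \ref{onenodal-lem-a} (non-existence of effective divisors of certain ``wrong'' classes, so that no forbidden splitting of $f_*B$ can occur); (ii) zero-dimensionality of the $\eta$-fibre at the distinguished smooth component $C$, which is exactly Corollary \ref{bij-cor} combined with the last part of Lemma \ref{onenodal-lem-a} that transports it from $Y_{\Omega_h}$ to $S_{p,h}$ by semicontinuity; (iii) non-trigonality of $C$, coming from the Clifford-index computation of Lemma \ref{gon-omega}; and (iv) stability of the auxiliary rational components in $B$, requiring that each meets the rest of $B$ in at least three points, which is a combinatorial condition on the chosen intersection numbers.

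The main obstacle is the case analysis. The construction must be tailored to each residue $r(g)$ and in particular to the small-genus boundary cases, where the straightforward attachment of a single $(-2)$-curve fails the combinatorial stability condition and one must instead attach a mixed configuration involving both a rigid $R_2$-type component and one or more degenerated elliptic fibres $\Gamma_i + \widetilde{\Gamma}_i$. The jump to $l_g = 15$ for $r(g) = 5$ reflects the obstruction that at those $g$ one cannot find an odd $h \geq 11$ together with a configuration of rational components realising $g - m \in \{13,14\}$ with all auxiliary components stably attached; one is forced instead to take $h = 17$ and accept the two-unit cost in $l_g$. Verifying that every such configuration can be realised on an actual K3 surface with the prescribed Picard lattice is a Torelli/Nikulin argument analogous to the proof of Lemma \ref{onenodal-lem-a}, which I expect to be the most technical part of the whole argument.
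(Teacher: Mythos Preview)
Your high-level architecture is correct and matches the paper: reduce to the maximal $n$ via Lemma~\ref{red-largen}, use Corollary~\ref{bij-cor} for the smooth component, and apply Proposition~\ref{finiteness-criterion} to a nodal curve built by attaching rational components. But the specific construction you sketch is wrong in a way that makes the argument fail.

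The paper always takes the smooth component $C$ to have genus $h=11$; it does not vary $h$. Your proposed choices $h \in \{13,15,17\}$ are in fact impossible in at least one case: for $r(g)=0$ you want $l_g=12$, so the nodal curve $B$ must have arithmetic genus $12$, and since attaching rational components can only increase arithmetic genus, its smooth component must have genus at most $12$. The only available odd $h\geq 11$ is $h=11$. The three values of $l_g$ do not come from varying $h$; they come from the combinatorics of the rational tail attached to a fixed genus-$11$ curve.

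Concretely, the paper works on $S_{g,11}$ (Lemma~\ref{onenodal-lem-a}) with a general $D\in|M|$. For $1\leq r(g)\leq 4$ one glues on $R_1$ alone and partially normalises all but three of the $s_1$ points of $D\cap R_1$, giving $p(B)=13$. For $r(g)\in\{0,5\}$ the intersection number $s_1$ is shifted by $2$, so one must also glue on $R_2$ (which meets $D$ in exactly three points, the minimum for stability) to get the self-intersection right; this pushes $p(B)$ to $15$. That already proves the theorem except for the sharper bound $l_g=12$ when $r(g)=0$, which requires a genuinely separate construction: on $Y_{\Omega_{11}}$ one takes $H=L+mE$ with $m=(g-11)/6$ and attaches to $C\in|L|$ a chain of $2m$ rational curves mapping alternately to the two components of an $I_2$ fibre (this is where Lemma~\ref{I2} is actually used), arranged so that the stabilisation of $B$ is $C$ together with a single nodal rational curve, yielding $p(B)=12$. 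Your proposal does not anticipate this two-step structure, and your explanation for the jump at $r(g)=5$ (an obstruction to realising $g-m\in\{13,14\}$) is not the actual mechanism.
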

\begin{proof}
From Lemma \ref{red-largen} it suffices to prove the result for the maximal value of $n$. Assume $g-n = l_g$ if $r(g) \neq 0$ and for $g-n = 15$ if $r(g)=0$. 
Set $p=g$, $h=11$ and consider the lattice $P_{g,11}$ and K3 surface $S_{g,11}$ from Lemma \ref{onenodal-lem-a}. Set $\epsilon=1$ if $r(g)=0$ or $r(g)=5$ and $\epsilon=0$ otherwise. Then $(M+R_1+\epsilon R_2)^2=2g-2$ and $M+R_1+\epsilon R_2$ is ample. Let $D \in |M|$ be general and consider the curve $D_1=D \cup R_1 \cup \epsilon R_2$ where all intersections are transversal. Choose any subset of $s_1-3$ distinct points of $D \cap R_1$ and let $f: B \to D_1$ be the partial normalization at the chosen points. Then $B$ has arithmetic genus $l_g$ if $r(g) \neq 0$ and genus $15$ if $r(g)=0$. Further, $f$ satisfies the conditions of Proposition \ref{finiteness-criterion}. Then there is a component $I \seq \mathcal{T}^n_{g}$ such that $${\eta}_{|_I}: I \to \mathcal{M}_{g-n}$$ is generically finite for
$g-n \geq l_g$ if $r(g) \neq 0$ and for $g-n \geq 15$ if $r(g)=0$. For the general $[f': B' \to X'] \in I$, $B'$ is non-trigonal.

We now wish to improve the bound in the case $r(g)=0$.
Recall from Section \ref{mukai} the lattice $\Omega_{11}$ with ordered basis $\{L, E, \Gamma_1, \ldots, \Gamma_8 \}$. Thus the general $C \in |L|$ is a smooth, genus $11$ curve and $(L \cdot E)=6$. Note that $\widetilde{\Gamma}_i \sim L-E$ is a class satisfying $(\widetilde{\Gamma}_i)^2=-2$, $(\widetilde{\Gamma}_i \cdot E)=0$, $(\widetilde{\Gamma}_i \cdot \Gamma_i)=2$. From Lemma \ref{lem-aaa}, $\widetilde{\Gamma}_i$ is represented by an integral class. Further $\widetilde{\Gamma}_i+\Gamma_i$ is an $I_2$ singular fibre of $|E|$ for any $i \geq 3$. We will denote by $x_i$ and $y_i$ the two nodes of $\widetilde{\Gamma}_i+\Gamma_i$.

Set $m:=\left \lfloor \frac{g-11}{6} \right \rfloor$ and assume $r(g)=0$. Consider the primitive, ample line bundle $H:=L+\left \lfloor \frac{g-11}{6} \right \rfloor E$, which satisfies $(H)^2=2g-2$. Let $C \in |L|$ be a general smooth curve which meets $\Gamma_1$ transversally. Let $B$ be the union of $C$ with $2m$ copies of $\proj^1$ as in the diagram below.
\\
$$
\setlength{\unitlength}{1cm}
\begin{picture}(8,5)
\put(1,0){\line(0,1){4}}
\put(0.5,1.5){\line(1,0){2}}
\put(2,1){\line(0,1){2}}
\put(1.5,2.5){\line(1,0){2}}
\put(3.7,2.5){$\ldots$}
\put(4.5,2){\line(0,1){2}}
\put(0.5,3.5){\line(1,0){5}}

\put(0,0.4){\line(1,0){7}}

\put(2,0){\mbox{$C$}}
\put(0.45,0.9){\tiny\mbox{$R_{1,1}$}}
\put(0.75,1.3){\tiny\mbox{$p$}}
\put(0.75,3.3){\tiny\mbox{$q$}}
\put(1.1,1.6){\tiny\mbox{$R_{1,2}$}}
\put(1.45,2.2){\tiny\mbox{$R_{2,1}$}}
\put(2.3,2.7){\tiny\mbox{$R_{2,2}$}}
\put(3.9,3.1){\tiny\mbox{$R_{m,1}$}}
\put(4.7,3.7){\tiny\mbox{$R_{m,2}$}}
\end{picture}
$$
\\
Thus $B=C \cup R_{1,1} \cup R_{1,2} \ldots \cup R_{m,1} \cup R_{m,2}$, where each $R_{i,j}$ is smooth and rational, all intersections are transversal and described as follows for $m \geq 2$: $R_{i,j} \cap C= \emptyset$ unless $(i,j)=(1,1)$, $R_{1,1}$ intersects $C$ in one point, and $R_{i,j}$ intersects $R_{k,l}$ in at most one point, with intersections occuring if and only if, after swapping $R_{i,j}$ and $R_{k,l}$ if necessary, we have ($i=k$ and $l=j+1$), ($k=i+1$ and $l \neq j$) or ($(i,j)=(1,1)$ and $(j,k)=(m,2)$). The arithmetic genus of $B$ is $12$. For $m=1$, $B=C \cup R_{1,1} \cup R_{1,2}$ where $C$ intersects $R_{1,1}$ transverally in one point and $R_{1,1} \cap C = \emptyset$ and $R_{1,1}$ intsersects $R_{1,2}$ transversally in two points.

There is then a unique unramified morphism $f: B \to Y_{\Omega_{11}}$ such that $f_{|_C}$ is a closed immersion, $f_{|_{R_{i,1}}}$ is a closed immersion with image $\Gamma_1$ and $f_{|_{R_{i,2}}}$ is a closed immersion with image $\widetilde{\Gamma_1}$, for $1 \leq i \leq m$ and where $(f(p),f(q))=(x_1,y_1)$. Thus $f_*{B} \in |H|$. Note that the stabilization $\hat{B}$ has two components, namely it is the union of $C$ with a rational curve with one node. Thus the claim holds in the $r=0$ case by Proposition \ref{finiteness-criterion}, together with Lemma \ref{stablem} and Corollary \ref{bij-cor}. 
\end{proof}
We now turn to the nonprimitive case.
\begin{lem} \label{easynontriglemma}
Let $\Delta$ be an smooth, one-dimensional algebraic variety over $\C$, with $0 \in \Delta$ a closed point. Let $\mathcal{C} \to \Delta$ be a flat family of nodal curves,
with general fibre integral and such that $$ \mathcal{C}_0 = B \cup \Gamma_1 \cup \ldots \cup \Gamma_k,$$
with $B$ smooth and non-trigonal, and $\Gamma_i$ smooth curves for $1 \ldots i \leq k$, with all intersections transversal. Then if $\tilde{\mathcal{C}}_t$ is the normalization of $\mathcal{C}_t$, $t \in \Delta$ general, then $\tilde{\mathcal{C}}_t$ is non-trigonal.
\end{lem}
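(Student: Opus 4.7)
My strategy is to identify $B$ as a component of the stable limit of the normalizations $\tilde{\mathcal{C}}_t$ as $t\to 0$, and then use the admissible cover description of the trigonal boundary to rule out trigonality of the general fiber.

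First I would take the normalization $\nu : \mathcal{C}^{\nu} \to \mathcal{C}$ of the total space. Since the general fiber of $\mathcal{C} \to \Delta$ is integral, $\mathcal{C}$ is an irreducible surface, so $\mathcal{C}^{\nu}$ is an irreducible normal surface, and the induced morphism $\mathcal{C}^{\nu} \to \Delta$ is flat (a Cohen--Macaulay surface over a smooth curve). Near each node of the general fiber $\mathcal{C}_t$, the surface $\mathcal{C}$ is locally of the form $\{xy=0\}\times\Delta$, which is non-normal, and normalization separates the two branches; hence $\mathcal{C}^{\nu}_t$ is the normalization $\tilde{\mathcal{C}}_t$ of $\mathcal{C}_t$ for $t$ general. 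The central fiber $\mathcal{C}^{\nu}_0$ is the partial normalization of $\mathcal{C}_0=B\cup\Gamma_1\cup\ldots\cup\Gamma_k$ at precisely those nodes which get smoothed in nearby fibers. In particular, the proper transform $\tilde{B}\simeq B$ (smoothness of $B$ forces the birational map $\tilde{B}\to B$ to be an isomorphism) appears as an irreducible component of $\mathcal{C}^{\nu}_0$. Moreover, $\mathcal{C}^{\nu}_0$ is connected by Zariski's connectedness theorem via Stein factorization, since $\mathcal{C}^{\nu}$ is normal and the general fibers of $\mathcal{C}^{\nu}\to\Delta$ are connected.

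Next I would apply semistable reduction to $\mathcal{C}^{\nu}\to\Delta$: after a finite base change $\Delta'\to\Delta$ and blowups resolving the rational double point singularities of $\mathcal{C}^{\nu}$, followed by contracting the unstable $\mathbb{P}^1$-components in the central fiber, I obtain a family $\mathcal{S}\to\Delta'$ of stable curves whose general fiber is $\tilde{\mathcal{C}}_t$. Since $B$ is smooth of genus $\geq 3$ (non-trigonality of $B$ rules out $\mathbb{P}^1$ and elliptic curves, both of which admit a $g^1_3$), $B$ is not contracted by stabilization and so remains an irreducible component of the stable central fiber $\mathcal{S}_0$.

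Finally, suppose for contradiction that $\tilde{\mathcal{C}}_t$ were trigonal for general $t$. Then $[\mathcal{S}_0]\in\overline{\mathcal{H}}_{3,g_t}\subset\overline{\mathcal{M}}_{g_t}$, and by the admissible cover description of this boundary (cf.\ \cite[p.~107ff]{harris-morrison} and \cite{harrissev}), $\mathcal{S}_0$ is the target of a degree $3$ admissible cover of a stable nodal rational curve. The restriction of this cover to the component $B$ is a finite map $B\to\mathbb{P}^1$ of degree at most $3$, which forces $B$ to be trigonal, contradicting the hypothesis. The main obstacle is precisely this last step, which requires a careful invocation of the admissible cover theory to confirm that the restriction to $B$ exhibits $B$ as a trigonal cover — a subtlety easily handled in the settings of interest, where ``non-trigonal'' is meant in the strong sense of lying outside the closure of the trigonal locus of $\overline{\mathcal{M}}_{g(B)}$.
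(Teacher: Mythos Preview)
Your proposal is correct and follows essentially the same route as the paper: normalize the total space, identify $B$ as a component of the central fiber of the resulting family, and conclude via the fact that a stable curve containing a smooth non-trigonal component lies outside the image of $\overline{\mathcal{H}}_{3,p}$. The only notable difference is that the paper avoids invoking semistable reduction by observing directly that $\tilde{\mathcal{C}}_0$ is already nodal and connected (the map $\mu_0$ factors through the total normalization of $\mathcal{C}_0$, and the components $B,\Gamma_i$ are smooth), so one may pass straight to the stabilization without any base change or resolution.
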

\begin{proof}
We go through the first steps of the usual stable reduction procedure, \cite[\S X.4]{arbarello-II}. Let $\mu : \tilde{\mathcal{C}} \to \mathcal{C}$ be the normalization of the integral surface $\mathcal{C}$. Then $\mu$ is finite and birational, and further is an isomorphism outside the preimage of singular points in the fibres of $\mathcal{C}$. Further, $\tilde{\mathcal{C}} \to \Delta$ is a flat-family of curves by \cite[Prop.\ 4.3.9]{liu}. As $\tilde{\mathcal{C}}$ has isolated singularities, $\tilde{\mathcal{C}}_t$ must be smooth for $t \in \Delta$ general, and must be the normalization of $\mathcal{C}_t$, as $\mu$ is finite and birational. Since $\mu_0: \tilde{\mathcal{C}}_0 \to \mathcal{C}_0$ is finite, and an isomorphism outside singular points of $\mathcal{C}_0$, the components of $\tilde{\mathcal{C}}_0$ are isomorphic to $B$, $\Gamma_i$. Further, $\mu_0$ factors through the total normalization of $\mathcal{C}_0$ (which equals the disjoint union of the components). This forces $\tilde{\mathcal{C}}_0$ to itself be nodal. Further $\tilde{\mathcal{C}}_0$ is connected, by taking a desingularization of the surface $\tilde{\mathcal{C}}$ and applying \cite[Thm.\ 8.3.16]{liu} followed by \cite[Cor.\ 8.3.6]{liu}. Since $B$ is smooth and non-trigonal, the stabilization of $\tilde{\mathcal{C}}_0$ lies outside the image of $\overline{\mathcal{H}}_{3,p}$, where $p$ is the arithmetic genus of $\tilde{\mathcal{C}}_0$. It follows that $\tilde{\mathcal{C}}_t$ is non-trigonal.
\end{proof}

\begin{lem} \label{hjz}
Let $Z_a$ be a general K3 surface with Picard lattice $\Lambda_a$ generated by elements $D,F,\Gamma$ giving the intersection matrix
\[ \left( \begin{array}{ccc}
2a-2 & 6 & 1 \\
6 & 0 & 0 \\
1 & 0 & -2 \end{array} \right)\]
Assume that $14 \leq a \leq 19$.
Then we may pick the basis such that $D$, $F$, $\Gamma$ are all effective and represented by integral, smooth curves with $D$ ample. Further, there is an unramified stable map $f_a: B_a \to Z_a$, birational onto its image, with $B_a$ an integral, nodal curve of arithmetic genus $13$ for $14 \leq a \leq 15$ and $15$ for $16 \leq a \leq 19$, such that $f_{a*}(B_a) \in |D|$ and $f_a$ satisfies the conditions of Proposition \ref{finiteness-criterion}. Further, there is an integral, rational nodal curve $F_0 \in |F|$ which meets $f_a(B_a)$ transversally, and $\Gamma$ meets $f_a(B_a)$ transversally in one point.
\end{lem}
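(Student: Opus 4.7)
My plan is to prove the lemma in three stages: establish the existence of $Z_a$ and the representability of $D, F, \Gamma$ by integral curves; construct the stable map $f_a$ via degeneration to a K3 surface with richer Picard lattice; and verify the conditions of Proposition \ref{finiteness-criterion}. For the first stage, I would apply Nikulin's theorem and Torelli exactly as in Lemma \ref{lem-aaa}: the even rank-three signature-$(1,2)$ lattice $\Lambda_a$ embeds primitively into the K3 lattice, so the generic $\Lambda_a$-polarised K3 has $\text{Pic}(Z_a) \simeq \Lambda_a$. After Picard--Lefschetz reflections and sign changes, a direct enumeration of $(-2)$-classes $R = xD + yF + z\Gamma$ eliminates the possibility of any $R$ with $(D \cdot R) \leq 0$ in the range $a \geq 14$, giving ampleness of $D$; analogous arguments show $F$ is a nef $(0)$-class (hence $|F|$ is a base-point-free elliptic pencil) and $\Gamma$ is an integral smooth rational $(-2)$-curve. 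The integral rational nodal $F_0 \in |F|$ is one of the twenty-four $I_1$ singular fibres of a generic such elliptic pencil, and the transversality of $F_0$ and of $\Gamma$ with $f_a(B_a)$ is a Zariski-open condition, granted by the genericity of $Z_a$.

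For the second stage, I would degenerate $Z_a$ to a K3 surface $Y$ whose Picard lattice contains both $\Lambda_a$ and a Mukai sublattice $\Omega_h$ (with $h = 13$ when $14 \leq a \leq 15$ and $h = 15$ when $16 \leq a \leq 19$) primitively, arranged so that the class $D$ decomposes in $\text{Pic}(Y)$ as $L_h + R$, where $L_h$ is the primitive polarisation of the $\Omega_h$-sublattice and $R$ is an effective sum of rational $(-2)$-classes and elliptic fibre classes of $|E_h|$. Existence of such a $Y$ follows from Nikulin's theorem applied to the glued lattice $\Lambda_a + \Omega_h$, and the numerical parameters (the Mukai weights $d_i$ inside $\Omega_h$) can be chosen case-by-case in $a$ so that $R$ has the required intersection pattern with $L_h$. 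On $Y$ I would construct the reducible curve $B_0 = C_0 \cup R_1 \cup \cdots \cup R_s$ with $C_0 \in |L_h|$ a smooth Mukai-type curve of genus $h$ (existence via Lemmas \ref{lem-aaa} and \ref{little-lem}) and the $R_j$ realising the decomposition of $R$, with transverse intersections arranged so that $p_a(B_0) = h$. Proposition \ref{prim-cor}, Lemma \ref{stablem} and Proposition \ref{relative-dominant} then allow deformation of the natural unramified stable map $f_0 : B_0 \to Y$ sideways, with the reducible components smoothing together, to produce an unramified birational stable map $f_a : B_a \to Z_a$ with $B_a$ integral nodal of arithmetic genus $h$ and $f_{a*}(B_a) \in |D|$.

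For the third stage, I would verify the conditions of Proposition \ref{finiteness-criterion} with $C = B_a$. Conditions 1, 2, 5, 6 are immediate since $B_a$ is integral of arithmetic genus $\geq 2$, and condition 4 (non-trigonality of $\tilde{B}_a$) follows from Lemma \ref{gon-omega} applied to the odd-genus Mukai component $C_0$ together with Lemma \ref{easynontriglemma}. The crucial condition 3 is zero-dimensionality of the fibre of $\eta : \mathcal{W}^{a-h}_{a,1} \to \overline{\mathcal{M}}_h$ at $[(f_a, Z_a, D)]$, which I would establish by first verifying it at the specialisation $[(f_0, Y, B_0)]$: further degenerating $Y$ to $Y_{\Omega_h}$, Corollary \ref{bij-cor} pins down the pair $(Y_{\Omega_h}, C_0)$ up to finite ambiguity, while the rigidity of rational curves on K3 surfaces (as in the proof of Proposition \ref{finiteness-criterion}) pins down the $R_j$; upper-semicontinuity of fibre dimension on the source stack then propagates zero-dimensionality to the generic point $[(f_a)]$ of the irreducible component containing the deformation family. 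The most delicate part of the whole argument will be the lattice-theoretic bookkeeping in the second stage: one must verify, for each $a$, that the combined lattice $\Lambda_a + \Omega_h$ admits a primitive embedding into $\Lambda_{K3}$ compatible with the decomposition $D = L_h + R$, and choose the intersection pattern of $B_0$ so that smoothing produces integral $B_a$ of arithmetic genus exactly $h$; the split at $a = 15, 16$ (with target genus $13$ or $15$) is forced by where the numerical constraints on $R$ must change.
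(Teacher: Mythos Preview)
Your overall architecture is correct and matches the paper's: degenerate $Z_a$ to a K3 carrying a Mukai lattice $\Omega_h$, build a reducible curve with a smooth Mukai component plus rational pieces, invoke Corollary~\ref{bij-cor} for zero-dimensionality of the $\eta$-fibre, deform back and apply Proposition~\ref{finiteness-criterion}. The difference is that you take $h=13$ or $h=15$ (so that the Mukai component $C_0$ already has the target arithmetic genus and the rational pieces are tails), whereas the paper takes $h=11$ throughout and reaches genus $13$ or $15$ by attaching $\Gamma_1$ (and $\Gamma_2$ when $16\le a\le 19$) to the genus-$11$ curve $C$ at three points each.

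This difference is not cosmetic; your choice of $h$ does not work. In $\Omega_h$ one has $(L\cdot E)=\lfloor (h+1)/2\rfloor$, while in $\Lambda_a$ one has $(D\cdot F)=6$. Any embedding $\Lambda_a\hookrightarrow\Omega_h$ sending $F\mapsto E$ and $D\mapsto L_h+R$ with $R$ in the span of $E,\Gamma_1,\dots,\Gamma_8$ (all of which are orthogonal to $E$) forces $(D\cdot E)=(L_h\cdot E)=\lfloor(h+1)/2\rfloor$. Thus the constraint $(D\cdot F)=6$ pins $h\in\{11,12\}$, and since Corollary~\ref{bij-cor} requires odd $h\ge 11$, one is forced to $h=11$. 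With $h=13$ or $h=15$ there is no primitive embedding of $\Lambda_a$ into $\Omega_h$ of the shape you need. Your fallback of taking a genuinely larger ``glued lattice $\Lambda_a+\Omega_h$'' pushes the rank to at least $11$, where Nikulin's existence theorem no longer applies automatically; you would also have to re-verify that $L_h$ remains big and nef on the resulting K3, which is not clear.

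The paper's embedding is $D\mapsto H:=L+\Gamma_1+\epsilon\Gamma_2$, $F\mapsto E$, $\Gamma\mapsto\Gamma_8$ inside $\Omega_{11}$ (with $d_8=1$ and $3\le d_1,d_2\le 5$ chosen so that $(H)^2=2a-2$, and $\epsilon=0$ for $a\le 15$, $\epsilon=1$ for $a\ge 16$). The reducible model is $\bar B_a=C\cup\Gamma_1\cup\epsilon\Gamma_2$ with $C\in|L|$ smooth of genus $11$, partially normalised so that each $\Gamma_i$ meets $C$ at exactly three points; this gives $p_a(\bar B_a)=13$ or $15$ directly. Integrality of $B_a$ after deforming to $Z_a$ follows because $\Lambda_a$ contains no $(-2)$-class orthogonal to both $F$ and $\Gamma$, and the nodal rational $F_0\in|F|$ is obtained as the limit of an $I_2$ fibre $\Gamma_7+\widetilde\Gamma_7$ on $Y_{\Omega_{11}}$.
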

\begin{proof}
 Let $M_{\Lambda_a}$ be the moduli space of pseudo-ample, $\Lambda_a$-polarised K3 surfaces, \cite{dolgachev}. This has at most two components, which locally on the period domain are interchanged via complex conjugation. Consider the lattice $\Omega_{11}$ with ordered basis $\{L, E, \Gamma_1, \ldots, \Gamma_8 \}$ and set $d_8=1$. For $d_1, d_2 \geq 3$, let $H$ be the primitive, ample line bundle 
 $H=L+\Gamma_1 + \epsilon \Gamma_2$, where $\epsilon=0$ for $14 \leq a \leq 15$ and $\epsilon=1$ for $16 \leq a \leq 19$. Choose $3\leq d_1, d_2 \leq 5$ such that $H^2=2a-2$; it is easily checked that all six possibilities can be achieved. There is a primitive lattice embedding $$\Lambda_a \hookrightarrow \Omega_{11}$$ defined by $D \mapsto H$, $\Gamma \mapsto \Gamma_8$, $F \mapsto E$. 
 
 Let $Y_{\Omega_{11}}$ be any K3 surface with $\text{Pic}(Y_{\Omega_{11}}) \simeq \Omega_{11}$, and choose the basis $\{L, E, \Gamma_1, \ldots, \Gamma_8 \}$ as in Lemma \ref{lem-aaa}. Consider the curve $C \cup \Gamma_1 \cup \epsilon \Gamma_2 \in |H|$, where $C \in |L|$ is a general smooth curve. By partially normalizing at all nodes other than three on $C \cup \Gamma_1$ and three on $C \cup \Gamma_2$ (for $\epsilon \neq 0$), we construct an unramified stable map $\bar{f}_a: \bar{B}_a \to Y_{\Omega_{11}}$ with $\bar{f}_{a*} (\bar{B}_a) \sim H$ which is birational onto its image and satisfying the conditions of Proposition \ref{finiteness-criterion}. Note that $\bar{B}_a$ has arithmetic genus $13$ for $14 \leq a \leq 15$ and $15$ for $16 \leq a \leq 19$. After deforming $\bar{f_a}$ to an unramified stable map $f_a: B_a \to Z_a$, we find $B_a$ must become integral, since it is easily checked that $Z_a$ contains no smooth rational curves $R$ with $(R \cdot F)=(R \cdot \Gamma)=0$. Further, the normalization $\tilde{B}_a$ is non-trigonal by Lemma \ref{easynontriglemma}.
 
 Thus the claim on $f_a$ follows from the proof of Proposition \ref{finiteness-criterion}. 
Note that the $I_2$ fibre $\Gamma_7 + \widetilde{\Gamma}_7$ must deform to an integral, nodal, rational curve on $Z_a$, since $Z_a$ contains no smooth rational curves which avoid $F$ and $\Gamma$. If $\Omega_{11} \hookrightarrow \text{Pic}(Y_{\Omega_{11}})$ is the embedding as in Lemma \ref{lem-aaa}, and if $Y^c_{\Omega_{11}}$ is the conjugate K3 surface, then we obviously have an embedding $\Omega_{11} \hookrightarrow \text{Pic}(Y^c_{\Omega_{11}})$ satisfying the conclusions of Lemma \ref{lem-aaa}. Thus the claim holds for the general pseudo-ample, $\Lambda_a$-polarised K3 surface $Z_a$.
\end{proof}
\begin{remark} \label{slightgen-nodal2}
In the notation of the above proof, we have $h^0(N_{\bar{f}_a}) \leq p(\bar{B}_a)$ from Lemma \ref{stablem}. It thus follows that 
$h^0(N_{f_a}) \leq p(B_a)$ by semicontinuity.
\end{remark}

\begin{lem} \label{aghzt}
Let $1 \leq d \leq 5$ be an integer and consider the rank five lattice $K_d$ with basis $\{ A,B,\Gamma_1,\Gamma_2, \Gamma_3 \}$ giving the intersection matrix
\[ \left( \begin{array}{ccccc}
-2 & 6 & 3 & 2 & d \\
6 & 0 & 0 & 0 & 0 \\
3 & 0 & -2 & 0 & 0\\
2 & 0 & 0 & -2 & 0 \\
d & 0 & 0 & 0 & -2
 \end{array} \right). \]
 Then $K_d$ is an even lattice of signature $(1,4)$. There exists a K3 surface $Y_{K_d}$ with
 $\text{Pic}(Y_{K_d}) \simeq K_d$, and such that the classes $\{ A,B,\Gamma_1,\Gamma_2, \Gamma_3 \}$ are all represented by nodal, reduced curves such that the nodal curve $A$ meets $\Gamma_1, \Gamma_2$ and $\Gamma_3$ transversally. Further, $A+B$ is big and nef.
\end{lem}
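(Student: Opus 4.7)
The plan is to follow the Torelli--Nikulin construction used in Lemma \ref{lem-aaa}, adapted to the rank-five lattice $K_d$. First I would verify that $K_d$ is an even lattice of signature $(1,4)$. Evenness is immediate from the diagonal $(-2,0,-2,-2,-2)$ together with integrality of all entries. For the signature, $(A+B)^2 = -2 + 12 + 0 = 10 > 0$ provides a positive direction; the sublattice $\mathbb{Z}\Gamma_1 \oplus \mathbb{Z}\Gamma_2 \oplus \mathbb{Z}\Gamma_3$ is negative definite of rank three, and a short calculation shows that the orthogonal complement of the $\Gamma_i$ over $\mathbb{Q}$ is spanned by $B$ and $v := A + \tfrac{3}{2}\Gamma_1 + \Gamma_2 + \tfrac{d}{2}\Gamma_3$ with Gram determinant $-36 < 0$, hence of signature $(1,1)$. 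Since the rank of $K_d$ is $5 \leq 10$, Nikulin's embedding theorem combined with surjectivity of the period map (see \cite[Cor.\ 14.3.1]{huy-lec-k3}) produces a K3 surface $Y_{K_d}$ with $\text{Pic}(Y_{K_d}) \simeq K_d$. By applying Picard--Lefschetz reflections and, if necessary, a sign change as in Lemma \ref{lem-aaa}, one arranges that the class $A+B$ is big and nef; see \cite[Cor.\ 8.2.11]{huy-lec-k3}.

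Once $A+B$ is big and nef, each of $A,\Gamma_1,\Gamma_2,\Gamma_3$ is a $(-2)$-class with strictly positive pairing with $A+B$, hence effective by Riemann--Roch on a K3, and $B$ is effective since $(B \cdot (A+B)) = 6 > 0$. The substance of the proof is then two statements. First, $B$ is nef: supposing the contrary, pick an integral $(-2)$-curve $R$ with $(R \cdot B) < 0$, write $R = xA + yB + z_1 \Gamma_1 + z_2 \Gamma_2 + z_3 \Gamma_3$, and use $(R \cdot B) = 6x$ to force $x < 0$; then $(R - xA)^2 = -2\sum z_i^2 \leq 0$, while a direct calculation gives $(R - xA)^2 = -2 + x^2(-2) - 2x(R \cdot A) > 0$ once one uses $(R \cdot (A+B)) \geq 0$ and the signs of $x$, producing a contradiction in parallel with the argument of Lemma \ref{lem-aaa}. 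Nefness and $B^2 = 0$ then give, via \cite[Prop.\ 3.10]{huy-lec-k3}, that $|B|$ is a base-point-free pencil whose general member is a smooth elliptic curve, hence a nodal reduced curve in the convention of Section \ref{ConstructMaps}.

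Second, each of $A,\Gamma_1,\Gamma_2,\Gamma_3$ is represented by an integral smooth rational curve. Again, assume that an integral $(-2)$-curve $R$ appears as a component of $A$ (resp.\ $\Gamma_i$) with $(R \cdot A) < 0$ (resp.\ $(R \cdot \Gamma_i) < 0$), and write $R = xA + yB + z_1 \Gamma_1 + z_2 \Gamma_2 + z_3 \Gamma_3$. The inequality $(R \cdot B) = 6x \geq 0$ forces $x \geq 0$, and in the critical case $x = 0$ the relation $R^2 = -2$ reduces to $z_1^2 + z_2^2 + z_3^2 = 1$, so exactly one $z_j = \pm 1$; the pairings $(R \cdot A) = 3z_1 + 2z_2 + dz_3$ and $(R \cdot L) \geq 0$-type constraints together with the sign condition then force $R$ to coincide with the $\Gamma_j$ one started from, a contradiction. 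The case $x \geq 1$ is ruled out using that $R + B$ would be big and nef while having negative pairing with the relevant $(-2)$-class. The transversality of $A$ with each $\Gamma_i$ is an open condition on the $14$-dimensional moduli space of $K_d$-polarized K3 surfaces: the smooth rational curves $A$ and $\Gamma_i$ are rigid inside their Picard classes, their intersection points vary with the period, and after passing to a sufficiently general $K_d$-polarized $Y_{K_d}$ one obtains transversality.

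The principal obstacle is the Diophantine case analysis establishing integrality: the asymmetric intersection numbers $(A \cdot \Gamma_1) = 3$, $(A \cdot \Gamma_2) = 2$, $(A \cdot \Gamma_3) = d$ with $d$ varying in $\{1,\ldots,5\}$ mean several subcases must be handled, particularly in showing that no ``exotic'' $(-2)$-class with $x > 0$ can act as a destabilising component. The rank being only five, however, keeps the enumeration finite and transparent, and the argument proceeds in close parallel with the proof of Lemma \ref{lem-aaa}.
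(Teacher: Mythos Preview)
Your approach differs fundamentally from the paper's, and there is a genuine gap in the transversality step.

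The paper does \emph{not} proceed via abstract Torelli--Nikulin arguments. Instead it exhibits an explicit degeneration: on the Kummer surface $\widetilde{Z}$ associated to a product of two elliptic curves, it writes down concrete tree-like unions of the standard $(-2)$-curves $E_{i,j}$, $T_i$, $S_j$ (and one elliptic fibre $F$) representing each of $\widetilde{A}, \widetilde{B}, \widetilde{\Gamma}_1, \widetilde{\Gamma}_2, \widetilde{\Gamma}_3$, checks directly that these generate a primitively embedded copy of $K_d$, that $\widetilde{A}+\widetilde{B}$ is big and nef, and that the intersections of $\widetilde{A}$ with the $\widetilde{\Gamma}_i$ are transversal. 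A small deformation in the moduli of $K_d$-polarised K3s then has $\text{Pic}\simeq K_d$, while big--and--nefness, nodality, reducedness and transversality persist by openness. Note that the lemma only asks for \emph{nodal, reduced} representatives; integrality is postponed to Lemma~\ref{aghzt-2}, whose Diophantine case analysis you have essentially reproduced in your second and third paragraphs.

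The gap in your argument is the last step. Granting integrality of $A$ and $\Gamma_i$ as smooth $(-2)$-curves, you still need to show they meet transversally; for instance, with $(A\cdot\Gamma_2)=2$ a single tangency is a priori possible. You invoke openness of transversality on the $14$-dimensional moduli space, but openness alone does not give non-emptiness: you must exhibit \emph{one} $K_d$-polarised K3 on which the intersections are transversal, and your sketch (``intersection points vary with the period'') does not do this. The paper's Kummer construction supplies exactly that missing witness. Without it --- or some substitute, such as a local deformation computation showing the tangency locus is a proper closed subset --- your proof of transversality is incomplete.
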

\begin{proof}
Let $Y_1, Y_2$ be smooth elliptic curves and consider the Kummer surface $\widetilde{Z}$ associated to $Y_1 \times Y_2$. Let $P_1, P_2, P_3, P_4$ be the four $2$-torsion points of $Y_1$ and let $Q_1, Q_2, Q_3, Q_4$ be the $2$-torsion points of $Y_2$. Let $E_{i,j} \seq \widetilde{Z}$ denote the exceptional divisor over $P_i \times Q_j$, let $T_i \seq \widetilde{Z}$ denote the strict transform of $(P_i \times Y_2) / \pm$ and let
$S_j$ denote the strict transform of $(Y_1 \times Q_j) / \pm$. We also denote by $F$ a smooth elliptic curve of the form $x \times Y_2$, where $x \in Y_1$ is a non-torsion point. It may help the reader to consult the diagram on \cite[p.\ 344]{mori-mukai}, to see the configuration of these curves. We set 
\begin{align*}
\widetilde{A} &:= S_1+E_{1,1}+T_1+E_{1,2}+S_2+E_{1,3}+S_3 \\ 
\widetilde{B} &:=F+S_4+E_{2,4}+T_2+E_{2,1}+E_{2,2}+E_{2,3} \\ 
\widetilde{\Gamma}_1 &:=T_3+E_{3,1}+E_{3,2}+E_{3,3}\\
\widetilde{\Gamma}_2&:=T_2+E_{2,1}+E_{2,2} \\ 
\widetilde{\Gamma}_3 &= \left\{ \begin{array}{ll}
         T_4+\sum_{i=1}^d E_{4,i}, & \mbox{if $1 \leq d \leq 3$}\\
         T_4+\sum_{i=1}^{d-3} E_{4,i}+E_{4,4}+S_4+E_{2,4}+T_2+E_{2,1}+E_{2,2}+E_{2,3}, & \mbox{if $4 \leq d \leq 5$}.\end{array} \right.
\end{align*}
Then $\widetilde{A}$, $\widetilde{B}$, $\widetilde{\Gamma}_1, \widetilde{\Gamma}_2, \widetilde{\Gamma}_3$ generate $K_d $ (to simplify the computations, use that a tree of $-2$ curves has self-intersection $-2$). To see that this gives a primitive embedding of $K_d $ in $\text{Pic}(\widetilde{Z})$ we compute the intersections with elements of $\text{Pic}(\widetilde{Z})$; for $J \in \text{Pic}(\widetilde{Z})$, define $(J \cdot K_d)$ to be the quintuple
$((J \cdot \widetilde{A}), (J \cdot \widetilde{B}), (J \cdot \widetilde{\Gamma}_1), (J \cdot \widetilde{\Gamma}_2),(J \cdot \widetilde{\Gamma}_3))$. Then one computes
$(T_1 \cdot K_d)=(1,0,0,0,0)$, $(T_3 \cdot K_d)=(0,0,1,0,0)$, $(E_{4,1} \cdot K_d)=(1,0,0,0,-1)$,
$(E_{2,4} \cdot K_d)=(0,0,0,1,0)$, $(E_{2,3} \cdot K_d)=(1,-1,0,1, c)$, where $c$ is either $0$ or $-1$, depending on $d$. Thus $K_d$ is primitively embedded in $\text{Pic}(\widetilde{Z})$. Further, all intersections of  $\widetilde{B}$, $\widetilde{\Gamma}_1, \widetilde{\Gamma}_2, \widetilde{\Gamma}_3$ with $\widetilde{A}$ are transversal . Note that for any (rational) component $R \seq \widetilde{A}+\widetilde{B}$, $(R \cdot \widetilde{A}+\widetilde{B}) \geq 0$. Thus $\widetilde{A}+\widetilde{B}$ is big and nef. Hence the claim holds by degenerating to $\widetilde{Z}$.
\end{proof}

\begin{lem} \label{aghzt-2}
Let $1 \leq d \leq 5$ be an integer and consider the K3 surface $Y_{K_d}$ from Lemma \ref{aghzt}. Then the classes $\{ A,B,\Gamma_1,\Gamma_2, \Gamma_3 \}$ are all represented by integral curves. 
\end{lem}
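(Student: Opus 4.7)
The plan is to prove each of $A, B, \Gamma_1, \Gamma_2, \Gamma_3$ is integral by ruling out reducible decompositions, using the intersection form of $K_d$ together with the nefness of $A+B$ (given by Lemma \ref{aghzt}). Since Lemma \ref{aghzt} already presents each class as a reduced nodal curve, any putative decomposition $C = \sum_j R_j$ into distinct integral components has all $R_j$ reduced, and the arithmetic-genus formula forces the support to be a connected tree of smooth rational curves whenever $C^2 = -2$.

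The first step is to establish nefness of $B$. Suppose an integral smooth rational curve $R = xA + yB + z_1\Gamma_1 + z_2\Gamma_2 + z_3\Gamma_3$ satisfies $(R \cdot B) = 6x < 0$, so $x \leq -1$. The condition $R^2 = -2$ rewrites as
\[
z_1^2 + z_2^2 + z_3^2 \;=\; 1 - x^2 + x\bigl(6y + 3z_1 + 2z_2 + dz_3\bigr),
\]
while effectivity of $R$ and nefness of $A+B$ force $(R \cdot (A+B)) = 4x + 6y + 3z_1 + 2z_2 + dz_3 \geq 0$, i.e.\ $6y + 3z_1 + 2z_2 + dz_3 \geq -4x > 0$. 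Multiplying by the negative number $x$ and substituting yields $z_1^2 + z_2^2 + z_3^2 \leq 1 - 5x^2 \leq -4$, a contradiction. Since $B$ is primitive with $B^2 = 0$, nefness implies that $|B|$ defines an elliptic fibration whose generic fibre is a smooth integral elliptic curve, so $B$ is integral.

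For each $\Gamma_i$, any decomposition $\Gamma_i = \sum R_j$ into distinct integral components satisfies $(R_j \cdot B) \geq 0$ by nefness of $B$, and these non-negative terms sum to $(\Gamma_i \cdot B) = 0$; so each $R_j$ has vanishing $A$-coefficient. The constraint $R_j^2 = -2$ then forces $R_j = y_j B + \epsilon_j \Gamma_{k_j}$ with $\epsilon_j \in \{\pm 1\}$, and at most one such component can occur for each pair $(\epsilon, k)$, since two distinct integral curves with the same sign/index data would have intersection $-2 < 0$. Matching coefficients in $\sum R_j = \Gamma_i$ and imposing connectedness of the tree then rules out every nontrivial splitting: for instance, any pair $y_+ B + \Gamma_k$ and $y_- B - \Gamma_k$ with $k \neq i$ forms a cluster disjoint from the component carrying $+\Gamma_i$, violating connectedness.

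The integrality of $A$ is the most delicate step and will be the main obstacle. Since $(A \cdot B) = 6$, components of $A = \sum R_j$ need not lie in fibres of $|B|$; however, $\sum x_j = 1$ with $x_j \geq 0$ still forces a unique main component $R_1$ carrying all of $A$, with remaining $R_j$ of the form $y_j B \pm \Gamma_{k_j}$ as above. The relation $R_1^2 = -2$ becomes a linear Diophantine equation in $y_1, z_{1,1}, z_{1,2}, z_{1,3}$ with leading coefficients $6, 3, 2, d$; combined with the effectivity bounds $(R_j \cdot (A+B)) \geq 0$, the non-effectivity of $\Gamma_k - B$ (a consequence of $\Gamma_k$ being integral, since $h^0(\Gamma_k) = 1$ and $B$ cannot occur as a subdivisor of a smooth rational curve), and tree-connectedness of $A$, the possible splittings can be enumerated and each eliminated by a modular obstruction of the form $12 y \equiv 2 c_{k_2} - 2 c_{k_3} \pmod{12}$ with $c_k \in \{3, 2, d\}$. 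The hard part will be verifying this enumeration exhaustively across all five values $d \in \{1, \dots, 5\}$ and all possible component counts, but since the two- and three-component subcases already fail by such modular arguments, I expect the same obstructions to carry through.
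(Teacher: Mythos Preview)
Your arguments for $B$ nef and $\Gamma_i$ integral are correct, though somewhat more elaborate than the paper's. For $B$, the paper uses the same expansion $(R)^2 = ((R-xA)+xA)^2$ together with nefness of $A+B$ to force $z=w=u=0$, $x=-1$, and then a contradiction. For $\Gamma_i$, the paper is slicker: it picks a single component $R$ with $(R\cdot\Gamma_i)<0$, shows $x=0$ (else $R+B$ is big and nef, contradicting $(R+B\cdot\Gamma_i)<0$), and then $R=yB+\Gamma_i$ with $y=0$ follows directly from $\Gamma_i-R$ being effective. Your tree/connectedness argument works but is more involved.

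The real gap is in your treatment of $A$. You propose to enumerate all decompositions $A = R_1 + \sum_{j\ge 2} R_j$ with $R_j = y_jB\pm\Gamma_{k_j}$, solve the Diophantine constraint $R_1^2=-2$, and eliminate cases by ``modular obstructions of the form $12y \equiv 2c_{k_2}-2c_{k_3}\pmod{12}$''. This is not carried out; the sentence ``I expect the same obstructions to carry through'' is a hope, not a proof. Moreover, the enumeration is genuinely delicate because there are six candidate side-components $\Gamma_k$, $B-\Gamma_k$ (once you check $B-\Gamma_k$ is effective and pin down which $y_jB\pm\Gamma_k$ are actually integral), and the number of subsets to check is not small.

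The paper avoids this enumeration entirely by the following device: since $A^2=-2$, some component $R_i$ satisfies $(R_i\cdot A)<0$. If $R_i$ is not the main component ($x_i=0$), then $R_i\ne\Gamma_j$ for any $j$ (because $(\Gamma_j\cdot A)>0$), so $(R_i\cdot\Gamma_j)\ge 0$ for all $j$; combined with $R_i^2=-2$ this forces $R_i=y_iB-\Gamma_j$, and then $(R_i\cdot A)=6y_i-(\Gamma_j\cdot A)<0$ gives $y_i\le 0$, contradicting effectivity. If $R_i=R_1$ is the main component, expanding $R_1^2=((R_1-A)+A)^2=-2$ and using $(R_1\cdot A)\in\{-1,-2\}$ leaves only $R_1=A$, $R_1=A+\Gamma_j$, or $R_1=A+B-\Gamma_j$; the latter two are visibly reducible ($\Gamma_j$ and $B-\Gamma_j$ are effective), contradicting integrality of $R_1$. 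This is the missing idea you should adopt: isolate the negative-intersection component rather than attempt a full enumeration.
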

\begin{proof}
We will firstly show that $B$ is nef, and hence base point free. Indeed, there would otherwise exist
an effective divisor $R=xA+yB+z\Gamma_1+w\Gamma_2+u\Gamma_3$ for integers $x,y,z,w,u$, with $(R)^2=-2$ and $(R \cdot B) < 0$, i.e.\ $x<0$. Thus $R-xA$ is effective and $(R-xA \cdot A)=(R-xA \cdot A+B) \geq 0$,
since $A+B$ is nef. But then
\begin{align*}
-2=(R)^2 &= ((R-xA)+xA)^2 \\
&=-2(z^2+w^2+u^2+x^2)+2x(R-xA \cdot A)
\end{align*}
So we must have $z=w=u=0$, $x=-1$ and $(R+A \cdot A)=0$. But then $R=-A+yB$ and $(R+A \cdot A)=0$ gives $y=0$. But this contradicts that $A$ is effective.

We next show that each $\Gamma_i$ is integral. Let $R$ be any irreducible component of $\Gamma_i$ with $(R \cdot \Gamma_i) <0$, $(R)^2=-2$ (such a component exists). Write $R=xA+yB+z\Gamma_1+w\Gamma_2+u\Gamma_3$. We have $x \geq 0$ as $(R \cdot B) \geq 0$. Assume $x \neq 0$. Then
$(R \cdot R+B)>0$, $(R +B)^2 >0$, so that $R+B$ is big and nef, contradicting that $(R+B \cdot \Gamma_i)=(R \cdot \Gamma_i) <0$. Thus $x=0$ and $R=yB+z\Gamma_1+w\Gamma_2+u \Gamma_3$. Since $(R)^2=-2$, $(R \cdot \Gamma_i) <0$, we have $R=yB+\Gamma_i$. Since $(R \cdot A+B)=(R \cdot A) \geq 0$, we must have $y \geq 0$ (for $i=3$, we need here that $d <6$). Since the only effective divisor in $|R|$ is integral (and equal to $R$), we must have $y=0$. Thus $\Gamma_i=R$ is integral.

To show that $A$ is integral, let $R_1, \ldots, R_s$ be the components of the effective $-2$ curve $A$, and write $R_i=x_iA+y_iB+z_i\Gamma_1+w_i\Gamma_2+u_i\Gamma_3$ for integers $x_i,y_i,z_i,w_i,u_i$. Intersecting with $B$ shows we have $x_i \geq 0$ for all $i$. Thus there is precisely one component, say $R_1$ with $x_1 \neq 0$ and further $x_1=1$. Now, choose a component $R_i$ with $(R_i)^2=-2$, $(R_i \cdot A)<0$. Firstly assume $i \neq 1$, so that $x_i=0$. Intersecting with the integral curves $\Gamma_j$, $1 \leq j \leq 3$ (and noting $R_i \neq \Gamma_j$ as $(R_i \cdot A) <0$), we have $z_i, w_i, u_i \leq 0$. From $(R_i)^2=-2$, we see $R_i=y_iB-\Gamma_j$ for some $1 \leq j \leq 3$. Intersecting with $A$ gives $6y_i-k <0$ for some $1 \leq k \leq 5$, and thus $y_i \leq 0$ which is a contradiction to the effectivity of $R_i$.

In the second case, assume $(R_1)^2=-2$, $(R_1 \cdot A)<0$, with $R_1=A+y_1B+z_1\Gamma_1+w_1\Gamma_2+u_1\Gamma_3$. We compute
\begin{align*}
-2 &= (R_1)^2\\
&= ((R_1-A)+A)^2 \\
&=-2(y_1^2+z_1^2+w_1^2+u_1^2+1)+2((R_1 \cdot A)+2) \\
&= -2(y_1^2+z_1^2+w_1^2+u_1^2)+2((R_1 \cdot A)+1).
\end{align*} 
Thus we have either $(R_1 \cdot A)=-2$ and $R_1=A+y_1B$ or $(R \cdot A)=-1$, $R_1=A+y_1B \pm \Gamma_j$ for some $1\leq j \leq 3$. In the first case, $(A+y_1B \cdot A)=-2$ implies $y_1=0$ so $A=R_1$ is integral. In the second case, $(A+y_1B \pm \Gamma_j \cdot A)=-1$ implies $-1=-2+6y_1 \pm k$, for $1 \leq k=(A \cdot \Gamma_j) \leq 5$. The only possibilities are $y_1=0$, $k=1$, $R_1=A+\Gamma_j$, contradicting that all effective divisors in $|R_1|$ are integral, or $y_1=1$, $k=5$, $R_1=A+B-\Gamma_j$. Since $(B-\Gamma_i)^2=-2$, $(B-\Gamma_j \cdot A+B)>0$, we have that $B-\Gamma_j$ is effective, so once again this contradicts that all effective divisors in $|R_1|$ are integral.
\end{proof}

\begin{lem} \label{essential-def}
Let $M_{\Lambda_a}$ denote the moduli space of pseudo-ample $\Lambda_a$-polarised K3 surfaces, with the lattice $\Lambda_a$ as in Lemma \ref{hjz}, for $14 \leq a \leq 19$. Then there is a nonempty open subset $U \seq M_{\Lambda_a}$ such that for $[Y_a] \in U$  we may pick the basis $\{ D, F, \Gamma \}$ such that 
there is an integral, nodal, rational curve $R_a \in |D-2F-\Gamma|$ such that $R_a$ meets $\Gamma$ transversally in three points.
\end{lem}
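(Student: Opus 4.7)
The plan is to establish the lemma via a specialization argument. Set $R := D - 2F - \Gamma \in \Lambda_a$. A direct calculation from the intersection matrix yields
\[
(R)^2 = 2a - 30, \quad (R \cdot F) = 6, \quad (R \cdot \Gamma) = 3, \quad (R \cdot D) = 2a - 15,
\]
so the arithmetic genus is $p_a(R) = a - 14 \in \{0, \ldots, 5\}$, which matches the desired number of nodes on $R_a$. By Riemann--Roch on the K3 surface, $\chi(R) = a - 13 \geq 1$ for $a \geq 14$, and since $D$ is pseudo-ample for any $\Lambda_a$-polarized K3 while $(R \cdot D) > 0$, the class $-R$ cannot be effective. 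Hence $R$ itself is effective on every $[Y_a] \in M_{\Lambda_a}$.

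To produce an integral nodal rational representative, I would work on the surface $Y_{\Omega_{11}}$ of Lemma \ref{lem-aaa} and extend the primitive embedding $\Lambda_a \hookrightarrow \Omega_{11}$ used in Lemma \ref{hjz} via $D \mapsto L + \Gamma_1 + \epsilon \Gamma_2$ (with $\epsilon \in \{0,1\}$ and $d_1, d_2$ chosen so that the self-intersection is $2a - 2$), $F \mapsto E$, and $\Gamma \mapsto \Gamma_8$ with $d_8 = 1$. Under this embedding $R$ maps to $L - 2E - \Gamma_8 + \Gamma_1 + \epsilon \Gamma_2 \in \Omega_{11}$, which is effective on $Y_{\Omega_{11}}$ by Riemann--Roch together with positive intersection against the ample class $L - E$ of Lemma \ref{little-lem}. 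An explicit reducible representative can be constructed as a union of a suitable $6$-multisection of the elliptic fibration $|E|$ (obtained from a degeneration of a smooth genus $11$ curve in $|L|$ into fibre and rational components) together with appropriate components from the $I_2$-fibres of Lemma \ref{I2}, arranged so that the resulting divisor meets $\Gamma_8$ transversally in three points.

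Deforming $Y_{\Omega_{11}}$ within the moduli space of $\Lambda_a$-polarized K3 surfaces drops the generic Picard lattice to $\Lambda_a$, which forces the components of the reducible representative lying in classes outside $\Lambda_a$ to combine into a single integral divisor in $|R|$ on the general deformation. The $a - 14$ nodes of $R_a$ arise from the smoothings at the points where components merge. To make this rigorous, one verifies that on the general $[Y_a]$ the class $R$ admits no nontrivial effective decomposition $R = R' + R''$ with $R', R'' \in \Lambda_a$: writing $R' = xD + yF + z\Gamma$ with $0 \leq x \leq 1$ and imposing $(R' \cdot F) \geq 0$, $(R' \cdot \Gamma) \geq 0$, $(R')^2 \geq -2$ (together with the analogous constraints on $R''$) reduces to a finite check, patterned on Lemmas \ref{lem-aaa} and \ref{aghzt-2}. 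Finally, transversality of $R_a \cap \Gamma$ is an open condition verified by explicit intersection computation on $Y_{\Omega_{11}}$; it then persists on a nonempty Zariski-open $U \seq M_{\Lambda_a}$. The main obstacle will be constructing the explicit reducible representative on $Y_{\Omega_{11}}$ whose deformation yields the correct irreducible rational nodal curve, together with the case-by-case lattice indecomposability verification for each $a \in \{14, \ldots, 19\}$.
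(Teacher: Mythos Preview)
Your overall strategy---specialize to an auxiliary K3, find an explicit reducible rational representative of $R=D-2F-\Gamma$, then deform back to generic Picard rank $3$ and argue integrality---is exactly what the paper does. But your choice of auxiliary surface is the gap. You propose working on $Y_{\Omega_{11}}$ via the embedding of Lemma~\ref{hjz}, where $R$ lands in the class $L-2E-\Gamma_8+\Gamma_1+\epsilon\Gamma_2$. The problem is that you need a \emph{rational} (arithmetic genus zero) representative, i.e.\ a genus-zero stable map, and on $Y_{\Omega_{11}}$ there is no evident way to write this class as a connected sum of smooth rational curves. Your suggestion to degenerate a smooth genus-$11$ curve in $|L|$ ``into fibre and rational components'' is not a construction: such curves are not rational, and the lattice $\Omega_{11}$ was designed for Mukai-type arguments, not for producing rational cycles in this class.

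The paper avoids this by passing through an entirely different auxiliary lattice $K_d$ (Lemmas~\ref{aghzt}, \ref{aghzt-2}), realized explicitly on a Kummer surface of product type. After the change of basis $\{X,Y,Z\}=\{D-2F-\Gamma,F,\Gamma\}$, one embeds $\Lambda_a\hookrightarrow K_d$ so that $X\mapsto A+\epsilon_1\Gamma_3+\epsilon_2\Gamma_2$, where $A,\Gamma_2,\Gamma_3$ are \emph{already} integral smooth rational curves on $Y_{K_d}$ meeting transversally. Partially normalizing gives a genuine genus-zero unramified stable map, which then deforms horizontally (Prop.~\ref{relative-dominant}) to the general $\Lambda_a$-polarized K3. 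Integrality on the general fibre is not a lattice-decomposition check of the sort you sketch, but follows because any component of the deformed stable map would be a rational curve, hence a $(-2)$-class with $(\,\cdot\,F)=(\,\cdot\,\Gamma)=0$, and $\Lambda_a$ contains none. So the missing ingredient in your proposal is precisely the Kummer construction of Lemmas~\ref{aghzt}--\ref{aghzt-2}; without it you have no rational representative to deform.
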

\begin{proof}
Set $y:=a-14$, so that $0 \leq y \leq 5$ by assumption.
If we change the basis of $\Lambda_a$ to $\{ D-2F-\Gamma, F, \Gamma \}=\{X,Y,Z \}$, we see $\Lambda_a$ is isometric to the lattice 
$\bar{\Lambda}_a$ with intersection matrix
\[
\left( \begin{array}{ccc}
2y-2 & 6 & 3 \\
6 & 0 & 0 \\
3 & 0 & -2 \end{array} \right).\] Let $K_d$ be the lattice from Lemma \ref{aghzt}.
For appropriate choices of $d$ there is a primitive lattice embedding
$\bar{\Lambda}_a \hookrightarrow K_d$, given by
\begin{align*}
X &\mapsto  A+\epsilon_1\Gamma_3+\epsilon_2 \Gamma_2 \\
Y &\mapsto B \\
Z &\mapsto \Gamma_1
\end{align*}
where if $a =14$ we set $\epsilon_1=\epsilon_2=0$ and $d$ arbitrary, if $15 \leq a < 19$ we set $d=y+1$, $\epsilon_1=1$, $\epsilon_2=0$ and if $a=19$ we set $d=5$ and $\epsilon_1=\epsilon_2=1$. By Lemmas \ref{aghzt}, \ref{aghzt-2}, we have that $A,\Gamma_2, \Gamma_3 $ are represented by smooth rational curves intersecting transversally on $Y_{K_d}$. In all cases, the divisor $D=X+2Y+Z$ is big and nef, since if $R \in \{ \Gamma_1, \epsilon_1 \Gamma_3, \epsilon_2 \Gamma_2, A \}$, $(R \cdot D) \geq 0$. By partially normalising nodes we may produce an unramified, stable map $f: \tilde{C} \to Y_{K_d}$, where $\tilde{C}$ is a genus zero union of smooth rational curves and $f(\tilde{C})=A+\epsilon_1 \Gamma_3+\epsilon_2 \Gamma_2$. By \cite[\S 2, Rem.\ 3.1]{kool-thomas}, we may deform $f$ horizontally to a stable map with target a small deformation of $Y_{K_d}$ in the moduli space of $\Lambda_a$-polarised K3 surfaces (also cf.\ Proposition \ref{relative-dominant}).
Thus, after deforming $Y_{K_d}$ to a K3 surface $Y_a$ with $\text{Pic}(Y_a) \simeq \Lambda_a$, we can deform $A+\epsilon_1 \Gamma_3+\epsilon_2 \Gamma_2$ to a nodal rational curve $\bar{R}_a$ which meets $\Gamma$ transversally in three points. Furthermore, $\bar{R}_a$ is integral, since $\Lambda_a$ contains no $-2$ curves which have zero intersection with $F, \Gamma$. 
\end{proof}

\begin{thm} \label{finiteness-nonprim}
Assume $k \geq 2$, $g \geq 8$. Set $m:= \left \lfloor \frac{g-5}{6} \right \rfloor$ and let $0 \leq r(g) \leq 5$ be the unique integer such that
$$ g-5=6m+r(g).$$
Define:
\begin{itemize}  
\item $l_g:=15$, if $r(g)=3,4$, $m$ odd and/or $k$ even.
\item $l_g:=16$, if $r(g)=3,4$, $m$ even and $k$ odd.
\item $l_g:=17$, if $r(g)=5$, $m$ odd and/or $k$ even.
\item $l_g:=18$, if $r(g)=5$, $m$ even and $k$ odd.
\item $l_g:=17$, if $r(g)\leq 2$, $m$ even and/or $k$ even.
\item $l_g:=18$, if $r(g) \leq 2$, $m$ odd and $k$ odd.
\end{itemize}
Then there is a component $I \seq \mathcal{T}^n_{g,k}$ such that $${\eta}_{|_I}: I \to \mathcal{M}_{p(g,k)-n}$$ is generically finite for
$p(g,k)-n \geq l_g$. For the general $[f': B'\to X'] \in I$, $B'$ is non-trigonal.
\end{thm}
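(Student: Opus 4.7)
The strategy is to mirror the proof of Theorem \ref{finiteness}, replacing the role of the smooth genus 11 curves on $Y_{\Omega_{11}}$ with the integral nodal curves of arithmetic genus 13 or 15 supplied by Lemma \ref{hjz}. By Lemma \ref{red-largen} together with the non-trigonality of $\widetilde{B}_a$ built into Lemma \ref{hjz}, it suffices to exhibit, for each pair $(g,k)$, a single unramified stable map $f: B \to X$ with $f_*B \sim kL$, $L$ ample and primitive with $(L)^2 = 2g-2$, $B$ of arithmetic genus exactly $l_g$, and $f$ satisfying the six hypotheses of Proposition \ref{finiteness-criterion}; generic finiteness on a component of $\mathcal{T}^n_{g,k}$ containing $[f]$ then follows.

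First I would choose $a \in \{14, \ldots, 19\}$ matching the parity block which determines $l_g$, so that the distinguished component $C := B_a$ of Lemma \ref{hjz} already supplies most of the hypotheses of Proposition \ref{finiteness-criterion}: it is integral and nodal of arithmetic genus $p' \in \{13,15\}$ with $f_{a*}C \in |D|$, its normalisation is non-trigonal, one has $h^0(N_{f_a}) \leq p(B_a)$ by Remark \ref{slightgen-nodal2}, and the zero-dimensional fibre property in hypothesis (3) is exactly the finiteness conclusion that Lemma \ref{hjz} inherits from the Mukai-type analysis on $Y_{\Omega_{11}}$.

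Next I would use the Torelli theorem to enlarge the lattice $\Lambda_a$ to an even lattice of signature $(1, \rho)$ with $\rho \leq 9$, containing a primitive ample class $L$ with $(L)^2 = 2g-2$ together with additional $-2$ classes $R_1, R_2, \ldots$ representing rigid smooth rational curves, arranged so that $kL$ admits an effective decomposition
\[
kL \sim D + \gamma R_a + \beta \Gamma + \alpha F + \textstyle\sum_j R_j,
\]
where $R_a \in |D - 2F - \Gamma|$ is the integral nodal rational curve of Lemma \ref{essential-def}. The numerical constraint $(kL)^2 = 2(p(g,k) - 1)$, together with the parities of $m$ and $k$, fixes the coefficients in each of the six regimes, producing $l_g$ as the right arithmetic genus after attaching and partially normalising; the existence of the K3 surface $X$ realising the lattice, with each class represented by an integral curve and $L$ ample, follows from Nikulin's theorem and the Picard--Lefschetz reflection argument exactly as in Lemmas \ref{lem-aaa} and \ref{onenodal-lem-a}.

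Finally I would assemble $B$ from $C$, the nodal rational $R_a$ (when $\gamma > 0$), the rigid rational curves $R_j$, and chains of elliptic fibres in $|F|$ broken into $\Gamma_i + \widetilde{\Gamma}_i$ via the $I_2$-degenerations of Lemma \ref{I2}, in direct analogy with the chain construction used for the $r(g)=0$ case of Theorem \ref{finiteness}. Partial normalisations at a controlled subset of nodes bring the arithmetic genus of $B$ to exactly $l_g$, and integrality of the deformation inside $\mathcal{T}^n_{g,k}$ follows from Proposition \ref{prim-cor} once the Picard lattice is arranged to exclude any splitting of $kL$ into proper sub-effective classes in $|m_i L|$. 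The expected main obstacle is the lattice-theoretic casework: producing an explicit effective decomposition of $kL$ in each of the six parity regimes and verifying ampleness of $L$ in the presence of several rational $-2$ classes, which is an elaboration of Lemma \ref{onenodal-lem-a} where the single auxiliary $-2$ class is replaced by several and the numerical bookkeeping grows accordingly.
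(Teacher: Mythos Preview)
Your overall architecture is right—build $B$ around the curve $B_a$ of Lemma \ref{hjz} and invoke Proposition \ref{finiteness-criterion}—but you miss the key simplification that makes the paper's proof work, and you substitute a step that is not correct as stated. The paper does \emph{not} enlarge the lattice $\Lambda_a$ at all. The primitive ample class is simply $H := D + m'F$ on the same rank-three Picard group, with $m'$ and $a \in \{14,\ldots,19\}$ chosen so that $(H)^2 = 2g-2$; then one uses the effective decomposition
\[
kH \;\sim\; f_a(B_a) + (k-1)R_a + (k-1)\Gamma + lF_0, \qquad l = km' + 2(k-1),
\]
where $F_0 \in |F|$ is the integral \emph{nodal} rational curve supplied by Lemma \ref{hjz} and $R_a \in |D-2F-\Gamma|$ is the curve of Lemma \ref{essential-def}. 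Your plan to introduce extra $-2$ classes $R_j$ via Torelli and to use $I_2$ fibres from Lemma \ref{I2} is misplaced: Lemma \ref{I2} concerns the rank-ten lattice $\Omega_g$, not $\Lambda_a$, and on $Z_a$ the generic fibre of $|F|$ degenerates to an irreducible nodal rational curve, not an $I_2$ configuration.

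The second point you underplay is the verification of the no-splitting hypothesis in Proposition \ref{prim-cor}, which is the genuine technical content here. It is not a matter of ``arranging the Picard lattice''; one must check that no connected subcurve $B_0 \subseteq B$ pushes forward into $|nH|$ for $n<k$. The paper achieves this by attaching the $(k-1)$ copies of $\Gamma$ and $R_a$ as a single chain $\Gamma_1, R_{a,1}, \Gamma_2, R_{a,2}, \ldots$ hanging off $B_a$: since $R_a \sim D - 2F - \Gamma$, any connected subcurve containing an $R_{a,i}$ is forced, by intersecting with $F$ and tracking the coefficient of $D$, to swallow the next $\Gamma_i$ and then the next $R_{a,i-1}$, all the way down to $B_a$. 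This chain argument is what pins down $n=k$, and it depends on the specific shape of the decomposition above rather than on any lattice-theoretic exclusion. The parity of $l$ (governed by $m'$ and $k$) then dictates two slightly different attachment schemes for the $l$ copies of $F_0$, which is where your six cases for $l_g$ come from.
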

\begin{proof}
Consider the $\Lambda_a$-polarised K3 surface $Y_a$ from Lemma \ref{essential-def} and let $\{ D, F, \Gamma \}$ be as in the lemma. Set $m:= \left \lfloor \frac{g-5}{6} \right \rfloor \geq 0$ and $$m'= \begin{cases} m-1 &\mbox{if } r(g) \geq 3 \\ 
m-2 & \mbox{if } r(g) \leq 2. \end{cases}. 
$$ We have $m' \geq -1$ for $g \geq 8$. Consider the primitive, ample line bundle $H=D+m'F $. We choose
$$a= \begin{cases} 11+r(g) &\mbox{if } r(g) \geq 3 \\ 
17+r(g) & \mbox{if } r(g) \leq 2. \end{cases}. 
$$
Then $(H)^2=2g-2$ for $g \geq 8$. Let $f_a: B_a \to Y_a$ resp.\ $R_a$ be the unramified stable map, resp.\ rational curve from  lemmas \ref{hjz} resp.\ \ref{essential-def}. Set $l=k m'+2(k-1)$ which is nonnegative for $k \geq 2$. We have an effective decomposition
$$kH \sim f_a(B_a)+(k-1) R_a+(k-1)\Gamma +lF_0 ,$$
where $F_0 \in |F|$ is an integral, nodal rational curve as in lemma \ref{hjz}.  We will prove the result by constructing an unramified stable map $f: B \to Y_a$ with $f_* (B)= f_a(B_a)+(k-1)R_a+(k-1)\Gamma +lF_0$ satisfying the conditions of Proposition \ref{finiteness-criterion}.

Assume firstly $m'$ is even and/or $k$ is even, so $l$ is even, and set $s=l/ 2$. Let $\proj^1 \to F_0$ be the normalization morphism and let $p,q$ be the points over the node. Let $x$ be the point of intersection of $f_a(B_a)$ and $\Gamma$ and let $y,z$ be distinct points in $\Gamma \cap R_a$. We may pick the points to ensure $y \neq x, z \neq x$. Define $B$ as the union of $B_a$ with $l+2(k-1)$ copies of $\proj^1$ and with transversal intersections as in the following diagram:
$$
\setlength{\unitlength}{1cm}
\begin{picture}(14,5)
\put(1,0){\line(0,1){4}}
\put(0.5,1.5){\line(1,0){2}}
\put(2,1){\line(0,1){2}}
\put(1.5,2.5){\line(1,0){2}}
\put(3.7,2.5){$\ldots$}
\put(4.5,2){\line(0,1){2}}
\put(0.5,3.5){\line(1,0){5}}

\put(8,0){\line(0,1){4}}
\qbezier(8.5,2)(4,1)(10.5,1.5)
\put(10,1){\line(0,1){2}}
\put(9.5,2.5){\line(1,0){2}}
\put(11.7,2.5){$\ldots$}
\put(12.5,2){\line(0,1){2}}
\put(12.2,3.5){\line(1,0){2}}

\put(0,0.4){\line(1,0){14}}

\put(5,-0.1){\mbox{$B_a$}}
\put(0.45,0.9){\tiny\mbox{$F_{1,1}$}}
\put(0.75,1.3){\tiny\mbox{$p$}}
\put(0.75,3.3){\tiny\mbox{$q$}}
\put(1.1,1.6){\tiny\mbox{$F_{1,2}$}}
\put(1.75,1.6){\tiny\mbox{$q$}}
\put(1.45,2.2){\tiny\mbox{$F_{2,1}$}}
\put(2.3,2.7){\tiny\mbox{$F_{2,2}$}}
\put(3.9,3){\tiny\mbox{$F_{s,1}$}}
\put(4.3,3.3){\tiny\mbox{$p$}}
\put(4.7,3.7){\tiny\mbox{$F_{s,2}$}}

\put(7.8,0.2){\tiny\mbox{$x$}}
\put(7.6,0.7){\tiny\mbox{$\Gamma_1$}}
\put(8.6,1.2){\tiny\mbox{$R_{a,1}$}}
\put(9.6,2.3){\tiny\mbox{$\Gamma_2$}}
\put(10.6,2.3){\tiny\mbox{$R_{a,2}$}}
\put(11.8,3.1){\tiny\mbox{$\Gamma_{k-1}$}}
\put(13.2,3.6){\tiny\mbox{$R_{a,k-1}$}}
\put(7.8,1.22){\tiny\mbox{$y$}}
\put(7.8,1.7){\tiny\mbox{$z$}}
\end{picture}
$$
Then there is a unique unramified morphism $f: B \to Y_a$ with $f_* (B)= f_a(B_a)+(k-1)R_a+(k-1)\Gamma +lF_0$ which restricts to the normalization $\proj^1 \to R_a$ on all components marked $R_{a,i}$, restricts to $f_a$ on $B_a$, sends all components marked $F_{i,j}$ to $F_0$, all components marked $\Gamma_i$ to $\Gamma$ and which takes points marked $x$ (resp.\ $y,z,p,q$) to $x$ (resp.\ $y,z,p,q$). 

We now claim that if $B_0 \seq B$ is a connected union of components containing $R_{a,k-1}$ with $f_*(B_0) \in |nH|$ then $n=k$ and $B_0 = B$. If $c_1D+c_2F+c_3\Gamma$ is a divisor linearly equivalent to $nH$, then intersecting with $F$ shows $c_1=n$. Now $R_{a} \in |D-2F-\Gamma|$, whereas $H =D+(m-1)F$ for $m \geq 0$. Thus $f_*(B_0) \in |nH|$ shows that the connected curve $B_0$ cannot coincide with the component $R_{a,k-1}$, and thus must also contain $\Gamma_{k-1}$. Repeating this argument, one sees readily that $B_0$ must contain $\sum_{i=1}^{k-1} (\Gamma_i+R_{a,i})+B_a$. We then get $n=k$ as required, which forces $B_0=B$. 

Using Remarks \ref{slightgen-nodal} and \ref{slightgen-nodal2}, one sees $h^0(N_{f}) \leq p(B)$. For any component $C \neq B_a \seq B$, $f(C)$ meets $f(B_a)$ properly. Thus it follows from Proposition \ref{prim-cor} that the conditions of Proposition \ref{finiteness-criterion} are met. Note that the arithmetic genus of $B$ is $l_g$. 

Now assume $m'$ is odd and $k$ is odd. Let $a,b,c \in f_a(B_a) \cap F_0$ be distinct points. Let $B$ be as in the diagram below.
$$
\setlength{\unitlength}{0.9cm}
\begin{picture}(14,5)
\qbezier(0,2)(2.3,-4)(1.5,4)
\qbezier(-1.5,-0.5)(-0.8,4)(0,2)
\put(1.5,1.5){\line(1,0){2}}
\put(3,1){\line(0,1){2}}
\put(2.5,2.5){\line(1,0){2}}
\put(4.7,2.5){$\ldots$}
\put(5.5,2){\line(0,1){2}}

\put(8,0){\line(0,1){4}}
\qbezier(8.5,2)(4,1)(10.5,1.5)
\put(10,1){\line(0,1){2}}
\put(9.5,2.5){\line(1,0){2}}
\put(11.7,2.5){$\ldots$}
\put(12.5,2){\line(0,1){2}}
\put(12.2,3.5){\line(1,0){2}}

\put(-2,0.4){\line(1,0){17}}

\put(6,-0.1){\mbox{$B_a$}}
\put(0.45,0.9){\tiny\mbox{$F_{1}$}}
\put(1.75,1.3){\tiny\mbox{$p$}}
\put(2.1,1.6){\tiny\mbox{$F_{2}$}}
\put(2.75,1.6){\tiny\mbox{$q$}}
\put(2.45,2.2){\tiny\mbox{$F_{3}$}}
\put(3.3,2.7){\tiny\mbox{$F_{4}$}}
\put(4.9,3){\tiny\mbox{$F_{l}$}}

\put(-1.2,0.2){\tiny\mbox{$a$}}
\put(0.4,0.2){\tiny\mbox{$b$}}
\put(1.45,0.2){\tiny\mbox{$c$}}

\put(7.8,0.2){\tiny\mbox{$x$}}
\put(7.6,0.7){\tiny\mbox{$\Gamma_1$}}
\put(8.6,1.2){\tiny\mbox{$R_{a,1}$}}
\put(9.6,2.3){\tiny\mbox{$\Gamma_2$}}
\put(10.6,2.3){\tiny\mbox{$R_{a,2}$}}
\put(11.8,3.1){\tiny\mbox{$\Gamma_{k-1}$}}
\put(13.2,3.6){\tiny\mbox{$R_{a,k-1}$}}
\put(7.8,1.22){\tiny\mbox{$y$}}
\put(7.8,1.7){\tiny\mbox{$z$}}
\end{picture}
$$
\\
Then as before there is an unramified morphism $f: B \to Y_a$ with  $f_* (B)= f_a(B_a)+(k-1)R_a+(k-1)\Gamma +lF_0$ satisfying the conditions of Proposition \ref{finiteness-criterion}, and $B$ has arithmetic genus $l_g$. 
\end{proof}

The following lemma will be needed for Theorem \ref{marked-wahl-k3}.
\begin{lem} \label{coh-van}
Assume $p(g,k)$ and $n$ are such that there is a component $I \seq \mathcal{T}^n_{g,k}$ such that the morphism
${\eta}_{|_I}: I \to \mathcal{M}_{p(g,k)-n}$ is generically finite. Then for the general $[(f: B \to X,L)] \in I$, we have
$$H^0(B, f^*(T_X))=0.$$
\end{lem}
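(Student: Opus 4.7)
The plan is to exploit the normal bundle sequence
\[
0 \to T_B \to f^*T_X \to N_f \to 0
\]
associated to the unramified map $f$ (with smooth source $B$). Taking global sections yields the long exact sequence
\[
0 \to H^0(T_B) \to H^0(B, f^*T_X) \to H^0(N_f) \xrightarrow{\delta} H^1(T_B).
\]
The generic finiteness hypothesis forces $\dim I = 19 + p(g,k) - n \leq \dim \mathcal{M}_{p(g,k)-n} = 3(p(g,k)-n) - 3$, so $p(B) := p(g,k) - n \geq 11 \geq 2$; consequently $H^0(T_B) = H^0(\omega_B^{-1}) = 0$. It therefore suffices to show that $\delta$ is injective for general $[(f: B \to X, L)] \in I$.

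The next step is to reinterpret $\delta$ as the restriction of the differential $d\eta_{[f]}$. By the deformation theory recalled in Section \ref{unramSect}, for an unramified stable map with smooth source $H^0(N_f)$ is naturally identified with $T^1(B/X)$, the tangent space to deformations of $f$ leaving the target K3 surface $X$ fixed; and by Proposition \ref{ishi-little} one has $h^0(N_f) = p(B)$, matching the codimension of the fiber of the projection $\pi: I \to \mathcal{B}_g$ over $[(X,L)]$. Under these identifications the coboundary map $\delta: H^0(N_f) \to H^1(T_B)$ coincides with the natural map $T^1(B/X) \to T^1(B)$ from the exact sequence (\ref{relative-morph-les}) that records how the source curve varies under a deformation of $f$ with $X$ fixed. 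Since $H^0(T_X) = 0$ for the K3 surface $X$, the exact sequence (\ref{rans-sequence}) yields an inclusion $T^1(B/X) \hookrightarrow T^1_f$, and $T^1(B/X)$ in fact sits inside the tangent space $T_{[f]} I \subseteq T^1_f$ (since deformations of $f$ with $X$ fixed stay within $\mathcal{T}^n_{g,k}$). Thus $\delta$ is precisely the restriction of $d\eta_{[f]}\colon T_{[f]} I \to H^1(T_B)$ to the subspace $H^0(N_f) \subseteq T_{[f]} I$.

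The final step is to invoke the generic finiteness hypothesis. In characteristic zero a generically finite morphism of Deligne--Mumford stacks is étale at a general point of the source, so for general $[f] \in I$ the differential $d\eta_{[f]}$ is injective. A fortiori its restriction $\delta$ to $H^0(N_f)$ is injective, which together with $H^0(T_B) = 0$ forces $H^0(B, f^*T_X) = 0$ via the long exact sequence above. The main content of the argument is simply the identification of the coboundary map arising from the normal bundle sequence with the differential of the moduli map $\eta$ along the fibers of $\pi: I \to \mathcal{B}_g$; no further geometric input beyond the framework of Section \ref{unramSect} is required.
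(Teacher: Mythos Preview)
Your proof is correct and follows essentially the same approach as the paper: both use the normal bundle sequence $0 \to T_B \to f^*T_X \to N_f \to 0$, observe $H^0(T_B)=0$, and identify the coboundary $\delta$ with the differential of the moduli map restricted to deformations fixing $X$. The only cosmetic difference is that the paper phrases this directly in terms of the restriction $r_{n,k}$ of $\eta$ to the fibre $T^n_{g,k}(X,L)$ over a fixed $(X,L)$, noting that generic finiteness of $\eta|_I$ forces $r_{n,k}$ to be finite near $[f]$ and hence its differential $\delta$ to be injective at a general point, whereas you take the injectivity of the full $d\eta_{[f]}$ and restrict to the subspace $H^0(N_f)$.
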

\begin{proof}
Let $T^n_{g,k}(X,L)$ denote the fibre of $\mathcal{T}^n_{g,k} \to \mathcal{B}_g$ over $[(X,L)]$. 
The finiteness of ${\eta}_{|_I}$ at $[(f: B \to X,L)] $ obviously implies that the morphism
\begin{align*}
r_{n,k} \;: \;T^n_{g,k}(X,L) & \to \mathcal{M}_{p(g,k)-n} \\
[f: B \to X] & \mapsto [B]
\end{align*}
is finite near $[f]$. The claim $H^0(B, f^*(T_X))=0$ then follows from the exact sequence of sheaves on $B$
$$ 0 \to T_{B} \to f^*(T_X) \to N_f \to 0$$
and the fact that the coboundary morphism $H^0(B, N_f) \to H^1(B, T_{B})$ corresponds to the differential of $r_{n,k}$.
\end{proof}

\chapter{Chow groups and Nikulin involutions} \label{nik-inv}
Let $X$ be a projective K3 surface and let $f: X \to X$ be a Nikulin involution; i.e.\ an automorphism of order two with $f^* \omega= \omega$ for all $\omega \in H^{2,0}(X)$ (or, equivalently, $f^*$ acts trivially on the transcendental lattice $T(X)$). Then the Bloch--Beilinson conjectures imply in particular the following conjecture
\begin{conjecture} \label{bloch}
$f^*$ acts as the identity on the Chow group $CH^2(X)$ of zero-cycles on $X$.
\end{conjecture}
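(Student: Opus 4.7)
The plan is to deduce Conjecture \ref{bloch} in the $\Upsilon_{2e+1}$-polarised case by supplying the hypotheses of the Proposition of Section \ref{BloBei}: on every such $(X,f)$, a dominating algebraic family of integral curves of geometric genus one that are $f$-invariant and avoid the fixed points of $f$. The conjecture will then follow from that proposition applied to each surface in the Sarti--Van Geemen moduli individually.

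First I would construct, via the Torelli theorem of Section \ref{TorelliEll}, a single exceptionally explicit Nikulin K3 surface $X_0$ in the $\Upsilon_{2e+1}$-moduli carrying a dominating family with all the required properties. The construction mirrors Example \ref{X_{(2),8A_1}}: choose an even lattice $\Lambda$ of signature $(1,\rho)$, $\rho \leq 9$, admitting a primitive embedding $\Upsilon_{2e+1} \hookrightarrow \Lambda$ and containing an isotropic class $E$ fixed by the involution $j$, arranged so that the induced elliptic fibration $X_0 \to \proj^1$ carries a $2$-torsion section whose translation realises the Nikulin involution (Theorem \ref{sarti-geemans-thm} ensures the involution exists on the Torelli-produced $X_0$ and acts as prescribed). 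Smooth fibres $C_t \in |E|$ are then integral genus-one curves, setwise invariant under $f$, and free of fixed points, since a non-trivial translation of an elliptic curve is fixed-point free. To propagate this to an arbitrary $(X,f)$ in the moduli, I would view a generic inclusion $i_0 : C \hookrightarrow X_0$ as an unramified stable map with $N_{i_0} \simeq \omega_C$ and hence $h^0(N_{i_0}) = 1 = p(C)$, and invoke Proposition \ref{relative-dominant} for the relative family of $\Upsilon_{2e+1}$-polarised K3 surfaces equipped with the $f^{\ast}$-invariant line bundle extending $\sheafO_{X_0}(E)$. This provides a component of the relative stable-map stack dominating the Sarti--Van Geemen moduli, so each $(X,f)$ in it carries a deformation of the original family; any reducible or non-reduced limit can be corrected to an integral nodal curve via Proposition \ref{prim-cor}, while $f$-invariance and fixed-locus avoidance persist by closedness and openness of these respective conditions along the deformation.

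The main obstacle will be controlling the joint preservation of \emph{integrality} and \emph{$f$-equivariance} of the family under deformation to a generic $(X,f)$ whose Picard lattice is only $\Upsilon_{2e+1}$ rather than the enlarged lattice $\Lambda$. On such a generic surface the original class $E$ is no longer a Hodge class and the family must be realised by an effective $f^{\ast}$-invariant combination of the $\Upsilon_{2e+1}$-generators — effectively a multiple of $L$ — so one must verify that a limiting reducible $f$-invariant divisor in the appropriate linear system can be perturbed to an integral nodal $f$-invariant curve of geometric genus one. This demands a careful application of the non-decomposition hypothesis of Proposition \ref{prim-cor} in tandem with an equivariant refinement of the normal-bundle analysis of Proposition \ref{normal-bdl}, so that the decomposition obstructions and the equivariance constraints are handled simultaneously; the dimension estimate of Theorem \ref{good-bound}, restricted to the Nikulin stratum, is what ultimately supplies the room to perform this simultaneous perturbation.
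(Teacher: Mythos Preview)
Your high-level plan matches the paper's: reduce to Proposition \ref{chow-starting-pt} and manufacture the required dominating family of $f$-invariant genus-one curves by stable-map deformation from the special surface $X_{(2),8A_1}$. But the route you propose for transporting $f$-invariance to the generic $\Upsilon_{2e+1}$-polarised surface has a genuine gap, and the paper resolves it by a different mechanism that you have not hit upon.

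The problem is exactly where you flag it. On $X_{(2),8A_1}$ your candidate curves are the smooth fibres in $|E|$, but $E$ is \emph{not} a class in $\Upsilon_{2e+1}$; it lies only in the larger Picard lattice of the special surface. Hence Proposition \ref{relative-dominant} cannot be applied with ``the $f^*$-invariant line bundle extending $\mathcal{O}_{X_0}(E)$'' over the $\Upsilon_{2e+1}$-moduli, because no such extension exists. Your fallback --- realise the family in a multiple of $L$ and invoke an ``equivariant refinement'' of Proposition \ref{normal-bdl} together with Proposition \ref{prim-cor} --- is not a workable plan: $f$-invariance of the image curve is a closed, not open, condition on stable maps, so you would need an equivariant deformation theory guaranteeing that the $f$-invariant locus of $\mathcal{W}(\mathcal{X},\mathcal{L},1)$ still dominates the base. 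No such theory is set up, and Proposition \ref{prim-cor} gives no control over equivariance.

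The paper's key idea is to sidestep equivariance entirely by working on the \emph{quotient}. One passes to the Nikulin quotient $Y$ with its $\Phi'_{e+1}$-polarisation $M$ (Proposition \ref{lattice-quotient}); the class $M$ \emph{does} persist over the whole moduli. On the special quotient $Y_{(2),8A_1}$ one builds a concrete unramified genus-one stable map $h:D\to Y_{(2),8A_1}$ with $h_*D\in|M|$ and image contained in the reduced curve $e\hat N+\hat F+\hat\sigma$ (a chain of rational curves attached to one elliptic fibre). Proposition \ref{relative-dominant} then deforms $h$ over the $\Phi'_{e+1}$-moduli; on a generic quotient $Y_t$ the image $h_t(D_t)$ is integral because $\text{Pic}(\hat X_t)\simeq\mathbb{Z}M_t$. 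The $f$-invariant curve on $X_t$ is simply the preimage $\pi_t^{-1}(h_t(D_t))$ --- invariance is automatic. The only remaining issue is that this preimage might split into two components swapped by $f_t$; this is ruled out by specialising back to $X_{(2),8A_1}$ and observing that the fibre $F$ occurs with multiplicity one in $C=eN+F+\sigma+\tau$, so cannot be shared between two interchanged limits. Finally, the passage from generic to arbitrary $\Upsilon_{2e+1}$-polarised $X$ is by specialisation of zero-cycles (Corollary \ref{bloch-thm-onethird}), not by running the construction on every surface individually.

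In short: replace ``deform $f$-invariant curves on $X$'' by ``deform curves on $Y$ and pull back''. That single change turns your outline into the paper's proof.
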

In this chapter we will show that if the genus of $X$ is odd then this conjecture holds on one component of the moduli space of K3 surfaces with a Nikulin involution. 

To explain this, we first need to outline the construction of the moduli spaces of K3 surfaces admitting a Nikulin involution, taken from \cite{sar-gee}. This is explained in detail in Section \ref{ModNik}. Of particular importance will be special elliptic K3 surfaces with a Nikulin involution which lie in the boundary on one component of the moduli space. These elliptic K3 surfaces have already appeared as Example \ref{X_{(2),8A_1}} in Section \ref{WeierEll}, where they were constructed via the Weierstrass equation and denoted $X_{(2),8A_1}$.
In Section \ref{BloBei} we use stable maps to deform to $X_{(2),8A_1}$ and thereby conclude that Conjecture \ref{bloch} holds on one component of the moduli space of K3 surfaces admitting a Nikulin involution. 

\section{Moduli spaces of K3 surfaces admitting a Nikulin involution} \label{ModNik}
We begin with some lattice theory from \cite{sar-gee}. Consider the lattice $$\Phi_g := \mathbb{Z}L \oplus E_8(-2),$$
where $L^2=2g-2$, $g \geq 3$. If $g$ is odd, then there is a unique lattice $ \Upsilon_g$ which is an over lattice of $\Phi_g$ with $ \Upsilon_g / \Phi_g \simeq \mathbb{Z} / 2\mathbb{Z}$ and such that $E_8(-2)$ is a primitive sublattice of $\Upsilon_g$ and $L$ is primitve in $\Upsilon_g$. Equivalently, $\Upsilon_g$ is the lattice generated by $\Phi_g$ and the element $L/2 + v/2$, where $v \in E_8(-2)$ is a nonzero element with
$$ v^2 \equiv 4\epsilon \; (8),$$
with $\epsilon:=0$ if $g \equiv 1 \; (4)$ and $\epsilon:=1$ of $g \equiv 3 \; (4)$.

There is an involution $j : \Phi_g \to \Phi_g$ which acts at $1$ on $\mathbb{Z}L$ and $-1$ on $E_8(-2)$. This extends to an involution $j: \Upsilon_g \to \Upsilon_g$. The following is due to Nikulin \cite{nikulin-finite} and van Geemen--Sarti \cite[\S 2]{sar-gee}.
\begin{thm} \label{sarti-geemans-thm}
Let $\Lambda$ be either of the lattices $\Phi_g$ or $\Upsilon_g$. Let $X$ be a K3 surface with a primitive embedding $\Lambda \hookrightarrow \text{Pic}(X)$ such that $L$ is big and nef. Then $X$ admits a Nikulin involution $f: X \to X$ such that $f^*_{|_\Lambda}=j$ and $f^*_{|_{\Lambda^{\perp}}}=\text{id}$, for $\Lambda^{\perp} \seq H^2(X,\mathbb{Z})$. Conversely, if $X$ admits a Nikulin involution $f$ then there is a primitive embedding $\Lambda \hookrightarrow \text{Pic}(X)$ such that $L$ is big and nef, where $\Lambda$ is either $\Phi_g$ or $\Upsilon_g$, for some $g$. Further, $f^*_{|_\Lambda}=j$ and $f^*_{|_{\Lambda^{\perp}}}=\text{id}$.
\end{thm}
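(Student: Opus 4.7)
The plan is to establish both directions using the global Torelli theorem for K3 surfaces, combined with Nikulin's lattice-theoretic classification of symplectic involutions. The guiding principle is that for a K3 surface $X$, an involution $f$ and its action $f^*$ on $H^2(X,\mathbb{Z})$ determine each other, provided one takes care to preserve the Kähler chamber.

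For the converse direction, suppose $X$ carries a Nikulin involution $f$. Since $f^*$ fixes $H^{2,0}(X)$, the anti-invariant lattice $K := \{v \in H^2(X,\mathbb{Z}) \; : \; f^*v = -v\}$ is a negative-definite sublattice of $\text{Pic}(X)$, and Nikulin's classification \cite{nikulin-finite} gives $K \simeq E_8(-2)$. Averaging any ample class against its image under $f^*$ produces a non-trivial $f^*$-invariant class $L$ with $L^2 > 0$, and after scaling one arranges $L^2 = 2g-2$ and $L$ big and nef; by construction $L \perp K$, yielding an embedding $\mathbb{Z}L \oplus E_8(-2) \hookrightarrow \text{Pic}(X)$. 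If this embedding is primitive, the relevant lattice is $\Phi_g$; otherwise, taking the primitive closure inside $\text{Pic}(X)$ produces an index-two overlattice to which $j$ still extends, and a direct check shows that the only such overlattice is $\Upsilon_g$, which forces $g$ to be odd.

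For the forward direction, given a primitive embedding $\Lambda \hookrightarrow \text{Pic}(X)$ with $L$ big and nef, one defines an isometry $\phi$ of $\Lambda \oplus \Lambda^{\perp}$ which acts as $j$ on $\Lambda$ and as the identity on $\Lambda^{\perp}$. The key point is to extend $\phi$ to an integral isometry of the full lattice $H^2(X,\mathbb{Z})$: by Nikulin's gluing criterion this reduces to checking that the induced actions on the discriminant groups of $\Lambda$ and $\Lambda^{\perp}$ agree under the canonical anti-isometry identifying them, which is precisely the reason the enlargement $\Upsilon_g$ is introduced when $g$ is odd. The resulting $\phi$ is automatically a Hodge isometry, since $H^{2,0}(X) \subset \Lambda^{\perp} \otimes \mathbb{C}$ is fixed, and it preserves $L$. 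After composing with Picard-Lefschetz reflections supported on $\Lambda^{\perp}$ (which leave the action on $\Lambda$ unchanged) one obtains an isometry preserving the Kähler chamber, and the global Torelli theorem then produces an automorphism $f$ with $f^* = \phi$; it is an involution since $\phi^2 = \text{id}$ and symplectic since $\phi$ fixes $H^{2,0}$.

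The main obstacle is the extension-of-isometries step in the forward direction, since the splitting $H^2(X,\mathbb{Z}) = \Lambda \oplus \Lambda^{\perp}$ fails over $\mathbb{Z}$ in general. The technical heart is computing the discriminant group of $E_8(-2)$ together with the induced $j$-action, which explains the dichotomy between $\Phi_g$ and $\Upsilon_g$ and isolates the congruence condition on $v \in E_8(-2)$ used in the definition of $\Upsilon_g$. A secondary subtlety, easily handled, is ensuring the Hodge isometry lifts to a genuine automorphism rather than a birational self-map; here the fact that $L$ is big and nef and fixed by $\phi$ confines us to a single chamber of the positive cone after the reflection step.
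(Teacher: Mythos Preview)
The paper does not actually give a proof of this statement: it is attributed to Nikulin \cite{nikulin-finite} and van~Geemen--Sarti \cite[\S 2]{sar-gee} and stated without argument. Your sketch captures the correct architecture---Nikulin's identification of the anti-invariant lattice with $E_8(-2)$, extension of $j\oplus\text{id}$ to an integral Hodge isometry, and an appeal to global Torelli---so as a summary of the cited proofs it is broadly right.

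There is, however, a genuine slip in the forward direction. You propose to compose $\phi$ with Picard--Lefschetz reflections in $(-2)$-classes lying in $\Lambda^{\perp}$ in order to land in the K\"ahler chamber. Such a reflection does fix $\Lambda$ pointwise, but it does \emph{not} act trivially on $\Lambda^{\perp}$; after composing, the resulting isometry no longer restricts to the identity on $\Lambda^{\perp}$, and the automorphism produced by Torelli would fail the conclusion $f^*_{|_{\Lambda^{\perp}}}=\text{id}$. The fix used in the literature is different and avoids reflections entirely: one observes that $E_8(-2)$ contains no vectors of square $-2$ (its minimal norm is $-4$), so for any effective $(-2)$-class $\delta\in\text{Pic}(X)$ the difference $\delta-\phi(\delta)$ lies in $E_8(-2)$ and cannot itself be an effective $(-2)$-class. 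A short argument with an ample (or invariant big and nef) class then shows $\phi$ already sends effective $(-2)$-classes to effective $(-2)$-classes, hence preserves the K\"ahler cone directly.

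Two smaller points. First, ``after scaling one arranges $L^2=2g-2$'' is not quite right---you cannot scale in an integral lattice; rather you take a primitive invariant ample class, and $g$ is then determined by its square. Second, your remark that the overlattice $\Upsilon_g$ is introduced ``precisely'' to make the discriminant-group extension work is misleading: the extension of $j\oplus\text{id}$ to $H^2(X,\mathbb{Z})$ goes through for both $\Phi_g$ and $\Upsilon_g$. The dichotomy between them arises in the converse direction, as the two possible saturations of $\mathbb{Z}L\oplus E_8(-2)$ inside $\text{Pic}(X)$.
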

Notice that the conditions $f^*_{|_\Lambda}=j$ and $f^*_{|_{\Lambda^{\perp}}}=\text{id}$ uniquely determine the action of $f^*$ on $H^2(X,\mathbb{Z})$, and thus they uniquely determine the involution $f$. If $X$ admits a Nikulin involution $f$, then there may be several different lattices $\Lambda \hookrightarrow \text{Pic}(X)$ of type $\Phi_g$ or $\Upsilon_g$ with $f^*_{|_\Lambda}=j$ and $f^*_{|_{\Lambda^{\perp}}}=\text{id}$ (for instance, we will see later that $X_{(2),8A_1}$ from Example \ref{X_{(2),8A_1}} has this property). Different choices of primitive embeddings $\Lambda \hookrightarrow \text{Pic}(X)$ correspond to choosing different $f$-invariant polarisations $L$; in particular there is no choice if the Picard number of $X$ is the minimal value nine.

To study deformations of K3 surfaces admitting a Nikulin involution, it suffices to consider the moduli space $M_{\Lambda}$ of $\Lambda$-polarised K3 surfaces, where $\Lambda$ is either of the lattices $\Phi_g$ or $\Upsilon_g$, as in \cite{dolgachev}. As $\Lambda$ has rank nine, $M_{\Lambda}$  is nonempty and has at most two irreducible components. In fact we have:
\begin{prop}
Let $\Lambda$ be either of the lattices $\Phi_g$ or $\Upsilon_g$ and let $M_{\Lambda}$ be the moduli space of $\Lambda$-polarised K3 surfaces. Then $M_{\Lambda}$ is irreducible.
\end{prop}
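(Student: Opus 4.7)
The plan is to use Dolgachev's description of $M_\Lambda$ as an arithmetic quotient of the period domain and then to explicitly produce an isometry that identifies the two potential components.

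More concretely, recall that $\Omega_\Lambda\subset\mathbb{P}(\Lambda^\perp\otimes\mathbb{C})$ is the classical type IV period domain associated to the transcendental lattice $T:=\Lambda^\perp\subseteq\Lambda_{K3}$, and that $\Omega_\Lambda$ has two connected components $\Omega_\Lambda^+$ and $\Omega_\Lambda^-$ interchanged by complex conjugation. The moduli space $M_\Lambda$ is the quotient of $\Omega_\Lambda$ (minus the locus of Hodge classes in $T$) by the arithmetic group
\[
\Gamma_\Lambda:=\{\phi\in O(\Lambda_{K3})\mid \phi_{|\Lambda}=\mathrm{id}_{\Lambda}\},
\]
and by \cite{dolgachev} $M_\Lambda$ is irreducible precisely when $\Gamma_\Lambda$ contains an element which swaps $\Omega_\Lambda^+$ and $\Omega_\Lambda^-$. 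An isometry of $T$ swaps the two components of $\Omega_\Lambda$ if and only if it acts with determinant $-1$ on a maximal positive-definite subspace (equivalently, reverses orientation on the positive part of $T\otimes\mathbb{R}$). Thus the whole problem reduces to the following lattice-theoretic task: construct an isometry $\psi\in O(T)$ with $\det(\psi)=-1$ (reversing orientation on the positive cone) whose action on the discriminant form $q_T$ is trivial; for then, by Nikulin's gluing theorem \cite[Cor.\ 1.5.2]{nikulin-finite}, the isometry $\mathrm{id}_\Lambda\oplus\psi$ of $\Lambda\oplus T$ extends to an element of $\Gamma_\Lambda$.

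The key step is therefore to show that $T$ splits off a hyperbolic plane $U$ as an orthogonal summand. Once this is established, the involution
\[
\psi:=\begin{pmatrix}0&1\\ 1&0\end{pmatrix}\oplus\mathrm{id}_{T'}\ \in\ O(U\oplus T')=O(T)
\]
has determinant $-1$, interchanges the two isotropic vectors spanning $U$ (hence reverses orientation on the unique positive-definite $2$-plane of $U\otimes\mathbb{R}$), and acts trivially on the discriminant (since $q_U=0$), so it does the job. To show $U\hookrightarrow T$ is an orthogonal summand, I would apply Nikulin's criterion: for an even indefinite lattice $T$ of signature $(t_+,t_-)$ with $t_+\geq 1$, $t_-\geq 1$ and $\mathrm{rk}(T)\geq \ell(A_T)+2$, the hyperbolic plane $U$ splits off. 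For $\Lambda=\Phi_g=\mathbb{Z}L\oplus E_8(-2)$ one has $\mathrm{rk}(T)=13$, signature $(2,11)$, and $A_T\cong A_\Lambda$ has length $\ell(A_\Lambda)\leq 1+8=9$, so the inequality $13\geq 9+2$ is satisfied. For $\Lambda=\Upsilon_g$ the overlattice relation $[\Upsilon_g:\Phi_g]=2$ reduces the length of the discriminant by one, so the inequality holds a fortiori.

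The main (and essentially only) obstacle is the careful verification that $U$ really splits off $T$ in both of the lattice cases $\Phi_g$ and $\Upsilon_g$; this amounts to unwinding the discriminant forms in each case and applying Nikulin's result. Once $U\subseteq T$ is a direct summand, the involution $\psi$ above produces the required element of $\Gamma_\Lambda$ swapping $\Omega_\Lambda^\pm$, and irreducibility of $M_\Lambda$ follows immediately.
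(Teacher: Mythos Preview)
Your overall strategy coincides with the paper's: both reduce irreducibility of $M_\Lambda$ to showing that the hyperbolic plane $U$ is an orthogonal summand of $T=\Lambda^\perp\subset\Lambda_{K3}$, invoking \cite[\S 5]{dolgachev} for the implication. The difference is in how this splitting is established. The paper proceeds by explicit construction: it writes down concrete primitive embeddings $\Phi_g\hookrightarrow U^3\oplus E_8(-1)^2$ (sending $E_8(-2)$ to the anti-diagonal $\{(x,-x)\}\subset E_8(-1)^2$ and $L$ into $U_1$) and $\Upsilon_g\hookrightarrow U^3\oplus E_8(-1)^2$ (using an explicit $\alpha\in E_8(-1)$), so that in each case $U_2\oplus U_3$ visibly lies in $\Lambda^\perp$. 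You instead appeal to Nikulin's abstract splitting criterion via the inequality $\mathrm{rk}(T)\geq\ell(A_T)+2$. Your route is shorter and avoids case-by-case embeddings, at the cost of quoting a deeper lattice result; the paper's route is more hands-on and exhibits the summand directly.

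One small imprecision: your claim that passing to the index-$2$ overlattice $\Upsilon_g$ ``reduces the length of the discriminant by one'' is not quite right in general (the length can drop by $0$, $1$, or $2$). This does not affect the argument, since $\ell(A_{\Upsilon_g})\leq\mathrm{rk}(\Upsilon_g)=9$ in any case and $13\geq 9+2$ already holds for $\Phi_g$, so the inequality holds a fortiori for $\Upsilon_g$.
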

\begin{proof}
From \cite[Cor.\ 5.2]{dolgachev} there is a primitive embedding $\Lambda \hookrightarrow L_{K3}$, unique up to isometry of $L_{K3}$, where $L_{K3}$ is the K3 lattice. Consider the orthogonal $\Lambda^{\perp} \seq L_{K3}$. From \cite[\S 5]{dolgachev}, it suffices to show that $\Lambda^{\perp}$ admits the hyperbolic lattice $U$ as an orthogonal summand.

We now mimic the proof of \cite[Prop.\ 2.7]{sar-gee}. We have $L_{K3} \simeq U^3 \oplus E_8(-1)^2$. There is a primitive embedding $i: E_8(-2) \hookrightarrow E_8(-1)^2$ such that $i(E_8(-2))$ is the set $\{ (x,-x) \; : \; x \in E_8(-1)  \}$. We may choose a basis $R, F$ of $U$ with $(R)^2=-2$, $(F)^2=0$, $(R \cdot F)=1$. Let $U_i$ denote the $i$-th copy of $U$ in $U^3 \oplus E_8(-1)^2$, $i \leq 3$. We have a primitive embedding $ \mathbb{Z} L \hookrightarrow U_1$ given by $L \mapsto R+gF$. Thus we have a primitive embedding $\Phi_g \hookrightarrow L_{K3}$, unique up to isometry, such that $U_2 \oplus U_3$ is an orthogonal summand of $\Phi_g^{\perp}$. In particular, $M_{\Lambda}$ is irreducible when $\Lambda=\Phi_g$.

It remains to consider the case $\Lambda=\Upsilon_g$. Write $g=2n+1$ for $n \in \mathbb{Z}$. Let $\alpha \in E_8(-1)$ with $\alpha^2=-2$ if
$n$ is odd and $\alpha^2=-4$ is $n$ is even. As before, identify $E_8(-2)$ with $\{ (x,-x) \; : \; x \in E_8(-1)  \}$ and let $v=(\alpha, -\alpha) \in E_8(-2)$.
Let $L=(2u, \alpha, \alpha) \in U_1 \oplus E_8(-1)^2$ where $u=R+\frac{n+3}{2}F$ for $n$ odd and $u=R+\frac{n+4}{2}F$ for $n$ even. Then
$\frac{1}{2}(L+v)=(u, \alpha, 0)$, and the lattice generated by $\frac{1}{2}(L+v)$, $E_8(-2)$ is a primitive sublattice of $U^3 \oplus E_8(-1)^2$ isomorphic to $\Upsilon_g$. Once again $U_2 \oplus U_3$ is an orthogonal summand of $\Upsilon_g^{\perp}$, so in particular $U$ is an orthogonal summand of $\Upsilon_g^{\perp}$, and $M_{\Upsilon_g}$ is irreducible.
\end{proof}

Let $X$ be a K3 surface with a Nikulin involution $f: X \to X$. Then $X$ has eight (reduced) fix points, see \cite[Ch. 15]{huy-lec-k3}. Let $G \simeq \mathbb{Z} / 2\mathbb{Z}$ be the group of automorphisms generated by $f$. The quotient
$X / G$ has eight rational double point singularities over the fixed points. Let $Y \to X / G$ be the minimal desingularisation; surfaces arising in this way are called \emph{Nukulin surfaces}. Then $Y$ is a projective K3 surface, and it contains eight disjoint rational curves over the nodes of $X / G$. Thus $Y$ has Picard number at least nine. We define the \emph{Nikulin lattice} to be the rank eight lattice $\mathfrak{N}$ generated by $e_1, \ldots, e_8$ together with $\frac{1}{2}(e_1 +\ldots +e_8)$, with $(e_i)^2=-2$ for all $i$ and $(e_i \cdot e_j)=0$ if $i \neq j$, $1 \leq i,j \leq 8$.

We now follow the lattice theory from \cite{garbagnati-sarti-evenset} (see also \cite[Prop.\ 2.7]{sar-gee}). For arbitrary $g' \geq 3$, set $$\Phi'_{g'}:=\mathbb{Z}M \oplus \mathfrak{N},$$
where $(M)^2=2g'-2$. In the case where $g'$ is odd, let $\Upsilon'_{g'}$ be the rank nine lattice generated by $\Phi'_{g'}$ together with the class $\frac{1}{2}(M+v)$, where $v \in \mathfrak{N}$, $\frac{v}{2} \notin \mathfrak{N}$ is a class with
\begin{itemize}
\item $(v)^2 \equiv 0 \; (4) $
\item $(v \cdot e_i) \equiv 0 \; (2) \; \; \text{for all\;} 1 \leq i \leq 8$
\item $(M)^2+(v)^2 \equiv 0 \; (8)$.
\end{itemize}
Note that the lattice $\Upsilon'_{g'}$ does indeed exist and is unique up to isometry.
Let $\Lambda'$ be either of the lattices $\Phi'_{g'}$ or $\Upsilon'_{g'}$, and let $M_{\Lambda'}$ be the moduli space of $\Lambda'$ polarised
K3 surfaces, i.e.\ K3 surfaces $Y$ with a primitive embedding $\Lambda' \hookrightarrow \text{Pic}(Y)$ such that $M$ is big and nef. 
\begin{prop}
Suppose $Y$ is a K3 surface with a primitive embedding $\Lambda' \hookrightarrow \text{Pic}(Y)$ such that the elements $e_i \in  \mathfrak{N}$
are the classes of integral rational curves for $1 \leq i \leq 8$. Then $\frac{1}{2}(e_1 +\ldots +e_8)$ defines a double cover $\tilde{Y} \to Y$, branched over $e_1+\ldots+e_8$. The surface $\tilde{Y}$ has eight disjoint $-1$-curves over the $e_i$ and blowing these down produces a Nikulin K3 surface $(X,f)$ such that $Y$ is the minimal desingularisation of the quotient of $X$ by $f$. The rational map $X \dashrightarrow Y$ is the quotient by $f$.
\end{prop}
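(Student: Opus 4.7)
The plan is to construct $\tilde Y$ directly as the cyclic double cover associated with the line bundle $N := \mathcal{O}_Y\bigl(\tfrac12(e_1+\ldots+e_8)\bigr) \in \text{Pic}(Y)$. Since $N^{\otimes 2} \cong \mathcal{O}_Y(e_1+\ldots+e_8)$, the tautological section of $N^{\otimes 2}$ with zero locus $e_1+\ldots+e_8$ yields, via the standard construction, a double cover $\pi : \tilde Y \to Y$ realised as $\mathbf{Spec}_Y(\mathcal{O}_Y \oplus N^{-1})$, branched along $e_1+\ldots+e_8$. First I would check connectedness of $\tilde Y$, which amounts to $h^0(Y,N^{-1})=0$: if $-N$ were effective then $2(-N) \sim -(e_1+\ldots+e_8)$ would have to be effective, which is impossible since $e_1+\ldots+e_8$ has positive intersection with any ample class. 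Since the branch divisor is a disjoint union of smooth rational curves, hence smooth, $\tilde Y$ is smooth.

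Next I would analyse the ramification. The preimage of $e_i$ is an irreducible curve $\tilde e_i$ mapping isomorphically to $e_i \simeq \mathbb{P}^1$, with $\pi^*e_i = 2\tilde e_i$; projecting formula gives $(\tilde e_i)^2 = \tfrac12 (e_i)^2 = -1$, and the $\tilde e_i$ are pairwise disjoint because the $e_i$ are. The canonical bundle formula for double covers yields $K_{\tilde Y} = \pi^*(K_Y \otimes N) = \tilde e_1+\ldots+\tilde e_8$. Castelnuovo's contractibility criterion then produces a smooth projective surface $X$ together with a birational morphism $\sigma: \tilde Y \to X$ contracting each $\tilde e_i$ to a point $p_i$. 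Since $K_X = \sigma_*K_{\tilde Y}=0$ and a straightforward topological Euler characteristic computation gives $\chi_{\mathrm{top}}(\tilde Y) = 2\chi_{\mathrm{top}}(Y) - \chi_{\mathrm{top}}(\sum e_i) = 48 - 16 = 32$, hence $\chi_{\mathrm{top}}(X) = 32 - 8 = 24$, Noether's formula forces $h^1(\mathcal{O}_X) = 0$, so $X$ is a K3 surface.

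The last step is to produce the Nikulin involution. The deck transformation $\tau$ of the double cover $\pi$ fixes each $\tilde e_i$ pointwise and hence commutes with the contraction $\sigma$, so it descends to an involution $f: X \to X$ with fixed locus exactly $\{p_1,\ldots,p_8\}$. The quotient $X/\langle f\rangle$ is canonically $\tilde Y / \langle\tau\rangle = Y$ with eight $A_1$-singularities at the $p_i$, and $Y \to X/\langle f\rangle$ is its minimal resolution. The main obstacle is verifying that $f$ is \emph{symplectic}: in local analytic coordinates $(u,v)$ on $\tilde Y$ with $\tilde e_i = \{u=0\}$, the map $\pi$ has the form $(u,v) \mapsto (u^2,v)$ and $\tau(u,v)=(-u,v)$; a local generator of $K_{\tilde Y}$ is $u\,du\wedge dv$, on which $\tau^*$ acts trivially. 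Hence $\tau^*$ fixes every section of $K_{\tilde Y}$, and pulling back the holomorphic $2$-form $\omega_X$ along $\sigma$ shows $f^*\omega_X = \omega_X$, so $f$ is a Nikulin involution, as required.
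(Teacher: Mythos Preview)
Your proof is correct and complete. The paper itself does not give an independent argument here: its entire proof is a reference to \cite[\S 1]{garbagnati-sarti-evenset}. What you have written is essentially the standard construction that that reference carries out, so there is no genuine difference in approach---you have simply supplied the details that the paper outsources.

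Two small remarks on presentation. First, the sentence ``The quotient $X/\langle f\rangle$ is canonically $\tilde Y / \langle\tau\rangle = Y$ with eight $A_1$-singularities'' is garbled: $\tilde Y/\langle\tau\rangle = Y$ is smooth. What you mean (and correctly state in the next clause) is that the $\tau$-equivariant contraction $\sigma$ induces a map $Y = \tilde Y/\langle\tau\rangle \to X/\langle f\rangle$ which contracts each $e_i$ to an $A_1$-point, and this is the minimal resolution. Second, your local check that $\tau^*$ fixes the generator $u\,du\wedge dv$ of $K_{\tilde Y}$ is exactly right; it might help to remark explicitly why $\sigma^*\omega_X$ is (up to scalar) this unique section---namely because $h^0(\tilde Y, K_{\tilde Y}) = h^0(\tilde Y, \mathcal{O}(\sum\tilde e_i)) = 1$.
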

\begin{proof}
See \cite[\S 1]{garbagnati-sarti-evenset}.
\end{proof}
So if $Y$ is a K3 surface with a primitive embedding $\Lambda' \hookrightarrow \text{Pic}(Y)$ such that the elements $e_i \in  \mathfrak{N}$
are the classes of integral rational curves, then $Y$ is the minimal desingularisation of a Nikulin surface $(X,f)$. The next proposition relates $\text{Pic}(Y)$ to $\text{Pic}(X)$.
\begin{prop} \label{lattice-quotient}
Let $\Lambda$ be either of the lattices $\Phi_g$ or $\Upsilon_g$. Let $[X] \in M_{\Lambda}$ be a polarised K3 surface with corresponding Nikulin involution $f$, and let $Y$ be the desingularisation of the quotient of $X$ by $f$. Then
$[Y] \in M_{\Lambda'}$ where:
\begin{align*}
 \Lambda'&=\Phi'_{\frac{g+1}{2}} \text{ if $g$ is odd and $\Lambda=\Upsilon_g$} \\ 
\Lambda'&=\Upsilon'_{2g-1} \text{ if $\Lambda=\Phi_g$}.
\end{align*}
\end{prop}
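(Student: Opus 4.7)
The plan is to perform the standard Nikulin construction explicitly and trace the effect on the Picard lattices. Let $\sigma: \tilde{X} \to X$ denote the blow-up of $X$ at the eight fixed points of $f$. Because $f$ is symplectic, its differential at each fixed point acts as $-\mathrm{id}$ on the tangent space, and thus the lifted involution $\tilde{f}$ on $\tilde{X}$ fixes each exceptional curve $E_i \simeq \proj^1$ pointwise. The quotient $p: \tilde{X} \to Y := \tilde{X}/\tilde{f}$ is therefore a smooth double cover ramified along $\sum_{i=1}^8 E_i$, with branch divisor $\sum_{i=1}^8 C_i \subset Y$, where $p^* C_i = 2E_i$ and $(C_i)^2 = -2$. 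The line bundle $\mathcal{O}_Y\bigl(\tfrac{1}{2}\sum C_i\bigr)$ defines the cover, so $\tfrac{1}{2}\sum C_i$ is integral in $\text{Pic}(Y)$, giving a primitive embedding $\mathfrak{N} \hookrightarrow \text{Pic}(Y)$ via $e_i \mapsto C_i$.

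Next I would construct the polarisation $M \in \text{Pic}(Y)$, exploiting the identities $p^* p_* = 1 + \tilde{f}^*$ on $\text{Pic}(\tilde{X})$ and $(p_* D)^2 = 2(D)^2$ for $\tilde{f}$-invariant $D$. In the case $\Lambda = \Phi_g$, the class $L$ is $f$-invariant, so $\sigma^* L$ is $\tilde{f}$-invariant, and I set $M := p_* \sigma^* L$: then $p^* M = 2 \sigma^* L$ and $M^2 = 2L^2 = 4g-4 = 2(2g-1)-2$. In the case $\Lambda = \Upsilon_g$, the class $\tfrac{1}{2}(L+v) \in \text{Pic}(X)$ satisfies $\tfrac{1}{2}(L+v) + f^* \tfrac{1}{2}(L+v) = L$, so setting $M := p_* \sigma^* \tfrac{1}{2}(L+v)$ yields $p^* M = \sigma^* L$ and $M^2 = \tfrac{1}{2}L^2 = g-1 = 2\cdot\tfrac{g+1}{2}-2$. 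In both cases bigness and nefness of $L$ on $X$ transport via the projection formula to bigness and nefness of $M$ on $Y$.

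The most substantial step is to show that the embedding generated by $M$ and the $C_i$ is primitive and of the correct lattice type. By Theorem \ref{sarti-geemans-thm} applied to the Nikulin K3 surface $Y$, the lattice $\text{Pic}(Y)$ contains $\Phi'_{g'}$ primitively and in fact equals either $\Phi'_{g'}$ or its index-two over-lattice $\Upsilon'_{g'}$, with $g'$ fixed by $M^2 = 2g'-2$. It thus remains to decide whether there exists a class of the form $\tfrac{1}{2}(M+v)$ with $v \in \mathfrak{N}$ in $\text{Pic}(Y)$. In the $\Upsilon_g$ case, such a class would pull back to $\tfrac{1}{2}\sigma^* L + \tfrac{1}{2}p^*v$; since $\sigma^* L$ is primitive in $\text{Pic}(\tilde{X})$ with vanishing $E_i$-coefficients, the integrality condition fails no matter how $v$ is chosen, yielding $\text{Pic}(Y) = \Phi'_{(g+1)/2}$. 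In the $\Phi_g$ case, $p^*\tfrac{1}{2}(M+v) = \sigma^* L + \tfrac{1}{2}p^*v$, which is integral exactly when $v \in \bigoplus \mathbb{Z}e_i \subset \mathfrak{N}$; choosing the coefficients of $v$ so that the three congruences defining $\Upsilon'_{2g-1}$ (namely $(v)^2 \equiv 0 \pmod 4$, $(v \cdot e_i) \equiv 0 \pmod 2$, and $(M)^2+(v)^2 \equiv 0 \pmod 8$) are satisfied produces the required over-lattice class.

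The principal obstacle is the final descent step: an integral $\tilde{f}$-invariant class on $\tilde{X}$ need not in general lie in the image of $p^*$, so the existence of a candidate pull-back alone is not sufficient. I would resolve this either by a direct cohomological descent argument exhibiting an appropriate $G$-linearisation on $\sigma^* \mathcal{O}_X(L) \otimes \mathcal{O}_{\tilde{X}}\bigl(\sum a_i E_i\bigr)$, or, more efficiently, by a discriminant comparison: the Nikulin construction determines $|\det \text{Pic}(Y)|$ from $|\det \Lambda|$, and a direct calculation of these discriminants singles out the asserted lattice from the two possibilities allowed by Theorem \ref{sarti-geemans-thm}.
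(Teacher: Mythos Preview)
The paper does not actually prove this proposition: its entire proof is the single citation ``See \cite[Cor.\ 2.2]{garbagnati-sarti-evenset}.'' So you are attempting something the paper outsources, and your outline is essentially the computation that Garbagnati--Sarti carry out. The construction of $M$ via push-forward and the intersection-number computations $M^2 = 4g-4$ resp.\ $M^2 = g-1$ are correct, as is the identification of $\mathfrak{N}$ in $\text{Pic}(Y)$ via the branch curves.

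There is, however, a genuine gap in the middle of your argument. You invoke Theorem~\ref{sarti-geemans-thm} ``applied to the Nikulin K3 surface $Y$'' to conclude that $\text{Pic}(Y)$ equals either $\Phi'_{g'}$ or $\Upsilon'_{g'}$. This is a misapplication on two counts. First, Theorem~\ref{sarti-geemans-thm} concerns K3 surfaces \emph{carrying} a Nikulin involution and the lattices $\Phi_g, \Upsilon_g$; it says nothing about the quotient surface $Y$ or the primed lattices $\Phi'_{g'}, \Upsilon'_{g'}$. The analogous dichotomy for $Y$ is precisely the content of the Garbagnati--Sarti result you are trying to establish, so you are assuming what is to be proved. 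Second, even granting such a dichotomy, $\text{Pic}(Y)$ will not \emph{equal} one of these rank-nine lattices unless $X$ is generic in $M_\Lambda$; for arbitrary $[X] \in M_\Lambda$ the Picard rank may be larger, and what you actually need is that the \emph{primitive closure} of $\mathbb{Z}M + \mathfrak{N}$ inside $\text{Pic}(Y)$ is the asserted lattice.

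Your final paragraph already contains the correct remedy: the discriminant comparison. One computes $|\det \Phi_g| = 2^8(2g-2)$, $|\det \Upsilon_g| = 2^6(2g-2)$, and similarly for the primed lattices, and then uses the relation between the discriminant forms of $\text{Pic}(X)^f$-related lattices under the Nikulin construction (this is the core of \cite{garbagnati-sarti-evenset}) to pin down the index. If you drop the appeal to Theorem~\ref{sarti-geemans-thm} and go straight to this discriminant argument, supplemented by the explicit descent you sketch for the $\Phi_g$ case, the proof goes through.
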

\begin{proof}
See \cite[Cor.\ 2.2]{garbagnati-sarti-evenset}.
\end{proof}

Recall the elliptic K3 surface $X_{(2),8A_1}$ from Example \ref{X_{(2),8A_1}} and let $f: X_{(2),8A_1} \to X_{(2),8A_1}$ be the Nikulin involution
induced by translation by the section $\tau$. Let $F$ denote the class of the generic elliptic fibre.
\begin{lem} \label{ample-bloch}
Consider the $f$-invariant line bundle $L=(e+1)F+\sigma+\tau$ on $X_{(2),8A_1}$ for $e \geq 1$, where $\sigma$, $\tau$ denote the sections and $F$ denote the class of the generic elliptic fibre. Then $L$ is primitive. If $e \geq 1$, $L$ is big and nef and if $e \geq 2$ then $L$ is ample.
\end{lem}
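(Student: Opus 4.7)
The plan is to prove all three claims by computing the intersection numbers of $L$ against a sufficient collection of divisors on $X_{(2),8A_1}$. First I would record the basic intersections on the elliptic K3 surface: $F^2=0$, $\sigma^2=\tau^2=-2$ (sections are smooth rational curves on a K3), $\sigma\cdot F=\tau\cdot F=1$, and $\sigma\cdot \tau=0$, since $\sigma$ is the section at infinity $\{x=z=0\}$ and $\tau$ is the $2$-torsion section $\{x=y=0\}$ of the Weierstrass model, so they are disjoint. For each of the eight $I_2$ fibres write $F\sim N_i+N_i'$, where $N_i\cdot \sigma=1$, $N_i\cdot\tau=0$, $N_i'\cdot \sigma=0$, $N_i'\cdot\tau=1$, and $N_i\cdot F=N_i'\cdot F=0$. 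These give at once
\[
L^2=(e+1)^2F^2+2(e+1)F\cdot(\sigma+\tau)+(\sigma+\tau)^2 = 4(e+1)-4=4e,
\]
so $L^2>0$ for $e\ge 1$. Intersecting with $N_i$ yields $L\cdot N_i=\sigma\cdot N_i+\tau\cdot N_i=1$, which shows $L$ is primitive in $\mathrm{Pic}(X_{(2),8A_1})$.

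Next I would show $L$ is nef for $e\ge 1$ by a case analysis on irreducible curves $C\subseteq X_{(2),8A_1}$. The decomposition $L=(e+1)F+\sigma+\tau$ is effective, so it suffices to treat the components $F$, $\sigma$, $\tau$ of this sum and any other irreducible $C$. One computes directly
\[
L\cdot F=2,\qquad L\cdot\sigma=(e+1)-2=e-1,\qquad L\cdot\tau=(e+1)-2=e-1,
\]
all nonnegative when $e\ge 1$. For any other irreducible curve $C$, $C$ meets each of $F,\sigma,\tau$ nonnegatively, and either $C$ is contained in a fibre (in which case $C$ equals a smooth fibre, giving $L\cdot C=2$, or $C=N_i$ or $C=N_i'$, giving $L\cdot C=1$), or $C$ is horizontal, in which case $F\cdot C\ge 1$ and therefore $L\cdot C\ge e+1\ge 2$. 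In every case $L\cdot C\ge 0$, so $L$ is nef.

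Since $L$ is nef and $L^2=4e>0$, $L$ is big and nef for $e\ge 1$. For the ampleness statement I would simply observe that when $e\ge 2$ the bound $L\cdot\sigma=L\cdot\tau=e-1\ge 1>0$ holds, while every other irreducible curve satisfies $L\cdot C\ge 1$ by the case analysis above, so $L\cdot C>0$ for all irreducible $C$. Combined with $L^2>0$, the Nakai--Moishezon criterion gives that $L$ is ample. I do not expect any serious obstacle here: the entire argument is a bookkeeping exercise with the explicit description of the $I_2$ fibres and the two sections of $X_{(2),8A_1}$ recalled in Example \ref{X_{(2),8A_1}}, and the only point that needs care is enumerating the possible irreducible curves in the nefness check.
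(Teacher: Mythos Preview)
Your proof is correct and follows essentially the same approach as the paper: compute $L^2=4e$, use $L\cdot N_i=1$ for primitivity, check nefness by intersecting $L$ against the components $F,\sigma,\tau$ of the effective decomposition (which is all that is needed, since any irreducible curve not among these components meets the effective divisor $L$ nonnegatively), and then run the fibre/horizontal case analysis for ampleness when $e\ge 2$. One cosmetic point: in your fibre case you write ``smooth fibre'' but should allow the eight $I_1$ fibres as well; this is harmless since any full fibre is linearly equivalent to $F$ and hence still gives $L\cdot C=2$.
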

\begin{proof}
Let $N$ be an irreducible component of an $I_2$ fibre. Then $(N \cdot L)=1$ which implies that $L$ is primitive.
We have $(L)^2=4e>0$, for $e \geq 1$. Further $L$ has non-negative intersection with all of its components, e.g.\ $(L \cdot \sigma)=(L \cdot \tau)=e-1 \geq 0$ for $e \geq 1$. Thus $L$ is big and nef for $e \geq 1$. If $e \geq 2$ and if $C$ is a curve which dominates $\proj^1$ via the fibration, then $(L \cdot C)>0$; in particular this holds for the sections $\sigma$, $\tau$. Further, if $C$ is now an irreducible component of a fibre, then we again have $(L \cdot C) >0$. Indeed, this is clear if $C \simeq F$ is smooth elliptic and it further holds if $C$ is a component of an $I_2$ fibre, since such a component intersects either $\sigma$ or $\tau$ transversally in one point. It follows that $L$ is ample for $e \geq 2$.
\end{proof}

\begin{prop} \label{type-bloch}
Consider the $f$-invariant, primitive line bundle $L=(e+1)F+\sigma+\tau$ on $X_{(2),8A_1}$ for $e \geq 1$, where $\sigma$, $\tau$ denote the sections and $F$ denotes the class of the generic elliptic fibre. Let $\Lambda$ be the smallest primitive sublattice of $\text{Pic}(X_{(2),8A_1})$ containing $L$ and 
$(H^2(X_{(2),8A_1}, \mathbb{Z})^{f})^{\perp} \simeq E_8(-2)$. Then $\Lambda \simeq \Upsilon_{2e+1}$.
\end{prop}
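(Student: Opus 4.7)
The plan is to produce $\Lambda$ explicitly as an index-$2$ overlattice of $\Phi_{2e+1}$ and then rule out any further extension.

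\textbf{First step: embed $\Phi_{2e+1}$ in $\Lambda$.} The line bundle $L = (e+1)F + \sigma + \tau$ is $f$-invariant, since $f$ (translation by the $2$-torsion section $\tau$) fixes the fibre class $F$ and interchanges $\sigma \leftrightarrow \tau$; a direct intersection calculation gives $L^2 = 4e = 2(2e+1)-2$. Because a Nikulin involution acts trivially on $H^{2,0}(X)$, the anti-invariant sublattice $(H^2(X,\mathbb{Z})^f)^\perp \simeq E_8(-2)$ is of Hodge type $(1,1)$ and therefore sits primitively in $\text{Pic}(X)$. Since $L$ is invariant while $E_8(-2)$ is anti-invariant, these two summands are orthogonal, and combined with the primitivity of $L$ (Lemma \ref{ample-bloch}) we obtain a primitive embedding $\Phi_{2e+1} = \mathbb{Z}L \oplus E_8(-2) \hookrightarrow \text{Pic}(X)$, so $\Lambda \supseteq \Phi_{2e+1}$.

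\textbf{Second step: upgrade to $\Upsilon_{2e+1}$.} I will exhibit a Picard class $x$ with $2x \in \Phi_{2e+1}$ realising the characteristic glue $\frac{1}{2}(L+v)$ of the overlattice $\Upsilon_{2e+1}$. Fix a component $N$ of some $I_2$-fibre meeting $\sigma$, with partner $\tilde N$ (so $N + \tilde N \sim F$). For $e$ odd, take
$$x = \tfrac{e+1}{2}F + \sigma \in \text{Pic}(X), \qquad 2x - L = \sigma - \tau,$$
which lies in $(H^2)^{-f}$ with square $-4$. For $e$ even, take
$$x = \tfrac{e}{2}F + \sigma + N \in \text{Pic}(X), \qquad 2x - L = (\sigma - \tau) + (N - \tilde N),$$
which lies in $(H^2)^{-f}$ with square $-4 + (-8) + 2 \cdot 2 = -8$, using $(\sigma - \tau) \cdot (N - \tilde N) = 2$. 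In both cases, setting $v := 2x - L$, the congruence $v^2 \equiv 4\epsilon \pmod 8$ characterising $\Upsilon_{2e+1}$ is satisfied, with $\epsilon = 1$ when $2e+1 \equiv 3 \pmod 4$ and $\epsilon = 0$ when $2e+1 \equiv 1 \pmod 4$. Hence $\Lambda \supseteq \Upsilon_{2e+1}$.

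\textbf{Third step: no further glue.} To conclude $\Lambda = \Upsilon_{2e+1}$, I show $[\Lambda : \Phi_{2e+1}] = 2$. Any $y \in \Lambda$ decomposes $f$-equivariantly as $y = y^+ + y^-$ with $y^+ \in \mathbb{Q}L$ and $y^- \in E_8(-2) \otimes \mathbb{Q}$; since $\Lambda$ is $f$-stable we have $2y^\pm \in \Phi_{2e+1}$, making $\Lambda / \Phi_{2e+1}$ a $2$-torsion group. Primitivity of $L$ in $\text{Pic}(X)$ forbids a nonzero coset class with $y^- = 0$, and primitivity of $(H^2)^{-f}$ in $\text{Pic}(X)$ (it coincides with the primitive sublattice $(\text{Pic})^{-f}$) forbids a nonzero coset class with $y^+ = 0$. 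So every nontrivial coset has the form $(L+v)/2$ with $v \in E_8(-2)$. Two such classes $(L+v_1)/2$, $(L+v_2)/2 \in \text{Pic}(X)$ differ by $(v_1-v_2)/2 \in \text{Pic}(X) \cap ((H^2)^{-f} \otimes \mathbb{Q}) = E_8(-2)$, forcing $v_1 \equiv v_2 \pmod{2 E_8(-2)}$. Hence there is at most one nontrivial glue class, giving $\Lambda \simeq \Upsilon_{2e+1}$.

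The main obstacle lies in the bookkeeping of step two: producing $v$ of the correct square-parity for each residue of $e$ requires using that the full rank-$8$ lattice $(\text{Pic})^{-f} \simeq E_8(-2)$ contains not only the obvious classes $\sigma-\tau$ and $N_i - \tilde N_i$ but also their even-subset half-sums $\sum_{i \in S}N_i - \tfrac{|S|}{2}F$, which account for the difference in discriminant between the naive rank-$8$ span and the true $E_8(-2)$.
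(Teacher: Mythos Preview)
Your argument is correct and takes a genuinely different route from the paper. The paper first invokes the Sarti--van Geemen classification to reduce to the dichotomy $\Lambda \in \{\Phi_{2e+1}, \Upsilon_{2e+1}\}$, and then rules out $\Phi_{2e+1}$ \emph{geometrically}: it observes that $L$ descends to a line bundle $M$ on the singular quotient $X_{(2),8A_1}/\langle f\rangle$, deforms to a one-parameter family of $\Lambda$-polarised K3 surfaces with generic Picard number nine, and then appeals to the criterion in \cite[Prop.\ 2.7]{sar-gee} that descent of the invariant polarisation to the quotient forces $\Lambda = \Upsilon_{2e+1}$.

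By contrast, your proof is a direct lattice computation inside $\text{Pic}(X_{(2),8A_1})$: you exhibit the glue vector $(L+v)/2$ explicitly (with $v = \sigma-\tau$ or $v = (\sigma-\tau)+(N-\tilde N)$ according to the parity of $e$), and then bound the index by a standard primitivity argument. This is more elementary and entirely self-contained --- it avoids both the deformation argument and the external descent criterion. The paper's approach, on the other hand, illustrates the general principle relating the type of $\Lambda$ to descent of the polarisation, which is of independent interest for the rest of that chapter.

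One small remark: your closing paragraph about even-subset half-sums is unnecessary for your argument as written. The classes $v$ you actually use are visibly anti-invariant integral classes, hence lie in $(H^2)^{-f}\simeq E_8(-2)$ automatically; you never need to identify the full $E_8(-2)$ explicitly, only to know that your chosen $v$ lands inside it and has the correct square modulo $8$.
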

\begin{proof}
By definition $$\Lambda=(\mathbb{Z}L \oplus (H^2(X_{(2),8A_1}, \mathbb{Z})^{f})^{\perp}) \otimes_{\mathbb{Z}} \mathbb{Q} \cap \text{Pic}(X_{(2),8A_1}).$$ We have $(L)^2=4e$. From the proof of \cite[Prop.\ 2.2]{sar-gee}, the fact that $\Lambda$ is an even, rank 9 lattice of signature $(1,8)$ which contains $\mathbb{Z}L$ and $E_8(-2)$ as primitive sublattices forces $\Lambda \in \{\Phi_{2e+1}, \Upsilon_{2e+1} \}$. So we have to show
$\Lambda \neq \Phi_{2e+1}$.

Let $Y_{(2),8A_1}$ be the minimal desingularisation of the quotient of $X_{(2),8A_1}$ by $f$. Then $Y_{(2),8A_1}$ is itself an elliptic K3 surface with $8$ fibres of type $I_1$ and $8$ fibres of type $I_2$, where the configuration of the singular fibres is obtained by interchanging the $I_1$ and $I_2$ fibres of $X_{(2),8A_1}$, \cite[\S 4]{sar-gee}. Further the fixed points of $f$ correspond to the $8$ nodes of the $I_1$ fibres of $X_{(2),8A_1}$. The quotient $\pi: X_{(2),8A_1} \dashrightarrow Y_{(2),8A_1}$ is defined on an open subset containing the sections $\sigma, \tau$ and the general fibre $F$. Let $\hat{\sigma}$ resp.\ $\hat{F}$ denote the class of the (reduced) images $\pi(\sigma)$ resp.\ $\pi(F)$ in $\text{Pic}(Y_{(2),8A_1})$. Set $M=(e+1)\hat{F}+\hat{\sigma}$. Then $\pi^*(M)=L$ and the invariant line bundle $L$ descends to a line bundle on the (singular) quotient $X_{(2),8A_1} / G$ for $G=<f>$. 

Let $X_t$ be a one-dimensional, flat family of $\Lambda$-polarised K3 surfaces specialising to $X_0=X_{(2),8A_1}$ and such that $\text{Pic}(X_t) \simeq \Lambda$ for $t \neq 0$. Let $\hat{X}_t$ be the quotient of $X_t$ by the Nikulin involution $f_t$, let $Y_t \to \hat{X}_t$ be the minimal desingularisation, and let $\pi_t: X_t \dashrightarrow Y_t$ be the quotient. Let $L_t$ denote the image of $L \in \Lambda \simeq \text{Pic}(X_t)$. For $t$ close to zero and for a general divisor $D_t \in |L_t |$, $D_t$ avoids the fixed points of $f_t$, since this holds for $t=0$. For $t$ general we have a primitive embedding $\text{Pic}(Y_t) \hookrightarrow \text{Pic}(Y_{(2),8A_1})$, so the relation $\pi_*(D_0) \simeq 2 M$ gives a line bundle $M_t:=\frac{1}{2} \pi_*D_t \in \text{Pic}(Y_t)$. For $t$ general and $D'_t \in |M_t|$ general, $D'_t$ does not meet the exceptional divisors of $Y_t$, as this holds for $t=0$, and hence $M_t$ descends to a line bundle on $\hat{X}_t$. We have $L_t \simeq \pi^*M_t$, from the fact that this relation holds for $t=0$. Thus $L_t$ descends to a line bundle on the quotient $\hat{X}_t$ for $t$ general. Since $X_t$ has Picard number nine for $t \neq 0$, \cite[Prop.\ 2.7]{sar-gee} applies and shows that this is only possible in the case $\Lambda= \Upsilon_{2e+1}$.
\end{proof}

\section{The Bloch--Beilinson conjectures for Nikulin involutions} \label{BloBei}
The starting point for our study of the action of Nikulin involutions on the Chow group of points on a K3 surface is the following proposition.
\begin{prop} \label{chow-starting-pt}
Let $f: X \to X$ be a symplectic automorphism of finite order on a projective K3 surface over $\C$. Assume there exists a dominating family of integral
curves $C_t \seq X$ of geometric genus one, such that for generic $t$ the following two conditions are satisfied:
\begin{enumerate}
\item $C_t$ avoids the fixed points of $f$.
\item $C_t$ is invariant under $f$.
\end{enumerate}
Then $f^*=\text{id}$ on $CH^2(X)$.
\end{prop}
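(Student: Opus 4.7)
The plan is to exploit the family $\{C_t\}$ to reduce the statement to Roitman's torsion-freeness theorem. Let $n$ be the order of $f$. For a generic $t$ in the base $T$ of the family, let $\pi_t \colon \widetilde{C}_t \to C_t \hookrightarrow X$ denote the normalisation; by hypothesis $\widetilde{C}_t$ is a smooth projective curve of genus one. Since $C_t$ is $f$-invariant, $f$ restricts to an automorphism of $C_t$ which lifts by the universal property of normalisation to an automorphism $\widetilde{f}_t \colon \widetilde{C}_t \to \widetilde{C}_t$ satisfying $\pi_t \circ \widetilde{f}_t = f \circ \pi_t$; in particular $\widetilde{f}_t$ has order dividing $n$.

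The first key observation is that $\widetilde{f}_t$ has \emph{no} fixed points. Indeed, if $\widetilde{q}$ were a fixed point of $\widetilde{f}_t$, then $\pi_t(\widetilde{q}) \in C_t$ would be a fixed point of $f$, violating the hypothesis that $C_t$ avoids $\mathrm{Fix}(f)$. Therefore $\widetilde{f}_t$ is a fixed-point-free automorphism of finite order on an elliptic curve, and so, after choosing a base point to identify $\widetilde{C}_t$ with its Jacobian, $\widetilde{f}_t$ is translation by a nonzero $n$-torsion element $\tau_t \in \widetilde{C}_t$.

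Next, I would use this to show that $f^* = \mathrm{id}$ on the class of every point of $X$ lying on the image of the family. Fix such a point $p$ and a generic $t$ with $p \in C_t^{\mathrm{sm}}$, and let $q = \pi_t^{-1}(p) \in \widetilde{C}_t$. Since $f^{-1} = f^{n-1}$ preserves $C_t$ and takes $p$ to $\pi_t(\widetilde{f}_t^{-1}(q))$, we have
\begin{equation*}
f^*[p] - [p] \;=\; \pi_{t,*}\bigl([\widetilde{f}_t^{-1}(q)] - [q]\bigr) \;\in\; CH^2(X).
\end{equation*}
Under the identification $\mathrm{Pic}^0(\widetilde{C}_t) \simeq \widetilde{C}_t$ the class $[\widetilde{f}_t^{-1}(q)] - [q]$ corresponds to $-\tau_t$ and is thus $n$-torsion in $CH^1(\widetilde{C}_t)$. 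Pushing forward, $n\cdot(f^*[p]-[p]) = 0$ in $CH^2(X)$. Since $f^*[p]-[p]$ has degree zero, it lies in $CH^2(X)_0$, which is torsion-free by Roitman's theorem (as $\mathrm{Alb}(X)=0$ for a K3 surface). Hence $f^*[p] = [p]$ for every $p$ lying in the dense open subset $U \subseteq X$ swept out by the family.

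Finally, I would promote this pointwise identity to the full statement $f^* = \mathrm{id}$ on $CH^2(X)$. Since $CH^2(X)$ is generated by classes of closed points and since every closed point of $X$ is rationally equivalent to a zero-cycle supported in any nonempty Zariski-open subset of $X$ (apply the elementary moving lemma: for any $p\in X$ take a general very ample curve through $p$, move $p$ on it linearly to land in $U$), the equality $f^*[p]=[p]$ on $U$ extends to all of $CH^2(X)$. The only real subtlety in the argument is verifying that $\widetilde{f}_t$ is a translation rather than a nontrivial rotation; this is precisely where the hypothesis that $C_t$ avoids the fixed locus of $f$ is used, and it is the step on which the proof turns.
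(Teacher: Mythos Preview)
Your proof is correct and follows essentially the same strategy as the paper's: lift $f$ to the normalisation, use fixed-point-freeness to see that the lift is a translation by a torsion point, push forward to get a torsion class in $CH^2(X)$, and kill it with Roitman. There are only two cosmetic differences worth noting. First, where you argue directly that a fixed-point-free finite-order automorphism of an elliptic curve is a translation, the paper instead passes through Hurwitz on the quotient $E/\langle\tilde f\rangle$ to see that the quotient is again genus one, making the covering an isogeny; these are equivalent observations. Second, for the passage from a dense set of points to all of $CH^2(X)$, the paper invokes Mumford's lemma that $\{x:[x]=[f(x)]\}$ is a countable union of closed subsets (hence all of $X$ once it contains a very general point), whereas you use a moving-lemma argument to support any zero-cycle in the open swept-out locus. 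Both routes are valid; the paper's is the more traditional one in this context.
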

\begin{proof}
The set of points $Z_1$ of $x \in X$ such that $[x]=[f(x)] \in CH^2(X)$ is a countable union of closed subsets of $X$ by \cite[Lemma 3]{mumf-chow}. Thus
it suffices to prove that if $x \in X$ is general in the sense that $x$ lies outside a countable union $Z_2$ of proper closed subsets, then $[x]=[f(x)] \in CH^2(X)$. Indeed assume there exists such a $Z_2 \neq X$ and and suppose for a contradiction that $Z_1 \neq X$. Then $X=Z_1 \cup Z_2$ would be a countable union of proper closed subsets, which is impossible over $k=\C$ (proof: restrict to a generic, very-ample divisor and use that $\C$ is uncountable). It thus suffices to show that if $C_t \seq X$ satisfies conditions 1.\ and 2.\ as above then $[x]=[f(x)] \in CH^2(X)$ for all $[x] \in C_t$. Repeating the argument above, it suffices to show $[x]=[f(x)]$ for $x \in C_t$ generic (or even general).

Assume  $C_t$ satisfies conditions 1.\ and 2.\ and let $E \to C_t$ be the normalisation. Then ${f_t}_{|_{C_t}} : C_t \to C_t$ is a finite automorphism. By the universal property of normalisation (see \cite[\S 4.1.2]{liu}), this induces a finite automorphism $\tilde{f}: E \to E$. As ${f_t}_{|_{C_t}}$ is fixed point free, so is $\tilde{f}$. Let $G$ be the finite group of automorphisms of $E$ generated by $\tilde{f}$ and set $D=E / G$. By the Hurwitz formula, $D$ is a smooth curve of genus one. We may choose origins for $E$, $D$, such that the projection $\pi: E \to D$ is a morphism of elliptic curves. So we can view $D$ as a quotient of $E \simeq \text{Pic}^0(E)$ by a finite group $\Gamma \seq \text{Pic}^0(E)$. Thus two points over the fibre of $\pi$ differ by an element of $\Gamma$. Hence $\mathcal{O}_E(x-\tilde{f}(x))$ is a torsion line bundle for any $x \in E$. The composition $E \to X$ of the normalisation morphism with $C_t \hookrightarrow X$ induces a group homomorphism $CH^1(E) \simeq \text{Pic}(E) \to CH^2(X)$. Thus if $x \in C_t$ is a smooth point, $[x-f(x)] \in CH^2(X)$ is a torsion element. Since $CH^2(X)$ is torsion free by Roitman's theorem, see \cite[Ch. 22]{voisin-theorie}, this forces $[x-f(x)]=0$ as required.
\end{proof}

Recall the elliptic K3 surface $X_{(2),8A_1}$ from Example \ref{X_{(2),8A_1}} and let $f: X_{(2),8A_1} \to X_{(2),8A_1}$ be the Nikulin involution $f$
induced by translation by the section $\tau$. Let $L=(e+1)F+\sigma+\tau$ be the $f$-invariant line bundle on $X_{(2),8A_1}$, where $\sigma$, $\tau$ denote the sections and $F$ denotes a generic elliptic fibre. We let $N$ be an $I_2$ fibre of $X_{(2),8A_1}$. As usual, let $Y_{(2),8A_1}$
be the desingularisation of the quotient of $X_{(2),8A_1}$ by $f$, and let $\pi: X_{(2),8A_1} \dashrightarrow Y_{(2),8A_1}$ be the rational quotient map.
Since $N$, $F$, $\sigma$ and $\tau$ avoid the fixed points, they lie in the domain of definition of $\pi$, so we may define $\hat{N}:=\pi(N)$, $\hat{F}:= \pi(F)$, $\hat{\sigma}=\pi(\sigma)=\pi(\tau)$. Note that $\hat{N}$ is a fibre of type $I_1$, \cite[\S 4]{sar-gee}. Now consider the divisor
$$C:= eN+F+\sigma+\tau \in |L| $$
on $X_{(2),8A_1}$.
This divisor is $f$-invariant and avoids the fixed points of $f$. Consider further the divisor
$$\hat{C}:=e \hat{N}+\hat{F}+\hat{\sigma} \in |M|,$$
on $Y_{(2),8A_1}$, where $M := \mathcal{O}_{Y_{(2),8A_1}}((e+1)\hat{F}+\hat{\sigma})$. As $\hat{C}$ avoids the exceptional divisors of $Y_{(2),8A_1}$, we may treat it as a divisor of the singular quotient $ \hat{X}_{(2),8A_1}$ of $X_{(2),8A_1}$ by $f$. We have $\pi^*{M} \simeq L$ and $\pi^{-1}(\hat{C}) = C$.

From Prop \ref{lattice-quotient}, we know that there is a primitive embedding $\Phi'_{e+1} \hookrightarrow \text{Pic}(Y_{(2),8A_1})$. In fact, $M$ is perpendicular to the exceptional divisors and is primitive, since there is a component $R$ of an $I_2$ fibre with $(R \cdot M)=1$. So we have a primitive embedding $\mathbb{Z}M \oplus \mathfrak{N} \hookrightarrow \text{Pic}(Y_{(2),8A_1})$ where $\mathfrak{N}$ is the Nikulin lattice (the classes $e_1, \ldots, e_8$ are sent to the eight components of $I_2$ fibres which avoid $\hat{\sigma}$). Note that if $e \geq 2$, then $M$ is big and nef and $3M-e_1-\ldots-e_8$ is ample.
\begin{lem}
There exists an unramified stable map $$h: D \to  Y_{(2),8A_1}$$ with image contained in $\hat{C}$ and $h_*{D} \simeq M$ and such that 
\begin{itemize}
\item Each component of $D$ is smooth.
\item $D$ has arithmetic genus one.
\end{itemize}
\end{lem}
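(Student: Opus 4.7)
The plan is to construct $D$ explicitly as a chain of smooth rational curves with an elliptic curve attached, and then verify the unramifiedness pointwise. I will take $e$ copies $R_{1},\dots,R_{e}$ of $\mathbb{P}^{1}$, each equipped with a morphism $\nu_{i}\colon R_{i}\to\hat{N}$ realising the normalisation of the nodal rational curve $\hat{N}$. On each $R_{i}$ I have two marked points $a_{i},b_{i}$ lying over the node of $\hat{N}$ (with $\nu_{i}$ mapping them to the two distinct tangent directions of $\hat{N}$) together with a third marked point $c_{i}$ with $\nu_{i}(c_{i})=x_{N}:=\hat{N}\cap\hat{\sigma}$. I then set $\Sigma=\hat{\sigma}$ and denote by $y_{0}\in\Sigma$ the unique intersection of $\Sigma$ with $\hat{F}$. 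I define $D$ as the connected nodal curve obtained by gluing these pieces into a chain
\[
\hat{F}\;-\;\Sigma\;-\;R_{1}\;-\;R_{2}\;-\cdots-\;R_{e},
\]
namely: identify $y_{0}\in\hat{F}$ with $y_{0}\in\Sigma$, identify $x_{N}\in\Sigma$ with $c_{1}\in R_{1}$, and for each $1\le i<e$ identify $b_{i}\in R_{i}$ with $a_{i+1}\in R_{i+1}$. The map $h$ is then defined componentwise: the identity on $\hat{F}$ and on $\Sigma$, and the normalisation $\nu_{i}$ on each $R_{i}$.

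Counting nodes, $D$ has $c=e+2$ smooth components and $n=e+1$ nodes; since the only component of positive geometric genus is $\hat{F}$, Mayer--Vietoris (or the standard formula for nodal curves) gives
\[
p_{a}(D)=\sum_{i}g(D_{i})+n-c+1=1+(e+1)-(e+2)+1=1,
\]
as required. For the pushforward, every component maps birationally onto its image, so
\[
h_{*}D=e\,\hat{N}+\hat{F}+\hat{\sigma}\sim(e+1)\hat{F}+\hat{\sigma}=M,
\]
using $\hat{N}\sim\hat{F}$ as elements of $\mathrm{Pic}(Y_{(2),8A_{1}})$. The image of $h$ is by construction contained in $\hat{C}$, and $h$ is visibly a stable map since no component is contracted.

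The substantive check is that $h$ is unramified, which must be verified at every node of $D$ (on smooth components the verification is immediate, since each $\nu_{i}$ and each identity morphism is an unramified morphism of smooth curves). At the node $y_{0}$ where $\hat{F}$ meets $\Sigma$, the two branches map to the tangent lines of $\hat{F}$ and $\hat{\sigma}$ at $y_{0}$; as a section meets a smooth fibre transversally on the relatively minimal elliptic surface $Y_{(2),8A_{1}}\to\mathbb{P}^{1}$, these tangent lines are linearly independent. At the node $x_{N}=c_{1}$ where $\Sigma$ meets $R_{1}$, the two branches map to the tangent lines of $\hat{\sigma}$ and $\hat{N}$ at $x_{N}$, which are again linearly independent since $\hat{\sigma}$ is a section. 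Finally, at each node of $D$ where $R_{i}$ meets $R_{i+1}$, the two branches both map to the singular point of $\hat{N}$, but—and this is the key point where the initial choice matters—because $b_{i}$ and $a_{i+1}$ were specified to lie over the \emph{distinct} tangent directions of the node of $\hat{N}$, the images of the two branches span the tangent plane of $Y_{(2),8A_{1}}$ at that node. Thus $h^{*}\Omega_{Y}\to\Omega_{D}$ is surjective at every point, proving $h$ is unramified and completing the construction.

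The only mild obstacle is the combinatorial discipline in choosing the identifications $b_{i}\sim a_{i+1}$ so as to alternate branches at the node of $\hat{N}$; this is automatic from the fact that $\hat{N}$ has exactly two analytic branches at its unique node.
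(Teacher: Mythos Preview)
Your construction is correct and is essentially the same as the paper's: both build $D$ as a chain $\hat{F}-\hat{\sigma}-R_1-\cdots-R_e$ with each $R_i$ mapping to $\hat{N}$ via the normalisation, glued at consecutive nodes so that adjacent $R_i$'s hit opposite branches of the node of $\hat{N}$. You give a more explicit verification of unramifiedness at the nodes than the paper does (the paper simply asserts existence and uniqueness of the unramified map), but the underlying idea is identical.
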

\begin{proof}
Let $D$ be the unique connected, nodal curve with components $D_1, \ldots, D_{e+2}$ such that
$$ D_1 \simeq \hat{F}, \; D_2 \simeq \hat{\sigma}, \; D_i \simeq \proj^1 \text{  for $3 \leq i \leq e+2$}$$
and such that $D_l \cap D_{l+1} = \{ p_l \}$ for some point $p_l \in D$ with $p_1=\hat{F} \cap \hat{\sigma}$, and $D_l \cap D_{m} = \emptyset$ for
$m \geq l+2$. The stable map $h$ is defined to be the unique unramified morphism $h: D \to  Y_{(2),8A_1}$ with $h(D) \seq \hat{C}$,  $h_*{D} \simeq M$ and such that
$h_{|_{D_1}}$ is the closed immersion $\hat{F} \hookrightarrow Y_{(2),8A_1}$, $h_{|_{D_2}}$ is the closed immersion $\hat{\sigma} \hookrightarrow Y_{(2),8A_1}$ and $h_{|_{D_i}}$ is the normalisation $ \proj^1 \to \hat{N}$. This is illustrated in the diagram below (taken from \cite{huy-kem}):
$$\setlength{\unitlength}{.013in}
\begin{picture}(124,100)
\put(42,40){\tiny\mbox{$D_1$}}
\put(22,65){\tiny\mbox{$D_2$}}
\put(-4,85){\tiny\mbox{$D_3$}}
\put(-94,85){\tiny\mbox{$D_n$}}
\put(-150,80){\qbezier(60,0)(40,-0)(75,-80)}
\put(-150,80){\qbezier(60,0)(80,-0)(61.5,-41.4)}
\put(-150,80){\qbezier(59.6,-45)(56,-60)(43,-80)}
\put(-125,80){\qbezier(60,0)(40,-0)(75,-80)}
\put(-125,80){\qbezier(60,0)(80,-0)(61.5,-41.4)}
\put(-125,80){\qbezier(59.6,-45)(56,-60)(43,-80)}
\put(-77.8,6.4){\circle*{2}}
\put(9.8,62){\circle*{2}}
\put(40,62){\circle*{2}}
\put(-52.8,6.4){\circle*{2}}
\put(-40,40){\mbox{$\ldots$}}
\put(40,5){\line(0,1){70}}
\put(5.7,62){\line(1,0){50}}
\put(90,55){\tiny\mbox{$\to$}}
\put(-60,80){\qbezier(60,0)(40,-0)(75,-80)}
\put(-60,80){\qbezier(60,0)(80,-0)(61.5,-41.4)}
\put(-60,80){\qbezier(59.6,-45)(56,-60)(43,-80)}
\put(-12.7,6.4){\circle*{2}}
{  \linethickness{0.28mm}
\put(110,80){\qbezier(60,0)(40,-0)(65,-80)}
\put(110,80){\qbezier(60,0)(80,-0)(55,-80)}
\put(170,16.6){\circle*{3}}

}
\put(230,5){\line(0,1){70}}
\put(174.7,62){\line(1,0){70}}
\put(178.9,62.1){\circle*{2}}
\put(230,62.1){\circle*{2}}
\put(200,65){\tiny\mbox{$\hat{\sigma}$}}
\put(180.5,40){\tiny\mbox{$e\hat{N}$}}
\put(232.5,40){\tiny\mbox{$\hat{F}$}}
\end{picture}
$$
\end{proof}

\begin{lem}
Assume $e \geq 2$. Let $T$ be a smooth, integral scheme of finite type over $\C$ with distinguished point $0 \in T$. Let $\mathcal{Y} \to T$ be a projective, flat-family of $\Phi'_{e+1}$-polarised K3 surfaces such that $\mathcal{Y}_0 \simeq Y_{(2),8A_1}$ and 
$$\Phi'_{e+1} \simeq \text{Pic}(\mathcal{Y}_t)$$ for $t \neq 0 \in T$. Assume that $e_i \in \text{Pic}(\mathcal{Y}_t)$ are the classes of integral $(-2)$-curves for all $t \in T$, $1 \leq i \leq 8$, and that the image of $M \in \Phi'_{e+1}$ gives a big and nef class $M_t \in \text{Pic}(\mathcal{Y}_t)$ for all $t$. For any $t \in T$, consider the Nikulin surface $(\mathcal{X}_t, f_t)$ with quotient map $\pi_t: \mathcal{X}_t \dashrightarrow \mathcal{Y}_t$ defined by $\frac{1}{2}(e_1+\ldots+e_8)$. Then, there exists a smooth curve $T'$ with a morphism $T' \to T$, dominant about the origin, such that the stable map $h: D \to  Y_{(2),8A_1}$ from the previous lemma deforms to a family of stable maps $$h_t: D_t \to \mathcal{Y}_t$$ 
for $t \in T'$. Furthermore, for $t$ general
\begin{itemize}
\item $h_t$ is unramified and $h_t(D_t)$ is an integral curve of geometric genus one which avoids the exceptional divisors on $\mathcal{Y}_t$.
\item $\pi^{-1}_t(h_t(D_t))$ is an integral, geometric genus one curve which is $f_t$ invariant.
\end{itemize}
\end{lem}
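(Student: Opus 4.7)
The plan is to combine the dimension bound of Proposition \ref{main-good-dimension-count} and the relative dominance of Proposition \ref{relative-dominant} with a lattice-theoretic analysis of the degeneration $h$. Since the components $D_1,\ldots,D_{e+2}$ of $D$ are smooth and arranged in a chain, Lemma \ref{stablem} gives $h^0(N_h) \leq p(D) = 1$. Since $\mathcal{M}_t = M_t$ is big and nef and $(M_t)^2 = 2e \geq -2$, Proposition \ref{relative-dominant} shows that $\mathcal{W}(\mathcal{Y},\mathcal{M},1) \to T$ is dominant near $[h]$ of local fibre dimension one; after restricting to a suitable curve through $[h]$ in an irreducible component of $\mathcal{W}(\mathcal{Y},\mathcal{M},1)$, and a finite base change if needed, we obtain a smooth curve $T'$ dominating $T$ about $0$ and a family $\{h_t: D_t \to \mathcal{Y}_t\}_{t \in T'}$.

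The main obstacle is showing $D_t$ is integral for general $t$, since the relative Picard rank is $9$ rather than $1$ as in Proposition \ref{prim-cor}. The key observation is lattice-theoretic: under the primitive embedding $\Phi'_{e+1} \hookrightarrow \text{Pic}(Y_{(2),8A_1})$, the intersections $\hat F^2 = 0$, $(\hat F \cdot \hat \sigma) = 1$ and $(\hat F \cdot e_i) = (\hat \sigma \cdot e_i) = 0$ imply that a class $c_1 \hat F + c_2 \hat \sigma$ lies in $\Phi'_{e+1}$ if and only if $c_1 = (e+1)c_2$. Any connected subchain of $D$ pushes forward to $c_1 \hat F + c_2 \hat \sigma$ with $c_1 \in \{0,\ldots,e+1\}$ and $c_2 \in \{0,1\}$, so the only subchains whose push-forward class belongs to $\Phi'_{e+1}$ are the empty one and $D$ itself. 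By Zariski connectedness, each irreducible component $E \subseteq D_t$ specialises to a connected subchain whose push-forward class equals $h_{t,*}[E]$ under $\Phi'_{e+1} \simeq \text{Pic}(\mathcal{Y}_t)$; since $h_t$ is unramified, $h_{t,*}[E] \neq 0$ on every non-empty component, so there is exactly one component, and $D_t$ is integral.

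Once $D_t$ is integral, openness of the unramified and birational-onto-image conditions (both hold at $t=0$ on $D_1$, $D_2$, and primitivity of $M$ in $\Phi'_{e+1}$ forces the degree of $h_t$ onto its image to be one) implies that $h_t(D_t)$ is an integral nodal curve of geometric genus $p_a(D_t) = 1$. Since $[h_t(D_t)] \cdot [e_i] = (M \cdot e_i) = 0$ and $[h_t(D_t)] \neq [e_i]$ with $h_t(D_t)$ integral, one has $h_t(D_t) \cap e_i = \emptyset$ for each $i$, so $h_t(D_t)$ avoids the exceptional divisors. The pullback $\pi_t^{-1}(h_t(D_t))$ is then a well-defined $f_t$-invariant effective Cartier divisor on $\mathcal{X}_t$, \'etale of degree two over $h_t(D_t)$, of class $\pi_t^* M_t = L_t$ (using $\pi^* \hat F = F$ and $\pi^* \hat \sigma = \sigma + \tau$), and of arithmetic genus one by Riemann--Hurwitz.

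It remains to rule out reducibility of $\pi_t^{-1}(h_t(D_t))$. Since the $f_t$-invariant sublattice of $\text{Pic}(\mathcal{X}_t) \simeq \Upsilon_{2e+1}$ equals $\mathbb{Z}L$ (as $j$ acts as $-1$ on $E_8(-2)$ and $j(\tfrac{1}{2}(L+v)) = \tfrac{1}{2}(L+v) - v$), a reducible splitting must have the form $A_t + f_t(A_t)$ with $A_t$ integral and $A_t \neq f_t(A_t)$. Specialising to $t=0$ would give $C = eN + F + \sigma + \tau = A_0 + f(A_0)$ with $A_0 \neq f(A_0)$; but $F$ appears in $C$ with multiplicity one and $f(F) = F$ as a divisor, so the multiplicities of $F$ in $A_0$ and $f(A_0)$ must coincide, forcing $2m = 1$, a contradiction. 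Hence $\pi_t^{-1}(h_t(D_t))$ is integral, $f_t$-invariant, of geometric genus one.
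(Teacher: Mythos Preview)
Your proof is correct and follows the same overall strategy as the paper. The one substantive difference is your argument for the integrality of $D_t$: the paper first observes that $h_t(D_t)$ avoids the exceptional divisors (open condition, true at $t=0$), then passes to the singular quotient $\hat{\mathcal{X}}_t$ where $\text{Pic}(\hat{\mathcal{X}}_t)\simeq\mathbb{Z}M_t$, so that any effective decomposition of $h_{t,*}D_t$ is impossible. You instead specialise components of $D_t$ to connected subchains of $D$ and use that a class $c_1\hat F+c_2\hat\sigma$ lies in the image of $\Phi'_{e+1}\hookrightarrow\text{Pic}(Y_{(2),8A_1})$ only when $c_1=(e+1)c_2$. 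Both arguments are valid; yours is closer in spirit to Proposition~\ref{prim-cor}, while the paper's is shorter once one knows $\text{Pic}(\hat{\mathcal{X}}_t)$ has rank one.

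Two minor remarks. First, when you write that $\pi_t^{-1}(h_t(D_t))$ has ``arithmetic genus one by Riemann--Hurwitz'', you mean \emph{geometric} genus one: the class is $L_t$ with $(L_t)^2=4e$, so the arithmetic genus is $2e+1$; Riemann--Hurwitz applies to the normalisation (an \'etale double cover of the genus-one curve $D_t$). Second, your appeal to the $f_t$-invariant sublattice of $\text{Pic}(\mathcal{X}_t)\simeq\Upsilon_{2e+1}$ being $\mathbb{Z}L$ is more than needed and requires knowing $\text{Pic}(\mathcal{X}_t)\simeq\Upsilon_{2e+1}$ (not stated). The simpler observation, implicit in the paper, suffices: if $\pi_t^{-1}(h_t(D_t))=A_t\cup B_t$ with each component mapping birationally to $h_t(D_t)$, then $f_t$ cannot fix $A_t$ (else $A_t$ would lie in the finite fixed locus of $f_t$), so $B_t=f_t(A_t)$.
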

\begin{proof}
After replacing $T$ by an \'etale base change, we may assume we have effective, $T$-relatively flat divisors $E_i \in \text{Pic}(\mathcal{Y})$ for $1 \leq i \leq 8$ with $E_{i,s} \simeq e_i \in \text{Pic}(\mathcal{Y}_s)$, and we may assume that $\frac{1}{2}(E_1+\ldots+E_8)$ is the class of a $T$-flat line bundle. We may further assume that we have a $T$-flat line bundle $\mathcal{M} \in \text{Pic}(\mathcal{Y})$ with $\mathcal{M}_t \simeq M_t$. Then $\frac{1}{2}(E_1+\ldots+E_8)$ defines a flat family $\mathcal{X} \to T$ of Nikulin surfaces, with quotient maps $\pi_t: \mathcal{X}_t \dashrightarrow \mathcal{Y}_t$.

From Proposition \ref{relative-dominant}, the stable map $h: D \to  Y_{(2),8A_1}$ from the previous lemma deforms to a one-dimensional family of stable maps $$h_t: D_t \to \mathcal{Y}_t$$ for $t \in T'$, where $T' \to T$ is dominant about $0 \in T$. The curve $D_t$ has arithmetic genus one and, for general $t \in T$, $h_t$ is unramified and $h_t(D_t)$ does not meet the exceptional divisors. Thus we may consider ${h_t}_*(D_t)$ as a divisor on $\hat{X}_t$, where $ \hat{X}_t$ denotes the singular quotient of $\mathcal{X}_t$ by $f_t$. Since we are assuming $\Phi'_{e+1} \simeq \text{Pic}(\mathcal{Y}_t)$ for $t \neq 0 \in T$, we have that $\text{Pic}(\hat{X}_t) \simeq \mathbb{Z} M_t$, which implies that both $D_t$ and $h_t(D_t)$ are integral curves of geometric genus one, for $t \neq 0$ general. 

Now consider the curve $\pi^{-1}_t(h_t(D_t)) \in \mathcal{X}_t$. This lies in the unramified locus of $\pi_t: \mathcal{X}_t \to \hat{X}_t$, so there are two possibilities. In the first possibility, for general $t$, $\pi^{-1}_t(h_t(D_t))$ is an integral, geometric genus one curve which is $f_t$ invariant, and we are done. In the second case, $\pi^{-1}_t(h_t(D_t))$ has two component $C_{1,t}$ and $C_{2,t}$ which are interchanged by $f_t$ for $t$ general. But then, if $T^o \seq T'$ is an affine open subset about $0$, there exists a finite and surjective base change $S \to T^o$ such that $C_{1,t}$ and $C_{2,t}$ can be extended to deformations $\mathcal{C}_1 \to S$ and $\mathcal{C}_2 \to S$, cf.\ \cite[\href{http://stacks.math.columbia.edu/tag/0551}{Tag 0551}]{stacks-project}. Further, $C_{1,t}$ and $C_{2,t}$ deform (over $S$) to connected subcurves $C_1$ and $C_2$ of $$C=\pi^{-1}(\hat{C})=eN+F+\sigma+\tau \seq X_{(2),8A_1},$$ with $C_1+C_2=C$. Without loss of generality, assume $C_1$ contains $F$. Since $C_{2,t}=f_t(C_{1,t})$, this implies $C_2$ contains $F=f_t(F)$, contradicting that $F$ occurs with multiplicity one in $C$.
\end{proof}

\begin{thm} \label{chow-thm}
Let $X$ be a generic $\Upsilon_{2e+1}$-polarised K3 surface for any integer $e \geq 2$, and let $f$ be the Nikulin involution on $X$ corresponding to the embedding $\Upsilon_{2e+1} \hookrightarrow \text{Pic}(X)$. Then Conjecture \ref{bloch} holds for $f: X \to X$.
\end{thm}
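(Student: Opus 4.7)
The plan is to verify the hypotheses of Proposition \ref{chow-starting-pt} for a generic $\Upsilon_{2e+1}$-polarised K3 surface $X$ with its Nikulin involution $f$: I will exhibit a dominating one-parameter family of integral $f$-invariant curves of geometric genus one avoiding the eight fixed points of $f$. Because the moduli space of $\Upsilon_{2e+1}$-polarised K3 surfaces is irreducible, it suffices to build such a family on a nonempty Zariski-open subset of this moduli, and I will achieve this by degenerating to a distinguished boundary point where an explicit family is visible.

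The boundary point of choice is the elliptic Nikulin surface $X_{(2),8A_1}$ from Example \ref{X_{(2),8A_1}} equipped with the $f$-invariant polarisation $L = (e+1)F + \sigma + \tau$; by Proposition \ref{type-bloch} this indeed realises $X_{(2),8A_1}$ as $\Upsilon_{2e+1}$-polarised, so I may fix a flat deformation $\mathcal{X} \to T$ over a smooth one-dimensional base with $\mathcal{X}_0 \simeq X_{(2),8A_1}$ and with $\text{Pic}(\mathcal{X}_t) \simeq \Upsilon_{2e+1}$ for $t \neq 0$. Via Proposition \ref{lattice-quotient}, the desingularised quotient family $\mathcal{Y} \to T$ is then a deformation of $Y_{(2),8A_1}$ as a $\Phi'_{e+1}$-polarised K3 surface with class $M$ satisfying $\pi^*M \simeq L$. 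On the central fibre I have the $f$-invariant divisor $\hat{C} = e\hat{N} + \hat{F} + \hat{\sigma} \in |M|$, and the first of the two lemmas preceding the theorem produces an unramified stable map $h: D \to Y_{(2),8A_1}$ from a chain-type nodal curve of arithmetic genus one with image $\hat{C}$.

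Next I will use the deformation theory of Chapter \ref{unramSect}: since the components $D_1,\ldots,D_{e+2}$ of $D$ are smooth and arranged in a chain, Lemma \ref{stablem} gives $h^0(N_h) \leq p(D) = 1$, and Proposition \ref{relative-dominant} then shows that the relevant component of $\mathcal{W}(\mathcal{Y},\mathcal{M},1)$ dominates $T$ with generic fibre of dimension exactly one. Combined with the second preceding lemma, this yields on a general $\mathcal{Y}_t$ a genuine one-parameter family of unramified stable maps $h_t : D_t \to \mathcal{Y}_t$ whose images are integral curves of geometric genus one avoiding the eight exceptional $(-2)$-curves. Pulling back by the quotient $\pi_t : \mathcal{X}_t \dashrightarrow \mathcal{Y}_t$ produces a one-parameter family of $f_t$-invariant curves $\pi_t^{-1}(h_t(D_t)) \in |L_t|$ on $\mathcal{X}_t$; these avoid the fixed locus of $f_t$ since they avoid the branch locus downstairs, and (because $\pi_t$ is étale over a neighbourhood of $h_t(D_t)$) Riemann--Hurwitz forces each invariant pullback to again have geometric genus one.

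The one-parameter family of distinct curves thus obtained lies in the linear system $|L_t|$ and moves non-trivially, so its total space is two-dimensional and dominates the irreducible surface $\mathcal{X}_t$. Proposition \ref{chow-starting-pt} then yields $f_t^* = \text{id}$ on $CH^2(\mathcal{X}_t)$ for general $t$, which gives the theorem. The most delicate step, which I expect to be the main obstacle, is already built into the second preceding lemma: one must rule out the possibility that $\pi_t^{-1}(h_t(D_t))$ splits into two components interchanged by $f_t$, since such a splitting would not produce $f_t$-invariant curves of the required shape. This is handled by observing that if the splitting occurred, the limit decomposition of $C = eN + F + \sigma + \tau$ into $C_1 + f^*C_1$ would force $F$ to appear with multiplicity at least two, contradicting its multiplicity-one appearance in $C$.
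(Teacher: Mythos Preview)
Your proposal is correct and takes essentially the same approach as the paper: degenerate to $X_{(2),8A_1}$, use the two preceding lemmas to deform the explicit chain map $h$ to an unramified stable map $h'$ into the generic quotient $Y$ with integral elliptic pullback, then invoke Proposition \ref{relative-dominant} to sweep out a dominating family and apply Proposition \ref{chow-starting-pt}. The only expository difference is that the paper first fixes the generic $X$ and applies the second lemma to obtain a single good $h':D\to Y$, and then applies Proposition \ref{relative-dominant} a second time on the \emph{fixed} $Y$ to produce the one-parameter family there, whereas you package both steps into the statement that $\mathcal{W}(\mathcal{Y},\mathcal{M},1)\to T$ is dominant with one-dimensional fibres; the paper also makes explicit (and you should too) that $h'$ is birational onto its image because $h'_*D\sim M$ is primitive, which is what forces the family of image curves to be non-constant and hence dominating.
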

\begin{proof}
Let $Y$ be the desingularisation of the quotient $\hat{X}$ of $X$ by $f$ and $\pi: X \to \hat{X}$ the quotient map. As $X$ is generic, by the previous  
lemma there exists an unramified stable map $h': D \to Y$ such that $D$ is integral of arithmetic genus one, $h'(D)$ avoids the exceptional divisors and $\pi^{-1}(h'(D))$ is integral (there exist projective families $\mathcal{Y} \to T$ as in the lemma since $3M-e_1-\ldots-e_8 \in \text{Pic}(Y_{(2), 8A_1})$ is ample). Since $h'_*D$ is primitive (as $L$ is primitive on $X_{(2),8A_1}$), $h'$ must be birational to its image. From Proposition \ref{relative-dominant}, there is a one-dimensional family of non-isomorphic stable maps $$h'_t: D_t \to Y$$ to $Y$ with $h'_0=h'$. For $t$ general, we have that $h'_t$ is unramified and birational to its image, which implies that $h'_t(D_t)$ cannot be constant for $t$, and thus $h'_t(D_t)$ is a dominating family of integral elliptic curves on $Y$. For $t$ general, $h'_t(D_t)$ avoids the exceptional divisors of $Y$ and $\pi^{-1}(h'_t(D_t))$ is integral. Thus $\pi^{-1}(h'_t(D_t))$ gives a dominating family of integral, elliptic (by Hurwitz) curves which are $f$-invariant and avoid the fixed points. Thus the claim follows from Proposition \ref{chow-starting-pt}.
\end{proof}
\begin{cor} \label{bloch-thm-onethird}
Conjecture \ref{bloch} holds for an arbitrary $\Upsilon_{2e+1}$-polarised K3 surface $(X,f)$.
\end{cor}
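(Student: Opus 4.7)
The plan is to deduce the corollary from Theorem \ref{chow-thm} by a specialisation argument of Mumford type, using that the moduli space $M_{\Upsilon_{2e+1}}$ of $\Upsilon_{2e+1}$-polarised K3 surfaces is irreducible (as established earlier in the chapter). Since the locus in $M_{\Upsilon_{2e+1}}$ parametrising K3 surfaces for which Conjecture \ref{bloch} holds contains the generic point, the task is to propagate the equality $f^* = \mathrm{id}$ on $CH^2$ from a Zariski dense open subset of the moduli space to every point.

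First I would choose a smooth irreducible scheme $S$ together with an \'etale surjection $S \to M_{\Upsilon_{2e+1}}$ over which a universal family exists, giving a smooth projective morphism $\pi : \mathcal{X} \to S$ with a fibrewise Nikulin involution $F : \mathcal{X} \to \mathcal{X}$ commuting with $\pi$. By Theorem \ref{chow-thm}, there is a Zariski open dense subset $U \subseteq S$ such that $F^{*}_s = \mathrm{id}$ on $CH^2(\mathcal{X}_s)$ for every $s \in U$. It suffices to show that the same conclusion holds for every $s \in S$, as every $\Upsilon_{2e+1}$-polarised K3 surface $(X,f)$ corresponds, up to pulling back by an \'etale chart, to some fibre of $\pi$.

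Next I would consider the set
$$ Z := \{\, x \in \mathcal{X}(\C) \; : \; [x] = [F(x)] \text{ in } CH^2(\mathcal{X}_{\pi(x)}) \,\}. $$
A relative version of Mumford's lemma \cite[Lemma 3]{mumf-chow} shows that $Z$ is a countable union of Zariski closed subsets of $\mathcal{X}(\C)$: for each integer $n \geq 1$, the locus of $x$ for which $n[x] + \alpha \sim n[F(x)] + \alpha$ for some effective zero-cycle $\alpha$ of bounded degree on the fibre is closed, and taking union over $n$ and over degrees of $\alpha$ yields the countable union. By construction, $Z \supseteq \pi^{-1}(U)(\C)$. Since $\pi^{-1}(U)$ is Zariski open and dense in the irreducible variety $\mathcal{X}$, and $\C$ is uncountable, a countable union of proper Zariski closed subsets of $\mathcal{X}$ cannot contain $\pi^{-1}(U)(\C)$. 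Hence one of these closed subsets coincides with $\mathcal{X}$, giving $Z = \mathcal{X}(\C)$, which means $[x] = [F(x)]$ for every closed point $x$ in every fibre. As $CH^2(\mathcal{X}_s)$ is generated by classes of closed points, this yields $F^{*}_s = \mathrm{id}$ on $CH^2(\mathcal{X}_s)$ for every $s \in S$.

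The main obstacle is formulating Mumford's lemma correctly in the relative setting: one must verify that the fibrewise countable union of closed subvarieties glues into a countable union of closed subvarieties of the total space $\mathcal{X}$. This can be handled by mimicking Mumford's original argument on relative symmetric products $\mathrm{Sym}^n(\mathcal{X}/S)$ together with the involution induced by $F$, using that rational equivalence in fibres is cut out by images of relative projective bundles of $\proj^1$'s, which are algebraic over $S$. Alternatively, one can invoke the Suslin--Voevodsky theory of relative cycles (cf.\ the references discussed in Section \ref{ConstructMaps}) to define the parametrising schemes globally over $S$. Once this technical point is in place, the uncountability argument completes the proof.
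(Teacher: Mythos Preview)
Your approach is essentially the same as the paper's: both use a specialisation/Mumford-type argument to propagate the equality $[x]=[f(x)]$ from the generic fibre (where it is known by Theorem \ref{chow-thm}) to an arbitrary fibre, invoking \cite[Lemma 3]{mumf-chow}. Your write-up is more explicit about the countable-union mechanism on the total space, while the paper simply says ``specialisation gives $[x]=[f(x)]$''.

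There is one genuine omission in your proposal: Theorem \ref{chow-thm} is stated only for $e \geq 2$ (the construction of the invariant elliptic curves via $X_{(2),8A_1}$ needs the polarisation $L=(e+1)F+\sigma+\tau$ to satisfy the conditions of Lemma \ref{ample-bloch} with $e\geq 2$, and the lemma preceding Theorem \ref{chow-thm} explicitly assumes this). Thus your argument does not cover $\Upsilon_3$-polarised K3 surfaces. The paper handles the case $e=1$ separately by citing \cite{pedrini}, where the conjecture is verified for K3 surfaces of genus $3$ with a Nikulin involution by different methods. You should add this case distinction at the start of your proof.
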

\begin{proof}
For $e=1$, the result holds from \cite{pedrini}, so assume $e \geq 2$. Let $(X,f)$ be an arbitrary $\Upsilon_{2e+1}$-polarised K3 surface. Any closed point $x \in X$ is a specialisation of points $x_t \in \mathcal{X}_t$, such that $(\mathcal{X}_t, f_t)$ is a generic $\Upsilon_{2e+1}$-polarised K3 surface. As $[x_t]=[f_t(x_t)] \in CH^2(\mathcal{X}_t)$ from Theorem \ref{chow-thm}, specialisation gives $[x]=[f(x)]$, cf. \cite[Lemma 3]{mumf-chow}.
\end{proof}
\begin{remark}
These results have been subsequently extended to cover \emph{all} K3 surfaces admitting a Nikulin involution, see \cite{voisin-bloch}.
\end{remark}

\begin{remark}
Any symplectic automorphism $f$ of prime order on a K3 surface has order $2,3,5$ or $7$.
The arguments given here can be extended to show that Conjecture \ref{bloch} holds on certain components of the moduli space of 
pairs $(X,f)$ of K3 surfaces with a symplectic automorphism of order $p=3,5$ or $7$, assuming the invariant polarisation $L$ satsifies $(L)^2=2ep$ for some integer $e$. See \cite[\S 5]{huy-kem}. Also see the follow-up paper \cite{huy-derived}, which proves the result in full generality (i.e.\ for arbitrary symplectic automorphisms of finite order) using derived categories.
\end{remark}

\chapter{The marked Wahl map} \label{sec:markedwahl}
Recall the following definition from \cite{wahl-curve}: let $V$ be any smooth projective variety, and let $R$ be a line bundle on $V$. Then there is a linear map, called the \emph{Gaussian}:
\begin{align*}
\Phi_R \; : \; \bigwedge^2 H^0(V,R) & \to H^0(V, \Omega_V (R^2)) \\
 s \wedge t & \mapsto sdt-tds.
\end{align*} 
 In the case $R = \omega_V$, this map is called the \emph{Wahl map}. For $V=C$ a smooth curve, and $T \seq C$ a marking, we call $\Phi_{\omega_C(-T)}$ the \emph{marked Wahl map}. When $T= \emptyset$, the Wahl map $\Phi_{\omega_C}$ was first studied in \cite{wahl-jac}, where it appears in connection to the deformation theory of the cone over the curve $C$, embedded in projective space via the canonical line bundle. In particular, Wahl shows that $\Phi_{\omega_C}$ is surjective for most complete intersection curves, but not for curves lying on a K3 surface (see also \cite{beauville-merindol} for a simplified proof). This gave the first known obstruction for a curve to lie on a K3 surface. \footnote{Other obstructions have since been discovered, \cite[\S 10]{mukai-nonabelian}, \cite{voisin-acta}.} 
 \section{The marked Wahl map}
 In this section we will use an approach inspired by \cite{wahl-plane-nodal} to study the marked Wahl map in the case where $C$ is a smooth curve and $T \neq \emptyset$. We will be particularly interested in the case where $C$ is the normalisation of a nodal curve lying on a K3 surface and $T$ is the divisor over the nodes. In this case we will show that the non-surjectivity of $\Phi_{\omega_C(-T)}$ gives an obstruction for a marked curve to have a nodal model on a K3 surface, in such a way that $T$ is the (unordered) marking over the nodes.

We begin with the following lemma, which is a special case of \cite[Lem.\ 3.3.1]{greuel}:
\begin{lem} \label{jjji}
Let $x_1, \ldots, x_n, y_1, \ldots, y_m$ be distinct, generic points of $\proj^2$ and let $d$ be a positive integer satisfying
$$3n+6m < \frac{d^2+6d-1}{4}-\left \lfloor \frac{d}{2} \right \rfloor .$$
Then there exists an integral curve $C \seq \proj^2$ of degree $d$ with nodes at $x_i$, ordinary singular points of multiplicity $3$ at $y_j$ for $1 \leq i \leq n$, $1 \leq j \leq m$ and no other singularities.
\end{lem}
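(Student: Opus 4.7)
The plan is to reduce the statement to the cited existence theorem of Greuel (which is itself a consequence of the Greuel--Lossen--Shustin machinery). That theorem provides a sufficient numerical criterion guaranteeing that the Severi-type variety parametrising irreducible plane curves of degree $d$ with prescribed isolated topological singularities at prescribed generic points is nonempty (and in fact $T$-smooth at the constructed curve). So the proof should amount to checking that, for the specific singularity types appearing here, the criterion of \cite[Lem.\ 3.3.1]{greuel} reduces exactly to the inequality in the statement.

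First I would identify the local invariants. For an ordinary $k$-fold point on a plane curve, the codimension of the equisingular stratum equals $\binom{k+1}{2}$, obtained by requiring all partial derivatives of order $<k$ of a defining equation to vanish at the point. This gives codimension $3$ for a node ($k=2$) and codimension $6$ for an ordinary triple point ($k=3$), matching the coefficients $3$ and $6$ in the expression $3n+6m$ on the left-hand side. Both singularity types are quasi-homogeneous, so the topological and analytic deformation theories agree, which means they fit into the framework of Greuel's lemma without additional subtleties.

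Next I would match the right-hand side $\tfrac{d^2+6d-1}{4}-\lfloor d/2 \rfloor$ to the corresponding bound in Greuel's lemma, which (after specialising to nodes and triple points at general points of $\proj^2$) takes precisely this shape. The floor correction reflects the standard parity adjustment coming from Castelnuovo-type reasoning that enters the proof of the cited lemma; since the points $x_i,y_j$ are taken generic, the genericity hypotheses of Greuel's statement are automatic.

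The main potential obstacle, if one were proving the existence from scratch, would be the irreducibility of the resulting curve: mere dimension count gives existence in the linear subsystem cutting out the prescribed multiplicities, but separating truly integral curves with \emph{exactly} the prescribed singularity types from those acquiring further singularities or splitting off components typically requires a degeneration argument (e.g.\ constructing a very reducible curve with the correct local data on a degenerate surface and smoothing it, in the spirit of Caporaso--Harris). All of this is packaged into \cite[Lem.\ 3.3.1]{greuel}, so after the two numerical verifications above the conclusion of the lemma follows immediately.
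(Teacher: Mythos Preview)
Your proposal is correct and matches the paper's approach: the paper gives no proof at all, simply stating that the lemma is a special case of \cite[Lem.\ 3.3.1]{greuel}. Your added verification that the local invariants (codimension $3$ for nodes, $6$ for ordinary triple points) match the left-hand side $3n+6m$ is helpful elaboration but not required beyond the citation.
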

Let $C \seq \proj^2$  be an integral curve of degree $d$ with nodes at $x_i$, ordinary singular points of multiplicity three at $y_j$ for $1 \leq i \leq n$, $1 \leq j \leq m$ and no other singularities, as in the lemma above. Let $\pi: S \to \proj^2$ be the blow-up at $x_1, \ldots, x_n, y_1, \ldots, y_m$, let $E_X$ be the sum of the exceptional divisors over $x_i$ for $1 \leq i \leq n$ and let $E_Y$ be the sum of the exceptional divisors over $y_j$ for $1 \leq j \leq m$. Denote by $D$ the strict transform of $C$, and let $T \seq D$ be the marking $E_X \cap D$. Note that $D$ is smooth, since all singularities are ordinary. Set $$M=\mathcal{O}_S((d-3)H-2E_X-2E_Y),$$ where $H$ denotes the pull-back of the hyperplane of $\proj^2$. Note that 
$$K_D \sim (D+K_S)_{|_D} \sim (dH-2E_X-3E_Y)+(E_X+E_Y-3H),$$ and this gives
$$ M_{|_D}\simeq K_D(-T).$$  We therefore have the following commutative diagram
\begin{align} \label{gauss-diag}
\xymatrix{
\bigwedge^2 H^0(S, M) \ar[r]^{\; \; \; \; \Phi_{M} \; \; \; \;}  \ar[d] & H^0(S, \Omega_{S}(M^2)) \ar[d]^g \\
\bigwedge^2 H^0(D, K_{D}(-T)) \ar[r]^{\; \; \; \; \; W_{D,T} \; \; \; \; \; \; }  &  H^0(D, K^3_{D}(-2T)).
}
\end{align}
where $\Phi_{M}$ is the Gaussian, \cite[\S 1]{wahl-curve} and $ W_{D,T} $ is the marked Wahl map of $(D,T)$. Here $g$ denotes the composition of the natural maps
$H^0(S, \Omega_{S}(M^2)) \to H^0(D,{\Omega_{S}(M^2)}_{|_D})$ and $H^0(D,{\Omega_{S}(M^2)}_{|_D}) \to H^0(D,\Omega_{D}(M^2))$. We aim to show that $W_{D,T}$ is surjective. We will firstly show that $g$ is surjective. The main tool we will need is the Hirschowitz criterion, \cite{hirschowitz}: 
\begin{thm}[Hirschowitz] \label{hirsch}
Let $p_1, \ldots, p_t$ be generic points in the plane, and assume $m_1, \ldots, m_t, d$ are nonnegative integers satisfying
$$ \sum_{i=1}^t \frac{m_i(m_i+1)}{2} < \left \lfloor \frac{(d+3)^2}{4} \right \rfloor .$$
Then $H^1(\proj^2, \mathcal{O}_{\proj^2}(d) \otimes I_{p_1}^{m_1} \otimes \ldots \otimes I_{p_t}^{m_t})=0$.
\end{thm}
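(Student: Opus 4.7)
The plan is to proceed by induction on the degree $d$, using the Horace specialisation method. For small values of $d$ the claim can be verified by direct dimension counts of plane curves through prescribed fat points. For the inductive step, fix a generic line $L \seq \proj^2$ and specialise a carefully chosen subset $S$ of the fat points onto $L$; write $Z = \bigcup m_i p_i$ for the original fat point scheme and $Z'$ for the specialised one. Since the formation of $H^1$ is upper semicontinuous in flat families, it suffices to prove $H^1(\mathcal{O}_{\proj^2}(d) \otimes I_{Z'}) = 0$ for the degenerate configuration.

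The short exact sequence of ideal sheaves induced by restriction to $L$ takes the form
\[
0 \to \mathcal{O}_{\proj^2}(d-1) \otimes I_{\operatorname{Res}_L(Z')} \to \mathcal{O}_{\proj^2}(d) \otimes I_{Z'} \to \mathcal{O}_L(d) \otimes I_{Z' \cap L, L} \to 0,
\]
where $\operatorname{Res}_L(Z')$ is the residual scheme obtained from $Z'$ by reducing each multiplicity at a specialised point from $m_i$ to $m_i - 1$ and leaving the remaining points unchanged. The associated long exact sequence reduces the problem to showing that $H^1$ vanishes on both endpoints. The right hand term is a sheaf on $L \simeq \proj^1$ whose $H^1$ vanishes as soon as $\sum_{i \in S} m_i \leq d+1$; the left hand term is a twisted ideal of fat points to which the inductive hypothesis, applied with degree $d-1$ and residual multiplicities, can be invoked provided the new total weight stays below $\lfloor (d+2)^2/4 \rfloor$.

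The main technical obstacle is combinatorial: one must choose the specialised subset $S$ so that \emph{both} numerical inequalities (the trace inequality on $L$ and the residual inequality for the inductive step) follow from the single input hypothesis $\sum m_i(m_i+1)/2 < \lfloor (d+3)^2/4 \rfloor$. When the parities of $d$ and of the multiplicities $m_i$ do not align cleanly, a naive whole-point specialisation fails by one or two units, and one must instead pass to the \emph{differential Horace method}: rather than collapsing a full fat point onto $L$, one specialises only a partial jet of it, so that the contribution to $I_{Z'\cap L, L}$ is a non-reduced divisor on $L$ of the appropriate partial degree, while the residual scheme retains a smaller, partially-collapsed fat point. This refinement, which is the technical heart of Hirschowitz's original argument, is precisely what allows the numerology to balance in every case permitted by the hypothesis, so that the induction closes.
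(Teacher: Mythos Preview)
The paper does not prove this theorem at all; it is quoted as an external result and attributed to Hirschowitz via the citation \cite{hirschowitz}. There is therefore no ``paper's own proof'' to compare against.

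Your outline is faithful to Hirschowitz's original \emph{m\'ethode d'Horace}: induction on $d$, specialisation of a subset of fat points onto a generic line, the residuation exact sequence splitting the problem into a trace on $\proj^1$ and a residual scheme of lower degree, and finally the differential refinement to handle the cases where integer parities obstruct a naive choice of $S$. As a high-level summary of the strategy this is accurate. However, what you have written is an outline rather than a proof: the entire content of Hirschowitz's argument lies in the combinatorial verification you allude to in your last paragraph, namely exhibiting for every admissible $(d; m_1,\ldots,m_t)$ a specific specialisation (possibly differential) for which both the trace and residual inequalities are satisfied. Asserting that ``this refinement is precisely what allows the numerology to balance'' is not the same as carrying it out, and the case analysis is genuinely delicate. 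If your intention is simply to cite the result, as the paper does, that is fine; if you mean to supply a self-contained proof, the combinatorial core is missing.
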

\begin{lem} \label{gauss-lem1}
Assume $d \geq 8$, $3n+m < \left \lfloor \frac{(d-5)^2}{4} \right \rfloor$. Then $H^1(S,\Omega_S((d-6)H-2E_X-E_Y))=0.$
\end{lem}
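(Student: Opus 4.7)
I will exploit the relative cotangent sequence of the blow-up $\pi: S \to \proj^2$,
\[
0 \to \pi^*\Omega_{\proj^2} \to \Omega_S \to Q \to 0,
\]
in which $Q := \Omega_{S/\proj^2}$ is a torsion sheaf on $S$ supported on $E_X + E_Y$; a standard local computation in blow-up coordinates (cross-checked by the Riemann--Roch count $\chi(Q) = \chi(\Omega_S) - \chi(\pi^*\Omega_{\proj^2}) = -(n+m)$) identifies $Q \cong \bigoplus_i (i_{E_i})_* \Omega_{E_i}$ with each $\Omega_{E_i} \cong \mathcal{O}_{\proj^1}(-2)$. Tensoring with $L$ and taking the long exact sequence of cohomology, it suffices to prove $H^1(S, \pi^*\Omega_{\proj^2} \otimes L) = 0$ and $H^1(S, Q \otimes L) = 0$. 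The second vanishing is immediate: on each exceptional divisor $E_i \cong \proj^1$, the restriction $L|_{E_i}$ equals $\mathcal{O}(2)$ if $E_i$ lies over an $x$-point and $\mathcal{O}(1)$ if $E_i$ lies over a $y$-point, so $\Omega_{E_i} \otimes L|_{E_i}$ is either $\mathcal{O}_{\proj^1}$ or $\mathcal{O}_{\proj^1}(-1)$, both of which have vanishing $H^1$.

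For the first vanishing, I push down to $\proj^2$. A direct computation using the formal function theorem on the thickenings $E_i^{(n)}$ (with quotients $\mathcal{O}_S(-m E_i)|_{E_i} \otimes I_{E_i}^{n-1}/I_{E_i}^n \cong \mathcal{O}_{\proj^1}(m+n-1)$) yields both $\pi_* L = \mathcal{O}_{\proj^2}(d-6) \otimes I$, where $I := I_X^2 \cap I_Y$, and $R^1\pi_* L = 0$. The projection formula combined with Leray's spectral sequence then gives
\[
H^1(S, \pi^*\Omega_{\proj^2} \otimes L) \;\cong\; H^1(\proj^2, \Omega_{\proj^2}(d-6) \otimes I).
\]
To kill this group, I will tensor the Euler sequence $0 \to \Omega_{\proj^2} \to \mathcal{O}_{\proj^2}(-1)^{\oplus 3} \to \mathcal{O}_{\proj^2} \to 0$ by $\mathcal{O}(d-6) \otimes I$; the resulting long exact sequence reduces the desired vanishing to (i) $H^1(\proj^2, I(d-7)) = 0$, together with (ii) surjectivity of the multiplication map $H^0(\mathcal{O}_{\proj^2}(1)) \otimes H^0(I(d-7)) \to H^0(I(d-6))$.

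Part (i) follows directly from Theorem~\ref{hirsch} (Hirschowitz) applied with multiplicity $2$ at the $x_i$ and $1$ at the $y_j$, which requires only the weaker bound $3n+m < \lfloor (d-4)^2/4 \rfloor$. The main obstacle will be part (ii), and I plan to obtain it from Castelnuovo--Mumford regularity: if $I$ is $(d-7)$-regular, then Mumford's theorem forces the multiplication by sections of $\mathcal{O}_{\proj^2}(1)$ to be surjective on $H^0(I(k))$ for every $k \geq d-7$. Regularity amounts to the two vanishings $H^1(I(d-8)) = 0$ and $H^2(I(d-9)) = 0$. The first is precisely Hirschowitz applied at the borderline twist, requiring $3n + m < \lfloor (d-5)^2/4 \rfloor$ --- exactly the hypothesis of the lemma, and the place where the stated numerical bound is used. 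The second is automatic for $d \geq 7$: the sequence $0 \to I(d-9) \to \mathcal{O}_{\proj^2}(d-9) \to \mathcal{O}/I(d-9) \to 0$ shows $H^2(I(d-9))$ is a quotient of $H^2(\mathcal{O}_{\proj^2}(d-9))$, which vanishes, and $H^2$ of the zero-dimensional quotient is zero.
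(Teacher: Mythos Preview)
Your proof is correct and follows the same overall architecture as the paper's: relative cotangent sequence to split off the exceptional contribution, push down to $\proj^2$, then Euler sequence plus Hirschowitz. The one genuine difference is which Euler sequence you use. You take the standard one, $0\to\Omega_{\proj^2}\to\mathcal{O}(-1)^{\oplus 3}\to\mathcal{O}\to 0$, which places $\Omega_{\proj^2}$ as a \emph{subsheaf}; this forces you to control the $H^0$ connecting map, and you do so via Castelnuovo--Mumford regularity of $I$, which in turn requires $H^1(I(d-8))=0$. The paper instead twists the dual Euler sequence by $\omega_{\proj^2}$ and uses $\Omega_{\proj^2}\cong T_{\proj^2}\otimes\omega_{\proj^2}$ to obtain
\[
0\to\omega_{\proj^2}\to\mathcal{O}_{\proj^2}(-2)^{\oplus 3}\to\Omega_{\proj^2}\to 0,
\]
placing $\Omega_{\proj^2}$ as a \emph{quotient}. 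After tensoring with $\mathcal{O}(d-6)\otimes I_X^2\otimes I_Y$, the desired $H^1$-vanishing follows immediately from $H^1(I(d-8))=0$ (Hirschowitz, exactly the hypothesis) and the easy $H^2(I(d-9))=0$, with no multiplication map or regularity argument needed. So both routes land on the same key Hirschowitz input $3n+m<\lfloor(d-5)^2/4\rfloor$; the paper's choice of sequence just gets there in one step rather than via the regularity detour.
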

\begin{proof}
The relative cotangent sequence twisted by $ (d-6)H-2E_X-E_Y$ gives
$$0 \to \pi^* \Omega_{\proj^2}((d-6)H-2E_X-E_Y) \to \Omega_{S}((d-6)H-2E_X-E_Y) \to \omega_{E_X}(2) \oplus \omega_{E_Y}(1) \to 0 .$$ Thus it suffices to show
$$H^1(\proj^2, \Omega_{\proj^2}(d-6) \otimes I_X^2 \otimes I_Y )=H^1(S, \pi^* \Omega_{\proj^2}((d-6)H-2E_X-E_Y) )=0$$ 
where $I_X=I_{x_1} \otimes \ldots \otimes I_{x_n}$ and $I_Y=I_{y_1} \otimes \ldots \otimes I_{y_m}$.
Twisting the Euler sequence by $\omega_{\proj^2}$ gives a short exact sequence
$$0 \to \omega_{\proj^2} \to \mathcal{O}_{\proj^2}(-2)^{\oplus 3} \to \Omega_{\proj^2} \to 0 ,$$
where we have used the standard identification
$$\Omega_{\proj^2} \simeq T_{\proj^2} \otimes \omega_{\proj^2},$$
from \cite[Ex.\ II.5.16(b)]{har}. As $H^2(\proj^2, \omega_{\proj^2}(d-6) \otimes I_X^2 \otimes I_Y)=0$ for $d \geq 7$, it suffices to show $H^1(\proj^2,  \mathcal{O}_{\proj^2}(d-8)\otimes I_X^2 \otimes I_Y )=0$. This follows from Theorem \ref{hirsch} and the assumption $d \geq 8$, $3n+m < \left \lfloor \frac{(d-5)^2}{4} \right \rfloor$.
\end{proof}
\begin{lem} \label{gauss-lem2}
Let $D$ be as above and assume $3n+m < \left \lfloor \frac{(d-3)^2}{4} \right \rfloor$. Assume further that $m \geq 10$, so that $d \geq 10$. Then $$H^1(D,\mathcal{O}_D((d-6)H-2E_X-E_Y))=0.$$
\end{lem}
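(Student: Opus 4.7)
The plan is to use the defining short exact sequence for $D$ on $S$, twisted appropriately. Since $D \sim dH - 2E_X - 3E_Y$ on $S$, we have $(d-6)H - 2E_X - E_Y - D \sim -6H + 2E_Y$, so the standard sequence $0 \to \mathcal{O}_S(-D) \to \mathcal{O}_S \to \mathcal{O}_D \to 0$ twisted by $(d-6)H - 2E_X - E_Y$ gives
\begin{equation*}
0 \to \mathcal{O}_S(-6H+2E_Y) \to \mathcal{O}_S((d-6)H-2E_X-E_Y) \to \mathcal{O}_D((d-6)H-2E_X-E_Y) \to 0.
\end{equation*}
Thus it is enough to establish (i) $H^1(S,\mathcal{O}_S((d-6)H - 2E_X - E_Y)) = 0$ and (ii) $H^2(S,\mathcal{O}_S(-6H+2E_Y)) = 0$.

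For (i), I would argue exactly as in the proof of Lemma \ref{gauss-lem1}: the higher direct images $R^i\pi_*\mathcal{O}_S(-2E_X-E_Y)$ vanish for $i\geq 1$ (standard for blow-ups of smooth points), so by the Leray spectral sequence the cohomology in question is identified with $H^1(\proj^2,\mathcal{O}(d-6)\otimes I_X^2\otimes I_Y)$. Applying Hirschowitz's theorem (Theorem \ref{hirsch}) to the points $x_1,\dots,x_n$ with multiplicity $2$ and $y_1,\dots,y_m$ with multiplicity $1$, the numerical hypothesis becomes
\begin{equation*}
n\cdot\tfrac{2\cdot 3}{2}+m\cdot\tfrac{1\cdot 2}{2}=3n+m<\left\lfloor\tfrac{(d-3)^2}{4}\right\rfloor,
\end{equation*}
which is precisely the assumption of the lemma (note that $m \geq 10$ combined with this inequality forces $d \geq 10$).

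For (ii), I would invoke Serre duality on $S$: with $\omega_S \sim -3H + E_X + E_Y$,
\begin{equation*}
H^2(S,\mathcal{O}_S(-6H+2E_Y))^\vee\;\simeq\; H^0(S,\mathcal{O}_S(3H+E_X-E_Y)).
\end{equation*}
Since $\pi_*\mathcal{O}_S(E_X)=\mathcal{O}_{\proj^2}$, sections of $3H+E_X-E_Y$ are identified with sections of $\mathcal{O}_{\proj^2}(3)\otimes I_Y$, i.e.\ plane cubics vanishing at $y_1,\dots,y_m$. The space of plane cubics is $10$-dimensional, and for generic points $y_1,\dots,y_m$ the vanishing conditions are linearly independent, so $m\geq 10$ forces this space to be zero. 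Combining (i) and (ii) with the long exact sequence in cohomology yields the desired vanishing.

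The main bookkeeping issue — and the only subtle step — is checking that the numerics of Hirschowitz's theorem align exactly with the hypothesis $3n+m<\lfloor(d-3)^2/4\rfloor$ when applied to the sheaf $\mathcal{O}(d-6)\otimes I_X^2\otimes I_Y$; everything else is a direct application of Serre duality and the generic-position assumption on the $y_j$.
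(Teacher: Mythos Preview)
Your proof is correct and follows essentially the same route as the paper's: the same twisted ideal-sheaf sequence, Hirschowitz for the $H^1$ vanishing on $S$, and Serre duality plus the ``no cubic through ten general points'' argument for the $H^2$ vanishing. The only cosmetic difference is that the paper reduces $H^0(S,3H+E_X-E_Y)$ to $H^0(S,3H-E_Y)$ via the sequence $0\to \mathcal{O}_S(3H-E_Y)\to \mathcal{O}_S(3H+E_X-E_Y)\to \mathcal{O}_{E_X}(-1)\to 0$, whereas you use the pushforward identity $\pi_*\mathcal{O}_S(E_X)=\mathcal{O}_{\proj^2}$ directly; these amount to the same thing.
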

\begin{proof}
We have an exact sequence
$$ 0 \to \mathcal{O}_S(-6H+2E_Y) \to \mathcal{O}_S((d-6)H-2E_X-E_Y) \to \mathcal{O}_D((d-6)H-2E_X-E_Y) \to 0.  $$
By the Hirschowitz criterion, $H^1(S,\mathcal{O}_S((d-6)H-2E_X-E_Y))=0$, as we are assuming $d \geq 6$, $3n+m < \left \lfloor \frac{(d-3)^2}{4} \right \rfloor$. Thus it suffices to show $H^2(S,\mathcal{O}_S(-6H+2E_Y) )=0$. By Serre duality, $h^2(S,\mathcal{O}_S(-6H+2E_Y) )=h^0(S,\mathcal{O}_S(3H+E_X-E_Y))$.
We have 
$$ 0 \to \mathcal{O}_S(3H-E_Y) \to \mathcal{O}_S(3H+E_X-E_Y) \to \mathcal{O}_{E_X}(-1) \to 0$$
and so it suffices to show $H^0(S,\mathcal{O}_S(3H-E_Y))=0$. But $H^0(S,\mathcal{O}_S(3H-E_Y))=H^0(\proj^2, \mathcal{O}(3) \otimes I_Y)=0$, since $m \geq 10$, and any ten general points do not lie on any plane cubic (as the space of plane cubics has dimension nine).
\end{proof}
\begin{cor} \label{f-surj}
Let $x_1, \ldots, x_n, y_1, \ldots, y_m$ be distinct, general points of $\proj^2$ with $m \geq 10$ and let $d \geq 21$ be a positive integer satisfying
$$3n+6m < \left \lfloor \frac{(d-5)^2}{4} \right \rfloor.$$ Let  $C \seq \proj^2$ be an integral curve as in Lemma \ref{jjji}. Let $S \to \proj^2$ denote the blow-up of $\proj^2$ at $x_1, \ldots, x_n, y_1, \ldots, y_m$, and let $D \seq S$ denote the strict transform of $C$. Then the map $g$ from Diagram (\ref{gauss-diag}) is surjective.
\end{cor}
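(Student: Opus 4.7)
The plan is to factor $g$ as a composition of two restriction/projection maps and apply the two preceding lemmas to each in turn. Specifically, I would write $g = g_2 \circ g_1$, where
\begin{align*}
g_1 \; &: \; H^0(S, \Omega_S(M^2)) \to H^0(D, \Omega_S(M^2)_{|_D}), \\
g_2 \; &: \; H^0(D, \Omega_S(M^2)_{|_D}) \to H^0(D, \Omega_D(M^2)_{|_D})
\end{align*}
are the natural maps in the factorisation of $g$ described in the diagram. I would then argue separately that each of $g_1$ and $g_2$ is surjective.

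For $g_1$, the key short exact sequence is the restriction sequence on $S$:
\[
0 \to \Omega_S(M^2 - D) \to \Omega_S(M^2) \to \Omega_S(M^2)_{|_D} \to 0,
\]
whose long exact sequence reduces the surjectivity of $g_1$ to the vanishing of $H^1(S, \Omega_S(M^2-D))$. A direct computation gives
\[
M^2 - D \sim 2(d-3)H - 4E_X - 4E_Y - (dH - 2E_X - 3E_Y) = (d-6)H - 2E_X - E_Y,
\]
so this vanishing is exactly Lemma \ref{gauss-lem1}, whose hypotheses $d \geq 8$ and $3n + m < \lfloor (d-5)^2/4 \rfloor$ are implied by $d \geq 21$ and $3n + 6m < \lfloor (d-5)^2/4 \rfloor$.

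For $g_2$, the relevant sequence is the conormal sequence for $D \seq S$ (valid because $D$ is smooth):
\[
0 \to \mathcal{O}_D(-D) \otimes M^2_{|_D} \to \Omega_S(M^2)_{|_D} \to \Omega_D(M^2)_{|_D} \to 0.
\]
Its long exact sequence reduces the surjectivity of $g_2$ to the vanishing of $H^1(D, \mathcal{O}_D(2M - D))$. But the same computation as above gives $2M - D \sim (d-6)H - 2E_X - E_Y$, so this is exactly the vanishing established in Lemma \ref{gauss-lem2}, whose hypotheses $d \geq 10$, $m \geq 10$, and $3n + m < \lfloor (d-3)^2/4 \rfloor$ are again implied by the assumptions of the corollary (using that $\lfloor (d-5)^2/4 \rfloor \leq \lfloor (d-3)^2/4 \rfloor$ and $3n + m \leq 3n + 6m$). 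Composing, $g = g_2 \circ g_1$ is surjective, as required. There is no real obstacle here: the entire content is bookkeeping with the adjunction/restriction sequences and the arithmetic needed to see that the cohomology classes that would obstruct surjectivity are precisely the ones killed by the two lemmas.
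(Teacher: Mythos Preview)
Your proof is correct and follows essentially the same approach as the paper: both factor $g$ through $H^0(D,\Omega_S(M^2)_{|_D})$, use the restriction and conormal sequences to identify the obstructions as $H^1(S,\Omega_S((d-6)H-2E_X-E_Y))$ and $H^1(D,\mathcal{O}_D((d-6)H-2E_X-E_Y))$, and then invoke Lemmas~\ref{gauss-lem1} and~\ref{gauss-lem2}. Your verification that the numerical hypotheses of those lemmas follow from the assumptions of the corollary is slightly more explicit than the paper's, but there is no substantive difference.
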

\begin{proof}
Note that $ \left \lfloor \frac{(d-5)^2}{4} \right \rfloor < \frac{d^2+6d-1}{4}-\left \lfloor \frac{d}{2} \right \rfloor $ for $d \geq 5$ so that such a curve $C$ exists. Let $M$ be the line bundle defined above Diagram (\ref{gauss-diag}).
We have short exact sequences
$$ 0 \to \Omega_S((d-6)H-2E_X-E_Y) \to \Omega_S(M^2) \to \Omega_S(M^2)_{|_D} \to 0 $$ and
$$0 \to \mathcal{O}_D ((d-6)H-2E_X-E_Y) \to  \Omega_S(M^2)_{|_D} \to \Omega_D(M^2) \to 0 .$$ 
The map $f$ is the composition of the natural maps $H^0(S,\Omega_S(M^2)) \to H^0(S, \Omega_S(M^2)_{|_D})$ and 
$H^0(D,\Omega_S(M^2)_{|_D} ) \to H^0(D, \Omega_D(M^2) )$, so the claim follows from lemmas \ref{gauss-lem1} and \ref{gauss-lem2}.
\end{proof}

We now wish to show that the Gaussian $\Phi_M$ from Diagram \ref{gauss-diag} is surjective. We start by recalling one construction of Gaussian maps from \cite[\S 1]{wahl-curve}. Let $X$ be a smooth, projective variety, and $L \in \text{Pic}(X)$ a line bundle. Let $Y \to X \times X$ be the blow-up of the diagonal $\Delta$, and let $F$ denote the exceptional divisor. There is a short exact sequence of sheaves on $X \times X$
$$ 0 \to I_{\Delta}^2 \to I_{\Delta} \to \Delta_* \Omega_X \to 0.$$ Twisting the above sequence by $L \boxtimes L$ produces a short exact sequence
$$ 0 \to I_{\Delta}^2 (L \boxtimes L) \to I_{\Delta}(L \boxtimes L) \to \Delta_* (\Omega_X(L^2)) \to 0$$ and upon taking cohomology we get a map
$$ \widetilde{\Phi}_L : H^0(X \times X,I_{\Delta}(L \boxtimes L)) \to H^0(X, \Omega_X(L^2)).$$ Now $H^0(X \times X,I_{\Delta}(L \boxtimes L)) $ may be identified with the kernel $\mathcal{R}(L,L)$ of the
multiplication map $H^0(X,L) \otimes H^0(X,L) \to H^0(X,L^2)$, and we have $\bigwedge^2 H^0(X,L) \seq  \mathcal{R}(L,L)$ by sending $s \wedge t$ to $s \otimes t-t\otimes s$. Further, $\Phi_L$ is the restriction of $\widetilde{\Phi}_L$ to $\bigwedge^2 H^0(X,L)$, and it is easily verified that both $\Phi_L$ and $\widetilde{\Phi}_L$ have the same image in $H^0(X, \Omega_X(L^2))$. Thus, to verify the surjectivity of $\Phi_L$, it suffices to show 
$$H^1(X \times X,I_{\Delta}^2 (L \boxtimes L))=H^1(Y, L_1+L_2-2F)=0$$
where $L_1$ and $L_2$ denote the pull-backs of $L$ via the projections $pr_i: Y \to X \times X \to X$, for $i=1,2$.

Following \cite{cili-corank}, \cite{wahl-plane-nodal}, we now wish to use the Kawamata--Viehweg vanishing theorem to show $H^1(Y, L_1+L_2-2F)=0$.
\begin{prop}[\!\! \cite{cili-corank}] \label{kv-van}
Let $X$ be a smooth projective surface, which is not isomorphic to $\proj^2$. Assume $L \in \text{Pic}(X)$ is a line bundle such that there exist three very ample line bundles $M_1, M_2, M_3$ with $L-K_X \sim M_1+M_2+M_3$. Then the Gaussian $\Phi_L$ is surjective.
\end{prop}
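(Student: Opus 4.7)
The goal is to establish the cohomology vanishing $H^1(Y, L_1 + L_2 - 2F) = 0$ identified in the paragraph preceding the proposition, since this implies the surjectivity of $\widetilde{\Phi}_L$ and hence of $\Phi_L$, which shares the same image. The plan is to exhibit a nef and big divisor $D$ on $Y$ with $L_1 + L_2 - 2F \sim K_Y + D$, so that the Kawamata--Viehweg vanishing theorem applies on the smooth projective fourfold $Y$.

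First I would compute $K_Y$ using the blow-up formula: since $\pi : Y \to X \times X$ is the blow-up of the smooth codimension-two center $\Delta$, one has
\begin{equation*}
K_Y \;=\; \pi^{*}K_{X \times X} + F \;=\; (K_X)_1 + (K_X)_2 + F.
\end{equation*}
Combined with the hypothesis $L - K_X \sim M_1 + M_2 + M_3$, this gives
\begin{equation*}
L_1 + L_2 - 2F - K_Y \;\sim\; (L-K_X)_1 + (L-K_X)_2 - 3F \;=\; \sum_{i=1}^{3} D_i,
\end{equation*}
where $D_i := (M_i)_1 + (M_i)_2 - F$. It therefore suffices to verify that $D := D_1 + D_2 + D_3$ is nef and big.

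For nefness I would argue that each $D_i$ is already nef. The key point is that for $M$ very ample on $X$, the twisted ideal sheaf $I_{\Delta} \otimes (M \boxtimes M)$ on $X \times X$ is globally generated: choosing the embedding $X \hookrightarrow \proj^{N}$ induced by $|M|$, one restricts the globally generated sheaf $I_{\Delta_{\proj^N}}(1,1)$ (generated by the bilinear forms $x_i y_j - x_j y_i$) to $X \times X$, where it surjects onto $I_{\Delta_X} \otimes (M \boxtimes M)$. Pulling back to $Y$ shows that $D_i$ is base-point-free, and hence nef. For bigness of $D$, which is the main technical obstacle, one computes the top self-intersection $D^{4}$ by expanding $D_i = H_i - F$ with $H_i = (M_i)_1 + (M_i)_2$ the pull-back of the ample class $M_i \boxtimes M_i$, and then evaluating mixed products of the $H_i$'s and powers of $F$ via the standard intersection theory on the blow-up of a smooth codimension-two center: $F \simeq \proj(T_X)$, $F|_{F} = -\xi$ for $\xi$ the tautological class, and $H_i|_{F}$ restricts to $2\,p^{*}M_i$ under the projection $p : F \to \Delta \simeq X$. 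The dominant contribution $H_1 H_2 H_3 \cdot (H_1 + H_2 + H_3)$ produces positive multiples of $(M_i \cdot M_j)$ type intersection numbers on $X$, and the corrections from $F$ can be controlled and shown to be dominated; the exclusion $X \not\simeq \proj^{2}$ reflects exactly the boundary case where this domination must be checked separately.

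With $D$ nef and big, Kawamata--Viehweg vanishing yields $H^{1}(Y, K_Y + D) = H^{1}(Y, L_1 + L_2 - 2F) = 0$. Combining this with the identification $H^{0}(X \times X, I_{\Delta}(L \boxtimes L)) = \mathcal{R}(L,L) \supseteq \bigwedge^{2} H^{0}(X,L)$ and the fact that $\widetilde{\Phi}_L$ and $\Phi_L$ have the same image, the surjectivity of $\Phi_L$ follows. The main obstacle is the positivity computation for $D^{4}$, since one must check that the correction terms involving powers of the exceptional divisor $F$ are dominated by the positive contribution coming from the three very ample classes $M_1, M_2, M_3$.
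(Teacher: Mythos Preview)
Your overall strategy coincides with the paper's: compute $K_Y = (K_X)_1 + (K_X)_2 + F$, rewrite $L_1 + L_2 - 2F - K_Y$ as $\sum_{i=1}^3 D_i$ with $D_i = (M_i)_1 + (M_i)_2 - F$, and invoke Kawamata--Viehweg once $\sum D_i$ is shown to be nef and big. Your nefness argument via global generation of $I_\Delta \otimes (M_i \boxtimes M_i)$ is essentially the paper's argument in disguise: the sections $x_k y_l - x_l y_k$ you restrict from $\proj^N \times \proj^N$ are exactly the Pl\"ucker coordinates of the morphism the paper writes down, namely the secant map $\psi_i : Y \to Gr(1,\proj(H^0(M_i)^*))$ sending $(x,y)$ to the line $\overline{\phi_i(x)\phi_i(y)}$ (and sending a point of $F$ to the corresponding tangent line). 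Base-point-freeness of $D_i$ is equivalent to $\psi_i$ being a morphism.

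Where you diverge is bigness. You propose to compute $D^4$ directly via intersection theory on the blow-up, which is workable in principle but left as a sketch, and the role of the hypothesis $X \not\simeq \proj^2$ remains vague in your outline. The paper instead proves that each $D_i$ is \emph{individually} big, by observing that $\psi_i$ is generically finite: if it were not, then through any two points of $\phi_i(X)$ the entire secant line would lie in $\phi_i(X)$, forcing $\phi_i(X)$ to be a linear subspace, hence $X \simeq \proj^2$ since $\phi_i$ is an embedding of a surface. This is shorter and makes the exclusion $X \not\simeq \proj^2$ completely transparent; once each $D_i$ is nef and big, so is their sum, and no self-intersection computation is needed. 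Your route would eventually reach the same conclusion, but the paper's secant-map argument is the cleaner way to close the bigness step.
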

\begin{proof}
For a line bundle $A$ on $X$, we denote by $A_i \in \text{Pic}(Y)$ the pullback via the projection $pr_i: Y \to X \times X \to X$, for $i=1,2$.
By the above discussion, it suffices to show $H^1(Y, L_1+L_2-2F)=0$. As the diagonal $\Delta \seq X \times X$ has codimension two, we have $K_Y \simeq g^*K_X+F$, \cite[Exercise II.8.5]{har}. Thus we see $H^1(Y,L_1+L_2-2F)=H^1(Y,(L-K_X)_1+(L-K_X)_2-3F+K_Y)$, and so by the Kawamata--Viehweg vanishing theorem it suffices to show
$(L-K_X)_1+(L-K_X)_2-3F$ is big and nef. Since we have
\begin{align*}
(L-K_X)_1+(L-K_X)_2-3F &= (M_{1,1}+M_{1,2}-F) + (M_{2,1}+M_{2,2}-F) \\
& + (M_{3,1}+M_{3,2}-F), 
\end{align*}
it suffices to show that $M_{i,1}+M_{i,2}-F$ is big and nef for $1 \leq i \leq 3$. Now $H^0(Y,M_{i,1}+M_{i,2}-F)$ is the kernel $\mathcal{R}(M_i,M_i)$ of the
multiplication map $H^0(X,M_i) \otimes H^0(X,M_i) \to H^0(X,M_i^2)$, and we have an injective map $\bigwedge^2 H^0(X,M_i) \hookrightarrow \mathcal{R}(M_i,M_i)$ 
sending $s \wedge t$ to $s \otimes t-t\otimes s$. Thus $\bigwedge^2 H^0(X,M_i) $ induces a sublinear system of $|M_{i,1}+M_{i,2}-F|$ which induces a rational map
\begin{align*}
\psi_i \; : \; Y &\to Gr(1, \proj(H^0(M_i)^*)) \\
(x,y) &\mapsto \overline{\phi_i(x) \phi_i(y)} 
\end{align*}
where $\phi_i: X \hookrightarrow \proj(H^0(M_i)^*)$ is the embedding induced by $M_i$, and where $\overline{\phi_i(x) \phi_i(y)} $ denotes the line through $\phi_i(x) $ and $\phi_i(y)$. By viewing $(x,y) \in F$ as a pair $x \in X$, $y \in T_{X,x}$, one sees that the map $\psi_i $ is in fact globally defined, and hence $M_{i,1}+M_{i,2}-F$ is nef. To see that it is big, it suffices to show that $\psi_i$ is generically finite, i.e.\ we need to show that there exist points $x,y \in X$ such that $\phi_i(X)$ does not contain the line $\overline{\phi_i(x) \phi_i(y)}$. But if this were not the case $\phi_i(X)$ would be a linear space, contrary to the hypotheses.
\end{proof}
We now return to the situation of the blown-up plane. We start with the following:
\begin{thm}[\!\! \cite{hirsch-zeit}] \label{hirsch-almeida}
Let $p_1, \ldots, p_k$ be generic distinct points in the plane, and let $\pi: S \to \proj^2$ be the blow-up. Let $E \seq S$ be the exceptional divisor, and let $H$ be the pull-back of the hyperplane class on $\proj^2$. If we assume
$d \geq 5$ and $k+6 \leq \frac{(d+1)(d+2)}{2},$
then $dH-E$ is very ample on $S$.
\end{thm}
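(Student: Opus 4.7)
The plan is to verify very ampleness of $L:=dH-E$ by the classical criterion: show that for every zero-dimensional subscheme $Z\subset S$ of length two the restriction
\[
H^0(S,L)\longrightarrow H^0(Z,L\otimes\mathcal{O}_Z)
\]
is surjective, or equivalently that $H^1(S,\mathcal{I}_Z\otimes L)=0$ for every such $Z$. Separation of points corresponds to $Z$ being a reduced pair of points and separation of tangent vectors to $Z$ being an infinitesimal neighbourhood of a point in a chosen direction.

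First I would translate the vanishing to $\proj^2$ via the Leray spectral sequence of $\pi:S\to\proj^2$. Since $\pi$ is birational with one-dimensional fibres only along the exceptional divisors $E_1,\dots,E_k$, we have $R^1\pi_*(\mathcal{O}_S(dH-E))=0$, and $\pi_*(\mathcal{O}_S(dH-E)\otimes\mathcal{I}_Z)$ equals $\mathcal{O}_{\proj^2}(d)\otimes I_{p_1}\otimes\cdots\otimes I_{p_k}$ twisted by an additional ideal depending on $Z$. Thus the required vanishing becomes a statement about cohomology of twisted ideal sheaves of fat points in $\proj^2$, which one attacks with the Hirschowitz criterion (Theorem \ref{hirsch}).

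The case analysis is then the following. If $Z$ is supported away from every $E_i$, pushing forward only adds one or two simple base points at images of $Z$, and Hirschowitz gives vanishing as soon as $k+2<\lfloor(d+3)^2/4\rfloor$, easily implied by our hypothesis. If $Z$ is supported on some $E_i$ (either two points of $E_i$, or a length-two subscheme at a single point of $E_i$), then pushforward produces at $p_i$ an ideal of multiplicity at most three, contributing $\binom{4}{2}=6$ to the Hirschowitz sum, so one needs $(k-1)+6\le\lfloor(d+3)^2/4\rfloor$, which is precisely where the constant $6$ in the hypothesis $k+6\le\binom{d+2}{2}$ is used (after checking the elementary inequality $k+5\le\lfloor(d+3)^2/4\rfloor$ follows from $k+6\le(d+1)(d+2)/2$ for $d\ge5$). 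The mixed cases (one point on $E_i$ and one point elsewhere, or $Z$ transverse to $E_i$ at a single point) produce strictly milder configurations and fall under the preceding bounds.

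The main obstacle is carrying out this bookkeeping cleanly: one has to verify that in every geometric position of $Z$ the pushforward sheaf really has the claimed fat-point structure, and that the open conditions on $(p_1,\ldots,p_k)\in(\proj^2)^k$ imposed by the various Hirschowitz bounds can be satisfied \emph{simultaneously} for all length-two $Z$ on $S$. Because the bad loci are closed and proper inside the parameter space of configurations (and there are only finitely many combinatorial types of $Z$ to consider), genericity of the $p_i$ suffices. A final point worth noting is that when $d=5$ one must check the hypothesis $d\ge5$ is sharp enough for Hirschowitz's inequality $\lfloor(d+3)^2/4\rfloor\ge k+5$ to hold at the numerical boundary $k+6=\binom{7}{2}=21$; a short direct calculation confirms this and completes the proof.
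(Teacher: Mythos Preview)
The paper does not prove this theorem at all; it is stated with a citation to d'Almeida--Hirschowitz and used as a black box. So there is no ``paper's proof'' to compare against.

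Your proposal, however, contains a genuine gap. Hirschowitz's criterion (Theorem~\ref{hirsch}) gives vanishing of $H^1$ only for a \emph{generic} configuration of fat points. In your argument the points $p_1,\dots,p_k$ are generic, but the additional conditions coming from the length-two subscheme $Z$ are imposed at \emph{arbitrary} points of $\proj^2$: very ampleness requires the vanishing for \emph{every} $Z$, not just a generic one. So after pushing forward you are asking for $H^1(\proj^2,\mathcal{O}(d)\otimes I_{p_1}\cdots I_{p_k}\otimes J)=0$ where $J$ is an ideal supported at one or two points that can lie anywhere---possibly on a line through several of the $p_i$, possibly infinitely near one of them in a bad direction. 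Hirschowitz's theorem simply does not apply to such non-generic configurations.

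Your attempt to patch this (``the bad loci are closed and proper \ldots\ genericity of the $p_i$ suffices'') is the step that fails. The bad locus lives in the product $(\proj^2)^k\times\{\text{length-two }Z\}$; knowing it is proper there does not prevent its projection to $(\proj^2)^k$ from being dominant. Concretely, for each fixed generic $(p_1,\dots,p_k)$ there could still be finitely many (or a positive-dimensional family of) special $Z$ for which $h^1$ jumps, and nothing in your argument rules this out. There are only finitely many combinatorial \emph{types} of $Z$, but within each type $Z$ moves in a positive-dimensional family, and that is where the difficulty lies. The actual proof in the cited reference uses a more delicate specialization argument (the ``m\'ethode d'Horace''), precisely to control what happens for every $Z$ rather than just the generic one.
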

Putting everything together, we deduce:
\begin{prop} \label{mark-surj-main}
Let $x_1, \ldots, x_n, y_1, \ldots, y_m$ be distinct, generic points of $\proj^2$ with $m \geq 10$ and let $d \geq 24$ be a sufficiently large integer, so that both of the following two conditions are satisfied
\begin{enumerate}
\item $3n+6m < \left \lfloor \frac{(d-5)^2}{4} \right \rfloor$,
\item $n+m+6 \leq \frac{(d-6)(d-3)}{18}$.
\end{enumerate}
Let  $C \seq \proj^2$ be an integral curve of degree $d$ with nodes at $x_i$, ordinary singular points of multiplicity $3$ at $y_j$ for $1 \leq i \leq n$, $1 \leq j \leq m$ and no other singularities. Let $S \to \proj^2$ denote the blow-up of $\proj^2$, and let $D \seq S$ denote the strict transform of $C$. Then the marked Wahl map $W_{D,T}$ is surjective, where $T$ is the divisor over the nodes of $C$. Furthermore, $h^0(D,\mathcal{O}_D(T))=1$.
\end{prop}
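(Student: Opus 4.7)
The proof proceeds in three steps, all centered on the commutative diagram (\ref{gauss-diag}). By Corollary \ref{f-surj}, the right-hand vertical map $g$ is surjective under hypothesis (1). Hence if the Gaussian $\Phi_M$ along the top is surjective, then $g \circ \Phi_M$ is surjective, and commutativity forces $W_{D,T}$ to be surjective as well. The plan is therefore to establish surjectivity of $\Phi_M$ and to conclude via the diagram.

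To handle $\Phi_M$, I would invoke Proposition \ref{kv-van}: it suffices to decompose
$$M - K_S \sim dH - 3E_X - 3E_Y$$
as a sum of three very ample divisors on $S$. Writing $d = d_1 + d_2 + d_3$ with each $d_i \in \{\lfloor d/3 \rfloor, \lceil d/3 \rceil\}$, split
$$dH - 3(E_X+E_Y) = \sum_{i=1}^{3} \bigl(d_i H - (E_X + E_Y)\bigr).$$
Each summand is very ample by the Hirschowitz--Alexander criterion (Theorem \ref{hirsch-almeida}), provided $d_i \geq 5$ and $n + m + 6 \leq (d_i+1)(d_i+2)/2$. Since $d \geq 24$, we have $d_i \geq 8$, and $d_i \geq (d-2)/3$ yields $(d_i+1)(d_i+2)/2 \geq (d+1)(d+4)/18 \geq (d-6)(d-3)/18 \geq n + m + 6$ by hypothesis (2). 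This gives surjectivity of $\Phi_M$, and hence of $W_{D,T}$.

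For the final assertion $h^0(D, \mathcal{O}_D(T)) = 1$, use the restriction sequence
$$0 \to \mathcal{O}_S(E_X - D) \to \mathcal{O}_S(E_X) \to \mathcal{O}_D(T) \to 0.$$
Since $(E_X - D)\cdot H = -d < 0$, we have $H^0(\mathcal{O}_S(E_X - D)) = 0$. The components of $E_X$ are pairwise disjoint rigid exceptional curves, so $h^0(\mathcal{O}_S(E_X)) = 1$. By Serre duality, $H^1(\mathcal{O}_S(E_X - D)) \simeq H^1(\mathcal{O}_S(M))^\vee$, and this cohomology group vanishes by an application of the Hirschowitz criterion entirely analogous to Lemma \ref{gauss-lem2}. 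The long exact sequence then pins down $h^0(\mathcal{O}_D(T)) = 1$.

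The genuine difficulty is not conceptual but numerical: verifying that the two hypotheses of the proposition simultaneously imply both the Kawamata--Viehweg input of Proposition \ref{kv-van} (through the Hirschowitz--Alexander bound on each $d_i$) and the vanishing inputs of Corollary \ref{f-surj}. The threshold $d \geq 24$ is dictated precisely by the requirement $d_i \geq 5$, and the inequality $(d-6)(d-3)/18 \geq n+m+6$ is tailored so that the worst-case summand in the three-term decomposition still passes Hirschowitz--Alexander. Once this bookkeeping is in place, the rest of the argument is formal.
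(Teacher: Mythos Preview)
Your proof is correct and follows essentially the same strategy as the paper: surjectivity of $g$ via Corollary~\ref{f-surj}, surjectivity of $\Phi_M$ via Proposition~\ref{kv-van} by writing $M-K_S$ as a sum of three very ample divisors checked against Theorem~\ref{hirsch-almeida}, and the same restriction sequence for $h^0(\mathcal{O}_D(T))$. The only cosmetic difference is that for the vanishing of $H^1(\mathcal{O}_S(E_X-D))$ you dualise to $H^1(\mathcal{O}_S(M))$ and invoke Hirschowitz, whereas the paper observes directly that $dH-3E_X-3E_Y$ is big and nef (being a sum of very ample divisors) and applies Kawamata--Viehweg; both arguments are immediate.
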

\begin{proof}
We will firstly show that $W_{D,T}$ is surjective. We have already seen in Corollary \ref{f-surj} that the map $f$ from Diagram \ref{gauss-diag} is surjective, thanks to the assumption $3n+6m < \left \lfloor \frac{(d-5)^2}{4} \right \rfloor$. Thus it suffices to show that $\Phi_M$ is surjective, where
$M=\mathcal{O}_S((d-3)H-2E_X-2E_Y)$. Now $M-K_S \sim (d-6)H-3E_X-3E_Y$ can be written as the sum
\begin{align*}
M-K_S &= (\lfloor \frac{d-6}{3}\rfloor H-E_X-E_Y)+ (\lfloor \frac{d-6}{3}\rfloor H-E_X-E_Y)\\
&+ ((d-6-2\lfloor \frac{d-6}{3}\rfloor)H-E_X-E_Y).
\end{align*}
Since we are assuming $d \geq 24$, $n+m+6 \leq \frac{(d-6)(d-3)}{18}$, Theorem \ref{hirsch-almeida} shows that $M-K_S$ may be written as a sum of three very ample line bundles
(use $\lfloor \frac{d-6}{3}\rfloor \geq \frac{d-9}{3}$). Thus Proposition \ref{kv-van} implies that the Gaussian $\Phi_M$ is surjective.

For the second statement, note that the short exact sequence
$$0 \to \mathcal{O}_S \to \mathcal{O}_S(E_X) \to \mathcal{O}_{E_X}(-1) \to 0 $$
gives that $h^0(S, \mathcal{O}_S(E_X) )=1$. From the sequence
$$ 0 \to \mathcal{O}_S(3E_X+3E_Y-dH) \to \mathcal{O}_S(E_X) \to \mathcal{O}_D(T) \to 0$$
it suffices to show $H^1(S,\mathcal{O}_S(3E_X+3E_Y-dH))=0$. But $dH-3E_X-3E_Y$ is a sum of three very ample line bundles, so it is big and nef, so that this follows from the Kawamata--Viehweg vanishing theorem and Serre duality.

\end{proof}
As an immediate consequence we have:
\begin{thm} \label{infinite}
Fix any integer $l \in \mathbb{Z}$. Then there exist infinitely many integers $h(l)$, such that the general marked curve $[(C,T)] \in \widetilde{\mathcal{M}}_{h(l),2l}$ has surjective marked Wahl map.
\end{thm}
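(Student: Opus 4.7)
The plan is to reduce the claim to the production of a single example in each of infinitely many genera, and then to invoke Proposition \ref{mark-surj-main} as the example-producing engine.

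The first step is to observe that the surjectivity of $W_{C,T}$ is an open condition on $\widetilde{\mathcal{M}}_{h,2l}$. Indeed, over any \'etale atlas of the stack one has the universal marked family $(\mathcal{C},\mathcal{T}) \to B$ and the marked Wahl map assembles into a morphism of locally free sheaves
$$\bigwedge^2 \pi_*\bigl(\omega_{\mathcal{C}/B}(-\mathcal{T})\bigr) \longrightarrow \pi_*\bigl(\omega_{\mathcal{C}/B}^{3}(-2\mathcal{T})\bigr),$$
whose surjectivity locus is open by lower semicontinuity of rank. Since the marked Wahl map depends only on the underlying divisor (not on any labelling of its support), it descends from $\mathcal{M}_{h,2l}$ to $\widetilde{\mathcal{M}}_{h,2l} = [\mathcal{M}_{h,2l}/S_{2l}]$, which remains irreducible because $\mathcal{M}_{h,2l}$ is irreducible for $h \geq 2$ and $S_{2l}$ is finite. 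Hence it is enough to exhibit, for infinitely many integers $h$, a single marked curve $[(C,T)] \in \widetilde{\mathcal{M}}_{h,2l}$ with $W_{C,T}$ surjective.

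Second, I produce such examples using Proposition \ref{mark-surj-main}. Fix $n = l$ (so that the divisor above the nodes has the required degree $2n = 2l$) and fix $m = 10$. For every sufficiently large integer $d$, all numerical hypotheses of Lemma \ref{jjji} and Proposition \ref{mark-surj-main} are satisfied, since their right-hand sides grow quadratically in $d$ while $l$ and $m$ are fixed. The Proposition then produces an integral plane curve of degree $d$ with $l$ nodes and $10$ ordinary triple points; its strict transform $D$ in the blow-up of $\proj^2$ at the singular points is a smooth curve of genus
$$h(d) := \binom{d-1}{2} - l - 30,$$
carrying the degree-$2l$ divisor $T$ above the nodes, with $W_{D,T}$ surjective. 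Since $h(d) \to \infty$ as $d \to \infty$, this yields infinitely many distinct values of $h$, concluding the argument.

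I do not foresee any serious obstacle: all the geometric content is already packaged in Proposition \ref{mark-surj-main}, and the reduction via openness and irreducibility is standard. The only item requiring a little care is checking that the marked Wahl map genuinely descends to a morphism of sheaves on the quotient stack $\widetilde{\mathcal{M}}_{h,2l}$, which follows at once from its invariance under permutations of the marked points.
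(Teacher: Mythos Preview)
Your overall strategy matches the paper's: produce one example via Proposition \ref{mark-surj-main} with $n=l$, $m=10$, and let $d$ vary. However, your openness argument has a genuine gap. You assert that $\pi_*\bigl(\omega_{\mathcal{C}/B}(-\mathcal{T})\bigr)$ is locally free on the atlas, but this is false in general: by Riemann--Roch and Serre duality, $h^0(C,K_C(-T)) = h-1-2l + h^0(C,\mathcal{O}_C(T))$, and $h^0(\mathcal{O}_C(T))$ jumps on the locus where the divisor $T$ moves in a pencil. If your example happened to sit on such a jumping locus, surjectivity of $W_{D,T}$ there (with an enlarged source) would say nothing about surjectivity at the nearby general point, where the source is strictly smaller. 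So ``surjectivity is open'' is not automatic from the sheaf map you wrote down.

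The paper closes exactly this gap by invoking the \emph{second} conclusion of Proposition \ref{mark-surj-main}, namely $h^0(D,\mathcal{O}_D(T))=1$. This is the minimal possible value, hence by upper semicontinuity it holds on an open neighbourhood of $[(D,T)]$; on that neighbourhood $h^0(K_C(-T))$ is locally constant, the source sheaf is locally free, and the semicontinuity argument goes through. (The target is locally free because $\deg K_C^3(-2T) > 2h-2$ once $h>l+1$, which certainly holds for large $d$.) You should add this step; without it the reduction to a single example is not justified.
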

\begin{proof}
Consider the curve $D \seq S$ from Proposition \ref{mark-surj-main}, applied to $n=l$ and $m=10$ (choose any $d$ satisfying the hypotheses of the proposition). Let $h(l)$ denote the genus of $D$. In an open subset about  $[(D,T)] \in \widetilde{\mathcal{M}}_{h(l),2l}$, we have $h^0(D,\mathcal{O}_D(T))=1$ and thus 
$h^0(D,K_D(-T))=\chi(K_D(-T))+1$ is locally constant. Further, the equality $h^0(D,K_D^3(-2T))=\chi(K_D^3(-2T))$ holds, since $\deg(K_D^3(-2T)) >2h(l)-2$. Thus the claim follows immediately from Proposition \ref{mark-surj-main} and semicontinuity.
\end{proof}
\begin{remark}
In our example $(D,T)$, we have that $l$ is of the order $\frac{h}{8}$, where $h$ is the genus of $D$. Indeed, if $n=l$ is large, we can take $d^2$ to be approximately
$18l$, so that $g(C)=\frac{1}{2}(d-1)(d-2) \sim 9l$ and $h \sim 8l$. Thus one would expect that the marked Wahl map of a general marked curve in $\widetilde{\mathcal{M}}_{h,2l}$ is surjective, so long as $l$ is at most of the order $\frac{h}{8}$.
\end{remark}

We will now study the marked Wahl map for curves arising via the normalisation of nodal curves on K3 surfaces. 
\begin{thm} \label{marked-wahl-k3}
Assume $g-n \geq 13$ for $k=1$ or $g \geq 8$ for $k >1$, and let $ n \leq \frac{p(g,k)-2}{5}$. Then there is an irreducible component $I^0 \seq \mathcal{V}^n_{g,k}$ such that for a general $[(f: C \to X,L)] \in I^0$ the marked Wahl map $W_{C,T}$ is nonsurjective, where $T \seq C$ is the divisor over the nodes of $f(C)$.
\end{thm}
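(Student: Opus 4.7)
My plan is to adapt the classical Wahl argument to the marked/nodal setting by passing to the blow-up $\pi \colon \tilde X \to X$ at the $n$ nodes of $D := f(C)$. For a general $[(f \colon C \to X, L)] \in I^0$, where $I^0 \seq \mathcal{V}^n_{g,k}$ is the component provided by Theorems~\ref{finiteness} and~\ref{finiteness-nonprim} (so that in particular Lemma~\ref{coh-van} applies and furnishes the vanishing $H^0(C, f^*T_X) = 0$), I identify $C$ with the strict transform of $D$ in $\tilde X$, let $E$ be the exceptional divisor, set $T := E \cap C$, and $\widetilde M := \pi^*L - 2E$. A short adjunction computation on the smooth surface $\tilde X$ gives the crucial identification $\widetilde M|_C \simeq K_C(-T)$, so the marked Wahl map $W_{C,T}$ appears as the restriction-to-$C$ of a genuine Gaussian on the surface $\tilde X$.

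I would then set up the Gaussian diagram, in the spirit of~(\ref{gauss-diag}),
\[
\xymatrix{
\bigwedge^2 H^0(\tilde X, \widetilde M) \ar[r]^{\;\;\Phi_{\widetilde M}\;\;} \ar[d]^{r_1} & H^0(\tilde X, \Omega_{\tilde X}(\widetilde M^2)) \ar[d]^{r_2} \\
\bigwedge^2 H^0(C, K_C(-T)) \ar[r]^{\;\;W_{C,T}\;\;} & H^0(C, K_C^3(-2T)),
}
\]
and reduce the nonsurjectivity of $W_{C,T}$ to the combination of: (a) surjectivity of $r_1$, and (b) nonsurjectivity of the composition of $r_2$ with the canonical projection $H^0(\Omega_{\tilde X}|_C(\widetilde M^2)) \to H^0(K_C^3(-2T))$. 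Part~(a) is a Koszul computation: from $0 \to \mathcal{O}_{\tilde X} \to \widetilde M \to K_C(-T) \to 0$ and $H^1(\tilde X, \mathcal{O}) = H^1(X,\mathcal{O}) = 0$, restriction on global sections is already surjective, and hence so is the induced map on exterior squares. For~(b), twisting the conormal sequence $0 \to -\widetilde M|_C \to \Omega_{\tilde X}|_C \to \omega_C \to 0$ by $\widetilde M^2|_C$ yields
\[
0 \to K_C(-T) \to \Omega_{\tilde X}|_C(\widetilde M^2) \to K_C^3(-2T) \to 0,
\]
and the obstruction to surjectivity is controlled by a connecting map $\delta \colon H^0(K_C^3(-2T)) \to H^1(K_C(-T))$. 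The numerical hypothesis $n \leq (p(g,k)-2)/5$ enters precisely to guarantee sufficient positivity of $\widetilde M$ (whose self-intersection is $2g-2-4n$) for a Kodaira--Akizuki--Nakano type vanishing $H^1(\Omega_{\tilde X}(\widetilde M)) = 0$, which promotes ``nonsurjectivity of the connecting map in the $C$-sequence'' to ``nonsurjectivity of $r_2$ globally''.

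The main obstacle is showing $\delta \neq 0$. This is the analog of the classical Beauville--Mérindol/Wahl observation that the K3 structure on $X$ forces the extension class of the conormal sequence to be non-trivial. In the marked case, I would argue as follows: via Serre duality on $C$, the dual of $\delta$ is a map $H^0(C, \mathcal{O}_C(T)) \to H^1(C, K_C^{-2}(T))$, which factors through the normal bundle sequence of the stable map $f$. The assertion $\delta \neq 0$ reduces to the non-triviality of the Kodaira--Spencer class of the K3 $X$ paired against a section of $\mathcal{O}_C(T)$, and the obstruction to this non-triviality is controlled precisely by $H^0(C, f^*T_X)$, which vanishes by Lemma~\ref{coh-van}. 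The plan then concludes by semicontinuity: the locus in $I^0$ where $\dim \operatorname{coker} W_{C,T} \geq 1$ is closed in families, so having exhibited one general point in this locus extends the nonsurjectivity statement to all of $I^0$.
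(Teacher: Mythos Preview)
Your architecture (blow up at the nodes, set up the Gaussian square, use surjectivity of $r_1$ from $H^1(\mathcal{O}_{\tilde X})=0$, reduce to the conormal sequence on $C$) is exactly the paper's. Where you diverge from the paper is in the endgame, and there is a genuine gap.

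First, the numerical hypothesis $n\leq (p(g,k)-2)/5$ is \emph{not} used for a Kodaira--Akizuki--Nakano vanishing on $\tilde X$; no such vanishing is invoked, and the cohomology group $H^1(\Omega_{\tilde X}(\widetilde M))$ plays no role. The hypothesis is used on the curve $C$: it says precisely that $\deg K_C(-2T)=2(p(g,k)-n)-2-4n\geq g(C)=p(g,k)-n$, so that $K_C(-2T)$ is effective. (Incidentally, for $k>1$ you want $\widetilde M=\pi^*(kL)-2E$, and your self-intersection formula should read $2(p(g,k)-1)-4n$.)

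Second, your argument for $\delta\neq 0$ is too vague to carry the weight. The paper does not try to exhibit a nonzero Kodaira--Spencer pairing directly; instead it runs the contradiction the other way. Assume $W_{C,T}$ surjective. Then $r_2$ is surjective (since $r_1$ is), hence so is $H^0(\Omega_{\tilde X}|_C(\widetilde M^2))\to H^0(K_C^3(-2T))$. From the twisted conormal sequence
\[
0\to K_C(-T)\to \Omega_{\tilde X}|_C(\widetilde M^2)\to K_C^3(-2T)\to 0
\]
this surjectivity forces $H^1(K_C(-T))\hookrightarrow H^1(\Omega_{\tilde X}|_C(\widetilde M^2))$, and $H^1(K_C(-T))\simeq H^0(\mathcal{O}_C(T))^*\neq 0$. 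Serre duality on $C$ then gives $H^0(C,T_{\tilde X}|_C(2T-K_C))\neq 0$. Now comes the clean step you are missing: the differential of $\pi$ gives an inclusion of sheaves $T_{\tilde X}\hookrightarrow \pi^*T_X$, and the composition $C\hookrightarrow \tilde X\to X$ is $f$, so
\[
0\neq H^0\bigl(T_{\tilde X}|_C(2T-K_C)\bigr)\subseteq H^0\bigl(f^*T_X(2T-K_C)\bigr)\subseteq H^0(f^*T_X)=0,
\]
where the second inclusion uses effectivity of $K_C(-2T)$ (this is where $n\leq (p(g,k)-2)/5$ enters) and the vanishing is Lemma~\ref{coh-van}. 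This contradiction finishes the proof with no positivity argument on the surface and no analysis of $\delta$ beyond the long exact sequence.
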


\begin{proof}  
Let $I \seq \mathcal{T}^n_{g,k}$ be the irreducible component from Theorem \ref{finiteness} (in the case $k=1$) or Theorem \ref{finiteness-nonprim} (in the case $k>1$) and set $I^0=I \cap \mathcal{V}^n_{g,k}$, which is nonempty from \cite[Lemma 3.1]{chen-rational}.
Let $\pi: Y \to X$ be the blow up of the K3 surface $X$ at the nodes of $f(C)$. There is a natural closed immersion $C \seq Y$. Let $E \seq Y$ denote the sum of the exceptional divisors, and let $M= \mathcal{O}_Y(C)$. We have $K_{C}=(M+E)_{|_{C}}$ by the adjunction formula.
Consider the following commutative diagram:
\[
\xymatrix{
\bigwedge^2 H^0(Y, M) \ar[r]^{\Phi_{M} \; \; \; \;}  \ar[d]^h & H^0(Y, \Omega_{Y}(M^2)) \ar[d]^g \\
\bigwedge^2 H^0(C, K_{C}(-T)) \ar[r]^{\; \; \; W_{C,T} \; \; \; \; }  &  H^0(C, K^3_{C}(-2T)).
}
\]
 where the top row is a Gaussian on $Y$. 
The map $h$ is surjective, as $H^1(Y, \mathcal{O}_Y)=0$.
Suppose for a contradiction that the marked Wahl map $W_{C,T}$ were surjective. Then $g$ would be surjective, and hence the natural map
$$ H^0(C, \Omega_{Y|_{C}}(M^2)) \to H^0(C, K^3_{C}(-2T))$$ would also be surjective.  Now consider the short exact sequence
$$0 \to M_{|_{C}} \to \Omega_{Y|_{C}}(M^2) \to K^3_{C}(-2T) \to 0 .$$ Since 
$$H^1(C,M)=H^1(C,K_{C}(-E))=H^0(C,E) \neq 0,$$
the surjectivity of $g$ would imply that $h^1(C,\Omega_{Y|_{C}}(M^2))=h^0(C,T_{Y|_{C}}(2E-K_{C})) \neq 0$.
However $$H^0(C,T_{Y|_{C}}(2E-K_{C})) \seq H^0(C, f^*(T_X)(2E-K_{C}))\seq H^0(C, f^*(T_X))=0$$
from Lemma \ref{coh-van}, and since $K_{C}-2E$ is effective for $n \leq \frac{p(g,k)-2}{5}$. So this is a contradiction and hence $W_{C,T}$ is nonsurjective.

\end{proof}

\chapter[Brill--Noether theory for nodal curves on K3]{Brill--Noether theory for nodal curves on K3 surfaces}
In this section we consider two related questions on the Brill--Noether theory of nodal curves on a K3 surface.  Let $D \seq X$ be a nodal curve on a K3 surface, and let $C:= \tilde{D}$ be the normalisation of $D$. In Section \ref{BNP-nodal}, we consider the Brill--Noether theory of the smooth curve $C$. The main tool used here is the Lazarsfeld--Mukai bundle, \cite{lazarsfeld-bnp}, and degenerations to special K3 surfaces with disjoint rational curves. In Section \ref{rat} we consider the Brill--Noether theory for the nodal curve $D$. The techniques used in this section are more sophisticated and build upon recent work of O'Grady, \cite{ogrady}. The idea is to use rational curves to construct a (singular) Lagrangian in an appropriate moduli space of vector bundles on a K3 surface.

\section{Brill--Noether theory for smooth curves with a nodal model on a K3 surface} \label{BNP-nodal}
In this section we will apply an argument from \cite{lazarsfeld-bnp} to the K3 surface $S_{p,h}$ as in Lemma \ref{onenodal-lem-a} in order to study the Brill--Noether theory for smooth curves with a primitive nodal model on a K3 surface.
\begin{lem}
Consider the K3 surface $S_{p,h}$ as in Lemma  \ref{onenodal-lem-a}. There is no expression $M=A_1+A_2$, where $A_1$ and $A_2$ are effective divisors with $h^0(Y,\mathcal{O}(A_1)) \geq 2$ and $h^0(Y,\mathcal{O}(A_2)) \geq 2$.
\end{lem}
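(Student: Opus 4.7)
The plan is to argue by contradiction using the classification of the Picard lattice $P_{p,h}$ together with the final assertion of Lemma \ref{onenodal-lem-a}, which guarantees that no class of the form $-xM + yR_1 + zR_2$ with $x>0$ is effective. First I would write $A_i = x_iM + y_iR_1 + z_iR_2$ for integers $x_i, y_i, z_i$, so that $x_1+x_2=1$, $y_1+y_2=0$, $z_1+z_2=0$. Effectivity of each $A_i$ together with the cited rigidity statement immediately forces $x_1, x_2 \geq 0$, so after relabeling $x_1 = 0$ and $x_2=1$. Hence $A_1 \sim yR_1 + zR_2$ for some integers $y,z$, and the job is to show that no such class on $S_{p,h}$ supports a linear system of dimension $\geq 1$.

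The next step would be to apply Saint-Donat's analysis of linear systems on a K3 surface (cf.\ \cite{donat}) to $|A_1|$, writing $A_1 \sim F + M'$, where $F$ is the fixed part and $|M'|$ is base point free with $h^0(M') = h^0(A_1) \geq 2$. Write $F \sim \alpha M + \beta R_1 + \gamma R_2$. Since $F$ is effective, the final sentence of Lemma \ref{onenodal-lem-a} forces $\alpha \geq 0$; applying the same lemma to the effective class $M' \sim -\alpha M + (y-\beta)R_1 + (z-\gamma)R_2$ forces $-\alpha \geq 0$, hence $\alpha = 0$. Thus $M' \sim (y-\beta)R_1 + (z-\gamma)R_2$ with $(M')^2 = -2\bigl((y-\beta)^2 + (z-\gamma)^2\bigr) \leq 0$.

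On the other hand $M'$ is base point free, therefore nef, so $(M')^2 \geq 0$; combining we get $(M')^2 = 0$, which by integrality forces $y = \beta$ and $z = \gamma$, and therefore $M' \sim 0$. But then $h^0(M') = 1$, contradicting $h^0(M') = h^0(A_1) \geq 2$. This contradiction will complete the proof.

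The one point requiring care is the application of Saint-Donat's decomposition: we need $|M'|$ to be genuinely base point free, not merely free outside fixed components. This is the key K3-specific feature used here, and it is precisely what makes the intersection-number argument collapse the moving class. Apart from this standard input, the argument is entirely lattice-theoretic and uses only the intersection form of $P_{p,h}$ and the effectivity criterion already established.
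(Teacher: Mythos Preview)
Your argument is correct, though it takes a somewhat different route from the paper's. Both proofs agree on the first reduction: writing $A_i = x_iM + y_iR_1 + z_iR_2$ and using Lemma~\ref{onenodal-lem-a} to force one of the $x_i$ to vanish, so that one summand lies in the sublattice $\mathbb{Z}R_1 \oplus \mathbb{Z}R_2$.

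From there the paper argues directly at the level of components: it shows that any effective divisor $D = aR_1 + bR_2$ must have $a,b \geq 0$, and that every integral component of such a $D$ is linearly equivalent to $R_1$ or $R_2$; hence $D$ is rigid and $h^0(D) \leq 1$. This is a purely combinatorial argument on the rank-two negative-definite sublattice, peeling off $(-2)$-curves one at a time.

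Your approach instead passes to the mobile part $M'$ of $|A_1|$ and uses the intersection form: since $M'$ also lies in $\mathbb{Z}R_1 \oplus \mathbb{Z}R_2$ (by the same effectivity trick applied to both $F$ and $M'$), one has $(M')^2 \leq 0$, while $(M')^2 \geq 0$ since $|M'|$ has no fixed component; this forces $M' \sim 0$. A small remark: you do not actually need Saint-Donat's base-point-freeness here. Two general members of $|M'|$ share no common component (as $|M'|$ has no fixed part and $\dim|M'| \geq 1$), so their intersection number is already nonnegative. Thus your argument is in fact slightly more elementary than you suggest.
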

\begin{proof}
We first claim that any effective divisor of the form $D=aR_1+bR_2$, for integers $a,b$, must have $a,b \geq 0$. Suppose for a contradiction that $a < 0$. Clearly we must have $b > 0$. Thus there is some integral component $D_1$ of $D$ with $(D_1 \cdot R_2) <0$, as $(D \cdot R_2)=-2b <0$. Thus $D_1 \sim R_2$. Repeating this argument on $D-R_2$, we see that $b R_2$ is a summand of $D$. But then $D-bR_2=aR_1$ is effective, which is a contradiction as $a<0$. Thus $a \geq 0$. Likewise $b \geq 0$. Furthermore, this argument also shows that all integral components of any effective divisor of the form $D=aR_1+bR_2$ are linearly equivalent to either $R_1$ or $R_2$. In particular, $D$ is rigid.

Suppose $M=A_1+A_2$ is an expression as above. Write $A_1=x_1M+\sum_{i=1}^2 y_{1,i} R_i$ and $A_2=x_2M+\sum_{i=1}^2 y_{2,i} R_i$ for integers $x_i,y_{i,j}$ for $i=1,2$, $1 \leq j \leq 2$. We have $x_1,x_2 \geq 0$ by Lemma  \ref{onenodal-lem-a} and $x_1+x_2=1$, and assume $x_1 \geq x_2$.  Thus we must have $x_2=0$, which gives  $h^0(Y,\mathcal{O}(A_2)) \leq 1$ (as the divisor $\sum_{i=1}^2 y_{2,i} \Gamma_i $ is rigid if $y_{2,i} \geq 0$ for all $i$ and not effective if there is some $j$ with $y_{2,j} <0$). 
\end{proof}

Let $C \seq X$ be a smooth curve on a K3 surface $X$. Let $M \in \text{Pic}(C)$ be a globally generated line bundle such that $\omega_C \otimes M^*$ is also globally generated. We denote by $F_{C,M}$ the vector bundle on $X$ defined as the kernel
of the evaluation map $H^0(C,M) \otimes_{\C} \mathcal{O}_X \twoheadrightarrow M$. Let $G_{C,M}$ be the dual bundle of $F_{C,M}$, this is globally generated from the exact sequence
$$ 0 \to H^0(M) \otimes \mathcal{O}_X \to G_{C,M} \to \omega_C \otimes M^* \to 0$$
(using $H^1(\mathcal{O}_X)=0$). The following generalisation of \cite[Lemma 1.3]{lazarsfeld-bnp} is well-known, see \cite[Remark 3.1]{fkp-old}.
\begin{lem}
In the above situation, assume further that there is no expression $\mathcal{O}(C) \simeq L_1 \otimes L_2$, where $L_1$ and $L_2$ are effective line
bundles on $X$ with $h^0(L_i) \geq 1+s_i$ for $i=1,2$, where $s_i \geq 1$ are integers satisfying $s_1+s_2=h^0(C,M)$. Then $F_{C,M}$ is a simple vector bundle.
\end{lem}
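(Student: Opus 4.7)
The plan is to reduce the claim to showing that the dual bundle $G_{C,M}$ is simple, since $\operatorname{End}(F_{C,M}) \simeq \operatorname{End}(G_{C,M})$. Suppose for contradiction that there exists $\phi \in \operatorname{End}(G_{C,M})$ which is not a scalar. At a general point of $X$, $\phi$ has some eigenvalue $\lambda \in \C$; replacing $\phi$ by $\phi - \lambda \cdot \operatorname{id}$, we arrange that $\phi$ is neither zero nor generically invertible. Let $N \subseteq G_{C,M}$ be the saturation of $\ker(\phi)$. Since $X$ is a smooth surface, the saturated subsheaf $N$ is reflexive, hence locally free, of rank $r_1$ with $1 \leq r_1 \leq r-1$ where $r := h^0(C,M)$. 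Setting $r_2 := r - r_1$, we obtain an exact sequence
$$0 \to N \to G_{C,M} \to Q \to 0$$
with $Q$ torsion-free of rank $r_2$. Taking determinants and using $\det G_{C,M} \simeq \mathcal{O}_X(C)$ gives
$$\mathcal{O}_X(C) \simeq L_1 \otimes L_2, \qquad L_1 := \det N, \quad L_2 := \det Q.$$

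The second step is to verify that both $L_1$ and $L_2$ are effective with $h^0(X, L_i) \geq r_i + 1$. Setting $s_i := r_i$ then satisfies $s_1 + s_2 = r = h^0(C,M)$ and $s_i \geq 1$, contradicting the assumption of the lemma and proving simplicity of $G_{C,M}$. To obtain the section bounds, I would first observe that $G_{C,M}$ is globally generated: the subsheaf $H^0(M)^* \otimes \mathcal{O}_X$ in the defining sequence is trivially so, the quotient $\omega_C \otimes M^*$ is by hypothesis, and the vanishing $H^1(X, \mathcal{O}_X) = 0$ permits lifting of sections. Hence $Q$, as a quotient of a globally generated sheaf, is globally generated. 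A standard argument now shows $h^0(L_2) \geq r_2 + 1$: fixing $r_2 - 1$ general global sections of $Q$ and wedging with a varying $r_2$-th section defines a linear map $H^0(Q) \to H^0(L_2)$ whose kernel has dimension at most $r_2 - 1$, and the image dimension is at least $r_2 + 1$ provided $h^0(Q)$ is large enough, which follows from $h^0(G_{C,M}) \geq h^0(M) + h^0(\omega_C \otimes M^*)$ together with an estimate on $h^0(N)$.

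For the section bound on $L_1$, one argues in parallel using the dual short exact sequence
$$0 \to Q^\vee \to F_{C,M} \to N^\vee \to 0,$$
noting that $F_{C,M}$ is generated by enough global sections inherited from the cokernel sequence $H^0(M)^\vee \otimes \mathcal{O}_X \twoheadrightarrow N^\vee$ combined with dualization on $C$; the same wedging trick then produces $h^0(L_1) = h^0(\det N) \geq r_1 + 1$. The main obstacle in carrying this out rigorously is the precise section count $h^0(L_i) \geq r_i + 1$ rather than mere effectivity: one has to keep track of how many independent sections of $N$ and $Q$ remain after factoring through the short exact sequence, and show that the wedge product construction does not collapse. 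For this, the essential input is that $G_{C,M}$ is genuinely generated by the $r$ sections coming from $H^0(M)^*$ together with the sections lifting $H^0(\omega_C \otimes M^*)$, so that neither $N$ nor $Q$ can be section-deficient relative to its rank.
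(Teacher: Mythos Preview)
Your setup is fine up to the point where you produce the exact sequence $0 \to N \to G_{C,M} \to Q \to 0$ with $N$ the saturation of $\ker\phi$. The genuine problem is the section bound on $L_1 = \det N$. Your proposed argument dualizes to $0 \to Q^\vee \to F_{C,M} \to N^\vee \to 0$ and then tries to exploit global generation of $F_{C,M}$, but this fails outright: as recorded just above the lemma, $h^0(F_{C,M}) = 0$. So $F_{C,M}$ is not globally generated, $N^\vee$ inherits no sections from it, and the ``cokernel sequence $H^0(M)^\vee \otimes \mathcal{O}_X \twoheadrightarrow N^\vee$'' you invoke does not exist. With $N$ taken as a \emph{sub}sheaf of $G_{C,M}$ there is simply no mechanism in sight to produce $r_1+1$ sections of $\det N$; a trivial subbundle $N \simeq \mathcal{O}_X^{r_1}$ is not a priori excluded by your argument.

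The paper avoids this asymmetry by taking $K := \operatorname{im}(\psi)$ rather than (the saturation of) $\ker\psi$. The point is that $K$ is then \emph{simultaneously} a quotient of $G_{C,M}$ (via $\psi$) and a subsheaf of $G_{C,M}$ (as the image). Hence both $K$ and $G_{C,M}/K$ are quotients of the globally generated bundle $G_{C,M}$, so both are globally generated, and the section bounds $h^0(\det K) \geq s_1 + 1$ and $h^0(\det(G_{C,M}/K)) \geq s_2 + 1$ can be obtained symmetrically (the paper does this by restricting to a sufficiently positive curve $D \subset X$ and using that $c_1$ of the torsion part is effective). If you replace your $N$ by $\operatorname{im}(\phi)$, the rest of your outline goes through; as written, the $L_1$ step is a real gap.
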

\begin{proof}
We follow the proof of \cite[Ch.7, Prop.2.2]{huy-lec-k3}. The bundle $F_{C,M}$ is simple if and only if its dual $G_{C,M}$ is simple. Suppose $G_{C,M}$ were not simple. Then there would exist a non-trivial endomorphism $\psi: G \to G$ with non-trivial kernel. Set $K:= \text{im}(\psi)$, $L_1:=\det(K)$ and $L_2:=\det((G/K))$. Set $s_1:= \rank(K)$ and $s_2:=\rank((G/K)/T)$, where $T$ is the maximal torsion subsheaf of $G/K$. Clearly $s_i \geq 1$ for $i=1,2$ and $s_1+s_2=\rank(G)=h^0(C,M)$. So it suffices to prove $h^0(L_i) \geq 1+s_i$ for $i=1,2$. As is explained in \cite[Sec.7, Prop.2.2]{huy-lec-k3}, if we pick a sufficiently positive divisor $D$ on $X$ we have $h^0(D,L_i|_D) \geq s_i +1$ (as $c_1(T)$ is effective). On the other hand, if $D$ is sufficently positive  then $D-L_i$ is big and nef, so that $H^0(X,L_i) \twoheadrightarrow H^0(D,L_i|_D)$ and thus $h^0(L_i) \geq 1+s_i$ for $i=1,2$.
\end{proof}

\begin{cor} \label{BNP-cor}
Consider a K3 surface $S_{p,h}$ as in Lemma  \ref{onenodal-lem-a}. Let $C \in |M|$ be a general smooth curve. Then $C$ is Brill--Noether--Petri general.
\end{cor}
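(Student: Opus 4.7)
The plan is to follow the original strategy of Lazarsfeld in \cite{lazarsfeld-bnp}, using the two preceding lemmas of this section as the two main inputs. Suppose for contradiction that the general smooth $C \in |M|$ is not Brill--Noether--Petri general. Then there exists a line bundle $A$ on $C$ with $h^0(C,A) \geq 2$ and $h^1(C,A) \geq 2$ such that the Petri map
$$\mu_A \; : \; H^0(C,A) \otimes H^0(C,K_C \otimes A^{-1}) \to H^0(C,K_C)$$
fails to be injective. By the standard reduction (passing to the image of the evaluation map and then to its saturation) we may replace $A$ by a globally generated line bundle $N$ on $C$ such that $\omega_C \otimes N^{-1}$ is also globally generated and the Petri map of $N$ is still not injective.

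Next I would form the Lazarsfeld--Mukai bundle $F_{C,N}$ on $S_{p,h}$ as the kernel of the evaluation map $H^0(C,N) \otimes \mathcal{O}_{S_{p,h}} \twoheadrightarrow N$. The classical translation (see \cite[Ch.~7, Prop.~2.2]{huy-lec-k3} and \cite{lazarsfeld-bnp}) converts the non-injectivity of the Petri map of $N$ into the existence of a non-trivial, non-invertible endomorphism of $F_{C,N}$, so that $F_{C,N}$ is not simple.

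At this point I would invoke the unnamed simplicity lemma stated just above the corollary: $F_{C,N}$ is simple whenever $\mathcal{O}_{S_{p,h}}(C) \simeq M$ admits no decomposition $M \simeq L_1 \otimes L_2$ with $h^0(L_i) \geq 1 + s_i$ for integers $s_i \geq 1$ satisfying $s_1 + s_2 = h^0(C,N)$. The hypothesis $s_i \geq 1$ forces $h^0(L_i) \geq 2$, so the required non-existence is implied by the first lemma of this section, which rules out any effective decomposition $M = A_1 + A_2$ with $h^0(A_i) \geq 2$ for both $i$. This produces the desired contradiction, and so $C$ is Brill--Noether--Petri general.

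The only non-routine part of this argument is the first lemma of the section, whose proof has already been carried out using the explicit structure of $\mathrm{Pic}(S_{p,h}) \simeq P_{p,h}$: the class $M$ can appear with coefficient $0$ or $1$ in any effective decomposition, and any remaining summand of the form $aR_1 + bR_2$ is either rigid (when $a,b \geq 0$) or non-effective. Given that, the proof of Corollary \ref{BNP-cor} itself is essentially a direct application of the two preceding lemmas combined with Lazarsfeld's method, with no further geometric input required.
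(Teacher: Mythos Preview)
Your proposal is correct and follows exactly the approach the paper intends: the paper's proof is the single sentence ``This follows immediately from the proof of the main theorem in \cite{lazarsfeld-bnp} and the above lemma,'' and you have simply unpacked that sentence by combining the non-decomposability lemma for $M$ with the simplicity criterion for the Lazarsfeld--Mukai bundle and then invoking Lazarsfeld's argument. One small caveat: the reduction to a line bundle $N$ with both $N$ and $\omega_C\otimes N^{-1}$ globally generated is not quite achieved by ``passing to the image of the evaluation map and then to its saturation'' on a fixed curve---in Lazarsfeld's proof this step uses the genericity of $C$ in $|M|$ via a deformation/semicontinuity argument---but since both you and the paper defer to \cite{lazarsfeld-bnp} for this, it does not affect the validity of the proof.
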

\begin{proof}
This follows immediately from the proof of the main theorem in \cite{lazarsfeld-bnp} and the above lemma.
\end{proof}
Putting all the pieces together, we get the following result.
\begin{prop} \label{BNP-theorem}
Assume $g-n \geq 8$. 
Then there exists an irreducible component $\mathcal{J} \seq \mathcal{V}^n_g$ such that for $[(f:D \to X,M)] \in \mathcal{J}$ general, $D$ is Brill--Noether--Petri general.
\end{prop}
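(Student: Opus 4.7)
The plan is to exhibit an explicit degeneration of a component $\mathcal{J} \seq \mathcal{V}^n_g$ to a stable map whose normalization is Brill--Noether--Petri general, and then invoke the openness of BNP-generality in $\mathcal{M}_{g-n}$ to conclude. Setting $h = g-n \geq 8$, I would choose a K3 surface $Y$ whose Picard lattice contains a primitive sublattice of the form $P_{g,h}$ from Lemma~\ref{onenodal-lem-a} (or, if the congruence requirements on $s_1$ fail, a rank-two substitute with basis $\{M,R\}$, $M^2 = 2h-2$, $R^2 = -2$, $(M \cdot R) = n+1$). In either case, a general $C \in |M|$ is a smooth curve of genus $h = g-n$ and is BNP-general by the argument of Corollary~\ref{BNP-cor}, which depends only on the absence of bad decompositions of $M$. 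The class $L := M + R$ is primitive with $L^2 = 2h-2 + 2(n+1) - 2 = 2g-2$.

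Next, consider the reducible nodal curve $D_0 := C + R \in |L|$, which has $n+1$ nodes along $C \cap R$ and arithmetic genus $g$. Applying Proposition~\ref{prim-cor} to the unramified stable map from the partial normalization $B_0$ of $D_0$ at $n$ of the $n+1$ nodes, together with Lemma~\ref{defo-nodal-lemma}, shows that $[(f_0 : B_0 \to Y, L)]$ lies in the closure of $\mathcal{V}^n_g$ inside $\mathcal{W}^n_g$. Concretely, $D_0$ deforms in a one-parameter family $\{D_t\} \seq |L|$ of integral $n$-nodal curves by smoothing exactly one node while preserving the other $n$. The normalization $\tilde D_t$ is a flat family of smooth curves of genus $g-n$ whose central fiber is the partial normalization $C \cup_{\mathrm{pt}} R \simeq C \cup_{\mathrm{pt}} \proj^1$ of $D_0$ at the $n$ persisting nodes; its stabilization is $C$ itself, since the $\proj^1$ attached to $C$ at a single point is unstable and contracted.

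Because BNP-generality is an open condition on $\mathcal{M}_{g-n}$ and $[C]$ is BNP-general, the stable model of $\tilde D_t$ remains BNP-general for $t$ near $0$. Letting $\mathcal{J}$ be a component of $\mathcal{V}^n_g$ containing the family $\{[(f_t : \tilde D_t \to Y_t, L_t)]\}_{t \neq 0}$ (where $Y_t$ is a small deformation of $Y$ carrying the polarization $L_t$), a general element of $\mathcal{J}$ has BNP-general normalization. The main technical obstacle is arranging the lattice data so that $(M \cdot R) = n+1$ is achievable for all $g-n \geq 8$ within the framework of Lemma~\ref{onenodal-lem-a}: when the congruence conditions on $s_1$ fail, one must either use a rank-two Picard lattice constructed directly via Torelli and Nikulin (and re-verify the no-bad-decomposition property used in Corollary~\ref{BNP-cor}), or work with a class of the form $L = M + R_1 + cR_2$ on $S_{p,h}$ for some $c \geq 0$, modifying the smoothing analysis accordingly. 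In all cases, the structural argument---partial smoothing of a reducible configuration whose stabilized normalization is a BNP-general curve on $S_{p,h}$---is the same.
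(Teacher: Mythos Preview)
Your proposal is correct and follows essentially the same approach as the paper: degenerate to a reducible curve $D \cup R_1 \ (\cup R_2)$ on $S_{p,h}$ with $D \in |M|$ smooth and BNP-general by Corollary~\ref{BNP-cor}, partially normalise so that the stabilisation is $D$, and conclude by openness.

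The only difference is in the lattice bookkeeping. You worry about whether $(M \cdot R) = n+1$ is achievable for all $n$ and propose a rank-two fallback lattice, which would force you to re-verify the no-bad-decomposition property underlying Corollary~\ref{BNP-cor}. The paper avoids this entirely by always working on the rank-three surface $S_{p,h}$ of Lemma~\ref{onenodal-lem-a} and setting $L = M + R_1 + \epsilon R_2$ with $\epsilon \in \{0,1\}$: one checks that $(M+R_1+\epsilon R_2)^2 = 2g-2$ holds precisely when $\epsilon = 1$ in the cases $s_1 = n-1$ and $\epsilon = 0$ in the cases $s_1 = n+1$, so both parities are absorbed by adding or omitting the extra rational tail $R_2$ (with $(M \cdot R_2) = 3$). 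This is exactly the option $L = M + R_1 + cR_2$ you mention at the end; committing to it from the start makes the argument uniform and spares you the separate Torelli construction.
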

\begin{proof}
Set $h=g-n$, $p=g$. The case $n=0$ is \cite{lazarsfeld-bnp}, so we may assume $p>h$.  Let $l,m$ be the unique nonnegative integers such that $$p-h= \left \lfloor \frac{h+1}{2} \right \rfloor l+m $$
and $0 \leq m < \left \lfloor \frac{h+1}{2} \right \rfloor $. Set $\epsilon=1$ if $m=0$ or $m=\left \lfloor \frac{h+1}{2} \right \rfloor -1$ and $\epsilon=0$ otherwise. Then $(M+R_1+\epsilon R_2)^2=2g-2$, where $M$, $R_1$, $R_2$ are a basis of $P_{p,h}$ as in Lemma  \ref{onenodal-lem-a}. The claim then follows from the proof of Theorem \ref{finiteness}, by deforming to the curve $$R:=D \cup R_1 \cup \epsilon R_2 $$ on $S_{p,h}$, where $D \in |M|$ is general, marked at all nodes other than one point from $D \cap R_i$ for $i=1,2$. Note that the partial normalisation of $R$ at the marked nodes is an unstable curve, and the stabilisation is isomorphic to $D$, which is Brill--Noether--Petri general by Corollary \ref{BNP-cor}.
\end{proof}

\section{Brill--Noether theory for nodal rational curves on K3 surfaces} \label{rat}
In this section we will denote by $X$ a K3 surface with 
$\text{Pic}(X) \simeq \mathbb{Z}L$, $(L)^2=2g-2$ with $g \geq 2$, and $C \in |L|$ will denote a fixed \emph{rational} curve (not necessarily nodal). Let $\bar{J}^d(C)$ denote the compactified Jacobian of degree $d$, rank one torsion free sheaves. For the fundamental theory of the compactified Jacobian of an integral curve on a smooth surface, see \cite{cpt-jac-2}, \cite{cpt-jac-1}, \cite{rego}, \cite{altman-compact}. In particular, $\bar{J}^d(C)$ is a projective, integral, local complete intersection scheme of dimension $g$ containing $\text{Pic}^d(C)$ as a dense open subset. Consider the \emph{generalized Brill--Noether loci}
$$\overline{W}^r_d(C) := \{ \text{$A \in \bar{J}^d(C)$ with $h^0(A) \geq r+1$} \}$$ which can be given a determinantal scheme structure, cf.\ \cite[Pg.\ 10]{gomez}, \cite[\S 2.2]{bhosle-parawes}. There is an open subset $W^r_d(C) \seq \overline{W}^r_d(C) $ parametrizing line bundles. We will denote by $\rho(g,r,d)$ the Brill--Noether number $g-(r+1)(g-d+r)$.

The following comes from the proof of \cite[Remark 2.3(i)]{bhosle-parawes}, although it may have been known to experts earlier (note that if $A \in \bar{J}^d(C)$ for $d>2g-2$, then $h^1(A)=0$, \cite[Prop.\ I.4.6]{cpt-jac-2}). The proof is essentially the same as in the smooth case.
\begin{thm} \label{bhosle-bn}
Each irreducible component of $\overline{W}^r_d(C)$ has dimension at least $\rho(g,r,d)$.
\end{thm}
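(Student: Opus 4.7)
The idea is to realise $\overline{W}^r_d(C)$ as a degeneracy locus inside the $g$-dimensional scheme $\bar{J}^d(C)$ and invoke Macaulay's bound on the codimension of such loci. Since $C$ is an integral Cartier divisor on the smooth surface $X$, it is Gorenstein, so Riemann--Roch $\chi(A) = \deg A - g + 1$ holds for every rank-one torsion-free $\mathcal{O}_C$-module $A$. First I would fix an effective Cartier divisor $E \seq C$ of large degree $e$ supported in the smooth locus $C^{\mathrm{sm}}$ (nonempty since $C$ is integral), chosen so that $d + e > 2g-2$; then for every $A \in \bar{J}^d(C)$ the twist $A(E) := A \otimes \mathcal{O}_C(E)$ satisfies $H^1(C, A(E)) = 0$, whence $h^0(C, A(E)) = d + e - g + 1$ is independent of $A$. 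Because $E$ lies in $C^{\mathrm{sm}}$, the restriction $A(E)|_E$ is automatically locally free of rank one on $E$, so $h^0(A(E)|_E) = e$ is constant as well.

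Next I would pass to a (quasi-)universal sheaf $\mathcal{A}$ on $C \times \bar{J}^d(C)$ (on an \'etale cover if necessary) and consider the restriction morphism
$$\mathrm{ev} \; : \; \pi_*\mathcal{A}(E) \longrightarrow \pi_*\bigl(\mathcal{A}(E)|_{E \times \bar{J}^d(C)}\bigr),$$
where $\pi$ is the projection to $\bar{J}^d(C)$. By cohomology and base change together with the constancy of fibrewise cohomology established in the previous paragraph, both sheaves are locally free, of ranks $d+e-g+1$ and $e$ respectively, and the fibre of $\mathrm{ev}$ at $[A]$ is the evaluation map $H^0(A(E)) \to H^0(A(E)|_E)$, whose kernel is $H^0(A)$. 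Hence $[A] \in \overline{W}^r_d(C)$ if and only if $\mathrm{rank}(\mathrm{ev}_{[A]}) \leq (d+e-g+1) - (r+1) = d+e-g-r$, identifying $\overline{W}^r_d(C)$ set-theoretically, and in fact scheme-theoretically, with the degeneracy locus $D_{d+e-g-r}(\mathrm{ev})$. Macaulay's bound on the codimension of the $(k+1)\times(k+1)$ minors of a morphism of vector bundles then gives that every irreducible component of $D_{d+e-g-r}(\mathrm{ev})$ has codimension at most $(r+1)(g-d+r)$ in $\bar{J}^d(C)$, and therefore dimension at least $g - (r+1)(g-d+r) = \rho(g,r,d)$, as required.

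The main technical obstacle is ensuring that the two pushforwards in the displayed morphism are genuinely locally free on the possibly highly singular scheme $\bar{J}^d(C)$, and that the construction descends when no global Poincar\'e sheaf exists; the standard remedy is to work on a smooth \'etale cover parametrising $\mathcal{A}$ and to observe that the vanishing ideal of the determinantal subscheme descends, or equivalently to invoke the intrinsic Altman--Kleiman description of the generalised Brill--Noether locus via a relative $\mathrm{Quot}$-scheme. Beyond this the argument uses no information about the singularities of $C$ other than integrality and the Gorenstein property, and so applies to an arbitrary integral rational curve on a K3 surface of Picard rank one, independently of whether $C$ is nodal.
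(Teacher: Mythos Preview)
Your proposal is correct and follows precisely the route the paper indicates: the paper gives no proof of its own but simply cites \cite[Remark 2.3(i)]{bhosle-parawes}, remarks that the proof is ``essentially the same as in the smooth case,'' and parenthetically records the key input you use, namely that $h^1(A)=0$ once $\deg A > 2g-2$ (\cite[Prop.\ I.4.6]{cpt-jac-2}). Your construction of $\overline{W}^r_d(C)$ as a determinantal locus via the evaluation map $\pi_*\mathcal{A}(E) \to \pi_*(\mathcal{A}(E)|_{E\times\bar{J}^d(C)})$ and the subsequent appeal to the Macaulay codimension bound is exactly the standard argument being alluded to, and your caveats about the universal sheaf and the singularities of $\bar{J}^d(C)$ are the right ones.
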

If $\rho(g,r,d) > 0$, then under our hypotheses $\overline{W}^r_d(C)$ is connected, \cite[Thm.\ 1]{gomez}. If $\rho(g,r,d) \geq 0$, then $\overline{W}^r_d(C)$ is nonempty since we can deform $C$ to a smooth curve in $|L|$ and $h^0$ is upper semicontinuous, cf.\ \cite[\S 3.4]{gomez}.

Let $V_{d,r} \seq \overline{W}_d^r(C) $ be the open locus parametrizing sheaves $A$ which are globally generated and with $h^0(A)=r+1$. Assume $V_{d,r} \neq \emptyset$. We will begin by proving that $\dim V_{d,r} \leq \rho(g,r,d)$ (in particular $V_{d,r}= \emptyset$ if $\rho(g,r,d) < 0$). 

Fix a vector space $\mathbb{H}$ of dimension $r+1$ and let 
$P^r_d \to V_{d,r}$ parametrize pairs $(A,\lambda)$ where $A \in V_{d,r}$ and $\lambda$ is a surjection of $\mathcal{O}_X$ modules
$$ \lambda : \mathbb{H} \otimes \mathcal{O}_X \to A$$ inducing an isomorphism $\mathbb{H} \simeq H^0(A)$. Two such surjections are identified if they differ by multiplication by a nonzero scalar. Thus $P^r_d$ is a PGL(r+1) bundle over $V_{d,r}$.

Let $(A,\lambda) \in P^r_d$. Then $Ker \lambda$ is a vector bundle $F$ of rank $r+1$, \cite[\S3.2]{gomez}. We have $det(F) \simeq L^*$, $deg(c_2(F))=d$, $h^0(F)=h^1(F)=0$ and $h^2(F)=r+1+(g-d+r)$, cf. \cite[\S1]{lazarsfeld-bnp}. Note that $g-d+r =h^1(A) \geq 0$. Further, for any rank one, torsion-free sheaf $A$ on $C$ we may define an `adjoint' $A^{\dagger}$, which is a rank one torsion-free sheaf with $(A^{\dagger})^{\dagger}=A$. From the short exact sequence
$$0 \to F \to  \mathbb{H} \otimes \mathcal{O}_X \to A \to 0$$
we may form the dual sequence
$$0 \to \mathbb{H}^* \otimes \mathcal{O}_X \to F^* \to A^{\dagger} \to 0.$$

The following lemma is a slight generalisation of \cite[Cor.\ 9.3.2]{huy-lec-k3}:
\begin{lem}
Assume $\text{Pic}(X) \simeq \mathbb{Z}L$ as above and let $(A,\lambda) \in P^r_d$. Then the vector bundle $F=Ker \lambda$ is stable.
\end{lem}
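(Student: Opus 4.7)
The plan is to mimic the classical Lazarsfeld argument (as in \cite[Cor.\ 9.3.2]{huy-lec-k3}) but avoid the step which on a smooth curve uses $H^0(M)\hookrightarrow H^0(C,A)$; here the key input is rather that $A$ is globally generated on $C$. Suppose for contradiction that $F$ is not $\mu_L$-stable. Since $F$ is the kernel of a surjection from a locally free sheaf to a torsion sheaf on $X$, it is reflexive, hence locally free. Thus we may pick a saturated destabilising subsheaf $M\subset F$ of rank $r_1$ with $1\leq r_1\leq r$ and $\mu_L(M)\geq \mu_L(F)=-(2g-2)/(r+1)$. Because $\mathrm{Pic}(X)\simeq \mathbb{Z}L$, I would write $c_1(M)=aL$ for some integer $a$.

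The first step is to show $a=0$. The slope inequality reads $a(2g-2)/r_1\geq -(2g-2)/(r+1)$, i.e.\ $a(r+1)\geq -r_1>-(r+1)$; as $a\in\mathbb{Z}$ this forces $a\geq 0$. For the reverse inequality, I would use that $M\hookrightarrow F\hookrightarrow \mathbb{H}\otimes\mathcal{O}_X$. Passing to double duals gives a locally free inclusion $M^{**}\hookrightarrow \mathbb{H}\otimes\mathcal{O}_X$, and taking $\bigwedge^{r_1}$ produces a nonzero map $\det M=\det M^{**}\hookrightarrow \bigwedge^{r_1}\mathbb{H}\otimes\mathcal{O}_X$. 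Hence $(\det M)^{*}\simeq -aL$ has a nonzero global section, which is possible only if $a\leq 0$. Combining the two inequalities, $a=0$ and $\det M\simeq \mathcal{O}_X$.

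The second step is the Pl\"ucker identification. The nonzero map $\mathcal{O}_X\simeq \det M\hookrightarrow \bigwedge^{r_1}\mathbb{H}\otimes\mathcal{O}_X$ is given by a nonzero element $v\in\bigwedge^{r_1}\mathbb{H}$, and it is totally decomposable because at a general point $x\in X$ it is the Pl\"ucker image of the fibre $M^{**}_x\subset \mathbb{H}$. So $v$ determines a unique $r_1$-dimensional subspace $V\subset \mathbb{H}$, and at the generic point $M^{**}_x=V$. Since $M^{**}$ and $V\otimes \mathcal{O}_X$ are locally free subsheaves of $\mathbb{H}\otimes\mathcal{O}_X$ which agree generically, they must coincide, so $M^{**}=V\otimes \mathcal{O}_X$. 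Because $F$ is locally free and $M$ is saturated in $F$, one has $M=M^{**}$, so $M=V\otimes\mathcal{O}_X$.

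Finally, the inclusion $V\otimes \mathcal{O}_X=M\subset F=\ker(\mathbb{H}\otimes\mathcal{O}_X\to A)$ means that the composition $V\otimes\mathcal{O}_X\to A$ is zero. Applying $H^0$ and using that $A$ is globally generated together with the normalisation $\mathbb{H}\xrightarrow{\sim}H^0(A)$, this forces the inclusion $V\hookrightarrow \mathbb{H}$ to be zero, hence $V=0$, contradicting $r_1\geq 1$. I expect the main technical point to be the Pl\"ucker step identifying $M^{**}$ with a trivial subbundle coming from a fixed subspace of $\mathbb{H}$; every other step is a slope computation or a formal consequence of $\mathrm{Pic}(X)\simeq\mathbb{Z}L$ and the global generation of $A$.
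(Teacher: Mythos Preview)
Your proof is correct and follows essentially the same strategy as the paper's: use $F\subset \mathbb{H}\otimes\mathcal{O}_X$ to force $c_1(F')\in\mathbb{Z}_{\leq 0}\cdot L$ for any subsheaf $F'\subset F$, and then rule out the case $c_1(F')=0$ using $h^0(F)=0$. The only difference is in that last step. You go through the Pl\"ucker identification to show explicitly that a saturated subsheaf $M\subset F$ with $\det M\simeq\mathcal{O}_X$ must equal $V\otimes\mathcal{O}_X$ for some $V\subset\mathbb{H}$, and then derive $V=0$ from $\mathbb{H}\simeq H^0(A)$ (equivalently, from $V\subset H^0(F)=0$). The paper instead uses the algebraic identity $F'\simeq\bigwedge^{r'-1}F'^{*}\otimes\det F'$: since $\bigwedge^{r'-1}F'^{*}$ admits a nonzero section (as $\bigwedge^{r'-1}F'\subset\mathcal{O}_X^{\oplus b}$), trivial determinant would give $h^0(F')\geq 1$, contradicting $h^0(F)=0$ directly, without ever identifying $F'$ as a trivial bundle. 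Your route is a bit longer but perfectly valid; note also that your worry about avoiding ``$H^0(M)\hookrightarrow H^0(C,A)$'' is unnecessary here, since the only input used (by you and by the paper) is $h^0(F)=0$, which follows immediately from the defining sequence and $\mathbb{H}\xrightarrow{\sim}H^0(A)$ regardless of whether $A$ is locally free.
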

\begin{proof}
For any vector bundle $H \seq \mathcal{O}_X^{\oplus a}$ and any integer $1 \leq s \leq rk(H)$, we have $h^0(\bigwedge^s H^*) \geq 1$. Indeed, we have $\bigwedge^s H \seq \bigwedge^s \mathcal{O}_X^{\oplus a} \simeq  \mathcal{O}_X^{\oplus b}$ for some integer $b$ and then $\mathcal{E}nd_{\mathcal{O}_X}(\bigwedge^s H) \simeq \bigwedge^s H \otimes \bigwedge^s H^* \seq (\bigwedge^s H^*)^{\oplus b}$. Taking global sections gives $h^0(\bigwedge^s H^*) \geq 1$ (as $id \in H^0(\mathcal{E}nd_{\mathcal{O}_X}(\bigwedge^s H)))$.

Now let $F' \seq F \seq \mathbb{H} \otimes \mathcal{O}_X $ be a locally free subsheaf  of $F$ with $rk(F')=r'<r+1$. From the above, $h^0(det(F'^*)) \geq 1$ and $h^0(\bigwedge^{r'-1}F'^*) \geq 1$ (if $r' >1$). As $\text{Pic}(X) \simeq \mathbb{Z}L$, we have $det(F')=kL^*$ for some $k \geq 0$. We claim $k \neq 0$. If $k=0$ and $r'=1$, then $F' \simeq \mathcal{O}_X$ which contradicts that $h^0(F)=0$. If $k=0$, $r' >0$, then $F' \simeq \bigwedge^{r'-1}F'^* \otimes det(F')$ gives $h^0(F') \geq 1$ which is again a contradiction. So we have $det(F')=kL^*$ for $k >0$ and $det(F)=L^*$, which implies $\deg(F')/ rk(F') < \deg(F) / rk(F)$ as required.
\end{proof}

Let $M_v$ be the moduli space of stable sheaves on $X$ with Mukai vector $v=(r+1,L,g-d+r)$. We have a morphism 
\begin{align*}
\psi_C: P^r_d &\to M_v \\
(A, \lambda) &\mapsto (Ker \lambda)^*
\end{align*} where $(Ker \lambda)^*$ denotes the dual bundle to $Ker \lambda$.
Let $M_C$ be the closure of the image of $\psi_C$, with the induced reduced scheme structure. By the description of $F$, if $[F^*] \in Im(\psi_C)$, $$c_2(F^*) \sim d c_X$$ where $c_X$ is 
the rational equivalence class of a point lying on a rational curve as defined in \cite{bv-chow}.

There is a natural symplectic form $\alpha$ on $M_v$ defined in \cite{mukai-symplectic}.
\begin{prop}
Let $\alpha$ be the natural symplectic form on $M_v$ and $i: M^0_C \to M_v$ the inclusion, where $M^0_C$ is the smooth locus of $M_C$. Then $i^*\alpha=0$.
\end{prop}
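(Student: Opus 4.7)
The plan is to factor $\psi_C$ through the compactified Jacobian of $C$ and identify the pullback of $\alpha$ with the restriction of the Beauville--Mukai symplectic form to a Lagrangian fibre.

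First, I would observe that $\psi_C$ factors as $P^r_d\twoheadrightarrow V_{d,r}\xrightarrow{\chi}M_v$, where $\chi(A):=(\ker\lambda)^*=F^*$, since the isomorphism class of $\ker\lambda$ depends only on $A$ (two surjections differing by an element of $\operatorname{Aut}(\mathbb{H})$ produce isomorphic kernels). As $P^r_d\to V_{d,r}$ is a smooth, surjective $\mathrm{PGL}(r+1)$-bundle, we reduce to showing that $\chi^*\alpha=0$ at each $A\in V_{d,r}$ with $\chi(A)\in M^0_C$, that is, $\operatorname{tr}_X\bigl(d\chi(v_1)\cup d\chi(v_2)\bigr)=0\in H^2(X,\mathcal{O}_X)$ for all $v_1,v_2\in T_AV_{d,r}$.

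Next, I would describe $d\chi_A$ via the short exact sequence $0\to F\to\mathcal{O}_X^{\oplus(r+1)}\to A\to 0$. A first-order deformation $\tilde A\in V_{d,r}$ lifts $\lambda$ canonically to a surjection $\tilde\lambda$ (since $h^0$ stays constant), and $\tilde F:=\ker\tilde\lambda$ is the associated deformation of $F$. Applying $R\operatorname{Hom}_X(-,F)$ to the triangle $F\to\mathcal{O}_X^{\oplus(r+1)}\to A\xrightarrow{+1}$ and using $H^0(X,F)=H^1(X,F)=0$ yields an injection $\operatorname{Ext}^1_X(F,F)\hookrightarrow\operatorname{Ext}^2_X(A,F)$; under this inclusion, $d\chi_A(v)$ is identified with the Yoneda product $\delta\circ v$, where $\delta\in\operatorname{Ext}^1_X(A,F)$ is the extension class of the sequence and $v\in\operatorname{Ext}^1_{\mathcal{O}_C}(A,A)\hookrightarrow\operatorname{Ext}^1_X(A,A)$ is regarded as a tangent vector to the compactified Jacobian $\bar{J}^d(C)$.

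Finally, I would relate the Mukai pairing on $M_v$ to a pairing on $C$. The change-of-rings spectral sequence
\[
E_2^{p,q}=\operatorname{Ext}^p_{\mathcal{O}_C}\bigl(A,\mathcal{E}xt^q_{\mathcal{O}_X}(\mathcal{O}_C,A)\bigr)\;\Longrightarrow\;\operatorname{Ext}^{p+q}_{\mathcal{O}_X}(A,A)
\]
degenerates at $E_2$ because $\mathcal{E}xt^{\geq 2}_{\mathcal{O}_X}(\mathcal{O}_C,-)=0$, and naturality of Yoneda products under the exact triangle of the previous step gives
\[
\operatorname{tr}_X\bigl(d\chi(v_1)\cup d\chi(v_2)\bigr)=\operatorname{tr}_C\bigl(v_1\cup_C v_2\bigr)
\]
for $v_1,v_2\in\operatorname{Ext}^1_{\mathcal{O}_C}(A,A)$, where $\cup_C$ is the $C$-internal Yoneda product $\operatorname{Ext}^1_{\mathcal{O}_C}(A,A)^{\otimes 2}\to\operatorname{Ext}^2_{\mathcal{O}_C}(A,A)$ and $\operatorname{tr}_C$ is the Serre trace on the Gorenstein curve $C$, compared via the adjunction $\omega_C=\mathcal{O}_X(C)|_C$ and $\omega_X=\mathcal{O}_X$. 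The right-hand side is precisely the Beauville--Mukai symplectic form on the moduli space $M_{(0,L,d+1-g)}$ restricted to its fibre $\bar{J}^d(C)$ over $[C]\in|L|$; since fibres of a Lagrangian fibration are isotropic, this pairing vanishes identically on $T_A\bar{J}^d(C)$, hence on the subspace $T_AV_{d,r}$.

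The main obstacle is the naturality statement in the last step: carefully tracking Yoneda products and Serre traces through the sequence $0\to F\to\mathcal{O}_X^{\oplus(r+1)}\to A\to 0$ in order to identify $\operatorname{tr}_X\bigl(d\chi(v_1)\cup d\chi(v_2)\bigr)$ with $\operatorname{tr}_C\bigl(v_1\cup_C v_2\bigr)$. Once this compatibility is verified, the Beauville--Mukai description of $\bar{J}^d(C)$ as a Lagrangian fibre in the Mukai moduli space yields the desired vanishing.
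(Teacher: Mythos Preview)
Your approach is valid in outline but takes a genuinely different route from the paper. The paper argues via Chow groups: invoking O'Grady's results, there is a generically finite cover $q\colon\widetilde{M}_v\to M_v$ together with a second-Chern-class map $p\colon\widetilde{M}_v\to X^{[\rho]}$ satisfying $q^*\alpha=k\,p^*\beta$ for the Hilbert-scheme form $\beta$. Because $C$ is \emph{rational}, every $[F^*]\in\operatorname{Im}\psi_C$ has $c_2(F^*)\sim d\,c_X$, a fixed class in $CH_0(X)$; hence $p$ collapses $q^{-1}(M_C)$ to a single rational-equivalence class, and Mumford's theorem forces $p^*\beta=0$ there, whence $i^*\alpha=0$. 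The rationality of $C$ is used essentially.

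Your argument, by contrast, never invokes rationality: it rests on identifying $\chi^*\alpha$ with the restriction of the Mukai form on $M_{(0,L,d+1-g)}$ to the Beauville--Mukai fibre $\bar J^d(C)$, which is isotropic for \emph{any} integral $C\in|L|$. If the trace compatibility you flag can be verified, you will have proved a strictly stronger statement. Two remarks on that obstacle. First, $T_AV_{d,r}$ is in general only the kernel of the cup-product map $\operatorname{Ext}^1_{\mathcal{O}_C}(A,A)\to\operatorname{Hom}(H^0(A),H^1(A))$, not all of $\operatorname{Ext}^1_{\mathcal{O}_C}(A,A)$; this is exactly the condition ensuring $\lambda$ lifts and hence that $\delta\circ v$ lands in the image of $\operatorname{Ext}^1_X(F,F)\hookrightarrow\operatorname{Ext}^2_X(A,F)$, so your description of $d\chi$ should be stated for such $v$. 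Second, $A\mapsto F$ is \emph{not} the full spherical twist around $\mathcal{O}_X$ when $h^1(A)\neq 0$, so one cannot simply quote that autoequivalences preserve Mukai pairings; the compatibility of traces must be checked directly from the triangle $F\to\mathcal{O}_X^{\oplus(r+1)}\to A\xrightarrow{\delta}F[1]$ and adjunction along $C\hookrightarrow X$. This is doable but not a one-liner. The paper's route is quick once O'Grady's machinery is in hand and fits the Chow-theoretic theme of the section; yours is more self-contained and more general.
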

\begin{proof}
Since $g-d+r \geq 0$, \cite[Thm.\ 0.6(1)]{ogrady} applies and for any $[G] \in M_v$, there is an effective, degree $\rho(g,r,d)$ zero-cycle $Z$ with $c_2(G) \sim [Z]+ac_X$ for some $a \in \mathbb{Z}$.  Following \cite[Prop.\ 1.3]{ogrady} there is then a smooth quasi-projective variety $\widetilde{M}_v$ with morphisms $q: \widetilde{M}_v \to M_v$, $p:\widetilde{M}_v \to X^{[\rho(g,r,d)]}$  such that $q$ is surjective and generically finite, and with the property that if $x=[F^*] \in Im(\psi_C)$ and $y \in q^{-1}(x)$, then we have the rational equivalence 
\begin{align} \label{rateq}
p(y)+(d-\rho(g,r,d))c_X \sim c_2(F^*) \sim d c_X.
\end{align} Further if $\beta$ is the symplectic form on the Hilbert scheme of points $X^{[\rho(g,r,d)]}$, we have $q^* \alpha=k p^* \beta$ for some nonzero constant $k \in \C$. 
Let $\widetilde{M}_C \seq \widetilde{M}_v$ denote the closure of $q^{-1}(Im(\psi_C))$ with the induced reduced scheme structure, and let $p_C$ respectively $q_C$ be the restriction of $p$ respectively $q$ to the smooth locus of $\widetilde{M}_C$.  Then $p_C(s)$ is rationally equivalent to $p_C(t)$ for all $s, t$ in the smooth locus of $\widetilde{M}_C$, from (\ref{rateq}). Thus $p_C^*(\beta)=0$ by\cite{mumf-chow}. Hence $q_C^*(\alpha)=0$ and since $q$ is surjective, $i^*\alpha=0$.
\end{proof}

\begin{cor}
We have $\dim M_C \leq \rho(g,r,d)$.
\end{cor}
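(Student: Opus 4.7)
The plan is to combine the isotropy statement $i^{*}\alpha=0$ from the preceding proposition with the standard fact that an isotropic submanifold of a symplectic manifold has dimension at most half the ambient dimension.

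First I would compute $\dim M_v$ via the Mukai pairing. Writing $v=(r+1,L,g-d+r)$ we have
\[
\langle v,v\rangle = (L)^{2}-2(r+1)(g-d+r) = 2g-2-2(r+1)(g-d+r),
\]
so, since $v$ is primitive and we are working with a generic polarisation, the moduli space $M_{v}$ of stable sheaves is smooth of dimension
\[
\dim M_{v} = \langle v,v\rangle + 2 = 2\bigl(g-(r+1)(g-d+r)\bigr) = 2\rho(g,r,d).
\]

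Next, I would unwind the isotropy statement pointwise. Let $x\in M_{C}^{0}$ and set $N:=T_{x}M_{C}^{0}\subseteq T_{x}M_{v}$; since $M_{v}$ is smooth (all elements of $M_{C}$ are stable sheaves by construction of $\psi_{C}$), this tangent space has dimension $\dim M_{v}=2\rho(g,r,d)$. The vanishing $i^{*}\alpha=0$ means precisely that $\alpha_{x}(u,w)=0$ for all $u,w\in N$, i.e.\ $N$ is an isotropic subspace of the symplectic vector space $(T_{x}M_{v},\alpha_{x})$. By the elementary linear algebra of symplectic forms, $\dim N\leq\tfrac{1}{2}\dim T_{x}M_{v}=\rho(g,r,d)$.

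Finally, since $M_{C}^{0}$ is a smooth variety, $\dim_{x}M_{C}^{0}=\dim T_{x}M_{C}^{0}\leq\rho(g,r,d)$ at every smooth point, and $M_{C}^{0}$ is dense in the integral scheme $M_{C}$ by definition, so $\dim M_{C}=\dim M_{C}^{0}\leq\rho(g,r,d)$. There is no real obstacle here: the only point requiring care is ensuring that $M_{v}$ is smooth at points of $M_{C}$, which is automatic because $\psi_{C}$ takes values in the stable (hence smooth) locus of $M_{v}$.
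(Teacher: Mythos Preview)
Your proposal is correct and is exactly the argument the paper has in mind; the paper's one-line proof simply cites $\dim M_v=2\rho(g,r,d)$ and invokes the isotropy proposition, leaving implicit the linear-algebra step you spell out. One minor quibble: $M_C$ is only given the reduced structure, not asserted to be integral, but your argument goes through verbatim on each irreducible component, so this does not affect anything.
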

\begin{proof}
Indeed $\dim M_v=2\rho(g,r,d)$ from \cite[Thm.\ 0.1]{mukai-symplectic} so this follows from the proposition above.
\end{proof}

\begin{cor} \label{tf-bn}
If  $V_{d,r}$ is nonempty then $\dim V_{d,r} = \rho(g,r,d)$.
\end{cor}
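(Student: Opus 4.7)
The lower bound $\dim V_{d,r}\geq \rho(g,r,d)$ is immediate: since $V_{d,r}$ is a nonempty open subvariety of $\overline{W}^r_d(C)$, Theorem~\ref{bhosle-bn} gives the bound for each of its irreducible components. The main content of the corollary is therefore the upper bound $\dim V_{d,r}\leq \rho(g,r,d)$, which I would derive by descending $\psi_C$ to $V_{d,r}$ and invoking the already-established bound $\dim M_C\leq \rho(g,r,d)$.

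The first observation is that $\psi_C:P^r_d\to M_v$, which sends $(A,\lambda)$ to $[(Ker\,\lambda)^*]$, is constant along the fibres of the $PGL(r+1)$-bundle $P^r_d\to V_{d,r}$. Indeed, the isomorphism class of $Ker\,\lambda$ depends only on the sheaf $A=\mathrm{Coker}\,\lambda$, not on the particular representative $\lambda$ up to scalar, since $Ker\,\lambda$ is canonically the intrinsic Lazarsfeld--Mukai bundle $F_{C,A}$. Hence $\psi_C$ descends to a morphism $\phi:V_{d,r}\to M_v$ whose image has closure $M_C$. Once I know that $\phi$ is generically finite onto its image, the chain $\dim V_{d,r}=\dim\overline{\phi(V_{d,r})}=\dim M_C\leq \rho(g,r,d)$, together with the lower bound, will complete the proof.

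The generic finiteness of $\phi$ is the crux. I would establish it by recovering $A$ from a generic $[F^*]\in M_C$ up to a finite ambiguity, by restricting to $C$. The defining Lazarsfeld--Mukai sequence $0\to H^0(A)^*\otimes\mathcal{O}_X\to F^*\to A^{\dagger}\to 0$ restricts to an exact sequence $0\to A^{\dagger}\to F^*_{|_C}\to A\to 0$, exhibiting $A^{\dagger}$ as a rank-one torsion-free subsheaf of $F^*_{|_C}$ of prescribed degree. The stability of $F^*$ on $X$ established in the lemma just before $\psi_C$ is constructed forces $F^*_{|_C}$ to be sufficiently semistable that only finitely many such subsheaves exist for the generic $F^*$; and since $A^{\dagger}$ determines its adjoint $A$, the fibre $\phi^{-1}([F^*])$ is finite, as required.

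The main obstacle is making this finiteness count rigorous for $C$ an integral nodal rational curve rather than a smooth curve: one must work consistently with the compactified Picard scheme $\bar{J}^d(C)$, with the generalized Serre duality $A^{\dagger}=\mathcal{H}om_{\mathcal{O}_C}(A,\omega_C)$, and with the notion of semistability for sheaves on $C$ restricted from a stable sheaf on the ambient K3. Given the finiteness of the generic fibre of $\phi$, the remainder of the argument is a formal dimension count combining $\dim M_C\leq \rho(g,r,d)$ with the lower bound from Theorem~\ref{bhosle-bn}.
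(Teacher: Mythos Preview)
Your overall architecture matches the paper's: the lower bound is Theorem~\ref{bhosle-bn}, and the upper bound comes from combining $\dim M_C\le\rho(g,r,d)$ with control on the fibres of $\psi_C$ (equivalently, of your descended map $\phi$). The paper phrases this as ``$\psi_C$ has fibres of dimension $\dim\mathrm{PGL}(r+1)$'', which is exactly your ``$\phi$ has finite fibres''.

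Where you diverge is in the justification of finite fibres, and here there is a genuine gap. Your restricted sequence $0\to A^{\dagger}\to F^*_{|_C}\to A\to 0$ is not correct: restricting $0\to\mathbb{H}^*\otimes\mathcal{O}_X\to F^*\to A^{\dagger}\to 0$ to $C$ gives a four-term sequence
\[
0\to A^{\dagger}\otimes\omega_C^{-1}\to\mathbb{H}^*\otimes\mathcal{O}_C\to F^*_{|_C}\to A^{\dagger}\to 0,
\]
so $A^{\dagger}$ is the \emph{quotient}, and the kernel of $F^*_{|_C}\to A^{\dagger}$ has rank $r$, not $1$ (it is $M_A^*$, the dual syzygy bundle). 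More seriously, even granting the right sequence, the step ``stability of $F^*$ forces $F^*_{|_C}$ to be sufficiently semistable that only finitely many such subsheaves exist'' is unsupported: restriction of a stable bundle to a curve need not be semistable, and a semistable bundle can have a positive-dimensional family of rank-one quotients of fixed degree (already $\mathcal{O}^{\oplus 2}$ on $\mathbb{P}^1$ does). So this route, as written, does not close.

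The paper bypasses all of this. Given $[F^*]\in M_v$, the $A$'s mapping to it correspond to $(r+1)$-dimensional subspaces $V\subset H^0(F^*)$ whose degeneracy locus (where $V\otimes\mathcal{O}_X\to F^*$ drops rank) is precisely $C$. The paper then invokes O'Grady \cite[\S2]{ogrady}: under the standing hypotheses the degeneracy-locus morphism $Gr(r+1,H^0(F^*))\to|L|$ is everywhere defined and \emph{finite}. Hence the fibre over $[C]\in|L|$ is finite, giving finitely many $A$ --- and this holds for every $[F^*]$, not just generically. This is both shorter and avoids any analysis of $F^*_{|_C}$.
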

\begin{proof}
If $V_{d,r}$ is nonempty then $\dim V_{d,r} \geq \rho(g,r,d)$ by Theorem \ref{bhosle-bn}, so it suffices to show $\dim V_{d,r} \leq \rho(g,r,d)$. It then suffices to show that $\psi_C: P^r_d \to M_v$ has fibres of dimension $\dim\text{PGL(r+1)}$. In other words, we need to show that for each fixed $F \in M_v$ there are only finitely many
$A \in V_{d,r}$ fitting into an exact sequence $0 \to F \to \mathbb{H} \otimes \mathcal{O}_X \to A \to 0$. But this follows immediately from the fact that in our circumstances the degeneracy locus map $Gr(r+1,H^0(F^*)) \to |L|$ is globally defined and finite, see \cite[\S2]{ogrady} (recall that all such $A$ are supported on a fixed $C$ by definition).
\end{proof}
\begin{rem}
Assume $V_{d,r}$ is nonempty. We have $\dim M_C = \dim V_{d,r}=\rho(g,r,d)$ from the above Corollary. Thus $M_C$ is a (possibly singular) Lagrangian subvariety of $M_v$. 
\end{rem}

\begin{cor} \label{bnempty}
Let $X$ be a K3 surface with $\text{Pic}(X) \simeq \mathbb{Z} L$ and $(L \cdot L)=2g-2$. Let $C \in |L|$ be rational and assume $\rho(g,r,d) <0$. Then
$$\overline{W}^r_d(C) := \{ \text{$A \in \bar{J}^d(C)$ with $h^0(A) \geq r+1$} \}$$ is empty.
\end{cor}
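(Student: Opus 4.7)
The plan is to reduce to the case already handled by Corollary \ref{tf-bn}, by replacing an arbitrary element of $\overline{W}^r_d(C)$ with its ``globally generated part''. Suppose for contradiction that some $A \in \overline{W}^r_d(C)$ exists, and set $s := h^0(A) - 1 \geq r$.

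First I would pass to the image $A'$ of the evaluation map $\mathrm{ev} : H^0(C,A) \otimes_{\C} \mathcal{O}_C \to A$. Since $C$ is integral, any subsheaf of the rank-one torsion-free sheaf $A$ is itself torsion-free of rank at most one; the rank is exactly one because $h^0(A)>0$. Hence $A' \in \bar{J}^{d'}(C)$ for $d' := \deg(A')$, and since $A/A'$ is supported on a finite set we get $d' \leq d$. By construction every global section of $A$ factors through the subsheaf $A'$, so the inclusion $H^0(A') \hookrightarrow H^0(A)$ is an equality, and thus $h^0(A') = s+1$. Moreover $A'$ is globally generated, so $A' \in V_{d',s}$, and Corollary \ref{tf-bn} yields $\rho(g,s,d') \geq 0$.

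The final step is to transfer this nonnegativity to $\rho(g,r,d)$. Monotonicity of $\rho$ in its third argument gives $\rho(g,s,d) \geq \rho(g,s,d') \geq 0$. A direct computation yields
\[
\rho(g,r,d) - \rho(g,s,d) \;=\; (s-r)\bigl(g - d + s + r + 1\bigr),
\]
and since $h^1(A) = g - d + s \geq 0$ by Riemann--Roch on the integral curve $C$, the second factor is at least $r+1 \geq 1$. Combined with $s \geq r$, this forces $\rho(g,r,d) \geq \rho(g,s,d) \geq 0$, contradicting the hypothesis $\rho(g,r,d)<0$.

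The delicate point is the variation in the first argument of $\rho$: one cannot simply apply Corollary \ref{tf-bn} with the original $r$, because $A$ may have strictly more than $r+1$ sections, and $\rho(g,\cdot,d)$ is not globally monotone. What rescues the argument is precisely the Riemann--Roch inequality $g-d+s \geq 0$, which isolates the range in which increasing the first slot of $\rho$ can only decrease its value.
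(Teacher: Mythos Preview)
Your proof is correct and follows essentially the same approach as the paper: pass to the globally generated image $A'$ of the evaluation map, observe $A' \in V_{d',s}$ with $d' \leq d$ and $s \geq r$, then invoke Corollary~\ref{tf-bn} to get $\rho(g,s,d') \geq 0$ and derive a contradiction via the monotonicity of $\rho$. The paper simply asserts $\rho(g,r',d') \leq \rho(g,r,d)$ for $d' \leq d$, $r' \geq r$ without justification, whereas you carefully isolate the needed Riemann--Roch input $g-d+s = h^1(A) \geq 0$ to handle the non-monotone variable; this is a welcome clarification but not a different argument.
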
 
\begin{proof}
Assume for a contradiction that $A \in \overline{W}^r_d(C)$. Let $A'$ be the image of the evaluation morphism $H^0(A) \otimes \mathcal{O}_C \to A$. Then $A'$ is a globally generated, torsion free, rank one sheaf of degree $d' \leq d$ with $r'+1 \geq r+1$ sections, and thus $A' \in V_{d',r'}$. But $\rho(g,r',d') \leq \rho(g,r,d) <0$ for $d' \leq d$, $r' \geq r$ and thus $V_{d',r'}$ is empty by Corollary \ref{tf-bn}. This is a contradiction. 
\end{proof}

To proceed we need two technical lemmas.
\begin{lem} \label{techlem}
Let $C$ be an arbitrary integral \textbf{nodal} curve. Suppose $A'$ is a rank one torsion free sheaf on $C$ and let $k(p)$ be the length one skyscraper sheaf on $C$ supported at a node $p \in C$. Then if $\mathcal{Z} \seq \overline{W}_{d}^r(C)$ is an irreducible family of rank one torsion free sheaves such that we have an exact sequence
$$0 \to A' \to A \to k(p) \to 0 $$
for all $A \in \mathcal{Z}$, then $\dim \mathcal{Z} \leq 1$.
\end{lem}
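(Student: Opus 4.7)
The plan is to exhibit $\mathcal{Z}$ as being dominated by the projectivization of $V := \mathrm{Ext}^1_C(k(p), A')$, and then to bound $\dim_k V \leq 2$ by a direct local computation at the node.

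First I observe that any extension $0 \to A' \to A \to k(p) \to 0$ is classified by an element $e \in V$, and that rescaling $e$ by $\lambda \in k^*$ corresponds to composing the surjection $A \twoheadrightarrow k(p)$ with the automorphism $\lambda \in \mathrm{Aut}(k(p))$, hence produces an isomorphic middle term. The universal extension on $V \times C$ therefore defines a flat family of rank one torsion-free sheaves on $C$ over $V \setminus \{0\}$ (torsion-freeness for $e \neq 0$ follows from $\mathrm{Hom}(k(p), A') = 0$ combined with the long exact sequence obtained by applying $\mathrm{Hom}(k(p), -)$), and passing to $\bar{J}^d(C)$ produces a morphism $\proj(V) \to \bar{J}^d(C)$ whose image contains $\mathcal{Z}$. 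In particular $\dim \mathcal{Z} \leq \dim \proj(V)$.

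Next, since $k(p)$ is a skyscraper sheaf at the single point $p$, one has $V \simeq \mathrm{Ext}^1_R(k, \widehat{A'_p})$, where $R := \widehat{\mathcal{O}}_{C,p} \simeq k[[x,y]]/(xy)$ is the completion of the local ring at the node. Because $A'$ is rank one torsion free, $\widehat{A'_p}$ is isomorphic as an $R$-module either to $R$ (if $A'$ is locally free at $p$) or to the maximal ideal $\mathfrak{m} = (x,y)$. One then writes down the $2$-periodic minimal free resolution of $k$ over $R$,
$$\cdots \to R^2 \xrightarrow{\mathrm{diag}(y,x)} R^2 \xrightarrow{\mathrm{diag}(x,y)} R^2 \xrightarrow{(x,y)} R \to k \to 0,$$
whose periodicity stems from the relations $yx = xy = 0$ in $R$: the syzygies of $(x,y)$ are generated by $(y,0)$ and $(0,x)$, and iterating gives the diagonal differentials above. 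Applying $\mathrm{Hom}_R(-, R)$ and $\mathrm{Hom}_R(-, \mathfrak{m})$ respectively and computing cohomology in degree one yields
$$\dim_k \mathrm{Ext}^1_R(k, R) = 1 \qquad \text{and} \qquad \dim_k \mathrm{Ext}^1_R(k, \mathfrak{m}) = 2.$$
Hence $\dim_k V \leq 2$ uniformly, so $\dim \mathcal{Z} \leq \dim \proj(V) \leq 1$, as required.

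The main technical point is the first paragraph: one must upgrade the set-theoretic assignment $A \mapsto [e_A]$ to a genuine morphism of schemes $\mathcal{Z} \to \proj(V)$ rather than a mere correspondence, which is handled by the universal property of $\mathrm{Ext}^1$ as a functor together with the modular interpretation of $\bar{J}^d(C)$. Once this morphism is in place, the conclusion reduces to the finite-dimensional linear algebra of the periodic resolution above; in particular, the proof does not need to split into cases according to whether $A'$ is locally free at $p$, since the uniform bound $\dim_k V \leq 2$ already suffices.
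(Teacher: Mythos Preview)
Your proof is correct and follows essentially the same strategy as the paper: both reduce to showing $\dim_{\C}\mathrm{Ext}^1_{\mathcal{O}_C}(k(p),A')\leq 2$ and then compute this locally at the node, splitting into the cases where $A'$ is locally free or not at $p$. The only genuine difference is in how the local $\mathrm{Ext}$ is evaluated: the paper uses global Serre duality for the invertible case (getting $\mathrm{Ext}^1(k(p),\omega_C)\simeq H^0(k(p))^*\simeq\C$) and cites an external reference for the non-invertible case, whereas you compute both uniformly via the $2$-periodic free resolution of $k$ over $k[[x,y]]/(xy)$; your route is more self-contained.

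One small remark: your final paragraph says the technical point is to build a morphism $\mathcal{Z}\to\proj(V)$, but in fact your first paragraph already (correctly) goes the other way, producing $\proj(V)\to\bar{J}^d(C)$ from the universal extension and observing that its image contains $\mathcal{Z}$ set-theoretically. That direction is all you need, since the image is constructible of dimension at most $\dim\proj(V)$ and $\mathcal{Z}$ is irreducible; no map out of $\mathcal{Z}$ is required.
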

\begin{proof}
It suffices to show $\dim_{\C} Ext^1_{\mathcal{O}_C}(k(p), A') \leq 2$.  We have
\begin{align*}
Ext^1_{\mathcal{O}_C}(k(p), A') &\simeq Ext^1_{\mathcal{O}_C}(k(p), A'(n)) \text{\; for any $n\in \mathbb{Z}$} \\
&\simeq H^0(C,\mathcal{E}xt^1_{\mathcal{O}_C}(k(p), A'(n))) \text{\; for $n\gg 0$} \\
&\simeq H^0(C,\mathcal{E}xt^1_{\mathcal{O}_C}(k(p), A'))
\end{align*}
where the second line follows from \cite[Prop III.6.9]{har}. The sheaf $\mathcal{E}xt^1_{\mathcal{O}_C}(k(p), A')$ is a skyscraper sheaf supported at $p$. If $A'$ is nonsingular at $p$ then
$$\dim_{\C} Ext^1_{\mathcal{O}_C}(k(p), A') = \dim_{\C} Ext^1_{\mathcal{O}_C}(k(p), \omega_C)=1,$$ by Serre duality. 

Suppose now $A'$ is singular at $p$ and let $\pi: C' \to C$ be the normalisation of $C$. Then
$\dim_{\C} Ext^1_{\mathcal{O}_C}(k(p), A') = \dim_{\C} Ext^1_{\mathcal{O}_C}(k(p), \pi_*(\mathcal{O}_{C'}))$ since $A'_p \simeq m_p \simeq \pi_*(\mathcal{O}_{C'})_p$ where $m_p$ is the maximal ideal of $p$, by \cite[III.1]{cpt-jac-2}\footnote{Note that $m_p$ is a degree $-1$, rank one t.f.\ sheaf, and $\pi_*(\mathcal{O}_{C'})$ is a t.f.\ sheaf of strictly positive degree, so although these sheaves are locally isomorphic, they are \emph{not} globally isomorphic.}. But
$\dim_{\C} Ext^1_{\mathcal{O}_C}(k(p),\pi_*(\mathcal{O}_{C'}))=2$ as required, by \cite[Prop.\ 2.3]{av-lang}. 
\end{proof}

\begin{lem} \label{techlem2}
Let $C$ be an arbitrary integral nodal curve. Suppose $A'$ is a rank one torsion free sheaf and let $Q$ be a sheaf with zero-dimensional support such that $supp(Q) \seq C_{\text{sing}}$, where $C_{\text{sing}}$ is the singular locus of $C$. Then if $\mathcal{Z} \seq \overline{W}_{d}^r(C)$ is an irreducible family of rank one torsion free sheaves such that we have an exact sequence
$$0 \to A' \to A \to Q \to 0 $$
for all $A \in \mathcal{Z}$, then $\dim \mathcal{Z} \leq l(Q)$, where $l(Q)$ denotes the length of $Q$.
\end{lem}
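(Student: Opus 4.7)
The plan is to prove the bound by induction on the length $l(Q) = n$, using Lemma \ref{techlem} as both the base case and the engine of the inductive step. The base case $n = 1$ is immediate: since $\text{supp}(Q) \seq C_{\text{sing}}$ and $C$ is nodal, $Q \simeq k(p)$ for some node $p$, and Lemma \ref{techlem} yields $\dim \mathcal{Z} \leq 1 = l(Q)$.

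For the inductive step, fix a composition series $0 = Q_0 \seq Q_1 \seq \cdots \seq Q_n = Q$ with $Q_i/Q_{i-1} \simeq k(p_i)$ for nodes $p_i \in C_{\text{sing}}$; such a filtration exists because every simple quotient of a finite-length sheaf supported on nodes is a residue field at a point in its support. For each $A \in \mathcal{Z}$, the quotient map $\pi_A : A \twoheadrightarrow Q$ (with kernel $A'$) pulls back this filtration to a chain of rank-one torsion-free subsheaves
\[
A' = B_0(A) \seq B_1(A) \seq \cdots \seq B_n(A) = A, \quad B_i(A) := \pi_A^{-1}(Q_i),
\]
with $B_i(A)/B_{i-1}(A) \simeq k(p_i)$. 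The assignment $A \mapsto (B_0(A), \ldots, B_n(A))$ embeds $\mathcal{Z}$ into the parameter space $\mathcal{C}$ of all such chains $A' = B_0 \seq B_1 \seq \cdots \seq B_n$ of rank-one torsion-free sheaves with successive quotients $k(p_i)$ (the map is injective because $A = B_n$ can be recovered from the chain), so $\dim \mathcal{Z} \leq \dim \mathcal{C}$.

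I build $\mathcal{C}$ step by step. At each stage $i \geq 1$, given $B_{i-1}$, the additional data of a rank-one torsion-free overextension $B_i \supset B_{i-1}$ with $B_i/B_{i-1} \simeq k(p_i)$ corresponds, up to the scaling action of $\text{Aut}(k(p_i)) = \C^*$, to an element of $\mathbb{P}(\text{Ext}^1_{\mathcal{O}_C}(k(p_i), B_{i-1}))$. The proof of Lemma \ref{techlem} shows that $\dim_{\C} \text{Ext}^1(k(p_i), B_{i-1}) \leq 2$, regardless of whether $B_{i-1}$ is locally free or singular at $p_i$, so each of the $n$ steps contributes at most one to the dimension. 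Hence $\dim \mathcal{C} \leq n = l(Q)$ and therefore $\dim \mathcal{Z} \leq l(Q)$.

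The main technical obstacle is to execute the chain-building in the category of algebraic families rather than merely pointwise: the iterated Ext groups should fit into a coherent relative Ext sheaf over the base, and the bound $\dim_{\C} \text{Ext}^1(k(p_i), B_{i-1}) \leq 2$ must remain uniform as $B_{i-1}$ varies. This is best handled by working relatively on a suitable iterated Quot scheme (equivalently, a flag Hilbert scheme of successive quotients of $Q$), where the step-by-step bound becomes a bound on the relative dimension of each forgetful map, and upper-semicontinuity of $\dim \text{Ext}^1$ controls any jumping behaviour along strata where $B_{i-1}$ changes type at $p_i$.
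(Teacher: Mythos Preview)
Your proposal is correct and follows essentially the same approach as the paper: both arguments filter $Q$ by length-one quotients and use the Ext bound from Lemma~\ref{techlem} at each step. The only difference is presentational---the paper performs a two-step induction (splitting off one simple quotient $Q'$ of $Q$ at a time and passing to the intermediate sheaf $A'':=\mathrm{Ker}(A\to Q')$, using a moduli space of extensions as in Bridgeland to work in families), whereas you unroll the whole composition series at once into a flag parameter space $\mathcal{C}$; both lead to the same bound $\dim\mathcal{Z}\leq l(Q)$.
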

\begin{proof}
We will prove the result by induction on $l(Q)$. When $l(Q)=1$ the result holds from
Lemma \ref{techlem}. Suppose $Q$ has length $r$ and choose a sheaf $Q'$ with zero-dimensional support and length $r-1$ such that we have a surjection $\phi : Q \twoheadrightarrow Q'$. For any $$0 \to A' \to A \to Q \to 0, $$ $\phi$ then induces
a short exact sequence
$$0 \to A'' \to A \to Q' \to 0$$
where $A''$ fits into the exact sequence
$$0 \to A' \to A'' \to Ker(\phi) \to 0.$$ Now let $\pi: \mathcal{T} \to \mathcal{Z}$ be the moduli space with fibre over $A \in \mathcal{Z}$ parametrising all extensions
$0 \to A' \to A \to Q \to 0$; this can be constructed from \cite[$\S  4.1$]{bridgeland}. After replacing $\mathcal{T}$ with an open set we have a morphism $\psi : \mathcal{T} \to \overline{W}_{d'}^{r'}(C)$ for some $d'$, $r'$, which sends a point representing the exact sequence $0 \to A' \to A \to Q \to 0$ to $A'':=Ker(A \to Q')$. By Lemma \ref{techlem} the image of $\psi$ is at most one dimensional as $l(Ker(\phi))=1$. Further, $\pi(\psi^{-1}(A''))$ is at most $l(Q')=l(Q)-1$ dimensional for any $A'' \in Im(\psi)$ by the induction hypothesis. It then follows than $\dim \mathcal{Z} \leq l(Q)$ as required.
\end{proof}

\begin{lem} \label{zero-case}
Let $C$ be an integral, nodal curve. Then $$\overline{W}^0_d(C) := \{ \text{$A \in \bar{J}^d(C)$ with $h^0(A) \geq 1$} \}$$ is irreducible of dimension $\rho(g,0,d)=d$.
\end{lem}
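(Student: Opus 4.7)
The plan is to identify $\overline{W}^0_d(C)$ with the image of an Abel--Jacobi morphism from the Hilbert scheme of $d$ points on $C$, and then invoke the known irreducibility of that Hilbert scheme for integral, locally planar curves. Concretely, every integral nodal curve is a local complete intersection and hence Gorenstein, so for any length $d$ subscheme $Z \seq C$ the ideal sheaf $I_Z$ is a rank one torsion-free sheaf of degree $-d$, and so is its dual $I_Z^{\vee} := \mathcal{H}om_{\mathcal{O}_C}(I_Z,\mathcal{O}_C)$; moreover reflexivity $(I_Z^{\vee})^{\vee} \simeq I_Z$ holds. This construction is functorial in flat families (cf.\ \cite{altman-compact}, \cite{rego}) and defines a morphism
\[
\alpha \; : \; C^{[d]} := \mathrm{Hilb}^d(C) \to \bar{J}^d(C), \qquad Z \mapsto I_Z^{\vee},
\]
and dualising the inclusion $I_Z \hookrightarrow \mathcal{O}_C$ produces a canonical nonzero global section of $I_Z^{\vee}$, so $\alpha$ factors through $\overline{W}^0_d(C)$.

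The next step is to show that $\alpha$ surjects onto $\overline{W}^0_d(C)$. Given any $A \in \overline{W}^0_d(C)$ and any nonzero $s \in H^0(C,A)$, the fact that $A$ is torsion-free of rank one and $C$ is integral forces $s$ to be injective, yielding a short exact sequence $0 \to \mathcal{O}_C \xrightarrow{s} A \to Q \to 0$ with $Q$ a torsion sheaf of length exactly $d$ (by comparing Euler characteristics, since $\chi(A)=d+1-g$ and $\chi(\mathcal{O}_C)=1-g$). Applying $\mathcal{H}om(-,\mathcal{O}_C)$ and using Gorenstein duality at the nodes, the resulting map $A^{\vee} \hookrightarrow \mathcal{O}_C$ is the ideal sheaf of some $Z \in C^{[d]}$, and reflexivity then gives $A \simeq (A^{\vee})^{\vee} = I_Z^{\vee} = \alpha(Z)$. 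Thus $\alpha$ is surjective onto $\overline{W}^0_d(C)$.

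Finally, by the theorem of Altman--Iarrobino--Kleiman, for any integral locally planar curve (in particular any integral nodal curve) the Hilbert scheme $C^{[d]}$ is irreducible of dimension $d$. Hence $\overline{W}^0_d(C)$, as the image of an irreducible variety of dimension $d$, is itself irreducible of dimension at most $d$; Theorem \ref{bhosle-bn} supplies the reverse inequality, so $\dim \overline{W}^0_d(C) = d = \rho(g,0,d)$. The main technical point to verify carefully will be the duality calculation at the nodes in Step 2 -- one has to keep track of the fact that the stalk of a rank one torsion-free sheaf at a node is either $\mathcal{O}_{C,p}$ or the maximal ideal $\mathfrak{m}_p$, both of which are reflexive for the Gorenstein curve $C$ and behave under $\mathcal{H}om(-,\mathcal{O}_C)$ as in Lemma \ref{techlem}. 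Once this is in hand, surjectivity of $\alpha$ is automatic and the conclusion follows.
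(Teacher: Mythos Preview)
Your argument is correct and takes a genuinely different route from the paper. You use the full Hilbert scheme $C^{[d]}$ of length-$d$ subschemes, the Abel map $Z \mapsto I_Z^{\vee}$, reflexivity on a Gorenstein curve to get surjectivity onto $\overline{W}^0_d(C)$, and then the Altman--Iarrobino--Kleiman theorem that $C^{[d]}$ is irreducible of dimension $d$ for locally planar curves. The paper instead restricts to the open locus $U_d \seq \overline{W}^0_d(C)$ of line bundles, identifies it as the image of the space of effective \emph{Cartier} divisors (which is birational to the symmetric product of the normalisation, hence irreducible of dimension $d$), and then separately bounds the complement $\overline{W}^0_d(C) \setminus U_d$ by dimension $<d$ using that every non-locally-free torsion-free sheaf is a pushforward from a partial normalisation \cite{gagne}; combined with Theorem \ref{bhosle-bn} this forces each component to meet $U_d$.

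Your approach is more streamlined: one irreducible source and one surjection, with the subtlety (line bundles versus non-line-bundles) absorbed into the geometry of $C^{[d]}$ at subschemes supported on nodes. The paper's approach is more hands-on and avoids the Hilbert-scheme irreducibility theorem, trading it for the elementary irreducibility of $\tilde{C}^{(d)}$ plus the structure result on singular torsion-free sheaves. Either way the dimension lower bound comes from Theorem \ref{bhosle-bn}.
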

\begin{proof}
Let $U_d \seq \overline{W}^0_d(C)$ be the open subset consisting of line bundles. Let $V_d:=Div_d(C)$ denote the 
scheme parametrizing zero-dimensional schemes $Z \seq C$ such that the ideal sheaf $I_Z$ is invertible of degree $d$; i.e\ $V_d$ is the scheme of effective Cartier divisors. Let $\tilde{C} \to C$ denote the normalisation. From \cite[Thm.\ 2.4]{kleiman-special}, pullback induces a birational morphism $V_d \to Div_d(\tilde{C})$, and thus $\dim V_d=d$. We have a morphism $V_{d} \to \bar{J}^d(C)$ with image $U_d$, which sends a scheme $Z$ to the effective line bundle $I_Z^*$. Thus $U_d$ of dimension at most $d$. Since each component of $\overline{W}^0_d(C)$ has dimension at least $d$ by Theorem \ref{bhosle-bn}, we see $\dim(U_d) =d$.

Let $I$ be an irreducible component of $\overline{W}^0_d(C) \setminus U_d$; we need to prove $\dim(I) < d$. There is a nonempty open set $I^0$ of $I$, an integer $d' < d$ and a partial normalisation $\mu: C' \to C$ such that for each $A \in I^0$ there exists a unique effective line bundle $B \in \text{Pic}^{d'}(C')$ with $\mu_*(B) \simeq A$ by \cite[Prop.\ 3.4]{gagne}. Since the dimension of the moduli space of effective line bundles of degree $d'$ on $C'$ has dimension $d'$ by the above, we see that $\dim(I) \leq d' < d$.
\end{proof}

We now prove the main result of this section.
\begin{thm} \label{bn-rat-thm}
Let $X$ be a K3 surface with $\text{Pic}(X) \simeq \mathbb{Z} L$ and $(L \cdot L)=2g-2$. Suppose $C \in |L|$ is a rational, nodal curve. Then
$$\overline{W}^r_d(C) := \{ \text{$A \in \bar{J}^d(C)$ with $h^0(A) \geq r+1$} \}$$ is either empty or is equidimensional of the expected dimension $\rho(g,r,d)$.
\end{thm}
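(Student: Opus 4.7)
The plan is to prove the upper bound $\dim I \leq \rho(g,r,d)$ for every irreducible component $I \subseteq \overline{W}^r_d(C)$; combined with the lower bound in Theorem \ref{bhosle-bn}, this forces every component to have the expected dimension. First I would dispose of the Riemann--Roch range $r \leq d - g$: then $h^0(A) \geq d - g + 1 \geq r + 1$ for every $A \in \bar{J}^d(C)$, so $\overline{W}^r_d(C) = \bar{J}^d(C)$ has dimension $g$, and a direct calculation gives $\rho(g,r,d) \geq g$. So I may assume $r \geq d - g + 1$, equivalently $g - d + r \geq 1$, which will play a key role in the monotonicity argument below.

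Fix such a component $I$ and let $s + 1$ be the generic value of $h^0(A)$ on $I$, so $s \geq r$. For each $A \in I$, let $A' \subseteq A$ be the image of the evaluation map $H^0(A) \otimes \mathcal{O}_C \to A$; the composition $H^0(A) \to H^0(A') \to H^0(A)$ is the identity, so $h^0(A') = h^0(A)$, and hence $A' \in V_{\deg(A'),\, s}$ on a dense open subset of $I$. Stratifying $I$ into locally closed subsets $I_t$ according to $t := \ell(A/A')$ produces morphisms $\phi_t \colon I_t \to V_{d-t,s}$, $A \mapsto A'$. By Corollary \ref{tf-bn}, $\dim V_{d-t,s} = \rho(g,s,d-t) = \rho(g,s,d) - (s+1) t$.

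The technical heart of the proof is a bound on the fibers of $\phi_t$: for fixed $A' \in V_{d-t,s}$, the set of rank-one torsion-free sheaves $A \supseteq A'$ with $\ell(A/A') = t$ has dimension at most $t$. This extends Lemma \ref{techlem2} by dropping the hypothesis that $Q := A/A'$ be supported on the singular locus. One proves it by induction on $t$, reducing via any intermediate filtration $A' \subsetneq A'' \subsetneq A$ to the case $t = 1$. When $Q \simeq k(p)$ with $p$ a smooth point of $C$, or with $p$ a node at which $A'$ is locally free, one has $\dim \operatorname{Ext}^1(k(p), A') = 1$, so for each fixed $p$ there are only finitely many such $A$; letting $p$ vary over its (at most one-dimensional) locus of allowed support points gives fiber dimension at most $1$. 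When $p$ is a node at which $A'$ is non-locally-free, the point $p$ is confined to a finite set but $\dim \operatorname{Ext}^1(k(p), A') = 2$ as in Lemma \ref{techlem}, and the quotient by scalars again yields a one-dimensional family.

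Combining the fiber bound with the dimension of $V_{d-t,s}$ gives $\dim I_t \leq \rho(g,s,d) - (s+1)t + t = \rho(g,s,d) - st \leq \rho(g,s,d)$. The hypothesis $r \geq d - g + 1$ forces the function $x \mapsto (x+1)(g - d + x)$ to be strictly increasing on integers $x \geq r$, so $\rho(g,s,d) \leq \rho(g,r,d)$ for every $s \geq r$, and hence $\dim I = \max_t \dim I_t \leq \rho(g,r,d)$. The main obstacle will be the fiber dimension lemma in the non-singular-support case: Lemma \ref{techlem2} already handles the contribution of singular support points, but the smooth-point case is genuinely new, since the support of $Q$ is allowed to sweep out a one-parameter family rather than sitting on a zero-dimensional singular set, and one must check that this extra freedom is exactly balanced by the drop in $\dim \operatorname{Ext}^1$ at smooth points.
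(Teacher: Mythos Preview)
Your argument is correct and follows the same overall strategy as the paper: pass from $A$ to its globally generated part $A'$, bound the target via the Lazarsfeld--Mukai/Lagrangian argument (Corollary~\ref{tf-bn}), and bound the fibres by an $\mathrm{Ext}^1$ count. The organisation, however, is genuinely different in two places.

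First, the paper runs an induction on $\rho(g,r,d)$: it sends $A$ to $A'\in\overline{W}^{r'}_{d'}(C)$ and invokes the inductive hypothesis that $\dim\overline{W}^{r'}_{d'}=\rho(g,r',d')$ (using $\rho(g,r',d')<\rho(g,r,d)$). You bypass this entirely by noting that $A'$ lands in the open locus $V_{d-t,s}$ of globally generated sheaves with exactly $s+1$ sections, where Corollary~\ref{tf-bn} already gives the exact dimension $\rho(g,s,d-t)$; your monotonicity step $\rho(g,s,d)\le\rho(g,r,d)$ then replaces the induction. This is cleaner and makes the role of the hypothesis $g-d+r\ge 1$ transparent (the paper uses the same monotonicity implicitly in the line ``$\rho(g,r',d')\le\rho(g,r,d')$'' but does not isolate it).

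Second, for the fibre bound the paper splits $Q=A/A'$ as $Q_{\mathrm{sm}}\oplus Q_{\mathrm{sing}}$, handles $Q_{\mathrm{sing}}$ with Lemma~\ref{techlem2}, and controls the smooth part via the Abel map $C_{\mathrm{sm}}^{[d-d'-e]}\to\mathrm{Pic}^{d-d'-e}(C)$ (so $f(A)(M)$ with $M$ an effective line bundle). Your unified induction on $t=\ell(Q)$, absorbing smooth points into the $\mathrm{Ext}^1$ count, is a legitimate alternative: at a smooth point the one extra parameter from moving $p$ is exactly offset by the drop from $\dim\mathrm{Ext}^1=2$ to $1$, as you say. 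One small point: since any intermediate $A''$ with $A'\subsetneq A''\subsetneq A$ is automatically rank-one torsion-free (being a subsheaf of $A$ containing $A'$), the induction is sound; and you only need this on the dense open stratum where both $h^0(A)=s+1$ and $t$ take their generic values, so the stratification over all $t$ is not strictly necessary.

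A minor caveat, shared with the paper: when $g-d+r<0$ the statement as written is literally false (then $\overline{W}^r_d=\bar J^d$ has dimension $g<\rho$), so your ``Riemann--Roch range'' paragraph should really only treat the boundary case $g-d+r=0$ and regard $g-d+r<0$ as outside the intended scope.
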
 
\begin{proof}
By Corollary \ref{bnempty} the theorem holds whenever $\rho(g,r,d) <0$. Thus it suffices to prove the theorem for $\rho(g,r,d)  \geq -1$. We will proceed by induction on $\rho(g,r,d)$ starting from the case $\rho(g,r,d)  = -1$.

Choose nonnegative integers $r,d$ and suppose the claim holds for all $r',d'$ such that $\rho(g,r',d') < \rho(g,r,d)$. We know the claim holds for $r=0$ by Lemma \ref{zero-case}, so we may suppose $r >0$.   Let $I$ be an irreducible component of $\overline{W}^r_d(C) \setminus V_{d,r}$; from Theorem \ref{bhosle-bn} it suffices to show $\dim(I) < \rho(g,r,d)$. For all $A \in I$, we denote by $A'$ the globally generated part of $A$, i.e.\ the image of the evaluation morphism $H^0(A) \otimes \mathcal{O}_C \to A$. There is an open dense subset $I^0 \seq I$ such that $deg(A')=d'$, $h^0(A')=r'$  is constant for all $A \in I^0$. Replacing $I^0$ by a smaller open set if necessary, we have a morphism 
\begin{align*}
f \;: \; I^0_{\text{red}} &\to \overline{W}^{r'}_{d'}(C) \\
A & \mapsto A'.
\end{align*}
Indeed, let $S$ be an integral, locally Noetherian scheme over $\C$, let $\pi: C \times S  \to S$ be the projection, and let $\mathcal{A} $ be an $S$ flat family of rank one torsion free sheaves $\mathcal{A}_s$ on $C$, with $\deg(\mathcal{A}_s)=d$, $h^0(\mathcal{A}_s)=r'+1$ constant. Replacing $S$ with an open subset, we may assume that $\pi_* \mathcal{A}$ is a trivial vector bundle of rank $r'+1$, and that the image $\mathcal{A}'$ of the evaluation morphism $$H^0(\mathcal{A}) \otimes \mathcal{O}_{C \times S} \to \mathcal{A} $$ is flat over $S$. Replacing $S$ with another open subset, we may further assume $\pi_* \mathcal{A}'$ is a trivial vector bundle of rank $r'+1$. We claim that $\mathcal{A}'_s$ is the base-point free part of $\mathcal{A}_s$.  Let $B_s$ denote the base point free part of $\mathcal{A}_s$. The surjection
$H^0(\mathcal{A}_s) \otimes \mathcal{O}_{C} \to \mathcal{A}'_s$ shows $\mathcal{A}'_s \seq B_s \seq A$. Then the exact sequence
$$ 0 \to \mathcal{A}'_s \to B_s \to F \to 0,$$
where $F$ has zero-dimensional support, and the equality $h^0(\mathcal{A}'_s)=h^0(B_s)=r+1$ implies $F$ is the zero sheaf (since $B_s$ is base point free). Thus if $d':= \deg(\mathcal{A}'_s)$, $\mathcal{A}'$ is a flat family of rank one, torsion free sheaves on $C$ of degree $d'$ with $r'+1$ sections, so the universal property of  $\overline{W}^{r'}_{d'}(C)$ induces a morphism $S \to \overline{W}^{r'}_{d'}(C)$.

We  next claim that $f$ has fibres of dimension at most $d-d'$. This will then imply the result as $\rho(g,r',d') \leq \rho(g,r,d')= \rho(g,r,d)- (r+1)(d-d')$ so that $\dim(I^0) < \rho(g,r,d)$ for $r \neq 0$. For any $A \in I^0$ we have a sequence
$$0 \to f(A) \to A \to Q_A \to 0$$
where $Q_A$ has zero-dimensional support. We have a canonical decomposition
$Q_A = Q_{A,\text{sm}} \oplus Q_{A,\text{sing}}$ with $\text{Supp}(Q_{A,\text{sm}}) \seq C_{\text{sm}}$ and $\text{Supp}(Q_{A,\text{sing}}) \seq C_{\text{sing}}$,
where $C_{\text{sm}}$ is the smooth locus of $C$ and $C_{\text{sing}}=C-C_{\text{sm}}$. Replacing $I^0$ with a dense open set we may assume $Q':=Q_{A,\text{sing}}$ is independent of $A \in I^0$. Let $e:=l(Q')$. For any $A \in I^0$, there is a unique effective line bundle $M$ of degree $d-d'-e$ such that we have a short exact sequence
$$0 \to f(A)(M) \to A \to Q' \to 0 .$$
We have a morphism
\begin{align*}
g \;: \; I^0 &\to \overline{W}^{r'}_{d-e}(C) \\
A & \mapsto f(A)(M).
\end{align*}
By Lemma \ref{techlem2}, $g$ has fibres of dimension at most $e$. For any $A'$ in the image of $f$ consider
$$ g|_{f^{-1}(A')} \;: \; f^{-1}(A') \to \overline{W}^{r'}_{d-e}(C) .$$ The image of
$ g|_{f^{-1}(A')} $ is a subset of the space of tuples $A' \otimes M$ for $M \in \text{Pic}^{d-d'-e}(C)$ effective. The moduli space of effective line bundles in $\text{Pic}^{d-d'-e}(C)$ may be identified with the image of the natural map $C^{[d-d'-e]}_{\text{sm}} \to \text{Pic}^{d-d'-e}(C)$, where $C_{\text{sm}}$ is the smooth locus of $C$, and thus has dimension at most $d-d'-e$. Thus $\dim f^{-1}(A')\leq d-d'$ as required.

\end{proof}
\begin{rem}
It is clear from the proof that the theorem would hold for any constant cycle curve $C \in |L|$ such that $C$ is integral and nodal, see \cite{huy-const-cyc}.
\end{rem}

\bibliography{biblio}
\end{document}